\theoremstyle{plain} 
\newtheorem{theorem}{Theorem}[chapter]
\newtheorem{lemma}[theorem]{Lemma}
\newtheorem*{lemma*}{Lemma}
\newtheorem{proposition}[theorem]{Proposition}
\newtheorem{conjecture}[theorem]{Conjecture}
\newtheorem{corollary}[theorem]{Corollary}
\newtheorem{property}[theorem]{Property}
\newtheorem*{property*}{Property}
\newtheorem{properties}[theorem]{Properties}
\newtheorem{assumption}[theorem]{Assumption}
\newtheorem{assumptions}[theorem]{Assumptions}
\newtheorem{claim}[theorem]{Claim}
\newtheorem{procedure}[theorem]{Procedure}
\theoremstyle{definition}
\newtheorem{definition}[theorem]{Definition}
\newtheorem{definitions}[theorem]{Definitions}
\newtheorem{remark}[theorem]{Remark}
\newtheorem{remarks}[theorem]{Remarks}
\newtheorem{notation}[theorem]{Notation}
\newtheorem*{remark*}{Remark}
\newtheorem*{note*}{Editorial Note}
\numberwithin{section}{chapter}
\numberwithin{subsection}{section}
\numberwithin{subsubsection}{subsection}
\numberwithin{equation}{chapter}
\numberwithin{table}{chapter}
\numberwithin{figure}{chapter}
\renewcommand{\Bbb}{\mathbb} 
\newcommand{\B}{\mathbb{B}}
\newcommand{\D}{\mathbb{D}}
\newcommand{\bH}{\mathbb{H}}
\newcommand{\K}{\mathbb{K}}
\newcommand{\N}{\mathbb{N}}
\newcommand{\bP}{\mathbb{P}}
\newcommand{\R}{\mathbb{R}}
\newcommand{\bS}{\mathbb{S}}
\newcommand{\T}{\mathbb{T}}
\newcommand{\cA}{\mathcal{A}}
\newcommand{\cB}{\mathcal{B}}
\newcommand{\cD}{\mathcal{D}}
\newcommand{\cE}{\mathcal{E}}
\newcommand{\cG}{\mathcal{G}}
\newcommand{\cH}{\mathcal{H}}
\newcommand{\cI}{\mathcal{I}}
\newcommand{\cL}{\mathcal{L}}
\newcommand{\cM}{\mathcal{M}}
\newcommand{\cO}{\mathcal{O}}
\newcommand{\cR}{\mathcal{R}}
\newcommand{\cS}{\mathcal{S}}
\newcommand{\cU}{\mathcal{U}}
\newcommand{\cW}{\mathcal{W}}
\newcommand{\cZ}{\mathcal{Z}}
\newcommand{\mf}{\mathfrak}
\newcommand{\bbH}{\overline{\mathbb{H}}}
\newcommand{\bdH}{\partial{\mathbb{H}}}
\newcommand{\Omegab}{\overline{\Omega}}
\newcommand{\omegat}{\widetilde{\omega}}
\newcommand{\tauv}{\vec{\tau}}
\newcommand{\nuv}{\vec{\nu}}
\newcommand{\ww}{\cW}%
\newcommand{\pib}{\frac{\pi}{2}}
\newcommand{\pih}{\frac{\pi}{8}}
\DeclareMathOperator{\mult}{mult}
\DeclareMathOperator{\inte}{int}
\DeclareMathOperator{\sign}{sgn}
\DeclareMathOperator{\id}{Id}
\DeclareMathOperator{\ord}{ord}
\DeclareMathOperator{\spanop}{span}
\newcommand{\nf}{\normalfont}
\newcommand{\pf}{\emph{Proof.~}}
\newcommand{\qedc}{$\checkmark$}
\newcommand{\wt}[1]{\widetilde{#1}}
\newcommand{\wb}[1]{\overline{#1}}
\newcommand{\sm}{\!\setminus\!}
\newcommand{\set}[1]{\big\lbrace #1 \big\rbrace}
\newcommand{\spanv}[1]{\spanop\!\big\lbrace #1 \big\rbrace}
\newcommand{\ip}[2]{\langle #1,#2 \rangle}
\newcommand{\Cty}{C^{\infty}}
\newcommand{\floor}[1]{\big\lfloor #1 \big\rfloor}
\newcommand{\equivo}[1]{\, \overset{#1}{\equiv}\,}
\newcommand{\noi}{\noindent}
\newcommand{\noib}{\noindent $\bullet$~}
\newcommand{\noid}{\noindent $\diamond$~}
\newcommand{\noic}{\noindent $\circ$~}
\definecolor{purple}{RGB}{127,0,255}
\definecolor{lgray}{gray}{0.7}
\begin{document}

\frontmatter

\title{Upper bounds on eigenvalue multiplicities  for spheres and plane domains revisited}

\author{Pierre B\'erard}
\address{Institut Fourier, Universit\'{e} Grenoble Alpes and CNRS, CS 40700\\
  38058 Grenoble Cedex 9, France}
\email{pierrehberard@gmail.com}

\author{Bernard Helffer}
\address{Laboratoire de Math\'ematiques Jean Leray,  Nantes  Universit\'{e}  and CNRS\\ 44000 Nantes Cedex, France}
\email{Bernard.Helffer@univ-nantes.fr}

\subjclass[2020]{Primary 58C40, 35P99.}

\keywords{Spectral theory,  Laplacian, Schr\"{o}dinger operator, eigenvalue, multiplicity}

\date{\small \today}

\begin{abstract}
We revisit two papers which appeared in 1999:
\emph{M.~Hoffmann-Osten\-hof, T.~Hoffmann-Ostenhof, and N.~Nadirashvili}  [Ann. Global Anal. Geom. 17 (1999) 43--48] and \emph{T.~Hoff\-mann-Ostenhof, P.~Michor, and N.~Nadirashvili} [Geom. Funct. Anal. 9 (1999) 1169--1188]. The main result of these papers is that the multiplicity $\mathrm{mult}(\lambda_k(M))$ of the $k$th eigenvalue of the Riemannian surface $M$ is bounded from above by $(2k-3)$ provided that $k \ge 3$. In the first paper, $M$ is homeomorphic to a sphere. In the second,  $M$ is a plane domain with Dirichlet boundary condition. In both cases, the starting label of eigenvalues is $1$. The proofs given in these papers are not very detailed, and often rely on figures or special configurations of nodal sets.\\
The purpose of this monograph is to provide detailed general proofs for the above upper bounds and to extend the results to Robin boundary conditions. We also provide a survey of previous results  (Chapter~1), as well as proofs of prerequisite theorems  (Chapter~2).\\
When $M$ is homeomorphic to a sphere, we provide a complete proof of the upper bound, $\mathrm{mult}(\lambda_k) \le (2k-3)$ for any $k\ge 3$, by  introducing and carefully studying the \emph{combinatorial type} and a \emph{labeling of the nodal domains} of some particular eigenfunctions  (Chapter~3). When $M$ is a plane domain, we consider the three boundary conditions, Dirichlet, Neumann, Robin, and we also study the \emph{combinatorial types} and \emph{a labeling of the nodal domains} of some particular eigenfunctions. More precisely, we prove the inequality $\mathrm{mult}(\lambda_k) \le (2k-2)$ for general $C^{\infty}$ bounded domains and all $k \ge 3$  (Chapter~4).   We prove  the inequality $\mathrm{mult}(\lambda_k) \le (2k-3)$ for $k \ge 3$ under the additional assumption that the domain is \emph{simply connected}  (Chapter~5).  Chapter~3 serves as a warm-up for Chapters~4 and 5 which form the core of this monograph. These three chapters rely on Euler's inequality applied to the nodal graph of eigenfunctions (see Chapter~2), and a careful analysis of some eigenfunctions which optimize Euler's inequality.   Chapter~6 contains related results (nodal line conjecture; Courant-sharp eigenvalues).
\end{abstract}

\maketitle

\vspace{2cm}
\begin{center}
Final version 15/01/2026.\\
Accepted for publication in the Memoirs of the European Mathematical Society.
\end{center}%

\vspace{2cm}
[\currfilename], \today.

\cleardoublepage
\thispagestyle{empty}
\vspace*{13.5pc}
\begin{center}
This memoir is dedicated to Steve Zelditch.
\end{center}
\clearpage
\thispagestyle{empty}
\vspace*{13.5pc}
\emph{Acknowledgements.~}During the preparation of this monograph, we consulted several colleagues: A.~Berdnikov, L.~Friedlander, T.~Hoffmann-Ostenhof, P.~Jammes, M.~Kar\-pukhin, J.~Kennedy, F.~Laudenbach, Z.~Liqun, N.~Nadirashvili, and I.~Poltero\-vich. We thank them all for their comments.  Since the publication of the first version of this work in February 2022, we had several fruitful discussions with N.~Nadirashvili, which  led us to explore new approaches to fill in  the gaps in the proofs provided in \cite{HoMN1999}.   We also acknowledge the comments and further references provided by V.~Bobkov. \smallskip

Finally, we thank Steve Zelditch for his comments, a few months before his death in September 2022, and we express our deep appreciation of his contributions to spectral geometry.
\clearpage

\setcounter{page}{7}

\tableofcontents

\mainmatter


\chapter{Introduction and Survey of Previous Results}\label{Ch-ihs}

\section{Introduction}\label{S-intro}
 In this monograph, we are concerned with upper bounds for the multiplicities of the eigenvalues $\set{\lambda_k, k\ge 1}$ of a Schr\"{o}dinger operator \index{Schr\"{o}dinger operator} $-\Delta + V$ on a compact, smooth (ie. $C^{\infty}$), connected Riemannian surface. When the boundary $\partial M$ is not empty, we consider the Dirichlet, Neumann or Robin boundary conditions.  We \emph{do not} consider the Steklov eigenvalue problem for which we refer to the papers of Karpukhin, Kokarev and Polterovich \cite{KaKP2014}, Fraser and Schoen \cite{FrSc2016}, Jammes \cite{Jam2016},  Colbois, Girouard, Gordon and Sher \cite{CoGGS2024}, and their reference lists. \medskip

Our main purpose is to revisit the papers \cite{HoHN1999} (Riemannian surfaces homeomorphic to a sphere\footnote{In \cite{HoHN1999}, the authors refer to ``compact surfaces without boundary and genus $0$'', and implicitly assume that the surface is orientable. In this monograph, we have chosen a shorter terminology. We will refer to Riemannian spheres \index{Riemannian sphere} $(M,g)$ with potential $V$, where $M$ is a $\Cty$ surface homeomorphic to the sphere, equipped with a $\Cty$ Riemannian metric $g$, and with a $\Cty$ real valued potential $V$. See Chapter~\ref{Ch-rsp}.})  and \cite{HoMN1999} (planar domains with smooth boundary) whose proofs are not very detailed and often rely on figures and special configurations of nodal sets. We introduce and carefully study the \emph{combinatorial type}  (defined in Subsection~\ref{SS-h2n-s2})  of some particular eigenfunctions, as well as a \emph{labeling} of their nodal domains  (see Sections~\ref{S-llnd} and \ref{S-hmn2L}).  For domains in $\R^2$, we  provide a unified treatment for the three boundary conditions  (Dirichlet, Neumann, Robin). We also illustrate our proofs with many figures.\medskip

In the sequel $\Delta$ is the Laplace-Beltrami operator \index{Laplace-Beltrami operator} on the surface $M$ for some smooth Riemannian metric $g$ (our convention is that $\Delta$ is a nonpositive operator), and $V$ is a smooth real valued function. We list the eigenvalues in nondecreasing order, multiplicities accounted for. Our convention is that, in all cases, we label the eigenvalues \index{Labeling (eigenvalue)}\emph{starting from the label} $1$,
\begin{equation*}
\lambda_1 < \lambda_2 \le \lambda_3 \le \ldots ,
\end{equation*}
and we
denote the multiplicity of $\lambda_k$ by $\mult(\lambda_k)$.
\index{2-m@$\mult(\lambda_k)$}
We refer to Chapter~\ref{Ch-evp} for more definitions and notation.\smallskip

In Section~\ref{S-sketch} we provide a survey of the main results on the multiplicity problem, and the ideas behind their proofs. \smallskip

Chapter~\ref{Ch-evp} is devoted to definitions, notation and  prerequisites on eigenvalues and eigenfunctions of Schr\"{o}dinger operators on compact surfaces.  In Section~\ref{S-pargef}, we state Euler type formulas for nodal graphs. They will be applied extensively in the sequel. In Sections~\ref{S-lsns} and \ref{S-lsbs}, we give detailed proofs of the local structure theorem for an eigenfunction near a singular point. They will also be used extensively in the following chapters. \smallskip

In Chapter~\ref{Ch-rsp}, we revisit \cite{HoHN1999}. Introducing the \emph{combinatorial type} of some particular nodal sets (Definition~\ref{D-h2n-type}) and a \emph{labeling} of their nodal domains (Section~\ref{S-hmn2L}), we provide a complete proof of the inequality $\mult(\lambda_k) \le (2k-3)$ for all $k \ge 3$, for Riemannian spheres with potential, see Theorem~\ref{T-h2n}. This chapter  is meant as a  warm-up for the remaining chapters. \smallskip

In the next two chapters, we revisit \cite{HoMN1999} and prove the following theorem.

\begin{theorem}\label{T-hmn-bh}
Consider the eigenvalue problem for the operator  $-\Delta + V$ in a $\Cty$ bounded domain $\Omega \subset \R^2$, with the Dirichlet, the Neumann or the $h$-Robin boundary condition.
\begin{enumerate}[(i)]
  \item Without any further assumption on $\Omega$, for any $k \ge 3$, $\mult(\lambda_k) \le (2k-2)$.
  \item Assuming that $\Omega$ is \emph{simply connected}, for any $k \ge 3$, $\mult(\lambda_k) \le (2k-3)$.
\end{enumerate}
\end{theorem}%

The proof of Assertion~(i) is given in Chapter~\ref{Ch-pdwb}; the proof of Assertion~(ii)  in Chapter~\ref{Ch-scpdsb}.

\begin{remarks}\label{R-hmn-sc}~
\begin{enumerate}
  \item In \cite[Theorem~B, p.~1172]{HoMN1999}, the authors state that the bound, $\mult(\lambda_k) \le (2k-3)$ for all $k\ge 3$, holds for all smooth bounded  domains $\Omega \subset \R^2$. In \cite[Section~4]{Berd2018}, Berdnikov points out a gap in the proof when $\Omega$ is not simply connected. This is why we restrict ourselves to simply connected domains in Assertion~(ii).
  \item Theorem~\ref{T-hmn-bh} covers both Dirichlet and Robin boundary conditions, whereas \cite{HoMN1999} only deals with the Dirichlet boundary condition.  As a matter of fact, the proofs in both cases, Dirichlet and Robin, are very similar, except for a specific energy argument in the Robin case (Lemma~\ref{L-hmn3-L38-e}).
\end{enumerate}
\end{remarks}%

In Chapter~\ref{Ch-fr} we relate the problem of bounding multiplicities from above to the question of \emph{Courant-sharp eigenvalues}  (eigenvalues one of whose eigenfunctions maximizes the number of nodal domains, see Remark~\ref{R-hmn-2}), and the particular case of  the multiplicity of the second eigenvalue, $\mult(\lambda_2)$, to the \emph{Nodal Line Conjecture}.\medskip

\section{Survey of previous results}\label{S-sketch}

In the case of closed surfaces, the first upper bounds on multiplicities were obtained by  Cheng \cite{Chen1976}, Besson \cite{Bess1980}, and Nadirashvili \cite{Nadi1987}.
We denote their respective upper bounds on $\mult(\lambda_k)$ by $m_k^{*}$, with $* \in \set{B,C,N}$, where $B$ stands for ``Besson'', $C$ for ``Cheng'', and $N$ for ``Nadirashvili'', and provide a summary of their results in
Table~\ref{E-scr-2T}  (with our convention that the labeling of eigenvalues begins with $1$, not $0$). \smallskip

The upper bounds for the multiplicity of the second eigenvalue (i.e., the least positive eigenvalue of a closed surface) given in the fourth column are sharp. For the sphere the bound is achieved for the canonical (round) metric, \cite{Chen1976}; for the projective space the bound is achieved for the metric induced by the canonical metric of the sphere, \cite{Bess1980}; for the torus the bound is achieved for the equilateral torus $\T_e$ with metric induced from $\R^2$, \cite{Bess1980}; for the Klein bottle, the bound is achieved for a nontrivial pair $(g,V)$ constructed in \cite[\S\,2]{Nadi1987}, and for smooth metrics constructed in \cite[Th\'{e}or\`{e}me~4.2]{ColV1987}. An interesting feature of $\bS^2, \mathbb{RP}^2$ and $\T^2$ is that the bounds for $\mult(\lambda_2)$ are also achieved for metrics different from the ones mentioned above, see \cite{Bess1980}.\smallskip

In \cite[Th\'{e}or\`{e}me~1.5]{ColV1987}, Colin de Verdi\`{e}re shows that for a closed surface $M$,   \[\sup\set{\mult(\lambda_2(M,-\Delta_g + V)) \mid (g,V)} \ge C(M) - 1,\] where the supremum is taken over the Riemannian metrics and potentials on $M$, and where $C(M)$ is the \emph{chromatic number} \index{Chromatic number} of $M$ (the maximal  number $N$ such that the complete graph on $N$ vertices $K_N$ can be embedded into $M$). Table~\ref{E-scr-2T} shows that equality holds for $\bS^2, \mathbb{RP}^2, \T^2$ and $\K^2$; it also holds for surfaces with $\chi(M) \ge -3$, where $\chi (M)$ is the Euler characteristic \index{Euler characteristic} of $M$, see \cite{Seve2002}.  It was conjectured that equality holds for all closed surfaces, see Fortier Bourque et alii \cite{FoGPP2023} for two counter-examples.\smallskip

\begin{table}[!htb]
\centering
\resizebox{0.9\textwidth}{!}{%
\begin{tabular}[c]{|c|c|c|c|l|}%
\hline
&&&&\\[-1em]
$M$ & $\chi(M)$ & orientability & $\mult(\lambda_2) \le$
& \text{for~} $k\ge 1, ~\mult(\lambda_k) \le$ \\[0.2em]
\hline
&&&&\\[-1em]
$\bS^2$ &  $2$ & orientable & $3$ &
$
\left\{
\begin{array}{ll}
m_k^{C} & = \frac 12 k(k+1)\\[5pt]
m_k^{B} & = 2k-1\\[5pt]
m_k^{N} & = 2k-1
\end{array}
\right.
$
\\[0.2em]
\hline
&&&&\\[-1em]
$\mathbb{RP}^2$ &  $1$ & non-orientable & $5$ &
$
\left\{
\begin{array}{ll}
m_k^{C} & = \text{not considered} \\[5pt]
m_k^{B} & = 4k-1\\[5pt]
m_k^{N} & = 2k+1
\end{array}
\right.
$ \\[0.2em]
\hline
&&&&\\[-1em]
$\T^2$ &  $0$ & orientable & $6$ &
$
\left\{
\begin{array}{ll}
m_k^{C} & = \frac 12  (k+2)(k+3)\\[5pt]
m_k^{B} & = 2k+3\\[5pt]
m_k^{N} & = 2k+2
\end{array}
\right.
$ \\[0.2em]
\hline
&&&&\\[-1em]
$\K^2$ &  $0$ & non-orientable & $5$ &
$
\left\{
\begin{array}{ll}
m_k^{C} & =  \text{not considered} \\[5pt]
m_k^{B} & =  \text{not considered} \\[5pt]
m_k^{N} & = 2k+1
\end{array}
\right.
$ \\[0.2em]
\hline
&&&&\\[-1em]
$M^2$ &  $\chi(M) < 0$ & orientable & -- &
$
\left\{
\begin{array}{ll}
m_k^{C} & = \frac 12 \big( k- \chi(M) + 2\big) \big( k-\chi(M) +3 \big)\\[5pt]
m_k^{B} & = 2k - 2 \chi(M) +3\\[5pt]
m_k^{N} & = 2k- 2 \chi(M) +1 \\
\end{array}
\right.
$ \\[0.2em]
\hline
&&&&\\[-1em]
$M^2$ &  $\chi(M) < 0$ & non-orientable & -- &
$
\left\{
\begin{array}{ll}
m_k^{C} & = \text{not considered} \\[5pt]
m_k^{B} & = 4k - 4 \chi(M) + 3\\[5pt]
m_k^{N} & = 2k- 2 \chi(M) + 1\\
\end{array}
\right.
$ \\[0.2em]
\hline
\end{tabular}%
}
\caption{Closed surfaces: multiplicity upper bounds obtained by Cheng, Besson, and Nadirashvili  (labeling starting from $1$)}\label{E-scr-2T}
\end{table}

\index{1-chi@$\chi(M)$}
\index{1-gamma@$\gamma$!$\gamma(M)$}
Cheng and Besson, express their upper bounds in terms of the genus. When the surface $M$ is orientable, $\chi(M) = 2 - 2\gamma(M)$, where $\gamma(M)$ is the genus of $M$, \index{Genus} and the surface is homeomorphic to a  $2$-sphere with $\gamma(M)$ handles attached. When the surface $M$ is not orientable, $\chi(M) = 1 - \gamma(\wt{M})$, where $\gamma(\wt{M})$ is the genus of the orientable cover $\wt{M}$ of $M$, and the surface is the connected sum of $(\gamma(\wt{M})+1)$ copies of the projective plane. \smallskip

Better upper bounds were later obtained by M. and T.~Hoffmann-Ostenhof and Nadirashvili \cite{HoHN1999}  (Riemannian spheres with potential, see Chapter~\ref{Ch-rsp}),  S\'{e}vennec \cite{Seve2002}, Fortier Bourque and Petri \cite{FoBP2023} (improved bounds on the multiplicity of $\lambda_2$ when $\chi(M) < 0$), Berdnikov, Nadirashvili and Penskoi \cite{BeNP2016} (improved bounds for the multiplicities on the projective plane),  Fortier Bourque and Petri \cite{FoBP2024} (Klein quartic). \smallskip

In \cite[Theorem~2]{Nadi1987}, Nadirashvili also considers smooth bounded
domains $\Omega \subset \R^2$ and proves that the multiplicity of the $k$th eigenvalue $\lambda_k$ of an operator $-\Delta + V$ with Dirichlet or Neumann boundary condition is at most $(2k-1)$.

In the paper \cite{HoMN1999}, Hoffmann-Ostenhof, Michor and Nadirashvili, improve Nadirashvili's bound for bounded plane domains  with $C^{\infty}$ boundary and Dirichlet boundary condition. More precisely, they state that the multiplicity of $\lambda_k$ is less than or equal to $(2k-3)$. Berdnikov \cite{Berd2018} considers the case of compact surfaces with boundary, under the assumption that $\chi(M) + b_0(\partial M)$ is negative  (where $b_0$ denotes the number of connected components).
\index{2-b@$b_0$}
He points out some problem in the proof of Theorem~B in \cite{HoMN1999} when the domain is not simply connected.\medskip

The general strategy to prove upper bounds for the eigenvalue multiplicities is a combination of the following ingredients:
\begin{enumerate}[(i)]
  \item Courant's nodal domain theorem, \index{Courant nodal Theorem} Theorem~\ref{T-RC}.
  \item  Local structure of  eigenfunctions near a singular point, Theorem~\ref{T-nodinfo}.
  \item Existence of eigenfunctions with prescribed  singular points, provided the dimension of the eigenspace is large enough, Subsection~\ref{SS-evp3}.
  \item Euler's formula for the graph \index{Euler formula} associated with the nodal set of an eigenfunction, Section~\ref{S-pargef}.
  \item The \emph{rotating function argument}, which first appeared in \cite{Bess1980}, \S~\ref{SSS-h2n-s2r}.
  \item Energy arguments, Lemma~\ref{L-hmn3-L38-e}, and eigenvalue monotonicity.
\end{enumerate}

In one form or another, these arguments go back to Cheng \cite{Chen1976}, Besson \cite{Bess1980}, and Nadirashvili \cite{Nadi1987}.\smallskip

Two other papers,  respectively \cite{HeHO1999} by Helffer, M. and T. Hoffmann-Osten\-hof and Owen, and \cite{HeHN2002} by Helffer, M. and T. Hoffmann-Ostenhof and Nadirashvili, have used the same techniques
for related purposes (for example the Aharo\-nov-Bohm  operators). Similar techniques are used in the analysis of the properties of minimal partitions \cite{HeHT2009,BoHe2017}.

We refer to the papers of Burger, Colbois and/or Colin de Verdi\`{e}re \cite{BuCo1985, Colb1985, ColV1986, ColV1987, CoCo1988}  and to the paper of Letrouit and Machado \cite{LeMa2024} for results of a different flavor.

\chapter{Prerequisites on Eigenvalue Problems}\label{Ch-evp}

\section{Eigenvalue Problems}\label{S-evp}

\subsection{Definitions, notation and preliminary results}\label{SS-evp1}

In this chapter, $M$ denotes a closed surface (compact, no boundary), or a compact surface with boundary. The boundary is denoted by $\partial M$, and the interior $M\sm \partial M$ is denoted by $\inte(M)$. Unless otherwise stated, the surface is assumed to be smooth and connected. We equip $M$ with a smooth Riemannian metric $g$, and we consider a (non-magnetic) Schr\"{o}dinger operator
\index{Schr\"{o}dinger operator}
of the form $-\Delta_g + V$, where $\Delta_g$ is the Laplace-Beltrami operator \index{Laplace-Beltrami operator} for the metric $g$ and $V$ is a smooth real valued function on $M$.\medskip

The notation is  as follows. The Riemannian measure \index{Riemannian measure}  is denoted by $v_g$. When $\partial M$ is not empty, $\sigma_g$ is the Riemannian measure of $\partial M$ for the metric induced by $g$, and $\nu$ is the unit normal to $\partial M$ pointing  inward.\smallskip

When $M$ is closed ($\partial M = \emptyset$), we consider the  closed (no boundary condition) eigenvalue problem
\index{Eigenvalue problem} \index{Eigenvalue problem!closed}
\begin{equation}\label{E-evp-2c}
-\Delta u + V u = \lambda \, u \text{\quad in \quad} M,
\end{equation}
associated with the quadratic form
\begin{equation}\label{E-evp-2cq}
\int_M \big( |du|^2_g + V u^2 \big) \, dv_g, \text{\quad with domain\quad} H^1(M).
\end{equation}

When $\partial M \not = \emptyset$, we consider the boundary eigenvalue problem
\index{Eigenvalue problem} \index{Eigenvalue problem!with boundary}
\begin{equation}\label{E-evp-2bc}
\left\{
\begin{array}{rll}
-\Delta u + V u & = \lambda \, u &\text{in\quad} \inte(M),\\[5pt]
B(u) &= 0 &\text{on\quad} \partial M,
\end{array}
\right.
\end{equation}
where $B(u)$ is one the following boundary conditions:
\begin{equation}\label{E-evp-bc}
B(u) = \left\{
\begin{array}{ll}
u &\text{(Dirichlet),}\\[5pt]
\frac{\partial u}{\partial \nu} &\text{(Neumann),}\\[5pt]
 \frac{\partial u}{\partial \nu}- h\,u &\text{($h$-Robin).}
\end{array}
\right.
\end{equation}
In the Robin case, \index{Robin condition} $h$ is a  given $\Cty$ function on $\partial M$.\index{Eigenvalue problem} \index{Eigenvalue problem!$h$-Robin}

The associated quadratic forms are
\begin{equation}\label{E-evp-2dq}
\int_M \big( |du|^2_g + V u^2 \big) \, dv_g \text{\quad with domain \quad} H_0^1(M),
\end{equation}
for the Dirichlet problem\index{Dirichlet condition}, and
\begin{equation}\label{E-evp-2rq}
\int_M \big( |du|^2_g + V u^2 \big) \, dv_g +  \int_{\partial M} h \, (u_{\partial M})^2\, d\sigma_g \text{\quad with domain \quad} H^1(M)
\end{equation}
for the Neumann \index{Neumann condition}  problem (in this case $h=0$) and for the $h$-Robin problem.

For the above eigenvalue problems, the spectrum of $-\Delta + V$ is discrete, and consists of a sequence of non-negative eigenvalues with finite multiplicities,
\begin{equation}\label{E-evp-2ev}
\lambda_1 < \lambda_2 \le \lambda_3 \le \cdots \nearrow \infty ,
\end{equation}
listed in nondecreasing order, multiplicities accounted for, starting from the label $1$.
\index{Eigenvalue labeling}
In the sequel, we only consider \emph{real valued} eigenfunctions.

\begin{notation}\label{N-evp-0}  The \emph{eigenspace} \index{Eigenspace} associated with the eigenvalue $\lambda_k$ is denoted by $U(\lambda_k)$. Its dimension, the \emph{multiplicity} \index{Multiplicity} of $\lambda_k$, is  denoted by $\mult(\lambda_k)$ or $\dim U(\lambda_k)$.
\end{notation}%

\begin{remark}\label{R-evp-2}
Whenever necessary, we indicate the dependence on $M, g, V, h$ and the boundary condition. For example, $\lambda_k(M,g,V,\mf{d})$ denotes the eigenvalues of $-\Delta + V$ on $(M,g)$ with the Dirichlet boundary condition on $\partial M$.
\end{remark}%

\index{2-Z@$\cZ(u)$}
\begin{definitions}[Terminology]\label{D-evp2-00}\phantom{}
\begin{enumerate}[(i)]
\item The \emph{nodal set} \index{Nodal set} of a nontrivial eigenfunction $u$ is denoted by $\cZ(u)$, and defined by
\begin{equation}\label{E-evp-ns}
\cZ(u) = \overline{\set{x \in \inte(M) \mid u(x) = 0}}.
\end{equation}
When $\partial M \not = \emptyset$, $\cZ(u)$ is the closure in $M$ of the set of interior zeros of $u$.
\item In dimension $2$, the nodal set $\cZ(u)$ of an eigenfunction $u$ is also called the \emph{nodal line} \index{Nodal line} of $u$.
\item The \emph{nodal domains} \index{Nodal domain} of the eigenfunction $u$ are the connected components\footnote{In the sequel, unless otherwise stated, we use the word \emph{component} for the expression \emph{connected component}.} of $ \inte(M) \sm \cZ(u)$.
    \index{Component (connected)}
    We denote the number of nodal domains of $u$ by $\kappa(u)$.
    \index{1-kappa@$\kappa(u)$}
\end{enumerate}
\end{definitions}%

A key ingredient in the forthcoming proofs is the following.

\begin{theorem}[Courant's nodal domain theorem, \cite{Cour1923}]\label{T-RC}
\index{Courant nodal Theorem}
With the previous definitions, a $\lambda_k$-eigenfunction has at most $k$ nodal domains.
\end{theorem}%

For modern proofs we refer to \cite{Plej1956, BeMe1982} and \cite{SoZe2011}. See also \cite{Ales1998} which investigates the validity of Courant's theorem under low regularity assumptions on the coefficients of the operator, in dimension 2 and in higher dimension as well.\smallskip

Eigenfunctions associated with $\lambda_1$ are characterized by the fact that they have precisely one nodal domain. An eigenfunction associated with $\lambda_k, k\ge 2$, has at least two nodal domains. An eigenfunction associated with $\lambda_2$ has precisely two nodal domains.

\begin{remark}\label{R-RC1}
For any $k \ge 2$ and any $\lambda_k$-eigenfunction $u$, $\kappa(u) \ge 2$, a consequence of the fact that $u$ is $L^2$-orthogonal to a first eigenfunction. It turns out that this lower bound can in general not be improved, see \cite{Ster1925,Lewy1977}, \cite{BeBo1982}, \cite{BeHe2015s,BeHe2015r}, and the recent papers \cite{JuZe2022} and \cite{CiJLS2022}.
\end{remark}%

\begin{remark}\label{R-RC2}
As a matter of fact, $\sup\set{\kappa(u) \mid 0 \not = u \in U(\lambda_k)} < k$ when $k$ is large enough (depending on $M, g, V)$, see  Pleijel's paper \cite{Plej1956} \index{Pleijel Theorem} and Section~\ref{S-mcs} for more details and references.
\end{remark}

\subsection{Local structure of eigenfunctions near a zero}\label{SS-evp2}

\index{2-o@$\ord(u,x)$}
\begin{definitions}[Terminology]\label{D-evp2-02}
We say that a function $u$ \emph{vanishes at order} $n \ge 1$ \index{Vanishing order} at a point $x$, and we write $\ord(u,x) = n$, if (in a local coordinate system) the function and all its derivatives of order less than or equal to $(n-1)$ vanish at $x$, and at least one derivative of order $n$ does not vanish at $x$. A \emph{critical zero} \index{Critical zero} of $u$ is a point at which $u$ vanishes at order at least $2$ (i.e., $u(x)=0$ and $\nabla_x u = 0$). A critical zero $x$ of $u$ is called an \emph{interior critical zero} if $x \in \inte(M)$, and a \emph{boundary critical zero} if $x \in \partial M$.
\index{Critical zero!interior} \index{Critical zero!boundary}
\end{definitions}%

\begin{theorem}[Local structure theorem] \label{T-nodinfo}
\index{Local structure Theorem}
Let $u$ be a nontrivial eigenfunction of the Schr\"odinger operator $-\Delta +V$ on a smooth compact Riemannian surface $M$ (with or without boundary),  where $V$ is a smooth real valued  potential. Then, $u\in C^\infty(M)$,  and does not vanish at infinite order at any point of $M$. Furthermore, depending on the boundary condition on $\partial M$, $u$ has the following  properties.
\begin{enumerate}[(i)]
\item For $x_0\in M$ an interior point, if $u$ has a zero of order $\ell$ at $x_0$, then there exist local polar coordinates $(r,\omega)$ centered at $x_0$ such that
\begin{equation}\label{E-harmpoly}
u(x)=r^{\ell} \big( a \sin(\ell \omega) + b \cos(\ell \omega) \big) +\cO(r^{\ell+1}),
\end{equation}
where $a,b \in \R$, $a^2+b^2 \not = 0$.

\item For $x_0\in \partial M$, if a Dirichlet eigenfunction $u$ has a zero of order $\ell$ at $x_0$, then there exist local polar coordinates $(r,\omega)$ centered at $x_0$, such that
\begin{equation}\label{E-sinexp}
u(x)=a\, r^\ell \sin(\ell \omega) +\cO(r^{\ell +1})
\end{equation}
for some $a\in\mathbb R$, $a \not = 0$. The angle $\omega$ is chosen so that the tangent to the boundary at $x_0$ is given by the equation $\omega=0$.

\item For $x_0\in \partial M$, if a Robin eigenfunction $u$ has a zero of order $\ell$ at $x_0$, then there exist local polar coordinates $(r,\omega)$ centered at $x_0$, such that
\begin{equation}\label{E-cosexp}
u(x)=b\, r^\ell \cos(\ell \omega) +\cO(r^{\ell +1})
\end{equation}
for some $b\in\mathbb R$, $b \not = 0$. The angle $\omega$ is chosen so that the tangent to the boundary at $x_0$ is given by the equation $\omega=0$.
\end{enumerate}
\end{theorem}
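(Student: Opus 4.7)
The overall plan is to combine three ingredients: (a) elliptic regularity, which gives $u \in C^\infty$ once $V$ is smooth; (b) the Aronszajn–Carleman unique continuation theorem for second-order elliptic operators with bounded coefficients, which forbids a nontrivial $u$ from vanishing to infinite order at any point; and (c) a normal-form computation in suitably chosen local coordinates, in which the equation reads as a perturbation of the flat Laplacian and the leading homogeneous part of the Taylor series is forced to be a harmonic polynomial satisfying the leading-order boundary condition (if any). Points (a) and (b) are classical and I would quote them; the bulk of the argument is (c).

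For part (i), I work in local isothermal coordinates $(x,y)$ centered at $x_0$, so that $g = e^{2\phi}(dx^2 + dy^2)$ and $\Delta_g = e^{-2\phi} \Delta_{\mathrm{eucl}}$. The eigenvalue equation becomes
\begin{equation*}
\Delta_{\mathrm{eucl}} u = e^{2\phi}(V-\lambda)\, u =: f \, u,
\end{equation*}
with $f$ smooth. Since $\ord(u,x_0) = \ell$, by unique continuation $\ell$ is finite, and the Taylor series at $x_0$ starts at degree $\ell$: $u = P_\ell + P_{\ell+1} + \cdots$, with $P_\ell \not\equiv 0$. The right-hand side vanishes to order at least $\ell$, while $\Delta_{\mathrm{eucl}} P_\ell$ is homogeneous of degree $\ell - 2$; matching homogeneous parts forces $\Delta_{\mathrm{eucl}} P_\ell = 0$. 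The space of harmonic homogeneous polynomials of degree $\ell$ in $\R^2$ is two-dimensional, spanned by $\mathrm{Re}(z^\ell) = r^\ell \cos(\ell\omega)$ and $\mathrm{Im}(z^\ell) = r^\ell \sin(\ell\omega)$, which yields \eqref{E-harmpoly}. Non-triviality of $(a,b)$ comes from $P_\ell \not\equiv 0$, and the remainder $\cO(r^{\ell+1})$ is controlled because $u$ is smooth.

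For parts (ii) and (iii), I use boundary-adapted isothermal (or Fermi) coordinates $(s,t)$ with $t \geq 0$, so that $\partial M$ is locally given by $\{t=0\}$, the metric is still conformally flat near $x_0$, and the outward normal is $-\partial_t$. After the same conformal reduction, the Taylor series gives a leading harmonic polynomial $P_\ell$ on the half-plane, and one only has to read off the leading-order boundary condition it must satisfy. In the Dirichlet case $u(s,0) \equiv 0$ \emph{globally} in $s$, hence $P_\ell(s,0) \equiv 0$, and the only harmonic homogeneous polynomials with this property are multiples of $r^\ell \sin(\ell\omega)$ with the angle measured from $\{t=0\}$, giving \eqref{E-sinexp}. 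In the Robin case $\partial_t u|_{t=0} = h u|_{t=0}$; but $h u|_{t=0}$ vanishes to order $\ell$ in $s$ at $s=0$, whereas $\partial_t u|_{t=0}$ a priori vanishes only to order $\ell-1$, so the leading-order boundary condition on $P_\ell$ is the pure Neumann condition $\partial_t P_\ell(s,0) \equiv 0$. The harmonic homogeneous polynomials satisfying this are multiples of $r^\ell \cos(\ell\omega)$, yielding \eqref{E-cosexp}. The Neumann case is the specialization $h=0$. In all three cases, the leading coefficient is nonzero by the very definition of $\ord(u,x_0) = \ell$.

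The main substantive obstacle is part (b), the strong unique continuation property; everything else is formal Taylor-series bookkeeping. The only subtle point in the boundary cases is the observation that, at leading order in $r$, the Robin condition degenerates to the Neumann condition, which is what decouples (iii) from the value of $h$. Smoothness of the remainder $\cO(r^{\ell+1})$ on the closed side $t\geq 0$ follows from standard boundary elliptic regularity up to $\partial M$, so no further care is needed.
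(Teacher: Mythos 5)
Your proof is correct and follows essentially the same route as the paper, which simply refers to the literature (\cite[Appendix~A]{GiHe2019}, \cite{Aron1957} for interior unique continuation, \cite{DoFe1990a} for boundary points, and the boundary isothermal coordinates of \cite{YaZh2021}) and carries out the very same leading-harmonic-polynomial computation in its Appendices~\ref{S-lsns} and \ref{S-lsc}. The one point to be precise about is that at boundary points the strong unique continuation property is not the interior Aronszajn--Carleman theorem but its boundary version (Donnelly--Fefferman), exactly as the paper indicates; otherwise your normal-form bookkeeping, including the observation that the Robin condition degenerates to the Neumann condition at leading order, is exactly what is needed.
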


We provide detailed proofs adapted to our purposes in Sections~\ref{S-lsns} and \ref{S-lsbs}. The starting point is to use the unique continuation theorem, see \cite{Aron1957} when $x_0$ is an interior point, and \cite{DoFe1990a} when $x_0$ is a boundary point.\smallskip

Classical references for the first assertion are \cite{Bers1955,HaWi1953}. For a proof of the local structure theorem under weaker regularity assumptions on the boundary,  and for references to the literature, we refer to \cite[Appendix~A]{GiHe2019}.  \medskip

From a local point of view, we have the following corollary.

\begin{corollary}\label{cor:nodloc}~
\begin{enumerate}[(i)]
\item Let $x_0\in \inte(M)$. If $u$ has a zero of order $\ell $ at $x_0$, then exactly $\ell$ nodal arcs pass through $x_0$. More precisely, in a neighborhood of $x_0 \in \inte(M)$, the nodal set $\cZ(u)$ consists of $2\ell$ semi-arcs emanating from $x_0$ tangentially to the rays $\set{\omega = \omega_j}$ where $\omega_j := \omega_{a,b} + j \frac{\pi}{\ell}, 0\le j < 2\ell$, for some angle $\omega_{a,b}$. The semi-tangents to these semi-arcs dissect the full unit circle in the tangent plane at $x_0$ into $2\ell $
    equal parts.
\item Let $x_0\in \partial M$. Let $u$ be a Dirichlet eigenfunction. If $u$ has a zero of order $\ell \ge 2$ at $x_0$, then exactly $(\ell-1)$ semi-arcs hit $\partial M$ at $x_0$, their  semi-tangents at $x_0$ dissect the half unit circle in the tangent plane at $x_0$ into $\ell$ sectors given by the equation $\sin(\ell \omega) =0$.
\item Let $x_0\in \partial M$. Let $u$ be a Robin eigenfunction. If $u$ has a zero of order $\ell \ge 1$ at $x_0$, then exactly $\ell$ semi-arcs hit $\partial M$ at $x_0$, their  semi-tangents  at $x_0$ dissect the half unit circle in the tangent plane at $x_0$ into $\ell$ sectors given by the equation $\cos(\ell \omega) =0$.
\end{enumerate}
\end{corollary}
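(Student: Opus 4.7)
The plan is to deduce the corollary directly from Theorem~\ref{T-nodinfo} by analyzing the leading harmonic-polynomial term in each Taylor expansion and then lifting this infinitesimal information to the actual nodal set via the implicit function theorem.

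For part (i), I would first rewrite the leading term of \eqref{E-harmpoly} in the compact form $C\, r^\ell \sin\!\big(\ell(\omega-\omega_0)\big)$, where $C = \sqrt{a^2+b^2} \ne 0$ and $\omega_0 \in \R$. On the full circle $\omega \in [0,2\pi)$ this function has exactly $2\ell$ simple zeros located at $\omega_j := \omega_0 + j\pi/\ell$ for $j=0,\ldots,2\ell-1$, and these rays dissect the tangent circle into $2\ell$ equal sectors. To transfer this to $\cZ(u)$ itself, I would pass to polar coordinates and write $u(r\cos\omega, r\sin\omega) = r^\ell\, \widetilde{u}(r,\omega)$, where $\widetilde{u}$ extends smoothly to $r=0$ with $\widetilde{u}(0,\omega) = a\sin(\ell\omega) + b\cos(\ell\omega)$. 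This extension follows from a standard application of Hadamard's lemma to the remainder in \eqref{E-harmpoly}, which may be written as $r^{\ell+1}$ times a smooth function of $(r,\omega)$. Since each angular zero $\omega_j$ is simple (with $\partial_\omega \widetilde{u}(0,\omega_j) = \pm \ell C \ne 0$), the implicit function theorem applied to $\widetilde{u}$ near $(0,\omega_j)$ produces a smooth curve $\omega = \omega_j(r)$ with $\omega_j(0) = \omega_j$. These curves yield the $2\ell$ semi-arcs of $\cZ(u)$ emanating from $x_0$, tangent to the prescribed rays.

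For parts (ii) and (iii), the same strategy applies, but the angular variable now ranges over $(0,\pi)$, with $\omega = 0$ and $\omega = \pi$ corresponding to the tangent directions along $\partial M$. In the Dirichlet case, the angular profile $a\sin(\ell\omega)$ has zeros in $(0,\pi)$ at $\omega = j\pi/\ell$ for $j=1,\ldots,\ell-1$, yielding exactly $\ell-1$ interior semi-arcs hitting $\partial M$ at $x_0$; the boundary itself is not counted as an interior nodal semi-arc. In the Robin case, the angular profile $b\cos(\ell\omega)$ has zeros in $(0,\pi)$ at $\omega = (2j+1)\pi/(2\ell)$ for $j=0,\ldots,\ell-1$, giving the claimed $\ell$ interior semi-arcs. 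In both cases, the equal angular spacing of the rays translates into the dissection statement.

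The main delicate point is to control the $\cO(r^{\ell+1})$ remainder well enough to invoke the implicit function theorem, and to exclude spurious nodal components hiding inside it. This is handled by the Hadamard-type decomposition mentioned above, combined with the simplicity of the angular zeros: on each closed angular interval between consecutive zeros, $\widetilde{u}(r,\omega)$ keeps a definite sign for $r$ small, so no additional nodal arcs can appear. A secondary technical issue for parts (ii) and (iii) is the boundary regularity needed to carry out the polar extension up to $\omega \in \{0,\pi\}$, but this is already absorbed in the derivation of expansions \eqref{E-sinexp} and \eqref{E-cosexp} in Theorem~\ref{T-nodinfo}, so no further work is needed beyond restricting the interior argument to the half-disk.
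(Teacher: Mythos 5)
Your proposal is correct and follows essentially the same route as the paper's own proof: Assertion~(i) is established in Appendix~\ref{S-lsns} by exactly this mechanism --- pass to polar coordinates, identify the leading harmonic term $r^{\ell}\sin(\ell(\omega-\omega_0))$, factor the $\cO(r^{\ell+1})$ remainder as $r^{\ell+1}$ times a bounded smooth function of $(r,\omega)$, and combine a uniform sign bound away from the angular zeros with a monotonicity/implicit-function-theorem argument near each simple angular zero to produce exactly one smooth semi-arc per ray and exclude spurious components. For Assertions~(ii)--(iii) the paper simply cites the references in \cite{GiHe2019}, and your adaptation of the same argument to the half-disk in boundary conformal coordinates is precisely what is indicated there and in Remark~\ref{R-Cb}.
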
%

Assertion~(i) is proved in Section~\ref{S-lsns}.  For Assertions~(ii) and (iii), see Section~\ref{S-lsbs}, or the references in \cite[Appendix]{GiHe2019}. \medskip

Points at which nodal arcs meet in the interior $\inte(M)$, and points at which the nodal set hits the boundary $\partial M$ play an important role in the global understanding of nodal sets. The terminology in the following definitions comes from the framework of partitions. 

\index{2-S@$\cS(u)$}
\index{2-S@$\cS(u)$!$\cS_{\mathrm{i}}(u), \cS_{\mathrm{b}}(u)$}
\begin{definitions}[Terminology]\label{D-evp2-0}
Define the \emph{singular points} \index{Singular point} of an eigenfunction $u$ as follows. \index{Singular point} \index{Singular point!interior}
\index{Singular point!boundary}
\begin{enumerate}[(i)]
\item A point $x_0 \in \inte(M)$ is an \emph{interior singular point} of $u$ if and only if it is an interior critical zero; the set of interior singular points of $u$ is denoted by $\cS_{\mathrm{i}}(u)$. The \emph{index} $\nu(u,x_0)$
    \index{Index $\nu(u,x)$} \index{1-nu@$\nu(u,x)$}
    of the interior singular point $x_0$ is defined as the number of nodal semi-arcs emanating from $x_0$, $\nu(u,x_0) = 2\, \ord(u,x_0)$.
  \item A point $x_0 \in \partial M$ is a \emph{boundary singular point} of $u$ if and only if the nodal set $\cZ(u)$ hits the boundary $\partial M$ at $x_0$; the set of boundary singular points of $u$ is denoted by $\cS_{\mathrm{b}}(u)$. The \emph{index} $\rho(u,x_0)$ \index{Index $\rho(u,x)$} \index{1-rho@$\rho(u,x)$} of the boundary singular point $x_0$ is defined as the number of nodal semi-arcs hitting $\partial M$ at $x_0$.\\
      If $u$ is a Dirichlet eigenfunction, $\rho(u,x_0) = (\ord(u,x_0) - 1)$; if $u$ is a Robin eigenfunction, $\rho(u,x_0) = \ord(u,x_0)$.
\end{enumerate}
The set $\cS(u)$ of singular points of $u$ is the set $\cS(u) = \cS_{\mathrm{i}}(u) \cup \cS_{\mathrm{b}}(u)$.
\end{definitions}%

\begin{remark}\label{R-evp2-2}
The order of vanishing is \emph{semi-continuous} in the following sense. Let $\set{v_n}$ be a sequence of functions which converges to some $v$ uniformly in $C^{k}$ for some $k \ge 1$. Let $\set{x_n}$ be a sequence of points which converges to some $x$ in $M$. Assume that $\ord(v_n,x_n) \ge k$ for all $n$. Then, $\ord(v,x) \ge k$. Since they are defined in terms of order of vanishing, the indices $\nu$ and $\rho$ inherit this property.
\end{remark}%

 From the global point of view,  the set $\cS(u)$ is finite, and the components of its complement $\cZ(u) \sm \cS(u)$ are smooth $1$-dimensional submanifolds homeomorphic to either circles or open intervals whose boundaries consist of singular points.

\begin{definitions}[Terminology]\label{D-evp2-01}\phantom{}
\begin{enumerate}[(i)]
\item We call a circle-like component of $ \cZ(u)\sm \cS(u)$  \emph{a nodal circle} \index{Nodal circle}; we call an interval-like component,  \emph{a nodal interval}. \index{Nodal interval}
\item Let $I_{x,y}$ be a nodal interval with boundary $\set{x,y} \subset \cS(u)$. In this case, the closed nodal interval  $\bar{I}_{x,y} := I_{x,y}\cup \set{x,y}$  can be parametrized by arc-length, from $x$ to $y$, by $\gamma_{x,y} : [0,L_{x,y}] \to M$, with $\gamma_{x,y}(0) = x$ and $\gamma_{x,y}(L_{x,y})=y$ or, from $y$ to $x$, by $\gamma_{y,x}$ given by $\gamma_{y,x}(t) = \gamma_{x,y}(L_{x,y}-t)$. The semi-tangents to $\bar{I}_{x,y}$ at $x$ and $y$ are given by the local structure theorem. The point $x$ (resp. $y$) might be an interior singular point, or a boundary singular point. If $x=y$, we say that the component $\bar{I}_{x,x}$ is \emph{a nodal loop} \index{Nodal loop} at $x$. In this case the loop is not a smooth circle, but a continuous, piecewise $C^1$ circle.
\end{enumerate}
\end{definitions}

 From a global point of view, we have the following corollary of the structure theorem.

\begin{corollary}\label{cor:nodinfo}\phantom{}
\begin{enumerate}
  \item The nodal set of $u$ is the union of the finitely many singular points, the nodal circles in the interior of $M$, and the nodal intervals some of which may hit  $\partial M$.
  \item Each component of $\partial M$ is hit by an even number of nodal intervals: if $\Gamma$ is a component of $\partial M$, then
      \begin{equation*}
        \sum_{z \in \cS_{\mathrm{b}}(u) \cap \Gamma\,} \rho(u,z) \in 2\N.
      \end{equation*}
\end{enumerate}
\end{corollary}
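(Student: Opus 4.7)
My plan is to deduce both parts directly from the local normal forms in Theorem \ref{T-nodinfo} and Corollary \ref{cor:nodloc}.

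For item (1), I would first argue that $\cS(u)$ is discrete. The local expansions in Theorem \ref{T-nodinfo} show that, near any singular point $x_0$, the gradient $\nabla u$ vanishes only at $x_0$ itself; by compactness of $M$ the set $\cS(u)$ is therefore finite. At every point of $\cZ(u)\setminus \cS(u)$ the gradient is nonzero, so the implicit function theorem yields that $\cZ(u)\setminus \cS(u)$ is a smooth embedded $1$-manifold contained in $\inte(M)$. Its connected components are diffeomorphic either to $S^1$ (nodal circles) or to open intervals (nodal arcs); any limit point of a nodal arc in $M$ lies in $\cZ(u)$ and, if regular, would contradict the $1$-manifold structure of the arc, so all such limit points lie in $\cS(u)$. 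Finiteness of the collection of components follows because only finitely many semi-arcs emanate from each of the finitely many singular points (by Corollary \ref{cor:nodloc}), while any accumulation of interior nodal circles would produce a singular accumulation point incident to infinitely many components, a contradiction.

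For item (2), I plan to use a tubular neighbourhood argument that treats the three boundary conditions uniformly. Fix a component $\Gamma$ of $\partial M$ and parametrise a tubular neighbourhood $U\cong \Gamma\times[0,\delta)$ by the inward geodesic normal coordinate $s$, writing $\Gamma_\epsilon := \Gamma\times\{\epsilon\}$ for $\epsilon\in(0,\delta)$. Using item (1), I would shrink $\delta$ so that the only components of $\cZ(u)$ meeting $U$ are the finitely many semi-arcs emanating from the points of $\cS_b(u)\cap \Gamma$: all remaining nodal circles and nodal arcs form a compact set at positive distance from $\Gamma$. By Corollary \ref{cor:nodloc}, each such semi-arc departs from $z$ transversely to $\Gamma$, hence, for $\delta$ small enough, it is monotone in $s$ on $U$ and meets each $\Gamma_\epsilon$ transversely in exactly one point. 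Consequently
\begin{equation*}
\#\bigl(\cZ(u)\cap \Gamma_\epsilon\bigr) \;=\; \sum_{z \in \cS_b(u)\cap\Gamma} \rho(u,z),
\end{equation*}
with all crossings transverse. The restriction $u|_{\Gamma_\epsilon}$ therefore changes sign at each crossing, and since $\Gamma_\epsilon$ is a topological circle the total number of sign changes of a continuous function on it is even, so the sum lies in $2\N$.

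I expect the principal technical point to be the compactness/separation step that forces $U$ to avoid every nodal component not emanating from $\Gamma$; this relies essentially on the finiteness statement of item (1). Once it is in place, the transversality of each semi-arc with $\Gamma_\epsilon$ and the parity argument on the circle are direct consequences of the local structure theorems.
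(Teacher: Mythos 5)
Your proof is correct, and for Assertion~(2) it takes a genuinely different (though closely related) route from the paper. The paper does not prove Assertion~(1) at all (it is dismissed as well known), and for Assertion~(2) it runs the parity argument \emph{on the boundary circle $\Gamma$ itself}: using the local normal forms \eqref{E-sinexp}--\eqref{E-cosexp}, one checks that the trace function $\breve{u}$ of Notation~\ref{N-evp-dr} ($\partial_\nu u$ in the Dirichlet case, $u|_{\partial M}$ in the Robin case) picks up a sign $(-1)^{\rho(u,z_j)}$ each time one passes a boundary singular point $z_j$, and a full turn around $\Gamma$ forces the product of these signs to be $+1$. You instead push the circle into the interior to a parallel curve $\Gamma_\epsilon$ and count transverse sign changes of $u$ itself. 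The two arguments are the same parity principle executed at different locations; yours buys a uniform treatment of the three boundary conditions (no case split, no appeal to Lemma~\ref{L-breve}-type information about $\breve u$), while the paper's avoids the tubular-neighbourhood and separation machinery entirely and is correspondingly shorter.

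One step in your argument deserves to be made explicit. Transversality of a semi-arc at its endpoint $z\in\Gamma$ only gives monotonicity of $s$ on an \emph{initial} segment of that semi-arc; it does not by itself prevent the same nodal arc from re-entering the collar $U=\Gamma\times[0,\delta)$ later without touching $\Gamma$, which would create extra intersections with $\Gamma_\epsilon$. The fix is the same compactness argument you already use for the other components: write each nodal arc hitting $\Gamma$ as (initial segments near its boundary endpoints) $\cup$ (a compact middle piece disjoint from $\Gamma$), note that the middle piece is at positive distance from $\Gamma$, and shrink $\delta$ accordingly. With that addition the identity $\#\bigl(\cZ(u)\cap\Gamma_\epsilon\bigr)=\sum_{z\in\cS_b(u)\cap\Gamma}\rho(u,z)$ holds and the parity conclusion follows as you state.
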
%

\begin{proof} The first assertion is well-known. We give the proof of the second assertion for completeness.\smallskip

\emph{\noid Dirichlet case.}~ The component $\Gamma$ is topologically a circle which meets $\cS_{\mathrm{b}}(u)$ at finitely many points $z_j$, $1 \le j \le k$, which are precisely the zeros of the normal derivative $\partial_{\nu}u(z)$. Choosing a parametrization $z$ of $\Gamma$ and taking  the local structure of $u$ at each $z_j$ into account, we see that each time $z$ passes some $z_j$, the sign of $\partial_{\nu}u$ is multiplied by $(-1)^{\rho(z_j)}$. Running through $\Gamma$ once, we must have
 \[\Pi_j (-1)^{\rho(z_j)}=(-1)^{\sum_j \rho(u,z_j)}=1.\]

\emph{\noid Robin case.}~ The proof is similar, actually simpler.
\end{proof}%

\subsection{Eigenfunctions with prescribed singular points}\label{SS-evp3}

\phantom{}  In order to bound multiplicities, we will use eigenfunctions with prescribed  singular points of sufficiently high index. Their existence is given by the following lemmas.  These lemmas appear in one form or another in the previous papers on eigenvalue multiplicity bounds, \cite[Theorem~3.4]{Chen1976}, \cite[Theorem~2.1]{Bess1980}, \cite[Lemma~4]{Nadi1987}, \cite[Proposition~2]{HoHN1999}, \cite[Lemma~2.9]{HoMN1999}.\medskip

The first lemma prescribes an interior singular point.

\begin{lemma}\label{L-zeroi} Let $M$ be a compact surface (with or without boundary), and $x$ an interior point. Let $U$ be a linear subspace of an eigenspace of $-\Delta + V$, see \eqref{E-evp-2c} or \eqref{E-evp-2bc}, with $\dim U = m \ge 2$.
\begin{enumerate}[(i)]
\item There exists a function $0 \not = u\in U$ such that $x$ is a singular point of $u$ with index $\nu(u,x) \ge 2\,\floor{\frac m2}$ (the integer part of $\frac m2$), equivalently with $\ord(u,x) \ge \floor{\frac m2}$.
\item Furthermore, if $m$ is odd, there exist at least two linearly independent such functions.
\end{enumerate}
\end{lemma}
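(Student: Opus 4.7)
The plan is to reduce the statement to an elementary linear-algebra dimension count by imposing vanishing conditions on the low-order Taylor coefficients at $x$. Fix local polar coordinates $(r,\omega)$ centered at $x$. For each $\ell\ge 1$, let $\tau_\ell \colon C^\infty(M) \to \R^2$ be the linear map sending a smooth function $u$ to the pair $(a_\ell,b_\ell)$ of coefficients of the harmonic polynomial $r^\ell(a_\ell\sin(\ell\omega) + b_\ell\cos(\ell\omega))$, extracted as the harmonic part of the degree-$\ell$ homogeneous component of the Taylor polynomial of $u$ at $x$; and set $\tau_0(u) := u(x) \in \R$.

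Setting $k := \floor{m/2}$, I will consider the linear map
\[
\Phi := (\tau_0,\tau_1,\ldots,\tau_{k-1}) \colon U \longrightarrow \R \oplus (\R^2)^{k-1},
\]
whose target has dimension $1 + 2(k-1) = 2k-1$. The rank-nullity theorem yields
\[
\dim \ker \Phi \;\ge\; m - (2k-1) \;=\;
\begin{cases}
1 & \text{if } m = 2k, \\
2 & \text{if } m = 2k+1.
\end{cases}
\]
Any nonzero $u \in \ker \Phi$ is a nontrivial eigenfunction (since $U$ is contained in an eigenspace), so by the unique continuation principle invoked in Theorem~\ref{T-nodinfo} it has a finite order of vanishing $\ord(u,x)=\ell$ at $x$. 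Assertion (i) of that same theorem tells us that the degree-$\ell$ Taylor component of $u$ is a \emph{nonzero} harmonic polynomial $r^\ell(a\sin(\ell\omega)+b\cos(\ell\omega))$, whence $\tau_\ell(u)\ne 0$. But $\Phi(u)=0$ forces $\tau_j(u)=0$ for all $0\le j \le k-1$, so we must have $\ell \ge k = \floor{m/2}$. This gives $\nu(u,x) = 2\,\ord(u,x) \ge 2\floor{m/2}$, proving (i); part (ii) follows immediately from $\dim\ker\Phi\ge 2$ in the odd case, any two linearly independent vectors of $\ker\Phi$ yielding two linearly independent eigenfunctions with the required property.

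The argument is essentially a clean dimension count, and the real content has already been packaged into Theorem~\ref{T-nodinfo}. I do not anticipate a substantive obstacle here: the theorem does precisely the two jobs needed, namely (a) forcing the leading Taylor term of a nontrivial eigenfunction to lie in the two-dimensional space of harmonic polynomials of the relevant degree --- which is what shrinks the target of $\Phi$ from the full jet dimension $\frac{k(k+1)}{2}$ down to $2k-1$, and makes the dimension count sharp enough --- and (b) supplying unique continuation, so that a nonzero $u\in\ker\Phi$ cannot vanish to infinite order at $x$. The only bookkeeping care required is the split between the even and odd cases, which is already encoded in the identity $m - (2\floor{m/2}-1) \in \{1,2\}$.
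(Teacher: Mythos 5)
Your proof is correct, and it takes a genuinely different route from the paper's. The paper proves Lemma~\ref{L-zeroi} by induction on $m$: at each step it takes two (or three) functions already known to vanish to order $\ge p$ at $x$, observes via Theorem~\ref{T-nodinfo}~(i) that their leading Taylor terms lie in the $2$-dimensional space of degree-$p$ harmonic homogeneous polynomials, and forms a nontrivial linear combination killing the leading term. You instead package the whole argument into a single rank--nullity count on the map $\Phi$ recording the harmonic parts of the jets of degree $0,\dots,k-1$, whose target has dimension $1+2(k-1)=2k-1$; the even/odd dichotomy then falls out of $m-(2k-1)\in\{1,2\}$, and assertion (ii) comes for free as a $2$-dimensional subspace every nonzero element of which vanishes to order $\ge k$. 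Both arguments rest on exactly the same structural input (Bers's theorem, that the leading term of a nontrivial eigenfunction at an interior point is a \emph{nonzero harmonic} homogeneous polynomial, hence confined to a $2$-dimensional space in each degree $\ge 1$), and your deduction that $\Phi(u)=0$ forces $\ord(u,x)\ge k$ is sound: unique continuation gives a finite vanishing order $\ell$, and if $\ell<k$ the leading component would be its own (nonzero) harmonic part, contradicting $\tau_\ell(u)=0$. Your version is essentially the classical argument of Cheng and Besson and is shorter; the only extra bookkeeping it requires, which you handle implicitly, is that the ``harmonic part of the degree-$\ell$ homogeneous Taylor component'' is a well-defined linear functional of $u$ in a fixed coordinate chart, whereas the paper's induction only ever inspects leading terms.
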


\begin{proof} We use induction on $m$.  Recall that $\nu(u,x) = 2\ord(u,x)$. The assertion is clear when $m=2$. Assume $m=3$, and let $\set{u_1,u_2,u_3}$ be a basis of $U$. Then, we can find $0 \not = v_1 \in \spanv{u_1,u_2}$ such that $\ord(v_1,x) \ge 1$. The subspace $V_1$ of $U$ orthogonal\footnote{ Orthogonality is meant with respect to the inner product induced by the $L^2$-inner product of eigenfunctions.} to $v_1$ has dimension $2$, and hence there exists $0 \not = v_2 \in V_1$ such that $\ord(v_2,x) \ge 1$. Then $v_1$ and $v_2$ are two linearly independent functions in $U$ vanishing at order at least $1$ at $x$.\medskip

Assume that the lemma holds for $2p$ and $(2p+1)$ for some $p \ge 1$. Let $U$ be linear subspace of an eigenspace with dimension $(2p+2)$, and basis $\set{u_1,\ldots,u_{2p+2}}$. By the induction hypothesis, in the subspace $V_1 := \spanv{u_1,\ldots,u_{2p+1}}$, we can find two linearly independent functions $v_1, v_2$ such that $\ord(v_i,x) \ge p$. If one of them vanishes at order at least $(p+1)$, the assertion for $U$ is satisfied. If not, by Theorem~\ref{T-nodinfo}~(i), there exist $(a_i,b_i)$, $i=1,2$, with $a_i^2+b_i^2 \not = 0$, such that
\begin{equation*}
v_i = r^{p} \big( a_i \sin(p\, \omega) + b_i \cos(p\, \omega) \big) +\cO(r^{p+1}).
\end{equation*}

The subspace $V_2$ of $U$ orthogonal to $v_1$ and $v_2$ has dimension $2p$ and hence, there exists $0 \not = v_3 \in V_2$ such that $\ord(v_3,x) \ge p$. If $v_3$ vanishes at order at least $(p+1)$ at $x$, we are done. Otherwise, there exist $(a_3,b_3)$, with $a_3^2+b_3^2 \not = 0$ such that
\begin{equation*}
v_3 = r^{p} \big( a_3 \sin(p\, \omega) + b_3 \cos(p\, \omega) \big) +\cO(r^{p+1}).
\end{equation*}

The functions $r^p \big( a \sin(p\, \omega) + b \cos(p\, \omega) \big)$ are the homogeneous harmonic polynomials of degree $p$ in $\R^2$, a vector space of dimension $2$. The three polynomials $r^{p} \big( a_i \sin(p\, \omega) + b_i \cos(p\, \omega)\big)$, $i \in \set{1,2,3}$ must be linearly dependent, and hence there exists a nontrivial linear combination of $v_1,v_2,v_3$ which vanishes at order at least $(p+1)$ at $x$.\smallskip

Let $U$ be an eigenspace with dimension $(2p+3)$, with basis $\set{u_1,\ldots,u_{2p+3}}$. By the previous proof, in the subspace $V_1 := \spanv{u_1,\ldots,u_{2p+2}}$, there exists $0 \not = v_1$ such that $\ord(v_1,x) \ge (p+1)$. For the same reason, in the subspace $V_2$ orthogonal to $v_1$, there exists $0 \not = v_2$ such that $\ord(v_2,x) \ge (p+1)$. The functions $v_1, v_2$ are two linearly independent functions in $U$ vanishing at order at least $(p+1)$ at $x$.\\
The proof of Lemma~\ref{L-zeroi} is complete.
\end{proof}

The next lemmas prescribe respectively one or two boundary singular points.

\begin{lemma}\label{L-zero1} Let $M$ be a compact surface with
boundary, and $x \in \partial M$. Let $U$ be a linear subspace of an eigenspace of $-\Delta + V$ , see \eqref{E-evp-2bc}, with $\dim U = m \ge 2$. Then, there
exists a function $0 \not = u\in U$ such that $x$ is a boundary singular point of $u$ with index $\rho(u,x) \ge (m-1)$.\index{Index $\nu(u,x)$}
\end{lemma}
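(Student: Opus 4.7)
The plan is to mimic the induction in the proof of Lemma~\ref{L-zeroi}, but to exploit the fact that at a boundary point the local normal form of an eigenfunction (Theorem~\ref{T-nodinfo}~(ii) and (iii)) involves only one free coefficient per order, instead of two as at an interior point. This extra rigidity is what allows the index to reach $(m-1)$ from only $m$ dimensions.

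Concretely, I would set up the filtration
\begin{equation*}
U = U_0 \supseteq U_1 \supseteq U_2 \supseteq \cdots, \qquad U_k = \set{u \in U ~|~ \ord(u,x) \ge k},
\end{equation*}
with the convention $\ord(0,x) = +\infty$. In the Dirichlet case, $U_0 = U_1$ since every $u \in U$ vanishes on $\partial M$, so $\ord(u,x)\ge 1$ automatically; in the Robin case, we only know $U_0 = U$. The key observation is that each inclusion $U_{k+1}\subseteq U_k$ has codimension at most one: if $u \in U_k$ and $\ord(u,x) = k$, then by Theorem~\ref{T-nodinfo}~(ii) (resp.~(iii)) the leading term in polar coordinates centered at $x$ is $a\, r^k \sin(k\omega)$ (resp.~$b\, r^k \cos(k\omega)$), and the assignment $u \mapsto a$ (resp.~$u\mapsto b$) extends to a well-defined linear functional $L_k\colon U_k \to \R$ whose kernel is exactly $U_{k+1}$. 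Linearity is immediate from the uniqueness of the Taylor expansion, and well-definedness follows because there is a \emph{single} harmonic polynomial (odd or even) realizing the leading term.

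Iterating the codimension-one estimate, I get $\dim U_{k+1}\ge \dim U_k - 1$ for every $k\ge 0$ (resp.~$k\ge 1$ in the Dirichlet case, where $U_0=U_1$). Hence:
\begin{itemize}
\item[$\diamond$] In the Robin case, $\dim U_{m-1} \ge m - (m-1) = 1$, so there exists $0\not=u \in U$ with $\ord(u,x)\ge m-1$, and thus $\rho(u,x) = \ord(u,x) \ge m-1$.
\item[$\diamond$] In the Dirichlet case, $\dim U_{m} \ge m - (m-1) = 1$, so there exists $0\not=u \in U$ with $\ord(u,x)\ge m$, and thus $\rho(u,x) = \ord(u,x) - 1 \ge m-1$.
\end{itemize}
In either case the nodal set $\cZ(u)$ actually hits $\partial M$ at $x$, so $x$ is a genuine boundary singular point in the sense of Definition~\ref{D-evp2-0}.

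The main (minor) obstacle I anticipate is the careful verification that $L_k$ is indeed linear, i.e. that the leading coefficient of a linear combination is the sum of the leading coefficients whenever all summands have order exactly $k$, and that it is zero precisely when $u \in U_{k+1}$. This is routine once one remembers that in Theorem~\ref{T-nodinfo} the angle $\omega$ is chosen intrinsically (tangent to $\partial M$ at $x_0$), so the representation $a\sin(k\omega)$, resp.~$b\cos(k\omega)$, of the leading term is canonical. Compared with Lemma~\ref{L-zeroi}, no ``three-into-two'' trick is needed here because the space of allowed leading homogeneous harmonics is one-dimensional rather than two-dimensional, which is precisely why the lemma yields index $(m-1)$ instead of roughly $m/2$.
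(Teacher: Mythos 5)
Your proof is correct and rests on exactly the same idea as the paper's: at a boundary point the admissible leading harmonic polynomial ($a\,r^k\sin(k\omega)$ in the Dirichlet case, $b\,r^k\cos(k\omega)$ in the Robin case) spans a one-dimensional space, so passing from order $\ge k$ to order $\ge k+1$ costs at most one dimension. The paper packages this as an induction on $m$ (choose $v_1$ in a hyperplane, then $v_2$ in its orthogonal complement, and form $a_2v_1-a_1v_2$ to raise the order), whereas you phrase it as a filtration with codimension-one steps via the leading-coefficient functional $L_k$; these are the same argument, and your bookkeeping of the Dirichlet shift $U_0=U_1$ correctly accounts for the relation $\rho(u,x)=\ord(u,x)-1$ there.
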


\begin{proof} We use induction on $m$.  Recall that $\rho(u,x) = (\ord(u,x) - 1)$ for Dirichlet eigenfunctions, resp. $\rho(u,x) = \ord(u,x)$ for Robin eigenfunctions. \smallskip

\emph{\noid Dirichlet boundary condition.}~ When $m=2$, the assertion is clear. Assume it is true for some $m\ge 2$. Let $U$ be a linear subspace of an eigenspace with dimension $(m+1)$, and basis $\set{u_1,\ldots,u_{m+1}}$. Consider the subspace $V_1 = \spanv{u_1,\ldots,u_m}$. By the induction hypothesis, there exists $0 \not = v_1 \in V_1$ such that $\ord(v_1,x) \ge m$. If $v_1$ vanishes at order at least $(m+1)$, we are done. Otherwise, by Theorem~\ref{T-nodinfo}, Equation~\eqref{E-sinexp}, there exists $a_1\not = 0$ such that, in local polar coordinates at $x$,
\begin{equation*}
v_1(z)=a_1\,r^m \sin(m \,\omega) +\cO(r^{m +1}).
\end{equation*}
The subspace $V_2=\set{u\in U \mid u \perp v_1}$ orthogonal to $v_1$ has dimension $m$, and hence there exists $0 \not = v_2 \in V_2$ such that $\ord(v_2,x) \ge m$. If $v_2$ vanishes at order at least $(m+1)$, we are done. Otherwise, as above we can write
\begin{equation*}
v_2(z)=a_2\,r^m \sin(m\, \omega) +\cO(r^{m +1})
\end{equation*}
for some $a_2 \not = 0$, and hence the linear combination $v = a_2 v_1 - a_1 v_2$ vanishes at order at least $(m+1)$. \quad \qedc \smallskip

\emph{\noid Robin boundary condition.}~ When $m=2$, the assertion is clear. Assume it is true for some $m\ge 2$. Let $U$ be a linear subspace of an eigenspace with dimension $(m+1)$, with basis $\set{u_1,\ldots,u_{m+1}}$. Consider the subspace $U_1 = \spanv{u_1,\ldots,u_m}$.\ By the induction hypothesis, there exists $0 \not = v_1 \in U_1$ such that $\ord(v_1,x) \ge (m-1)$. If $v_1$ vanishes at order at least $m$, we are done. Otherwise, by Theorem~\ref{T-nodinfo},  Equation~\eqref{E-cosexp}, there exists $b_1\not = 0$ such that, in local polar coordinates at $x$,
\begin{equation*}
v_1(z)=b_1\,r^m \cos((m-1) \omega) +\cO(r^{m+1}).
\end{equation*}
We can then consider the subspace $U_2$ orthogonal to $v_1$ in $U$, and conclude by arguing as above. \quad \qedc \\
The proof of Lemma~\ref{L-zero1} is complete.
\end{proof}%

\begin{lemma}\label{L-zero2} Let $M$ be a compact surface with
boundary, and $x, y \in \partial M$, with $x \not = y$. Let $U$ be a linear subspace of an eigenspace of $-\Delta + V$, see \eqref{E-evp-2bc}, with $\dim U = m \ge 3$. Then, there exists a function $0 \not = u\in U$ such that $x$ and $y$ are boundary singular points of $u$ with indices $\rho(u,x) \ge (m-2)$ and $\rho(u,y) \ge 1$.
\end{lemma}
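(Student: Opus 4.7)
The plan is to reduce Lemma~\ref{L-zero2} directly to Lemma~\ref{L-zero1}, by first cutting the space $U$ by a single linear condition that forces a mild singularity at $y$, and then applying Lemma~\ref{L-zero1} to the resulting codimension-$1$ subspace at the point $x$.

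More precisely, I would treat the Dirichlet and Robin cases in parallel. In the Robin case, the condition $\rho(u,y) \ge 1$ is equivalent to $\ord(u,y) \ge 1$, i.e.\ $u(y) = 0$, which is a single linear condition on $U$. In the Dirichlet case, every $u \in U$ already vanishes on $\partial M$, so $\ord(u,y) \ge 1$ holds automatically, together with the vanishing of the tangential derivative at $y$; hence $\rho(u,y) \ge 1$ is equivalent to $\ord(u,y) \ge 2$, which in turn is equivalent to the normal derivative condition $\partial_\nu u(y) = 0$, again a single linear condition on $U$. In both boundary conditions, define
\begin{equation*}
U_1 := \set{u \in U ~|~ u \text{ satisfies the above linear condition at } y}.
\end{equation*}
Then $U_1$ is a linear subspace of $U$ of codimension at most one, so $\dim U_1 \ge m-1 \ge 2$, using the hypothesis $m \ge 3$.

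Next, I would apply Lemma~\ref{L-zero1} to the subspace $U_1$ at the boundary point $x$. Since $\dim U_1 \ge m-1 \ge 2$, there exists $0 \ne u \in U_1$ such that $x$ is a boundary singular point of $u$ with $\rho(u,x) \ge (m-1) - 1 = m-2$. By construction $u \in U_1$, so the chosen linear condition at $y$ is satisfied and therefore $\rho(u,y) \ge 1$, as required. The function $u$ is nontrivial, and both $x$ and $y$ are boundary singular points with the prescribed lower bounds on their indices.

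I do not expect a real obstacle here: the only delicate point is the bookkeeping for the Dirichlet case, where one must note that the vanishing of $u$ on $\partial M$ already gives $\ord(u,y) \ge 1$ and kills the tangential derivative, so that a single scalar condition ($\partial_\nu u(y) = 0$) suffices to raise the order to at least $2$ and hence $\rho(u,y)$ to at least $1$. Once this is checked via Theorem~\ref{T-nodinfo}~(ii)--(iii), the reduction to Lemma~\ref{L-zero1} is immediate, and the compatibility of the dimension bound $m-1 \ge 2$ with the hypothesis $m \ge 3$ of the lemma closes the argument.
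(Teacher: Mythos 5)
Your proof is correct and follows essentially the same route as the paper: both arguments impose the single linear condition at $y$ (namely $\partial_\nu u(y)=0$ in the Dirichlet case, $u(y)=0$ in the Robin case) to obtain a subspace of dimension at least $m-1\ge 2$, and then apply Lemma~\ref{L-zero1} at $x$. Your handling of the codimension (allowing the condition to be trivially satisfied by all of $U$) matches the paper's case split in the Dirichlet argument.
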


\begin{proof} \phantom{}

\emph{\noid Dirichlet boundary condition.}~ Choose $\set{u_1,\ldots, u_m}$ a basis of $U$. Looking at a general element $u=\sum \alpha_j \phi_j$ in $U$, the condition at $y$ reads
\[
\sum_{j=1}^m  \alpha_j (\partial_\nu \phi_j)(y) =0.
\]
There are two cases.
\begin{enumerate}[$\diamond$]
\item If $\partial_\nu \phi_j(y) =0$ for all $j$, the condition at $y$ is satisfied for any $u\in U$.
\item If $\partial_\nu \phi_j(y) \neq 0$ for some $j$, then there exists a subspace $U'\subset U$ of dimension $(m-1) \ge 2$ such that the condition at $y$ is satisfied for any $u\in U'$.
\end{enumerate}
Apply Lemma~\ref{L-zero1} with $U$ in the first case and with $U'$ in the second case. \quad \qedc 

\emph{\noid Robin boundary condition.}~ The condition $\rho(u,y) \ge 1$ holds if and only if $u$ vanishes at $y$. Since $m \ge 3$ there exists a linear subspace $U' \subset U$, with $\dim U' \ge (m-1) \ge 2$ such that any $u \in U'$ satisfies $u(y) = 0$. Then, Lemma~\ref{L-zero1} implies that there exists $0 \neq u \in U'$ such that $\rho(u,x) \ge (m-2)$. \quad \qedc \\
The proof of Lemma~\ref{L-zero2} is complete. \end{proof}

\begin{lemma}\label{L-zeroc}
Let $M$ be a compact surface. Let $U$ be a linear subspace of an eigen\-space of $-\Delta + V$, see \eqref{E-evp-2c} or \eqref{E-evp-2bc}.
\begin{enumerate}[(i)]
  \item Let $x \in \inte(M)$, and let $u_1, u_2, u_3$ be three linearly independent functions in $U$, such that $ \nu(u_1,x) = \nu(u_2,x) = \nu(u_3,x) \ge 2$. Then, there exists an eigenfunction $0 \neq u \in \spanop\set{u_1,u_2,u_3}$ such that $\nu(u,x) \ge \nu(u_1,x) + 2$.
  \item Let $x \in \partial M$, and let $u_1, u_2$ be two linearly independent functions in $U$, such that $ \rho(u_1,x) = \rho(u_2,x) \ge 1$. Then, there exists $0 \neq u \in \spanop\set{u_1,u_2}$ such that $\rho(u,x) \ge \rho(u_1,x) + 1$.
\end{enumerate}
\end{lemma}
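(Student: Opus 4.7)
My plan is to reduce both assertions to a linear-algebra argument on the leading-order homogeneous harmonic polynomials provided by the local structure Theorem~\ref{T-nodinfo}. The underlying observation is that the space of admissible such leading forms at $x$ is two-dimensional at an interior point and only one-dimensional at a boundary point (because the boundary condition selects either a pure sine in the Dirichlet case or a pure cosine in the Robin case). Once this is granted, ``too many'' functions with identical vanishing order must have linearly dependent leading parts, and a suitable linear combination cancels those leading parts, bumping up the order of vanishing by one.

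\smallskip

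For Assertion~(i), set $\ell := \ord(u_i,x)$, which is common to $i=1,2,3$. By Theorem~\ref{T-nodinfo}~(i), in local polar coordinates at $x$ each $u_i$ has a Taylor expansion
\[
u_i(r,\omega) = r^{\ell}\bigl(a_i\sin(\ell\omega) + b_i\cos(\ell\omega)\bigr) + \cO(r^{\ell+1}),
\]
with $(a_i,b_i)\neq (0,0)$. The three leading homogeneous harmonic polynomials live in the two-dimensional real vector space $\spanv{r^{\ell}\sin(\ell\omega),\, r^{\ell}\cos(\ell\omega)}$, hence there exists a nontrivial triple $(\alpha_1,\alpha_2,\alpha_3)$ annihilating them. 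The function $u := \alpha_1 u_1 + \alpha_2 u_2 + \alpha_3 u_3$ is nonzero by linear independence of the $u_i$, and its leading term cancels, so $\ord(u,x) \ge \ell+1$ and $\nu(u,x) \ge 2(\ell+1) = \nu(u_1,x)+2$.

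\smallskip

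For Assertion~(ii), set $\ell := \ord(u_i,x)$ (again common to $i=1,2$). Theorem~\ref{T-nodinfo}~(ii)--(iii) provides expansions of the form
\[
u_i(r,\omega) = c_i\, r^{\ell}\, \phi(\ell\omega) + \cO(r^{\ell+1}),\qquad c_i\neq 0,
\]
where $\phi = \sin$ in the Dirichlet case and $\phi=\cos$ in the Robin case. The combination $u := c_2 u_1 - c_1 u_2$ cancels the leading term and is nonzero by linear independence of $u_1, u_2$ together with $c_1,c_2\neq 0$. Hence $\ord(u,x) \ge \ell+1$, which translates into $\rho(u,x) \ge \rho(u_1,x)+1$ via $\rho = \ord - 1$ in the Dirichlet case and $\rho=\ord$ in the Robin case.

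\smallskip

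There is no serious obstacle here: the whole argument is a two-line dimension count applied to the Taylor expansions of Theorem~\ref{T-nodinfo}. The only mildly delicate point is bookkeeping the asymmetry between Dirichlet and Robin ($\rho$ differs from $\ord$ by $1$ in one case and by $0$ in the other), which is absorbed by treating the two subcases in parallel through the common expression $c_i r^{\ell}\phi(\ell\omega)$.
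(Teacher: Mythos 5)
Your proof is correct and follows essentially the same route as the paper: both arguments rest on the fact that the leading homogeneous harmonic term at $x$ lives in a two-dimensional space at an interior point (and is determined up to a scalar at a boundary point by the Dirichlet/Robin condition), so a nontrivial linear combination cancels the leading parts and raises the vanishing order. The translation back from $\ord$ to $\nu$ and $\rho$ is handled the same way in both.
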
%

\begin{proof} Since the index of a singular point can be expressed in terms of the vanishing order, \index{Vanishing order} the lemma follows from Theorem~\ref{T-nodinfo}. Indeed, under the assumption of Assertion~(i), we can write
\begin{equation*}
u_i(z) = p_i(z-x) + \cO(|z-x|^{k+1}),
\end{equation*}
in local coordinates centered at $x$,  where $p_i$ is a nonzero harmonic homogeneous polynomial in two variables,  of degree $k = \ord(u_1,x)$. Since the vector space of such polynomials has dimension $2$, there exist real numbers $\alpha_1, \alpha_2$ and $\alpha_3$, not all of them equal to zero, such that $\alpha_1 \, p_1 + \alpha_2 \, p_2 + \alpha_3 \, p_3 = 0$. It follows that $\alpha_1 \, u_1 + \alpha_2 \, u_2 + \alpha_3 \, u_3$ vanishes at order at least $(k+1)$ at $x$. This proves Assertion~(i). \smallskip

The proof of Assertion~(ii) is similar, using the local forms \eqref{E-sinexp} or \eqref{E-cosexp} depending on the boundary condition, Dirichlet or Robin.
\end{proof}

For later purposes, we introduce the following notation.

\index{2-u@$\breve{u}$}
\begin{notation}\label{N-evp-dr}
Let $u$ be an eigenfunction of $-\Delta + V$ of the compact surface with boundary $M$, see \eqref{E-evp-2bc}. Define the function $\breve{u}$ on $\partial M$ by
\begin{equation}\label{E-evp-dr}
\breve{u} = \left\{
\begin{array}{ll}
u|_{\partial M} & \text{in the Robin case,}\\[5pt]
\partial_{\nu}u & \text{in the Dirichlet case.}
\end{array}
\right.
\end{equation}
Then, for any $y \in \partial M$, $\rho(u,y) \ge 1$ if and only if
$\breve{u}(y) = 0$.
\end{notation}%

The following lemma will also be useful.

\begin{lemma}\label{L-breve}
Let $u$ be an eigenfunction of $-\Delta + V$, see \eqref{E-evp-2bc}.
\begin{enumerate}[(i)]
  \item If $u$ is a Dirichlet eigenfunction, and $y \in \partial M$, then $u$ vanishes at order $k$ at $y$ if and only if the function $\partial_{\nu}u$ vanishes at order $(k-1)$ at $y$ along $\partial M$.
  \item If $u$ is a Robin eigenfunction, and $y \in \partial M$, then $u$ vanishes at order $k$ at $y$ if and only if the function $u|_{\partial M}$ vanishes at order $k$ at $y$ along $\partial M$.
\end{enumerate}
Therefore, the order of vanishing \index{Vanishing order} of the function $\breve{u}$ at some boundary point $y$ is precisely the number $\rho(u,y)$ of nodal  arcs hitting $\partial M$ at $y$.
\end{lemma}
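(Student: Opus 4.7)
The plan is to reduce both assertions to the local Taylor expansions of Theorem~\ref{T-nodinfo}(ii)--(iii) by working in Fermi coordinates $(s,t)$ near $y\in\partial M$: $s$ an arc-length parameter along $\partial M$, $t$ the signed distance to $\partial M$ with $t\ge 0$ inside $M$, so that $\partial_\nu=-\partial_t$ at $t=0$. The polar coordinates $(r,\omega)$ of Theorem~\ref{T-nodinfo} may be chosen so that $s+it=re^{i\omega}$, which matches the requirement that $\omega=0$ be tangent to $\partial M$. Assuming $u$ is Dirichlet with $\ord(u,y)=\ell$, part (ii) of that theorem gives
\begin{equation*}
u(s,t)=a\,\mathrm{Im}\bigl((s+it)^\ell\bigr)+R(s,t),\qquad a\ne 0,\quad R=O(r^{\ell+1}).
\end{equation*}
The Dirichlet identity $u(s,0)\equiv 0$ together with $\mathrm{Im}(s^\ell)=0$ forces $R(s,0)\equiv 0$, so by Hadamard's lemma $R=t\,S$ with $S$ smooth; reading off Taylor coefficients, $R=O(r^{\ell+1})$ then translates into $S=O(r^\ell)$. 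Consequently
\begin{equation*}
\breve{u}(s)=-\partial_t u(s,0)=-a\ell\,s^{\ell-1}-S(s,0)=-a\ell\,s^{\ell-1}+O(s^\ell),
\end{equation*}
which vanishes at $y$ to order exactly $(\ell-1)$.

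In the Robin case, still with $\ord(u,y)=\ell$, part (iii) gives
\begin{equation*}
u(s,t)=b\,\mathrm{Re}\bigl((s+it)^\ell\bigr)+R(s,t),\qquad b\ne 0,\quad R=O(r^{\ell+1}).
\end{equation*}
The $O(r^{\ell+1})$ condition encodes the vanishing of every mixed partial of $R$ at $y$ up to order $\ell$; in particular the tangential restriction $s\mapsto R(s,0)$ vanishes at $s=0$ to order $\ge\ell+1$. Hence
\begin{equation*}
\breve{u}(s)=u(s,0)=b\,s^\ell+O(s^{\ell+1}),
\end{equation*}
which vanishes at $y$ to order exactly $\ell$.

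To finish, I would observe that any Dirichlet or Robin eigenfunction vanishes at any point to some well-defined finite order (by the unique continuation result cited in Theorem~\ref{T-nodinfo}), so the correspondences $\ell\leftrightarrow\ell-1$ (Dirichlet) and $\ell\leftrightarrow\ell$ (Robin) established above are bijections on the possible vanishing orders, which directly yields the two ``if and only if'' statements. The final identity $\ord(\breve{u},y)=\rho(u,y)$ then follows immediately from Definition~\ref{D-evp2-0}, since $\rho(u,y)=\ord(u,y)-1$ in the Dirichlet case and $\rho(u,y)=\ord(u,y)$ in the Robin case. The main technical point is the remainder bookkeeping in the Dirichlet case---extracting the factor $t$ via Hadamard's lemma and controlling the order of $S$---which is exactly what produces the drop of one in vanishing order when passing from $u$ to $\partial_\nu u|_{\partial M}$.
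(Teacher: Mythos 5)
Your proof is correct, but it takes a genuinely different route from the paper's. You take the boundary local structure theorem (Theorem~\ref{T-nodinfo}(ii)--(iii)) as the main input, straighten the boundary with Fermi coordinates, and read off the trace of $\partial_\nu u$ (resp.\ of $u$) from the leading harmonic polynomial plus remainder; the two ``if and only if'' statements then follow because the exact vanishing orders of $u$ and $\breve{u}$ determine each other. The paper instead argues by induction on $k$ directly from the equation: in boundary isothermal coordinates the eigenvalue equation becomes $\Delta v = Av$ on a half-disk, and repeated substitution of $\partial^{(0,2)}v=-\partial^{(2,0)}v+Av$ shows that, once $v$ vanishes to order $\ge k-1$, every $\partial^{(k-2q,2q)}v(0,0)$ equals $(-1)^q\partial^{(k,0)}v(0,0)$ and every $\partial^{(k-2q-1,2q+1)}v(0,0)$ equals $(-1)^q\partial^{(k-1,1)}v(0,0)$; the boundary condition kills one of these two generators and identifies the other with the relevant derivative of $\breve{v}$. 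Your version is shorter but imports the full Bers-type expansion as a black box, which is precisely what the paper's elementary computation avoids; the paper's version is self-contained modulo the choice of isothermal coordinates. Two small remarks on your write-up: the identification of the polar coordinates of Theorem~\ref{T-nodinfo} with Fermi coordinates deserves a word of justification (it works because the metric in Fermi coordinates is Euclidean at the base point, so the corrections only enter the equation at degree $\ell-1$ and the leading Taylor polynomial is still flat-harmonic); and the Hadamard-lemma detour is unnecessary, since $R$ vanishing to order $\ge\ell+1$ already gives that $\partial_t R$ vanishes to order $\ge\ell$, hence $\partial_t R(s,0)=\cO(s^{\ell})$ directly.
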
%

\begin{proof} The proof is by induction on $k$. The equation $\Delta u = (V-\lambda)u$ implies relations between the derivatives of $u$ of degree $k$, evaluated at $y$, assuming that the derivatives of order less than or equal to $(k-1)$ vanish at $y$.\smallskip

More precisely, according to \cite[Section~2]{YaZh2021}, fixing some $y \in \partial M$, we can choose local boundary isothermal coordinates at $y$ transforming  the equation $(-\Delta + V)u = \lambda u$ in a neighborhood of $y$ into the equation
\begin{equation*}
\Delta v= A v \tag{e}
\end{equation*}
in some half-ball $\set{(\xi_1,\xi_2) \in \R^2 \mid \xi_1^2+\xi_2^2 < a^2 \text{~and~}\xi_2 > 0}$, where $0$ is the image of $y$. Here, $\Delta$ is the ordinary Laplacian in the variables $(\xi_1,\xi_2)$, $a$ is some given positive number, $A$ and $v$ are $C^{\infty}$ up to the boundary, and correspond to $(V-\lambda)$ and $u$ respectively.\smallskip

In the proof, we use the following conventions.
\begin{enumerate}[$\diamond$]
  \item The symbol $\equivo{t}$ indicates a trivial identity.
  \item The symbol $\equivo{e}$ indicates an identity which follows from the above identity $(e)$.
  \item The symbol $\equivo{(m)}$ indicates an identity which holds up to a linear combination of derivatives of $v$ of order less than or equal to $m$.
  \item The symbol $\partial^{(p,q)}$ stands for $\frac{\partial^{p+q}}{\partial \xi_1^p\,\partial \xi_2^q}$.
\end{enumerate}

For $k \ge 2$, we have
\begin{equation*}
\begin{array}{lcl}
\partial^{(k-2q,2q)}v & \equivo{t} & \partial^{(k-2q,2q-2)} \partial^{(0,2)} v\\[5pt]
& \equivo{e} & \partial^{(k-2q,2q-2)} \big( -\, \partial^{(2,0)}v + A v\big)\\[5pt]
& \equivo{(k-2)} & -\, \partial^{(k-2q+2,2q-2)}v.
\end{array}%
\end{equation*}

Assuming that $u$ vanishes at order larger than or equal to $(k-1)$ at $(0,0)$, we obtain that
\begin{equation*}
\partial^{(k-2q,2q)}u(0,0) = (-1)^q \partial^{(k,0)}u(0,0), \text{~for~} q \in \set{0,1,\ldots,\floor{\frac k2}}. \tag{a}
\end{equation*}

Similarly,
\begin{equation*}
\begin{array}{lcl}
\partial^{(k-2q-1,2q+1)}v & \equivo{t} & \partial^{(k-2q-1,2q-1)} \partial^{(0,2)} v\\[5pt]
& \equivo{e} & \partial^{(k-2q-1,2q-1)} \big( -\, \partial^{(2,0)}v + A v\big)\\[5pt]
& \equivo{(k-2)} & -\, \partial^{(k-2q+1,2q-1)}v.
\end{array}%
\end{equation*}

Assuming that $u$ vanishes at order larger than or equal to $(k-1)$ at $(0,0)$, we obtain that
\begin{equation*}
\partial^{(k-2q-1,2q+1)}u(0,0) = (-1)^q \partial^{(k-1,1)}u(0,0), \text{~for~} q \in \set{0,1,\ldots,\floor{\frac{k-1}{2}}}. \tag{b}
\end{equation*}

\noid \emph{Dirichlet case.}~ In this case, $\breve{v}(\xi_1) = \partial^{(0,1)}v(\xi_1,0)$. Since $v(\xi_1,0) \equiv 0$, Equation~(a) implies that
$\partial^{(k-2q,2q)}v(0,0) = 0$ for all $q \in \set{0,\ldots,\floor{\frac k2}}$. If $v$ vanishes at order greater then or equal to $(k-1)$, Equation~(b) implies that
$\partial^{(k-2q-1,2q+1)}v(0,0) = (-1)^q \partial^{(k-1)}\breve{v}(0)$ for all $q \in \set{0,\ldots,\floor{\frac{k-1}{2}}}$. It follows that if $v$ vanishes at order greater than of equal to $(k-1)$, then $v$ vanishes at order greater than or equal to $k$ if and only if $\partial^{(k-1)}\breve{v}(0) = 0$.\medskip

\noid \emph{Robin case.}~ In this case, $\breve{v}(\xi_1) = v(\xi_1,0)$. Assuming that $v$ vanishes at order at least $(k-1)$, Eq.~(a) implies that
$\partial^{(k-2q,2q)}v(0,0) = (-1)^q \partial^{k}\breve{v}(0)$. Since $\partial^{(0,1)}v(\xi_1,0) \equiv B(\xi_1) u(\xi_1,0)$ (Robin condition), Equation~(b) implies that\\ $\partial^{(k-2q-1,2q+1)}u(0,0) = 0$. Therefore, if $v$ vanishes at order at least $(k-1)$ at $(0,0)$, then $v$ vanishes at order at least $k$ if and only if $\partial^{k}\breve{v}(0)=0$.\medskip

We have proved that $v$ vanishes at order at least $k$ (resp. equal to $k$) at $(0,0)$ if and only if $\breve{v}$ vanishes at least at order $\rho(v,(0,0))$ (resp. equal to $\rho(v,(0,0))$) at $(0,0)$.
\end{proof}

\subsection{A global property of nodal sets}\label{SS-gpns}

\begin{lemma}\label{L-hdn}
Let $(M,g)$ be a compact Riemannian surface. Let $w_n, w : M \to \R$ be continuous functions with zero sets $K_n := w_n^{-1}(0)$ and $K := w^{-1}(0)$. Assume that $w_n$ converges to $w$ uniformly.
\begin{enumerate}[(i)]
  \item The limit points of the sequence $\set{K_n}$ with respect to the Hausdorff distance \index{Hausdorff distance} associated with the Riemannian distance of $(M,g)$ are compact and contained in $K$. They are connected if the sets $K_n$ are connected.
  \item If $K_n, K$ are nodal sets of eigenfunctions of $-\Delta + V$, then the sequence $\set{K_n}$ converges to $K$ in the Hausdorff distance.
\end{enumerate}
\end{lemma}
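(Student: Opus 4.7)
My plan is to treat part~(i) by soft Hausdorff-hyperspace arguments and to reduce part~(ii) to the \emph{sign-change} property of nodal sets of eigenfunctions, supplied by the local structure theorem (Theorem~\ref{T-nodinfo}).

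For part~(i), fix a Hausdorff limit point $L$ of $\set{K_n}$, realized as $L = \lim_j K_{n_j}$; such sub-sequential limits exist because the hyperspace of nonempty closed subsets of the compact space $M$ is itself compact in the Hausdorff distance. Compactness of $L$ is automatic since $L\subset M$ is closed as a Hausdorff limit of closed sets. To see $L\subset K$, pick $x\in L$; by definition of Hausdorff convergence there exist $x_{n_j}\in K_{n_j}$ with $x_{n_j}\to x$, and the uniform convergence $w_n\to w$ gives
\[
w(x) \;=\; \lim_{j\to\infty} w_{n_j}(x_{n_j}) \;=\; 0 .
\]
For the connectedness statement, assume by contradiction that $L = A \sqcup B$ with $A,B$ disjoint nonempty compact sets, and set $3\delta := d(A,B)>0$. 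For $j$ large, $K_{n_j}$ lies in the $\delta$-neighborhood of $L$, which splits as the disjoint union $\cN_\delta(A)\sqcup\cN_\delta(B)$; moreover $K_{n_j}$ must meet both pieces, because $A\cup B\subset L\subset \cN_\delta(K_{n_j})$. This contradicts the assumed connectedness of $K_{n_j}$.

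For part~(ii), combining (i) with the compactness of the hyperspace, every subsequence of $\set{K_n}$ admits a Hausdorff-convergent sub-subsequence whose limit $L$ is a compact subset of $K$; it therefore suffices to prove the reverse inclusion $K \subset L$, or equivalently $K \subset \liminf_n K_n$. This amounts to the claim that for every $x\in K$ and every $\epsilon>0$,
\[
K_n \cap B(x,\epsilon) \neq \emptyset \quad \text{for all $n$ large enough.}
\]
Here is where I invoke Theorem~\ref{T-nodinfo}: whether $x$ is an interior point or a boundary point, the eigenfunction $w$ has a local Taylor expansion of harmonic-polynomial type at $x$, so in every neighborhood of $x$ the function $w$ strictly changes sign. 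Choose $y, z \in B(x,\epsilon)$ with $w(y) > 0$ and $w(z) < 0$, and set $\eta := \min(w(y), -w(z)) > 0$. Once $\|w_n - w\|_\infty < \eta/2$ we have $w_n(y) > 0$ and $w_n(z) < 0$, so by the intermediate value theorem along any continuous path from $y$ to $z$ in the (connected) ball $B(x,\epsilon)$, the function $w_n$ vanishes somewhere in $B(x,\epsilon)$. The reverse inclusion $\limsup_n K_n \subset K$ has already been established in part~(i).

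The real substance sits in the single step where Theorem~\ref{T-nodinfo} is invoked, and this is what I expect to be the main (conceptual) obstacle to separate from routine soft arguments. For a general continuous $w$, part~(ii) can genuinely fail: a nodal point could arise from a tangential touching of the zero level, with no sign change, and then $K_n$ may stay bounded away from that point even as $w_n \to w$ uniformly. The fact that eigenfunctions of a Schr\"odinger operator on a surface have only harmonic-polynomial zeros, and thus change sign at every point of $\cZ(w)$, is exactly what rescues the argument.
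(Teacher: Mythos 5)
Your proposal is correct and follows essentially the same route as the paper: part~(i) by extracting Hausdorff-convergent subsequences and using uniform convergence to get $L\subset K$, and part~(ii) by exploiting the sign change of an eigenfunction at every point of its nodal set (guaranteed by the local structure theorem) to force $w_n$ to vanish near each point of $K$, then concluding by uniqueness of the subsequential limit. The only cosmetic difference is that you phrase the conclusion as $K\subset\liminf_n K_n$ plus hyperspace compactness, whereas the paper argues directly that $K$ is the only possible limit point; these are the same argument.
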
%

\begin{proof} The properties that the sequence $\set{K_n}$ has limit points, and that they are compact (and connected if the sets $K_n$ are connected) are general.  Assume that a  subsequence $\set{K_{s(n)}}$ tends to $K'$ in the Hausdorff distance. Assume that $K' \not \subset K$. Then, there exists some $z' \in K'$ such that $w(z') \neq 0$. Let $\varepsilon_0>0$ be such that $|w(z')|=3\varepsilon_0$.\smallskip

(a) There exists $\eta_0$ such that $d(y_1,y_2) < \eta_0$ implies that $|w(y_1)-w(y_2)| < \varepsilon_0$.\smallskip

(b) Since $\set{K_{s(n)}}$ tends to $K'$ in the Hausdorff distance, there exists $N(\eta_0)$ such that for $n \ge N(\eta_0)$, $K' \subset \cU(K_{s(n)},\eta_0)$, the $\eta_0$-neighborhood of $K_{s(n)}$. In particular there exists some point $z_{s(n)} \in K_{s(n)}$ such that $d(z',z_{s(n)}) < \eta_0$.\smallskip

(c) There exists $N(\varepsilon_0)$ such that for $n \ge N(\varepsilon_0)$, $\| w - w_{s(n)}\| < \varepsilon_0\,$.\smallskip

Take $n \ge \max\set{N(\varepsilon_0), N(\eta_0)}$. Then,
\begin{equation*}
w(z') = w(z') - w(z_{s(n)}) + w(z_{s(n)}) - w_{s(n)}(z_{s(n)})
\end{equation*}
and we conclude that $|w(z')| < 2\varepsilon_0$, a contradiction.\medskip

The second assertion uses the nodal character. Assume that there exists $z \in K \sm K'$. Then $d(z,K') =: 2\eta > 0$, and for $n$ large enough, $d(z,K_{s(n)}) > \eta$. Since $K$ is a nodal set, there exists a small arc through $z$, from some $z_{-}$ to some $z_{+}$ such that $w(z_{-}) < 0$ and $w(z_{+}) > 0$. It follows that for $n$ large enough we also have $w_{s(n)}(z_{-}) < 0$ and $w_n(z_{+}) > 0$. This shows that $w_{s(n)}$ must vanish on this small arc, and hence that there exists some $z'_n \in K_{s(n)}$ close to $z$ contradicting the fact that $d(z,K_n) > \eta$. Since the only possible limit point of $\set{K_n}$ is $K$, the assertion follows.
 \end{proof}

\section{Euler Type Formulas for Nodal Sets}\label{S-pargef}

In this section, $M$ denotes a smooth surface homeomorphic to $\bS^2$,  or a smooth bounded domain $M \subset \R^2$ with boundary $\partial M$.

\subsection{Graphs associated with a nodal set}\label{SS-gr}
\index{Graph}
We are interested in Euler type formulas for the nodal set $\cZ(u)$ of an eigenfunction $u$ of the operator $-\Delta + V$ in $(M,g)$, where $g$ is some smooth Riemannian metric on $M$, and $V$ a smooth real valued potential. When $\partial M \neq \emptyset$, we assume a Dirichlet, Neumann or $h$-Robin boundary condition on $\partial M$, see \eqref{E-evp-2c} or \eqref{E-evp-2bc}.\smallskip

In this framework, the singular set $\cS(u) = \cS_{\mathrm{i}}(u) \cup \cS_{\mathrm{b}}(u)$ of $u$ is finite, and the set $\big( \cZ(u) \cup \partial M \big)\setminus \cS(u)$ consists of finitely many  components $\set{C_j}$ which are diffeomorphic to either circles or open intervals whose extremities are points in $\cS(u)$.\smallskip

The pair $\cG_u = \big( \cS(u), \set{C_j} \big)$
\index{2-G@$\cG_u$}
is in general not a \emph{multiple graph} in the sense of \cite[Section~1.10]{Dies2017}. Indeed, among the components $\set{C_j}$, there might be \emph{nodal circles} or  components of $\partial M$ which do not intersect $\cZ(u)$. It is not a \emph{simple graph}, as some of the $C_j$'s might form multiple edges.

\begin{notation}\label{N-gr-not}
From now on, we use the definition of graph given in \cite{Gibl2010} (i.e., ``graph = simple graph''). \index{Graph} Given a graph $G$, we denote by $\alpha_0(G)$ the number of vertices, by $\alpha_1(G)$ the number of edges, and by $c(G)$ the number of components of $G$. For a graph $G$ embedded in a surface $M$, we denote by $r(G,M)$ the number of components of $M \sm G$.
\index{1-alpha@$\alpha_0(G)$} \index{1-alpha@$\alpha_1(G)$}
\index{2-c@$c(G)$} \index{2-r@$r(G,M)$}
\end{notation}%

 With the pair $\cG_u$ we will associate a graph (not necessarily connected) to which we will apply the (Euler) formula in \cite[Theorem~1.27]{Gibl2010}. The vertices of the graph should comprise the singular points of $\cS(u)$, and the edges should comprise both sub-arcs of $\cZ(u)$ and sub-arcs of $\partial M$. Taking into account the fact that $\cG_u$ is in general not a graph, we first define a multigraph $G_0 := G_0(u,M)$, as follows.
\index{1-G@$G_0(u,M)$}

\begin{itemize}
  \item[$\diamond$]  Let $e := e(u,M)$ be the number of components of $\big( \cZ(u) \cup \partial M \big) \sm \cS(u)$ which are homeomorphic to a circle. We first choose one vertex for each such component. Call $\set{v_1,\ldots, v_{e}}$ these vertices, if any. Define the set  $V_0$ of vertices of the graph $G_0$ as $\cS(u) \cup \set{v_1,\ldots, v_{e}}$.
  \item[$\diamond$] Define the set $E_0$ of edges of $G_0$ as the set of components of $\big( \cZ(u) \cup \partial M\big) \sm V_0$ (they are all homeomorphic to intervals).
\end{itemize}

\begin{lemma}\label{L-eucl-2}
The pair $G_0 = (V_0,E_0)$ is a multigraph. \index{Multigraph} The number of vertices $\alpha_0(G_0)$, and the number of edges $\alpha_1(G_0)$ of $G_0$ are given by
\begin{equation}\label{E-eucl-2}
\left\{
\begin{array}{l}
\alpha_0(G_0) = e + |\cS_{\mathrm{i}}(u)| + |\cS_{\mathrm{b}}(u)|,\\[5pt]
\alpha_1(G_0) = e + \frac{1}{2} \, \big( \sum_{y \in \cS_{\mathrm{i}}(u)} \nu(u,y)  + \sum_{z\in \cS_{\mathrm{b}}(u)} \rho(u,z)  \big) + |\cS_{\mathrm{b}}(u)|,
\end{array}
\right.
\end{equation}
where $e$ is defined above, where $|\cS_{\mathrm{i}}(u)|$ (resp $|\cS_{\mathrm{b}}(u)|$) denotes the number of interior (resp. boundary) singular points of $u$, and where the numbers $\nu$ and $\rho$ are as in Definition~\ref{D-evp2-0}. In particular,
\begin{equation}\label{E-eucl-4}
\alpha_1(G_0) - \alpha_0(G_0) = \frac{1}{2} \, \big( \sum_{y \in \cS_{\mathrm{i}}(u)} (\nu(u,y)-2)  + \sum_{z\in \cS_{\mathrm{b}}(u)} \rho(u,z)  \big) .
\end{equation}
\end{lemma}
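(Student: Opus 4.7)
The plan is to verify that $G_0$ is a well-defined multigraph, count its vertices directly from the construction, and then count its edges via a handshake argument (summing local degrees and dividing by $2$). First I would check that after inserting the auxiliary vertices $v_1, \ldots, v_e$, every component of $(\partial \cD \cup \partial M) \setminus V_0$ is homeomorphic to an open interval whose closure attaches to two (not necessarily distinct) elements of $V_0$, so that $E_0$ is a legitimate multi-edge set. The circle-like components of $(\partial\cD \cup \partial M) \setminus \cS(\cD)$ -- namely nodal circles of $\partial \cD$ disjoint from $\cS(\cD)$, and components of $\partial M$ free of boundary singular points -- each become a loop at their attached auxiliary vertex, while the remaining interval-like components pick up their endpoints from $\cS(\cD) \subset V_0$. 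This makes $G_0$ a legitimate multigraph.

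The vertex count is immediate: by construction, $\alpha_0(G_0) = |\cS_i(\cD)| + |\cS_b(\cD)| + e$, which is the first line of \eqref{E-eucl-2}. For the edge count, I would invoke the handshake identity $\sum_{v \in V_0} \deg_{G_0}(v) = 2\,\alpha_1(G_0)$, with loops counted twice. The local degree at each vertex type is read off from Definition~\ref{D-parti-reg}: at an interior singular point $y \in \cS_i(\cD)$, exactly $\nu(y)$ semi-arcs of $\partial \cD$ emanate, so $\deg(y) = \nu(y)$; at a boundary singular point $z \in \cS_b(\cD)$, there are $\rho(z)$ semi-arcs of $\partial \cD$ together with the two semi-arcs of $\partial M$ on either side of $z$, hence $\deg(z) = \rho(z) + 2$; at an auxiliary vertex $v_i$, the single incident loop contributes $\deg(v_i) = 2$. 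Summing yields
\begin{equation*}
2\,\alpha_1(G_0) \;=\; \sum_{y \in \cS_i(\cD)} \nu(y) \;+\; \sum_{z \in \cS_b(\cD)} \rho(z) \;+\; 2\,|\cS_b(\cD)| \;+\; 2e,
\end{equation*}
which gives the second line of \eqref{E-eucl-2} after dividing by~$2$. Subtracting the two formulas and regrouping $-|\cS_i(\cD)|$ with $\tfrac12\sum_y \nu(y)$ as $\tfrac12\sum_y(\nu(y)-2)$ then delivers \eqref{E-eucl-4}.

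The step I expect to require the most care, and hence the main (minor) obstacle, is the bookkeeping at boundary singular points: the ``$+2$'' in $\deg(z) = \rho(z) + 2$ is precisely what produces the term $|\cS_b(\cD)|$ in the formula for $\alpha_1(G_0)$ and explains its cancellation in \eqref{E-eucl-4}. One must also observe that no component of $\partial \cD$ can meet $\partial M$ at a non-singular point -- this is part (ii) of Definition~\ref{D-parti-reg} -- so that the incidences at $\partial M$ are correctly localized at the finitely many $z \in \cS_b(\cD)$. Once these local contributions are tabulated correctly, the rest is routine algebra.
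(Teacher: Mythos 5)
Your proof is correct. The paper's own proof counts the edges directly by type: the $e$ circle-like components each contribute one vertex and one edge (loop), the interior nodal arcs between singular points are counted by $\tfrac12\sum\nu + \tfrac12\sum\rho$ (each arc having two ends at singular points), and the boundary singular points cut the components of $\partial M$ they lie on into $|\cS_b(\cD)|$ edges in total. Your handshake argument is the alternative derivation that the paper itself records immediately afterwards in Remark~\ref{R-eucl-2L}; your degree bookkeeping ($\deg(y)=\nu(y)$ at interior singular points, $\deg(z)=\rho(z)+2$ at boundary singular points, $\deg(v_i)=2$ at the auxiliary vertices) is exactly right and reproduces \eqref{E-eucl-2}. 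The two routes are of comparable length; the direct count makes the meaning of each term in $\alpha_1(G_0)$ explicit, while the handshake version is more mechanical and localizes all the care in the single observation that a boundary singular point has the two extra incidences coming from $\partial M$, which is indeed the source of the $|\cS_b(\cD)|$ term and of its cancellation in \eqref{E-eucl-4}.
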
%

\begin{proof}  Clearly $G_0$ is a multigraph in the sense of  \cite[Section~1.10]{Dies2017}. The formula for $\alpha_0$ is clear. The first term in the right-hand side for $\alpha_1$ counts both the number of points $v_j$ and the number of components of $\big( \cZ(u) \cup \partial M \big) \sm \cS(u)$ which are circles. The second term counts the number of edges between two singular points, each one being a simple curve contained in $\cZ(u) \sm \cS(u)$. The third term counts the number of edges determined by the boundary singular points on the components of $\partial M$ which intersect $\cS(u)$. \end{proof}

\begin{remark}\label{R-eucl-2L}
The second relation in \eqref{E-eucl-2} also follows from the relation
\begin{equation}\label{E-eucl-2R}
2 \, \alpha_1(\Gamma) = \sum_{x \in V(\Gamma)} \deg_{\Gamma}(x)
\end{equation}
which holds for any multi-graph $\Gamma$ (here, $\deg_{\Gamma}(x)$, the degree of the vertex $x$, \index{Degree (vertex)} is the number of edges of $\Gamma$ one of whose ends is $x$), see \cite[Section~1.2]{Dies2017}.
\end{remark}%

Note that the multigraph $G_0(u,M)$
\begin{itemize}
\item[(a)] might contain loops at some singular point or at one vertex $v_j$ ;
\item[(b)] might contain pairs of distinct vertices in $V_0$ linked by more than one edge.
\end{itemize}

We now transform the multigraph $G_0(u,M)$ into a graph $G(u,M)$ (in the sense of \cite[p.~10]{Gibl2010}), \index{2-G@$G(u,M)$}
keeping track of the number of vertices and edges. More precisely, if necessary, we introduce additional vertices and edges by performing one of the following  vertex-edge additions.

\begin{definition}\label{D-gr-2} We call \emph{vertex-edge additions} \index{Vertex-edge addition} the following modifications of the graph $G_0(u,M)$.
\begin{enumerate}
  \item[(i)] If a component $\Gamma$ of $\big( \cZ(u) \cup \partial M\big) \sm V_0$ is bounded by only one vertex $v$ (i.e., there is a loop at $v$), we add two extra vertices $v_1, v_2$ on $\Gamma$, and replace the edge $\Gamma$ by three edges, the components of $\Gamma \sm \set{v_1,v_2}$.
  \item[(ii)] If two distinct vertices of $V_0$ are the endpoints of more than one edge in $E_0$, i.e., of more than one component $\Gamma_j$ of $\big( \cZ(u) \cup \partial M\big) \sm V_0$, we add one extra vertex $w_j$ to each $\Gamma_j$, and replace $\Gamma_j$ by two edges, the components of $\Gamma_j \sm \set{w_j}$.
\end{enumerate}
\end{definition}%

Figure~\ref{F-gpart-03} illustrates the transformation of $\cZ(u) \cup \partial M$ into a graph. Boundary sub-arcs (black) are represented by thicker lines than nodal sub-arcs (red). Squares (blue) represent vertices $v_i$ initially attached to each circle component. Stars (green)  represent vertices added in the vertex-edge additions. Sub-figure~(a) illustrates the transformation of circle components of $\partial M$ or $\cZ(u)$, and loops in $\cZ(u)$ into graphs. Sub-figure~(b) illustrates the transformation of multiple edges into graphs\footnote{As indicated here, whenever useful, we have inserted symbols to identify the colors in a black and white printing setting.}.

\begin{figure}[!ht]
\centering
\begin{subfigure}[t]{.5\textwidth}
\centering
\includegraphics[width=\linewidth]{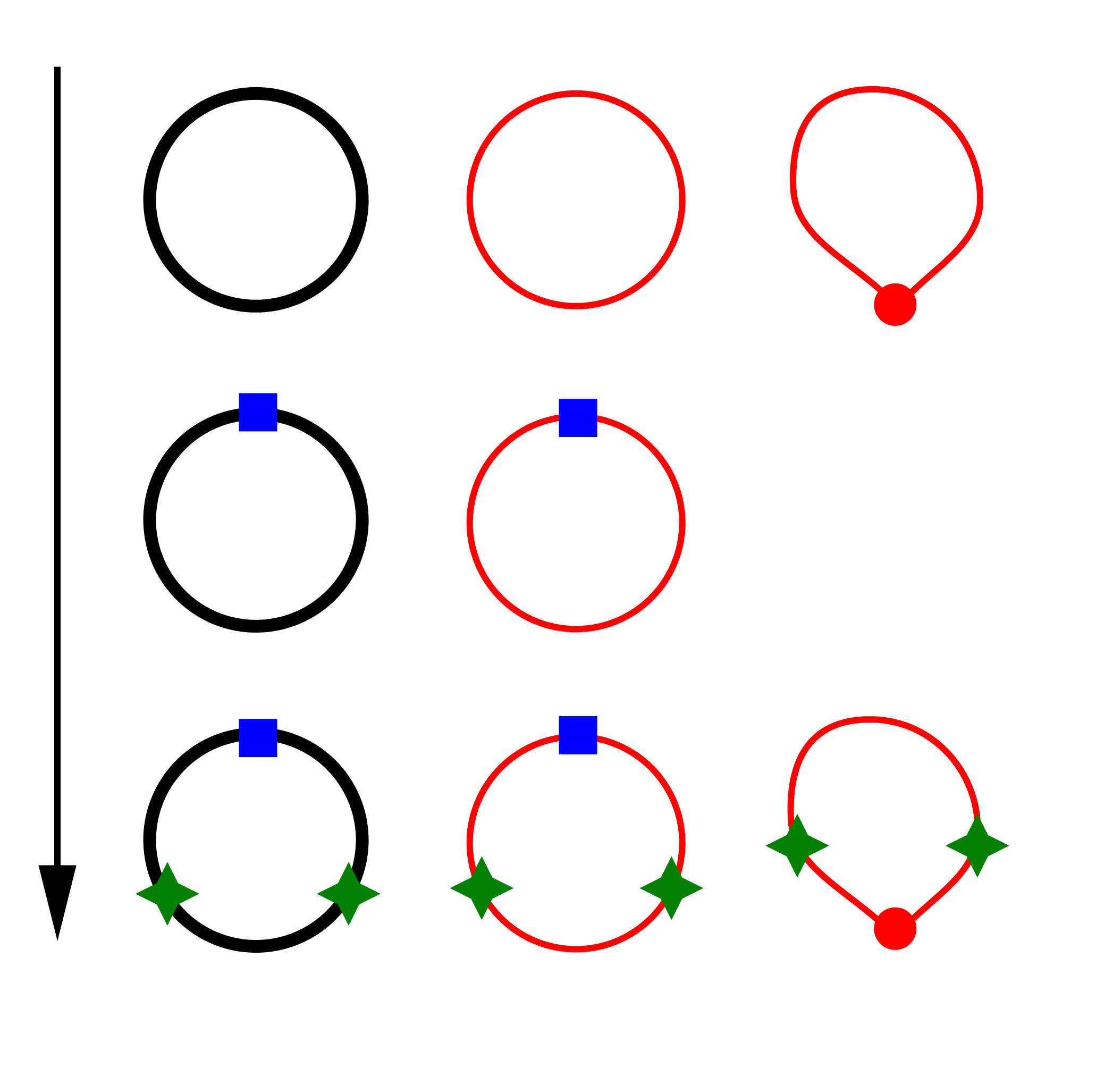}
\caption{Circle components and loops}
\end{subfigure}\qquad
\begin{subfigure}[t]{.33\textwidth}
\centering
\includegraphics[width=0.89\linewidth]{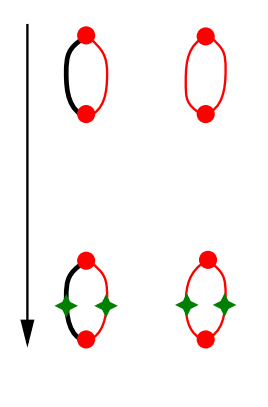}
\caption{Multiple edges}
\end{subfigure}
\caption{Vertex-edge additions}\label{F-gpart-03}
\end{figure}

The following lemma is clear.

\begin{lemma}\label{L-gr-4}
Performing finitely many  vertex-edge additions transforms the multigraph $G_0(u,M)$ into a graph $G(u,M)$.
\end{lemma}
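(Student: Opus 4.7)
The plan is to view $G_0(\cD,M)$ as a finite multigraph and then to apply the two operations of Definition~\ref{D-gr-2} iteratively, first to destroy loops, then to destroy multi-edges, verifying at each step that the surgery strictly decreases a ``badness'' count without introducing new pathologies. Finiteness of $G_0$ is already encoded in Lemma~\ref{L-eucl-2}: a regular partition has only finitely many singular points, and $(\partial\cD\cup\partial M)\sm\cS(\cD)$ has only finitely many connected components, so $e$, $\alpha_0(G_0)$ and $\alpha_1(G_0)$ are all finite. Write $\ell(G)$ for the number of loop-edges of a multigraph $G$, and $m(G)$ for the number of unordered pairs of distinct vertices of $G$ joined by at least two edges; both quantities are finite for $G_0$.

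First I would eliminate loops by iterating operation~(i): pick a loop at some vertex $v$, add two new vertices $v_1,v_2$ on it, and replace the loop by the triangle on $\{v,v_1,v_2\}$. The two new vertices have degree exactly $2$ and are incident only to the three new edges, so no loop remains on the modified part, no loop is created elsewhere, and no new multi-edge pair arises (the two new edges at $v$ lead to the two \emph{distinct} new vertices $v_1,v_2$). Thus $\ell$ strictly decreases while $m$ does not increase, and after finitely many iterations I reach a loop-free multigraph $G_1$ with $m(G_1)\le m(G_0)$.

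Next I would eliminate multi-edges by iterating operation~(ii): for each pair $\{u,v\}$ of distinct vertices joined by $k\ge 2$ edges $\Gamma_1,\ldots,\Gamma_k$ in the current multigraph, introduce distinct new vertices $w_j\in\Gamma_j$ and split each $\Gamma_j$ into the two edges $\{u,w_j\}$ and $\{w_j,v\}$. I would then check that no loop is created (each $w_j$ has degree $2$ with distinct endpoints $u$ and $v$) and that no new multi-edge pair appears (the edges $\{u,w_j\}$ are pairwise distinct since the $w_j$ are, and likewise for the edges $\{v,w_j\}$), so the multi-edge between $u$ and $v$ is destroyed while every other vertex pair is left untouched. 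Hence $m$ strictly decreases, $\ell$ remains zero, and the procedure terminates after finitely many steps with a simple graph $G(\cD,M)$.

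The only real point to watch—more bookkeeping than genuine obstacle—is that each surgery is strictly local: both operations modify only the edges incident to the loop or the multi-edge being resolved, together with the freshly inserted vertices, so they cannot create any pathology at a distance. Once this locality is recorded, the termination argument via the monovariants $(\ell,m)$ closes the proof.
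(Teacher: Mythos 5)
Your argument is correct. The paper in fact gives no proof of this lemma at all: it is stated as an immediate consequence of Definition~\ref{D-gr-2}, since the two vertex-edge additions are designed precisely to destroy loops and multiple edges, and there are only finitely many of each by Lemma~\ref{L-eucl-2}. Your termination argument via the monovariants $(\ell,m)$, together with the locality check that neither operation creates a new loop or a new multi-edge pair, is exactly the reasoning the authors leave implicit, so nothing is missing and nothing diverges from the paper's intent.
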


\begin{notation}\label{N-parti-2a}
For an eigenfunction of $-\Delta + V$ in $M$, we introduce the following numbers:
\begin{itemize}
  \item[(a)] $\beta(u)$\index{1-beta@$\beta(u)$} is defined as $\beta(u) = b_0(\cZ(u) \cup \partial M) - b_0(\partial M)$, the difference between the number of components of $\cZ(u) \cup \partial M$, and the number of components of $\partial M$;
  \item[(b)] $\kappa(u)$\index{1-kappa@$\kappa(u)$} denotes the number of nodal domains of $u$;
  \item[(c)] $\sigma(u) = \sigma_{\mathrm{i}}(u)  + \sigma_{\mathrm{b}}(u) $ weighs the singular points of $u$,
      \index{1-sigma@$\sigma(u)$}
      \index{1-sigma@$\sigma(u)$!$\sigma_{\mathrm{i}}(u),\sigma_{\mathrm{b}}(u)$}
  \begin{equation*}
  \left\{
  \begin{array}{ll}
  \sigma_{\mathrm{i}}(u) = \frac{1}{2}\, \sum_{x \in \mathcal{S}_{\mathrm{i}}(u)\,}\big( \nu(u,x) - 2\big),\\[5pt]
\sigma_{\mathrm{b}}(u) = \frac{1}{2}\, \sum_{x \in \mathcal{S}_{\mathrm{b}}(u)\,} \rho(u,x).
  \end{array}%
  \right.
  \end{equation*}
\end{itemize}%
\end{notation}%

The following lemma follows from the fact that the number $\alpha_0 - \alpha_1$ remains un\-changed if we perform a vertex-edge addition.

\begin{lemma}\label{L-gr-6}
For the graph $G:=G(u,M)$ obtained by performing  vertex-edge additions on the multigraph $G_0:=G_0(u,M)$, we have
\begin{equation*}
\begin{array}{rll}
\alpha_1(G) - \alpha_0(G) & = \quad \alpha_1(G_0) - \alpha_0(G_0) & = \quad \sigma(u),\\[5pt]
c(G) & = \quad c(G_0) & = \quad b_0(\cZ(u) \cup \partial M),\\[5pt]
r(G,M) & = \quad r(G_0,M) & = \quad \kappa(u).
\end{array}%
\end{equation*}
\end{lemma}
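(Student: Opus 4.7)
The plan is to verify each of the three claimed identities separately, then observe that both vertex-edge additions (i) and (ii) in Definition~\ref{D-gr-2} are merely \emph{subdivisions}: each adds some number $k$ of new vertices in the interior of a single edge $\Gamma$ and replaces $\Gamma$ with $(k+1)$ edges, i.e., adds $k$ vertices and $k$ edges. In operation (i) we have $k=2$, in operation (ii) applied to a single multi-edge we have $k=1$. Consequently $\alpha_1-\alpha_0$ is invariant under each elementary move, and since the underlying topological space of the multigraph (as a subset of $M$) is \emph{not altered} by inserting new vertices on existing edges, both $c$ (the number of components of the graph as a topological space) and $r(\cdot,M)$ (the number of components of its complement in $M$) are preserved as well. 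This reduces the proof of all three equalities to the corresponding statements for $G_0(\cD,M)$.

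For the first equality, I would simply combine \eqref{E-eucl-4} from Lemma~\ref{L-eucl-2},
\begin{equation*}
\alpha_1(G_0) - \alpha_0(G_0) = \frac{1}{2} \left( \sum_{y \in \cS_i(\cD)} (\nu(y)-2) + \sum_{z\in \cS_b(\cD)} \rho(z) \right),
\end{equation*}
with the definition of $\sigma(\cD) = \sigma_i(\cD) + \sigma_b(\cD)$ from Notation~\ref{N-parti-2a}(c); the two expressions are identical term by term.

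For the identity $c(G_0)=b_0(\bD\cup\partial M)$, the key observation is that the \emph{underlying topological subset} of $M$ carrying the multigraph $G_0(\cD,M)$ is exactly $\bD\cup\partial M$: by construction the vertex set $V_0=\cS(\cD)\cup\{v_1,\ldots,v_e\}$ lies in $\bD\cup\partial M$, and the edges were defined as the components of $(\bD\cup\partial M)\sm V_0$, so their union together with $V_0$ recovers $\bD\cup\partial M$ set-theoretically. Since the connected components of a graph (regarded as a topological space) coincide with its graph-theoretic components, the equality follows.

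Finally, for $r(G_0,M)=\kappa(\cD)$, I would argue that $M\sm G_0 = M\sm(\bD\cup\partial M) = \inte(M)\sm\bD$; using the essential property \eqref{E-parti-str} together with the pairwise disjointness of the open connected sets $D_j$, this complement is precisely the disjoint union $\bigsqcup_{j=1}^{\kappa(\cD)} D_j$, each $D_j$ being connected and hence a single component. The only mild subtlety is to check that no $D_j$ is broken into pieces by the boundary set, which follows from the definition of the partition (the $D_j$ are by assumption connected open subsets of $M\sm\bD$). No part of the argument presents a serious obstacle; the content is really a careful bookkeeping of how subdivision preserves the Euler-like quantity $\alpha_1-\alpha_0$ and how the topological subset $\bD\cup\partial M$ of $M$ is faithfully encoded by $G_0(\cD,M)$.
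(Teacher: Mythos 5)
Your proof is correct and takes essentially the same route the paper gestures at: the paper supplies only the one-line observation that $\alpha_1-\alpha_0$ is invariant under vertex-edge additions, and you flesh this out into a complete argument by noting that these additions are subdivisions (so they also preserve $c$ and $r$), and then reducing the three identities for $G_0$ to Lemma~\ref{L-eucl-2}, the identification of $G_0$'s underlying topological space with $\bD\cup\partial M$, and the decomposition $\inte(M)\setminus\bD = \bigsqcup_j D_j$. One tiny wording slip: condition \eqref{E-parti-str} is part of the definition of a partition in $\mf{D}_k(M)$, not the ``essential'' condition of Definition~\ref{D-parti-ess}, but the substance of the argument is unaffected.
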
%

\subsection{Euler type formulas for nodal sets}\label{SS-etf}~
\index{Euler formula for nodal sets}
According to \cite[Theorem~1.27]{Gibl2010} a graph $\bar{G}$ in $\R^2$, divides the plane into $r(\bar{G})$ regions, with
\begin{equation}\label{E-etf-00}
r(\bar{G}) = \alpha_1(\bar{G}) - \alpha_0(\bar{G}) + c(\bar{G}) + 1.
\end{equation}
\index{Euler formula}

\begin{proposition}\label{P-etf0-2}
Let $u$ be an eigenfunction of $-\Delta + V$ on $M$ (with Dirichlet, Neumann or $h$-Robin boundary condition if $\partial M \neq \emptyset$), where $M$ is topologically a sphere or a domain in $\R^2$. The following Euler type formula holds for $\cZ(u)$,
\begin{equation}\label{E-etf0-2}
\kappa(u) = 1 + \beta(u) + \sigma(u).
\end{equation}%
\end{proposition}

\begin{proof} \phantom{}

\noid If $M=\bS^2$, we can view the graph $G := G(u,M)$ as a graph in $\R^2$. Applying \eqref{E-etf-00}, we obtain
\begin{equation*}
r(G,\R^2) = \alpha_1(G) - \alpha_0(G) + c(G) + 1,
\end{equation*}
and it suffices to apply Lemma~\ref{L-gr-6} with $\partial M = \emptyset$.\smallskip

\noid If $M \subset \R^2$, we can view $G = G(u,M)$ as a graph in $\R^2$ for which \[r(G,\R^2) = r(G,M) + b_0(\partial M),\] and apply Lemma~\ref{L-gr-6}. \end{proof}

\begin{proposition}\label{P-euler-np}
Let $u$ be an eigenfunction of $-\Delta + V$ in $M$. For any component
$\Gamma$ of $\partial M$, we have
\begin{equation}\label{E-2.80}
\sum_{z\in \cS_{\mathrm{b}}(u) \cap \Gamma}\, \rho(z) \in 2\N.
\end{equation}
Furthermore,
\begin{equation}\label{E-2.8b}
b_0(\cZ(u) \cup \partial M) -b_0(\partial M) + \frac 12 \, \sum_{y \in \cS_{\mathrm{b}}(u)} \rho(y) \geq 1.
\end{equation}
\end{proposition}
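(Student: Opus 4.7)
The first assertion \eqref{E-2.80} is simply Corollary~\ref{cor:nodinfo}(2) rephrased in the language of nodal partitions; the proof given there is the sign-tracking argument on $\breve{u}$ along a boundary circle, where each singular point $z$ flips the sign by $(-1)^{\rho(z)}$, and returning to the starting point forces the total parity of flips to vanish. So the plan is simply to invoke Corollary~\ref{cor:nodinfo}(2) for this part.

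For the inequality \eqref{E-2.8b}, my strategy is a component-counting argument that feeds on \eqref{E-2.80}. Let $B = b_0(\partial M)$, and let $b$ denote the number of components of $\partial M$ that meet $\cS_b(u)$. By \eqref{E-2.80}, on each such ``singular'' boundary component the total $\rho$-weight is a positive even integer, hence at least $2$; summing over these $b$ components yields
\begin{equation*}
\tfrac{1}{2}\sum_{y\in\cS_b(u)}\rho(y)\ \ge\ b.
\end{equation*}

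Next I would classify the components of $\cZ(u)\cup\partial M$ into three disjoint types: (a) the $B-b$ boundary components of $\partial M$ disjoint from $\cZ(u)$, each contributing a distinct component of the union; (b) the $p$ components of $\cZ(u)$ disjoint from $\partial M$, each again a distinct component; and (c) the remaining $r$ ``mixed'' components, each of which by definition must absorb at least one boundary component of $\partial M$ carrying a singular point. Since the $b$ singular boundary components are distributed among these $r$ mixed pieces, we get $r\le b$. Hence
\begin{equation*}
b_0(\cZ(u)\cup\partial M)-b_0(\partial M)+\tfrac{1}{2}\sum_{y\in\cS_b(u)}\rho(y)\ \ge\ (B-b+p+r-B)+b\ =\ p+r.
\end{equation*}

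Finally, under the understood assumption that $\cZ(u)\neq\emptyset$ (the statement with ``$\ge 1$'' obviously requires some nontriviality of $u$, and fails for a first eigenfunction where $\cZ(u)=\emptyset$), every component of $\cZ(u)$ contributes to $p$ or to one of the $r$ mixed pieces, so $p+r\ge 1$ and the bound \eqref{E-2.8b} follows. I do not foresee a real obstacle: the only delicate point is the bookkeeping inequality $r\le b$, which is essentially the observation that a mixed component of $\cZ(u)\cup\partial M$ cannot exist without \emph{some} boundary singular point on one of its absorbed boundary circles, and this is tautological from how such a component is formed.
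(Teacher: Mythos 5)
Your proof is correct and is essentially the same argument as the paper's: both cancel the boundary circles disjoint from $\cZ(u)$, use \eqref{E-2.80} to get $\tfrac{1}{2}\sum\rho(y)\ge$ (number of singular boundary circles), and finish by noting that $\cZ(u)\cup\Gamma(u)$ has at least one component when $\cZ(u)\neq\emptyset$ (which in your notation is $p+r\ge 1$). One small remark: the claim $r\le b$, which you flag as "the only delicate point," is never actually used in your final chain of inequalities — the bound $\tfrac12\sum\rho\ge b$ already cancels the $-b$ from the component count, so that observation can simply be dropped.
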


\begin{proof} The first assertion is general and contained in Corollary~\ref{cor:nodinfo}. To prove the second assertion, we divide the components of $\partial M$ into two sets: the components $\Gamma'_i, 1 \le i \le p$, which meet $\cZ(u)$, and the components $\Gamma^{''}_j, 1 \le j \le q$, which do not meet $\cZ(u)$. Let $\Gamma(u) = \cup_{i=1}^p\Gamma'_i$. \index{1-Gamma@$\Gamma$!$\Gamma(u)$}
Clearly, we have the relation
\begin{equation*}
b_0(\cZ(u) \cup \partial M) -b_0(\partial M) = b_0(\cZ(u) \cup \Gamma(u)) -b_0(\Gamma(u)).
\end{equation*}
On the other-hand, according to \eqref{E-2.80}, for each $1 \le i \le p$, we have
\begin{equation*}
\sum_{z \in \cS_{\mathrm{b}}(u) \cap \Gamma'_i} \rho(z) \ge 2.
\end{equation*}
Relation \eqref{E-2.8b} follows from the fact that $b_0(\cZ(u) \cup \Gamma(u)) \ge 1$. \end{proof}

\section[Proof: Local Structure Theorem in the Interior]{Proof of the Local Structure Theorem at an Interior Point}\label{S-lsns}

In this section, we provide a proof of the local structure theorem for the
nodal set of an eigenfunction $u$ in the neighborhood of an interior
singular point  (alias critical zero) $x$, see Theorem~\ref{T-nodinfo},
Assertion~(i).  Being quantitative, in particular when applied to
eigenfunctions depending on a parameter, it is best suited to our
purposes. This proof is probably known although we did not find  it
explicitly  in the literature. In  \cite{Bess1980}, Besson refers to
\S\,128 of the book \cite{Vali1966}. In Subsection~\ref{SS-lsns-2},
we comment on Cheng's approach \cite{Chen1976}.

\subsection{Proof of Theorem~\ref{T-nodinfo},
Assertion~(i)}\label{SS-lsns-1}

Let $(M,g)$ be a $\Cty$ compact Riemannian surface, and let $V : M \to \R$ be a real valued $\Cty$ potential. Let $u \not = 0$ be a real valued function, satisfying
\begin{equation}\label{E-lsns-1}
(-\Delta_g + V)u = \lambda  u
\end{equation}
for some real number $\lambda$. Here $\Delta_g$ denotes the Laplace-Beltrami operator of $(M,g)$. Let $x$ be a given (interior) point of $M$ (in case $M$ has a boundary).\medskip

Choose some $r_0 > 0$ such that the exponential map $\exp_x : T_xM \to M$ is a diffeomorphism from the disk $D(2r_0)$, with center $0$ and  radius $2r_0$ in $T_xM$, onto the geodesic disk $D(x,2r_0)$, with center $x$ and radius $2r_0$ in $M$. Choose an orthonormal frame in $T_xM$, call $(\xi_1,\xi_2)$ the corresponding coordinates in $T_xM$, and $(r,\omega)$ the associated polar coordinates. In the normal coordinates $(\xi_1,\xi_2)$, the Riemannian metric is given by the $2\times2$ matrix $G = (g_{ij})$, where $g_{ij} = g(\frac{\partial}{\partial \xi_i},\frac{\partial}{\partial \xi_j})$; the Riemannian measure is given by $v_g\, d\xi_1 d\xi_2$, where $v_g = \sqrt{\det G}$. Write the matrix $G^{-1}$ as $G^{-1} = (g^{ij})$. Then (see for example \cite[\S~2.89bis, p. 87]{GaHuLa2004}),
\begin{equation}\label{E-lsns-2a}
\left\{
\begin{array}{l}
G(0,0) = \id, \text{\quad i.e.,\quad} g_{ij}(0,0) = \delta_{ij},
\quad 1 \le i,j \le 2, \\[5pt]
\frac{\partial G}{\partial \xi_k}(0,0) = 0, \text{\quad i.e.,\quad} \frac{\partial g_{ij}}{\partial \xi_k}(0,0) = 0, \quad 1 \le i,j,k \le 2.
\end{array}%
\right.
\end{equation}

It follows that
\begin{equation}\label{E-lsns-2b}
\left\{
\begin{array}{l}
v_g(0,0) = 1,\\[5pt]
G^{-1}(0,0) = \id, \text{\quad i.e.,\quad} g^{ij}(0,0) = \delta_{ij},
\quad 1 \le i,j \le 2, \\[5pt]
\frac{\partial v_g}{\partial \xi_k}(0,0) = 0, \quad 1 \le k \le 2, \\[5pt]
\frac{\partial G^{-1}}{\partial \xi_k}(0,0) = 0, \text{\quad i.e.,\quad} \frac{\partial g^{ij}}{\partial \xi_k}(0,0) = 0, \quad 1 \le i,j,k \le 2.
\end{array}%
\right.
\end{equation}

Given a function $u$ on $M$, let $f = u\circ \exp_x$. In the local coordinates $(\xi_1,\xi_2)$, the Laplace-Beltrami operator \index{Laplace-Beltrami operator} $\Delta_g$ is given (see \cite[\S G.{III}, p.~126]{BeGM1971}) by
\begin{equation}\label{E-lsns-2c}
\left\{
\begin{array}{ll}
\Delta_g f & = v_g^{-1}\, \sum_{1 \le i,j \le 2\,} \frac{\partial}{\partial \xi_i}\big( v_g\, g^{ij} \frac{\partial f}{\partial \xi_j}\big),\\[5pt]
& = \sum_{1 \le i,j \le 2\,} g^{ij}\, \frac{\partial^2 f}{\partial \xi_i \partial \xi_j} + \sum_{1 \le j \le 2\,}  b_j \, \frac{\partial f}{\partial \xi_j},\\[5pt]
& \text{\quad where~}b_ j = \sum_{1 \le i \le 2\,} v_g^{-1}\, \frac{\partial}{\partial \xi_i} (v_g g^{ij}), \quad 1 \le j \le 2.
\end{array}%
\right.
\end{equation}

Letting $\Delta_0 = \sum_{1 \le j \le 2\,} \frac{\partial^2}{\partial \xi_j^2}$ denote the Laplacian in the Euclidean space $(T_xM,g_x)$, and taking the relations \eqref{E-lsns-2a} and \eqref{E-lsns-2b} into account, we obtain the following expression for the Laplace-Beltrami operator,
\begin{equation}\label{E-lsns-2d}
\left\{
\begin{array}{l}
\Delta_g = \Delta_0 + \sum_{1 \le i,j \le 2\,} a_{ij}\, \frac{\partial^2}{\partial \xi_i \partial \xi_j} + \sum_{1 \le j \le 2\,} b_{j}\, \frac{\partial}{\partial \xi_j},\\[5pt]
\text{where~~} {\ord\big( a_{ij},(0,0) \big) \ge 2 \text{~and~} \ord\big( b_j,(0,0) \big)) \ge1.}
\end{array}
\right.
\end{equation}

If $u$ satisfies \eqref{E-lsns-1} and $u(x) = 0$, the unique continuation theorem \index{Unique continuation Theorem} \cite{Aron1957,DoFe1990a} implies that $f$ does not vanish at infinite order at $0$. \\  If $\ord(u,x)= \ord(f,0)=p$, Taylor's formula at $0$, gives
\index{2-R@$R$!$R_{p+1}$}
\begin{equation}\label{E-lsns-4}
\left\{
\begin{array}{l}
f(\xi_1,\xi_2) = \sum_{|\alpha| = p} \frac{1}{\alpha!} D^{\alpha}\!f(0,0) (\xi_1,\xi_2)^{\alpha} + R_{p+1}(\xi_1,\xi_2), \text{~where}\\[8pt]
R_{p+1}(\xi_1,\xi_2) = \sum_{|\alpha| = p+1} \frac{p+1}{\alpha!} (\xi_1,\xi_2)^{\alpha} {\displaystyle \int_0^1 (1-t)^p D^{\alpha}\!f(t \xi_1,t \xi_2)\,dt}.
\end{array}%
\right.
\end{equation}
Here,  as usual,
\begin{equation}\label{E-lsns-4a}
\left\{
\begin{array}{l}
\alpha = (\alpha_1,\alpha_2), \quad|\alpha| = \alpha_1 + \alpha_2, \quad (\xi_1,\xi_2)^{\alpha} = \xi_1^{\alpha_1}\, \xi_2^{\alpha_2}, \text{and}\\[5pt]
D^{\alpha}\!f = \dfrac{\partial^{|\alpha|}\!f}{\partial \xi_1^{\alpha_1}\partial \xi_2^{\alpha_2}}.
\end{array}%
\right.
\end{equation}

Using relations \eqref{E-lsns-1} and \eqref{E-lsns-2d}, and identifying the terms with lowest order, we find that the polynomial $P_p(\xi_1,\xi_2) := \sum_{|\alpha| = p} \frac{1}{\alpha!} D^{\alpha}\!f(0,0) (\xi_1,\xi_2)^{\alpha}$ is homogenous of degree $p$, and harmonic \index{Harmonic function}  with respect to $\Delta_0$.

\begin{remark}\label{R-lsns-2}
As a matter of fact, we may write a $2$-term Taylor formula for the function $f$,
\begin{equation*}
f(\xi_1,\xi_2) = P_p(\xi_1,\xi_2) + P_{p+1}(\xi_1,\xi_2) + R_{p+2}(\xi_1,\xi_2),
\end{equation*}
where $P_p$ and $P_{p+1}$ are homogeneous polynomials of degrees $p$ and $(p+1)$ respectively, and where the remainder term $R_{p+2}$ vanishes at order at least $(p+2)$. Then, we actually have that $\Delta_0 P_p = 0$ and $\Delta_0 P_{p+1} = 0$.
\end{remark}%

Writing the condition $\Delta_0 P_p = 0$ in polar coordinates $(r,\omega)$ in $T_xM$, we find that the polynomial $P_p$ has the form
\begin{equation}\label{E-lsns-4b}
P_p(r\, \cos \omega, r\, \sin \omega) = a \, r^p \, \sin(p\,\omega - \omega_0).
\end{equation}
for some $0 \not = a \in \R$ and some $\omega_0 \in [0,2\pi]$.\medskip

Multiplying the function $f$ by some constant, and rotating the coordinates $(\xi_1,\xi_2)$ in $\R^2$ if necessary, we can assume that $a=1$ and $\omega_0 = 0$. It follows that $f$ can be written as
\begin{equation}\label{E-lsns-6}
f(r\, \cos \omega, r\, \sin \omega) = r^p \, \sin(p\,\omega) + r^{p+1}\, T_{p+1}(r\, \cos\omega, r\, \sin\omega),
\end{equation}
where $T_{p+1}$ is given by
\begin{equation}\label{E-lsns-8}
\sum_{|\alpha| = p+1} \frac{p+1}{\alpha!}\, \,(\cos \omega,\sin \omega)^{\alpha} \, {\displaystyle \int_0^1 (1-t)^p\, D^{\alpha}\!f(tr\,\cos\omega,tr\,\sin\omega)\,dt}.
\end{equation}

Define
\begin{equation}\label{E-lsns-10}
W(r,\omega) := \sin(p\, \omega) + r\, T_{p+1}(r,\omega).
\end{equation}

The function $W(0,\omega)$ vanishes precisely for the values
\begin{equation}\label{E-lsns-12}
\omega_j := j\frac{\pi}{p}\,, \quad j \in \set{0,\ldots,2p-1}.
\end{equation}

Choose $\alpha_1 \in \big( 0,\frac{\pi}{8} \big)$ and define
$\alpha_p := \frac{\alpha_1}{p}$. We have the following relations

\begin{equation}\label{E-lsns-14}
\left\{
\begin{array}{l}
\sin\big(p\,(\omega_j \pm \alpha_p)\big) = \pm (-1)^j \, \sin(\alpha_1),\\[5pt]
|\sin(p\, \omega)| \ge \sin \alpha_1, \text{~for~} \omega \not \in \bigcup_{j=0}^{2p-1}\big( \omega_j-\alpha_p,\omega_j+\alpha_p\big).
\end{array}%
\right.
\end{equation}

Define
\begin{equation}\label{E-lsns-16}
r_1 := \min\set{r_0 ,\, \frac 12 \, \sin(\alpha_1)\, \|T_{p+1}\|^{-1}_{\infty,D(r_0)}\,, \frac 12 \cos(\alpha_1)\, \|\partial_{\omega}T_{p+1}\|^{-1}_{\infty,D(r_0)}},
\end{equation}
where $\|\cdot\|_{\infty,D(r_0)}$ denotes the $L^{\infty}$ norm of functions in the disk $D(r_0)$ of radius $r_0$ in $T_xM$.

\begin{proposition}\label{P-lsns-2}
For any $0 \le r \le r_1$,
\begin{enumerate}
  \item[(i)] the function $\omega \mapsto W(r,\omega)$ does not vanish in
\[
      [0,2\pi] \sm \bigcup_{j=0}^{2p-1}\big( \omega_j-\alpha_p,\omega_j+\alpha_p\big) = \bigcup_{j=0}^{2p-1}\big[ \omega_j+\alpha_p,\omega_{j+1}-\alpha_p\big];
\]
  \item[(ii)]for each $j \in \set{0,\ldots,2p-1}$, the function $\omega \mapsto W(r,\omega)$ has exactly one zero
  $\omegat_j(r) \in \big( \omega_j - \alpha_p ,\, \omega_j +\alpha_p \big)$;
  \item[(iii)] for each $j \in \set{0,\ldots,2p-1}$, the function $r \mapsto \omegat_j(r)$ is $C^{\infty}$ in $(0, r_1)$ and tends to $\omega_j$ as $r$ tends to zero;
  \item[(iv)] for each $j \in \set{0,\ldots,2p-1}$, the curve
  \[
  \big( 0 , r_1\big) \ni r \mapsto a_j(r) = \big( r\, \cos(\omegat_j(r) ,\, r \, \sin(\omegat_j(r)  \big)
  \]
  is smooth and has semi-tangent $\omega_j$ at the origin.
\end{enumerate}
\end{proposition}%

\begin{proof} To prove (i), we observe that in each interval $\set{r} \times \big[ \omega_j+\alpha_p,\omega_{j+1}-\alpha_p\big]$, $|W(r,\omega)| \ge \frac 12 \sin(\alpha_1)$. To prove (ii), we observe that the function $W(r,\omega)$ changes sign in $\set{r}\times \big( \omega_j-\alpha_p,\omega_j+\alpha_p\big)$ and that its partial derivative with respect to $\omega$ does not vanish.  Assertion~(iii) follows from the implicit function theorem. Assertion~(iv) follows from the previous ones. \end{proof}

\begin{remark}\label{R-lsns-4}
 Assume that  there exist two eigenfunctions $v_1$ and $v_2$ of \eqref{E-lsns-1} such that the functions $f_i = v_i \circ \exp_x$ satisfy the relations
\begin{equation}
\left\{
\begin{array}{l}
f_1(r\, \cos\omega, r\, \sin\omega) = r^p \, \sin(p\,\omega) + R_{1,p+1}(r\, \cos \omega, r\, \sin\omega),\\[5pt]
f_2(r\, \cos\omega, r\, \sin\omega) = r^p \, \cos(p\,\omega) + R_{2,p+1}(r\, \cos \omega, r\, \sin\omega).
\end{array}%
\right.
\end{equation}
Defining the family of functions $w_{\theta} = \cos\theta \, v_1 - \sin\theta \, v_2$, the associated family of functions $f_{\theta} = w_{\theta}\circ \exp_x$, satisfies
\begin{equation}
\left\{
\begin{array}{l}
f_{\theta}(r\, \cos\omega, r\, \sin\omega) = r^p \, \sin(p\, \omega - \theta) + R_{\theta,p+1}(r\, \cos\omega, r\, \sin\omega), \text{~with}\\[5pt]
R_{\theta,p+1} = \cos\theta \, R_{1,p+1} - \sin\theta \, R_{2,p+1}.
\end{array}%
\right.
\end{equation}
Then, Proposition~\ref{P-lsns-2} remains valid for the family $f_{\theta}$, uniformly with respect to the variable $\theta \in [0,2\pi]$, with $\omega_j$ replaced by
$\omega_{j,\theta} +\frac{\theta}{p}$, and the corresponding functions $r \mapsto \omegat_j(r,\theta)$ and $r \mapsto a_j(r,\theta)$ are smooth in $(r,\theta)$.
\end{remark}%

\begin{remark}\label{R-lsns-6}
Proposition~\ref{P-lsns-2} tells us that, in a neighborhood of the critical zero $x$ of $u$, \index{Critical zero} the nodal set $\cZ(u)$ consists of $p$ smooth semi-arcs emanating from $x$ tangentially to the rays $\omega_j$.
\end{remark}

\subsection{About Cheng's proof of the local structure theorem} \label{SS-lsns-2}

Recall that the main purpose of Cheng's paper \cite{Chen1976} is to give general upper bounds on the multiplicities of the eigenvalues of the Laplace-Beltrami operator $\Delta_g$ on a closed (i.e., compact without boundary) Riemannian orientable \emph{surface} $(M,g)$.  Both the surface $M$ and the Riemannian metric $g$ are assumed to be $C^{\infty}$.\smallskip

The starting point is to describe the local and global structure of the nodal set $\cZ(u)$ of an eigenfunction $u$, \cite[Theorem~2.5]{Chen1976}.  In particular, Cheng states the following property in any dimension, \cite[Lemma~2.4]{Chen1976}.

\begin{property}[S.Y.~Cheng]\label{P-cheng}
Near a critical zero $x_0$ of a nontrivial eigenfunction $u$ of the closed surface $(M,g)$, the nodal set $\cZ(u)$ is $C^1$ diffeomorphic to the nodal set $\cZ(p_N)$ of some harmonic homogeneous polynomial $p_N$ of degree $N$, the vanishing order $\ord(u,x_0)$.
\end{property}%

As remarked in \cite{Paga1990}, Bemerkung~2.9.3 p.~34, the proof of Cheng's Lemma~2.4 is flawed\footnote{We thank  V.~Bobkov for pointing out this reference (Private communications, Nov. 2024 and Jan. 2025).}. More precisely Cheng's claim (page 49, line 2) ``\emph{Thus, $X(x,a) = O(|x|^{1+\varepsilon})$. This shows that $X(x,a)$ is a $C^1$ vector field.}'' is not correct as shown by the example $X(x,a) = \big( x_1^2 \sin(\frac{1}{x_1}), 0, \ldots, 0\big)$.

\begin{proof}[Proof of Property~\ref{P-cheng}]  According to unique continuation, the nontrivial eigenfunction $u$ cannot vanish at infinite order at $x_0$ so that $\ord(u,x_0) = N$ for some $N \in \N$. Under Cheng's smoothness assumptions, the eigenfunction $u$ is $C^{\infty}$. Let  $\xi$ denote exponential coordinates centered at $x_0$. Write Taylor's formula at order $N$ with integral remainder term
\begin{equation}\label{E-T1}
u(\xi) = \sum_{|\alpha| = N} \partial^{\alpha}u(0) \frac{\xi^{\alpha}}{\alpha!} + \sum_{|\beta| = N+1} R_{\beta}(\xi) \xi^{\beta},
\end{equation}
where the functions $R_{\beta}$ are given by
\begin{equation}\label{E-T2}
R_{\beta}(\xi) = \frac{|\beta|}{\beta!} \int_0^1 (1-t)^{|\beta|-1} \partial^{\beta}u(t\xi)\, dt.
\end{equation}

It is easy to check that the leading term in Taylor's formula is a nontrivial homogeneous polynomial $p_N(\xi)$ of degree $N$ which satisfies
$\Delta_0 p_N = 0$ where $\Delta_{0}$ is the  ordinary Laplacian in the $\xi$-variable. Since $\dim M = 2$, the harmonic polynomial $p_N$ can be written in polar coordinates as
\[
p_N([\rho,\theta]) = \rho^{N} \big( a\, \cos(N\theta) + b\, \sin(N\theta) \big)
\]
with $a^2+b^2 \neq 0$, and we have
\[|\nabla p_N([\rho,\theta])| = N \sqrt{a^2+b^2}\, \rho^{N-1}.\]

Then, in the coordinates $\xi$, the function $u$ satisfies the assumptions of the following lemma whose proof can be found in K.~Pagani's thesis \cite{Paga1990} (Lemma~2.9.1 for the first statement and  Korollar~2.9.4 for the second statement). The first statement and the method of proof are actually due to T-C.~Kuo \cite{Kuo1969}.

\begin{lemma}[K.~Pagani]\label{L-P1}
Let $U$ be an open neighborhood of  $0 \in \R^n$. Let $u, p \in C^{2}(U)$ satisfy the following properties for some $N \ge 1$, some positive constant $c$ and all $x \in U$.
\begin{equation}\label{E-P1}
\left\{
\begin{array}{lll}
u(\xi) & = & p(\xi) + o(|\xi|^{N})\\[5pt]
\nabla u(\xi) & = & \nabla p(\xi) + o(|\xi|^{N-1})\\[5pt]
p(0) & = & 0\\[5pt]
\left| \nabla p(\xi) \right| & \ge & c\,  |\xi|^{N-1}.\\[5pt]
\end{array}%
\right.
\end{equation}
Then, there exists a local \emph{homeomorphism} $\Phi$ in a neighborhood of $0$ in $U$ such that $\Phi(0)=0$ and $u = p \circ \Phi$.\smallskip

Assume furthermore that
\begin{equation}\label{E-P2}
\left\{
\begin{array}{lll}
\mathrm{Hess\,}u(\xi) & = & \mathrm{Hess\,}p (\xi) + o(|\xi|^{N-2})\\[5pt]
\mathrm{Hess\,}p(\xi) & = & O(|\xi|^{N-2}).
\end{array}%
\right.
\end{equation}
Then, $\Phi$ is a local $C^1$ \emph{diffeomorphism}.
\end{lemma}%

The proof of Property~\ref{P-cheng} is complete.
\end{proof}

\begin{remark} In his paper,  Cheng makes use of Bers' theorem, \cite[Theorem~2.1]{Chen1976}. The strength of Bers' theorem is that it only assumes that the coefficients are H\"{o}lder continuous. In Cheng's framework, the coefficients are $C^{\infty}$ and Bers's theorem actually boils down to writing Taylor's formula.
\end{remark}%

As pointed out by Y.~Colin de Verdi\`{e}re, see \cite[Appendix~E]{BeMe1982}, the assumption $\left| \nabla p(\xi) \right| \ge c\,  |\xi|^{N-1}$ in Equation~\eqref{E-P1} is necessary. Indeed, in $\R^3$, let
\begin{equation}\label{E-BM1}
u(\xi,\eta,\zeta) = \xi \eta - \zeta^4 \text{~and~} p(\xi,\eta,\zeta) = \xi \eta .
\end{equation}
Then $\R^3 \! \setminus \! u^{-1}(0)$ has three connected components, whereas $\R^3 \! \setminus \! p^{-1}(0)$ has four connected components. Choosing $N=2$, we have $|\nabla p|(\xi,\eta,\zeta) = \sqrt{\xi^2+\eta^2}$ which cannot be bounded from below by some multiple of $\sqrt{\xi^2+\eta^2+\zeta^2}$. This shows that Lemma~\ref{L-P1} does not hold without the last assumption in Equation~\eqref{E-P1}. \smallskip

On the other hand, defining  the operator
\begin{equation*}
L := \partial_{\xi}^{2} + \partial_{\eta}^{2} + \partial_{\zeta}^{2} + 12\, \zeta^2 \, \partial_{\xi \eta}^{2},
\end{equation*}
we may observe that it is elliptic in a neighborhood of $0$ in $\R^3$, that $Lu=0$, and that $0$ is an isolated singularity of $u$. \medskip

In \cite{Chen1976}, when working in dimension $N$ in the proof of his Theorem~2.2, Cheng misses checking  that the assumption  $\left| \nabla p(\xi) \right| \ge c\,  |\xi|^{N-1}$ is satisfied.
The previous example shows that this assumption might actually not be satisfied in dimension larger than or equal to $3$, and that local arguments are not sufficient to extend Property~\ref{P-cheng} to higher dimensions.

\section[Proof: Local structure theorem at the Boundary]{Proof of the Local Structure Theorem at a  Boundary Point}\label{S-lsbs}

\subsection{Preamble}\label{SS-lsbs1}

Let $\Omega \subset \R^2$ be a bounded domain with $C^{\infty}$ boundary $\Gamma := \partial \Omega$.
\index{1-Gamma@$\Gamma$!$\Gamma  = \partial \Omega$} We consider the eigenvalue problem
\begin{equation}\label{E-lsbs-02}
\left\{
\begin{array}{rll}
(-\Delta + V) u & = \lambda \, u &\text{~in\quad} \Omega,\\[5pt]
B(u) & = 0  &\text{~on\quad} \Gamma,
\end{array}%
\right.
\end{equation}
where $V$ is a real valued function in $C^{\infty}(\wb{\Omega})$, $\Delta$ the usual Laplacian of $\R^2$,  and $B(u)$ one of the following boundary conditions on $\Gamma$
\begin{equation}\label{E-lsbs-04}
\left\{
\begin{array}{rl}
B(u) := u|_{\Gamma} &\text{\quad the Dirichlet boundary condition},\\[5pt]
B(u) := \partial_{\nu_i}u &\text{\quad the Neumann boundary condition},\\[5pt]
B(u) := \partial_{\nu_i}u - h\, u|_{\Gamma} &\text{\quad the $h$-Robin boundary condition},
\end{array}%
\right.
\end{equation}
where $\partial_{\nu_i}u$  is the derivative of $u$ with respect to the unit normal along $\Gamma$, pointing inwards, and $h$ a $C^{\infty}$ function on $\Gamma$. \medskip

The purpose of this section is to describe the local structure of the nodal set $\cZ(u)$ of an eigenfunction  $u$ of \eqref{E-lsbs-02}--\eqref{E-lsbs-04} near a boundary singular point of $u$, i.e., a point $y \in \Gamma$ at which the nodal set hits the boundary. For the sake of simplicity, throughout this section, we assume that
\begin{assumption}\label{A-lsbs-2}
$\Omega$ is simply connected.
\end{assumption}%

We explain how to deal with non simply connected domains in Remark~\ref{R-lsbs-4} and Subsection~\ref{SS-lsbs9}.
\subsection{Notation}\label{SS-lsbs2}

We use the following notation.\smallskip

Without loss of generality, we assume that the length of $\Gamma$ is $2\pi$.  The orientation of $\R^2$ induces a natural orientation of $\Gamma$, and  we fix an arc length parametrization of $\Gamma$ compatible with this orientation:
\begin{equation}\label{E-lsbs2-1}
\left\{
\begin{array}{l}
\gamma:[0,2\pi] \to \R^2 \text{~with~} \gamma([0,2\pi]) = \Gamma\\[5pt]
\tauv_{\gamma(t)} = \dot{\gamma}(t) \text{~is the unit tangent vector}\\[5pt]
\nuv_{\gamma(t)} \text{~is the unit normal vector pointing inwards}\\[5pt]
\set{\tauv,\nuv} \text{~is a direct frame}.
\end{array}%
\right.
\end{equation}

Introduce the notation
\begin{equation}\label{E-lsbs2-2}
\left\{
\begin{array}{l}
\bH := \set{ (\xi_1,\xi_2) \in \R^2 \mid \xi_2 > 0}\\[5pt]
\bbH := \set{ (\xi_1,\xi_2) \in \R^2 \mid \xi_2 \ge 0}.
\end{array}%
\right.
\end{equation}

\begin{equation}\label{E-lsbs2-4}
\left\{
\begin{array}{l}
\D := \set{ (\xi_1,\xi_2) \in \R^2 \mid \xi_1^2+\xi_2^2 < 1}\\[5pt]
\wb{\D} := \set{ (\xi_1,\xi_2) \in \R^2 \mid \xi_1^2+\xi_2^2 \le 1}.
\end{array}%
\right.
\end{equation}%

Given $y =(y_1,y_2) \in \R^2$ and $r>0$, define the disks
\begin{equation}\label{E-lsbs2-6}
\left\{
\begin{array}{l}
D(y,r) := \set{ (z_1,z_2) \in \R^2 \mid  (y_1-z_1)^2 + (y_2-z_2)^2 < r^2}\\[5pt]
\wb{D}(y,r) := \set{ (z_1,z_2) \in \R^2 \mid  (y_1-z_1)^2 + (y_2-z_2)^2 \le r^2}.
\end{array}%
\right.
\end{equation}

Similarly, given $\eta \in \partial \bH$, define the half-disks $D_{+}(\eta,r)$ and $\wb{D}_{+}(\eta,r)$
\begin{equation}\label{E-lsbs2-6h}
\left\{
\begin{array}{l}
D_{+}(\eta,r) := \set{ (\xi_1,\xi_2) \in \bH \mid  (\eta_1-\xi_1)^2 + (\eta_2-\xi_2)^2 < r^2}\\[5pt]
\wb{D}_{+}(\eta,r) := \set{ (\xi_1,\xi_2) \in \bH \mid  (\eta_1-\xi_1)^2 + (\eta_2-\xi_2)^2 \le r^2}.
\end{array}%
\right.
\end{equation}

\subsection{Preparation}\label{SS-lsbs3}~

\subsubsection{Regularity of conformal mappings at the boundary}\label{SSS-lsbs31}

\begin{lemma}\label{L-lsbs3-2}
Let $\Omega$ be a $C^{\infty}$ simply connected domain of $\R^2$ with boundary $\Gamma$. Let $y_0, z_0$ and $y_*$ be distinct points in $\Gamma$. Then, there exists a conformal diffeomorphism $F: \Omega \to \bH$ which extends as a $C^{\infty}$ map up to the boundary, such that $F|_{\Gamma}$ sends $\Gamma \sm \set{y_*}$ diffeomorphically onto $\partial \bH$, $y_0$ to $(0,0)$ and $z_0$ to some $(\zeta_0,0)$ at finite distance in $\bdH$.
\end{lemma}%

\begin{proof}  The existence of  a conformal diffeomorphism $F_1: \Omega \to \D$ is given by the Riemann mapping theorem. The fact that this diffeomorphism is $C^{\infty}$ up to the boundary and sends $\Gamma$ diffeomorphically onto $\partial \D$ is explained in \cite{BeKr1987}.\smallskip

Since $F_1(y_*) \in \partial \D$, the map $F_2 = \overline{F_1(y_*)}\, F_1$ is a conformal map from $\Omega$ onto $\D$ which extends smoothly to the boundary, sends $\Gamma$ diffeomorphically onto $\partial \D$, and $y_*$ to the point $1$.
The map $F_3 : w \mapsto i \frac{1+w}{1-w}$ maps $\D$ onto $\bH$, is smooth up to the boundary, maps $\partial \D \sm \set{1}$ onto $\partial \bH$, and $1$ to infinity. Taking for $F_4$ a suitable horizontal translation in $\bH$, the map $F:=F_4 \circ F_3 \circ F_2$ has the required properties.
\end{proof}

\subsubsection{Eigenfunctions}\label{SSS-lsbs32}

The conformal diffeomorphism  $E:=F^{-1} : \bH \to \Omega$ is $C^{\infty}$ up to the boundary, sends $\partial \bH$ onto the boundary $\Gamma$ minus the point $y_*\,$, and the point $0$ to $y_0$.\smallskip

If $\xi = (\xi_1,\xi_2)$ and $E(\xi) = \big( E_1(\xi_1,\xi_2), E_2(\xi_1,\xi_2) \big)$, the Jacobian of $E$ is given by
\begin{equation*}
\mathrm{Jac}(E)(\xi) = \begin{pmatrix}
                        \partial_{\xi_1}E_1(\xi) & \partial_{\xi_2}E_1(\xi) \\
                        \partial_{\xi_1}E_2(\xi) & \partial_{\xi_2}E_2(\xi) \\
                      \end{pmatrix},
\end{equation*}
where $\partial_{\xi_i}$ stands for $\frac{\partial}{\partial \xi_i}$.\smallskip

Since $E$ is conformal, we have $|\nabla E_1| = |\nabla E_2|$, $\langle \nabla E_1, \nabla E_2 \rangle = 0$. The determinant of the Jacobian of $E$ is given by
\begin{equation}\label{E-lsbs3-2}
J_E  =\det(\mathrm{Jac}(E)) = |\nabla E_1|^2 = |\nabla E_2|^2.
\end{equation}

Let $u$ be a $C^{\infty}$ function in $\Omega$. The Laplacian $\Delta_{\xi}$ of the function $u \circ E$ is given by the following formula in the variables $(\xi_1,\xi_2)$ of $\bH$
\begin{equation}\label{E-lsbs3-4}
\Delta_{\xi}(u \circ E) = J_E \;\big( (\Delta_{x}u)\circ E\big),
\end{equation}
where $\Delta_x$ is the Laplacian of $u$ in the variables $(x_1,x_2)$ of $\Omega$.\smallskip

Let $u$ be a nontrivial eigenfunction of \eqref{E-lsbs-02}--\eqref{E-lsbs-04}, and let $y_0 \in \Gamma$. Choose $E$ so that $E(0,0) = y_0$.  We now work with  the function
\begin{equation}\label{E-lsbs3-6}
v = u\circ E .
\end{equation}

Define the functions
\begin{equation}\label{E-lsbs3-7}
\left\{
\begin{array}{l}
V_E := J_E \; \big( V\circ E\big), \text{~and~}\\[5pt]
h_E := \sqrt{J_E} \; \big(h\circ E\big).
\end{array}
\right.
\end{equation}

The function $v$ satisfies
\begin{equation}\label{E-lsbs3-8}
 \left\{
\begin{array}{rll}
\big( - \Delta_{\xi}v + V_E \big) v & = \lambda \, J_E \, v  &\text{~in~} \bH \\[5pt]
B_E\,v & = 0 &\text{~on~} \partial \bH,
\end{array}%
\right.
\end{equation}
where the boundary condition $B_E\,v$ is given by
\begin{equation}\label{E-lsbs3-10}
\left\{
\begin{array}{l}
B_E\,v = v|_{\partial \bH}  \text{~in the Dirichlet case}\\[5pt]
B_E\,v = \partial_{\xi_2} v|_{\partial \bH}  \text{~in the Neumann case}\\[5pt]
B_E\,v = \big( \partial_{\xi_2} v - h_E\, v\big)|_{\partial \bH}  \text{~in the Robin case}.
\end{array}%
\right.
\end{equation}

To determine the local properties of $u$ near $y_0 \in \bdH$, it is sufficient to determine the local properties of $v$ in $\wb{D}_{+}(0,r_0) \cap \bH$, for some $r_0 > 0$.\medskip

\subsection{The unique continuation property at the boundary}\label{SS-lsbs4}
\index{Unique continuation property (boundary)}
\begin{property}\label{P-lsbs4-2}
The eigenfunctions of \eqref{E-lsbs-02}--\eqref{E-lsbs-04} are in $C^{\infty}(\wb{\Omega})$.
\end{property}%

This property follows from elliptic regularity, see \cite{GiTr1977},  Sections~6.4 and 6.7, or  \cite{Mikh1978}, Chap.~4.2, page 217.

\begin{proposition}\label{P-lsbs4-4}
A nontrivial eigenfunction $u$ of \eqref{E-lsbs-02}--\eqref{E-lsbs-04} cannot vanish at infinite order at any point $y \in \Gamma$.
\end{proposition}%

\begin{proof} We follow the proof of Lemma~2.1 in Melas' paper \cite{Mela1992}. He only considers the Dirichlet boundary condition, and deals with convex domains. His proof can be adapted to the Neumann boundary condition, and the part of the proof we are interested in does actually not use the convexity assumption. For convenience, we give a complete proof here. We use the framework described in Paragraph~\ref{SSS-lsbs32}.\smallskip

Let $u$ be a nontrivial eigenfunction of \eqref{E-lsbs-02}. Then $u \in C^{\infty}(\wb{\Omega})$, and the function $v = u\circ E$ is in $C^{\infty}(\bbH)$. Furthermore, $u$ vanishes at infinite order at $y_0$ if and only if $v$ vanishes at infinite order at $0$. \smallskip

We now restrict $v$ to some neighborhood $\wb{D}_{+}(0,r_0) \cap \bbH$ of $0 \in \bbH$, and correspondingly $u$ to the image $\wb{D}_E(y_0,r_0) := E(\wb{D}_{+}(0,r_0) \cap \bbH)$. \smallskip

Since $E$ is conformal, the function $v$ satisfies the equation
\begin{equation*} 
\Delta_{\xi}v = J_E \, \big((\Delta_xu)\circ E\big) = \big( V_E - J_E\, \lambda\big) v,
\end{equation*}
where the function $V_E$ is $C^{\infty}$ and bounded in $\wb{D}_{+}(0,r_0) \cap \bH$.  Furthermore, $v$ satisfies the Dirichlet (resp. the Neumann, or a Robin) boundary condition on $\wb{D}_{+}(0,r_0) \cap \partial \bH$ when the function $u$ satisfies the Dirichlet  boundary condition on $\wb{D}_E(y_0,r_0) \cap \Gamma$ (resp. the Neumann, or a Robin boundary  condition).\medskip

We now work with the function $v$ restricted to $\wb{D}_{+}(0,r_0) \cap \bbH$, and we consider three cases separately: $v$ satisfies the Dirichlet boundary condition, $v$ satisfies the Neumann boundary condition, or  $v$ satisfies a Robin boundary condition on $\wb{D}_{+}(0,r_0) \cap \partial \bH$. \smallskip

\noid \emph{Dirichlet boundary condition.} We define the function $w$ on $D_{+}(0,r_0)$ by
\begin{equation}\label{E-lsbs4-12}
w(\xi_1,\xi_2) := \left\{
\begin{array}{rl}
v(\xi_1,\xi_2)  &\text{~if\quad} \xi_2 > 0\\[5pt]
- v(\xi_1,-\xi_2) &\text{~if\quad} \xi_2 < 0\\[5pt]
0 &\text{~if\quad} \xi_2 = 0.
\end{array}%
\right.
\end{equation}

Since $v$ extends to the boundary and vanishes on the boundary, the function $w$ is well defined and continuous in $D_{+}(0,r_0)$, with $w(\xi_1,0) = 0$ for all $\xi_1 \in (-r_0,r_0)$.\smallskip

The function $w$ has first partial derivatives given as follows:
\begin{enumerate}[a)]
\item%
\begin{equation*}
\partial_{\xi_1}  w(\xi_1,\xi_2) = \left\{
\begin{array}{rl}
\partial_{\xi_1} v(\xi_1,\xi_2)  &\text{~if\quad} \xi_2 > 0\\[5pt]
- \partial_{\xi_1}v(\xi_1,-\xi_2) &\text{~if\quad} \xi_2 < 0\\[5pt]
0  &\text{~if\quad} \xi_2 = 0,
\end{array}%
\right.
\end{equation*}
where the third line follows from the fact that $v(\xi_1,0) \equiv 0$ on $(-r_0,r_0)$.
\item%
\begin{equation*}
\partial_{\xi_2}  w(\xi_1,\xi_2) = \left\{
\begin{array}{rl}
\partial_{\xi_2} v(\xi_1,\xi_2)  &\text{~if\quad } \xi_2 > 0\\[5pt]
\partial_{\xi_2}v(\xi_1,-\xi_2) &\text{~if\quad } \xi_2 < 0\\[5pt]
\partial_{\xi_2} v(\xi_1,0)  &\text{~if\quad} \xi_2 = 0,
\end{array}%
\right.
\end{equation*}
where the third line is computed using the definition of the derivative of $w$ at the point $(\xi_1,0)$.
\end{enumerate}%

Since $v$ is $C^{\infty}$ up to the boundary, the first partial derivatives of $w$ are continuous.

The second derivatives of $w$ are given as follows:
\begin{enumerate}[a)]
\item%
\begin{equation*}
\partial^2_{\xi_1^2}  w(\xi_1,\xi_2) = \left\{
\begin{array}{rl}
\partial^2_{\xi_1^2} v(\xi_1,\xi_2)  &\text{~if\quad} \xi_2 > 0\\[5pt]
-\partial^2_{\xi_1^2}v(\xi_1,-\xi_2) &\text{~if\quad} \xi_2 < 0\\[5pt]
0 &\text{~if\quad} \xi_2 = 0,
\end{array}%
\right.
\end{equation*}
where the third line follows from the fact that $\partial_{\xi_1}w(\xi_1,0) \equiv 0$ on $(-r_0,r_0)$.  Since $v$ is $C^{\infty}$ up to the boundary, this second derivative is continuous.
\item%
\begin{equation*}
\partial^2_{\xi_2\xi_1}  w(\xi_1,\xi_2) = \left\{
\begin{array}{rl}
\partial^2_{\xi_2\xi_1} v(\xi_1,\xi_2)  &\text{~if\quad} \xi_2 > 0\\[5pt]
\partial^2_{\xi_2\xi_1}v(\xi_1,-\xi_2) &\text{~if\quad} \xi_2 < 0\\[5pt]
\partial^2_{\xi_2\xi_1} v(\xi_1,0)  &\text{~if\quad} \xi_2 = 0,
\end{array}%
\right.
\end{equation*}
where the third line follows from the definition of the derivative of $\partial_{\xi_1}w$ with respect to $\xi_2$ at some point $(\xi_1,0)$.  Since $v$ is $C^{\infty}$ up to the boundary, this second derivative is continuous.
\item From the formula for $\partial_{\xi_2}w$ we immediately obtain
\begin{equation*}
\partial^2_{\xi_1\xi_2}  w(\xi_1,\xi_2) = \left\{
\begin{array}{rl}
\partial^2_{\xi_1\xi_2} v(\xi_1,\xi_2)  &\text{~if\quad} \xi_2 > 0\\[5pt]
\partial^2_{\xi_1\xi_2}v(\xi_1,-\xi_2) &\text{~if\quad} \xi_2 < 0\\[5pt]
\partial^2_{\xi_1\xi_2} v(\xi_1,0)  &\text{~if\quad} \xi_2 = 0.
\end{array}%
\right.
\end{equation*}
Since $v$ is $C^{\infty}$ up to the boundary, this second derivative is continuous, and we have $\partial^2_{\xi_1\xi_2}w = \partial^2_{\xi_2\xi_1}w$.
\item%
\begin{equation*}
\partial^2_{\xi_2^2}  w(\xi_1,\xi_2) = \left\{
\begin{array}{rl}
\partial^2_{\xi_2^2} v(\xi_1,\xi_2)  &\text{~if\quad} \xi_2 > 0\\[5pt]
-\partial^2_{\xi_2^2}v(\xi_1,-\xi_2) &\text{~if\quad} \xi_2 < 0\\[5pt]
\pm \partial^2_{\xi_2^2} v(\xi_1,0)  &\text{~if\quad} \xi_2 = 0,
\end{array}%
\right.
\end{equation*}
where the third line follows from a direct computation of the right/left partial derivative of $\partial_{\xi_2}w$ with respect to $\xi_2$ at a point $(\xi_1,0)$. The sign indicates the value for the right/left derivatives.
\end{enumerate}%

It follows from \eqref{E-lsbs3-8} that $\Delta_{\xi}v(\xi_1,0) \equiv 0$ because $v$ satisfies the Dirichlet boundary condition. Since we already have $\partial^2_{\xi_1^2}v(\xi_1,0) \equiv 0$, we conclude that $\partial^2_{\xi_2^2}v(\xi_1,0) \equiv 0$ as well, and hence that the second derivative $\partial^2_{\xi_2^2}w$ is well defined and continuous.\medskip

We have just shown that the function $w$ is in $C^2(D_{+}(0,r_0))$.
Furthermore, we have
\begin{equation}\label{E-lsbs4-14}
\Delta_{\xi}w(\xi_1,\xi_2) = \left\{
\begin{array}{rl}
\Delta_{\xi}v(\xi_1,\xi_2) &\text{~if\quad} \xi_2 > 0\\[5pt]
-\Delta_{\xi}v(\xi_1,-\xi_2) &\text{~if\quad} \xi_2 < 0\\[5pt]
0 &\text{~if\quad} \xi_2 = 0,
\end{array}
\right.
\end{equation}
and hence, by \eqref{E-lsbs3-8},
\begin{equation}\label{E-lsbs4-16}
|\Delta_{\xi} w| \le C \,  |w| \mbox{ in } D_{+}(0,r_0).
\end{equation}

 Recall the \emph{strong unique continuation property} \index{Strong unique continuation property} for the operator $P=-\Delta+V$ in the open domain $\Omega \subset \R^2$ where $V \in L^{\infty}(\Omega)$.

\begin{theorem}\label{T-SUCP}
Assume that $u \in H^2(\Omega)$ and that $|\Delta u| \le C(|\nabla u| + |u|)$ almost everywhere in $\Omega$. If $\lim_{r\to 0} r^{-N}\int_{B(x_0,r)}|u|^2\, dx = 0$ for  some $x_0  \in \Omega$ and for all $N \ge 0$ (i.e., $u$ \emph{vanishes to infinite order at} $x_0$), then $u$ vanishes identically in $\Omega$.
\end{theorem}%
\index{Aronszajn Theorem}
For this theorem, we refer to \cite{Salo2014} (Theorem~1.2 and Remark~4 on page~4), or Aronszajn \cite{Aron1957}. \smallskip

Since $w \not \equiv 0$, in view of \eqref{E-lsbs4-16}, the strong unique continuation property implies that $w$ does not vanish at infinite order at $0$, so that $v$ does not vanish at $0$ at infinite order either. This proves that the eigenfunction $u$ cannot vanish at infinite order at the boundary point $y$.\medskip

\noid \emph{Neumann boundary condition.} We define the function $w$ on $D_{+}(0,r_0)$ by
\begin{equation}\label{E-lsbs4-18}
w(\xi_1,\xi_2) := \left\{
\begin{array}{rl}
v(\xi_1,\xi_2)  &\text{~if\quad} \xi_2 > 0\\[5pt]
v(\xi_1,-\xi_2) &\text{~if\quad} \xi_2 < 0\\[5pt]
v(\xi_1,0) &\text{~if\quad} \xi_2=0.
\end{array}%
\right.
\end{equation}

Since $v$ is $C^{\infty}$ up to the boundary, the function $w$ is well defined and continuous in $D_{+}(0,r_0)$, with $w(\xi_1,0) = v(\xi_1,0)$ for all $\xi_1 \in (-r_0,r_0)$.\smallskip

The function $w$ has first partial derivatives given as follows:
\begin{enumerate}[a)]
\item
\begin{equation*}
\partial_{\xi_1}  w(\xi_1,\xi_2) = \left\{
\begin{array}{rl}
\partial_{\xi_1} v(\xi_1,\xi_2)  &\text{~if\quad} \xi_2 > 0\\[5pt]
\partial_{\xi_1}v(\xi_1,-\xi_2) &\text{~if\quad} \xi_2 < 0\\[5pt]
\partial_{\xi_1} v(\xi_1,0)   &\text{~if\quad} \xi_2 = 0.
\end{array}%
\right.
\end{equation*}
\item
\begin{equation*}
\partial_{\xi_2}  w(\xi_1,\xi_2) = \left\{
\begin{array}{rl}
\partial_{\xi_2} v(\xi_1,\xi_2)  &\text{~if\quad} \xi_2 > 0\\[5pt]
-\partial_{\xi_2}v(\xi_1,-\xi_2) &\text{~if\quad} \xi_2 < 0\\[5pt]
0  &\text{~if\quad} \xi_2 = 0,
\end{array}%
\right.
\end{equation*}
where the third line is computed using the definition of the derivative, and the fact that the function $v$ satisfies the Neumann boundary condition $\partial_{\xi_2}v(\xi_1,0) \equiv 0$ in $(-r_0,r_0)$.
\end{enumerate}%
Since $v$ is $C^{\infty}$ up to the boundary, the first partial derivatives of $w$ are continuous.\medskip

The second derivatives of $w$ are given as follows:
\begin{enumerate}[a)]
\item
\begin{equation*}
\partial^2_{\xi_1^2}  w(\xi_1,\xi_2) = \left\{
\begin{array}{rl}
\partial^2_{\xi_1^2} v(\xi_1,\xi_2)  &\text{~if\quad} \xi_2 > 0\\[5pt]
\partial^2_{\xi_1^2}v(\xi_1,-\xi_2) &\text{~if\quad} \xi_2 < 0\\[5pt]
\partial^2_{\xi_1^2} v(\xi_1,0) &\text{~if\quad} \xi_2 = 0,
\end{array}%
\right.
\end{equation*}
where the third line follows from the fact that $\partial_{\xi_1}v(\xi_1,0)$ is $C^{\infty}$. Since $v$ is $C^{\infty}$ up to the boundary, this second derivative is continuous.
\item
\begin{equation*}
\partial^2_{\xi_2\xi_1}  w(\xi_1,\xi_2) = \left\{
\begin{array}{rl}
\partial^2_{\xi_2\xi_1} v(\xi_1,\xi_2)  &\text{~if\quad} \xi_2 > 0\\[5pt]
-\partial^2_{\xi_2\xi_1}v(\xi_1,-\xi_2) &\text{~if\quad} \xi_2 < 0\\[5pt]
\pm \partial^2_{\xi_2\xi_1} v(\xi_1,0)  &\text{~if\quad} \xi_2 = 0,
\end{array}%
\right.
\end{equation*}
where the third line follows from the definition of the right/left derivatives of $\partial_{\xi_1}w$ with respect to $\xi_2$ at the point $(\xi_1,0)$.   The term in the third line is actually zero due to the Neumann boundary conditions and the fact that $\partial^2_{\xi_2\xi_1} v(\xi_1,0) = \partial^2_{\xi_1\xi_2} v(\xi_1,0)$. Since $v$ is $C^{\infty}$ up to the boundary,  this second derivative is continuous.
\item From the formula for $\partial_{\xi_2}w$ we immediately obtain
\begin{equation*}
\partial^2_{\xi_1\xi_2}  w(\xi_1,\xi_2) = \left\{
\begin{array}{rl}
\partial^2_{\xi_1\xi_2} v(\xi_1,\xi_2)  &\text{~if\quad} \xi_2 > 0\\[5pt]
-\partial^2_{\xi_1\xi_2}v(\xi_1,-\xi_2) &\text{~if\quad} \xi_2 < 0\\[5pt]
0  &\text{~if\quad} \xi_2 = 0.
\end{array}%
\right.
\end{equation*}
Since $v$ is $C^{\infty}$ up to the boundary, this second derivative is continuous, and we have $\partial^2_{\xi_1\xi_2}w = \partial^2_{\xi_2\xi_1}w$.
\item
\begin{equation*}
\partial^2_{\xi_2^2}  w(\xi_1,\xi_2) = \left\{
\begin{array}{rl}
\partial^2_{\xi_2^2} v(\xi_1,\xi_2)  &\text{~if\quad} \xi_2 > 0\\[5pt]
\partial^2_{\xi_2^2}v(\xi_1,-\xi_2) &\text{~if\quad} \xi_2 < 0\\[5pt]
\partial^2_{\xi_2^2} v(\xi_1,0)  &\text{~if\quad} \xi_2 = 0.
\end{array}%
\right.
\end{equation*}
\end{enumerate}%
We have shown that the function $w$ is continuous, with continuous  first and second derivatives. Furthermore, we have
\begin{equation}\label{E-lsbs4-20}
\Delta_{\xi}w(\xi_1,\xi_2) = \left\{
\begin{array}{rl}
\Delta_{\xi}v(\xi_1,\xi_2) &\text{~if\quad} \xi_2 > 0\\[5pt]
\Delta_{\xi}v(\xi_1,-\xi_2) &\text{~if\quad} \xi_2 < 0\\[5pt]
\Delta_{\xi}v(\xi_1,0) &\text{~if\quad} \xi_2 = 0,
\end{array}
\right.
\end{equation}
and hence, by \eqref{E-lsbs3-8},
\begin{equation}\label{E-lsbs4-22}
|\Delta_{\xi} w| \le C\,  |w| \mbox{ in } D_{+}(0,r_0).
\end{equation}
 We can apply Aronszajn's theorem \index{Aronszajn Theorem} to $w$ and, since $w \not \equiv 0$, conclude that $w$ does not vanish at infinite order at $0$ in $L^1$-norm, so that $v$ does not vanish at infinite order in $L^1$-norm either.
This proves that $u$ cannot vanish at infinite order at the boundary point $y$.\medskip

\noid \emph{Robin boundary condition.}

According to the third line in \eqref{E-lsbs3-10}, we have $\partial_{\xi_2}v(\xi_1,0) - g(\xi_1)  v(\xi_1,0) \equiv 0$, where $ g(\xi_1) := h(\xi_1) \, \sqrt{J_E}(\xi_1,0)$. Introduce the function
\begin{equation}\label{E-lsbs4-32}
v_1(\xi_1,\xi_2) := e^{- g(\xi_1)\xi_2}\, v(\xi_1,\xi_2).
\end{equation}

Then,
\begin{equation}\label{E-lsbs4-32a}
\left\{
 \begin{array}{l}
\partial_{\xi_1}v_1(\xi_1,\xi_2) = e^{- g(\xi_1)\xi_2}\, \partial_{\xi_1}v(\xi_1,\xi_2) - g'(\xi_1)\xi_2\, v_1(\xi_1,\xi_2)\\[5pt]
\partial_{\xi_2}v_1(\xi_1,\xi_2) = e^{-g(\xi_1)\xi_2}\, \partial_{\xi_2}v(\xi_1,\xi_2) - g(\xi_1)\, v_1(\xi_1,\xi_2),
 \end{array}
 \right.
\end{equation}
and hence,
\begin{equation}\label{E-lsbs4-32b}
\left\{
 \begin{array}{l}
v_1(\xi_1,0) = v(\xi_1,0)\\[5pt]
\partial_{\xi_2}v_1(\xi_1,0) =  \partial_{\xi_2}v(\xi_1,0) - g(\xi_1)\,  v(\xi_1,0) \equiv 0,
 \end{array}
 \right.
\end{equation}
so that the function $v$ satisfies the Neumann boundary condition. Then,
\begin{equation}\label{E-lsbs4-34}
\left\{
 \begin{array}{ll}
\partial^2_{\xi_1^2}v_1(\xi_1,\xi_2) = & e^{-g(\xi_1)\xi_2} \partial^2_{\xi_1^2}v(\xi_1,\xi_2) - 2 g'(\xi_1)\xi_2 \partial_{\xi_1}v_1\\[5pt]
& \quad - [g^{''}(\xi_1)\xi_2 - (\xi_2g'(\xi_1))^2]v_1\\[5pt]
\partial^2_{\xi_2^2}v_1(\xi_1,\xi_2) = & e^{-g(\xi_1)\xi_2} \partial^2_{\xi_2^2}v(\xi_1,\xi_2) - 2 g(\xi_1) \partial_{\xi_2}v_1(\xi_1,\xi_2)\\[5pt]
& \quad - g^2(\xi_1) v_1.
 \end{array}
 \right.
\end{equation}

It follows that the function $v_1$ satisfies
\begin{equation}\label{E-lsbs-36}
\left\{
 \begin{array}{l}
\Delta_{\xi}v_1 + a_1 \partial_{\xi_1}v_1 + a_2 \partial_{\xi_2}v_1 + A v_1 = 0\\[5pt]
\partial_{\xi_2}v_1(\xi_1,0) = 0,
 \end{array}
 \right.
\end{equation}
where the functions $a_1, a_2$ and $A$ are $C^{\infty}$ and bounded in the neighborhood in which we work.\smallskip

We can now repeat the arguments of the Neumann case, using the inequality
\[
|\Delta v_1| \le C \big( |\nabla v_1| + |v_1|\big)
\]
and apply  Theorem~\ref{T-SUCP}.\smallskip

This concludes the proof of Proposition~\ref{P-lsbs4-4}.
\end{proof}

\begin{remark}\label{R-lsbs-4}
We have proved Proposition~\ref{P-lsbs4-4} under the additional assumption that $\Omega$ is simply connected. In the general case, since the property we are interested in is local, it suffices to first reduce to a simply-connected neighborhood $\Omega_1$ of the point $y_0$. Indeed, in a neighborhood of $y_0$, the boundary $\Gamma$ is a graph above the tangent to $\Gamma$ at $y_0$ which we can choose as the first coordinate axis $x_1$. Then, provided that $r$ is small enough, $\Omega_1 := \Omega \cap D_{x}(y_0,r)$ will be simply connected, and we will choose a conformal diffeomorphism from $\bH$ to $\Omega_1$,  as given in Lemma~\ref{L-lsbs3-2}.\smallskip

In the general case of a compact surface with boundary, we can use \cite[Section~2]{YaZh2021} or \cite{PiVe2020}.
\end{remark}%

We also have the following relations in the sense of distributions.

\begin{equation}\label{E-lsbs4-42f1}
\partial_{\xi_1}w(\xi_1,\xi_2) \overset{\cD'}{=} \left\{
 \begin{array}{ll}
 \partial_{\xi_1}v(\xi_1,\xi_2) & \text{if\quad} \xi_2 > 0 \\[5pt]
 \partial_{\xi_1}v(\xi_1,-\xi_2) & \text{if\quad} \xi_2 > 0.
 \end{array}
 \right.
\end{equation}

\begin{equation}\label{E-lsbs4-42f2}
\partial_{\xi_2}w(\xi_1,\xi_2) \overset{\cD'}{=} \left\{
 \begin{array}{ll}
 \partial_{\xi_2}v(\xi_1,\xi_2) & \text{if\quad} \xi_2 > 0 \\[5pt]
- \partial_{\xi_2}v(\xi_1,-\xi_2) & \text{if\quad} \xi_2 > 0.
 \end{array}
 \right.
\end{equation}

\begin{equation}\label{E-lsbs4-42s11}
\partial^2_{\xi_1^2}w(\xi_1,\xi_2) \overset{\cD'}{=} \left\{
 \begin{array}{ll}
 \partial^2_{\xi_1^2}v(\xi_1,\xi_2)  & \text{if\quad} \xi_2 > 0 \\[5pt]
 \partial^2_{\xi_1^2}v(\xi_1,-\xi_2) & \text{if\quad} \xi_2 < 0.
 \end{array}
 \right.
\end{equation}

\begin{equation}\label{E-lsbs4-42s22}
\partial^2_{\xi_2^2}w(\xi_1,\xi_2) \overset{\cD'}{=} \left\{
 \begin{array}{ll}
 \partial^2_{\xi_2^2}v(\xi_1,\xi_2)  & \text{if\quad} \xi_2 > 0 \\[5pt]
 \partial^2_{\xi_2^2}v(\xi_1,-\xi_2) & \text{if\quad} \xi_2 < 0.
 \end{array}
 \right.
\end{equation}

\begin{equation}\label{E-lsbs4-42s12}
\partial^2_{\xi_1\xi_2}w(\xi_1,\xi_2) \overset{\cD'}{=} \partial^2_{\xi_2\xi_1}w(\xi_1,\xi_2) \overset{\cD'}{=}
\left\{
 \begin{array}{ll}
 \partial^2_{\xi_1\xi_2}v(\xi_1,\xi_2)  & \text{if\quad} \xi_2 > 0 \\[5pt]
- \partial^2_{\xi_1\xi_2}v(\xi_1,-\xi_2) & \text{if\quad} \xi_2 < 0.
 \end{array}
 \right.
\end{equation}

\subsection{Proof of the Local Structure Theorem  at the Boundary}\label{SS-lsbs5}~

 Let $y_0$ be a boundary singular point of an eigenfunction $u$ of \eqref{E-lsbs-02}. To analyze $u$ in a neighborhood of $y_0$, we apply Lemma~\ref{L-lsbs3-2} and work with the function $v = u\circ E$ which
satisfies \eqref{E-lsbs3-8}. \smallskip

From Subsection~\ref{SS-lsbs4}, we know that the function $v$ does not vanish at infinite order at $0$. Let $p:=\ord(v,0)$.   Applying Taylor's formula to the function $v$ at the point $0$, in the half-disk $\wb{D}_{+}(0,r_0)$ for some $r_0 > 0$, gives
\begin{equation}\label{E-lsbs5-4}
\left\{
\begin{array}{l}
v(\xi_1,\xi_2) = \sum_{|\alpha| = p} \frac{1}{\alpha!}\, D^{\alpha}\!v(0,0)\,(\xi_1,\xi_2)^{\alpha} + R_{p+1}(\xi_1,\xi_2), \text{~where}\\[8pt]
R_{p+1}(\xi_1,\xi_2) = \sum_{|\beta| = p+1} \frac{|\beta|}{\beta!} (\xi_1,\xi_2)^{\beta}  {\displaystyle \int_0^1 (1-s)^{|\beta|-1} D^{\beta}\!v(s\xi_1,s\xi_2)\,ds}.
\end{array}%
\right.
\end{equation}
Here,  as usual,
\begin{equation}\label{E-lsbs5-4a}
\alpha = (\alpha_1,\alpha_2), \quad|\alpha| = \alpha_1 + \alpha_2, \quad (\xi_1,\xi_2)^{\alpha} = \xi_1^{\alpha_1}\, \xi_2^{\alpha_2}, \quad
D^{\alpha}\!v = \dfrac{\partial^{|\alpha|}\!v}{\partial \xi_1^{\alpha_1}\partial \xi_2^{\alpha_2}}.
\end{equation}

Using \eqref{E-lsbs3-8}, and identifying the terms with lowest order, we find that the polynomial $P_p(\xi_1,\xi_2) := \sum_{|\alpha| = p} \frac{1}{\alpha!}\, D^{\alpha}\!v(0,0)\,(\xi_1,\xi_2)^{\alpha}$ is homogenous of degree $p$, and harmonic (with respect to $\Delta_{\xi}$) in $\bH$. Writing the harmonicity condition in polar coordinates $(\rho,\omega)$ at the point $0$ in $\R^2$, we find that the polynomial $P_p$ has the form
\begin{equation}\label{E-lsbs5-4b}
P_p(\rho\, \cos \omega, \rho\, \sin \omega) = \rho^p \, Q_p(\omega),
\end{equation}
with the function $Q_p$ satisfying $Q_p^{''}(\omega) + p^2 \, Q_p(\omega) \equiv 0$ in $(0,\pi)$. It follows that $v$ can be written as
\begin{equation}\label{E-lsbs5-6}
v(\rho\, \cos \omega, \rho\, \sin \omega) = \rho^p \, Q_p(\omega) + \rho^{p+1}\,T_{p+1}(\rho , \omega),
\end{equation}
where
\begin{equation}\label{E-lsbs5-8}
\resizebox{.89 \textwidth}{!}
{$
 T_{p+1}(\rho,\omega) =
\sum_{|\beta| = p+1} \frac{|\beta|}{\beta!} (\cos \omega,\sin \omega)^{\beta} {\displaystyle \int_0^1 (1-s)^{|\beta|-1} D^{\beta}\!v(s\rho \cos\omega,s\rho \sin\omega)\,ds}.
$}
\end{equation}

Depending on the boundary condition satisfied by $u$, see \eqref{E-lsbs3-10}, the function $v$ satisfies
\begin{enumerate}
  \item the Dirichlet condition $v(\xi_1,0) = 0$,
  \item the Neumann condition $\partial_{\xi_2}v(\xi_1,0) = 0$, or
  \item the Robin condition $\partial_{\xi_2}v(\xi_1,0) - h(\xi_1) \sqrt{J_E}(\xi_1,0) \, v(\xi_1,0) = 0$.
\end{enumerate}

We now express the normal derivative $\partial_{\xi_2}$ in terms of the $\rho$ and $\omega$ derivatives,
\begin{equation}\label{E-lsbs5-8a}
\partial_{\xi_2} = \sin\omega \, \partial_{\rho} + \cos\omega \, \frac{1}{\rho}\partial_{\omega}.
\end{equation}

In polar coordinates, the relation $\xi_2=0$ is equivalent to $\omega \in \set{0,\pi}$. For $\omega_0 \in \set{0,\pi}$, we have
\begin{equation*}
v(\rho\cos\omega_0,0) = \rho^{p} Q_p(\omega_0) + \cO(\rho^{p+1})
\end{equation*}
and
\begin{equation*}
\partial_{\xi_2}v(\rho\cos\omega_0,0) = \rho^{p-1}\, \cos(\omega_0) \, Q_p'(\omega_0) + \cO(\rho^p)\,
\end{equation*}
so that
\begin{equation*}
\partial_{\xi_2}v(\rho\cos\omega_0,0) -h_E(\rho \cos\omega_0)\, v(\rho\cos\omega_0,0) = \rho^{p-1}\, \cos(\omega_0) \, Q_p'(\omega_0) + \cO(\rho^p).
\end{equation*}

From these relations, we conclude that
\begin{enumerate}
\item if $v$ satisfies the \emph{Dirichlet condition}, then  $Q_p(0) = Q_p(\pi) = 0$,
and hence
\begin{equation}\label{E-lsbs5-8d}
v(\rho\, \cos \omega, \rho\, \sin \omega) = a_v \, \rho^p \, \sin(p\, \omega) + \rho^{p+1}\,T_{p+1}(\rho , \omega),
\end{equation}
\item if $v$ satisfies the \emph{Neumann or Robin condition}, then $Q'_p(0) =
Q'_p(\pi) = 0$, and hence
\begin{equation}\label{E-lsbs5-8r}
v(\rho\, \cos \omega, \rho\, \sin \omega) = a_v \, \rho^p \, \cos(p\, \omega) + \rho^{p+1}\,T_{p+1}(\rho , \omega),
\end{equation}
\end{enumerate}
where $a_v$ is a nonzero scalar.  For an alternative proof, see Subsection~\ref{SS-lsbs6}.\smallskip

Define
\begin{equation}\label{E-lsbs5-10}
\left\{
\begin{array}{l}
V_d(\rho,\omega) := \sin(p\, \omega) + a_v^{-1}\, \rho\, T_{p+1}(\rho,\omega)\\[5pt]
V_n(\rho,\omega) := \cos(p\, \omega) + a_v^{-1}\, \rho\, T_{p+1}(\rho,\omega).
\end{array}%
\right.
\end{equation}

\noid  \emph{Dirichlet boundary condition.}~ Define the values
\begin{equation}\label{E-lsbs5-12}
\omega_j := j\frac{\pi}{p}, \quad j \in \set{0,\ldots,p},
\end{equation}
\begin{equation}\label{E-lsbs5-12a}
\alpha_1 \in \big( 0,\frac{\pi}{8} \big) \text{~and~} \alpha_p := \frac{\alpha_1}{p}.
\end{equation}

In the interval $(0,\pi)$, the function $V_d(0,\omega)$ vanishes precisely for the values $\omega_j$ with $j \in \set{1,\ldots,p-1}$.
The following relations hold.
\begin{equation}\label{E-lsbs5-14}
\left\{
\begin{array}{l}
\sin\big(p\,(\omega_j \pm \alpha_p)\big) = \pm (-1)^j \, \sin(\alpha_1)\\[5pt]
|\sin(p\, \omega)| \ge \sin \alpha_1, \text{~for~} \omega \in \bigcup_{j=0\,}^{p-1} [\omega_j+\alpha_p,\omega_{j+1}-\alpha_p].
\end{array}%
\right.
\end{equation}

On the other hand, we have
\begin{equation*}
\partial_{\omega}V_d(\rho,\omega) = p \, \cos(p \, \omega) + |a_v|^{-1}\, \rho \, \partial_{\omega}T_{p+1}(\rho,\omega), \text{~and}
\end{equation*}
\begin{equation}\label{E-lsbs5-15d}
|\cos(p \, \omega)| \ge \cos(\alpha_1) \quad \forall \omega \in [0,\alpha_p] \cup [\pi - \alpha_p, \pi]\cup\bigcup_{j=1}^{p-1}[\omega_j - \alpha_p, \omega_j + \alpha_p].
\end{equation}

Define
\begin{equation}\label{E-lsbs5-16}
r_{d,1} := \frac{1}{2} \, \min\set{r_0 ,\, |a_v|\sin(\alpha_1)\, \|T_{p+1}\|^{-1}_{\infty,\frac{r_0}{2}},\, p\,|a_v|\cos(\alpha_1)\, \|\partial_{\omega}T_{p+1}\|^{-1}_{\infty,\frac{r_0}{2}}},
\end{equation}
where $\|\cdot\|_{\infty,\frac{r_0}{2}}$ denotes the $L^{\infty}$ norm of functions in $D_{+}(0,\frac{r_0}{2})$. Then, for all $r \le r_{d,1}$,
\begin{equation}\label{E-lsbs5-16a}
\left\{
\begin{array}{l}
\pm \, (-1)^j \, V_d(r,\omega_j \pm \alpha_p) \ge \frac 12 \, \sin(\alpha_1) \text{~~for all~~} 1 \le j \le p-1 \\[5pt]
|\partial_{\omega}V_d(r,\omega)| \ge \frac p2 \, \cos(\alpha_1)\\[5pt]
\quad \text{for all~} \omega \in [0,\alpha_p] \cup [\pi - \alpha_p, \pi]\cup\bigcup_{j=1}[\omega_j - \alpha_p, \omega_j + \alpha_p],\\[5pt]
|V_d(r,\omega)| \ge \frac 12 \, \sin(\alpha_1) \text{~~for all~~}
\omega \in \bigcup_{j=0}^{p-1}[\omega_j + \alpha_p, \omega_{j+1} - \alpha_p].
\end{array}
\right.
\end{equation}

\begin{proposition}\label{P-lsbs5-2d}
Assume that $\rho \le r_{d,1}$. The following properties hold.
\begin{enumerate}
  \item[(i)] The function $\omega \mapsto V_d(\rho,\omega)$ does not vanish in
  \[
 \bigcup_{j=0}^{p-1} [\omega_j+\alpha_p,\omega_{j+1}-\alpha_p].
  \]
  \item[(ii)] For each $j \in \set{1,\ldots,p-1}$, the function $\omega \mapsto V_d(\rho,\omega)$ has exactly one zero
  $\omegat_j(\rho) \in \big( \omega_j - \alpha_p ,\, \omega_j+\alpha_p \big)$, and does not vanish in $(0,\alpha_p] \cup [\pi - \alpha_p,\pi)$.
  \item[(iii)] For each $j \in \set{1,\ldots,p-1}$, the function $\rho \mapsto \omegat_j(\rho)$ is $C^{\infty}$ in the interval $\big( 0 ,\, r_1\big)$ and tends to $\omega_j$ as $\rho$ tends to zero.
  \item[(iv)] For each $j \in \set{1,\ldots ,p-1}$, the curve
  \[
  ( 0 ,\, r_{d,1}) \ni \rho \mapsto a_j(\rho) = \big( \rho \, \cos\omegat_j(\rho) ,\, \rho \, \sin\omegat_j(\rho)  \big)
  \]
  is smooth and has semi-tangent $\omega_j$ at the origin.
\end{enumerate}
\end{proposition}%

In the Dirichlet case, recall that  $p := \ord(v,0)$ and $\rho(v,0) = p-1$.\medskip

\begin{proof} For the proof, we use \eqref{E-lsbs5-16a}. To prove (i), we observe that in each set $\set{\rho} \times \big[ \omega_j+\alpha_p , \omega_{j+1}-\alpha_p \big]$, with $0 < \rho \le r_{d,1}$ and $j = 0 \ldots (p-1)$, $|V_d(\rho,\omega)| \ge \frac 12 \sin(\alpha_1)$. To prove (ii), we observe that the function $V_d(\rho,\omega)$ changes sign in each set $\set{\rho}\times \big( \omega_j -\alpha_p,\omega_j +\alpha_p\big)$, and that its partial derivative with respect to $\omega$ does not vanish.  We use a similar reasoning in $(0,\alpha_p] \cup [\pi - \alpha_p,\pi)$, taking into account the fact that $V_d(\rho,\omega)$ vanishes for $\omega=0$ or $\pi$. The first part of Assertion~(iii) follows from the implicit function theorem; the second part from the fact that $\alpha_1$ can be chosen as small as we want. Assertion~(iv) follows from the previous ones.
\end{proof}

Introduce the following notation (``$\cG\cR\cB$-arcs'').
\begin{equation}\label{E-lsbs5-18}
\left\{
\begin{array}{l}
[r,\omega] := (r\, \cos\omega, r\, \sin\omega),\\[5pt]
C_{+}(0,r) := \set{[r,\omega] \mid \omega \in (0,\pi)},\\[5pt]
\cG_{\mathrm{d}}(r,j) := \set{[r,\omega] \mid \omega \in (\omega_j - \alpha_p,\omega_j+\alpha_p)}, \text{~for~} 1 \le j \le (p-1),\\[5pt]
\cR_{\mathrm{d}}(r,j) := \set{[r,\omega] \mid \omega \in [\omega_j + \alpha_p,\omega_{j+1}-\alpha_p]}, \text{~for~} 0 \le j \le (p-1),\\[5pt]
\cB_{\mathrm{d}}(r,0) := \set{[r,\omega] \mid \omega \in (0,\alpha_p]},\\[5pt]
\cB_{\mathrm{d}}(r,p) := \set{[r,\omega] \mid \omega \in [\pi-\alpha_p,\pi)}.
\end{array}
\right.
\end{equation}

These arcs are illustrated in Figure~\ref{F-gam0-0bis}, left image (Dirichlet case with $p=8$). The $\cG$-arcs are centered on the rays $\omega_j$; the $\cR$-arcs do not intersect the rays; the $\cB$-arcs do not intersect the rays either and touch the boundary.\smallskip

For $r \le r_{d,1}$, the nodal set $\cZ(v)$ meets each $\cG$-arc precisely once, and does not meet the $\cR\cB$-arcs. More precisely, for $0 < r \le r_{d,1}$, we have the following properties:
\begin{equation}\label{E-lsbs5-18a}
\left\{
\begin{array}{l}
 \pm \, (-1)^{j}\, \sign(a_v) \, v([r,\omega_j \pm \alpha_p]) \ge \frac 12 |a_v|\, \sin(\alpha_1)\, r^p.\\[5pt]
\text{In~} \cG_{\mathrm{d}}(r,j), 1 \le j \le (p-1), |\partial_{\omega}v(r,\omega)| \ge \frac p2 \, |a_v| \, \cos(\alpha_1) \, r^p \\[5pt]
\hspace{0.8cm} \text{and~} v(r,\omega) \text{~vanishes precisely once.} \\[5pt]
\text{In~} \cB_{\mathrm{d}}(r,0) \cup \cB_{\mathrm{d}}(r,p), |\partial_{\omega}v(r,\omega)| \ge \frac p2 \, |a_v| \, \cos(\alpha_1) \, r^p\\[5pt]
\hspace{0.8cm} \text{and~} v(r,\omega) \text{~does not vanish.}\\[5pt]
\text{In~} \cR_{\mathrm{d}}(r,j), 0 \le j \le (p-1), |v(r,\omega)| \ge \frac 12 \, |a_v|\, \sin(\alpha_1) \, r^p\\[5pt]
\hspace{0.8cm} \text{and~} v(r,\omega) \text{~does not vanish.}
\end{array}
\right.
\end{equation}

\noid \emph{Neumann or Robin boundary condition.}~  We introduce  the values
\begin{equation}\label{E-lsbs5-12n}
\omega'_j := (j-\frac 12)\,\frac{\pi}{p}, \quad j \in \set{1,\ldots,p},
\end{equation}
\begin{equation}\label{E-lsbs5-12na}
\alpha_1 \in \big( 0,\frac{\pi}{8} \big) \text{~and~} \alpha_p := \frac{\alpha_1}{p}.
\end{equation}

In the interval $(0,\pi)$, the function $V_n(0,\omega)$ vanishes precisely for the values $\omega'_j$, $1 \le j \le p$. The following relations hold.

\begin{equation}\label{E-lsbs5-14n}
\left\{
\begin{array}{l}
\cos\big(p\,(\omega'_j \pm \alpha_p)\big) = \pm (-1)^j \, \sin(\alpha_1) \text{~~for~~} 1 \le j \le p\\[5pt]
|\cos(p\, \omega)| \ge \sin \alpha_1 \text{~~for all}\\[5pt]
\hspace{0.8cm} \omega \in [0,\omega'_1-\alpha_p] \cup [\omega'_p+\alpha_p,\pi] \cup \,  \bigcup_{j=1}^{p-1} [\omega'_j+\alpha_p,\omega'_{j+1}-\alpha_p].
\end{array}%
\right.
\end{equation}

On the other hand, we have
\begin{equation*}
\partial_{\omega}V_n(\rho,\omega) = - p \, \sin(p \, \omega) + |a_v|^{-1}\, \rho \, \partial_{\omega}T_{p+1}(\rho,\omega), \text{~and}
\end{equation*}
\begin{equation}\label{E-lsbs5-15n}
|\sin(p \, \omega)| \ge \cos(\alpha_1) ,\quad \forall \omega \in \bigcup_{j=1}^p[\omega'_j - \alpha_p, \omega'_j + \alpha_p].
\end{equation}
Define
\begin{equation}\label{E-lsbs5-16n}
r_{n,1} := \frac{1}{2} \, \min\set{r_0 ,\, |a_v|\sin(\alpha_1)\, \|T_{p+1}\|^{-1}_{\infty,\frac{r_0}{2}},\, p\, |a_v|\cos(\alpha_1)\, \|\partial_{\omega}T_{p+1}\|^{-1}_{\infty,\frac{r_0}{2}}},
\end{equation}
where $\|\cdot\|_{\infty,\frac{r_0}{2}}$ denotes the $L^{\infty}$ norm of functions on the disk $D_{+}(0,\frac{r_0}{2})$.\smallskip

Then, for all $r \le r_{n,1}$,
\begin{equation}\label{E-lsbs5-16na}
\left\{
\begin{array}{l}
\pm \, (-1)^j \, V_n(r,\omega'_j \pm \alpha_p) \ge \frac 12 \, \sin(\alpha_1) \text{~~for~~} 1 \le j \le p \\[5pt]
|\partial_{\omega}V_n(r,\omega)| \ge \frac p2 \, \cos(\alpha_1)
\text{~for all~} \omega \in \bigcup_{j=1}^{p}[\omega'_j - \alpha_p, \omega'_j + \alpha_p],\\[5pt]
|V_n(r,\omega)| \ge \frac 12 \, \sin(\alpha_1) \text{~~for all}\\[5pt]
\hspace{0.8cm}
\omega \in [0,\omega'_1-\alpha_p] \cup [\omega'_p+\alpha_p,\pi] \cup \bigcup_{j=1}^{p-1} [\omega'_j+\alpha_p,\omega'_{j+1}-\alpha_p].
\end{array}
\right.
\end{equation}

\begin{proposition}\label{P-lsbs5-2n}
Assume that $\rho \le r_{n,1}$. The following properties hold.
\begin{enumerate}[(i)]
\item The function $\omega \mapsto V_n(\rho,\omega)$ does not vanish in
\[
[0,\omega'_1-\alpha_p] \cup [\omega'_p+\alpha_p,\pi] \cup \bigcup_{j=1}^{p-1} [\omega'_j+\alpha_p,\omega'_{j+1}-\alpha_p].
\]
\item For each $j \in \set{1,\ldots,p}$, the function $\omega \mapsto V_n(\rho,\omega)$ has exactly one zero $\omegat'_j(\rho) \in \big( \omega'_j - \alpha_p ,\, \omega'_j+\alpha_p \big)$.
  \item For each $j \in \set{1,\ldots,p}$, the function $\rho \mapsto \omegat'_j(\rho)$ is $C^{\infty}$ in the interval $\big( 0 ,\, r_{n,1}\big)$ and tends to $\omega'_j$ as $\rho$ tends to zero.
  \item For each $j \in \set{1,\ldots,p}$, the curve
  \[
  \big( 0 , r_{n,1}\big) \ni \rho \mapsto a_j(\rho) = \big( \rho \, \cos(\omegat_j(\rho) ,\, \rho \, \sin(\omegat_j(\rho)  \big)
  \]
  is smooth and has semi-tangent $\omega'_j$ at the origin.
\end{enumerate}
\end{proposition}%

Recall that, in the Robin case,  $p := \ord(v,0) = \rho(v,0) =: q$.\medskip

\begin{proof} The proof is similar to the previous one. \end{proof}

Introduce the following notation (``$\cG\cR$-arcs'').
\begin{equation}\label{E-lsbs5-18n}
\left\{
\begin{array}{l}
[r,\omega] := (r\, \cos\omega, r\, \sin\omega),\\[5pt]
C_{+}(0,r) := \set{[r,\omega] \mid \omega \in (0,\pi)},\\[5pt]
\cG_{\mathrm{n}}(r,j) := \set{[r,\omega] \mid \omega \in (\omega'_j - \alpha_p,\omega'_j+\alpha_p)}, \text{~for~} 1 \le j \le p,\\[5pt]
\cR_{\mathrm{n}}(r,j) := \set{[r,\omega] \mid \omega \in [\omega'_j + \alpha_p,\omega'_{j+1}-\alpha_p]}, \text{~for~} 1 \le j \le (p-1),\\[5pt]
\cR_{\mathrm{n}}(r,0) := \set{[r,\omega] \mid \omega \in (0,\omega'_1-\alpha_p]},\\[5pt]
\cR_{\mathrm{n}}(r,p) := \set{[r,\omega] \mid \omega \in [\omega'_p+\alpha_p,\pi)}.
\end{array}
\right.
\end{equation}

These arcs are illustrated in Figure~\ref{F-gam0-0bis}, right image (Robin case, with $p=7$). The $\cG$-arcs are centered on the rays $\omega'_j$\,; the $\cR$-arcs do not intersect the rays and there are no $\cB$-arc. \smallskip

For $0 < r < r_{n,1}$, the nodal set $\cZ(v)$ meets each $\cG$-arc precisely once, and does not meet the $\cR$-arcs. More precisely, for $0 < r \le r_{n,1}$, we have the following properties:
\begin{equation}\label{E-lsbs5-18na}
\left\{
\begin{array}{l}
 \mp \, (-1)^{j}\, \sign(a_v) \, v([r,\omega'_j \pm \alpha_p]) \ge \frac 12 |a_v|\, \sin(\alpha_1)\,  r^p.\\[5pt]
\text{In~} \cG_{\mathrm{n}}(r,j), 1 \le j \le p, |\partial_{\omega}v(r,\omega)| \ge \frac p2 \, |a_v| \, \cos(\alpha_1) \, r^p \\[5pt]
\hspace{0.8cm} \text{and~} v(r,\omega) \text{~vanishes precisely once.} \\[5pt]
\text{In~} \cR_{\mathrm{n}}(r,0) \cup \cR_{\mathrm{n}}(r,p), |v(r,\omega)| \ge \frac 12 \, |a_v| \, \sin(\alpha_1) \, r^p\\[5pt]
\hspace{0.8cm} \text{and~} v(r,\omega) \text{~does not vanish.}\\[5pt]
\text{In~} \cR_{\mathrm{n}}(t,j), 1 \le j \le (p-1), |v(r,\omega)| \ge \frac 12 \, |a_v|\, \sin(\alpha_1) \, r^p\\[5pt]
\hspace{0.8cm} \text{and~} v(r,\omega) \text{~does not vanish.}
\end{array}
\right.
\end{equation}

\begin{remark}\label{R-lsbs5-2}~
Translated into properties of the eigenfunction $u$, Propositions~\ref{P-lsbs5-2d}--\ref{P-lsbs5-2n}, tell us that, in a neighborhood of a singular point $y_0$ of $u$, the nodal set $\cZ(u)$ consists of $(p-1)$, resp. $p$, smooth semi-arcs emanating from $y_0$ tangentially to the rays $\omega_j$, resp. $\omega'_j$.  These arcs are contained in the image under $E$ of conical neighborhoods of the rays (controlled by the parameter $\alpha_1$).
\end{remark}%

The $\cG$-arcs appear in white on the semi-circles in Figure~\ref{F-gam0-0bis} as they represent open gates through which the nodal set exits the disk. In the digital version, the $\cR\cB$-arcs are colored respectively red, and blue. In the printed version they appear in gray.

\begin{figure}[!htb]
  \centering
  \includegraphics[width=0.9\textwidth]{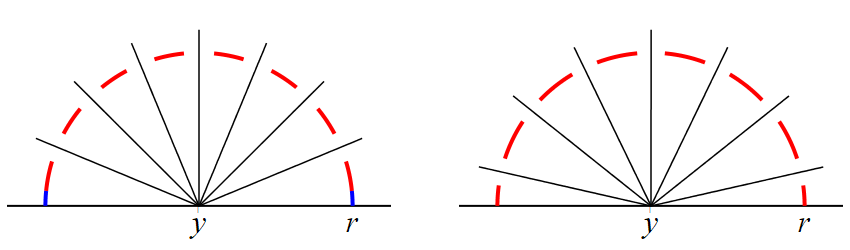}
  \caption{$\cG\cR\cB$-arcs for $v$ (left, Dirichlet -- right, Robin), $\rho(v,0)=7$}\label{F-gam0-0bis}
\end{figure}

\FloatBarrier
\subsection{A refined Taylor formula near a boundary singular point}\label{SS-lsbs6}~

Let $v = u\circ E$ as in Subsection~\ref{SS-lsbs5}, and $p = \ord(v,0)$. Applying Taylor's formula at order $(p+1)$ to the function $v$ at the point $0$ in the half-disk $\wb{D}_{+}(0,r_0)$, gives
\index{2-R@$R$!$R_{\beta}$}
\begin{equation}\label{E-lsbs6-4}
\left\{
\begin{array}{ll}
v(\xi_1,\xi_2) & = \sum_{|\alpha| = p} \frac{1}{\alpha!} D^{\alpha}\!v(0) (\xi_1,\xi_2)^{\alpha}\\[5pt]
& \quad + \sum_{|\alpha| = p+1} \frac{1}{\alpha!} D^{\alpha}\!v(0) (\xi_1,\xi_2)^{\alpha}\\[5pt]
& \quad + \sum_{|\beta|=p+2}R_{\beta}(\xi_1,\xi_2) \, (\xi_1,\xi_2)^{\beta}, \text{where} \\[8pt]
R_{\beta}(\xi_1,\xi_2) &  = \frac{|\beta|}{\beta!} {\displaystyle \int_0^1 (1-s)^{|\beta|-1} D^{\beta}\!v(s \xi_1,s \xi_2)\,ds}.
\end{array}%
\right.
\end{equation}
Then, both
\begin{equation}\label{E-lsbs6-6}
\left\{
\begin{array}{l}
p_0(\xi_1,\xi_2) := \sum_{|\alpha| = p} \frac{1}{\alpha!}\, D^{\alpha}\!v(0) (\xi_1,\xi_2)^{\alpha} \text{~and}\\[5pt]
p_1(\xi_1,\xi_2) := \sum_{|\alpha| = p+1} \frac{1}{\alpha!} D^{\alpha}\!v(0) (\xi_1,\xi_2)^{\alpha}
\end{array}
\right.
\end{equation}
 are homogeneous \emph{harmonic} polynomials, respectively of degree $p$ and $(p+1)$.\medskip

Harmonic homogenous polynomials of degree $n \ge 1$ in two variables form a two dimensional vector space $\cH_n$. Writing
\index{2-C@$C_n$} \index{2-S@$S_n$}
\begin{equation}\label{E-lsbs6-12}
(\xi_1 + i \,\xi_2)^n = C_n(\xi_1,\xi_2) + i \, S_n(\xi_1,\xi_2),
\end{equation}
where
\begin{equation}\label{E-lsbs6-14}
\left\{
\begin{array}{ll}
C_n(\xi_1,\xi_2) & = \sum^n_{k=0,\text{~even}\,}(-1)^{\frac k2} \binom{n}{k}\, \xi_1^{n-k}\, \xi_2^k\\[5pt]
& = \xi_1^n - \binom{n}{2}\xi_1^{n-2}\xi_2^2 + \binom{n}{4}\xi_1^{n-4}\xi_2^4 - \cdots \\[5pt]
S_n(\xi_1,\xi_2) & = \sum^n_{k=0,\text{~odd}\,}(-1)^{\floor{\frac k2}} \binom{n}{k}\, \xi_1^{n-k}\, \xi_2^k \\[5pt]
& = n  \xi_1^{n-1}\, \xi_2 - \binom{n}{3}\xi_1^{n-3}\xi_2^3 + \binom{n}{5}\xi_1^{n-5}\xi_2^5 - \cdots\,
\end{array}
\right.
\end{equation}
we obtain a basis $\set{C_n,S_n}$ of $\cH_n$. In polar coordinates, we have
\begin{equation}\label{E-lsbs6-14p}
\left\{
\begin{array}{l}
C_n(\rho\,\cos\omega,\rho\, \sin\omega) = \rho^n\, \cos(n\omega) \\[5pt]
S_n(\rho\,\cos\omega,\rho\, \sin\omega) = \rho^n\, \sin(n\omega) .
\end{array}
\right.
\end{equation}

We list the following relations for later use.
\begin{equation}\label{E-lsbs6-16}
\left\{
\begin{array}{ll}
\partial_{\xi_1}C_n(\xi_1,\xi_2) & = n\, C_{n-1}(\xi_1,\xi_2)\\[5pt]
& = n\, \xi_1^{n-1} - (n-2) \binom{n}{2}\, \xi_1^{n-3}\xi_2^2 +  \cdots \\[5pt]
\partial_{\xi_1}S_n(\xi_1,\xi_2) & = n\, S_{n-1}(\xi_1,\xi_2) \\[5pt]
& = n(n-1)\,  \xi_1^{n-2}\, \xi_2 - n \binom{n-1}{3}\, \xi_1^{n-4}\xi_2^3 + \cdots.
\end{array}
\right.
\end{equation}
\begin{equation}\label{E-lsbs6-18}
\left\{
\begin{array}{ll}
\partial_{\xi_2}C_n(\xi_1,\xi_2) & = -n\, S_{n-1}(\xi_1,\xi_2)\\[5pt]
& = -2\binom{n}{2}\, \xi_1^{n-2}\xi_2 + 4 \binom{n}{4}\, \xi_1^{n-4}\xi_2^3 +  \cdots \\[5pt]
\partial_{\xi_2}S_n(\xi_1,\xi_2) & = n\, C_{n-1}(\xi_1,\xi_2)\\[5pt]
& = n\,  \xi_1^{n-1} - 3 \binom{n}{3}\, \xi_1^{n-3}\xi_2^2 + \cdots.
\end{array}
\right.
\end{equation}
\begin{equation}\label{E-lsbs6-20}
\left\{
\begin{array}{rllrl}
C_n(\xi_1,0) & = \xi_1^n & \text{~and~}  & S_n(\xi_1,0) & = 0\\[5pt]
\partial_{\xi_1}C_n(\xi_1,0) & = n\, \xi_1^{n-1} &  \text{~and~} & \partial_{\xi_1}S_n(\xi_1,0) & = 0\\[5pt]
\partial_{\xi_2}C_n(\xi_1,0) & = 0 &  \text{~and~} & \partial_{\xi_2}S_n(\xi_1,0) & = n\, \xi_1^{n-1}.
\end{array}
\right.
\end{equation}
\medskip

Coming back to the Taylor formula at order $(p+1)$ for the function $v$, and using the preceding relations, we rewrite \eqref{E-lsbs6-4} as
\begin{equation}\label{E-lsbs6-26}
\begin{array}{ll}
v(\xi_1,\xi_2) & = c_{v,p}\, C_p(\xi_1,\xi_2) + s_{v,p}\, S_p(\xi_1,\xi_2)\\[5pt]
&\qquad +\, c_{v,p+1}\, C_{p+1}(\xi_1,\xi_2) + s_{v,p+1}\, S_{p+1}(\xi_1,\xi_2)\\[5pt]
&\qquad +\, \sum_{|\beta|=p+2}R_{\beta}(\xi_1,\xi_2)\, (\xi_1,\xi_2)^{\beta}.
\end{array}%
\end{equation}

\noid \emph{Dirichlet boundary condition.} In this case, we have $v(\xi_1,0) \equiv 0$. Using \eqref{E-lsbs6-20}, we obtain
\begin{equation*}
0 \equiv c_{v,p}\, \xi_1^p + c_{v,p+1} \, \xi_1^{p+1} + \cO(\xi_1^{p+2}),
\end{equation*}
which implies that $c_{v,p} = c_{v,p+1} = 0$.\medskip

\noid \emph{Neumann boundary condition.} In this case, we have $\partial_{\xi_2}v(\xi_1,0) \equiv 0$. Using \eqref{E-lsbs6-20}, we obtain
\begin{equation*}
0 \equiv p\, s_{v,p}\, \xi_1^{p-1} + (p+1)\, s_{v,p+1} \, \xi_1^{p} + \cO(\xi_1^{p+1}),
\end{equation*}
which implies that $s_{v,p} = s_{v,p+1} = 0$.\medskip

\noid \emph{Robin boundary condition.} In this case, we have $\partial_{\xi_2}v(\xi_1,0) \equiv h_E(\xi_1)\, v(\xi_1,0)$. Using \eqref{E-lsbs6-20}, we obtain

\begin{equation*}
p\, s_{v,p}\, \xi_1^{p-1} + (p+1)\, s_{v,p+1} \, \xi_1^{p} + \cO(\xi_1^{p+1}) \equiv h_E(0) \, c_{v,p}\, \xi_1^p + \cO(\xi^{p+1}),
\end{equation*}
which implies that $s_{v,p} = 0$ and $(p+1) \, s_{v,p+1} = h_E(0) \, c_{v,p}$.\medskip

We have proved the following lemma.

\begin{lemma}\label{L-lsbs6-2} Depending on the boundary condition, for the function $v$ such that $\ord(v,0)=p$, the Taylor formula at the point $0$ and at order $(p+1)$ is given as follows.
\vspace{-1em}
\begin{align*}
\intertext{Dirichlet problem}
v(\xi_1,\xi_2) & = s_{v,p} S_p(\xi_1,\xi_2) + s_{v,p+1}  S_{p+1}(\xi_1,\xi_2) + R_{p+2}(\xi_1,\xi_2),\\
\intertext{Neumann problem}
v(\xi_1,\xi_2) & = c_{v,p} C_p(\xi_1,\xi_2) + c_{v,p+1}  C_{p+1}(\xi_1,\xi_2) + R_{p+2}(\xi_1,\xi_2),\\
\intertext{Robin problem}
v(\xi_1,\xi_2) & =  c_{v,p} C_p(\xi_1,\xi_2) + c_{v,p+1}  C_{p+1}(\xi_1,\xi_2) + \frac{c_{v,p} h_E(0)}{p+1}\, S_{p+1}(\xi_1,\xi_2)\\
& \quad + R_{p+2}(\xi_1,\xi_2),\\
\intertext{where the remainder term}
R_{p+2}(\xi_1,\xi_2) & = \sum_{|\beta|=p+2}R_{\beta}(\xi_1,\xi_2) (\xi_1,\xi_2)^{\beta}
\end{align*}
vanishes at order at least $(p+2)$ at zero.
\end{lemma}%

\begin{remark}\label{R-lsbs6-2}
Note that one recovers the Neumann case from the Robin case. Note also that one recovers the formulas in polar coordinates form given in \eqref{E-lsbs5-8d} and \eqref{E-lsbs5-8r}.
\end{remark}%

\subsection{Non simply connected domains}\label{SS-lsbs9}~

Let $\Omega \subset \R^2$ be a smooth bounded domain, contained in some open disk $D(0,r)$. The boundary $\Gamma$ of $\Omega$ is a compact $1$-manifold without boundary, so that it has finitely many  components, $\Gamma_1, \ldots, \Gamma_k$, $1 \le k < \infty$, which are all diffeomorphic to $\bS^1$, and hence are Jordan curves. Each curve $\Gamma_j$ separates the plane into two  components, one bounded $B_j$, the other one unbounded $D_j$. \medskip

\textbf{Claim}.~ Relabeling the components $\Gamma_j$ is necessary,
$\Omega = B_1 \sm \bigcup_{j=2}^k B_j$.

 \emph{Sketch of the proof}.
\begin{description}
  \item[\nf Fact~1.] \emph{For all $j$, $1 \le j \le k$, $\Omega \subset B_j$ or $\Omega \subset D_j$.}\\
      Assume this is not the case. Then, for some $j$, there exists $b, d \in \Omega$ with $b \in B_j$ and $d \in D_j$. Since $\Omega$ is connected, there exists a continuous path from $b$ to $d$ entirely contained in $\Omega$. On the other hand, this path would have to intersect $\Gamma_j$, a contradiction.
  \item[\nf Fact~2.] \emph{There exists at most one $j$ such that $\Omega \subset B_j$.}\\
      Assume this is not the case, and that $\Omega \subset B_1 \cap B_2$. Since $\Gamma_2$ does not intersect $\Gamma_1$, it must be contained in $B_1$ or in $D_1$.  If $\Gamma_2 \subset D_2$ then $B_1 \cap B_2 = \emptyset$, a contradiction. If $\Gamma_2 \subset  B_1$, we reach a contradiction with the fact that we have point is $\Omega$ as close as we want from $\Gamma_1$ although $B_1$ is at positive distance from $\Gamma_1$.
  \item[\nf Fact~3.] \emph{There exists some $j$ such that $\Omega \subset B_j$.}\\
      Take some $x_0 \in \Omega$, and consider $r_0 := \sup \set{d(x_0,x) \mid x \in \Omega}$. Then $r_0$ is finite and the supremum is achieved at some $x_1 \in \Gamma$, and $\Omega \subset \wb{B(x_0,r_1)}$. If $x_1 \in \Gamma_j$ then $\Omega \subset B_j$.
  \item[\nf Fact~4.] \emph{Up to relabeling the boundary components, we have} $\Omega \subset B_1$ \emph{and} $\Omega \subset D_j$ \emph{for all} $j = 2, \ldots, k$, \emph{and}
      \[
      \Omega = B_1 \bigcap\, \cap_{j=2}^k D_j = B_1 \sm \bigcup_{j=2}^k B_j.
      \]
The inclusion $\subset$ is clear. If the inclusion $\supset$ were not true, we would find a point $x \in B_1 \bigcap \,  \cap_{j=2}^k D_j$, not in $\Omega$. The largest disk $B(x,r)$ contained in $B_1 \bigcap \cap_{j=2}^k D_j$ would touch one of the $\Gamma_j$ and this would yield a contradiction since there is a one-sided neighborhood of each $\Gamma_j$ contained in $\Omega$.
\quad \qedc
\end{description}

The domain $B_1$ is simply connected and its boundary $\Gamma_1$ is called the \emph{outer boundary} of $\Omega$. We say that $B_1$ is obtained from $\Omega$ by \emph{filling the holes.}\smallskip

Given any  component $\Gamma_j$ of $\Gamma$ there exists a diffeomorphism $\Psi$ of $\R^2$ such that the outer boundary of $\Psi(\Omega)$ is $\Psi(\Gamma_1)$. For this purpose, it suffices to use two stereographic projections from $\bS^2$ to $\R^2$, using adhoc points on $\bS^2$. As explained in \cite[p.~24]{BeKr1987} using the outer boundary and applying Lemma~\ref{L-lsbs3-2} several times, one can construct a conformal diffeomorphism from $B_1$ to the unit disc $\D$ sending the outer boundary of $\Omega$ to $\partial \D$, and the other boundary components to analytic Jordan curves in $\D$.
\begin{remark}\label{R-lsbs9-4}
As far as the local boundary behavior of an eigenfunction is concerned, we could use the following alternative argument. Given a point $m_0 \in \Gamma$, choose the coordinate system $(x_1,x_2)$ in $\R^2$ such that $m_0 = (0,0)$ and the $x_1$-axis is tangent to $\Gamma$ at $0$. Choose $a>0$ small enough so that $\Omega_a := \Omega \cap D_{x}(0,a)$ is simply connected and $\Gamma \cap D_{x}(0,a)$ is a graph above the segment $(-a,a) \times \set{0}$.

Since $\Omega_a$ is simply connected, there exists a conformal diffeomorphism $E$ from $\bH$ onto $\Omega_a$, and this map is smooth up to the boundary except around the intersection points of $\partial D_{x}(0,a)$ with $\Gamma$.  We may also assume that $E(0,0) =(0,0)$. Choose $r_0 > 0$ small enough so that $E|_{D_{+}(0,r_0)\cap \bH}$ is $C^{\infty}$ up to the boundary.
\end{remark}%

\chapter{Eigenvalue Bounds for Riemannian Spheres with Potential}\label{Ch-rsp}

\section{Revisiting the Multiplicity Bounds for Riemannian Spheres}\label{S-h2n}

\subsection{Introduction}\label{SS-h2n1}

 In the first section of this chapter, we revisit the paper \cite{HoHN1999} by M. and T.~Hoffmann-Ostenhof and N.~Nadirashvili, in which the authors consider Riemannian spheres with potential $(M,g,V)$. This means that $M$ is a $\Cty$ surface homeomorphic to the sphere, and that it is equipped with a $\Cty$ Riemannian metric $g$, and with a $\Cty$ real valued potential $V$. We denote the eigenvalues of the Schr\"{o}dinger operator $-\Delta + V$ on $M$ by $\set{\lambda_k}_{k=1}^{\infty}$, with first label $1$, see Section~\ref{S-evp}, Equation~\eqref{E-evp-2c}.\smallskip

According to Cheng \cite{Chen1976},  for $(M,g,0)$, $\mult(\lambda_2) \le 3$, and this bound is sharp, achieved for the round metric on the sphere. Nadirashvili \cite{Nadi1987}, proved that $\mult(\lambda_k)\break \le (2k-1)$, for any
$k \ge 1$, and for any such $(M,g,V)$.
In \cite{HoHN1999}, M. and T. Hoffmann-Ostenhof and Nadirashvili, prove the following result.

\begin{theorem}[\cite{HoHN1999}, Theorem~1]\label{T-h2n}
For any smooth Riemannian metric $g$, and any smooth real valued potential $V$ on the sphere $M$, the eigenvalues of the operator $-\Delta+V$ satisfy ~$\mult(\lambda_k) \le (2k-3)$ for any $k \ge 3$.
\end{theorem}%

\emph{Sketch of the proof of Theorem~\ref{T-h2n}}.
Fix some $x \in M$. Consider the eigenspace
$U(\lambda_k)$. We first prove Nadirashvili's estimate,
\begin{equation}\label{E-h2n-4a}
\text{for~}(M,g,V), ~~\mult(\lambda_k) \le (2k-1) \text{~for all~} k \ge 1.
\end{equation}
The estimate is clear for $k=1$. Assume, by contradiction,  that $\mult(\lambda_k) = \dim U(\lambda_k) \ge 2k $ for some integer $k \ge 2$. Then, according to Lemma~\ref{L-zeroi}, there exists a function $0 \not = u \in U(\lambda_k)$ such that $\nu(u,x) \ge 2k$. By Courant's theorem, Theorem~\ref{T-RC},  the number of nodal domains of $u$ satisfies $\kappa(u) \le k$. Since  $M$ is  topologically  a sphere, we can apply Euler's formula \eqref{E-etf0-2} to the nodal set $\cZ(u)$ of the function $u$,
\begin{equation}\label{E-h2n-2}
\kappa(u) = 1 + b_0(\cZ(u)) + {\frac 12} \, \sum_{z \in \cS(u)} \big( \nu(u,z) - 2 \big).
\end{equation}

Summing up the above information, we obtain
\begin{equation}\label{E-h2n-4}
0 \ge \kappa(u) - k = \left\lbrace b_0(\cZ(u)) - 1\right\rbrace + \sum_{\substack{z \in \cS(u) \\z \not = x}} \frac{\nu(u,z) - 2}{2}
+ \lbrace \frac{\nu(u,x)}{2} - k + 1 \rbrace \ge 1,
\end{equation}
a contradiction. \smallskip

Note that inequality \eqref{E-h2n-4a} is sharp for $k=1$ and $2$, see Table~\ref{E-scr-2T} in Section \ref{S-sketch}. \smallskip

In view of \eqref{E-h2n-4a}, to prove Theorem~\ref{T-h2n}, it suffices to show that the cases in which $\dim U(\lambda_k) = (2k-1)$ or $\dim U(\lambda_k) = (2k-2)$ cannot occur when $k\ge 3$. This is the purpose of the following two subsections, in which we revisit the arguments of \cite{HoHN1999}. More precisely, in Subsection~\ref{SS-h2n-s2}, we assume that $\dim U(\lambda_k) = (2k-1)$ for some $k \ge 3$, and we reach a contradiction using an argument which will be recurrent in this paper, the \emph{rotating function argument}, \index{Rotating function argument} see Paragraph~\ref{SSS-h2n-s2r}. In Subsection~\ref{SS-h2n-s3}, we assume that $\dim U(\lambda_k) = (2k-2)$ for some $k \ge 3$, and we reach a contradiction by using both the \emph{rotating function argument} and the Poincar\'{e}-Hopf theorem, \index{Poincar\'e-Hopf theorem}  see \cite[Chap.~6, p.~35]{Miln1997}

\subsection{Riemannian spheres with potential, $\dim U(\lambda_k) \le (2k-2)$ for $k\ge 3$}\label{SS-h2n-s2}\phantom{}

The proof of this upper bound is by contradiction. Taking \eqref{E-h2n-4a} into account, we assume that
\begin{equation}\label{E-h2n-s2}
\text{there exists some}\quad k \ge 3 \text{~~such that~~} \dim U(\lambda_k) = (2k-1).
\end{equation}

Fixing some $x \in M$, we introduce the subspace \index{2-W@$W_x$}
\begin{equation}\label{E-h2n-s2a}
W_x := \set{u \in U(\lambda_k) \mid \nu(u,x) \ge 2(k-1)}.
\end{equation}

Fix a direct orthonormal frame $\set{\vec{e}_1,\vec{e}_2}$ in $T_xM$, and the associated polar coordinates $(r,\omega)$, via the exponential map $\exp_x$.\smallskip

\subsubsection{Structure of the nodal set $\cZ(u)$, for $0 \neq u \in W_x$}\label{SSS-h2n-s2s}

\begin{properties}\label{propertyD2}
Assume that $\dim U(\lambda_k) = (2k-1)$ for some  $k \ge 3$. The linear subspace
\begin{equation*}
W_x = \set{u \in U(\lambda_k) \mid \nu(u,x) \ge 2(k-1)}
\end{equation*}
has the following properties. For any $0 \not = u \in W_x$,
\begin{enumerate}[(i)]
 \item $\nu(u,x) = 2(k-1)$, and $x$ is the only singular point of the function $u$;
 \item $\cZ(u)$ is connected;
 \item $\kappa(u) = k$.
\end{enumerate}
\noindent Furthermore, $\dim W_x = 2$, and there exists a basis $\set{v_1,v_2}$ of $W_x$ such that, in the local polar coordinates $(r,\omega)$ centered at $x$,
\begin{equation}\label{E-h2n-5}
\left\{
\begin{array}{l}
v_1(r,\omega) = r^{k-1} \sin ((k-1)\omega) +\mathcal O (r^k),\\
v_2(r,\omega) = r^{k-1} \cos ((k-1)\omega) +\mathcal O (r^k).
\end{array}
\right.
\end{equation}
\end{properties}

\begin{proof}  According to Lemma~\ref{L-zeroi}~(ii), there exist two linearly independent functions $u_{1}, u_{2} \in U(\lambda_k)$, with $\nu(u_{i},x) \ge 2(k-1)$, for $i = 1, 2$, so that $\dim W_x \ge 2$.  Given any $0 \not = u \in W_x$, we can apply Inequality~\eqref{E-h2n-4} to $u$,
\begin{equation}\label{E-h2n-6}
0 \ge \kappa(u) - k = \left\lbrace b_0(\cZ(u)) - 1\right\rbrace + \sum_{\substack{z \in \cS(u)\\ z \not = x}} \frac{\nu(u,z) - 2}{2} + \lbrace \frac{\nu(u,x)}{2} - k + 1 \rbrace.
\end{equation}
The three terms in the right-hand side of \eqref{E-h2n-6} are nonnegative, and their sum is nonpositive. They must all vanish. This proves Assertions~(i)--(iii).\smallskip

We already know that $W_x$ has dimension at least $2$. Assume that it has dimension at least $3$, and let $v_1, v_2, v_3$ be three linearly independent functions in $W_x$. Since $\nu(v_i,x) = 2(k-1)$,  Lemma~\ref{L-zeroc} implies the existence of a nontrivial linear combination $v$ of these functions with $\nu(v,x) \ge 2k$, contradicting Assertion~(i). Hence, $\dim W_x = 2$.\smallskip

For any $0 \neq u \in W_x$ and $\xi \in T_xM$, we have
\begin{equation*}
u\big(\exp_x(t \xi)\big) = t^{(k-1)} \, h_{x,u}(\xi) + \cO(t^k),
\end{equation*}
where $h_{x,u}$ is a harmonic homogeneous polynomial of degree $(k-1)$ in $\xi$. On $W_x$, the map $u \mapsto h_{x,u}$ is linear and injective. Since the space $\cH_{x,(k-1)}$ of harmonic homogeneous polynomials of degree $(k-1)$ has dimension $2$, this map is bijective. In the polar coordinates $(r,\omega)$,  this space  is spanned by the polynomials $r^{(k-1)}\, \sin\big( (k-1)\omega\big)$ and $r^{(k-1)}\, \cos\big( (k-1)\omega\big)$ . This proves the existence of the basis $\set{v_1,v_2}$ satisfying \eqref{E-h2n-5}. This proves the last assertion.\smallskip

The proof of Properties~\ref{propertyD2} is now complete.\end{proof}

 Let $0 \neq u \in W_x$. The local structure theorem -- Corollary~\ref{cor:nodloc}~(i) -- implies that, in a neighborhood of $x$, the nodal set $\cZ(u)$ consists of  $2(k-1)$ nodal semi-arcs which emanate from $x$, tangentially to $2(k-1)$ rays dividing the unit circle in $T_xM$ into equal parts. Since $\cS(u) = \set{x}$, if we follow a nodal semi-arc emanating  from $x$, we will eventually come back to $x$. Using the fact that $\cS(u) = \set{x}$ and the connectedness of $\cZ(u)$, we conclude that $\cZ(u)$ consists of $(k-1)$ simple loops at $x$, and that these loops only intersect each other at $x$.

\begin{definition}\label{D-h2n-rose}
A \emph{$p$-bouquet of loops at $x$}\index{Bouquet of loops}\index{Bouquet of loops!$p$-bouquet} is a collection of $p$ piecewise $C^1$ loops at $x$, which do not intersect away from $x$, and whose semi-tangents at $x$ are pairwise transverse in $T_xM$.
\end{definition}%

Therefore, for any $0 \neq u \in W_x$, the nodal set $\cZ(u)$ is a $(k-1)$-bouquet of loops at the point $x$.\smallskip

\subsubsection{Combinatorial type of the nodal set $\cZ(u)$, for $0 \neq u \in W_x$}\label{SSS-h2n-s2c}
\index{Combinatorial type}
 Using the frame $\set{\vec{e}_1, \vec{e}_2}$ in $T_xM$, we label the rays tangent to $\cZ(u)$ at $x$ counter-clockwise, according to their angle with respect to $\vec{e}_1$ (two consecutive rays making an angle $\frac{\pi}{k-1}$), so that we obtain an ordered list
\[ \set{0 \le \vartheta_0 < \ldots < \vartheta_{(2k-3)} < 2\pi}.\]
The loops in the $(k-1)$-bouquet of loops $\cZ(u)$ can now be described by a map
\begin{equation}\label{E-h2n-tau}
\tau_{x,u} : \set{0,\ldots,(2k-3)} \to \set{0,\ldots,(2k-3)},
\end{equation}
which is defined  in the following way: for $j \in   \set{0,\ldots,(2k-3)}$,   we consider the nodal arc which emanates from $x$ tangentially to the ray $\vartheta_j$, and define  $\tau_{x,u}(j)$ as the label of the ray tangent to the nodal arc when it arrives back at $x$, forming a loop at $x$, with semi-tangents the rays $\vartheta_j$ and $\vartheta_{\tau_{x,u}(j)}$.  We denote this loop by $\gamma^{x,u}_{j,\tau_{x,u}(j)}\,$.

 \index{1-tau@$\tau$!$\tau_{x,u}$}
\begin{definition}[Combinatorial type]\label{D-h2n-type}
The map $\tau_{x,u}$ is called the \emph{combinatorial type}\index{Combinatorial type} of the function $u$, or of the nodal set $\cZ(u)$, at the point $x$.
\end{definition}%

We write $\tau$ instead of $\tau_{x,u}$ when there is no ambiguity. The properties of nodal sets imply that
\begin{equation}\label{E-h2n-taup}
\left\{
\begin{array}{l}
\tau_{x,u}(j) \neq j ~~ \text{~for all~} j \in \set{0,\ldots,
(2k-3)},\\[5pt]
\tau_{x,u}^2 = \id.
\end{array}%
\right.
\end{equation}

Note that changing the frame $\set{\vec{e}_1,\vec{e}_2}$ at $x$, keeping the orientation, amounts to conjugating the map $\tau_{x,u}$ by a circular permutation of the set  $\set{0,\ldots,(2k-3)}$.\smallskip

\subsubsection{The rotating function argument}\label{SSS-h2n-s2r}
\index{Rotating function argument}
Fix the basis $\set{v_1,v_2}$ of $W_x$ provided by Properties~\ref{propertyD2}, using the direct frame $\set{\vec{e}_1, \vec{e}_2}$ in $T_xM$ and the associated  local polar coordinates $(r,\omega)$ at $x$ such that \eqref{E-h2n-5} holds.\smallskip

We now analyze the nodal sets of the one-parameter family
\begin{equation}\label{E-h2n-6a}
w_\theta = \cos((k-1) \theta) \,v_1 - \sin((k-1) \theta) \, v_2,
\end{equation}
for $\theta \in \big[ 0, \frac{\pi}{k-1}\big]$. In particular, we have
\begin{equation}\label{E-h2n-6b}
v_1 = w_0 = - w_{\frac{\pi}{k-1}} \text{~~and~~} v_2 = - w_{\frac{\pi}{2(k-1)}},
\end{equation}
and
\begin{equation}\label{E-h2n-6c}
w_\theta(r,\omega) = r^{k-1}\, \sin((k-1)(\omega - \theta)) \, +\mathcal O (r^k).
\end{equation}

Properties~\ref{propertyD2} state that $x$ is the sole singular point of the eigenfunction $w_{\theta}$, and that the nodal set $\cZ(w_{\theta})$ is connected. With respect to the frame $\set{\vec{e}_1, \vec{e}_2}$ in $T_xM$, there are $2(k-1)$ nodal semi-arcs which emanate from $x$, tangentially to the $2(k-1)$ rays $\set{\omega = \omega_j(\theta) := \omega_j + \theta}$, where $\omega_j :=j \frac{\pi}{k-1}, j \in \set{0,\ldots,(2k-3)}$. We call these rays $\omega_j(\theta)$ for short, and we view $j$ as defined modulo $2(k-1)$.\medskip

\index{1-gamma@$\gamma$!$\gamma_{j,\tau(j)}$}
The nodal set $\cZ(w_{\theta})$ is a $(k-1)$-bouquet of loops described by the map $\tau_{x,w_{\theta}}$ associated with the rays $\set{\omega_j(\theta)}$. Call this map $\tau_{\theta}$ for short,  and call $\gamma_{j,\tau_{\theta}(j)}^{\theta}$ the corresponding loops at $x$. \smallskip

\begin{property}\label{propertyD3}
Assume that  $k \ge 3$, and  $\dim U(\lambda_k) = (2k-1)$. Choose some $j \in \set{0,\ldots,(2k-3)}$. Considering $\tau_{\theta}(j)$ instead of $j$ if necessary, we can assume that $0 \le j < \tau_{\theta}(j) \le 2k-3$. The loop $\gamma_{j,\tau_{\theta}(j)}^{\theta}$ separates  (the topological sphere) $M$ into two components. The rays $\omega_k(\theta)$ such that $j < k < \tau_{\theta}(j)$ point inside one of the two components ; the rays $\omega_k(\theta)$ with $k < j$ or $k > \tau_{\theta}(j)$ point inside the other component. In particular, $\tau_{\theta}(j) - j$ is an odd integer.
\end{property}%

The proof of this property is clear.

\begin{property}\label{propertyD4}
Assume that  $k \ge 3$, and  $\dim U(\lambda_k) = (2k-1)$. The combinatorial type $\tau_{\theta}$ of the function $w_{\theta} = \cos((k-1) \theta) \,v_1 - \sin((k-1) \theta) \, v_2$ does actually not depend on $\theta$. More precisely, for any $j \in \set{0,\ldots,(2k-3)}$, and for any $\theta \in \big[ 0,\frac{\pi}{k-1}\big]$, the loop which emanates from $x$ tangentially to the ray $\omega_j(\theta)$ arrives at $x$ tangentially to the ray $\omega_{\tau_0(j)}(\theta)$.  We shall henceforth denote this map by $\tau$.
\end{property}%

\begin{proof} Since all the functions $w_{\theta}$ share the same properties, it suffices to show that $\tau_{\theta} = \tau_0$ for $\theta$ small enough. Assume the contrary. Then, there exists a sequence $\theta_n$ tending to zero, and a sequence $\set{j_n} \subset \set{0,\ldots,2k-3}$ such that $\tau_{\theta_n}(j_n) \not = \tau_0(j_n)$. Since the sequence $\{j_n\}$ takes finitely many values, we can find a constant subsequence $\set{j_{n,1}} \subset \set{j_n}$. Similarly, there exists a subsequence $\set{j_{n,2}} \subset \set{j_{n,1}}$  such that $\tau_{\theta_{n,2}}(j_{n,2})$ is constant. Hence, there exists some $\ell \in \set{0,\ldots,2k-3}$, and a sequence $\theta_n$ such that $\tau_{\theta_n}(\ell) \not = \tau_0(\ell)$. Without loss of generality, we may assume that $\ell = 0$, so that there exists a sequence $\theta_n$ tending to zero such that $\tau_{\theta_n}(0) \equiv \ell_0 \not = \tau(0)$.\smallskip

We now use a more precise version of the local structure theorem, see Section~\ref{S-lsns}. For any $\alpha > 0$ small enough, there exists $r_0 > 0$ such that, for all $\theta$, $ \cZ(w_{\theta}) \cap B(x,2r_0)$ consists of $2(k-1)$ nodal semi-arcs
\begin{equation}\label{E-h2n-8a}
A_j(r,\theta) : (0,2r_0) \ni r \mapsto \exp_x\big(r\, \omegat_j(r,\theta) \big) \in B(x,2r_0),
\end{equation}
for $j \in \set{0,\ldots,2k-3}$. Here, we assume that an orthonormal frame $\set{e_1,e_2}$ has been chosen in $T_xM$, is such a way that the vector $e_1$ directs the ray $\omega_0$. In the polar coordinates $(r,\omega)$ associated with this frame, we identify the angle $\omega$ with a point on the unit circle. Furthermore, the functions $\omegat_j$ are smooth in $(r,\theta) \in (0,2r_0) \times [0,2\pi]$, and they satisfy,
\begin{equation}\label{E-h2n-8b}
\left\{
\begin{array}{l}
\omegat_j(r,\theta) \in \big( \omega_j+\theta - \alpha , \omega_j + \theta +\alpha \big),\\[5pt]
\lim_{r \to 0} \omegat_j(r,\theta) = \omega_j + \theta,
\end{array}
\right.
\end{equation}
for all $j, 0 \le j \le 2k-3$. The semi-arc $A_j(r,\theta)$ is semi-tangent to the ray $\omega_j+\theta$ at the point $x$.\smallskip

We now reason as in the proof of Lemma~\ref{L-hdn}. In the closed ball $\wb{B}(x,r_0)$, the nodal set $\cZ(w_{\theta_n})$ consists of $2(k-1)$ nodal semi-arcs $A_j(\cdot,\theta_n)$, with end points $x$ and $\exp_x(r_0\, \omegat_j(r_0,\theta_n))$ which converge to the corresponding semi-arcs $A_j(\cdot,0)$ with end points $x$ and $\exp_x(r_0\, \omegat_j(r_0,0))$.\smallskip

In the compact set $M\sm B(x,r_0)$, the nodal set $\cZ(w_{\theta_n})$ consists of $(k-1)$ disjoint connected nodal arcs $C_j(r_0,\theta_n)$ with two end points $\exp_x(r_0\, \omegat_j(r_0,\theta_n))$ and $\exp_x(r_0\, \omegat_{\tau(j)}(r_0,\theta_n))$, which correspond to the intersections of the loops in $\cZ(w_{\theta_n})$ with $M\sm B(x,r_0)$. We look more precisely at the arcs $C_0(r_0,\theta_n)$. From this sequence of compact connected subsets of $M\sm B(x,r_0)$, we can extract a subsequence which converges in the Hausdorff distance to some compact connected set $C_0$. Since any $z \in C_0$ is the limit of a sequence $z_n \in C_0(r_0,\theta_n)$, and since $w_{\theta_n}$ tends to $w_0$ uniformly on $M$, we conclude that $w_0(z) = 0$, i.e., that $C_0 \subset \cZ(w_0)$. The set $C_0$ contains the points $\exp_x(r_0\, \omegat_0(r_0,0))$ and $\exp_x(r_0\, \omegat_{\ell_0}(r_0,0))$. Since $C_0$ is connected and contained in $\cZ(w_0)$, and in view of the structure of $\cZ(w_0)$, we must have $\ell_0 = \tau(0)$, and we reach a contradiction. The proof of Property~\ref{propertyD4} is complete.\end{proof}

\begin{figure}[!ht]
  \centering
  \includegraphics[width=0.95\textwidth]{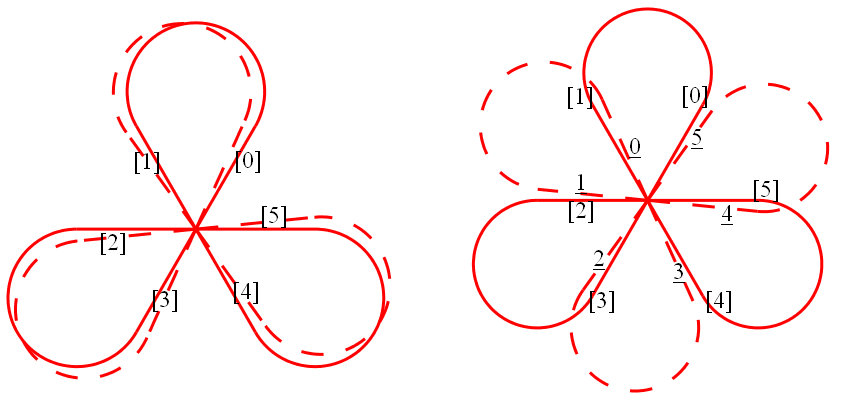}
  \caption{Example 1: $k=4$, $\theta \approx 0$ (left) and $\theta \approx \frac{\pi}{3}$ (right)}\label{F-h2n-k4-ex1}
\end{figure}

\begin{figure}[!ht]
  \centering
  \includegraphics[width=0.95\textwidth]{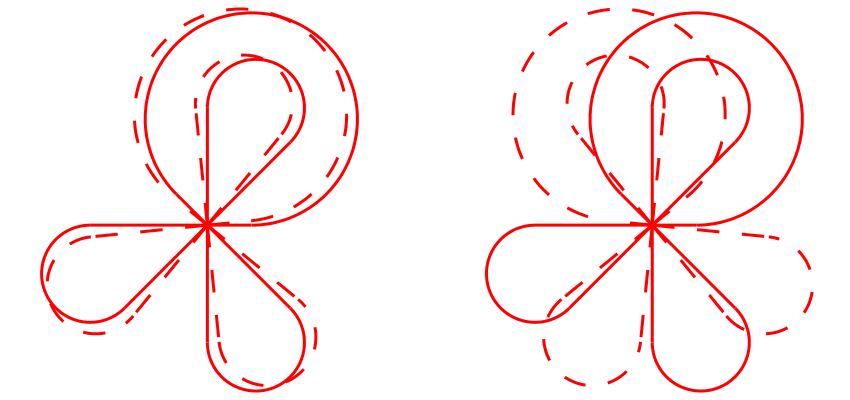}
  \caption{Example 2: $k=5$, $\theta \approx 0$ (left) and $\theta \approx \frac{\pi}{4}$ (right)}\label{F-h2n-k5-ex2}
\end{figure}

\begin{proof}[Conclusion of the rotating function argument]  Under the assumpt\-ion that $k \ge 3$, and  $\dim U(\lambda_k) = (2k-1)$, we can apply the previous construction. Since $w_{\frac{\pi}{k-1}} = - v_1$, we infer from Property~\ref{propertyD4} that $\gamma^{\frac{\pi}{k-1}}_{0,\tau(0)}= \gamma^0_{1,\tau(0)+1}$. Since there is only one nodal semi-arc tangent to a given ray at $x$, we conclude that $\tau(0) \not = 1$ and $\tau(0) \not = 2k-3$. It follows that $0 < 1 < \tau(0)$, and that $\tau(0) < \tau(0)+1 = \tau(1) \le 2k-3$  (here we have used the assumption $k\ge 3$). This contradicts Property~\ref{propertyD3}, and proves that $\dim U(\lambda_k) = (2k-1)$ cannot occur.  This is illustrated in  Figures~\ref{F-h2n-k4-ex1} and \ref{F-h2n-k5-ex2} (with $k=4$). The nodal set of $w_0$ is displayed as a solid line, the nodal set of $w_{\theta}$ is displayed as a dashed line. In the left sub-figures, $\theta$ is close to $0$, in the right sub-figures $\theta$ is close to $\frac{\pi}{k-1}$. In Figure~\ref{F-h2n-k4-ex1}, the labels $[0], \ldots, [5]$ indicate the rays for the initial function $w_0$. When $\theta \approx 0$, in the left picture, the rays for $w_{\theta}$ are close to the rays for $w_0 $ and they are not labelled. In the right picture, they are labelled $\underline{0}, \ldots, \underline{5}$  by continuity as $\theta$ varies.\\
We have proved the inequality
\begin{equation}\label{E-h2n-b2}
\mult(\lambda_k) \le (2k-2), \text{~for any~} k\ge 3.
\end{equation}
\end{proof}%

\begin{remark}\label{R-h2n-bess}
As far as we know, the idea to consider the family of functions $w_{\theta}$ was introduced by Besson \cite{Bess1980}, in the  proof of his Theorem~3.C.1 in which he improves the upper bound for the multiplicity of the second eigenvalue of a torus from $7$ to $6$. A similar idea was used by Nadirashvili  \cite{Nadi1987}, p.~231 lines 1--8, for higher eigenvalues as well. It is used in \cite{HoHN1999, HoMN1999, Berd2018} also, and will appear several times, in one form or another, in the present paper.
\end{remark}%

\begin{remark}\label{R-h2n-f}
In the second section of this chapter, we relate the \emph{combinatorial type} of a nodal bouquet to the \emph{labelling of the nodal domains} around the critical zero. In Chapters~\ref{Ch-pdwb} and \ref{Ch-scpdsb}, we will also use the notions of \emph{combinatorial type} and \emph{labelling of nodal domains} for certain nodal sets on a compact surface $M$ with boundary, and we will repeatedly use a \emph{rotating function argument}.\index{Rotating function argument}
\end{remark}%

\FloatBarrier

\newpage
\subsection{Riemannian spheres with potential, $\dim U(\lambda_k) \le (2k-3)$ for $k\ge 3$}\label{SS-h2n-s3}\phantom{}

The proof of this upper bound is by contradiction. Taking Subsection~\ref{SS-h2n-s2} into account, we assume that $\dim U(\lambda_k) = (2k-2)$. \smallskip

As in the previous subsection, fix some $x \in M$, a direct orthonormal frame $\set{\vec{e}_1,\vec{e}_2}$ in $T_xM$, and the associated polar coordinates $(r,\omega)$, via the exponential map $\exp_x$.

\begin{properties} \label{propertyD6} Assume that $\dim U(\lambda_k) = (2k-2)$ for some $k \ge 3$.  Then, the linear subspace
\begin{equation*}
W_x = \set{u \in U(\lambda_k) \mid \nu(u,x) \ge 2(k-1)}
\end{equation*}
has the following properties.
\begin{enumerate}[(i)]
 \item $\dim W_x = 1$ and, for any $0\not = u \in W_x\,$,
 \item $\nu(u,x) = 2(k-1)$, and $x$ is the only singular point of the eigenfunction $u$;
 \item $\cZ(u)$ is connected;
 \item $\kappa(u) = k$.
 \end{enumerate}
\end{properties}

\begin{proof} By Lemma~\ref{L-zeroi}, $\dim W_x \ge 1$. Given any $0 \not = u \in W_x$, Euler's formula gives
\begin{equation}\label{E-h2n-10}
0 \ge \kappa(u) - k = \left\lbrace b_0(\cZ(u)) - 1\right\rbrace + \sum_{\substack{z \in \cS(u)\\ z \not = x}} \frac{\nu(u,z) - 2}{2} + \lbrace \frac{\nu(u,x)}{2} - k + 1 \rbrace,
\end{equation}
and we conclude that (ii)--(iv) hold. To prove that $\dim W_x = 1$, assuming that $\dim W_x \ge 2$, we can repeat the arguments of Subsection~\ref{SS-h2n-s2}, and reach a contradiction. \end{proof}

\index{2-P@$\bP(U)$}
\begin{property}\label{P-h2n-laa}
 Assume that $\dim U(\lambda_k) = (2k-2)$ for some $k \ge 3$.  Let $[W_x]$ denote the line $W_x$ as a point in the projective space $\bP(U)$ of $U$. Then, the map $x \mapsto [W_x]$ from $M$ to $\bP(U)$ is $\Cty$.
\end{property}%

\begin{proof} Choose a basis $\set{\phi_j}_{j=1}^{(2k-2)}$ of $U$, and write $w_{x}$ as $w_{x} = \sum_{j=1}^{2k-2} \alpha_j(x) \phi_j$. According to Lemma~\ref{L-zeroi}, the condition $\ord(w_{x},x) \ge (k-1)$ is equivalent to $(2k-3)$ linear equations in the derivatives of $w_x$ at $x$, and hence in the  $(2k-2)$ unknowns $\set{\alpha_j(x)}_{j=1}^{(2k-2)}$. Call $\cM(x)$ the associated matrix. The system reads $\cM(x) \cA(x) = 0$ where $\cA(x)$ is the vector associated with the coefficients $\alpha_j(x)$. Since $\dim W_x = 1$ for all $x \in M$, this linear system has constant rank $(2k-3)$. Given some $x_0 \in M$, there exists a $(2k-3)\times(2k-3)$ sub-matrix $\cM'_{x_0}$ which is invertible, and the same is true for the sub-matrix $\cM'_x$ provided that $x$ is close enough to $x_0$. We can now find $w_x$, alias the coefficients $\alpha_j(x)$, by solving a linear system of the form $\cM'_x \cA'(x) = \cB(x)$ in a neighborhood of $x_0$. In view of Cramer's method, the coefficients of $\cA'(x)$ are given as quotients of determinants whose coefficients are $\Cty$ in $x$, and the determinant at the  denominator is nonzero. \end{proof}

\index{2-S@$\bS(U)$}
Since $M$ is simply connected, the map $x \mapsto [W_x]$ can be lifted to a smooth map $x \mapsto w_x$, from $M$ to $\bS(U(\lambda_k))$, the unit sphere of $U(\lambda_k)$ (for example with respect to the $L^2$ norm).\smallskip

To each $x \in M$, we associate the homogeneous polynomial of degree $(k-1)$,  $h_{x,w_x}$ on $T_xM$ defined by $p_x := h_{x,w_x} : T_xM \ni \xi \mapsto \frac{d^{k-1}}{dt^{k-1}} w_x(\exp_x(t\xi))$. It is harmonic with respect to the Riemannian metric $g_x$ in $T_xM$. The map $x \mapsto p_x$ is smooth. The restriction of the polynomial $p_x$ to the unit circle $S_xM$ in $(T_xM,g_x)$ has simple zeros. Choose some $x_0 \in M$, and some root $e_{x_0}$ of $p_{x_0}$ in $S_{x_0}M$. Given any $x_1 \in M$, and any curve $c$ from $x_0$ to $x_1$, we can follow this root by continuity along the curve $c$ so that $e_{c(t)}$ is a root of $p_{c(t)}$. Since the set of roots is discrete, and since $M$ is simply connected, the root $e_{c(1)}$ at $x_1$ does not depend on the choice of the curve $c$. \\
 It follows that we have defined a continuous unit vector-field $x \mapsto e_x$ on $M$, contradicting the Poincar\'e-Hopf theorem (as recalled below). Here, we have indeed a vector-field without zero, and $\chi(M)=2$. We have proved that the assumption $\dim U(\lambda_k) = (2k-2)$ leads to a contradiction, and hence that $\dim U(\lambda_k) \le (2k-3)$. \hfill \qed \smallskip

For the sake of completeness, we recall the statement of the Poincar\'{e}-Hopf theorem. \index{Poincar\'e-Hopf theorem}\index{Euler characteristic}

\begin{theorem}[\cite{Miln1997}, p.~35]\label{T-PH}
Let $X$ be a closed manifold. Let $w$ be a $\Cty$ vector-field on $X$ with isolated zeros. Then, the sum of the indices at the zeros of $w$ is equal to the Euler characteristic of $X$.
\end{theorem}%

The proof of Theorem~\ref{T-h2n} is now complete.

\section{Spheres: Labeling Nodal Loops and Nodal Domains}\label{S-llnd}

\subsection{Preamble}\label{SS-llnd-0}

In Section~\ref{S-h2n}, we have used the notion of \emph{combinatorial type} to study the eigenvalue multiplicity problem on Riemannian spheres with potential, $(M,g,V)$. More precisely, we have introduced the combinatorial type of very special eigenfunctions, namely eigenfunctions $u \in U(\lambda_k)$, with only one singular point $x \in M$, and whose nodal set $\cZ(u)$ is a $(k-1)$-bouquet of loops at $x$, see Properties~\ref{propertyD2} and \ref{propertyD6}. The combinatorial type $\tau_u$ of $\cZ(u)$ describes how the loops are organized in the bouquet $\cZ(u)$.\smallskip

In this section, we introduce the \emph{nodal word} \index{Nodal word} $\cW_{u}$ in order to describe how the nodal domains of $u$ are organized around $x$, by looking at their intersections with a small geodesic circle $S_x(r)$ with center $x$ and radius $r$. We prove that the nodal word $\cW_u$ determines the combinatorial type $\tau_u$ and vice-versa. Similar considerations will be applied to smooth bounded domains in $\R^2$, see Section~\ref{S-hmn2L}.\medskip

We make the following assumptions through out this section.

\begin{assumptions}\label{A-llnd-0}~
\begin{enumerate}[(i)]
  \item $(M,g,V)$ is a Riemannian sphere $M$ with $\Cty$ Riemannian metric $g$ and $\Cty$ real valued potential $V$.
  \item $u$ is an eigenfunction of the Schr\"{o}dinger operator $(-\Delta+V)$ on $M$, with a unique critical zero $x \in M$ such that $\ord(u,x) = p$ for some $p \ge 2$.
  \item $\cZ(u)$ is connected.
\end{enumerate}
\end{assumptions}%

\begin{enumerate}[]
\item For convenience, we fix
    \begin{enumerate}[~~$\diamond$]
    \item an orientation of $T_xM$,
    \item a ray  $\omega_0$ in $T_xM$, tangent to $\cZ(u)$ at $x$,
    \item the direct orthonormal frame $\cE(\omega_0) := \set{e_{0,1},e_{0,2}}$ in $T_xM$ whose first vector $e_{0,1}$ is supported by $\omega_0$,
    \end{enumerate}
\item and we denote
    \begin{enumerate}[~~$\diamond$]
    \item the rays tangent to $\cZ(u)$ at $x$ by  $\omega_0, \omega_1, \ldots, \omega_{(2p-1)}$,  counter-clockwise,
    \item the polar coordinates associated with $\cE(\omega_0)$ in $T_xM$ by $(r,\omega)$,
    \item the combinatorial type of $\cZ(u)$ with respect to the rays $\set{\omega_0,\ldots,\omega_{(2p-1)}}$, by $\tau_{u}$ (see Definition~\ref{D-h2n-type}).
    \end{enumerate}
\end{enumerate}
\medskip

Let $u$ be an eigenfunction satisfying Assumptions~\ref{A-llnd-0}.\smallskip

\emph{Fact~1.~ The nodal set $\cZ(u)$ is a $p$-bouquet of loops described by the pair $(L,\tau)$, where $L = \set{0,\ldots,(2p-1)}$ and $\tau = \tau_{u}$ is the combinatorial type of $u$. We now denote this bouquet by $\cB_L$.}\smallskip

Indeed, in a small neighborhood of $x$, the nodal set $\cZ(u)$ consists of $2p$ nodal semi-arcs emanating from $x$, tangentially to $2p$ distinct\footnote{The rays actually form an equiangular system, but this is not needed here.} rays $\omega_0,\ldots,\omega_{2p-1}$. Since nodal arcs can only meet at critical zeros, and since $\cS(u) = \set{x}$, the nodal interval emanating from $x$ tangentially to the ray $\omega_j$ must end up at $x$, arriving tangentially to some (different) ray $\omega_{\tau(j)}$, thus forming a loop $\gamma_{j,\tau(j)}$ at $x$. This defines the map $\tau$,
\begin{equation}\label{E-llnd-2}
\tau : L \to L,
\end{equation}
with the following properties,
\begin{equation}\label{E-llnd-4}
\left\{
\begin{array}{l}
\tau^2 = \id,\\[5pt]
\tau(j) \not = j, \quad \forall j \in L,\\[5pt]
\tau(j) - j \text{~is odd}, \quad \forall j \in L.
\end{array}%
\right.
\end{equation}

The first property is clear. The second and third ones follow from the local structure theorem and from the fact that $M$ is a sphere, see Property~\ref{propertyD3}.  \smallskip

\emph{Fact~2.~ The eigenfunction $u$ has $(p+1)$ nodal domains.}\smallskip

This property is a consequence of the following lemma; it is related to the fact that the eigenfunctions in Properties~\ref{propertyD2} and \ref{propertyD6} are  \emph{Courant-sharp}\index{Courant-sharp}, i.e., they have the maximum number of nodal domains allowed by Courant's nodal domain theorem, see Section~\ref{S-mcs}.

\begin{lemma}\label{L-llnd-2}
The complement of a $p$-bouquet of loops $\cB$ in the sphere $M$ has $(p+1)$ components.
\end{lemma}%

\begin{proof}  We can turn the $p$-bouquet of loops $\cB$ into a (simple) graph $\cG_{\cB}$ on the sphere by vertex-edge additions as explained in Section~\ref{S-pargef}. Then, the number of components in the complement of the bouquet $\cB$ is the same as the number of components in the complement of the graph $\cG_{\cB}$. Using the notation of Section~\ref{S-pargef}, Euler's formula for the graph $\cG_{\cB}$ reads
\[
r(\cG_{\cB}) = \alpha_1(\cG_{\cB}) - \alpha_0(\cG_{\cB}) + c(\cG_{\cB}) + 1
\]
as in Equation~\eqref{E-etf-00}.  It is easy to see that $\alpha_1(\cG_{\cB}) - \alpha_0(\cG_{\cB}) = (p-1)$. Since $c(\cG_{\cB}) = 1$, the result follows.
\end{proof}

In the polar coordinates $(r,\omega)$, and for $r$ small enough, the nodal semi-arcs  emanating from $x$ are given by equations $r \mapsto \exp_x(r\, \omegat_j(r))$, $0 \le j \le 2p-1$, see Section~\ref{S-lsns}. These arcs determine $2p$ intervals (sub-arcs)
\begin{equation}\label{E-llnd-8}
I_j(r) := \set{ \exp_x(r \,\omega) \mid  \omega \in \big( \omegat_j(r),\omegat_{j+1}(r) \big) }, \quad 0 \le j \le 2p-1,
\end{equation}
on the geodesic circle $S_x(r) = \set{\exp_x(r\, \omega) \mid \omega \in [0,2\pi]}$. The function $u$ does not vanish in these open intervals, and changes sign while crossing an end point $\exp_x(r\, \omegat_j(r))$ along the circle.  The following assertion is clear. \smallskip

\emph{Fact~3.~ Each open interval $I_j(r)$ is contained in a unique nodal domain, and two contiguous intervals are contained in different nodal domains. Each nodal domain of $u$ contains at least one interval (for $r$ small enough).}\medskip

\begin{definition}\label{D-llnd-2}
Let $\cD(u)$ be the set of nodal domains of $u$.\\[3pt] A bijection $d: \set{1,\ldots,(p+1)} \to \cD(u)$ is called a \emph{labeling of the nodal domains} \index{Labeling (nodal domains)} of $u$.  Using this labeling, we describe $\cD(u)$ as $\cD(u) = \set{\Omega_{d(1)},\ldots,\Omega_{d(p+1)}}$.
The \emph{nodal word} $\cW_{u,d}$ of $u$, associated with the labeling $d$ of $\cD(u)$, is the map $\cW_{u,d} : \set{0,\ldots,(2p-1)} \to \set{1,\ldots,(p+1)}$
\begin{equation*}
\cW_{u,d} = \begin{pmatrix}
               0 & 1 & \ldots & (2p-2) & (2p-1) \\
               a_0 & a_1 & \ldots & a_{2p-2} & a_{2p-1} \\
             \end{pmatrix},
\end{equation*}
also written as the word
\begin{equation*}
\cW_{u,d}= |a_0|a_1|\ldots |a_{(2p-1)}|,
\end{equation*}
where the letter $a_j$ is the label $d(k)$ of the nodal domain which contains $I_j(r)$,
\begin{equation*}
\cW_{u,d}(j) = d(k) \Leftrightarrow I_j(r) \subset \Omega_{d(k)}.
\end{equation*}
\end{definition}

The word $\cW(u,d)$ has length $2p$, the letters of the word are the labels of the nodal domains (they are separated by vertical bars in the second formula for notational convenience).\medskip

\emph{Fact~4.~ Given $\cW_{u,d}$ a nodal word, we recover the labeling $d$ as follows,
\begin{enumerate}
  \item $d(1) = a_0$ and $d(2) = a_1$
  \item if $m_3 := \min \set{j \mid a_j \not \in \set{d(1),d(2)}}$, then $\Omega_{d(3)}$ is the nodal domain which contains $I_{m_3}(r)$, i.e. $d(3) = a_{m_3}$.
  \item \ldots
\end{enumerate}
}
\medskip

It is convenient to describe a procedure to produce a ``standard nodal word'' $\cW_{u}$, and a ``standard labeling'' $d_s$ \index{Standard labeling} of the nodal domains of an eigenfunction $u$ satisfying Assumptions~\ref{A-llnd-0}.

\begin{definition}\label{D-llnd-4}
The \emph{standard nodal word} \index{Nodal word (standard)} $\cW_{u}$ of $u$ is the map
\[
\cW_{u} : L \to \set{1,\ldots,(p+1)}
\]
defined as follows.
\begin{enumerate}
\item Let $\cW_{u}(1)=1$. Equivalently, call $\Omega_1$ the nodal domain which contains the interval $I_0(r)$. Let  $\cW_{u}(j)=1$ whenever $I_j(r) \subset \Omega_1$.
\item Let $\cW_{u}(2)=2$. Equivalently, call $\Omega_2$ the nodal domain which contains the interval $I_1(r)$. Let  $\cW_{u}(j)=2$ whenever $I_j(r) \subset \Omega_2$.
\item Assume that $k$ nodal domains have been labeled, with the labels given in increasing order, $1, 2, \ldots,k$. Since all nodal domains intersect any neighborhood of $x$, the set  $\set{j \mid I_j(r) \not \subset  \bigcup_{j=1}^k \Omega_j}$ is not empty. Let $m_{k+1}$ be its infimum. Call
    $\Omega_{k+1}$ the nodal domain which contains $I_{m_{k+1}}(r)$. Let
    $\cW_{u}(j)=(k+1)$ whenever $I_j(r) \subset \Omega_{k+1}$.
\item After at most $p$ steps, all nodal domains will be labeled.
\end{enumerate}
\end{definition}%

\subsection{Sub-bouquets of loops}\label{SS-llnd-w}

As stated in the previous subsection, the nodal set $\cZ(u)$ is a \emph{$p$-bouquet of loops} $\cB_{L}$ at $x$, i.e., $p$ simple loops at $x$ which do not intersect away from $x$, and which meet transversally at $x$.\smallskip

Take any loop, $\gamma_{j,\tau(j)}$. Exchanging, $j$ and $\tau(j)$ if necessary, we may assume that $j < \tau(j)$. Consider the subsets
\begin{equation}\label{E-llnd-6}
\left\{
\begin{array}{l}
L_{j} := \set{j,(j+1),\ldots,(\tau(j)-1),\tau(j)} \subset L,\\[5pt]
L'_{j} := L_{j} \sm \set{j,\tau(j)}.
\end{array}
\right.
\end{equation}

Since $M$ is a sphere, $M\sm \set{\gamma_{j,\tau(j)}}$ has two components. The local structure of $\cZ(u)$ at $x$ shows that the rays $\omega_k, k \in L'_{j}$, point inside one of the components, call it $C'_{j}$. If $L'_{j}$ is empty, choose $C'_{j}$ to be the component which is a nodal domain of $u$. For $\ell \le j \le \tau(\ell)-1$, the nodal interval $I_j(r)$ is contained in $C'_{j}$, and the subsets  $L_j$, $L'_{j}$ are invariant under $\tau$, and correspond to bouquets of loops $\cB_{L_j}$ and $\cB_{L'_{j}} \subset C'_{j}$.\smallskip

Similarly, the rays $\omega_k, k \in L\sm L_{j}$, point inside the other component, call it $C''_{j}$. The corresponding nodal intervals are contained in $C''_{j}$, so that $L\sm L_{j}$ is invariant under $\tau$, and corresponds to a bouquet of loops $\cB_{L\sm L_{j}} \subset C''_{j}$. Note that if $L_j=L$, then $C''_{j}$ is a nodal domain of $u$.

\subsection{Nodal word of $u$ vs combinatorial type of $u$}\label{SS-llnd-d}

\begin{property}\label{P-llnd-2}
Once we have chosen an orientation in $T_xM$, an initial ray $\omega_0$, and labeled the other rays counter-clockwise, the combinatorial type $\tau_u : L \to L$ of $u$ determines the standard nodal word $\cW_u : L \to \set{1,\ldots,(p+1)}$ of the nodal domains of $u$. Conversely, given a standard nodal word $\cW_u$ as defined in Definition~\ref{D-llnd-4}, we can recover the nodal type $\tau_u$.
\end{property}%

\subsubsection{Proof of Property~\ref{P-llnd-2} on a simple example}
For the example, we choose $p=8$, so that $L = \set{0,1,2,3,4,5,6,7,8,9,10,11,12,13,14,15}$,
and the map $\tau$
\setcounter{MaxMatrixCols}{20}
\begin{equation}\label{E-llnd-8tau}
\tau ~=~
\begin{pmatrix}
  0 & 1 & 2 & 3 & 4 & 5 & 6 & 7 & 8 & 9 & {10} & 11 & 12 & 13 & 14 & 15\\
  3 & 2 & 1 & 0 &9  & 8 & 7 & 6 & 5 & 4 & 15 & 12 & 11 & 14 & 13 & 10\\
\end{pmatrix}
\end{equation}
written in matrix form: $\tau$ maps the first line to the second line. \smallskip

We view the pair $(L,\tau)$ as describing an abstract bouquet of loops $\cB_L$ which satisfies the properties explained in  Subsection~\ref{SS-llnd-w}, see Figure~\ref{F-w-llnd-2-bouquet} for a representation in $\R^2$. The numbers between brackets are the labels of the rays or nodal arcs emanating from $x$.
Since we may think of $\cB_L$ as the zero set of some function, we still call the components $\Omega_j$ of $M\sm \cB_L$ ``nodal domains of $\cB_L$''. \medskip

\begin{figure}[!ht]
  \centering
  \includegraphics[width=0.4\textwidth]{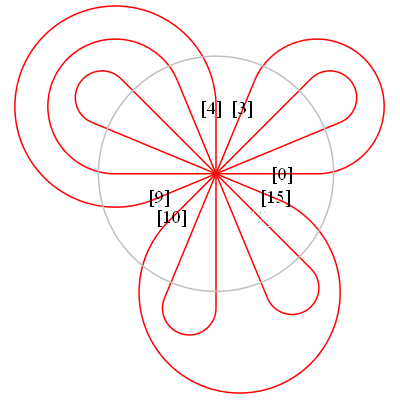}
  \caption{The bouquet $\cB_L$ with $\tau$ given by \eqref{E-llnd-8tau}}\label{F-w-llnd-2-bouquet}
\end{figure}

We first partition the set $L$ into $\tau$-invariant subsets with the same notation as in Equation~\eqref{E-llnd-6}.\smallskip

\begin{enumerate}[$\diamond$]
  \item Since $\tau(0) = 3$, we have the $\tau$-invariant subset $L_0 = \set{0,1,2,3}$, the ``big'' loop $\gamma_{0,3}$ and $C'_0$ the connected component of $M\sm \gamma_{0,3}$ which contains $\cB_{L'_0}$.
  \item Since $\tau(4) = 9$, we have the $\tau$-invariant subset $L_4 = \set{4,5,6,7,8,9}$, the ``big'' loop $\gamma_{4,9}$ and $C'_4$ the corresponding component of $M\sm \gamma_{4,9}\,$.
  \item Since $\tau(10) = 15$, we have the $\tau$-invariant subset $L_{10} = \set{10,11,12,13,14,15}$, the ``big'' loop $\gamma_{10,15}$ and $C'_{10}$ the corresponding component of $M\sm \gamma_{10,15}\,$.
\end{enumerate}

The set $L$ has been partitioned into three $\tau$-invariant subsets $L_0 = \set{0,1,2,3}$, $L_4 = \set{4,5,6,7,8,9}$ and $L_{10} = \set{10, 11, 12, 13, 14, 15}$. Accordingly, the matrix representing $\tau$ can be decomposed into three blocks,
\begin{equation*}
\tau ~=~\big(
\begin{vmatrix}
  0 & 1 & 2 & 3 \\
  3 & 2 & 1 & 0 \\
\end{vmatrix}
\begin{vmatrix}
  4 & 5 & 6 & 7 & 8 & 9 \\
  9 & 8 & 7 & 6 & 5 & 4 \\
\end{vmatrix}
\begin{vmatrix}
  10 & 11 & 12 & 13 & 14 & 15 \\
  15 & 12 & 11 & 14 & 13 & 10 \\
\end{vmatrix}
\big).
\end{equation*}

\noib \emph{Determining $\cW$ from  $\tau$}. The set $M\sm (C'_0 \cup C'_4 \cup C'_{10})$ is a nodal domain of $\cB_L$ which we call the \emph{exterior} of $\cB_L$. Any other nodal domain of $\cB_L$ is contained in either $C'_0,  C'_4$ or $C'_{10}$. It now suffices to define $\cW_u$ on each of the invariant subsets or, equivalently, to find the labels of the exterior of $\cB_L$ and of the nodal domains contained in  $C'_0,  C'_4$ or $C'_{10}$. \smallskip

We first label the nodal domains inside $C'_0$. We have $I_j(r) \subset C'_0$ for $j \in \set{0,1,2}$. According to Definition~\ref{D-llnd-4}, $\cW(0) = 1$, and $\cW(1)=2$. Following $S_x(r)$ from the point $\exp_x(r\omegat_0(r))$ to the point $\exp_x(r\omegat_3(r))$ and exiting $C'_0$, we find that $\cW(2)=1$ and $\cW(3)=3$. This implies that the exterior of $\cB_L$ is the nodal domain $\Omega_3$ and this also implies that $\cW(9)=\cW(15)=3$ because
$I_9(r)$ and $I_{15}(r)$ are both contained in the exterior of $\cB_L$.\smallskip

We now label the nodal domains inside $C'_4$. We have $I_j(r) \subset C'_4$ for $4 \le j \le 8$. According to Definition~\ref{D-llnd-4}, we set $\cW(4) = 4$. Following $S_x(r)$ from $\exp_x(r\omegat_4(r))$ to $\exp_x(r\omegat_9(r))$ and exiting $C'_4$, we conclude that $\cW(8)=4$ and $\cW(10)=7$ because $C'_4$ contains $3$ nodal domains. It remains to label the nodal domains in $L'_4$. This is similar to the first step (with different labels though) and we find that $\cW(5)=\cW(7)=5$ and $\cW(6)=6$ because we have one loop $\gamma_{6,7}$ inside the loop $\gamma_{5,8}$.\smallskip

It remains to label the nodal domains inside $C'_{10}$, starting from $\cW(10)=7$. Because there are two loops $\gamma_{11,12}$ and $\gamma_{13,15}$ inside the loop $\gamma_{10,15}$, we find that $\cW(10)=\cW(12)=\cW(14)=7$, $\cW(11)=8$ and $\cW(13)=9$.\smallskip

Finally, the map $\cW$ is given by the matrix
\begin{equation}\label{E-llnd-9d}
\cW ~=~
\begin{pmatrix}
  0 & 1 & 2 & 3 & 4 & 5 & 6 & 7 & 8 & 9 & 10 & 11 & 12 & 13 & 14 & 15 \\
  1 & 2 & 1 & 3 & 4 & 5 & 6 & 5 & 4 & 3 & 7 & 8 & 7 & 9 & 7 & 3  \\
\end{pmatrix},
\end{equation}
where $\cW$ sends the entry $j \in L$ in the first line, the label of the interval $I_j(r)$, to the entry $\cW(j)$ in the second line, the label of the nodal domain which contains $I_j(r)$. \medskip

\noib \emph{Recovering $\tau$ from $\cW$.} We use the fact that the nodal domains come in disjoint families separated by loops.\smallskip

The boundary of the domain $\Omega_1$ contains the loop $\gamma_{0,\tau(0)}$. To determine $\tau(0)$, we look at the largest integer $\ell$ such that $\cW(\ell) =1$: this is $2$ and we conclude that $\tau(0)=3$. Looking at the second row of the matrix of $\cW$ in \eqref{E-llnd-9d}, we infer that $\Omega_2$ is bounded by a single loop, so that $\tau(1)=2$.  We have determined a $\tau$-invariant subset $L_0 = \set{0,1,2,3}$, and that $\tau$ satisfies
\begin{equation*}
\tau|_{L_0} ~=~
\begin{pmatrix}
  0 & 1 & 2 & 3 \\
  3 & 2 & 1 & 0 \\
\end{pmatrix}.
\end{equation*}

The next nodal domain is $\Omega_3$ and we have $\cW(3)=\cW(9)=\cW(15)=3$. Taking into account how we constructed $\cW$ from $\tau$, it follows that $\Omega_3$ is the ``exterior'' of $\cB_L$.\smallskip

The next nodal domain which appears is $\Omega_4$. To determine $\tau(4)$, we look at the largest integer $\ell$ such that $\cW(\ell)=4$: this is $8$. This means that $\tau(4) = 9$, and we have a loop $\gamma_{4,9}$ whose complement in $M$ has two connected components $C'_4$ and $C''_4$, with the rays labeled $5$ to $8$ pointing inside $C'_4$ . We have the $\tau$-invariant subset  $L_4 = \set{4,5,6,7,8,9}$ . In $C'_4$, the nodal domain label $4$ occurs twice, the label $5$ occurs twice as well, and the label $6$ once. We conclude that
\begin{equation*}
\tau|_{L_4} ~=~
\begin{pmatrix}
  4 & 5 & 6 & 7 & 8 & 9 \\
  9 & 8 & 7 & 6 & 5 & 4 \\
\end{pmatrix}.
\end{equation*}

The two next nodal domain to appear are $\Omega_3$ again and $\Omega_7$. Reasoning as above, we find that $\tau(10) = 15$ and we have the $\tau$-invariant subset $L_{10} = \set{10,11,12,13,14,15}$. Finally,
\begin{equation*}
\tau|_{L_{10}} ~=~
\begin{pmatrix}
  10 & 11 & 12 & 13 & 14 & 15 \\
  15 & 12 & 11 & 14 & 13 & 10 \\
\end{pmatrix}.
\end{equation*}

We have recovered the map $\tau$ from the map $\cW$ in the example at hand. \hfill \qed

\subsubsection{Proof of Property~\ref{P-llnd-2} in general}~\smallskip

\noib \emph{Proof that $\tau$ determines $\cW$.}\smallskip

\noid We begin by partitioning $L$ into invariant subsets.\smallskip

First we define $\ell_1 := 0$. Then $\ell_1 < \tau(\ell_1) \le (2p-1)$ and $L_{\ell_1} = \set{\ell_1,\ldots, \tau(\ell_1)}$ is a $\tau$-invariant subset. If $\tau(\ell_1) = (2p-1)$, $M \sm \gamma_{\ell_1,\tau(\ell_1)}$ has two components, one of them $C''_{\ell_1}$ is a nodal domain of $\cB_L$, and we define $\cW(2p-1) = (p+1)$. The other component $C'_{\ell_1}$ of $M \sm \gamma_{\ell_1,\tau(\ell_1)}$ contains all the nodal domains  of $\cB_L$, except the nodal domain $\Omega_{p+1}$. If $\tau(\ell_1) \neq (2p-1)$,  $\tau(\ell_1) \le (2p-3)$, and we introduce $\ell_2 :=\tau(\ell_1)+1$. The subset $L_{\ell_2} = \set{\ell_2,\ldots,\tau(\ell_2)}$ is $\tau$-invariant, and we can repeat the procedure. After at most $|L|/2$ steps, we obtain a sequence $\ell_1 < \ldots < \ell_m$ such that $\ell_{j+1} = \tau(\ell_j)+1$ and $\tau(\ell_m)=(2p-1)$. Then, we have a partition, $L = \bigsqcup_{j=1}^m L_{\ell_j}$, of $L$. A nodal domain of $\cB_L$ is either contained
in one of the components $C'_{\ell_1}$, or is equal to the \emph{exterior nodal domain} of $\cB_L$, the set $M\sm \bigcup_{j=1}^m C'_{\ell_j}$.\smallskip

\noid Consider a loop $\gamma_{\ell_j,\tau(\ell_j)}$, with
\begin{equation}\label{E-llnd-9}
\left\{
\begin{array}{l}
L_{\ell_j} = \set{\ell_j,\ldots,\tau(\ell_j)} \text{~and~} L'_{\ell_j} = L_{\ell_j}\sm \set{\ell_j,\tau(\ell_j)},\\[5pt]
k_{\ell_j} = \frac{1}{2}|L_{\ell_j}| .
\end{array}%
\right.
\end{equation}

Taking into account Subsection~\ref{SS-llnd-w} and Lemma~\ref{L-llnd-2}, with the set $L_{\ell_j}$ we associate a $k_{\ell_j}$-bouquet of loops $\cB_{L_{\ell_j}}$, whose complement in $M$ has $(k_{\ell_j}+1)$ components. Similarly,  with the set $L'_{\ell_j}$ we associate a $(k_{\ell_j}-1)$-bouquet $\cB_{L'_{\ell_j}}$ which is contained in $C'_{\ell_j}$, and whose complement in $C'_{\ell_j}$ has $k_{\ell_j}$ components which are actually nodal domains of $\cB_L$. \smallskip

\noid The $k_{\ell_1}$ nodal domains contained in $C'_{\ell_1}$ are labeled from $1$ to $k_{\ell_1}$.  The intervals $I_j(r), \ell_1 \le j \le \tau(\ell_1)-1$ are contained in $C'_{\ell_1}$; the intervals $I_j(r), \tau(\ell_1) \le j$ are contained in $C''_{\ell_1}$. From these facts, we infer that
\begin{equation}\label{E-llnd-10}
\left\{
\begin{array}{l}
\cW(0) = \cW(\tau(\ell_1)-1) = 1,\\[5pt]
\cW(1) = 2,\\[5pt]
\cW(\tau(\ell_1)) = k_{\ell_1}+1,\\[5pt]
\cW(\tau(\ell_1)+1) = k_{\ell_1}+2.
\end{array}%
\right.
\end{equation}

The label $\cW(\tau(\ell_1))$ plays a special role. Indeed, this is the label of the exterior of $\cB_L$,  $M\sm \bigcup_{j=1}^m C'_{\ell_j}$. It follows that the nodal domains of $\cB_L$ will be labeled as follows:
\begin{enumerate}[1)]
  \item The $k_{\ell_1}$ nodal domains contained in $C'_{\ell_1}$ are labeled from $1$ to $k_{\ell_1}$.
  \item The exterior nodal domain of $\cB_L$ is labeled $(k_{\ell_1}+1)$.
  \item The $k_{\ell_2}$ nodal domains contained in $C'_{\ell_2}$ are labeled from $(k_{\ell_1}+2)$ to $(k_{\ell_1} + k_{\ell_2} + 1)$.
  \item The $k_{\ell_3}$ nodal domains contained in $C'_{\ell_3}$ are then labeled from $(k_{\ell_1}+k_{\ell_2}+2)$ to $(k_{\ell_1} + k_{\ell_2} + k_{\ell_3}+1)$.
  \item \ldots ~and so on.
\end{enumerate}

Once we know the label of the exterior domain, and which label sets to use for the domains contained in the sets $C'_{\ell_j}$, it suffices to determine $\cW$ on each set $L_{\ell_j}$ independently, and we can reason by induction on the size of $|L|$.\smallskip

\noid  Given $\ell \in \set{\ell_1,\ldots,\ell_m}$, the  connected component $C'_{\ell_1}$ of $M \sm \gamma_{\ell,\tau(\ell)}$  is simply connected, with boundary $\gamma_{\ell,\tau(\ell)}$. The nodal domains inside $C'_{\ell}$ are numbered from $K_{\ell}$ to $K_{\ell} + k_{\ell}-1$, according to the above list. The interval $I_{\ell}(r)$ is the first interval contained in $C'_{\ell}$ to be labeled, $\cW(\ell) = K_{\ell}$, and we must have $\cW(\ell+1) = (K_{\ell}+1)$. Since $|L_{\ell}| < |L|$ we can know apply the induction hypothesis, and $\cW|_{L_{\ell}}$ is well defined.\medskip

\noib \emph{Proof that one can recover $\tau$ from $\cW$ in general.}\smallskip

\noid Assume that we are given some $L := \set{0,\ldots,(2p-1)}$ and some map $\cW : L \to \set{1,\ldots,(p+1)}$ associated with a $p$-bouquet of loops $\cB_L$ as in Definition~\ref{D-llnd-4}. Let $\tau : L \to L$ be the combinatorial type of the $p$-bouquet $\cB_L$.\smallskip

In order to recover $\tau$ from $\cW$, the idea is to recover the $\tau$-invariant subsets $L_{\ell_j}$ introduced above, and to reason by induction on the size of the bouquets, i.e., on $|L|$.\medskip

\noid We first look at the nodal domain $\Omega_1$, with label $\cW(0)=1$, and at the set $\cW^{-1}(1)$.\smallskip

If $\cW^{-1}(1) = \set{0}$, then we must have $\tau(0)=1$, the loop $\gamma_{0,1}$ bounds $\Omega_1$, $\tau(0)=1$, and $\Omega_2$ is the exterior domain of $\cB_L$.\smallskip

If $|\cW^{-1}(1)| > 1$, we look at $m_1 := \max \cW^{-1}(1)$. Then, the only possibility is that $\tau(0) = m_1+1$. According to Subsection~\ref{SS-llnd-w}, the subsets $L_0 := \set{0,\ldots, (m_1+1)}$, $L'_0:=\set{1,\ldots,m_1}$, and $L\sm L_0$ are $\tau$ invariant. Defining $C'_0$ and $C''_0$ as above, there are exactly $k_0$ nodal domains inside $C'_0$, where $2k_0 = |L_0|$, $\cW(m_1+1) = (k_0+1)$, and the domain $\Omega_{\cW(m_1+1)}$ is the exterior domain of $\cB_L$. Looking at $\cW^{-1}(k_0+1)$, we obtain the partition of $L$ into $\tau$-invariant subsets which we used to deduce the word $\cW$ from the combinatorial type $\tau$.\smallskip

Example: If we look back at the example given by Equation~\eqref{E-llnd-8tau} and at the corresponding map $d$ given by Equation~\eqref{E-llnd-9d}, we find that $m_1 = 2$, and that $k_1+1 = 3$, and we recover the fact that
\[L = \set{0,\ldots,3} \sqcup \set{4,\ldots,9} \sqcup \set{10,\ldots,15}\]
and the fact that $\Omega_3$ is the exterior domain of $\cB_L$.\medskip

\noid To conclude in the general case, it suffices to determine $\tau$ in each of the invariant subsets, so that we can now use an induction argument on $|L|$.

\chapter{Plane Domains: the Estimate $\mult(\lambda_k) \le (2k-2)$ for $k \ge 3$}\label{Ch-pdwb}

Let $\Omega$ be a regular  (i.e.,  $C^{\infty}$) bounded domain\footnote{By ``domain'', we mean a connected open subset.} in $\R^2$. We are interested in the eigenvalue problem for the Laplacian or for a Schr\"{o}dinger operator of the form $-\Delta + V$ in $\Omega$, with Dirichlet, Neumann or $h$-Robin boundary condition, see \eqref{E-evp-2bc}.   As indicated in the introduction, we do not consider the Steklov problem. In this chapter, we prove the following result.

\begin{theorem}\label{T-hmn-bh1}
The multiplicities of the eigenvalues of the operator  $-\Delta + V$ in $\Omega$, with the Dirichlet, Neumann or $h$-Robin boundary condition, satisfy $\mult(\lambda_k) \le (2k-2)$ for any $k \ge 3$.
\end{theorem}%

In the next chapter, we will discuss the proof of  the sharper estimate $\mult(\lambda_k) \le (2k-3)$ for any $k \ge 3$, under the additional assumption that $\Omega$ is \emph{simply connected}, and relate this estimate to \cite[Theorem~A]{HoMN1999}.

\section{Bounding $\mult(\lambda_k)$ from Above}\label{S-hmn}

\subsection{Introduction}\label{SS-hmn-i}

As in \cite{HoMN1999}, our proof of Theorem~\ref{T-hmn-bh1} consists of three steps.
 \begin{itemize}
 \item The first step is to prove the upper bound $\mult(\lambda_k) \le (2k-1)$ for all $k \ge 1$. This upper bound follows easily from Courant's nodal domain theorem, Theorem~\ref{T-RC}, and Euler's formula for nodal sets, see Subsection~\ref{SS-hmn-0B}. It is sharp for $k=1$ since $\mult(\lambda_1) = 1$  for any domain. The inequality $\mult(\lambda_2) \le 3$ turns out to be sharp either. This is related to the \emph{nodal line conjecture},
   see Section~\ref{S-nlc}.
\item In a  second step, we prove that $\mult(\lambda_k)$ cannot be equal to $(2k-1)$ for $k\ge 3$. This is done in Section~\ref{S-hmn2}, under the simplifying assumption that $\Omega$ is simply-connected,  and in Section~\ref{S-hmn2N} in the general case.
\end{itemize}

\begin{remark}\label{R-hmn-0}
As far as we know, for the Neumann and $h$-Robin boundary condition, the upper bound on the eigenvalue multiplicities given in Theorem~\ref{T-hmn-bh1}  is new.
\end{remark}%

\subsection{Notation}\label{SS-hmn-0N}

Let us fix some notation for   Sections~\ref{S-hmn}--\ref{S-hmn9}.\smallskip

Let $U$ denote a linear subspace of an eigenspace of the eigenvalue problem for $-\Delta + V$ in $\Omega$, see \eqref{E-evp-2bc}. We assume that $U$ satisfies the inequality
\begin{equation}\label{E-hmn0-2}
\max \set{\kappa(u) \mid 0 \neq u \in U} \le \ell ,
\text{~~for some integer~} \ell \ge 2.
\end{equation}

 Denote $\partial \Omega$ by $\Gamma$, with $q = b_0(\Gamma)$. Write $\Gamma$ as the union of its components
\[
\Gamma = \bigcup_{j=1}^q \Gamma_j,\mbox{ with } q \ge 1.
\]

Given $0 \neq u \in U$, define the sets \index{2-J@$J(u)$} \index{1-Gamma@$\Gamma$!$\Gamma(u)$}
\begin{equation}\label{E-hmn0-4}
\left\{
\begin{array}{ll}
J(u) & := \set{j \mid \Gamma_j \cap \cZ(u) \neq \emptyset},\\[5pt]
\Gamma(u) & := \cup_{j \in J(u)} \Gamma_j.
\end{array}
\right.
\end{equation}

Given a function $0 \neq u \in U$, $[u]$ denotes the line \index{3-[@$[u]$}
\begin{equation}\label{E-hmn0-5}
[u] := \set{a u \mid a \in \R\sm \set{0}}
\end{equation}
in the projective space $\bP(U)$. We say that $u$ is a generator of the line $[u]$. If a function $u$ is uniquely determined by some condition, up to multiplication by a nonzero scalar, we say that $u$ is uniquely determined \emph{up to scaling} or, equivalently, that $[u]$ is uniquely determined.

\subsection{The initial inequalities}\label{SS-hmn-0B}

We shall make an extensive use of Euler's formula for the nodal set $\cZ(u)$ of an eigenfunction $u$, see Subsection~\ref{SS-etf}. Taking into account the assumption \eqref{E-hmn0-2} on $U$, we have
\begin{equation}\label{E-euler-or}
\ell \ge \kappa(u) = 1 + \beta(u) + \sigma_{\mathrm{i}}(u) + \sigma_{\mathrm{b}}(u),
\end{equation}
where,
\begin{equation}\label{E-hmn0-8}
\left\{
\begin{array}{ll}
\beta(u) & := b_0(\cZ(u) \cup \Gamma) - b_0(\Gamma) \\[5pt]
& ~= b_0\big(\cZ(u) \bigcup \Gamma(u)\big) - b_0(\Gamma(u)),
\end{array}
\right.
\end{equation}
\begin{equation}\label{E-hmn0-10}
\left\{
\begin{array}{ll}
\sigma_{\mathrm{i}}(u) &= \frac 12\, \sum_{z \in \cS_{\mathrm{i}}(u)} \big( \nu(u,z) - 2 \big),\\[5pt]
\sigma_{\mathrm{b}}(u) &= \frac 12\, \sum_{z \in \cS_{\mathrm{b}}(u)} \rho(u,z) = \sum_{j \in J(u)} \, \frac 12 \, \sum_{z\in \cS_{\mathrm{b}}(u) \cap \Gamma_j\,} \rho(u,z).\\[5pt]
\end{array}
\right.
\end{equation}

From the nodal character, using Proposition~\ref{P-euler-np} and the definition of $J(u)$, we also have
\begin{equation}\label{E-hmn0-12}
\forall j \in J(u), \quad \sum_{z \in \cS_{\mathrm{b}}(u)\cap \Gamma_j}\rho(u,z) \text{~~is even and~} \ge 2.
\end{equation}

We now rewrite  the Euler inequality \eqref{E-euler-or} in the form,
\begin{equation}\label{E-euler-or2}
\left\{
\begin{array}{ll}
0 \geq  \kappa(u) - \ell  \,  = &\big[ b_0(\cZ(u) \cup \Gamma(u))-1 \big]+ \frac 12 \sum_{z \in \cS_{\mathrm{i}}(u)} (\nu(u,z)-2)\\[5pt]
& + \sum_{j \in J(u)\,} \frac 12 \, \big( \sum_{z\in \cS_{\mathrm{b}}(u) \cap
\Gamma_j\,} \rho(u,z) \, - 2 \big) - (\ell - 2).
\end{array}
\right.
\end{equation}

We will apply this inequality to  eigenfunctions with prescribed singular points.\medskip

Fix some $x \in \Gamma_1$, and let $m := \dim U$. By Lemma~\ref{L-zero1}, there exists $0 \neq u \in U$ such that $\rho(u,x) \ge (m-1)$. Rewrite \eqref{E-euler-or2} as,
\begin{equation}\label{E-euler-or2a}
\left\{
\begin{array}{ll}
0 \geq  \kappa(u) - \ell  \,  & = \big[ b_0(\cZ(u) \cup \Gamma(u))-1 \big] + \frac 12 \sum_{z \in \cS_{\mathrm{i}}(u)} (\nu(z)-2)\\[5pt]
& ~~ + \sum_{j \in J(u), j\not = 1\,}  \frac 12 \, \big( \sum_{z\in \cS_{\mathrm{b}}(u) \cap
\Gamma_j\,} \rho(u,z) \, - 2 \big)\\[5pt]
& ~~ + \frac 12 \, \sum_{z\in \cS_{\mathrm{b}}(u) \cap \Gamma_1\,} \rho(u,z) \, - \ell + 1. \end{array}
\right.
\end{equation}
The first three terms in the right-hand side of the equality are nonnegative. It follows that
\begin{equation*}
2\ell - 2 \ge \sum_{z\in \cS_{\mathrm{b}}(u) \cap \Gamma_1\,} \rho(u,z) \ge m-1,
\end{equation*}
so that
\begin{equation}\label{E-hmn0-14}
\dim U = m \le (2\ell - 1).
\end{equation}

Courant's nodal domain theorem states that $\max \set{\kappa(u) \mid 0 \neq u \in U(\lambda_k)} \le k$. Cho\-osing $U = U(\lambda_k)$, inequality \eqref{E-hmn0-14} yields the following estimate.

\begin{proposition}[\cite{Nadi1987}, Theorem~2]\label{P-hmn-s1}
Let $\Omega \subset \R^2$ be a bounded domain with smooth boundary. Let $\set{\lambda_k, k\ge 1}$ be the eigenvalues of the operator $-\Delta + V$ in $\Omega$, with Dirichlet or Robin boundary condition. Then, for any $k \ge 1$,
\begin{equation*}
\mult(\lambda_k) \le (2k-1).
\end{equation*}
\end{proposition}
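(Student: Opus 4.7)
The proof is essentially contained in the preceding derivation of inequality \eqref{E-hmn0-14}; the plan is simply to specialize $U$ to the full eigenspace $U(\lambda_k)$ and record how Courant's theorem feeds the combinatorial bound.

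First, I would set $U := U(\lambda_k)$ and $m := \dim U(\lambda_k) = \mult(\lambda_k)$. Courant's nodal domain theorem (recalled after Definition~\ref{D-evp2-00}) guarantees that $\kappa(u) \le k$ for every $0 \neq u \in U$, so the hypothesis \eqref{E-hmn0-2} is satisfied with $\ell = k$. If $m \le 1$ there is nothing to prove, so we assume $m \ge 2$.

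Next, I would pick an arbitrary point $x$ on some boundary component $\Gamma_1$ of $\partial\Omega$ (which is nonempty since we are considering Dirichlet or Robin conditions). Lemma~\ref{L-zero1} applied to $U$ yields a nontrivial eigenfunction $u \in U$ whose index at $x$ satisfies $\rho(u,x) \ge m-1$. In particular, $1 \in J(u)$, and the boundary sum at $\Gamma_1$ appearing in \eqref{E-hmn0-10} is bounded below by $m-1$.

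Finally, I would apply the Euler-type inequality \eqref{E-euler-or2a} to this specific $u$ with $\ell = k$. The first three terms on the right-hand side (the topological term $b_0(\cZ(u)\cup\Gamma(u))-1$, the interior singular contribution $\sigma_i(u)$, and the sum over $j \in J(u)\setminus\{1\}$, which is nonnegative thanks to \eqref{E-hmn0-12}) are all $\ge 0$. Discarding them leaves
\begin{equation*}
0 \ge \tfrac12 \sum_{z\in \cS_b(u)\cap\Gamma_1} \rho(u,z) - k + 1 \ge \tfrac12(m-1) - k + 1,
\end{equation*}
which rearranges to $m \le 2k-1$. Since $m = \mult(\lambda_k)$, this is the claimed bound, and the argument is identical in the Dirichlet and Robin cases because Lemma~\ref{L-zero1} and inequality \eqref{E-hmn0-12} hold uniformly. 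There is no real obstacle here: the only ingredients are Courant's theorem, the existence of eigenfunctions with a prescribed boundary singular point of high index, and Euler's formula for nodal partitions on a planar domain, all of which have been established earlier in the paper.
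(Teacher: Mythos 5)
Your argument is correct and follows the paper's own proof essentially verbatim: it specializes Subsection~\ref{SS-hmn-0B} to $U=U(\lambda_k)$ with $\ell=k$ from Courant's theorem, invokes Lemma~\ref{L-zero1} to produce $u$ with $\rho(u,x)\ge m-1$ at a point $x\in\Gamma_1$, and discards the nonnegative terms in \eqref{E-euler-or2a} to obtain $m\le 2k-1$. Nothing further is needed.
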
%

In view of Proposition~\ref{P-hmn-s1}, in order to prove Theorem~\ref{T-hmn-bh1}, it suffices to show that the equality $\dim U(\lambda_k) = (2k-1)$ cannot occur for $k\ge 3$. This is the purpose of  Sections~\ref{S-hmn2} and \ref{S-hmn2N} in which we revisit and extend the arguments of \cite{HoMN1999} for the three boundary conditions \eqref{E-evp-bc}.

\begin{remark}\label{R-hmn-2}
According to Pleijel \cite{Plej1956}, \index{Pleijel Theorem} Courant's Theorem~\ref{T-RC} \index{Courant nodal Theorem} is sharp for finitely many Dirichlet eigenvalues only.  The eigenvalue $\lambda_k$ is called  \emph{Courant-sharp} \index{Courant-sharp} whenever the associated eigenspace $U(\lambda_k)$ contains an eigenfunction with $k$ nodal domains, the maximum number allowed by Courant's nodal domain theorem. If $\lambda_k$ is not a Courant-sharp eigenvalue, we have $\max \set{\kappa(u) \mid 0 \neq u \in U(\lambda_k)} \le (k-1)$ and hence $\mult(\lambda_k) \le (2k-3)$, improving the inequality in Proposition~\ref{P-hmn-s1} by $2$.  We refer to Section~\ref{S-mcs} for more details and references on Courant-sharp eigenvalues, and results \`{a} la Pleijel.
\end{remark}%

\begin{remark}\label{R-hmn-4}
In the forthcoming sections, under the assumptions that $\ell = k\ge 3$ and $\mult(\lambda_k) = (2k-1)$ or $\mult(\lambda_k) = (2k-2)$,  we prescribe eigenfunctions $u$ with a singular set $\cS(u)$ such that equality holds in \eqref{E-euler-or2a}, implying that $\kappa(u) = k$,  and hence that $\lambda_k$ is Courant-sharp. We actually do not use this information in the proofs, but rather carefully analyze the nodal sets $\cZ(u)$ to reach a topological contradiction.
\end{remark}%

\section{$\Omega$ Simply Connected: the Estimate $\mult(\lambda_k) \le (2k-2)$ for $k\geq 3$}\label{S-hmn2}

\subsection{Introduction}\label{SS-hmn20}

In this section, we provide detailed proofs of the statements in \cite[Section~2]{HoMN1999}. The general idea is to prove that an a priori upper bound on the number of nodal domains of eigenfunctions in a given subspace $U$ implies an upper bound on $\dim U$. Indeed, the bigger the dimension of $U$, the easier to construct eigenfunctions with prescribed high order singular points  and, by Euler's formula, with more nodal domains.\smallskip

The inequality in the title is valid for \emph{any} smooth bounded domain $\Omega \subset \R^2$. Note that it is not true for $k=1$ and $k=2$. In order to simplify the presentation, we shall however give the proof under \emph{the additional assumption that $\Omega$ is simply connected}, see Proposition~\ref{P-hmn-s2}.  In Remark~\ref{R-hmn-final}, we explain how to deal with the general case,  referring to Section~\ref{S-hmn2N} for complete proofs. \smallskip

The proof of the inequality is by contradiction. Taking Proposition~\ref{P-hmn-s1} into account, we assume that $\dim U(\lambda_k) = (2k-1)$ for some $k \ge 3$, and reach a contradiction.  In this section, we make the following assumptions.

\begin{assumptions}\label{A-hmn2-0}~
\begin{enumerate}[i)]
  \item $\Omega$ is simply connected.
  \item $U$ is a linear subspace of an eigenspace $U(\lambda)$ of $-\Delta + V$ in $\Omega$, with Dirichlet or Robin boundary condition, see \eqref{E-evp-2bc}.
  \item For some $\ell \ge 2$,
\begin{equation*}
\left\{
\begin{array}{l}
\sup\set{\kappa(u) \mid 0 \neq u \in U} \le \ell \text{~~and}\\[5pt]
\dim U = (2\ell - 1).
\end{array}%
\right.
\end{equation*}
\end{enumerate}
\end{assumptions}%

\subsection{Eigenfunctions with two  prescribed  boundary singular points}\label{SS-hmn-21}

We use the notation of Subsection~\ref{SS-hmn-0N}, and work  under Assumptions~\ref{A-hmn2-0}.\smallskip

For $y \neq z \in \Gamma$, we introduce the subspace\index{2-V@$V_{y,z}$}
\begin{equation*}
V_{y,z} := \set{u \in U \mid \rho(u,y) \ge (2\ell-3) \text{~and~} \rho(u,z) \ge 1}.
\end{equation*}

According to Lemma~\ref{L-zero2},  $V_{y,z} \neq \set{0}$. The purpose of this subsection is to investigate the properties of the functions $u \in V_{y,z}$, precise order of vanishing, and structure of their nodal sets.\smallskip

\subsubsection{Properties of $V_{y,z}$}\label{SSS-hmn-21a}

\begin{lemma}\label{L-Uxy0}
Assume that $\Omega$ is simply connected. Let $U$ be a linear subspace of an eigenspace of $-\Delta + V$ in $\Omega$,  such that $\sup\set{\kappa(u) \mid 0 \not = u \in U}\le \ell$ for some $\ell \ge 2$, and $\dim U = (2\ell-1)$. Let $y \neq z \in \Gamma$. The subspace
\[
V_{y,z} := \set{u \in U \mid \rho(u,y) \ge (2\ell - 3) \text{~and~} \rho(u,z) \ge 1}.
\]
has the following properties.
\begin{enumerate}[(i)]
  \item $\dim V_{y,z} = 1$ and, for any $0 \neq u \in V_{y,z}\,$;
  \item 
            $\cS_{\mathrm{i}}(u) = \emptyset$ and $\cS_{\mathrm{b}}(u) = \set{y,z}$;
  \item $\rho(u,y)=(2\ell-3)$ and $\rho(u,z) = 1$;
  \item $\kappa(u) = \ell$;
  \item the set $\cZ(u) \cup \Gamma$ is connected.
\end{enumerate}
A generator of $V_{y,z}$ will be denoted by $v_{y,z}$ (defined up to scaling).
\end{lemma}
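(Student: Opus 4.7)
The plan is to: (a) obtain existence of a nonzero element of $U_{x,y}$ from Lemma~\ref{L-zero2}; (b) apply the Euler inequality \eqref{E-euler-or2} to any such element and extract (ii)--(v) from the fact that the inequality must be saturated; (c) prove (i) by deriving a contradiction, using Lemma~\ref{L-zeroc}(ii) to bump up the order of vanishing at $x$.

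For existence I would invoke Lemma~\ref{L-zero2} with $m=2\ell-1\ge 3$: it produces $0\ne u\in U$ with $\rho(u,x)\ge 2\ell-3$ and $\rho(u,y)\ge 1$, so $U_{x,y}\ne\set{0}$.

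Then, for (ii)--(v), I would fix any $0\ne u\in U_{x,y}$ and observe that, since $\Omega$ is simply connected, $\partial\Omega=\Gamma_1$, and since $\rho(u,x)\ge 1$ we have $J(u)=\set{1}$. The inequality \eqref{E-euler-or2} reduces to
\begin{equation*}
0\ge \kappa(u)-\ell = \bigl[b_0(\cZ(u)\cup\Gamma_1)-1\bigr]+\sigma_i(u)+\tfrac12\Bigl(\sum_{z\in\cS_b(u)}\rho(u,z)-2\Bigr)-(\ell-2).
\end{equation*}
The first two bracketed terms are nonnegative, and the prescribed indices force $\tfrac12\sum_{z\in\cS_b(u)}\rho(u,z)-1\ge \ell-2$. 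Since the total is $\le 0$, all four nonnegative quantities must vanish, yielding simultaneously $b_0(\cZ(u)\cup\Gamma_1)=1$ (property (v)), $\cS_i(u)=\emptyset$, $\kappa(u)=\ell$ (property (iv)) and $\sum_{z\in\cS_b(u)}\rho(u,z)=2\ell-2$. This last equality, combined with $\rho(u,x)\ge 2\ell-3$ and $\rho(u,y)\ge 1$, forces $\rho(u,x)=2\ell-3$, $\rho(u,y)=1$ (property (iii)) and $\cS_b(u)=\set{x,y}$ (completing (ii)).

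For (i) I would argue by contradiction: if $\dim U_{x,y}\ge 2$, pick linearly independent $v_1,v_2\in U_{x,y}$. By (iii) just proved, $\rho(v_1,x)=\rho(v_2,x)=2\ell-3\ge 1$, so Lemma~\ref{L-zeroc}(ii) yields $0\ne u\in\spanop\set{v_1,v_2}$ with $\rho(u,x)\ge 2\ell-2$. By Notation~\ref{N-evp-dr} and Lemma~\ref{L-breve}, the condition $\rho(\cdot,y)\ge 1$ is equivalent to vanishing of the \emph{linear} functional $w\mapsto \breve w(y)$, hence is preserved under linear combinations; therefore $\rho(u,y)\ge 1$ and $u\in U_{x,y}$. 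But (iii) applied to $u$ would then give $\rho(u,x)=2\ell-3$, contradicting $\rho(u,x)\ge 2\ell-2$.

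The only delicate point is checking that saturation of \eqref{E-euler-or2} simultaneously pins down the exact indices at $x$ and $y$ and excludes further singular points, both interior and on $\partial\Omega$; this works precisely because the prescribed boundary indices account exactly for the $(\ell-2)$ term on the right-hand side, leaving no slack for any extra contribution. The argument is uniform across the three boundary conditions, since Lemmas~\ref{L-zero2}, \ref{L-zeroc} and \ref{L-breve} are each stated in the Dirichlet and Robin versions, and the Neumann case is the $h=0$ instance of Robin.
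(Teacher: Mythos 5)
Your proof is correct and follows essentially the same route as the paper: existence from Lemma~\ref{L-zero2}, saturation of the Euler inequality to force (ii)--(v), and Lemma~\ref{L-zeroc} to rule out $\dim U_{x,y}\ge 2$. The only (immaterial) difference is that in the uniqueness step you bump the index at $x$ to $2\ell-2$, whereas the paper bumps the index at $y$ to $2$; both variants work because vanishing to a prescribed order at a point is a linear condition, so membership in $U_{x,y}$ is preserved under the linear combination, and either outcome contradicts assertion~(iii).
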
%

\begin{proof} For simplicity, in the proof, we write $\nu(z)$ for $\nu(u,z)$, \ldots.\smallskip

The fact that $\dim V_{y,z} \ge 1$ follows from Lemma~\ref{L-zero2}.
In view of our assumptions, for any $0 \neq u \in U$, Euler's formula \eqref{E-euler-or2a} gives,
\begin{equation}\label{E-LA5}
\begin{array}{ll}
0 \geq  \kappa(u) - \ell  \,  = &\big( b_0(\cZ(u) \cup \Gamma)-1\big)+ \frac 12 \sum_{x \in \cS_{\mathrm{i}}(u)} (\nu(x)-2)\\[5pt]
& \quad +
\frac 12 \sum_{x\in \cS_{\mathrm{b}}(u),\, x\neq y,z\,}\rho (x)  + \frac 12 \big( \rho(y) + \rho(z) -2\ell +2 \big).
 \end{array}
\end{equation}
Each term in the right-hand side of the equality being nonnegative, the inequality implies that each term is zero, thus proving Assertions~(ii)--(v). \smallskip

To prove the first assertion, assume that there exist two linearly independent functions $u_1$ and $u_2$ in $U$. By Assertion~(iii) they both satisfy $\rho(u_i,y) = (2\ell - 3)$ and $\rho(u_i,z) = 1$. Applying Lemma~\ref{L-zeroc} at the point $z$, we find a nontrivial linear combination $\tilde u$ of $u_1$ and $u_2$ such that $\rho(\tilde u,z) \ge 2$ and  $\rho(\tilde u,y) \ge (2\ell - 3)$, contradicting Assertion~(iii).
\end{proof}

\begin{figure}[!ht]
\centering
\begin{subfigure}[t]{.30\textwidth}
\centering
\includegraphics[width=\linewidth]{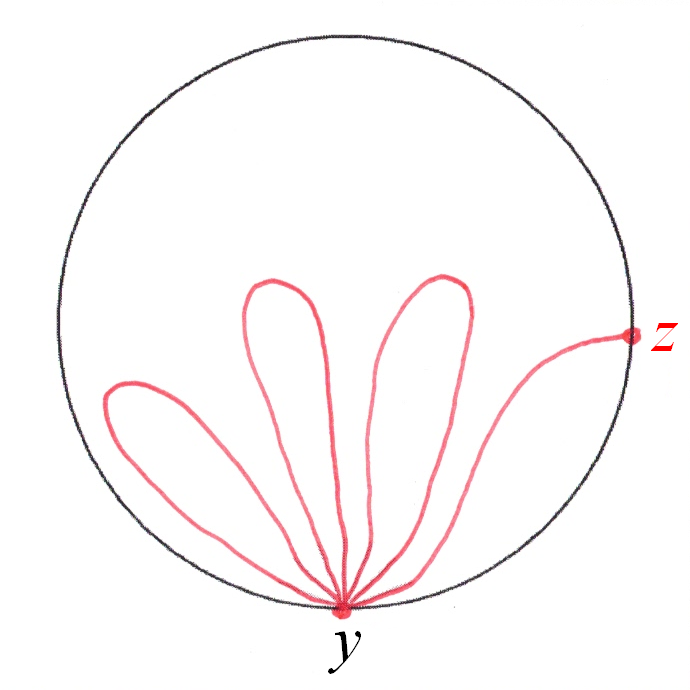}
\caption{}
\end{subfigure}
\begin{subfigure}[t]{.30\textwidth}
\centering
\includegraphics[width=\linewidth]{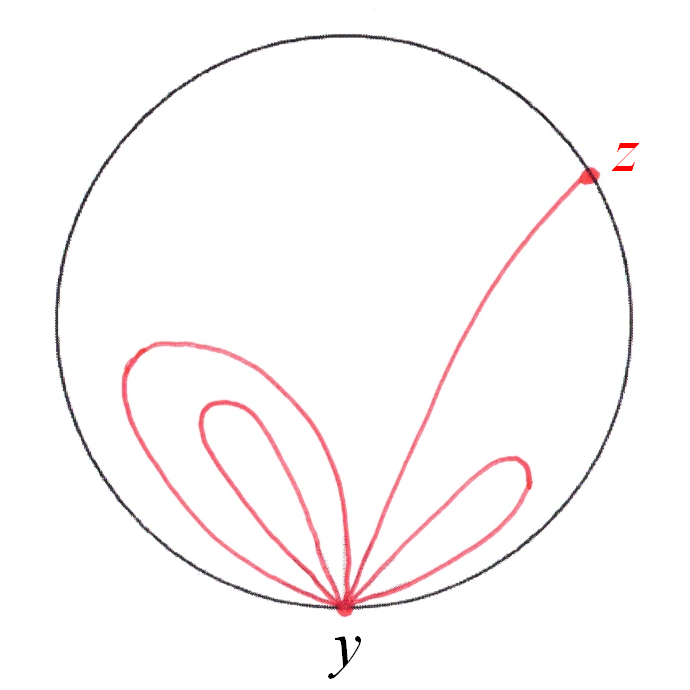}
\caption{}
\end{subfigure}
\begin{subfigure}[t]{.30\textwidth}
\centering
\includegraphics[width=\linewidth]{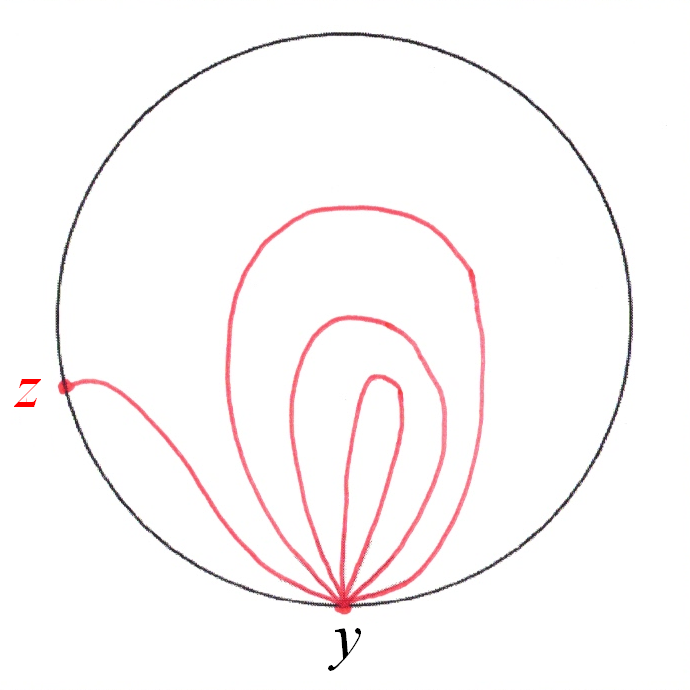}
\caption{}
\end{subfigure}
\caption{$\Omega$ simply connected, $\ell = 5$,  some possible nodal patterns for $v_{y,z}$}\label{F-hmn2-Uxy0}
\end{figure}

\begin{definition}\label{D-hmn-np}
By a \emph{nodal pattern}, \index{Nodal pattern}  we mean a nodal set, up to  continuous deformations under which singular points may move, but neither appear nor disappear (singular points  occur when  the nodal set has self-intersections, or when it hits the boundary).
\end{definition}%

\begin{remark}\label{R-Fgene}
The \emph{nodal patterns} displayed in Figure~\ref{F-hmn2-Uxy0} are valid for both the Dirichlet and Robin boundary conditions. Unless otherwise stated this remark applies to all figures of this section.
\end{remark}

\subsubsection{Structure and combinatorial type of nodal sets in $V_{y,z}$}\label{SSS-hmn-21as}

Fix $y \neq z \in \Gamma$.  Under the assumptions of Lemma~\ref{L-Uxy0}, the nodal set of an eigenfunction $u \in V_{y,z}$ can be described as follows, using the notation of Section~\ref{S-lsbs}. \smallskip

For $r_0$ small enough, in the neighborhood $D_{+}(y,r_0)$ of $y$, the nodal set $\cZ(u)$  consists of $(2\ell - 3)$ nodal semi-arcs $\delta_j$ emanating from $y$, tangentially to the rays $\omega_j, 1 \le j \le (2\ell - 3)$ (they actually depend on $y,z$ as well). Choosing any $j$, we follow the nodal semi-arc $\delta_j$ along $Z(u)$, until we reach a singular point of $u$. Otherwise stated, we consider the  component of $\cZ(u) \sm \cS(u)$ which contains the semi-arc $\delta_j$ (by abuse of notation we also denote this  component by $\delta_j$). This is a nodal interval one of whose end points is $y$. Since $\cS(u) = \cS_{\mathrm{b}}(u) = \set{y,z}$, the other end point is either $y$ or $z$.
More precisely, in view of the general properties of nodal sets, we define a map
\begin{equation}\label{E-hmn-nt}
\tau_{y,z} : \set{\downarrow} \cup L_{(2k-3)} \to \set{\downarrow} \cup L_{(2k-3)}
\end{equation}
as follows (recall that $L_{m} = \set{1,\ldots,m}$).
\begin{enumerate}[(i)]
  \item There exists a unique element $a^z_y \in L_{(2k-3)}$ such that starting from $y$ along $\delta_{a^z_y}$, we reach the boundary $\Gamma$ at $z$. We let $\tau_{y,z}(\downarrow) = a^z_y$ and $\tau_{y,z}(a^z_y) = \, \downarrow\,$.
  \item For $j \in L_{(2k-3)}\sm \set{a^z_y}$, following $\delta_j$, we arrive back at $y$, along another nodal semi-arc, which we denote by $\delta_{\tau_{y,z}(j)}$; this semi-arc emanates from $y$ tangentially to the ray $\omega_{\tau_{y,z}(j)}$. This defines $\tau_{y,z}$ on $L_{(2k-3)}\sm \set{a^z_y}$. The local structure theorem implies that for $j \in L_{(2k-3)}\sm \set{a^z_y}$, $\tau_{y,z}(j) \in L_{(2k-3)}\sm \set{a^z_y}$, and $\tau_{y,z}(j)\neq j$.
\end{enumerate}

Doing so, we obtain a uniquely defined map $\tau_{y,z}$ from $\set{\downarrow} \cup L_{(2\ell-3)}$ to itself, such that $(\tau_{y,z})^2 = \id$, and  $(\tau_{y,z})(j) \neq j$.\smallskip

The pair $\set{\downarrow,a^z_y}$ corresponds to the nodal interval $\delta_{a^z_y}$ from $y$ to $z$. For an index $j \in L_{(2k-3)}\sm \set{a^z_y}$, the pair $\set{j,\tau_{y,z}(j)}$ corresponds to a loop $\gamma^{y,z}_{j,\tau_{y,z}(j)}$ at $y$.  There are $(\ell - 2)$ such loops.  Since $\cS(u) = \set{y,z}$ and $\rho(u,z) = 1$, these loops and arc do not intersect away from $y$. Since $\cZ(u) \cup \Gamma$ is connected, the nodal set $\cZ(u)$ is actually the union of these $(\ell - 2)$ loops and arc. Otherwise stated, $\cZ(u)$ is the wedge sum $\cB_{y,(\ell-2)}^{z}$ of the simple arc $\delta_{\tau_{y,z}(\downarrow)}$ from $y$ to $z$ with an $(\ell-2)$-bouquet of loops at $y$. By analogy with Paragraph~\ref{SSS-h2n-s2c}, we give the following definition.

 \index{1-tau@$\tau$!$\tau_{y,z}$}
\begin{definition}\label{D-hmn-nt}
The map $\tau_{y,z}$  is called the \emph{combinatorial type} \index{Combinatorial type} of the eigenfunction $u \in V_{y,z}$ (or of the nodal set $\cZ(u)$) with respect to the points $y$ and $z$.
\end{definition}%

We describe the map $\tau_{y,z}$ in matrix form as
\begin{equation}\label{E-hmn-21b2}
\resizebox{.88 \textwidth}{!}
{$
\tau_{y,z} = \begin{pmatrix}
                   \downarrow & 1 & \ldots & (a^z_y-1) & a^z_y & (a^z_y+1) & \ldots & (2\ell -3) \\
                   a^z_y & \tau_{y,z}(1) & \ldots & \tau_{y,z}(a^z_y-1) & \downarrow & \tau_{y,z}(a^z_y+1) & \ldots & \tau_{y,z}(2\ell - 3) \\
                 \end{pmatrix}.
$}
\end{equation}

In the sequel, we  skip the sub- or super-scripts whenever the context is clear.  Figure~\ref{F-hmn2-Uxy0} displays some possible nodal patterns (for $\ell = 5$, $\rho(y)=7$, and $\rho(z) =1$).  The corresponding combinatorial types are given respectively by
\begin{equation*}
\resizebox{.97 \textwidth}{!}
{$
\tau_A = \begin{pmatrix}
           \downarrow & 1 & 2 & 3 & 4 & 5 & 6 & 7 \\
           1 & \downarrow & 3 & 2 & 5 & 4 & 7 & 6 \\
               \end{pmatrix},\quad
\tau_B = \begin{pmatrix}
           \downarrow & 1 & 2 & 3 & 4 & 5 & 6 & 7 \\
           3 & 2 & 1 & \downarrow & 7 & 6 & 5 & 4 \\
               \end{pmatrix},\quad
\tau_C = \begin{pmatrix}
           \downarrow & 1 & 2 & 3 & 4 & 5 & 6 & 7 \\
           7 & 6 & 5 & 4 & 3 & 2 & 1 & \downarrow \\
              \end{pmatrix}.
$}
\end{equation*}

\subsection{Eigenfunctions with one prescribed   boundary singular point}\label{SS-hmn-22}

We use the notation of Subsection~\ref{SS-hmn-0N}, and work under Assumptions~\ref{A-hmn2-0}.\smallskip

For $y \in \Gamma$, we introduce the subspaces
\index{2-U@$U_y$!$U^1_y, U^1_y$}
\begin{equation}\label{E-bha4-2}
\left\{
\begin{array}{l}
U^1_y = \set{u \in U \mid \rho(u,y) \ge (2\ell -2)},\\[5pt]
U^2_y = \set{u \in U \mid \rho(u,y) \ge (2\ell -3)}.\\[5pt]
\end{array}%
\right.
\end{equation}

According to Lemma~\ref{L-zero2},  $U^1_y \neq \set{0}$. The purpose of this subsection is to investigate the properties of the functions $u \in U^1_y$ or $U^2_y$ -- their precise order of vanishing, the structure of their nodal sets -- under Assumptions~\ref{A-hmn2-0}.\smallskip

\subsubsection{Properties of $U^1_y$ and $U^2_y$}\label{SSS-hmn-22a}

\begin{lemma}\label{L-Ux1}
Assume that $\Omega$ is simply connected. Let $U$ be a linear subspace of an eigenspace of $-\Delta + V$ in $\Omega$, such that $\sup\set{\kappa(u) \mid 0 \not = u \in U} \le \ell$ for some $\ell \ge 2$,  and $\dim U = (2\ell - 1)$.
Fix some $ y \in \Gamma$.  The subspaces
\begin{equation*}
\left\{
\begin{array}{l}
U^1_y = \set{u \in U \mid \rho(u,y) \ge (2\ell -2)}\\[5pt]
U^2_y = \set{u \in U \mid \rho(u,y) \ge (2\ell -3)}\\[5pt]
\end{array}%
\right.
\end{equation*}
have the following properties.
\begin{enumerate}[(i)]
  \item $\dim U^1_y = 1$, \, $\dim U^2_y = 2$ and,
  \item  for any $0 \neq u \in U^2_y$,
  \begin{equation}\label{E-bha4-2a}
\left\{
\begin{array}{l}
\kappa(u) = \ell \text{~~and~~} \cZ(u) \cup \Gamma \text{~is connected},\\[5pt]
\cS_{\mathrm{i}}(u) = \emptyset,\\[5pt]
\sum_{z \in \cS_{\mathrm{b}}(u)} \rho(u,z) = (2\ell - 2) \text{~and, more precisely,}\\[5pt]
\begin{array}{ll}
\text{(a) either} & \rho(u,y) = (2\ell - 2) \text{~and~} \cS_{\mathrm{b}}(u) = \set{y},\\[5pt]
\text{(b) or} & \rho(u,y)  = (2\ell-3),\, \exists\, z_u \in \Gamma \sm \set{y}, \\[5pt]
& \quad \text{~with~} \rho(u,z_u) = 1 \text{~and~}  \cS_{\mathrm{b}}(u)= \set{y,z_u}.
\end{array}%
\end{array}%
\right.
\end{equation}
\end{enumerate}
\end{lemma}%

\begin{proof} Clearly, $\set{0} \neq U_y^1 \subset U_y^2$. Take any $0 \not = u \in U^2_y$. Euler's formula \eqref{E-euler-or2a} can be rewritten as
\begin{equation}\label{E-bha4-2e}
\begin{array}{ll}
0 \ge \kappa(u) - \ell =& \big( b_0(\cZ(u) \cup \Gamma) - 1 \big)
+ \frac 12\, \sum_{x \in \cS_{\mathrm{i}}(u)} (\nu(x)-2)\\[5pt]
& +\, \frac 12\, \big( \sum_{x\in \cS _b(u)\,}\rho (x)  - 2\ell +2\big).
\end{array}%
\end{equation}
The first two  terms in the right-hand side of the equality are nonnegative. Since the last term $\sum_{x\in \cS _b(u)\,}\rho (x)$ is even, and larger than or equal to $(2\ell -3)$, the last term is nonnegative too. In view of the first inequality, the three terms must vanish. This proves the relations \eqref{E-bha4-2a}.
\medskip

\noid \emph{Proof that $\dim U^1_y = 1$.~}  Assume that this is not the case. Then, there exist two linearly independent functions $u_1, u_2$ in $U^1_y$ such that $\rho(u_i,y) = (2\ell - 2)$. By Lemma~\ref{L-zeroc}, there would exist a nontrivial linear combination $u$ such that $u \in U^1_y$ and $\rho(u,y) \ge (2\ell - 1)$, a contradiction with \eqref{E-bha4-2a}. \medskip

\noid \emph{Proof that $\dim U^2_y = 2$.~} Choose some $0 \not = v_1 \in U^1_y$. Clearly $v_1 \in U^2_y$. On the other-hand, given any $z \in \Gamma\sm\set{y}$, Lemma~\ref{L-Uxy0} provides a function $v_{y,z}$ belonging to $U^2_y$, not to $U^1_y$, and hence $\dim U^2_y \ge 2$. Choose $0 \not = v_2 \in U^2_y$ orthogonal to $v_1$. Since $\dim U^1_y = 1$, the function $v_2$ satisfies $\rho(v_2,y) = (2\ell - 3)$, and by Proposition~\ref{P-euler-np}, there must exist some $z_2 \in \Gamma$ such that $\rho(v_2,z_2) \ge 1$.  By Lemma~\ref{L-Uxy0}, $\rho(v_2,z_2) = 1$ and $v_2 \in V_{y,z_2}$. The subspace $U^{1,\perp}_y := \set{u \in U^2_y \mid u \perp u_1}$ has dimension at least one. Assume that $\dim U^2_y \ge 3$. Then $\dim U^{1,\perp}_y \ge 2$, and we can find two linearly independent functions $u_1, u_2 \in U^{1,\perp}_y$ such that $\rho(u_i,y) = (2\ell - 3)$. By Lemma~\ref{L-zeroc}, there exists a linear combination $u \in U^{1,\perp}_y$ such that $\rho(u,y) \ge (2\ell - 2)$, a contradiction.  \end{proof}

\begin{remark}\label{R-Ux1-2}
Up to scaling, there is a uniquely defined orthogonal basis $\set{v_1,v_2}$ of $U^2_y$, with $v_1 \in U^1_y$, $v_2 \in  U^{1,\perp}_y$, and a uniquely defined $z_2 \in \Gamma \sm \set{y}$ such that $\rho(v_2,z_2) = 1$. In view of Lemma~\ref{L-breve}, we can choose $v_1$ such that $\breve{v}_1 > 0$ on $\Gamma \sm \set{y}$, and $v_2$ such that $\breve{v_2} > 0$ on the arc $\cA(y,z_2)$ from $y$ to $z_2$ moving counter-clockwise on $\Gamma$.
\end{remark}%

\begin{figure}[!ht]
\centering
\begin{subfigure}[t]{.30\textwidth}
\centering
\includegraphics[width=\linewidth]{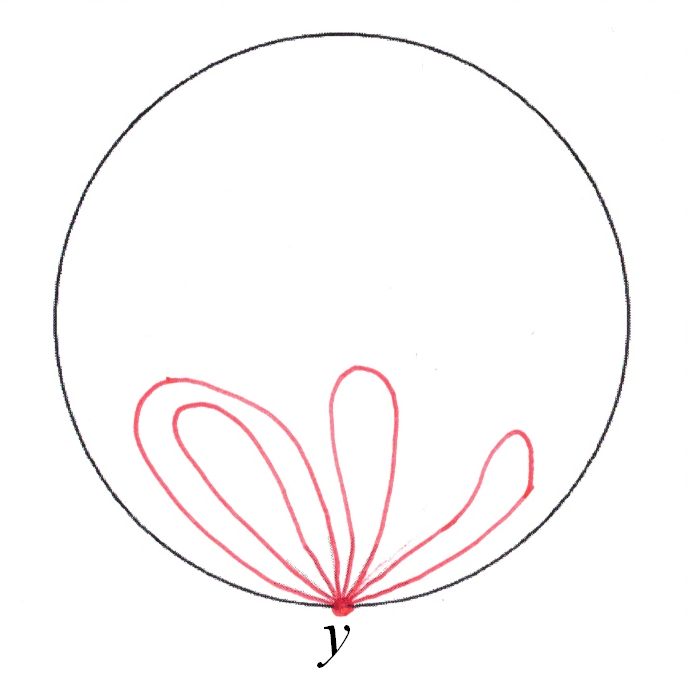}
\caption{}
\end{subfigure}
\begin{subfigure}[t]{.30\textwidth}
\centering
\includegraphics[width=\linewidth]{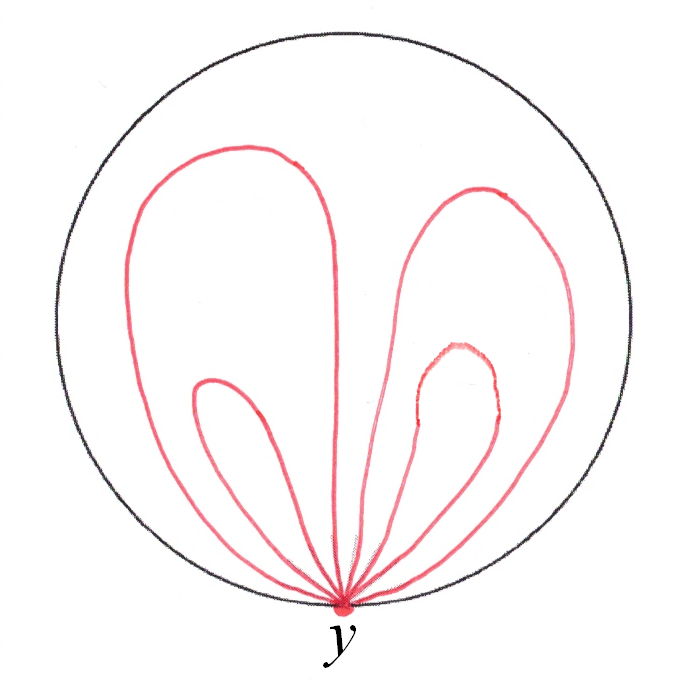}
\caption{}
\end{subfigure}
\begin{subfigure}[t]{.30\textwidth}
\centering
\includegraphics[width=\linewidth]{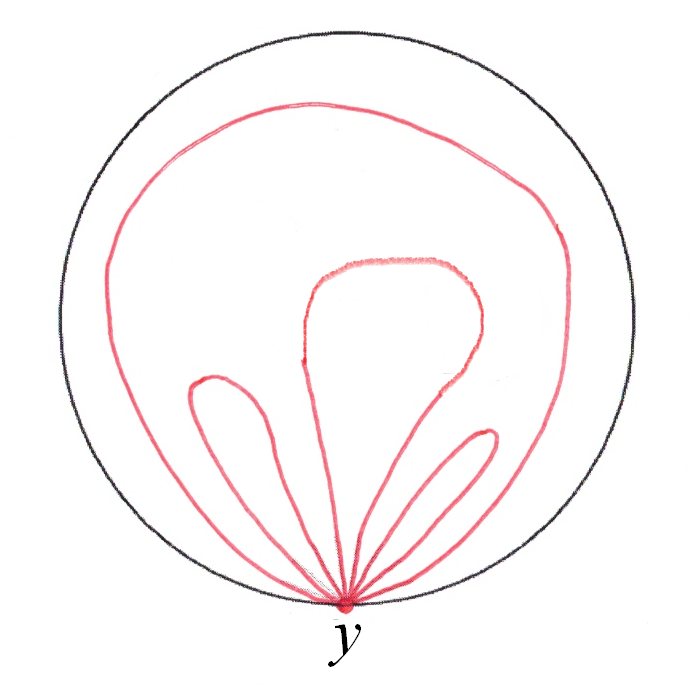}
\caption{}
\end{subfigure}
\caption{Some possible nodal patterns for $0 \not = u \in U^1_y$ ($\ell = 5$)}\label{F-hmn2-Ux100}
\end{figure}
\FloatBarrier

\subsubsection{Structure and combinatorial type of nodal sets in $U^1_y$ and $U^2_y$}\label{SSS-hmn-22as}\phantom{}~

\noid Relations~\eqref{E-bha4-2a} and an analysis as in Subsection~\ref{SS-hmn-21}, show that the nodal set of any $0\neq u \in U^1_y$ consists of $(\ell -1)$ nodal loops at the point $y$, and that these loops do not intersect away from $y$. The set $\cZ(u)$ is an $(\ell - 1)$-bouquet of loops $\cB_{y,(\ell -1)}$ at $y$. Adapting the description given in Paragraph~\ref{SSS-hmn-21as},  for $0 \neq u \in U_y^1$, we define the \emph{combinatorial type} $\tau_{y}$ of the nodal set $\cZ(u)$ with respect to $y$ for $u \in U_y^1$.   This is a map from $L_{(2\ell - 2)}$ to itself. \smallskip

Some possible nodal patterns  for $u \in U_y^1$ are displayed in Figure~\ref{F-hmn2-Ux100},  where $\ell = 5$, and $\rho(y) = 8$.  The corresponding combinatorial types, labeled  according to the figures,  are
\begin{equation*}
\begin{array}{l}
 \tau^{\ref{F-hmn2-Ux100}}_A  = \begin{pmatrix}
                          1 & 2 & 3 & 4 & 5 & 6 & 7 & 8\\
                          2 & 1 & 4 & 3 & 8 & 7 & 6 & 5\\
                        \end{pmatrix},\\[12pt]
\tau^{\ref{F-hmn2-Ux100}}_B = \begin{pmatrix}
                          1 & 2 & 3 & 4 & 5 & 6 & 7 & 8\\
                          4 & 3 & 2 & 1 & 8 & 7 & 6 & 5 \\
                        \end{pmatrix},\\[12pt]
\tau^{\ref{F-hmn2-Ux100}}_C = \begin{pmatrix}
                          1 & 2 & 3 & 4 & 5 & 6 & 7 & 8\\
                          8 & 3 & 2 & 5 & 4 & 7 & 6 & 1 \\
                        \end{pmatrix}.
\end{array}%
\end{equation*}

\noid  If $u \in U^2_y$ and $u \not \in U^1_y$, there exists a unique $z_u \in \Gamma$, such that $\cS_{\mathrm{b}}(u) \cap \Gamma = \set{y,z_u}$, with $z_u \neq y$, $\rho(u,y) = (2\ell - 3)$, and $\rho(u,z_u)=1$. Furthermore, $V_{y,z_u}=[u]$. The nodal set $\cZ(u)$  and its \emph{combinatorial type} $\tau_{y,z_u}$  are described in Paragraph~\ref{SSS-hmn-21as}. Then, $\cZ(u)$ is the wedge sum $\cB^{z_u}_{y,(\ell -2)}$ of a simple arc from $y$ to $z_u$ with an $(\ell-2)$-bouquet of loops at $y$, see Figure~\ref{F-hmn2-Uxy0}.

\subsection{Application of the previous analysis}\label{SS-hmn-24}~\\
Fix some $y \in \Gamma$. We now apply the analysis of Subsections~\ref{SS-hmn-21} and \ref{SS-hmn-22} to investigate the limits of $v_{y,z} \in U^2_y\sm U^1_y$, when $z$ tends to $y$ on $\Gamma$, \emph{clockwise} or \emph{counter-clockwise}. The notation are the same as in  Subsection~\ref{SS-hmn-21}.\smallskip

We choose a basis $\set{v_1,v_2}$ of $U^2_y$ as described in Remark~\ref{R-Ux1-2}. In particular, $\rho(v_1,y)=(2\ell -2)$,  $v_1 \perp v_2$ in $L^2(\Omega)$,  $\rho(v_2,y)=(2\ell-3)$, there exists $z_2 \in \Gamma\sm \set{y}$ such that $\rho(v_2,z_2)= 1$, and $\cS_{\mathrm{b}}(v_2) = \set{y,z_2}$.
Recall the definition of the functions $\breve{v}_i$ on $\Gamma$,
\begin{equation}\label{E-bha4-3d}
\breve{v}_i := \left\{
\begin{array}{ll}
\partial_{\nu}v_i &\quad \text{in the Dirichlet case,}\\[5pt]
v_i|_{\Gamma} &\quad \text{in the Robin case.}\\[5pt]
\end{array}
\right.
\end{equation}

According to Lemma~\ref{L-breve}, the function $\breve{v}_1$ vanishes only at $y$ and does not change sign on $\Gamma$. The function $\breve{v}_2$ does not vanish on $\Gamma \sm \set{y,z_2}$, and changes sign when crossing $z_2$  and $y$ along $\Gamma$.\smallskip

Let $\gamma : [0,2\pi] \to \Gamma$ be a parametrization such that $\gamma(0)=\gamma(2\pi) = y$.  Given any $z\in \Gamma \sm \set{y}$, there exists a function $v_{y,z}$ which satisfies \eqref{E-bha4-2a}, and this function is uniquely defined up to scaling. In the Dirichlet case, this function is characterized by the fact that $\breve{v}_{y,z} = \partial_{\nu}v_{y,z}|_{\Gamma}$ only vanishes at $y$ and $z$. In the Robin case, it is characterized by the fact that $\breve{v}_{y,z} =v_{y,z}|_{\Gamma}$ only vanishes at $y$ and $z$. Up to scaling, we may choose
\begin{equation}\label{E-bha4-4ab}
v_{y,z} = a(z) \, v_1 + b(z) \, v_2 ,
\end{equation}
with
\begin{equation}\label{E-bha4-4c}
\left\{
\begin{array}{l}
a(z) = -\, \breve{v}_2(z)\, \big( \breve{v}_1^2(z) + \breve{v}_2^2(z) \big)^{- \frac 12} ,\\[5pt]
b(z) = \breve{v}_1(z) \, \big( \breve{v}_1^2(z) + \breve{v}_2^2(z) \big)^{- \frac 12},\\[5pt]
\end{array}
\right.
\end{equation}
where $\breve{v}_1, \breve{v}_2$ are defined in \eqref{E-bha4-3d}. \\
Then, there exists a unique $\theta(z) \in (0,\pi)$ such that $\cos(\theta(z)) = a(z)$ and $\sin(\theta(z)) = b(z)$ (this is because $\breve{v}_1$ is positive on $\Gamma \sm \set{y}$). Defining the family of functions
\begin{equation}\label{E-bha4-4d}
w_{\theta} = \cos\theta \, v_1 + \sin\theta \, v_2,
\end{equation}
we have $v_{y,z} = w_{\theta(z)}$. Conversely, according to the proof of Lemma~\ref{L-Ux1}, any function $w_{\theta}$ has exactly two singular points on $\Gamma$, the point $y$ and some other point $z_{\theta} \not = y$. Note that the point $z$ determines the eigenfunction $v_{y,z}$ uniquely (up to scaling) and vice versa. It follows that we have a continuous, bijective map $(0,2\pi) \ni t \mapsto \theta(\gamma(t)) \in (0,\pi)$. This map is strictly monotone, and provides a diffeomorphism from $(0,2\pi)$ to $(0,\pi)$, with limits $0$ and $\pi$ respectively. Otherwise stated, the function $v_{y,\gamma(t)}$ defined in \eqref{E-bha4-4ab} tends to $v_1$ when $t$ tends to $0$ and to $-v_1$ when $t$ tends to $2\pi$. There exists $t_2$ such that $\gamma(t_2) = z_2$, and hence $\theta(z_2) = \frac \pi 2$.  We have proved the following property.

\begin{property}\label{P-hmn}
The function $v_{y,z}$ defined in \eqref{E-bha4-4ab} tends to $v_1$ when $z \neq y$ tends to $y$  counter-clockwise,  and to $-v_1$ when $z \neq y$ tends to $y$ clockwise.
\end{property}%

\subsection{$\Omega$ simply connected, proof that $\mult(\lambda_k) \le (2k -2)$  for all $k\ge 3$}\label{SS-hmn-25}~\\
In this subsection, we work with the family of functions $\set{w_{\theta} \mid \theta \in [0,\pi]}$ introduced in \eqref{E-bha4-4d}.\smallskip

\subsubsection{Preparation}\label{SSS-hmn-25A}  In view of Proposition~\ref{P-hmn-s1}, and reasoning by contradiction, we assume that $\dim U(\lambda_k) = (2k -1)$. By Courant's theorem, we have \[\sup\set{\kappa(u) \mid 0 \not = u \in U} \le k.\] We can apply Lemma~\ref{L-Ux1} with $\ell = k$ and $U := U(\lambda_k)$. \smallskip

In the arguments below we keep the notation of Lemma~\ref{L-Ux1} and its proof (with $\ell = k$). We fix a basis $\set{v_1,v_2}$ of $U^2_y$ as described at the beginning of  Subsection~\ref{SS-hmn-24}, and the direct frame $\set{\vec{e_1},\vec{e}_2}$ such that $\vec{e_1}$ is tangent to $\Gamma$ at $y$, and $\vec{e}_2$ is normal to $\Gamma$, pointing inwards.\smallskip

\subsubsection{Structure and combinatorial types for $v_1$ and $v_2$}\label{SSS-hmn-25B}

 Making a conformal chan\-ge of coordinates as in Section~\ref{S-lsbs}, we may assume that $\Gamma$ is a line segment in some neighborhood of $y$. Taking $r_1$ small enough, in the half-disk $D_{+}(y,r_1)$, the nodal set $\cZ(v_1)$ consists of $(2k-2)$ nodal semi-arcs $\delta_{1,j}$ emanating from $y$ tangentially to rays $\omega_{1,j}, j \in L_{(2k-2)}$; the nodal set $\cZ(v_2)$ consists of $(2k-3)$ nodal semi-arcs $\delta_{2,j}$ emanating from $y$ tangentially to rays $\omega_{2,j}, j \in L_{(2k-3)}$.\smallskip

The combinatorial type of the function $v_1 \in U^1_y$ with respect to $y$ is defined in Subsection~\ref{SS-hmn-22}. This is a map
\begin{equation}\label{E-hmn-206a}
\begin{array}{l}
\tau_{y} : L_{(2k-2)} \to L_{(2k-2)} \text{~such that}\\[5pt]
\tau_{y}(j) \not = j \text{~and~} (\tau_{y})^2(j) = j,
\text{~~for all~} j \in L_{(2k-2)}.
\end{array}%
\end{equation}
The nodal set $\cZ(v_1)$ is a $(k-1)$-bouquet of loops at $y$ described by the map $\tau_{y}$.\medskip

The combinatorial type $\tau_{y,z_2}$ of the function $v_2 \in U^{1,\perp}_y$, with respect to the points $y$ and $z_2\,$, is defined in Subsection~\ref{SS-hmn-22}. Recall that it is described as a map
\begin{equation}\label{E-hmn- 206b}
\left\{
\begin{array}{l}
\tau_{y,z_2} : \set{\downarrow} \cup L_{(2k-3)} \to \set{\downarrow} \cup  L_{(2k-3)} \text{~such that}\\[5pt]
\tau_{y,z_2}(\downarrow) =: a \in L_{(2k-3)} \text{~and~}
\tau_{y,z_2}(a) = \downarrow\,,\\[5pt]
\tau_{y,z_2}(j) \not = j \text{~and~} (\tau_{y,z_2})^2(j) = j,
\text{~~for all~} j \in L_{(2k-3)}\sm \set{a}.
\end{array}%
\right.
\end{equation}
Here, $\tau_{y,z_2}(\downarrow)$ is the element $a \in L_{(2k-3)}$ such that the semi-arc $\delta_a$ of $\cZ(v_2)$ which emanates from $y$ tangentially to $\omega_{2,a}$ eventually hits $\Gamma$ at the point $z_2$. For $a \neq j \in L_{(2k-3)}$,  the pairs $\big( j, \tau_{y,z_2}^{v_2}(j)\big)$ describe the loops of $\cZ(v_2)$ at the point $y$, so that $\cZ(v_2)$ is the wedge sum of the nodal interval $\delta_{a}$, where $a := \tau_{y,z_2}(\downarrow)$, with a $(k-2)$-bouquet of loops at $y$, described by the map $\tau_{y,z_2}$. \smallskip

Since $\Omega$ is simply connected, the arc $\delta_a$ separates $\Omega$ into
two  components $\Omega_{a,R}$ (on the right side of $\delta_a$),
and $\Omega_{a,L}$ (on the left side of $\delta_a$). There are three cases to consider, $a = 1$, $1 < a < (2k-3)$, and $a = (2k-3)$. The following properties follow easily from looking at the local structure of $\cZ(v_2)$ at $y$.

\begin{properties}\label{P-hmn-v2}\phantom{}
\begin{enumerate}[(i)]
  \item If $a = 1$, the component $\Omega_{a,R}$ does not contain any nodal arc, and the rays $\omega_{2,j}$, $2 \le j \le (2k-3)$ point inside $\Omega_{a,L}$.
  \item If $1 < a < (2k-3)$, the rays $\omega_{2,j}$, $1 \le j \le (a-1)$ point inside $\Omega_{a,R}$ ; the rays $\omega_{2,j}$, $(a+1) \le j \le (2k-3)$ point inside $\Omega_{a,L}$.
  \item If $a = (2k-3)$, the rays $\omega_{2,j}$, $1 \le j \le (2k-4)$ point inside $\Omega_{a,R}$ ; the component $\Omega_{a,L}$ does not contain any nodal arc.
\end{enumerate}
\end{properties}%

If the ray $\omega_{2,j}$ points inside $\Omega_{a,R}$, the whole nodal interval $\delta_{j}$ of $\cZ(v_2)$ is contained in $\Omega_{a,R}$, and so does the corresponding loop $\gamma_{j,\tau_{y,z_2}(j)}$. There is an analogous statement for $\Omega_{a,L}$. \smallskip

This means that
\begin{equation}\label{E-hmn-206c}
\left\{
\begin{array}{l}
a = \tau_{y,z_2}(\downarrow) \in L_{(2k-3)} \text{~~ is odd,}\\[5pt]
\tau_{y,z_2}\big(\set{1,\ldots,(a-1)}\big) \subset \set{1,\ldots,(a-1)}, \\[5pt]
\tau_{y,z_2}\big(\set{(a+1),\ldots,(2k-3)}\big) \subset \set{(a+1),\ldots,(2k-3)}.
\end{array}
\right.
\end{equation}
\begin{figure}[!ht]
  \centering
  \includegraphics[width=0.5\textwidth]{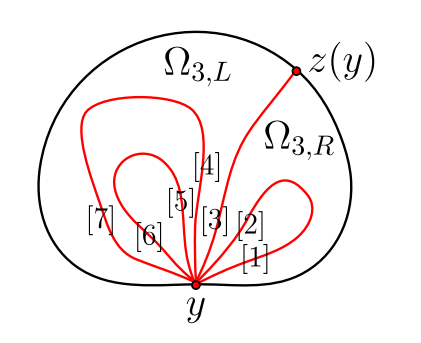}
  \caption{$k=5$, $a=3$}\label{F-hmn2-Uzz}
\end{figure}

More precisely, define
\begin{equation}\label{E-hmn-208}
\left\{
\begin{array}{l}
R := \set{1,\ldots,(a-1)}, \text{~with~~} R = \emptyset \text{~~if~~} a=1,\\[5pt]
L := \set{(a+1),\ldots,(2k-3)}, \text{~with~~} L = \emptyset \text{~~if~~} a=(2k-3),\\[5pt]
n_{R} := \frac{a-1}{2}.
\end{array}
\right.
\end{equation}

Then, the set $R$ corresponds to $n_{R}$ loops in the component $\Omega_{a,R}$ of $\Omega\sm \delta_a$, and these loops divide $\Omega_{a,R}$ into $(n_{R}+1)$ nodal domains of $v_2$. The set $L$ corresponds to $n_{L} := (k-2-n_{R})$ loops in the component $\Omega_{a,L}$ of $\Omega\sm \delta_a$, and these loops divide $\Omega_{a,L}$ into $n_{L}+ 1 = (k-1-n_{R})$ nodal domains of $v_2$, so that we recover the fact that $v_2$ has $k$ nodal domains.\smallskip

Otherwise stated the nodal set $\cZ(v_2)$ consists of the wedge sum of the nodal interval $\delta_a$ with two bouquets of loops, one $\cB_{R}$ contained in $\Omega_{a,R}$, corresponding to $ \tau_{y,z_2}|_{R}$,  another $\cB_{L}$ contained in $\Omega_{a,L}$, corresponding to $ \tau_{y,z_2}|_{L}$. One of these bouquets may be empty (when $a=1$ or $a=(2k-3)$).\medskip

For $\theta \in (0,\pi)$,  let $w_{\theta} := \cos\theta \, v_1 + \sin\theta \, v_2$.  Then $\cS_{\mathrm{b}}(w_{\theta}) = \set{y,z_{\theta}}$. The nodal set $\cZ(w_{\theta})$ has a structure similar to the structure of $\cZ(v_2)$:
\begin{itemize}
\item[$\diamond$] one nodal interval $\delta_{a_{\theta}}^{w_{\theta}}$ emanating from $y$ tangentially to some ray $\omega_{2,a_{\theta}}$, and hitting the boundary $\Gamma$ at the point $z_{\theta} \not = y$; $a_{\theta}$ is odd, and $a_{\theta} = \tau_{y}^{w_{\theta}}(\downarrow)$;
\item[$\diamond$]  loops at $y$, on either side of $\delta_{a_{\theta}}^{w_{\theta}}$, described by the restriction of the combinatorial type $\tau_{y}^{w_{\theta}}$ to $L_{(2k-3)} \sm \set{a_{\theta}}$.
\end{itemize}

Note:  In the above description, the nodal interval $\delta_{a_{\theta}}^{w_{\theta}}$ depends on $w_{\theta}$ and $a_{\theta}$. This point will be needed later on.

\begin{lemma}\label{L-hmn-202} Recall the notation $a = \tau_{y,z_{2}}(\downarrow)$ and $a_{\theta} = \tau_{y,z_{\theta}}(\downarrow)$.
For all $\theta \in (0,\pi)$, we have $a_{\theta} = a$, and $\tau_{y,z_{\theta}} = \tau_{y,z_2}$\,, i.e., the  combinatorial type of the nodal set $\cZ(w_{\theta})$ is the same as the combinatorial type of $\cZ(v_2)$.
\end{lemma}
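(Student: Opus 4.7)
The plan is to adapt the rotating function argument of Property~\ref{propertyD4} to the one-parameter family $\{w_\theta\}_{\theta \in (0,\pi)}$. At $\theta = \pi/2$ we have $w_{\pi/2} = v_2$, so $y_{\pi/2} = y_2$, $a_{\pi/2} = a$, and the combinatorial types agree. Since the combinatorial type takes values in a finite set and $(0,\pi)$ is connected, it suffices to show that $\theta \mapsto (a_\theta, \tau_{x,y_\theta}^{w_\theta})$ is locally constant.

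First I would establish the continuity of the relevant data in $\theta$. The function $\breve{w}_\theta = \cos\theta\,\breve{v}_1 + \sin\theta\,\breve{v}_2$ depends smoothly on $\theta$ (Notation~\ref{N-evp-dr}), and by Lemma~\ref{L-breve} the boundary singular points of $w_\theta$ on $\Gamma_1$ correspond to the zeros of $\breve{w}_\theta$. From Remark~\ref{R-Ux1-2} and Lemma~\ref{L-Ux1}, $\breve{w}_{\theta_0}$ has a simple zero at $y_{\theta_0}$ and no other zero on $\Gamma_1 \setminus \{x\}$; the implicit function theorem then gives a continuous local map $\theta \mapsto y_\theta$ near any $\theta_0 \in (0,\pi)$. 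Applying the parametric version of the local structure theorem (Appendix~\ref{S-lsns}) at the fixed boundary point $x$ and at $y_\theta$, the $(2k-3)$ nodal semi-arcs of $w_\theta$ emanating from $x$, and the single semi-arc arriving at $y_\theta$, depend continuously on $\theta$ in a small neighborhood of each $\theta_0$, with well-defined tangent rays $\omega_j(\theta)$ varying continuously.

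Next I would argue the rigidity of the combinatorial type by contradiction, mimicking the proof of Property~\ref{propertyD4}. Suppose there exist $\theta_0 \in (0,\pi)$, a sequence $\theta_n \to \theta_0$, and $j \in \{\downarrow\} \cup L_{(2k-3)}$ with $\tau_{x,y_{\theta_n}}^{w_{\theta_n}}(j) \not = \tau_{x,y_{\theta_0}}^{w_{\theta_0}}(j)$; by pigeonholing on the finite target set, we may assume $\tau^{w_{\theta_n}}(j) \equiv \ell_0 \not = \tau^{w_{\theta_0}}(j)$. Fix $r_0$ small enough so that the nodal sets of the $w_\theta$ have the structure described in Subsection~\ref{SS-hmn2-20} inside $B_+(x,r_0)$, and similarly inside a small ball around $y_{\theta_0}$. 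On the compact complement, the connected nodal component of $\cZ(w_{\theta_n})$ containing $A_j(r_0,\theta_n)$ is a compact connected arc with two prescribed endpoints; by Lemma~\ref{L-hdn} and the uniform convergence $w_{\theta_n} \to w_{\theta_0}$, a subsequence converges in Hausdorff distance to a compact connected subset $C_j$ of $\cZ(w_{\theta_0})$ containing $A_j(r_0,\theta_0)$. Given the tree-like structure of $\cZ(w_{\theta_0})$ as the wedge sum of a simple arc $\delta_a$ with a $(k-2)$-bouquet of loops at $x$ (Paragraph~\ref{SSS-hmn-21as}), connectedness forces $C_j$ to be precisely the component joining $A_j(r_0,\theta_0)$ to $A_{\tau^{w_{\theta_0}}(j)}(r_0,\theta_0)$ (or to the corresponding point near $y_{\theta_0}$ if $j = \downarrow$ or $j = a$), contradicting $\ell_0 \not = \tau^{w_{\theta_0}}(j)$.

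The main obstacle is that, unlike in Property~\ref{propertyD4} where only a single interior point was involved, here the second boundary singular point $y_\theta$ moves with $\theta$, so the Hausdorff convergence argument must be carried out on a moving complement and the element $\downarrow$ must be tracked separately. Once the continuity of $\theta \mapsto y_\theta$ and of the local nodal structure at both $x$ and $y_\theta$ is in place, the compactness-pigeonhole-Hausdorff argument proceeds as above and applies equally to $j = \downarrow$, yielding both $a_\theta = a$ and $\tau_{x,y_\theta}^{w_\theta} = \tau_{x,y_2}^{v_2}$ for all $\theta \in (0,\pi)$.
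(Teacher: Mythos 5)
Your proposal is correct and follows essentially the same route as the paper: reduce to local constancy in $\theta$ via connectedness of $(0,\pi)$, argue by contradiction with a pigeonholed sequence $\theta_n\to\theta_0$, use the uniform (parametric) local structure theorem near $x$ to fix a radius $r_0$ independent of $n$, and pass to a Hausdorff limit of the compact connected nodal pieces outside $B_{+}(x,r_0)$, which must land in $\cZ(w_{\theta_0})$ by uniform convergence. The paper merely organizes the same argument in two steps (first $a_\theta$, tracking the endpoint $y_{\theta_n}\to y_{\theta_0}$, then the restrictions of $\tau_\theta$ to $K_{a,\pm}$ as in Property~\ref{propertyD4}), exactly as you anticipate when you note that $\downarrow$ must be tracked separately.
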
%

\begin{proof}[Proof of Lemma \ref{L-hmn-202}] We consider local conformal coordinates  as in Section~\ref{S-lsbs}. The proof of the local structure theorem shows that one can choose the radius $r_0$ uniformly with respect to $\theta$. We use polar coordinates $(r,\omega)$ associated with $(\xi_1,\xi_2)$ in $\R^2$, and we write $E(r,\omega)$ for $E(r\cos\omega,r\sin\omega)$.\smallskip

By connectivity, to prove that $a_{\theta}$ is constant it suffices to prove that it is locally constant:\smallskip

\emph{For all $\theta \in (0,\pi)$ there exists $\varepsilon_{\theta} > 0$ such that $a_{\theta'} = a_{\theta}$ for $|\theta' - \theta| < \varepsilon_{\theta}$\,.} \smallskip

Assume, by contradiction, that this is not the case. Then, there exists $\theta_0 \in (0,\pi)$ and a sequence $\theta_n$ with $|\theta_n - \theta_0| < \frac 1n$ and $a_{\theta_n} \not = a_{\theta_0} =: a_0$. Since $a_{\theta}$ can only take finitely many values in $ L_{(2k-3)}$,  passing to a subsequence if necessary, we can assume that $a_{\theta_n} \equiv a_1 \not = a_0$. By the local structure theorem, there exists a uniform $r_0 > 0$ (depending on $\theta_0$) such that the nodal arc $ \delta_{a_1,\theta_n}$ intersects the   set $C_{+}(y,r_0)$ at the point $z_n := E\big( r_0, \omegat_{a_1}(r_0,\theta_n) \big)$, where the function $\omegat_{a_1}(r,\theta)$ is smooth in a neighborhood of $(r_0,\theta_0)$ (with the notation of Section~\ref{S-lsbs}).
The arcs $\delta_{a_1,\theta_n} \cap \Omega\sm B(y,r_0)$ are compact and connected, and we can find a subsequence which converges in the Hausdorff distance to some compact connected set $\bar{\delta}$ which contains the point $z_0 = E\big( r_0, \omegat_{a_1}(r_0,\theta_0) \big)$ and the point $z_{\theta_0} = \lim z_{\theta_n}$ at the boundary. The set $\bar{\delta}$ is also contained in $\cZ(w_{\theta_0})$ because $w_{\theta_n}$ tends to $w_{\theta_0}$ uniformly. Since $a_1 \not = a_0$, we have a contradiction.\smallskip

Since $a_{\theta} \equiv a$ in $(0,\pi)$, in order to prove that $\tau_{\theta} := \tau_{y,z_{\theta}}$ does not depend on $\theta$, it suffices to show that its restrictions to the sets $R$ and $L$ are locally constant in $\theta$. We give the proof for $R$ in the case $a > 1$. The other cases are similar. Reasoning by contradiction, we assume that there exists $\theta_0 \in (0,\pi)$ and a sequence $\theta_n$ such that $|\theta_n-\theta_0| < \frac 1n$, and $j_n \in R$, such that $\tau_{\theta_n}(j_n) \not = \tau_{\theta_0}(j_n)$. Since $R$ is finite, passing to subsequences if necessary, we may assume that $j_n \equiv b$ and $\tau_{\theta_n}(b) \equiv c$ for some $b,c \in R$ with $c\not = \tau_{\theta_0}(b)$. Since $\theta_n$ is close to $\theta_0 \in (0,\pi)$ we have a uniform structure theorem, and we can reason as in the proof of Property~\ref{propertyD4} to conclude.
\end{proof}

Given the basis $ \set{v_1,v_2}$, we have the associated odd integer  $a := \tau_2(\downarrow) \in L_{(2k-3)}$, where $\tau_2 := \tau_{y,z_2}$ is the combinatorial type of $v_2$. For all $\theta \in (0,\pi)$, the nodal set $\cZ(w_{\theta})$ has the same combinatorial type $\tau_2$. In particular, it contains a single simple nodal interval $\delta_{a,\theta}$, emanating from $y$ tangentially to the ray $\omega_{2,a}$, and hitting the boundary $\Gamma$ at the point $z_{\theta}$. \smallskip

Call $\Omega_{\theta,R}$ the component of $\Omega \sm \delta_{a,\theta}$ with semi-tangent at $y$ the vector $\vec{e}_1$, and  $\Omega_{\theta,L}$ the other component, with semi-tangent at $y$ the vector $-\vec{e}_1$. The component $\Omega_{\theta,R}$ contains the $n_R$ loops corresponding to the set $R$. These loops bound $(n_R+1)$ nodal domains of $w_{\theta}$ which can be labeled from $1$ to $(n_R+1)$. The component $\Omega_{\theta,L}$ contains the $n_L = (k - n_R -2)$ loops corresponding to the set $L$. These loops bound $(k - n_R -1)$ nodal domains of $w_{\theta}$ which can be labeled from $(n_R+2)$ to $k$.\smallskip

\subsubsection{The rotating function argument}\label{SSS-hmn-25D}
\index{Rotating function argument}
Look at the simple examples, displayed in Figures~\ref{F-hmn2-Uxyw-L1}
and \ref{F-hmn2-Uxyw-L2}.

\begin{figure}[!hbt]
\centering
\includegraphics[width=0.98\linewidth]{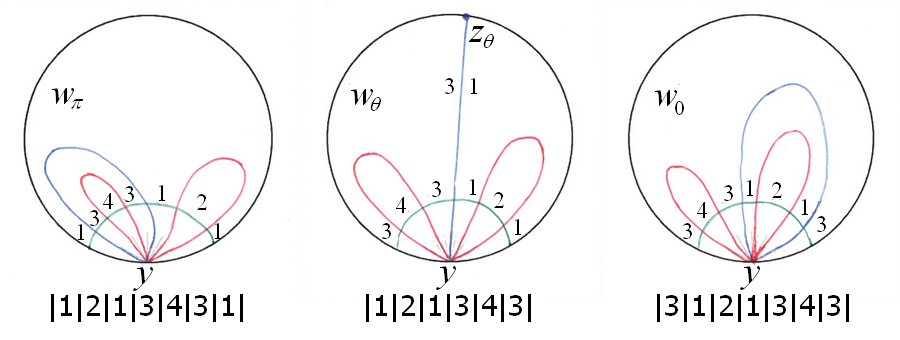}
\caption{Example with $k=4$, $a=3$}\label{F-hmn2-Uxyw-L1}
\end{figure}

\begin{figure}[!hbt]
\centering
\includegraphics[width=0.98\linewidth]{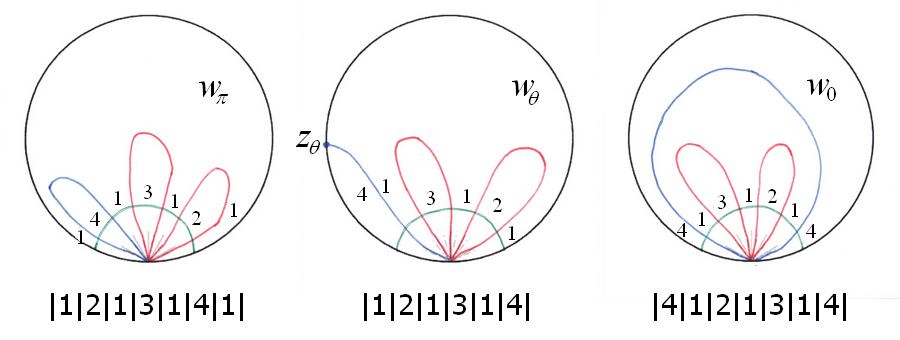}
\caption{Example with $k=4$, $a=5$}\label{F-hmn2-Uxyw-L2}
\end{figure}
\FloatBarrier

When $\theta$ tends to zero, $z_{\theta}$ tends to $y$ clockwise and the nodal interval $\delta_{a,\theta}$, from $y$ to $z_{\theta}$, tends to a loop in the nodal set of $w_0 = \lim_{\theta \to 0}w_{\theta}$, whose nodal pattern is displayed in the right sub-figure. When $\theta$ tends to $\pi$, $z_{\theta}$ tends to $y$ counter-clockwise and the nodal interval $\delta_{a,\theta}$ tends to a loop in the nodal set of $w_{\pi} = \lim_{\theta \to \pi}w_{\theta}$, whose nodal pattern is displayed in the left sub-figure. \smallskip

The combinatorial types of the functions $w_0$ and $w_{\pi}$ are as follows.
\begin{equation*}
\resizebox{1\hsize}{!}{%
$\begin{array}{c}
\text{Nodal patterns in Figure~\ref{F-hmn2-Uxyw-L1}:}\\[5pt]
\tau_{\pi} = \begin{pmatrix}
               1 & 2 & 3 & 4 & 5 & 6 \\
               2 & 1 & 6 & 5 & 4 & 3 \\
             \end{pmatrix}~
\tau_{\theta} = \begin{pmatrix}
               \downarrow & 1 & 2 & 3 & 4 & 5 \\
               3 & 2 & 1 & \downarrow & 5 & 4 \\
             \end{pmatrix}~
\tau_{0} = \begin{pmatrix}
               0 & 1 & 2 & 3 & 4 & 5 \\
               3 & 2 & 1 & 0 & 5 & 4 \\
             \end{pmatrix}.
\end{array}$%
}%
\end{equation*}
\begin{equation*}
\resizebox{1\hsize}{!}{%
$\begin{array}{c}
\text{Nodal patterns in Figure~\ref{F-hmn2-Uxyw-L2}:}\\[5pt]
\tau_{\pi} = \begin{pmatrix}
               1 & 2 & 3 & 4 & 5 & 6 \\
               2 & 1 & 4 & 3 & 6 & 5 \\
             \end{pmatrix}~
\tau_{\theta} = \begin{pmatrix}
               \downarrow & 1 & 2 & 3 & 4 & 5 \\
               5 & 2 & 1 & 4 & 3 & \downarrow \\
             \end{pmatrix}~
\tau_{0} = \begin{pmatrix}
               0 & 1 & 2 & 3 & 4 & 5 \\
               5 & 2 & 1 & 4 & 3 & 0 \\
             \end{pmatrix}.
\end{array}$%
}%
\end{equation*}
\smallskip

The above assertions are a consequence of the local structure theorem applied to $w_0$ or to $w_{\pi}$ in a disk  $D_{+}(y,r)$ with $r$ small enough. A loop in $\cZ(w_{\theta})$ always intersects $C_{+}(y,r)$ at two distinct points. For $\theta$ away from $0$ and $\pi$, the nodal interval from $y$ to $z_{\theta}$ only intersects $C_{+}(y,r)$ at the point $A_a$. When $\theta$ tends to $0$, the point $z_{\theta}$ enters the disc $D_{+}(y,r)$ and the nodal interval from $y$ to $z_{\theta}$ intersects $C_{+}(y,r)$ at two points $A_a$ and $A_0$, which lies below the first $\cR$-arc, see Figure~\ref{F-hmn2-w-theta0}. This is why $\tau_{0}$ is defined on the set $\set{0,\ldots,5}$. In the figure, the points $A_j$ are the intersection points of the nodal set $\cZ(w_{\theta})$ with $C_{+}(y,r)$, for $r$ small enough. Similarly, when $\theta$ tends to $\pi$, the nodal arc from $y$ to $z_{\theta}$ intersects $C_{+}(y,r)$ at two points, one of them below the last $\cR$-arc. This is why $\tau_{\pi}$ is defined on the set $\set{1,\ldots,6}$. For these arguments, we also refer to Section~\ref{S-hmn3}, proof of Lemma~\ref{L-L38}.\smallskip

\begin{figure}[!ht]
  \centering
  \includegraphics[width=0.95\textwidth]{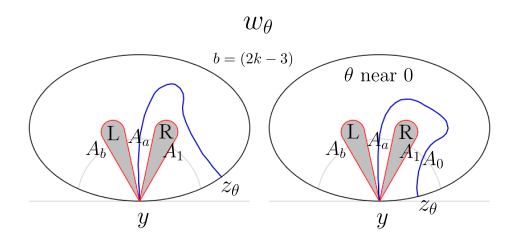}
  \caption{The behavior of $w_{\theta}$ for $\theta$ close to $0$}\label{F-hmn2-w-theta0}
\end{figure}

We label the nodal domains of $w_{\theta}$ in the central sub-figures according to the order in which they appear when one moves on $C_{+}(y,r)$ counter-clockwise. This order is independent of $\theta$ because the combinatorial type of $w_{\theta}$ is independent of $\theta$, see Lemma~\ref{L-hmn-202}. We then follow the deformation of the nodal sets of $w_{\theta}$, when $\theta$ tends to $0$, resp. to $\pi$. The order in which the nodal domains appear is encoded in the words $\ww_{\theta}$ which appear below the images  (labels are separated by vertical bars). In Figure~\ref{F-hmn2-Uxyw-L1}, $\ww_{\theta} = |1|2|1|3|4|3|$, $\ww_{\pi} = |1|2|1|3|4|3|1|$, $\ww_{0} = |3|1|2|1|3|4|3|$. In Figure~\ref{F-hmn2-Uxyw-L2}, $\ww_{\theta} = |1|2|1|3|1|4|$, $\ww_{\pi} = |1|2|1|3|1|4|1|$, $\ww_{0} = |4|1|2|1|3|1|4|$. \smallskip

The general procedure for labeling the nodal domains and producing the words is described in Section~\ref{S-hmn2L}.\smallskip

To see that the nodal patterns are different, and hence that the functions $w_0$ and $w_1$ are linearly independent, we look at the invariant $\sigma(\ww)$ defined in \eqref{D-hmn2L-4}: $\sigma(\ww)$ is the position at which the first letter of the word $\ww$ reappears. For the nodal patterns in Figure~\ref{F-hmn2-Uxyw-L1}, we have
$\sigma(\ww_{0})=5$  and $\sigma(\ww_{\pi})=3$. For the nodal patterns in Figure~\ref{F-hmn2-Uxyw-L2}, we have $\sigma(\ww_{0})=7$  and $\sigma(\ww_{\pi})=3$. \smallskip

At least for the above examples, the invariants being different, the functions $w_0$ and $w_{\pi}$ cannot be equal up to scaling, contradicting the fact that $w_0 = - w_{\pi} = v_1$.\smallskip

The proof in the general case is given in Section~\ref{S-hmn2L}, Section~\ref{SS-hmn2L-P1}. \hfill \qed \smallskip

 This completes the proof that the equality $\dim U(\lambda_k) = (2k-1)$ for some $k \ge 3$ leads to a contradiction.  Hence we have proved Theorem~\ref{T-hmn-bh1} under the additional assumption that $\Omega$ is \emph{simply connected}.

\begin{proposition}\label{P-hmn-s2}
Let $\Omega \subset \R^2$ be a \emph{simply connected} bounded domain with smooth boundary. Let $\set{\lambda_k, k\ge 1}$ be the eigenvalues of the operator $-\Delta + V$ in $\Omega$, with Dirichlet or Robin boundary condition  (where $V$ is a real valued $C^{\infty}$ function). Then,
\begin{equation*}
\mult(\lambda_k) \le (2k-2)~~  \text{for all~}k \ge 3  .
\end{equation*}
\end{proposition}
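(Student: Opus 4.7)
The plan is to prove the bound by contradiction. By Proposition~\ref{P-hmn-s1}, we already have $\mult(\lambda_k)\le 2k-1$, so it suffices to rule out the case $\dim U(\lambda_k) = 2k-1$. Fix such a putative eigenspace $U=U(\lambda_k)$ and fix an arbitrary point $x$ on the outer boundary component $\Gamma_1$. Courant's theorem gives $\sup\{\kappa(u):0\neq u\in U\}\le k$, so we are in the hypotheses of Lemma~\ref{L-Ux1} with $\ell=k$, yielding $\dim U_x^1=1$ and $\dim U_x^2=2$ together with the precise nodal structure \eqref{E-bha4-2a}. Choose an $L^2$-orthogonal basis $\{v_1,v_2\}$ of $U_x^2$ as in Remark~\ref{R-Ux1-2}, with $\rho(v_1,x)=2k-2$, and $\rho(v_2,x)=2k-3$, $\rho(v_2,y_2)=1$ for a unique $y_2\in\Gamma_1\setminus\{x\}$.

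Next I would introduce the rotating family $w_\theta=\cos\theta\,v_1+\sin\theta\,v_2$ for $\theta\in[0,\pi]$, together with the parametrization of $\Gamma_1$ by the singular point $y_\theta$ built in Subsection~\ref{SS-hmn-24}. By Lemma~\ref{L-Uxy0} (or Lemma~\ref{L-Uxyk} in the non-simply-connected case, after projecting as below), each $w_\theta$ with $\theta\in(0,\pi)$ lies in $U^2_x\setminus U^1_x$, so its nodal set consists of one simple (generalized) arc $\delta_{a_\theta,\theta}$ from $x$ to $y_\theta$ together with $k-2$ (generalized) loops at $x$, and Property~\ref{P-hmn} gives $w_\theta\to v_1$ (resp.\ $-v_1$) as $\theta\to 0^+$ (resp.\ $\theta\to\pi^-$). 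The key constancy statement is Lemma~\ref{L-hmn-202}: the integer $a_\theta\in L_{2k-3}$ and the combinatorial type $\tau^{w_\theta}_{x,y_\theta}$ are independent of $\theta\in(0,\pi)$. This uses a connectedness-plus-local-structure argument (the local structure theorem provides semi-arcs depending smoothly on $\theta$, Hausdorff convergence transfers nodal arcs to nodal arcs in the limit, and discreteness of the combinatorial data forces it to be locally constant).

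The contradiction is then obtained by the rotating function argument of Paragraph~\ref{SSS-hmn-25D}. Label the nodal domains of each $w_\theta$ by the scheme $\cJ_\theta$ of Lemma~\ref{L-hmn-210}; this labeling is intrinsic to $\tau_{x,y_2}^{v_2}$ and independent of $\theta$. Encode the cyclic arrangement of nodal domains around a small half-circle $C_+(x,r_0)$ by the word $m_\theta$ of length $2k-2$, which has the form $1\,p_+\,1\,(a_{(+)}{+}2)\,p_-\,(a_{(+)}{+}2)$. Passing to the limits $\theta\to 0$ and $\theta\to\pi$ produces two length-$(2k-1)$ words $m^{(0)}=(a_{(+)}{+}2)\,m_\theta$ and $m^{(\pi)}=m_\theta\,1$ describing the nodal pattern of $w_0$ and $-w_\pi=v_1$ respectively. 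The position of the first repetition of the leading letter (an invariant of the word, hence of the nodal pattern) equals $4+|p_+|$ for $m^{(0)}$ and $2+|p_+|$ for $m^{(\pi)}$; these are different, so $w_0$ and $w_\pi$ cannot both equal $\pm v_1$, a contradiction.

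The main obstacle is the passage from the simply-connected case to the general case, since Euler's formula is weaker when $\partial\Omega$ has several components and the ``loops'' at $x$ may swallow interior boundary components. The remedy, carried out in Subsubsection~\ref{SS-hmn-26}, is to work in the quotient space $\check{\Omega}$ of Notation~\ref{N-hmn210-2}, in which each inner boundary component is collapsed to a regular point $\xi_j$ (Remark~\ref{R-hmn23-2}). The projected nodal sets $\check{\cZ}(v_1)$, $\check{\cZ}(v_2)$, $\check{\cZ}(w_\theta)$ carry the same wedge-of-loops-and-arc structure as in the simply-connected setting, with $\check{x}$ and $\check{y}_\theta$ as the only singular points, and $\check{\Omega}$ is itself simply connected. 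The combinatorial-type/rotating-function argument then applies verbatim in $\check{\Omega}$ and produces the same contradiction, completing the proof.
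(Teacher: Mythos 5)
Your proposal follows the paper's own proof essentially step for step: contradiction from $\dim U(\lambda_k)=2k-1$, Lemma~\ref{L-Ux1} and the basis $\set{v_1,v_2}$ of $U^2_x$, the rotating family $w_\theta$ with the constancy of the combinatorial type (Lemma~\ref{L-hmn-202}), the word invariant of Paragraph~\ref{SSS-hmn-25D} comparing $m^{(0)}$ and $m^{(\pi)}$, and the reduction of the general case to the simply connected one via the quotient $\check{\Omega}$. The argument is correct and there is nothing materially different from the paper's treatment.
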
%

\begin{remark}\label{R-hmn-final}
 The general idea to get rid of the assumption that $\Omega$ is simply connected is as follows.  We decompose the boundary $\Gamma := \partial \Omega$ into its  components, $\Gamma = \bigcup_{j=1}^q \Gamma_j$.  Then we repeat the arguments in Subsections~\ref{SS-hmn-21} and \ref{SS-hmn-22} with the prescribed singular points $y$ and $z$ chosen to belong to $\Gamma_1$. The main difference with the simply connected case is that an eigenfunction $u$ with prescribed singular points $y$ and $z$ on $\Gamma_1$ may also have singular points on some other components of $\Gamma$. More precisely, using the notation \eqref{E-hmn0-4}, the following properties hold: $\cZ(u) \cup \Gamma(u)$ is connected and, for any $j \in J(u)\sm \set{1}$, $\sum_{z \in \cS_{\mathrm{b}}(u) \cap \Gamma_j\,}\rho(u,z) = 2$. It then suffices to work with the projection $\check{\cZ}(u)$ of the nodal set $\cZ(u)$ to the set $\check{\Omega}$ obtained from $\Omega$ by identifying each component $\Gamma_j, j\ge 2$, to a point, so that the boundary of $\check{\Omega}$ is $\Gamma_1$.
The complete proof is given in Section~\ref{S-hmn2N}.
\end{remark}%

\section{General Case: the Estimate $\mult(\lambda_k) \le (2k-2)$ for $k \ge 3$}\label{S-hmn2N}

\subsection{An abstract setting}\label{SS-hmn2-10}

When the domain $\Omega \subset \R^2$ is not simply connected, its boundary $\Gamma := \partial \Omega$ has  $(q+1)$ components, with $q\geq 1$.  One of them $\Gamma_1$, the ``outer boundary'', bounds the unbounded component of $\R^2\sm \Gamma$. The other components, $\Gamma_j, ~j\neq 1$, are contained in the bounded component of $\R^2\sm \Gamma_1$. We consider the following equivalence relation in $\wb{\Omega}$:
\begin{equation}\label{E-hmn210-equiv}
x \sim y \text{~~if and only if~~} x, y \in \Gamma_j
\text{~for some~} j \in \set{2,\ldots,(q+1)}.
\end{equation}

\begin{notation}\label{N-hmn210-2}
Let $\check{\Omega}$ denote the quotient space $\wb{\Omega}/\!\sim$, where each $j\neq 1$, $\Gamma_j$  is identified to one point $\xi_j$ in $\check{\Omega}$ (see \cite{Bona2009}). Define
\[\Xi := \set{\xi_2,\ldots,\xi_{q+1}}.\]
Generally speaking, $\check{A}$ will denote the image of the subset $A$  of  $\wb{\Omega}$ under the projection map from $\wb{\Omega}$ to $\check{\Omega}$.\\
We also introduce $\bS_{\Omega}$, the quotient space in which each $\Gamma_j, j\ge 1$, is identified to a point.
\end{notation}%

\subsubsection{$\Omega$ with one hole. Properties of $V_{y,z}$}\label{SSS-hmn-21c}

\begin{lemma}\label{L-Uxy1}
Assume that $\Omega$ has one hole, and $\Gamma =\Gamma_1 \cup \Gamma_2$,  where $\Gamma_1$ is  the  outer boundary.  Let $U$ be a linear subspace of an eigenspace of \eqref{E-evp-2bc} in $\Omega$,  such that  $\dim U = (2\ell-1)$ and $\sup\set{\kappa(u) \mid 0 \not = u \in U}\le \ell$ for some $\ell \ge 2$. Let $y \neq z\in \Gamma_1$, and define
\[V_{y,z} := \set{u \in U \mid \rho(u,y) \ge (2\ell - 3) \text{~and~} \rho(u,z) \ge 1}.\] Then,
\begin{enumerate}[(i)]
  \item $\dim V_{y,z} = 1$.
  \item  For $0 \neq u \in V_{y,z}$, the following alternative holds,
  \begin{itemize}
    \item[$\diamond$] either $b_0(\cZ(u) \cup \Gamma) = 1$, in which case $\cZ(u)$ hits $\Gamma_2$, $u$ has precisely two singular points on $\Gamma_2$ (counting multiplicities), $\sum_{x \in \cS_{\mathrm{b}}(u) \cap \Gamma_2} \rho (u,x) = 2$,
$\cS_{\mathrm{b}}(u) \cap \Gamma_1 = \set{y,z}$ and $\cS_{\mathrm{i}}(u) = \emptyset$;
    \item[$\diamond$] or $b_0(\cZ(u) \cup \Gamma) = 2$, in which case $\cZ(u) \cap \Gamma_2 = \emptyset$,
$\cS_{\mathrm{b}}(u) \cap \Gamma_1 = \set{y,z}$, and $\cS_{\mathrm{i}}(u) = \emptyset$;
\end{itemize}
\item $\rho(u,y)=(2\ell-3)$ and $\rho(u,z) = 1$.
\item $\kappa(u) = \ell$.
\end{enumerate}
A generator of $V_{y,z}$ will be denoted by  $v_{y,z}$ (defined up to scaling).
\end{lemma}%

\begin{proof} The fact that $\dim V_{y,z}\ge 1$ follows from Lemma~\ref{L-zero2}. Since we now have $b_0(\Gamma)=2$, Euler's formula \eqref{E-euler-or2a} applied to $u$ gives
\begin{equation}\label{E-A6}
\begin{array}{ll}
0 \geq \kappa(u) - \ell  \,  = & \big( b_0(\cZ(u)\cup \Gamma) -2\big) + \frac 12 \sum_{x\in \cS_{\mathrm{i}}(u)} (\nu(x)-2)\\[5pt]
& \hspace{3mm}+ \frac 12 \sum_{x \in \cS_{\mathrm{b}}(u) \cap \Gamma_2} \rho (x)\\[5pt]
& \hspace{3mm}+ \frac 12 \sum_{\substack{x\in \cS_{\mathrm{b}}(u) \cap \Gamma_1,\\ x \neq y,z}\,} \rho (x) + \frac 12 \big( \rho(y) + \rho(z) -2 \ell +2\big).
 \end{array}
\end{equation}

Except for the term $\big( b_0(\cZ(u) \cup \Gamma) -2\big)$, all the terms in the right-hand side of the equality are nonnegative. This implies that
\begin{equation*}
2 \ge b_0(\cZ(u) \cup \Gamma) \ge 1,
\end{equation*}
and we have to examine two cases.\smallskip

\noid If $b_0(\cZ(u) \cup \Gamma) = 1$, then the nodal set $\cZ(u)$ must hit $\Gamma_2$. According to Proposition~\ref{P-euler-np}, the sum $\sum_{x \in \cS_{\mathrm{b}}(u)\cap \Gamma_2} \rho (x)$ is an even integer, and we deduce from \eqref{E-A6} that $\sum_{x \in \cS_{\mathrm{b}}(u)\cap \Gamma_2} \rho (x) = 2$. This equality now implies that the other terms are zero, and hence that $\kappa(u) = \ell$.\smallskip

\noid If $b_0(\cZ(u) \cup \Gamma) = 2$, then all the terms in the right-hand side of \eqref{E-A6} vanish, and $\kappa(u) = \ell$. \smallskip

This proves Assertions~(ii)--(iv).  As in the proof of Lemma~\ref{L-Uxy0}, assuming that there are at least two linearly independent functions $u_1$ and $u_2$ in $V_{y,z}$, the first assertion follows from Assertion~(iii)  and Lemma~\ref{L-zeroc}. \end{proof}

\subsubsection{$\Omega$ with one hole. Structure and combinatorial type of nodal sets in $V_{y,z}$}\label{SSS-hmn-21cs}

From a geometric point of view and under the assumptions of Lemma~\ref{L-Uxy1}, once we have fixed $y \not = z$ in $\Gamma_1$, either $\cZ(u) \cap \Gamma_2 = \emptyset$\quad or $\cZ(u) \cap \Gamma_2 = \set{y_1,y_2}$, possibly with $y_1=y_2$.  In the first case, we simply reproduce the description given in Paragraph~\ref{SSS-hmn-21as}. In the second case, the connectivity of $\cZ(u) \cup \Gamma$ implies that one of the nodal arcs hitting $\Gamma_2$ also contains $y$.  The other one contains either $y$ or $z$. More precisely, we choose any $j \in L_{(2\ell - 3)}$ and follow the nodal semi-arc $\delta_j$ emanating from $y$ along $\cZ(u)$, until we meet a singular point $x$ as described in Paragraph~\ref{SSS-hmn-21as}. Since $x \in \cS(u) = \set{y,z,y_1,y_2}$, there are two possibilities. If $x \in \set{y,z}$ the description is similar to the one in Paragraph~\ref{SSS-hmn-21as}. If $x \in \set{y_1,y_2}$, say $y_1$, we continue our path from $y_1$ to $y_2$ along $\Gamma_2$, and leave $\Gamma_2$ along the second nodal arc hitting $\Gamma_2$ at $y_2$ until we meet a singular point. We then either reach the point $y$ again or the point $z$. It then follows that the nodal set of $u \in V_{y,z}$ consists of $ (\ell - 2)$ ``generalized'' nodal loops at $y$ (one of the loops may comprise some part of $\Gamma_2$), and a ``generalized'' simple arc from $y$ to $z$ (this arc may comprise a sub-arc from $y_1$ to $y_2$ on $\Gamma_2$).  In the preceding description, the points $y_1$ and $y_2$ may coincide. These ``generalized'' loops and arc do not intersect away from $y$. Then, $\cZ(u)$ is the wedge sum $\cB^{z}_{y,(\ell-2)}$ of an $(\ell - 2)$-bouquet of ``generalized'' loops at $y$, with a simple ``generalized'' arc from $y$ to $z$. We can then define the \emph{combinatorial type} $\tau_{y,z}^{u}$ of $u$ with respect to the points $y$ and $z$ as we did in Paragraph~\ref{SSS-hmn-21as}, somehow ignoring $\Gamma_2$.\smallskip

Projecting $\cZ(u)$ to $\check{\Omega}$, we obtain a set $\check{\cZ}(u) \subset \check{\Omega}$ which is the wedge sum $\cB_{\check{y}, (\ell-2)}^{\check{z}}$ of an $(\ell-2)$-bouquet of loops at $\check{y}$ with a simple arc from $\check{y}$ to $\check{z}$. One of the loops or the arc may contain the point $\xi_2$, the image of $\Gamma_2$ in $\check{\Omega}$.   Since there are only two semi-arcs at $\xi_2$, this point is a regular point of the projected nodal partition $\check{\cD}_{u}$. The general  picture is then similar to the picture in the simply connected case.

Figures~\ref{F-hmn2-Uxy1-1} and \ref{F-hmn2-Uxy1-2} display some possible nodal patterns for $0 \neq u \in V_{y,z}$ when $\Omega$ has one hole ($\ell=5$, $\rho(y)=7$, and $\rho(z)=1$).

\begin{figure}[!ht]
\centering
\begin{subfigure}[t]{.30\textwidth}
\centering
\includegraphics[width=\linewidth]{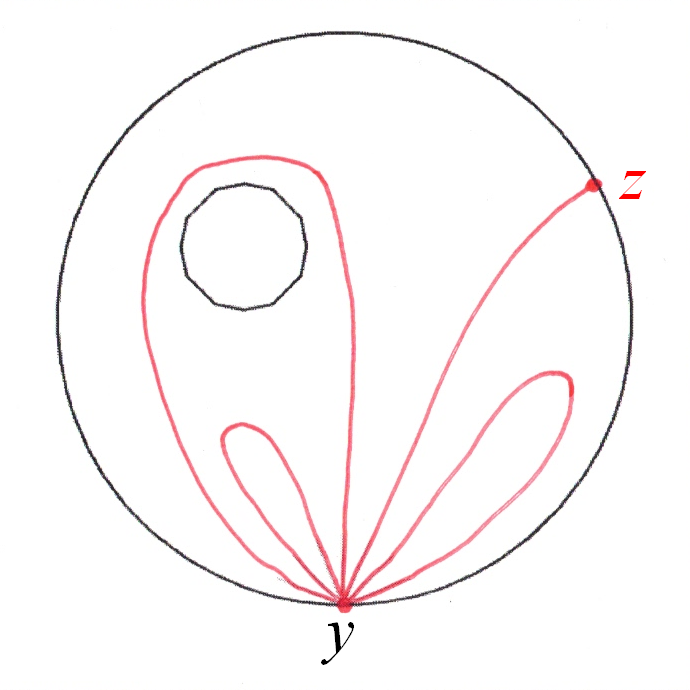}
\caption{}
\end{subfigure}
\begin{subfigure}[t]{.30\textwidth}
\centering
\includegraphics[width=\linewidth]{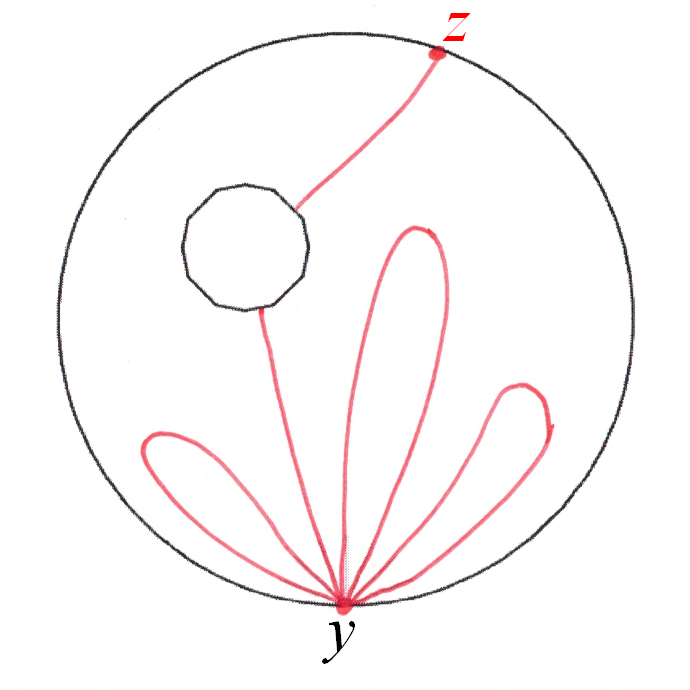}
\caption{}
\end{subfigure}
\begin{subfigure}[t]{.30\textwidth}
\centering
\includegraphics[width=\linewidth]{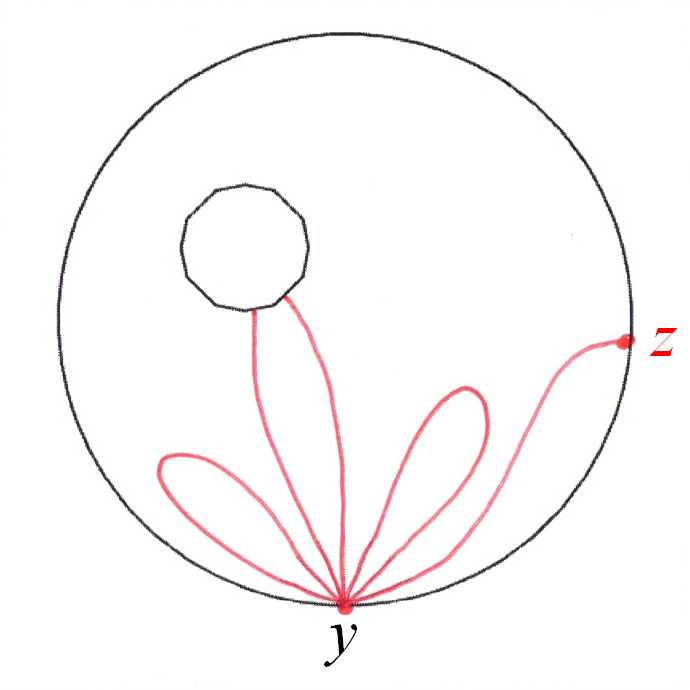}
\caption{}
\end{subfigure}
\caption{$\Omega$ with one hole: some possible nodal patterns for $v_{y,z}$}\label{F-hmn2-Uxy1-1}
\end{figure}

\begin{figure}[!ht]
\centering
\begin{subfigure}[t]{.30\textwidth}
\centering
\includegraphics[width=\linewidth]{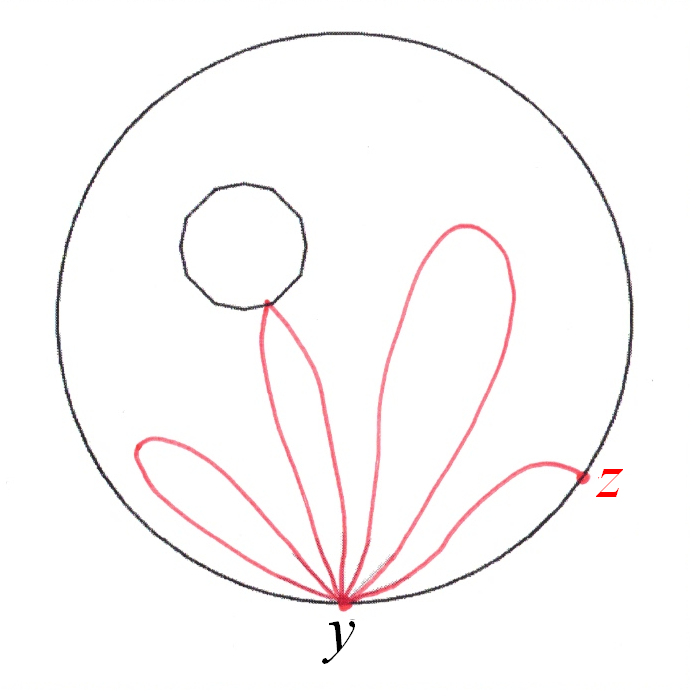}
\caption{}
\end{subfigure}
\begin{subfigure}[t]{.30\textwidth}
\centering
\includegraphics[width=\linewidth]{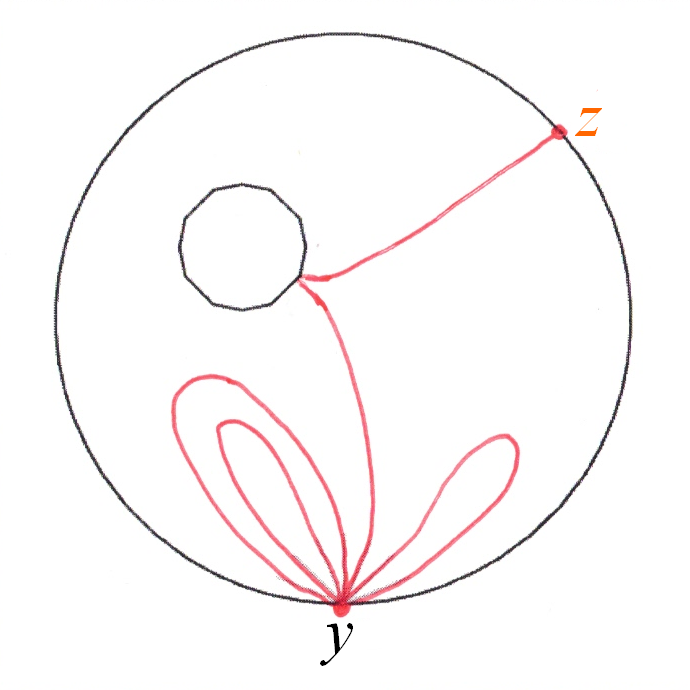}
\caption{}
\end{subfigure}
\begin{subfigure}[t]{.30\textwidth}
\centering
\includegraphics[width=\linewidth]{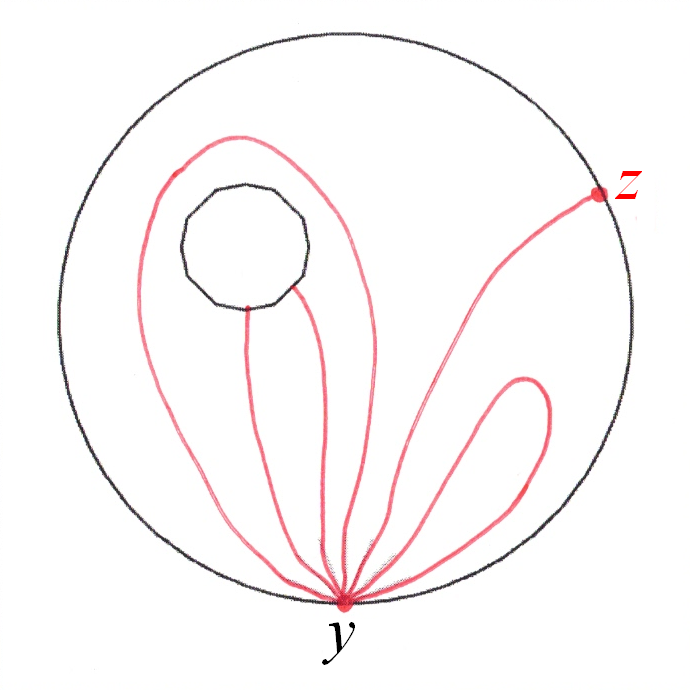}
\caption{}
\end{subfigure}
\caption{$\Omega$ with one hole: some possible nodal patterns for $v_{y,z}$}\label{F-hmn2-Uxy1-2}
\end{figure}
\FloatBarrier

For the nodal patterns in Figures~\ref{F-hmn2-Uxy1-1} and  \ref{F-hmn2-Uxy1-2}, we have the  combinatorial types
\begin{equation*}
\begin{array}{ll}
\tau^{\ref{F-hmn2-Uxy1-1}}_A = \tau^{\ref{F-hmn2-Uxy1-2}}_B = \tau^{\ref{F-hmn2-Uxy1-2}}_C &= \begin{pmatrix}
                         \downarrow & 1 & 2 & 3 & 4 & 5 & 6 & 7 \\
                          3 & 2 & 1 & \downarrow & 7 & 6 & 5 & 4 \\
                        \end{pmatrix},\\[12pt]
\tau^{\ref{F-hmn2-Uxy1-1}}_C = \tau^{\ref{F-hmn2-Uxy1-2}}_A &= \begin{pmatrix}
                         \downarrow & 1 & 2 & 3 & 4 & 5 & 6 & 7 \\
                          1 & \downarrow  & 3 & 2 & 5 & 4 & 7 & 6 \\
\end{pmatrix},\\[12pt]
                        \tau^{\ref{F-hmn2-Uxy1-1}}_B &= \begin{pmatrix}
                        \downarrow & 1 & 2 & 3 & 4 & 5 & 6 & 7 \\
                        5 & 2 & 1 & 4 & 3 & \downarrow & 7 & 6 \\
                        \end{pmatrix}.
\end{array}%
\end{equation*}

\subsubsection{ $\Omega$ with $k$ holes. Properties of $V_{y,z}$}\label{SSS-hmn-21d}

In this case, $\Gamma$ has $(k+1)$ components,  $\Gamma = \bigcup_{j=1}^{k+1} \Gamma_j$. Fix $y \not = z \in \Gamma_1$.
With the notation of Subsection~\ref{SS-hmn-0N}, we have the following lemma.

\begin{lemma}\label{L-Uxyk} Assume that $\Omega$ has $k$ holes, with $\Gamma = \bigcup_{j=1}^{k+1} \Gamma_j$.  Let $U$ be an eigenspace of \eqref{E-evp-2bc} in $\Omega$, such that, for some $\ell \ge 2$,  $\sup\set{\kappa(u) \mid 0 \not = u \in U}\le \ell$, and  $\dim U = (2\ell-1)$. Let $y \neq z \in \Gamma_1$, and define the subspace \[V_{y,z} := \set{u \in U \mid \rho(u,y) \ge (2\ell - 3) \text{~and~} \rho(u,x) \ge 1}.\]  Then, $\dim V_{y,z} = 1$. Furthermore, for all $0 \neq u \in V_{y,z}$:
\begin{enumerate}[(i)]
  \item The set $\cZ(u) \cup \Gamma(u)$ is connected.
  \item If $J(u)=\set{1}$, the only singular points of $u$ are the points $y$ and $z$, with $\rho(u,y) = (2\ell - 3)$ and $\rho(u,z)=1$.
  \item If $J(u) \not = \set{1}$, each component $\Gamma_j, j \in J(u)$, is hit by exactly two nodal arcs, the function $u$ has no interior singular point, and its only singular points on $\Gamma_1$ are $y$ and $z$, with $\rho(u,y) = (2\ell - 3)$ and $\rho(u,z)=1$.
  \item In all cases, $\kappa(u) = \ell$.
  \item In all cases, the nodal set of $u$ consists of $(\ell - 2)$ simple non-intersecting ``generalized'' nodal loops at $y$ (loops comprising nodal arcs, and possibly arcs contained in some boundary components $\Gamma_j, j \in J(u)\sm \set{1}$), a simple nodal arc from $y$ to either $z$ (when $J(u)=\set{1}$) or to some inner component of $\Gamma$, a simple nodal arc from $y$ to some component $\Gamma_j, j\in J(u)\sm \set{1}$, and possibly some nodal arcs joining components which meet $\cZ(u)$. These nodal arcs can only intersect at $y$ or possibly on the components $\Gamma_j, j \in J(u)\sm \set{1}$. In all cases, the point $y$ is joined to the point $z$ by a simple arc comprising nodal arcs and possibly sub-arcs of the $\Gamma_j, j \in J(u)\sm \set{1}$.
\end{enumerate}
A generator of $V_{y,z}$ will be denoted by $v_{y,z}$ (defined up to scaling).
\end{lemma}%

\begin{proof}[Proof of Lemma~\ref{L-Uxyk}]  With the assumptions of the lemma, Euler's formula \eqref{E-euler-or2a}
can be rewritten as,
\begin{equation}\label{E-bha-k10}
\begin{array}{ll}
0 \ge \kappa(u) - \ell =& \big( b_0(\cZ(u) \cup \Gamma(u)) - 1) \big)
+ \frac 12\, \sum_{x \in \cS_{\mathrm{i}}(u)} (\nu(x)-2)\\[5pt]
& +\, \sum_{j \in J(u), j\neq 1\,} \frac 12  \, \big( \sum_{x\in \cS _b(u) \cap \Gamma_j}\rho (x) - 2 \big)\\[5pt]
& +\, \frac 12 \sum_{x\in \cS _b(u) \cap \Gamma_1,\, x \not = y,z\,}\rho (x) + \frac 12 \big( \rho(y) + \rho(z) - 2\ell +2\big).
\end{array}%
\end{equation}

In view of our assumptions, and Proposition~\ref{P-euler-np}, the terms in the right-hand side of \eqref{E-bha-k10} are all non-negative. In view of the left hand side of the inequality, they must all be zero. We now examine two cases.\smallskip

\noid If $J(u)=\set{1}$, the second line is the right hand side disappears, the nodal set $\cZ(u)$ only meets $\Gamma_1$, $b_0(\cZ(u) \cup \Gamma_1) =1$, the only  singular points  of the function $u$ are the points $y$ and $z$, and $ \rho(y) = (2\ell-3)$, $\rho(z) = 1$.\smallskip

\noid If $J(u) \not = \set{1}$, all the terms in the right hand side must be zero: $b_0(\cZ(u) \cup \Gamma(u)) =1$, each component $\Gamma_j, j\in J(u)\sm \set{1}$, is hit by precisely two nodal arcs of $\cZ(u)$, $ \rho(y) = (2\ell-3)$, and $ \rho(z) = 1$, and the function $u$ has no other  singular point whether in the interior of $\Omega$ or on $\Gamma$.  Furthermore, there is a simple nodal arc from $y$ to one of the components $\Gamma_j, j \in J(u)$, a simple nodal arc from $z$ to one of the components $\Gamma_j, j \in J(u)$, and there is a simple nodal arc, possibly comprising arcs contained in $\Gamma(u)$ joining $y$ to $z$. Finally, $\kappa(u) = \ell$. This proves assertions (i)--(v).\medskip

To prove the first assertion, assuming there are at least two linearly independent functions $u_1$ and $u_2$ in $V_{y,z}$, we can apply Lemma~\ref{L-zeroc} as in the previous proofs, and construct yet another function $0 \not = \tilde{u}$ such that $\rho(\tilde{u},y) \ge (2\ell -3)$ and $\rho(\tilde{u},z)\ge 2\,$, contradicting Assertions~(ii).
\end{proof}

\begin{figure}[!ht]
\centering
\begin{subfigure}[t]{.30\textwidth}
\centering
\includegraphics[width=\linewidth]{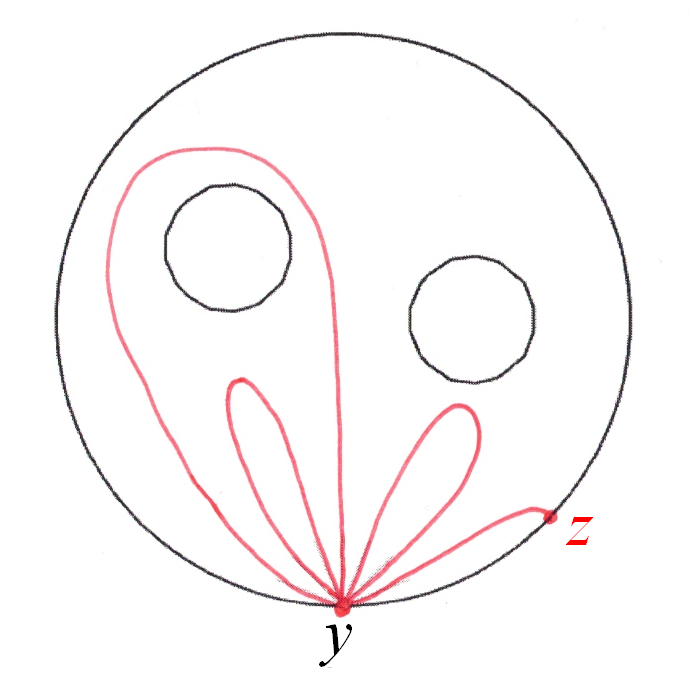}
\caption{}
\end{subfigure}
\begin{subfigure}[t]{.30\textwidth}
\centering
\includegraphics[width=\linewidth]{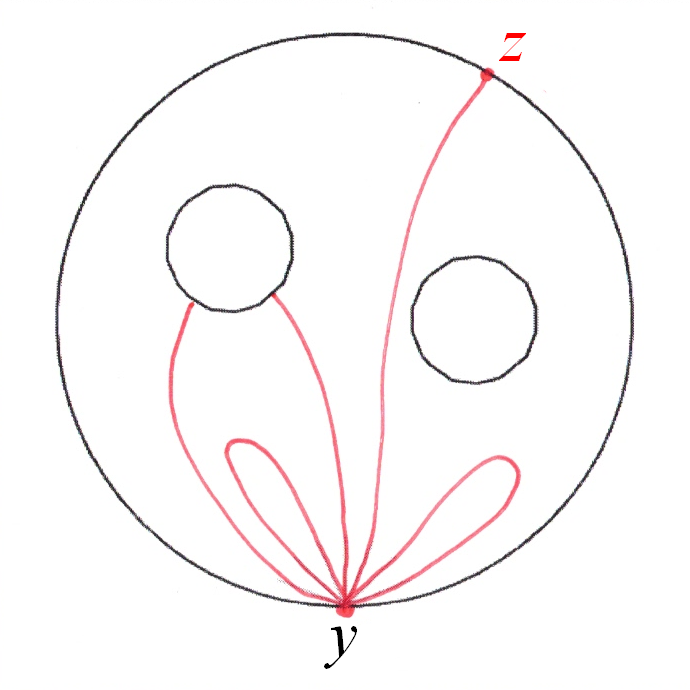}
\caption{}
\end{subfigure}
\begin{subfigure}[t]{.30\textwidth}
\centering
\includegraphics[width=\linewidth]{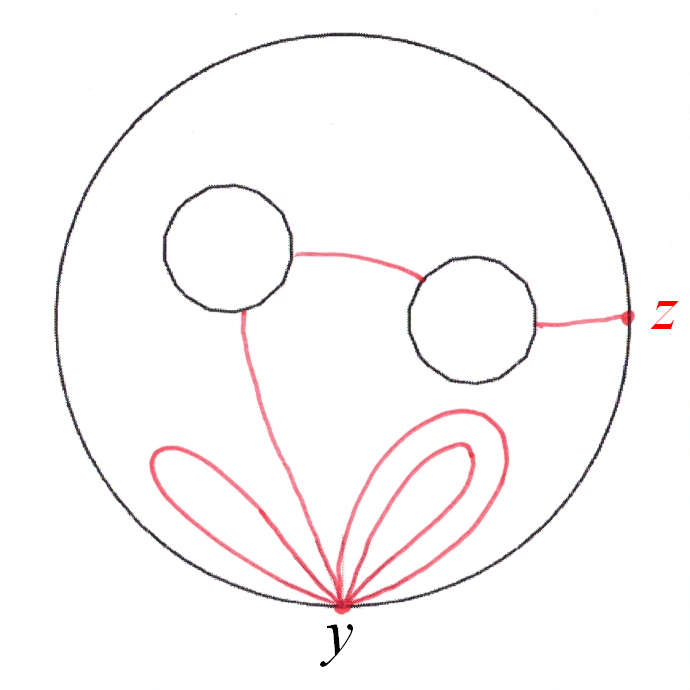}
\caption{}
\end{subfigure}
\caption{$\Omega$ with two holes: some possible nodal patterns for $v_{y,z}$}\label{F-hmn2-Uxy2-1}
\end{figure}

\begin{figure}[!ht]
\centering
\begin{subfigure}[t]{.30\textwidth}
\centering
\includegraphics[width=\linewidth]{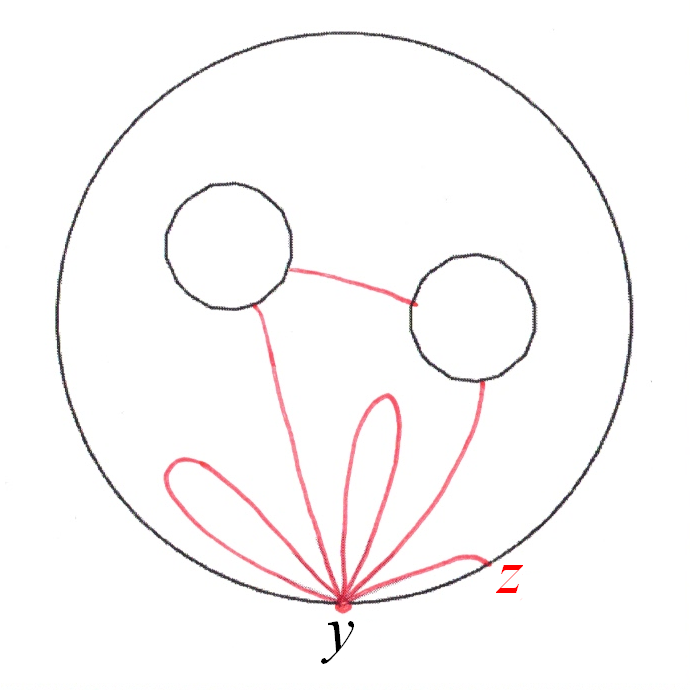}
\caption{}
\end{subfigure}
\begin{subfigure}[t]{.30\textwidth}
\centering
\includegraphics[width=\linewidth]{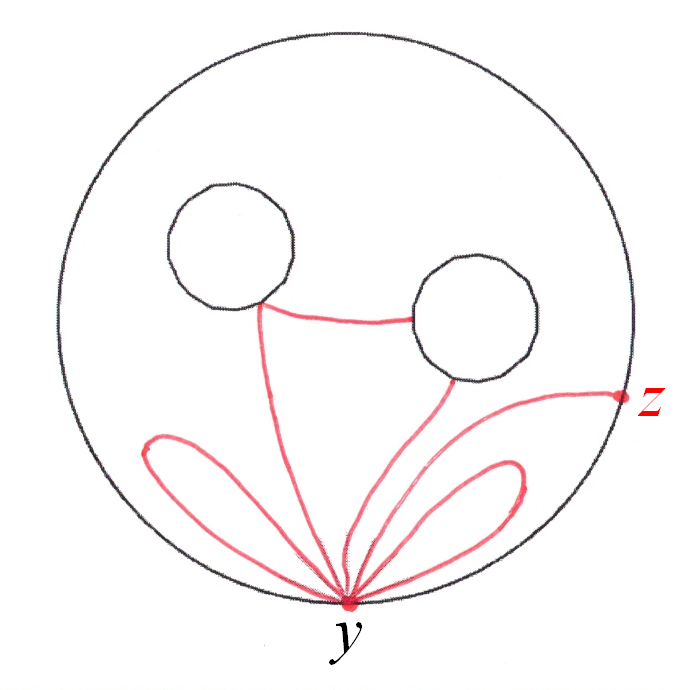}
\caption{}
\end{subfigure}
\begin{subfigure}[t]{.30\textwidth}
\centering
\includegraphics[width=\linewidth]{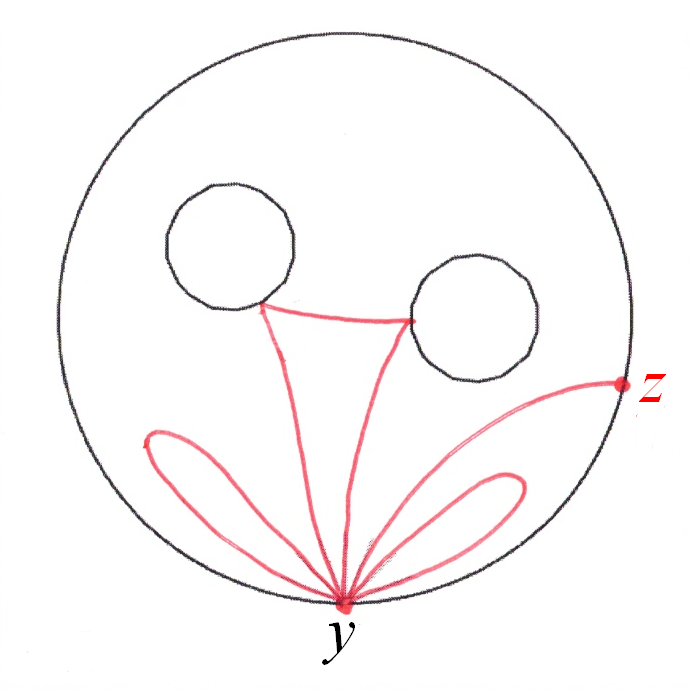}
\caption{}
\end{subfigure}
\caption{$\Omega$ with two holes: some possible nodal patterns for $v_{y,z}$}\label{F-hmn2-Uxy2-2}
\end{figure}

\subsubsection{$\Omega$ with $k$ holes. Structure and combinatorial type of nodal sets in $V_{y,z}$}\label{SSS-hmn-21ds}

 We can adapt the description of the nodal set $\cZ(u)$, $u \in V_{y,z}$ given in  Paragraph~\ref{SSS-hmn-21cs} to the present case (multiple components of $\Gamma$). The ``generalized'' loops or arc will then hit one or several components $\Gamma_j, j \in J(u)\sm \set{1}$. We can also define the \emph{combinatorial type}  $\tau_{y,z}^{u}$ of $u$ with respect to the points $y$ and $z$. \smallskip

Figures~\ref{F-hmn2-Uxy2-1} and \ref{F-hmn2-Uxy2-2} display possible nodal patterns for $ 0 \neq u \in V_{y,z}$  (in these examples, $\ell = 5$, $\rho(y) = 7$, and there are $3$ loops). For these nodal patterns, we have  the combinatorial types

\begin{equation*}
\begin{array}{ll}
\tau^{\ref{F-hmn2-Uxy2-1}}_A  &= \begin{pmatrix}
                       \downarrow &  1 & 2 & 3 & 4 & 5 & 6 & 7 \\
                        1 &  \downarrow & 3 & 2 & 7 & 6 & 5 & 4 \\
                        \end{pmatrix},\\[12pt]
\tau^{\ref{F-hmn2-Uxy2-1}}_B  &= \begin{pmatrix}
                         \downarrow & 1 & 2 & 3 & 4 & 5 & 6 & 7 \\
                          3 & 2 & 1 & \downarrow & 7 & 6 & 5 & 4 \\
                        \end{pmatrix},\\[12pt]
\tau^{\ref{F-hmn2-Uxy2-1}}_C  &= \begin{pmatrix}
                         \downarrow & 1 & 2 & 3 & 4 & 5 & 6 & 7 \\
                          5 & 4 & 3 & 2 & 1 & \downarrow & 7 & 6 \\
                        \end{pmatrix},\\[12pt]
\end{array}%
\end{equation*}
and
\begin{equation*}
\begin{array}{ll}
\tau^{\ref{F-hmn2-Uxy2-2}}_A  &= \begin{pmatrix}
                        \downarrow &  1 & 2 & 3 & 4 & 5 & 6 & 7 \\
                         1 & \downarrow & 5 & 4 & 3 & 2 & 7 & 6 \\
                        \end{pmatrix},\\[12pt]
\tau^{\ref{F-hmn2-Uxy2-2}}_B = \tau^{\ref{F-hmn2-Uxy2-2}}_C &= \begin{pmatrix}
                         \downarrow & 1 & 2 & 3 & 4 & 5 & 6 & 7 \\
                         3 & 2  & 1 & \downarrow & 5 & 4 & 7 & 6 \\
                        \end{pmatrix}.
\end{array}%
\end{equation*}

Lemma~\ref{L-Uxyk} can be reformulated in the abstract setting of Subsection~\ref{SS-hmn2-10} as follows.
For any $0 \neq u \in V_{y,z}$, the projection $\check{\cZ}(u)$ of the nodal set $\cZ(u)$ consists of $(\ell-2)$ continuous simple loops at $\check{y}$ and a continuous simple curve from $\check{y}$ to $\check{z}$. The loops and curve only intersect at $\check{y}$ and may contain points in $\Xi$. If $\xi_j \in \check{\cZ}(u)$, there are exactly two projected nodal semi-arcs at this point, and the point $\xi_j$ is a regular point of $\check{\cD}_u$.  The set $\check{\cZ}(u) \subset \check{\Omega}$ is therefore the wedge sum $\cB_{\check{y},(\ell-2)}^{\check{z}}$ of an $(\ell-2)$-bouquet of loops at $\check{y}$, with a simple arc from $\check{y}$ to $\check{z}$. The loops or the arc may contain points in $\Xi$.  This is illustrated in Figure~\ref{F-hmn2-Uxyk-p}.\smallskip

\begin{figure}[!ht]
\centering
\includegraphics[width=0.6\linewidth]{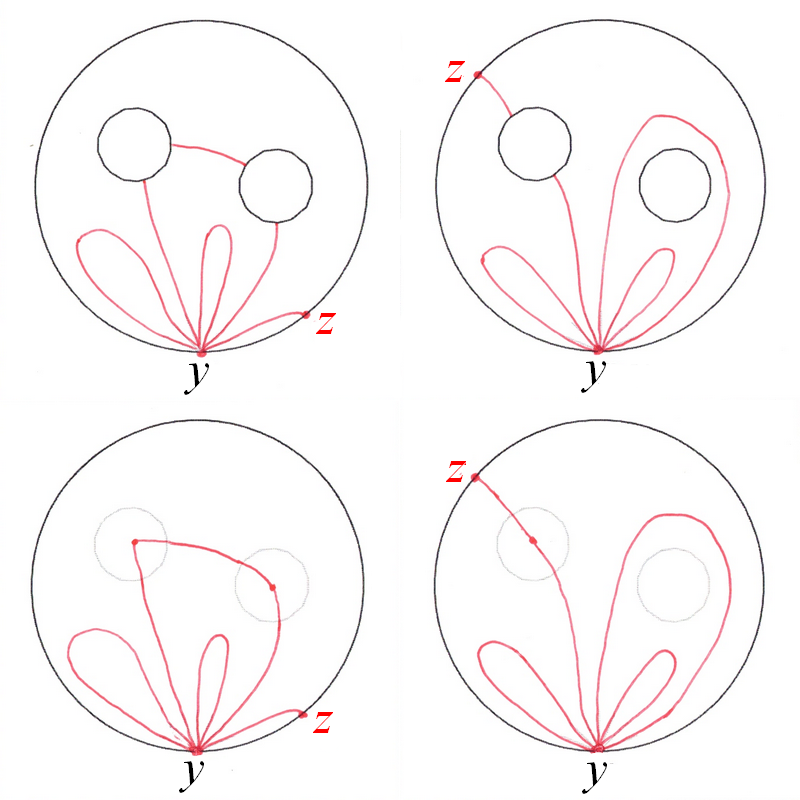}
\caption{Nodal patterns in $\Omega$ and their projections in $\check{\Omega}$}
\label{F-hmn2-Uxyk-p}
\end{figure}
\FloatBarrier

\begin{remark}\label{R-hmn23-2}
From the point of view of \emph{partitions}, see \cite{BoHe2017}, the points in $\Xi$ are not singular points of $\check{\cD}_u$, the projection of the nodal partition $\cD_u$ of $u$.
\end{remark}

\subsection{Analysis of eigenfunctions with one prescribed   boundary singular point}\label{SS-hmn-2N2}

We use the notation of Subsection~\ref{SS-hmn-0N}.  In this subsection, we assume that $U$ is a linear subspace of an eigenspace $U(\lambda)$ of \eqref{E-evp-2bc}, and that for some $\ell \ge 2$,
\begin{equation*}
\left\{
\begin{array}{l}
\sup\set{\kappa(u) \mid 0 \neq u \in U} \le \ell \text{~~and}\\[5pt]
\dim U = (2\ell - 1).
\end{array}%
\right.
\end{equation*}

For $x \in \Gamma_1$, we introduce the subspaces
\begin{equation}\label{E-bha4-2N}
\left\{
\begin{array}{l}
U^1_y = \set{u \in U \mid \rho(u,y) \ge (2\ell -2)},\\[5pt]
U^2_y = \set{u \in U \mid \rho(u,y) \ge (2\ell -3)}.\\[5pt]
\end{array}%
\right.
\end{equation}

According to Lemma~\ref{L-zero2},  $U^1_y \neq \set{0}$. The purpose of this subsection is to investigate the properties of the functions $u \in U^1_y$ or $U^2_y$ --their precise order of vanishing, the structure of their nodal sets-- under the above assumptions on $U$.

\subsubsection{Properties of $U^1_y$ and $U^2_y$}\label{SSS-hmn-2N2a}

\begin{lemma}\label{L-Ux1N}
Let $U$ be a linear subspace of an eigenspace of \eqref{E-evp-2bc} in $\Omega$, with \[ \sup\set{\kappa(u) \mid 0 \not = u \in U} \le \ell,  \text{~~and~~} \dim U = (2\ell - 1).\] Fix some $y \in \Gamma_1$.  For the spaces $U^1_y$ and $U^2_y$ defined in \eqref{E-bha4-2N}, we have
\begin{enumerate}[(i)]
  \item $\dim U^1_y = 1$, \, $\dim U^2_y = 2$ and,
  \item  for any $0 \neq u \in U^2_y$,
  \begin{equation}\label{E-bha4-2aN}
\left\{
\begin{array}{l}
\kappa(u) = \ell \text{~~and~~} \cZ(u) \cup \Gamma(u) \text{~is connected},\\[5pt]
\cS_{\mathrm{i}}(u) = \emptyset,\\[5pt]
\sum_{z \in \cS_{\mathrm{b}}(u) \cap \Gamma_j} \rho(u,z) = 2 \text{~for all~} j \in J(u)\sm \set{1},\\[5pt]
\sum_{z \in \cS_{\mathrm{b}}(u) \cap \Gamma_1} \rho(u,z) = (2\ell - 2) \text{~and, more precisely,}\\[5pt]
\begin{array}{ll}
\text{either} & \rho(u,y) = (2\ell - 2) \text{~and~} \cS_{\mathrm{b}}(u) \cap \Gamma_1 = \set{y},\\[5pt]
\text{or} & \rho(u,y)  = (2\ell-3) \text{~and~} \exists\, z_u \in \Gamma_1 \sm \set{y} \text{~such that~}\\[5pt]
& \cS_{\mathrm{b}}(u) \cap \Gamma_1 = \set{y,z_u}, \text{~with~} \rho(u,z_u) = 1.
\end{array}%
\end{array}%
\right.
\end{equation}

\end{enumerate}
\end{lemma}%

\begin{proof} Assume that $\Gamma$ has $(q+1)$ components, $\Gamma_1, \ldots, \Gamma_{q+1}$, with $x \in \Gamma_1$.\smallskip

Clearly, $\set{0} \neq U_y^1 \subset U_y^2$. Take any $0 \not = u \in U^2_y$. Euler's formula \eqref{E-euler-or2a} can be rewritten as
\begin{equation}\label{E-bha4-2eN}
\begin{array}{ll}
0 \ge \kappa(u) - \ell =& \big( b_0(\cZ(u) \cup \Gamma(u)) - 1 \big)
+ \frac 12\, \sum_{z \in \cS_{\mathrm{i}}(u)} (\nu(z)-2)\\[5pt]
& +\, \sum_{i \in J(u), i\neq 1\,} \frac 12 \, \big( \sum_{z\in \cS _b(u) \cap \Gamma_i}\rho (z) - 2 \big)\\[5pt]
& +\, \frac 12\, \big( \sum_{z\in \cS _b(u) \cap \Gamma_1\,}\rho (z)  - 2\ell +2\big).
\end{array}%
\end{equation}
The first $ |J(u)|+2$  terms in the right-hand side of the equality are nonnegative. Since $\sum_{z\in \cS _b(u) \cap \Gamma_1\,}\rho (z)$ is even, and larger than or equal to $(2\ell -3)$, the last term is nonnegative also. In view of the first inequality, the four terms must vanish. This proves the relations \eqref{E-bha4-2aN}.
\medskip

\noid \emph{Proof that $\dim U^1_y = 1$.~}  Assume that this is not the case. Then, there exist two linearly independent functions $u_1, u_2$ in $U^1_y$ such that $\rho(u_i,y) = (2\ell - 2)$. By Lemma~\ref{L-zeroc}, there would exist a nontrivial linear combination $u$ such that $u \in U^1_u$ and $\rho(u,y) \ge (2\ell - 1)$, a contradiction with \eqref{E-bha4-2aN}. \medskip

\noid \emph{Proof that $\dim U^2_y = 2$.~} Choose some $0 \not = v_1 \in U^1_y$. Clearly $v_1 \in U^2_y$. On the other-hand, given any $y \in \Gamma_1\sm\set{y}$, Lemma~\ref{L-Uxy0} provides a function $u_{y,z}$ belonging to $U^2_y$, not to $U^1_y$, and hence $\dim U^2_y \ge 2$. Choose $0 \not = v_2 \in U^2_y$ orthogonal to $v_1$. Since $\dim U^1_y = 1$, the function $v_2$ satisfies $\rho(v_2,y) = (2\ell - 3)$, and by Proposition~\ref{P-euler-np}, there must exist some $y_2 \in \Gamma$ such that $\rho(v_2,y_2) \ge 1$.  By Lemma~\ref{L-Uxyk}, $\rho(v_2,y_2) = 1$ and $v_2 \in V_{y,y_2}$. The subspace $U^{1,\perp}_y := \set{u \in U^2_y \mid u \perp u_1}$ has dimension at least one. Assume that $\dim U^2_y \ge 3$. Then $\dim U^{1,\perp}_y \ge 2$, and we can find two linearly independent functions $u_1, u_2 \in U^{1,\perp}_y$ such that $\rho(u_i,y) = (2\ell - 3)$. By Lemma~\ref{L-zeroc}, there exists a linear combination $u \in U^{1,\perp}_y$ such that $\rho(u,y) \ge (2\ell - 2)$, a contradiction.
\end{proof}

\begin{remark}\label{R-Ux1-2N}
Up to scaling, there is a uniquely defined orthogonal basis $\set{v_1,v_2}$ of $U^2_y$, with $v_1 \in U^1_y$, $v_2 \in  U^{1,\perp}_y$, and a uniquely defined $y_2 \in \Gamma_1 \sm \set{y}$ such that $\rho(v_2,y_2) = 1$. In view of Lemma~\ref{L-breve}, we can choose $v_1$ such that $\breve{v}_1 > 0$ on $\Gamma_1 \sm \set{y}$, and $v_2$ such that $\breve{v_2} > 0$ on the arc from $y$ to $y_2$ moving counter-clockwise on $\Gamma_1$.
\end{remark}%

\begin{figure}[!ht]
\centering
\begin{subfigure}[t]{.30\textwidth}
\centering
\includegraphics[width=\linewidth]{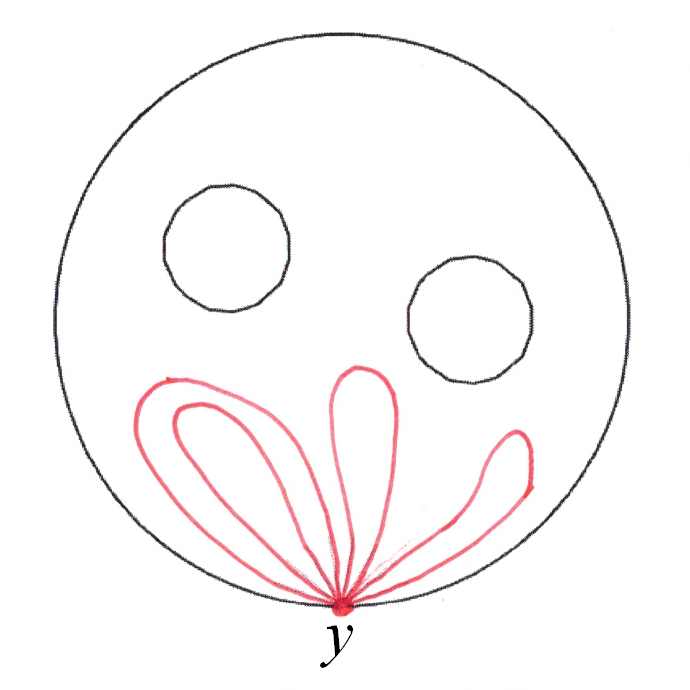}
\caption{}
\end{subfigure}
\begin{subfigure}[t]{.30\textwidth}
\centering
\includegraphics[width=\linewidth]{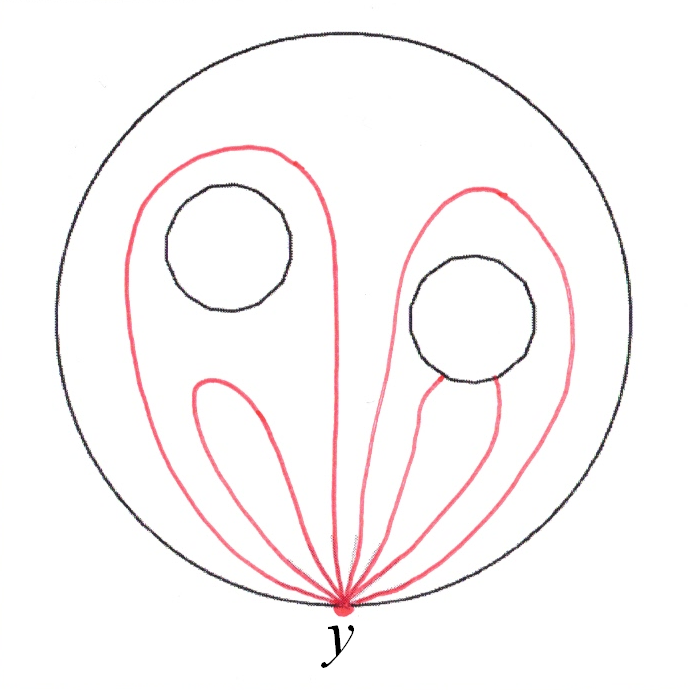}
\caption{}
\end{subfigure}
\begin{subfigure}[t]{.30\textwidth}
\centering
\includegraphics[width=\linewidth]{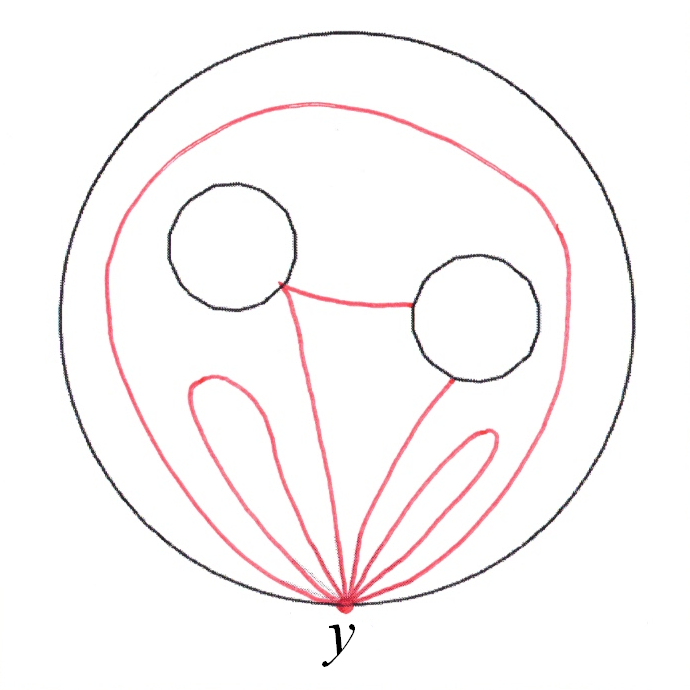}
\caption{}
\end{subfigure}
\caption{Some possible nodal patterns for $0 \not = u \in U^1_y$}\label{F-hmn2-Ux1}
\end{figure}
\FloatBarrier

\subsubsection{Structure and combinatorial type of nodal sets in $U^1_y$ and $U^2_y$}\label{SSS-hmn-2N2as}\phantom{}~

\noid Relations~\eqref{E-bha4-2aN} and an analysis as in Subsection~\ref{SS-hmn-21}, show that the nodal set of any $0\neq u \in U^1_y$ consists of $(\ell -1)$ ``generalized'' nodal loops at the point $y$, and that these loops do not intersect away from $y$.  In the abstract setting of Subsection~\ref{SS-hmn2-10},  for any $0 \neq u \in U^1_y$, the projection $\check{\cZ}(u)$ of the nodal set $\cZ(u)$ consists of $(\ell-1)$ continuous loops at $\check{y}$. The loops only intersect at $\check{y}$, and may contain points in $\Xi$. If $\xi_j \in \check{\cZ}(u)$, there are exactly two projected nodal semi-arcs at this point. It follows that $\xi_j$ is a regular point of $\check{\cD}_u$.  The set $\check{\cZ}(u)$ is an $(\ell - 1)$-bouquet of loops $\cB_{\check{y},(2\ell -2)}$ at $\check{y}$. \smallskip

Adapting the description given in Paragraph~\ref{SSS-hmn-21as},  for $0 \neq u \in U_y^1$, we define the \emph{combinatorial type} $\tau_{y}^{u}$ of the nodal set $\cZ(u)$ with respect to $y$.   This is a map from $L_{(2\ell - 2)}$ to itself.
\smallskip

Some possible nodal patterns  for $u \in U_y^1$ are displayed in Figure~\ref{F-hmn2-Ux1},  where $\ell = 5$, and $\rho(y) = 8$.  The corresponding combinatorial types  are
\begin{equation*}
\begin{array}{l}
 \tau^{\ref{F-hmn2-Ux1}}_A  = \begin{pmatrix}
                          1 & 2 & 3 & 4 & 5 & 6 & 7 & 8\\
                          2 & 1 & 4 & 3 & 8 & 7 & 6 & 5\\
                        \end{pmatrix},\\[12pt]
\tau^{\ref{F-hmn2-Ux1}}_B = \begin{pmatrix}
                          1 & 2 & 3 & 4 & 5 & 6 & 7 & 8\\
                          4 & 3 & 2 & 1 & 8 & 7 & 6 & 5 \\
                        \end{pmatrix},\\[12pt]
\tau^{\ref{F-hmn2-Ux1}}_C = \begin{pmatrix}
                          1 & 2 & 3 & 4 & 5 & 6 & 7 & 8\\
                          8 & 3 & 2 & 5 & 4 & 7 & 6 & 1 \\
                        \end{pmatrix}.
\end{array}%
\end{equation*}

\noid  If $u \in U^2_y$ and $u \not \in U^1_y$, there exists a unique $z_u \in \Gamma_1$, such that $z_u \neq y$ and $\cS_{\mathrm{b}}(u) \cap \Gamma_1 = \set{y,z_u}$, with $\rho(u,y) = (2\ell - 3)$, $\rho(u,z_u)=1$. Furthermore, $V_{y,z_u}=[u]$. The nodal set $\cZ(u)$  and its \emph{combinatorial type} $\tau_{y,z_u}^{u}$  are described in Paragraph~\ref{SSS-hmn-21d}. Projecting  $\cZ(u)$ to $\check{\Omega}$, $\check{\cZ}(u)$ is the wedge sum $\cB^{\check{z}_u}_{\check{y},(\ell -2)}$ of a simple arc from $\check{y}$ to $\check{z}_u$ with an $(\ell-2)$-bouquet of loops at $\check{y}$.

\subsection{Application of the previous analysis}\label{SS-hmn-2N4}
Fix some $y \in \Gamma_1$. We now apply the analysis of Subsections~\ref{SS-hmn-21} and \ref{SS-hmn-22} to investigate the limits $v_{y,z} \in U^2_y\sm U^1_y$, when $z$ tends to $y$ on $\Gamma_1$, clockwise or anti-clockwise. The notation are the same as in  Subsection~\ref{SS-hmn-21}.\smallskip

We choose a basis $\set{v_1,v_2}$ of $U^2_x$ as described in Remark~\ref{R-Ux1-2N}. In particular, $\rho(v_1,y)=(2\ell -2)$,  $v_1 \perp v_2$ in $L^2(\Omega)$,  $\rho(v_2,y)=(2\ell-3)$, there exists $y_2 \in \Gamma_1\sm \set{y}$ such that $\rho(v_2,y_2)= 1$, and $\cS(v_2) \cap \Gamma_1 = \set{y,y_2}$.
Recall the definition of the functions $\breve{v}_i$ on $\Gamma$,
\begin{equation}\label{E-bha4-3dN}
\breve{v}_i := \left\{
\begin{array}{ll}
\partial_{\nu}v_i &\quad \text{in the Dirichlet case,}\\[5pt]
v_i|_{\Gamma} &\quad \text{in the Robin case.}\\[5pt]
\end{array}
\right.
\end{equation}

According to Lemma~\ref{L-breve}, the function $\breve{v}_1$ vanishes only at $y$ and does not change sign on $\Gamma_1$. The function $\breve{v}_2$ does not vanish on $\Gamma_1 \sm \set{y,y_2}$, and changes sign when crossing $y_2$  and $y$ along $\Gamma_1$.\smallskip

Let $\gamma : [0,2\pi] \to \Gamma_1$ be a parametrization such that $\gamma(0)=\gamma(2\pi) = x$.  Given any $y\in \Gamma_1 \sm \set{y}$, there exists a function $u_{y,z}$ which satisfies \eqref{E-bha4-2aN}(ii), and this function is uniquely defined up to multiplication by a nonzero scalar. In the Dirichlet case, this function is characterized by the fact that $\breve{u}_{y,z} = \partial_{\nu}u_{y,z}|_{\Gamma_1}$ only vanishes at $y$ and $z$. In the Robin case, it is characterized by the fact that $\breve{u}_{y,z} =u_{y,z}|_{\Gamma_1}$ only vanishes at $y$ and $z$. Up to a constant factor, we may choose
\begin{equation}\label{E-bha4-4abN}
u_{y,z} = a(z) \, v_1 + b(z) \, v_2 ,
\end{equation}
with
\begin{equation}\label{E-bha4-4cN}
\left\{
\begin{array}{l}
a(z) = -\, \breve{v}_2(z)\, \big( \breve{v}_1^2(z) + \breve{v}_2^2(z) \big)^{- \frac 12} ,\\[5pt]
b(z) = \breve{v}_1(z) \, \big( \breve{v}_1^2(z) + \breve{v}_2^2(z) \big)^{- \frac 12},\\[5pt]
\end{array}
\right.
\end{equation}
where $\breve{v}_1, \breve{v}_2$ are defined in \eqref{E-bha4-3dN}. Then, since $\breve{v}_1$ is positive on $\Gamma_1 \sm \set{y}$, there exists a unique $\theta(z) \in (0,\pi)$ such that $\cos(\theta(z)) = a(z)$ and $\sin(\theta(z)) = b(z)$. Defining
\begin{equation}\label{E-bha4-4dN}
w_{\theta} = \cos\theta \, v_1 + \sin\theta \, v_2,
\end{equation}
we have $u_{y,z} = w_{\theta(z)}$. Conversely, according to the proof of Lemma~\ref{L-Ux1N}, any function $w_{\theta}$ has exactly two singular points on $\Gamma_1$, the point $y$ and some other point $z_{\theta} \not = y$. Note that the point $z$ determines the eigenfunction $u_{y,z}$ uniquely (up to scaling) and vice versa. It follows that we have a continuous, bijective map $(0,2\pi) \ni t \mapsto \theta(\gamma(t)) \in (0,\pi)$. This map is strictly monotone (we can assume that it is increasing), and provides a diffeomorphism from $(0,2\pi)$ to $(0,\pi)$, with limits $0$ and $\pi$ respectively. Otherwise stated, the function $u_{y,\gamma(t)}$ defined in \eqref{E-bha4-4abN} tends to $v_1$ when $t$ tends to $0$ and to $-v_1$ when $t$ tends to $2\pi$. There exists $t_2$ such that $\gamma(t_2) = y_2$, and hence $\theta(y_2) = \frac \pi 2$.  We have proved the following property.

\begin{property}\label{P-hmnN}
The function $u_{y,z}$ defined in \eqref{E-bha4-4abN} tends to $v_1$ when $z \neq y$ tends to $y$ clockwise,  and to $-v_1$ when when $z \neq y$ tends to $y$ counter-clockwise.
\end{property}%

\subsubsection{Proof that $\mult(\lambda_k) \le (2k -2)$ for $k \ge 3$, general case}\label{SS-hmn-26}\phantom{}

Under the assumption that $\dim U(\lambda_k) = (2k-1)$, the arguments in the simply connected case only use Euler's formula applied to nodal partitions $\cD_u$, Jordan's separation theorem, the structure of the nodal sets $\cZ(v_1)$ (a bouquet of loops at $y$) and $\cZ(v_2)$ (the wedge sum of an arc from $y$ to the boundary $\Gamma_1$ with one or two bouquets  of loops at $y$), and the fact that the combinatorial type  $\tau_{y,a_{\theta},w_{\theta}}$ of the nodal sets $\cZ(w_{\theta})$ is constant for $\theta \in (0,\pi)$.\smallskip

In the general case, Euler's formula leads to a similar structure for the nodal sets $\cZ(v_1)$, $\cZ(v_2)$, and $\cZ(w_{\theta})$, with ``generalized'' loops and arcs. We can now look at the projection of these sets to $\check{\Omega}$ as in Section~\ref{SS-hmn2-10}. As observed in Remark~\ref{R-hmn23-2}, the only singular points of the projected sets $\check{\cZ}(u)$ (or partitions $\check{\cD}_u$), $u \in \set{v_2, w_{\theta}}$, are the points $\check{y}$ and $ \check{z}_{\theta}$, and the combinatorial type of $\check{\cZ}(w_{\theta})$ is constant for $\theta \in (0,\pi)$. Since $\check{\Omega}$ is simply connected, we can now apply the same arguments as in the simply connected case.\medskip

This completes the proof of Theorem~\ref{T-hmn-bh1} for general $\Cty$ bounded domains.

\chapter[Simply Connected Domains: $\mult(\lambda_k) \le (2k-3)$]{Simply Connected Plane Domains: the Estimate\\ $\mult(\lambda_k) \le (2k-3)$ for $k \ge 3$}\label{Ch-scpdsb}

\section{Introduction}\label{S-hmn3-pre}

In Section~\ref{S-hmn2}, we have established that the estimate, $\mult(\lambda_k) \le (2k-2)$ for all $k \ge 3$, is valid for any $\Cty$ bounded domain $\Omega$, see Theorem~\ref{T-hmn-bh1}. In \cite[Theorem~B, p.~1172]{HoMN1999}, the authors state that  this estimate can be improved to $\mult(\lambda_k) \le (2k-3)$ for all $k\ge 3$.\smallskip

The purpose of this chapter is  to prove Theorem~\ref{T-hmn-bh}, Assertion~(ii), namely,

\begin{theorem}\label{T-hmn-bh2}
Let  $\Omega$ be a \emph{simply connected} $\Cty$ bounded domain in $\R^2$. The multiplicities of the eigenvalues of the operator  $-\Delta + V$ in $\Omega$, with the Dirichlet, Neumann or $h$-Robin boundary condition, satisfy the estimate $\mult(\lambda_k) \le (2k-3)$ for any $k \ge 3$.
\end{theorem}%

The proof  of the theorem is by contradiction. Introducing the following assumptions,  to hold throughout this chapter, we shall reach a contradiction.

\begin{assumptions}\label{A-hmn3-0}\phantom{}
\begin{enumerate}[$\diamond$]
\item $\Omega$ is a simply connected, $\Cty$, bounded domain in $\R^2$, and we let $\Gamma := \partial \Omega$.
\item For some $k \ge 3$, the $k$-th eigenvalue $\lambda_k$ of the eigenvalue problem \eqref{E-evp-2bc}--\eqref{E-evp-bc} has multiplicity $(2k-2)$, and we let $U := U(\lambda_k)$ be the corresponding eigenspace.
\end{enumerate}
\end{assumptions}%

More precisely, the proof of Theorem~\ref{T-hmn-bh2} is organized as follows.\smallskip

\noid In Section~\ref{S-hmn3}, we prove the existence of certain functions with prescribed singularities. More precisely, given any $x \in \Omega$ and $y \in \Gamma$, we introduce the linear subspaces
\index{2-W@$W_x$} \index{2-U@$U_y$}
\begin{equation*}
\left\{
\begin{array}{ll}
W_x & := \set{u \in U \mid \nu(u,x) \ge 2k - 2}\\[5pt]
U_y & := \set{u \in U \mid \rho(u,y) \ge 2k - 3}.
\end{array}
\right.
\end{equation*}
As it turns out, they have dimension $1$. Furthermore, for any $y \in \Gamma$ and $u \in U_y$, either $\rho(u,y) = (2k-2)$ and $\cS(u) = \set{y}$, or $\rho(u,y) = (2k-3)$ and $\cS(u) = \set{y,z(y)}$ for some $z(y) \neq y$ on $\Gamma$ \index{2-zy@$z(y)$} (Lemmas~\ref{L-L37} and \ref{L-L32}).
We introduce the following subsets of $\Gamma$,
\index{1-Gamma@$\Gamma$!$\Gamma_{(2k-3)},\Gamma_{(2k-2)}$}
\begin{equation*}
\left\{
\begin{array}{l}
\Gamma_{(2k -3)} := \set{y \in \Gamma \mid \forall ~0 \neq u \in U_y,~~\rho(u,y) = 2k - 3}\\[5pt]
\Gamma_{(2k -2)} := \set{y \in \Gamma \mid \forall ~0 \neq u \in U_y,~~\rho(u,y) = 2k - 2}.
\end{array}
\right.
\end{equation*}

We carefully study the properties of the maps $\Omega \ni x \mapsto [W_x] \in \bP(U)$ and $\Gamma \ni y \mapsto [U_y]$ (the one-dimensional linear subspaces viewed as points in the projective space of $U$), and of the sets $\Gamma_{(2k -3)}$ and $\Gamma_{(2k -2)}$, proving in particular that $\Gamma_{(2k -3)}$ is open and $\Gamma_{(2k -2)}$ finite  (Lemma~\ref{L-L33}).  This subsection contains three key lemmas. In Lemma~\ref{L-L33b}, we investigate the global behavior of $[U_y]$ and its associated singular point when $y \in \Gamma_{(2k-3)}$. In Lemma~\ref{L-L38} we prove that $[W_x]$ tends to $[U_y]$ when $x \in \Omega$ tends to $y \in \Gamma$. In Lemma~\ref{L-L33c}, we describe the global behavior of the combinatorial types of the $[U_y]$, $y \in \Gamma$. As a consequence, we obtain that $\#\big( \Gamma_{(2k -2)} \big)$ must be an even integer.\smallskip

\noid In Section~\ref{S-gam0}, analyzing the behavior of $[W_x]$ when $x$ is close  to some $y \in \Gamma$,  we conclude that Assumptions~\ref{A-hmn3-0} lead to a contradiction in the case $\Gamma_{(2k -2)} = \emptyset$ (Lemma~\ref{L-gam0-5}).\smallskip

\noid In Section~\ref{S-hmn9}, the analysis of the global behavior of $[W_x]$ in a neighborhood of $\Gamma$ shows that the Assumptions~\ref{A-hmn3-0} lead to a contradiction when $\Gamma_{(2k -2)} \neq \emptyset$, see Lemma~\ref{L-gam0-7}.\smallskip

\noid We can finally conclude that Assumptions~\ref{A-hmn3-0} lead to a contradiction, and hence that $\mult(\lambda_k) \le (2k-3)$ for $k\ge 3$.\smallskip

\noid Section~\ref{S-hmn2L} describes the \emph{labeling of nodal domains} of certain eigenfunctions. This notion is closely related to the notion of combinatorial type and used to prove that the combinatorial types of two eigenfunctions are different. This notion also appears in Section~\ref{S-llnd}.\smallskip

\noid  In Section~\ref{S-hmn32b}, we give a detailed proof of Lemma~\ref{L-L34a}. We also prove other statements which appear in \cite{HoMN1999} but which we do not actually use for our proof of Theorem \ref{T-hmn-bh2}.

\section[Properties of $\lambda_k$-Eigenfunctions]{Properties of $\lambda_k$-Eigenfunctions under Assumptions~5.2}\label{S-hmn3}

\subsection{Preamble}\label{SS-hmn30}

This section is devoted to establishing properties of $\lambda_k$-eigen\-functions under Assumptions~\ref{A-hmn3-0} (which will systematically be repeated in the lemmas).  These properties will be used in Sections~\ref{S-gam0} and \ref{S-hmn9}, leading to a contradiction, and showing that $\mult(\lambda_k)$ cannot be equal to $(2k-2)$ for $k \ge 3$.\smallskip

The assumption that the domain $\Omega$ is simply connected is actually not necessary in this Section~\ref{S-hmn3}, and only meant to simplify the proofs.  For the proofs in the general case, use arguments similar to those given in  Section~\ref{S-hmn2N}.\medskip

We use the notation of Subsection~\ref{SS-hmn-0N}. For later purposes, we also introduce the following notation.

\begin{notation}\label{N-hmn3-arcs}
Given two points $y_1 \neq y_2 \in \Gamma $, we denote by $\cA(y_1,y_2)$\index{2-A @$\cA(y_1,y_2)$} the open arc from $y_1$ to $y_2$, moving counter-clockwise. Given $y \in \Gamma $ and some number $a$ smaller than half the length of $\Gamma $, $\cA(y;a)$ denotes the arc centered at $y$, with length $2a$, taken counter-clockwise. In both cases, we use  the mathematical  symbols  $[$ and $]$ to denote the closed or semi-closed arcs.
\end{notation}%

According to Courant's nodal domain theorem, $\sup\set{\kappa(u) \mid u \in U} \le k$, so that the number $\ell$ in \eqref{E-euler-or} is now $k$. \smallskip

For any $0 \neq u \in U$,  Euler's formula \eqref{E-euler-or2} becomes,
\begin{equation}\label{E-hmn-3a12}
\left\{
\begin{array}{ll}
0 \ge \kappa(u) - k & = \big[ b_0\big( \cZ(u) \cup \Gamma\big) -1 \big]  + \frac 12 \, \sum_{z\in \cS_{\mathrm{i}}(u)} \big(  \nu(u,z) \, -2\big)\\[5pt]
& \quad + \frac 12 \,   \sum_{z\in \cS_{\mathrm{b}}(u)\,}\rho(u,z) \, - (k -1).
\end{array}
\right.
\end{equation}

In the next subsections, we analyze eigenfunctions with prescribed singular points, under Assumptions~\ref{A-hmn3-0}.

\subsection{Eigenfunctions with one prescribed interior singular point}\label{SS-hmn-31i}~\\
For $x \in \Omega$, define the subspace \index{2-W@$W_x$}
\begin{equation}\label{E-hmn3-n6a}
W_x := \set{u \in U \mid \nu(u,x) \ge 2k - 2}.
\end{equation}

In view of Assumptions~\ref{A-hmn3-0}, Lemma~\ref{L-zeroi} implies that $W_x \neq \set{0}$. \smallskip

\subsubsection{Properties of $W_x$}\label{SSS-hmn-31ia}

\begin{lemma}\label{L-L37}
Assume that $\Omega$ is simply connected. Let $U := U(\lambda_k)$ for some $k \ge 3$. Assume that $\dim U = (2k-2)$. [Assumptions~\ref{A-hmn3-0}].  Let $x \in \Omega$. Then, the subspace
\[W_x = \set{u \in U \mid \nu(u,x) \ge 2k - 2}\]
has the following properties.
\begin{enumerate}[(i)]
  \item  The dimension of $W_x$ is $1$.
  \item For all $0 \neq u \in W_x$,
  \begin{equation}\label{E-hmn3-n6b}
  \left\{
\begin{array}{l}
\kappa(u) = k,\\[5pt]
\cZ(u) \text{~~is connected,}\\[5pt]
\cS_{\mathrm{i}}(u) = \set{x} \quad\text{and~~} \nu(u,x) = 2(k -1),\\[5pt]
\sum_{z \in \cS_{\mathrm{b}}(u)} \rho(u,z) \in \set{0,2}.\\[5pt]
\end{array}
\right.
  \end{equation}
\item If $w_x$ is a generator of $W_x$, the map $\Omega \ni x \mapsto [w_x] \in \bP(U)$ is $C^{\infty}$.
\end{enumerate}%
\end{lemma}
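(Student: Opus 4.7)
I would apply Lemma~\ref{L-zeroi}(i) with $m = \dim U = 2k-2$ to obtain a nonzero $u \in U$ with $\nu(u,x) \ge 2\lfloor (2k-2)/2\rfloor = 2(k-1)$, showing $U_x \neq \{0\}$. For any $0 \neq u \in U_x$, I would rewrite Euler's inequality~\eqref{E-hmn-3a12} by isolating the contribution of $x$ inside $\sigma_i(u)$: the right-hand side then becomes the sum of the four nonnegative quantities $b_0(\cZ(u) \cup \Gamma(u)) - 1$, $\frac{1}{2}\sum_{z \in \cS_i(u),\, z \neq x}(\nu(u,z)-2)$, $\sum_{j \in J(u)}\frac{1}{2}\bigl(\sum_{z \in \cS_b(u) \cap \Gamma_j}\rho(u,z) - 2\bigr)$ (nonnegative by Proposition~\ref{P-euler-np}), and $\frac{1}{2}\bigl(\nu(u,x) - 2(k-1)\bigr)$. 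Since the left-hand side is nonpositive, each of these four quantities must vanish, and this yields every statement in~(ii).

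\textbf{Step 2 (dimension bound $\dim U_x \le 2$).} Assume three linearly independent elements $u_1,u_2,u_3 \in U_x$. By~(ii) each satisfies $\nu(u_i,x) = 2(k-1) \ge 2$, so Lemma~\ref{L-zeroc}(i) would produce a nonzero $u \in U_x$ with $\nu(u,x) \ge 2k$, contradicting~(ii). Hence $\dim U_x \le 2$. To rule out the case $\dim U_x = 2$, I would introduce the linear map $L' : U_x \to H_{k-1}$ sending $u$ to the harmonic homogeneous polynomial of degree $k-1$ appearing as the leading part of its Taylor expansion at $x$, where $H_{k-1}$ denotes the two-dimensional space of real harmonic homogeneous polynomials of degree $k-1$ in two variables. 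By Theorem~\ref{T-nodinfo}(i) combined with~(ii), $L'$ is injective; if $\dim U_x = 2$ then $L'$ is an isomorphism, and I could choose a basis $\{v_1,v_2\}$ of $U_x$ together with local polar coordinates $(r,\omega)$ at $x$ such that $v_1 = r^{k-1}\sin((k-1)\omega) + \cO(r^k)$ and $v_2 = r^{k-1}\cos((k-1)\omega) + \cO(r^k)$.

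\textbf{Step 3 (rotating function argument).} Setting $w_\theta = \cos((k-1)\theta)\, v_1 - \sin((k-1)\theta)\, v_2$, one has $w_\theta = r^{k-1}\sin((k-1)(\omega-\theta)) + \cO(r^k)$ and $w_{\pi/(k-1)} = -w_0$. Each $w_\theta$ lies in $U_x$, so satisfies~(ii); in particular $\cS_i(w_\theta) = \{x\}$, and each of the $2(k-1)$ nodal semi-arcs emanating from $x$ either loops back to $x$ or reaches a component of $\partial\Omega$. Labeling these rays $\omega_j(\theta) = j\pi/(k-1) + \theta$ counter-clockwise by $j \in \{0,\ldots,2k-3\}$, I would define a combinatorial type $\cT(\theta)$ sending $j$ either to its loop-partner label or to a marker identifying the boundary component reached by the semi-arc, extending the frameworks of Subsections~\ref{SSS-h2n-s2s} and~\ref{SSS-hmn-21ds}. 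As in the proofs of Property~\ref{propertyD4} and Lemma~\ref{L-hmn-202}, the uniform local structure theorem (Appendix~\ref{S-lsns}) together with the Hausdorff continuity of nodal sets (Lemma~\ref{L-hdn}) will show that $\cT(\theta)$ is locally, hence globally, constant on $[0,\pi/(k-1)]$. The identity $w_{\pi/(k-1)} = -w_0$ then forces $\cT$ to be invariant under the cyclic shift $j \mapsto j+1 \pmod{2(k-1)}$ of the ray labels (with boundary markers transported accordingly). Projecting the nodal set to the simply connected quotient $\check{\Omega}$ of Notation~\ref{N-hmn210-2}, and applying the Jordan-separation argument of Subsection~\ref{SS-h2n-s2} loop by loop, this shift invariance becomes incompatible with the nesting required for the loops (and boundary-ending arcs) to be pairwise disjoint off $x$, yielding the desired contradiction and hence $\dim U_x = 1$.

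\textbf{Step 4 (smooth dependence, assertion (iii)).} I would fix a basis $\{e_1,\ldots,e_{2k-2}\}$ of $U$ and observe that for $u = \sum_i \alpha_i e_i$, the condition $u \in U_x$ is equivalent to the vanishing at $x$ of the harmonic components of degrees $0,1,\ldots,k-2$ of the Taylor expansion of $u$, i.e.\ to $2k-3$ linear conditions on $(\alpha_1,\ldots,\alpha_{2k-2})$ with coefficient matrix $A(x)$ depending smoothly on $x$ (since the $e_i$ are $C^\infty$). By~(i), $A(x)$ has constant rank $2k-3$; near any $x_0 \in \Omega$ some $(2k-3)\times(2k-3)$ minor of $A(x)$ is therefore nonzero, and Cramer's rule produces a local $C^\infty$ section $x \mapsto (\alpha_1(x),\ldots,\alpha_{2k-2}(x)) \in \R^{2k-2} \sm \{0\}$ generating $\ker A(x) = U_x$, which descends to a global $C^\infty$ map $\Omega \ni x \mapsto [u_x] \in \bP(U)$. \emph{Main obstacle.} The delicate part is Step~3: the combinatorial type framework of Sections~\ref{S-hmn2}--\ref{S-hmn} was developed for boundary singularities, and its adaptation to an interior singularity with possibly many semi-arcs ending on several components of $\partial\Omega$ requires care; in particular, verifying that after projection to $\check{\Omega}$ the shift invariance truly contradicts the nesting constraints will demand a case analysis of how the boundary markers transform under the ray shift.
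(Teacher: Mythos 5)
Your proposal follows essentially the same route as the paper: assertion (ii) via the all-terms-nonnegative Euler identity \eqref{E-hmn-3a12} with the contribution of $x$ isolated, the bound $\dim U_x \le 2$ via Lemma~\ref{L-zeroc}, elimination of the two-dimensional case by the rotating family $w_\theta$ with constant combinatorial type and the relation $w_{\pi/(k-1)} = -w_0$, and assertion (iii) via a constant-rank linear system. The paper itself only sketches the rotating-function step by reference to Properties~\ref{propertyD3}--\ref{propertyD4} (with the reduction to the closed-surface bouquet picture carried out by projecting to $\bS_\Omega$ as in Paragraph~\ref{SSS-hmn-31ias}), so the care you flag for semi-arcs reaching the boundary is exactly the point the paper handles by that projection.
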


\begin{proof} We already know that $\dim W_x \ge 1$. \smallskip

\emph{Assertion~(ii).} The assumptions of the lemma and \eqref{E-hmn-3a12} imply that
\begin{equation}\label{E-hmn-312}
\begin{split}
0 \geq  \kappa(u) - k & = \big( b_0(\cZ(u) \cup \Gamma) -2\big) + \frac 12\, \sum_{z \in \cS_{\mathrm{i}}(u), z\neq x } (\nu(u,z)-2) \\
&\hspace{5mm} + \frac 12 \, \big( \nu(u,x) - 2k +2\big)  + \frac 12 \sum_{z\in \cS_{\mathrm{b}}(u) }\rho (u,z) .
\end{split}
\end{equation}
The terms in the right-hand side are nonnegative, except possibly the first one. The inequality implies that $b_0(\cZ(u) \cup \Gamma) \le 2$. We now consider two cases.\smallskip

\noid If $b_0(\cZ(u)\cup \Gamma) = 2$, the terms in the right-hand side are nonnegative, with a nonpositive sum. They must all vanish so that $\kappa(u) = k$, $\cS_{\mathrm{i}}(u) = \set{x}$, $\cS_{\mathrm{b}}(u) = \emptyset$, and $\nu(u,x) = 2(k - 1)$. In this case, $\cZ(u) \cap \Gamma = \emptyset$.  It follows that  $\cZ(u)$ is connected. Indeed, since $\cZ(u)$ does not hit $\Gamma$, the nodal arcs emanating from $x$ must form loops at $x$. These loops can only intersect each other at $x$ because $\cS_{\mathrm{i}}(u) = \set{x}$.
\index{2-Z@$\cZ(u)$!$\cZ_x(u)$}
The  component $\cZ_x(u)$ of $x$ in $\cZ(u)$ is a $(k-1)$-bouquet of  loops at $x$ whose complement has $k$ components. Since $k$ is the maximal possible number of nodal domains, this implies that $\cZ_x(u) = \cZ(u)$.\smallskip

\noid If $b_0(\cZ(u)\cup \Gamma) = 1$, the nodal set $\cZ(u)$ must hit $\Gamma$, which implies the inequality $\sum_{z\in \cS_{\mathrm{b}}(u) }\rho (z) \ge 2$ (use Proposition~\ref{P-euler-np}). Re-arranging the inequality \eqref{E-hmn-312}, we conclude that $\kappa(u) = k$, $\cS_{\mathrm{i}}(u) = \set{x}$, $\nu(u,x) = 2(k - 1)$, $\sum_{z\in \cS_{\mathrm{b}}(u) }\rho (z) = 2$,  and that $Z(u) \cup \Gamma$ is connected. The component $\cZ_x(u)$ of $x$ in $\cZ(u)$ is either a $(k-1)$-bouquet of loops at $x$, or consists of a $(k-2)$-bouquet of loops and two simple arcs from $x$ to the boundary. Away from $x$, the arcs do not intersect the loops and do not intersect each other except possibly on $\Gamma$. In the first case, the complement of the bouquet of loops has $k$  components, and the two points at which $\cZ(u)$ hits $\Gamma$ would be linked by a simple arc (possibly a loop if these points coincide). We would have too many nodal domains. This means that the first case does not occur. In the second case, the complement of $\cZ_x(u)$  has $k$ components, the maximal possible number. As above, this implies that $Z(u)$  is connected. We have proved Assertion~(ii).\medskip

\emph{Assertion~(i).~} Lemma~\ref{L-zeroc} and \eqref{E-hmn3-n6b} imply that $\dim W_x \le 2$. Assume by contradiction that $\dim W_x = 2\,$.  We use a \emph{rotating function argument} similar to the one used in \S~\ref{SSS-h2n-s2r}.\smallskip
\index{Rotating function argument}
As in Proposition~\ref{propertyD2}, we can choose a basis $\set{v_1,v_2}$ of $W_x$ such that, in local polar coordinates centered at $x$,
\begin{equation*}
\left\{
\begin{array}{l}
v_1 = r^{k -1} \sin ((k -1)\omega) +\mathcal O (r^{k}),\\
v_2 = r^{k -1} \cos ((k -1)\omega) +\mathcal O (r^{k}).
\end{array}
\right.
\end{equation*}

Introducing the family of functions
\begin{equation*}
w_\theta = \cos((k -1) \theta) \,v_1 - \sin((k -1) \theta) \, v_2,
\end{equation*}
and letting $\theta$ tend to $0$ or $\frac{\pi}{(k - 1)}$, we can follow the arguments given in the proofs of  Properties~\ref{propertyD3} and \ref{propertyD4} to reach a contradiction.\medskip

\emph{Assertion~(iii).~}  Same proof as for Property~\ref{P-h2n-laa}.
\end{proof}

\subsubsection{Structure and combinatorial type of nodal sets in $W_x$}\label{SSS-hmn-31ias}


 In view of Assertion~(ii), one can describe the possible nodal patterns for a generator $w_x$ of $W_x$.  There are two cases as displayed in Figure~\ref{F-hmn3-h0-L37}.
\begin{enumerate}
   \item Either $\cZ(w_x)$ consists of $(k - 1)$ loops at $x$ which do not intersect away from $x$, and do not hit $\Gamma$.
   \item Or $\cZ(w_x)$ consists of
   \begin{itemize}
     \item[$\diamond$] $(k - 2)$ loops at $x$ which do not hit the boundary, and
     \item[$\diamond$] two arcs emanating from $x$ and hitting $\Gamma$ at points $y_1 \neq y_2$, such that $\rho(w_x,y_i) = 1$ or, possibly, at one point $y$, with $\rho(w_x,y) = 2$.
   \end{itemize}
Furthermore, the loops at $x$ and the arcs from $x$ to the boundary are pairwise disjoint away from $x$,  except possibly at the boundary.
We then have a ``generalized'' nodal loop at $x$ which consists of the two arcs, and a portion of the boundary.
\end{enumerate}

\begin{figure}[!ht]
\centering
\begin{subfigure}[t]{0.25\textwidth}
\centering
\includegraphics[width=\linewidth]{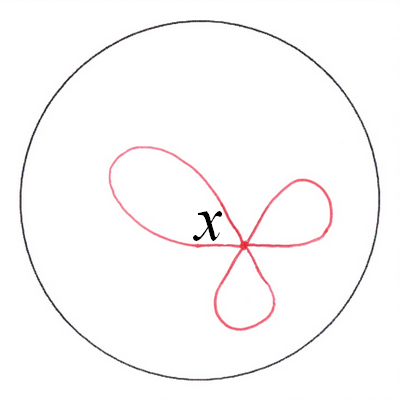}
\caption{}
\end{subfigure}
\begin{subfigure}[t]{.25\textwidth}
\centering
\includegraphics[width=\linewidth]{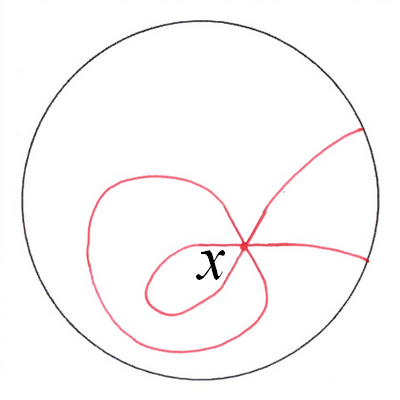}
\caption{}
\end{subfigure}
\begin{subfigure}[t]{.25\textwidth}
\centering
\includegraphics[width=\linewidth]{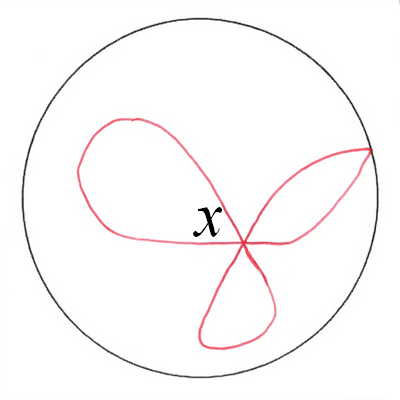}
\caption{}
\end{subfigure}
\caption{$\Omega$ simply connected, nodal patterns for $w_x \in W_x$ ($k=4$)}\label{F-hmn3-h0-L37}
\end{figure}

\FloatBarrier

\begin{remark}\label{R-Fgene2}
The \emph{nodal patterns} displayed in Figure~\ref{F-hmn3-h0-L37} are valid for both the Dirichlet and Robin boundary conditions. Unless otherwise stated, this remark applies to all figures of this section.
\end{remark}

\subsection{Eigenfunctions with one prescribed boundary singular point}\label{SS-hmn-31b}~\\
For $y \in \Gamma$, we introduce the subspace \index{2-U@$U_y$}
\begin{equation}\label{E-hmn3-n10a}
U_y := \set{u \in U \mid \rho(u,y) \ge 2k - 3}.
\end{equation}

In view of Assumptions~\ref{A-hmn3-0},  Lemma~\ref{L-zero1} implies that $U_y \neq \set{0}$.

\subsubsection{Properties of $U_y$}\label{SSS-hmn-31ba}

\begin{lemma}\label{L-L32}
Assume that $\Omega$ is simply connected. Let $U := U(\lambda_k)$ for some $k \ge 3$. Assume that $\dim U = (2k-2)$. [Assumptions~\ref{A-hmn3-0}]. Let $y \in \Gamma$. Then, the subspace
 \[U_y = \set{u \in U \mid \rho(u,y) \ge 2k - 3}\]
 has the following properties.
\begin{enumerate}[(i)]
  \item  The dimension of $U_y$ is $1$.
  \item For all $0 \neq u \in U_y$,
  \begin{equation}\label{E-hmn3-n10b}
  \left\{
\begin{array}{l}
\kappa(u) = k \text{~~and~~} \cZ(u), ~\cZ(u) \cup \Gamma \text{~are connected,}\\[5pt]
\cS_{\mathrm{i}}(u) = \emptyset,\\[5pt]
\sum_{z \in \cS_{\mathrm{b}}(u)\,} \rho(u,z) = 2k - 2.\\[5pt]
\end{array}
\right.
  \end{equation}
Furthermore,
  \begin{equation}\label{E-hmn3-n10d}
\left\{
\begin{array}{ll}
\text{either} & \rho(u,y) = 2k -2 \text{~~and~~} \cS_{\mathrm{b}}(u) = \set{y},\\[5pt]
\text{or} & \rho(u,y) = 2k -3 \text{~~and~~} \cS_{\mathrm{b}}(u) = \set{y,z(y)},\\[5pt]
&\quad \text{for some~} z(y) \in \Gamma, z(y) \neq y, \text{~with~} \rho(u,z(y)) = 1.
\end{array}
\right.
\end{equation}
  \item If $u_y$ denotes a generator of $U_y$, then the map $\Gamma \ni y \mapsto [u_y] \in \bP(U)$ is $C^{\infty}$.
\end{enumerate}
\end{lemma}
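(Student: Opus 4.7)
The plan is to follow the template of Lemma~\ref{L-L37} and Lemma~\ref{L-Ux1}: first use Lemma~\ref{L-zero1} to get $W_y \neq \{0\}$, then apply the Euler inequality \eqref{E-hmn-3a12} to extract the structural properties (ii)--(iii), and finally obtain the dimension bound by a rotating-function argument of the type used in Subsection~\ref{SS-hmn-25}. Existence is immediate: since $\dim U = 2k-2 \ge 2$, Lemma~\ref{L-zero1} produces $0 \neq u \in U$ with $\rho(u,y) \ge (2k-2)-1 = 2k-3$, so $W_y \neq \{0\}$.

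For Assertion~(ii), I would fix $0 \neq u \in W_y$ and isolate in \eqref{E-hmn-3a12} the contribution of $\Gamma_1$:
\begin{equation*}
\begin{split}
0 \ge \kappa(u) - k = & \bigl[b_0(\cZ(u) \cup \Gamma(u)) - 1\bigr] + \tfrac{1}{2}\sum_{z\in \cS_i(u)}(\nu(u,z)-2)\\
& + \sum_{j\in J(u),\,j\neq 1} \tfrac{1}{2}\bigl(\sum_{z\in \cS_b(u)\cap \Gamma_j}\rho(u,z) - 2\bigr) + \tfrac{1}{2}\sum_{z\in \cS_b(u)\cap \Gamma_1}\rho(u,z) - (k-1).
\end{split}
\end{equation*}
The key observation is that $\sum_{\cS_b(u)\cap \Gamma_1}\rho(u,z)$ is even by Corollary~\ref{cor:nodinfo}(2), while it is bounded below by $\rho(u,y) \ge 2k-3$ (odd); hence the sum is at least $2k-2$, making the last line a nonnegative quantity. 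All nonnegative terms must therefore vanish, yielding the four bullets of \eqref{E-hmn3-n10b}. For the ``furthermore'', the equality $\sum_{\Gamma_1}\rho = 2k-2$ combined with $\rho(u,y) \in \{2k-3, 2k-2\}$ (the value $2k-1$ being excluded by the same count) leaves only the two alternatives in \eqref{E-hmn3-n10d}: either a single singular point $y$ of index $2k-2$, or $y$ of index $2k-3$ together with a unique secondary point $z(y)$ of index $1$.

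For Assertion~(i), I would argue by contradiction. First, the subspace $W_y^* := \{u \in W_y : \rho(u,y) \ge 2k-2\}$ has dimension at most $1$: two linearly independent elements of $W_y^*$ would, via Lemma~\ref{L-zeroc}(ii) at $y$, produce a combination with $\rho \ge 2k-1$, contradicting \eqref{E-hmn3-n10b}. Assuming $\dim W_y \ge 2$, Lemma~\ref{L-zeroc}(ii) applied to a pair of elements of index $2k-3$ forces $\dim W_y^* = 1$, generated by some $v_1$, and one can choose $v_2 \in W_y \setminus W_y^*$ with $\rho(v_2,y) = 2k-3$ and associated secondary point $z_2 \in \Gamma_1$. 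Then the rotating family $w_\theta = \cos\theta\, v_1 + \sin\theta\, v_2$, $\theta \in [0,\pi]$, has $\rho(w_\theta, y) = 2k-3$ for $\theta \in (0,\pi)$ with a continuously varying secondary point $z_\theta$; at the endpoints $w_0 = v_1 = -w_\pi$, the point $z_\theta$ collapses onto $y$. By arguments parallel to Lemma~\ref{L-hmn-202}, the combinatorial type $\tau_{x,y}^{w_\theta}$ is constant on $(0,\pi)$; but since $\breve{v}_2$ vanishes at $y$ to odd order $2k-3$ whereas $\breve{v}_1$ vanishes to even order $2k-2$, a local Taylor analysis at $y$ shows that $z_\theta$ approaches $y$ from \emph{opposite} sides of $\Gamma_1$ as $\theta \to 0^+$ and $\theta \to \pi^-$. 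Comparing the labelings of nodal domains around $y$ in these two limits, by the same word-comparison technique as in Paragraph~\ref{SSS-hmn-25D}, yields two incompatible nodal patterns for the single function $v_1$, which is the desired contradiction.

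Assertion~(iii) is then routine, following the argument of Lemma~\ref{L-L37}(iii): fixing a basis $(\phi_1,\ldots,\phi_{2k-2})$ of $U$, the condition $\rho(u,y) \ge 2k-3$ on $u = \sum a_j \phi_j$ translates via Lemma~\ref{L-breve} into a linear system in the $a_j$ whose matrix depends smoothly on $y$ and whose rank is constantly $2k-3$ (thanks to (i)); the corresponding line of solutions is therefore a smooth map $\Gamma_1 \to \bP(U)$. The main obstacle I expect is the rotating-function step in Assertion~(i): ensuring that a uniform local-structure theorem applies along the whole family $\set{w_\theta}$, so that the Hausdorff-continuity of nodal sets (Lemma~\ref{L-hdn}) can be combined with the labeling combinatorics to produce a genuine contradiction, rather than a scenario the framework already tolerates.
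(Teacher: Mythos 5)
Your proposal is correct and follows essentially the same route as the paper: Lemma~\ref{L-zero1} for existence, the Euler inequality \eqref{E-hmn-3a12} plus the parity of $\sum_{\cS_b(u)\cap\Gamma_1}\rho$ for Assertion~(ii), a rotating-function argument in the two-dimensional case for Assertion~(i), and the constant-rank linear system for Assertion~(iii). The only (harmless) difference is organizational: the paper first bounds $\dim W_y\le 2$ and then rules out equality, whereas you run the rotating argument directly on a two-dimensional subspace of $W_y$; the substance of the contradiction (constant combinatorial type on $(0,\pi)$ versus the secondary zero approaching $y$ from opposite sides at $\theta\to 0^+$ and $\theta\to\pi^-$, by the parity of the vanishing orders of $\breve v_1,\breve v_2$) is exactly the argument of Subsections~\ref{SS-hmn-24}--\ref{SS-hmn-25} that the paper invokes.
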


\begin{proof} We already know that $\dim U_y \ge 1$. \smallskip

\noi \emph{Assertion~(ii)}.  Choose a function $0 \neq u \in U_y$, and apply the inequality \eqref{E-hmn-3a12} to obtain,
\begin{equation}\label{E-hmn-314}
\begin{split}
0 \geq  \kappa(u) - k & = \big( b_0(\cZ(u) \cup \Gamma) -1 \big) + \frac 12\, \sum_{z \in \cS_{\mathrm{i}}(u)} (\nu(u,z)-2) \\
&\hspace{5mm} + \frac 12 \big( \sum_{z\in \cS_{\mathrm{b}}(u)}\rho (u,z) - 2k +2\big) .
\end{split}
\end{equation}
Since $\rho(u,y) \ge 2k -3$, Proposition~\ref{P-euler-np} implies that the last term in \eqref{E-hmn-314} is nonnegative; all the terms in the right-hand side are nonnegative, with nonpositive sum, and hence they must all vanish. This proves \eqref{E-hmn3-n10b}. Looking at the two possible cases, $\rho(u,y) = (2k - 2)$ or $(2k -3)$, we obtain \eqref{E-hmn3-n10d}. Assertion~(ii) is proved.\quad \qedc \medskip

\noi \emph{Assertion~(i).~} We already know that $\dim U_y  \ge 1$. Assume that there exist at least two linearly independent functions $w_1$, $w_2$ in $U_y$. By \eqref{E-hmn3-n10b}, we have $2k- 3 \le \rho(w_i,y) \le 2k - 2$.  If $\rho(w_1,y) = \rho(w_2,y) = 2k - 3$, by Lemma~\ref{L-zeroc} there exists some linear combination $w$ of $w_1$ and $w_2$ such that $\rho(w,y) \ge 2k -2$. This function $w$ must satisfy \eqref{E-hmn3-n10d} and hence, is uniquely defined (up to scaling). If $\rho(w_1,y) = (2k - 2)$, then we must have $\rho(w_2,y) = (2k -3)$ since $w_1$ is uniquely defined. Any other function in $W_y$ must be a linear combination of $w_1$ and $w_2$. It follows that $\dim U_y \le 2$.\smallskip

 Assume that $\dim U_y = 2$. Choose a basis $\set{w_1,w_2}$ of $U_y$, with $\rho(w_1,y) = (2k -2)$,  $\rho(w_2,y) = (2k -3)$, and let $y_2 = z(y)$ be the unique other singular point of $w_2$ on $\Gamma$.  We encountered a similar framework in Subsection~\ref{SS-hmn-24} (Proposition~\ref{P-hmn}) and in Subsection~\ref{SS-hmn-25}, and we can use a \emph{rotating function argument} \index{Rotating function argument} to conclude that $\dim U_y = 1$. The claim is proved. This completes the proof of Assertion~(i). \quad \qedc \medskip

\emph{Assertion~(iii).}~   The proof of this assertion is similar to the proof of Property~\ref{P-h2n-laa}.
\end{proof}

Let $\set{\phi_j, 1 \le j \le (2k-2)}$ be an orthonormal basis of the eigenspace $U$. Let $y_0$ be a point on $ \Gamma$. By Lemma~\ref{L-L32}\,(iii), there exists some positive $\sigma_0$, and a $\Cty$ map $\cA(y_0;\sigma_0) \ni y \mapsto \big( a_{y_0,1}(y),\ldots,a_{y_0,(2k-2)}(y) \big) \in \bS^{2k-3}$ such that, for all $y \in \cA(y_0;\sigma_0)$, the eigenfunction
\begin{equation}\label{E-hmn3-n10f}
u_y:= \sum_{j=1}^{2k-2} a_{y_0,j}(y) \, \phi_j \in \bS(U)
\end{equation}
is a generator of $U_y$   and lies in the unit sphere $\bS(U)$ of the eigenspace $U$. (For the notation $\cA(y_0;\sigma_0)$ see Notation~\ref{N-hmn3-arcs}.)

\index{1-Gamma@$\Gamma$!$\Gamma_{(2k-3)},\Gamma_{(2k-2)}$}
\begin{notation}\label{N-hmn3-gam}
In view of the lemma, define the following subsets of $\Gamma$\,:
\begin{equation}\label{E-hmn3-n12}
\left\{
\begin{array}{l}
\Gamma_{(2k -3)} := \set{y \in \Gamma \mid \rho(u_y,y) = 2k - 3}\\[5pt]
\Gamma_{(2k -2)} := \set{y \in \Gamma \mid \rho(u_y,y) = 2k - 2}.
\end{array}
\right.
\end{equation}
\end{notation}%

\subsubsection{Structure and combinatorial type of nodal sets in $U_y$}\label{SSS-hmn-31bas}~

Using Lemma~\ref{L-L32}\,(ii) one can describe the possible nodal patterns of a generator $u_y$ of $U_y$, as we did in Paragraph~\ref{SSS-hmn-31ias}, see also Subsections~\ref{SS-hmn-24} and \ref{SS-hmn-25}.
If $\rho(u_y,y) = (2k-3)$, the nodal set $\cZ(u_y)$ consists of $(k-2)$ simple loops at $y$, and a simple arc from $y$ to some $z(y) \in \Gamma$, $z(y) \neq y$;  if $\rho(u_y,y) = (2k-2)$, the nodal set $\cZ(u_y)$ consists of $(k-1)$ simple loops at $y$. The loops and the arc do not intersect away from $y$. Figure~\ref{F-hmn3-h0-L33} displays some possible nodal patterns. \medskip

\begin{figure}[!ht]
\centering
\begin{subfigure}[t]{.30\textwidth}
\centering
\includegraphics[width=\linewidth]{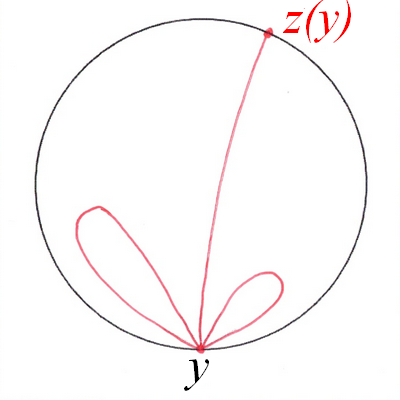}
\caption{}
\end{subfigure}
\begin{subfigure}[t]{.30\textwidth}
\centering
\includegraphics[width=\linewidth]{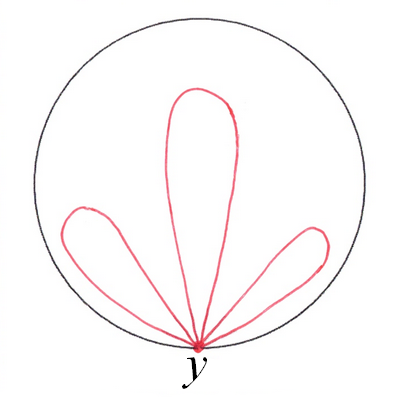}
\caption{}
\end{subfigure}
\caption{$\Omega$ simply connected, nodal patterns for $u_y \in U_y$ ($k=4$)}\label{F-hmn3-h0-L33}
\end{figure}

For a given $y \in \Gamma$, we apply  Section~\ref{S-lsbs}  to a generator $u_y$ of $U_y$. For both Dirichlet or Robin boundary condition, in a neighborhood of $y$, the nodal set $\cZ(u_y)$ consists of $\rho(u_y,y)$ nodal semi-arcs emanating from $y$. As in Paragraph~\ref{SSS-hmn-25B}, for a given $ j \in L_{\rho(u_y,y)} = \set{1,\ldots,\rho(u_y,y)}$,
we follow the nodal semi-arc emanating from $y$ tangentially to the ray $\omega_j$ along $\cZ(u_y)$.  There are two cases.
\begin{enumerate}[$\diamond$]
\index{1-tau@$\tau$!$\tau_x^U$}
\item When $y \in \Gamma_{(2k -2)}$, according to Lemma~\ref{L-L32}, we eventually arrive back at $y$ along a nodal semi-arc emanating from another ray which we denote by $\omega_{\tau_y^{U}(j)}\,$. This uniquely defines a map $\tau_y^{U} : L_{(2k-2)} \to L_{(2k-2)}$, such that $\tau_y^{U}(j) \neq j$ and $(\tau_y^{U})^2 = \id$. In this case, the nodal set $\cZ(u_y)$ consists of a $(k-1)$-bouquet of loops $\gamma^{U}_{j,\tau_y^{U}(j)}$ at $y$, for $j \in L_{(2k-2)}$.
  \item When $y \in \Gamma_{(2k -3)}$, we define a map $\tau_y^{U}$ from $\set{\downarrow} \cup L_{(2k-3)}$ to itself as follows. According to Lemma~\ref{L-L32}, there exists a unique $a_y \in L_{(2k-3)}$ (depending on $y$) such that the nodal semi-arc $\delta_{a_y}$ emanating from $y$ tangentially to the ray $\omega_{a_y}$ eventually hits $\Gamma$ at some $z(y) \neq y$. We let $\tau_y^{U}(a_y) = \,\downarrow$\, and $\tau_y^{U}(\downarrow)=a_y$. For $j \neq a_y$, following the nodal semi-arc $\delta_j$ emanating from $y$ tangentially to $\omega_j$ along $\cZ(u_y)$, we will eventually  reach $y$ again, along another ray denoted by $\omega_{\tau_{y}^{U}(j)}$. This uniquely defines a map $\tau_{y}^{U}$ from $\set{\downarrow} \cup L_{(2k-3)}$ to itself such that $\tau_{y}^{U}(j) \neq j$ and $(\tau_{y}^{U})^2 = \id$. In this case, the nodal set $\cZ(u_y)$ is the wedge sum of the arc $\delta_{a_y}$ from $y$ to $z(y)$ with a $(k-2)$-bouquet of loops $\gamma^U_{j,\tau_{y}^{U}(j)}$ at $y$.
\end{enumerate}
When $\Omega$ is simply connected and $y \in \Gamma_{(2k -3)}$, the $(k-2)$-bouquet of loops actually consists of two bouquets of loops, one in each  component of $\Omega \sm \delta_{a_y}$.  When $a_y = 1$ or $a_y = (2k-3)$, one of these bouquets of loops is actually  empty.\smallskip

As in Paragraph~\ref{SSS-hmn-25B}, we define the map $\tau_y^{U}$ as the \emph{combinatorial type} of the nodal set $\cZ(u_y)$ at $y$, when $y \in \Gamma_{(2k-2)}$, resp. $y \in \Gamma_{(2k-3)}$. The source-set of $\tau_{y}^{U}$ is $L_{(2k-2)}$, resp. $\set{\downarrow} \cup L_{(2k-3)}$.

\subsection{Eigenfunctions with Two Prescribed Boundary Singular Points}\label{SS-hmn-31b2}

Given $(y,s) \in \Gamma_{(2k -3)} \times \Gamma $, with $ y \neq s$, we define the subspace
\begin{equation}\label{E-L34-2}
V_{y,s} := \set{u \in U \mid \rho(u,y) \ge 2k-4 \text{~and~} \rho(u,s) \ge 1}.
\end{equation}

In view of Assumptions~\ref{A-hmn3-0},  Lemma~\ref{L-zero2} implies that $V_{y,s} \neq \set{0}$.

\begin{lemma}\label{L-L34a}%
Assume that $\Omega$ is simply connected. Let $U:=U(\lambda_k)$ with $k \ge 3$, and assume that $\dim U = (2k-2)$. Given $(y,s) \in \Gamma_{(2k -3)} \times \Gamma $, with $ y \neq s$,  the subspace
\[
V_{y,s} = \set{u \in U \mid \rho(u,y) \ge 2k-4 \text{~and~} \rho(u,s) \ge 1}
\]
has the following properties.
\begin{enumerate}[(i)]
  \item The subspace $V_{y,s}$ has dimension $1$.
  \item Any $0 \neq u \in V_{y,s}$ satisfies
  \begin{equation}\label{E-L34-4}
  \left\{
  \begin{array}{l}
  \kappa(u) = k,\\[5pt]
  \cZ(u) \cup \Gamma  \text{~is connected,}\\[5pt]
  \cS_{\mathrm{i}}(u) = \emptyset,\\[5pt]
  \sum_{z \in \cS_{\mathrm{b}}(u)} \rho(u,z) = 2k-2, \text{~and}\\[5pt]
  \quad 2k-4 \le \rho(u,y) \le 2k-3,\\[5pt]
  \quad 1 \le \rho(u,s) \le 2.
  \end{array}
  \right.
  \end{equation}
  More precisely, there are three distinct possibilities.
  \begin{description}
    \item[Case~A] $\rho(u,y) = (2k-3)$ and $\rho(u,s) = 1$.  In that case, $u \in U_y$, with $\cS_{\mathrm{b}}(u) = \set{y,s}$, and hence $s = z(y)$.
    \item[Case~B] $\rho(u,y) = (2k-4)$ and $\rho(u,s) = 2$.  In that case, $\cS_{\mathrm{b}}(u) = \set{y,s}$.
    \item[Case~C] $\rho(u,y) = (2k-4)$, $\rho(u,s) = 1$, and there exists some $s' \in \Gamma  \sm \set{y,s}$ such that $\cS_{\mathrm{b}}(u) = \set{y,s,s'}$, with $\rho(u,s') = 1$.
  \end{description}
  \item If $s = z(y)$, then $V_{y,z(y)} = U_y$.
  \item The map $\set{(y,s) \mid (y,s) \in \Gamma_{(2k-3)}\times \Gamma , s\neq y} \ni (y,s) \mapsto [V_{y,s}] \in \bP(U)$ is $C^{\infty}$.
  \end{enumerate}
\end{lemma}%

\begin{proof}  The proof of Assertion~(ii) follows from Euler's formula. Using \eqref{E-L34-4}, it is easy to prove that $\dim V_{y,s} \le 2$. The proof that $\dim V_{y,s} = 1$ is easy when $s \neq z(y)$. The proof that $\dim V_{y,z(y)} = 1$ is more involved and follows from a \emph{rotating function argument}. Assertion~(iv) is a consequence of Assertion~(i), as in the proofs of Lemmas~\ref{L-L37} and \ref{L-L32}.
\end{proof}

In Section~\ref{S-hmn32b} we provide a detailed proof of Lemma~\ref{L-L34a}, as well as further properties of the functions in $V_{y,s}$. These results, partially revisit Lemmas~3.4, 3.5 and 3.6 of \cite[pp.~1180-1183]{HoMN1999}. In this monograph, we actually only use Lemma~\ref{L-L34a}, see the proof of Lemma~\ref{L-L33b}.

\subsection{ Local properties of the map $\Gamma \ni y \mapsto [U_y] \in \bP(U)$}\label{SSS-hmn-31bb}

Let $y_0 \in \Gamma$. For $y \in \cA(y_0;\sigma_0)$ with $\sigma_0$ small enough (see Notation~\ref{N-hmn3-arcs}), we represent a generator of $U_y$ as in \eqref{E-hmn3-n10f},
\begin{equation*}
u_y = \sum_{j=1}^{2k-2} a_{y_0,j}(y) \, \phi_j\,.
\end{equation*}

Applying Lemma~\ref{L-lsbs3-2}, we have a conformal mapping $E_0 : \bH \to \Omega$ such that $E_0$ extends smoothly to $\bbH$, $E_0(0) = y_0$ and, when $y_0 \in \Gamma_{(2k-3)}$, such that $E_0(\zeta_0) = z(y_0)$ for some $\zeta_0 \in \bdH$. Since $E_0|_{\bdH}$ is a diffeomorphism from $\bdH$ onto $\Gamma\sm\set{y_*}$, we can choose some $r_0 > 0$ such that $E_0\big( (-r_0,r_0) \times \set{0} \big) \subset \cA(y_0;\sigma_0)$. We now work in $\wb{D}_{+}(0,r_0) \subset \bbH$, and consider the $t$-family of functions $\xi \mapsto v_t(\xi)$
\index{2-v@$v_t$!$v_t$}
\begin{equation*}
v_t(\xi_1,\xi_2) = \sum_{j=1}^{2k-2} a_{y_0,j}(E_0(t,0)) \, \phi_j \circ E_0(\xi_1,\xi_2)
\end{equation*}
which we rewrite as
\index{2-v@$v_t$!$v_t(\xi_1,\xi_2)$}
\begin{equation}\label{E-L33a-2}
v_t(\xi_1,\xi_2) = \sum_{j=1}^{2k-2} a_j(t) \, \psi_j(\xi_1,\xi_2),
\end{equation}
with the obvious notation.\smallskip

The functions $t \mapsto a_j(t)$ are $\Cty$ in $(-r_0,r_0)$ and the functions $\psi_j$ satisfy \eqref{E-lsbs3-8}. Furthermore, for all $t \in (-r_0,r_0)$, we have $\rho(v_t,(t,0)) = (2k-2)$ if $E_0((t,0))$ belongs to $ \Gamma_{(2k-2)}$, and $\rho(v_t,(t,0)) = (2k-3)$ if $E_0(t,0)$ belongs to $ \Gamma_{(2k-3)}$. Restricting $r_0$ if necessary, we may also assume that $\cS_{\mathrm{b}}(v_t) = \set{(t,0),(z(t),0)}$  for any $t \in (-r_0,r_0)$ such that $E_0(t,0)$ belongs to $\Gamma_{(2k-3)}$,  and for some $z(t) \neq t$. \smallskip

For convenience, we introduce the notation
\index{1-Gamma@$\Gamma$!$\Gamma_{0,(2k-2)},\Gamma_{0,(2k-3)}$}
\begin{equation}\label{E-L33a-2a}
\left\{
\begin{array}{l}
\Gamma_{0,(2k-2)} := \set{(t,0) \mid t \in (-r_0,r_0) \text{~and~} E_0(t,0) \in \Gamma_{(2k-2)}} \subset \bdH \\[5pt]
\Gamma_{0,(2k-3)} := \set{(t,0) \mid t \in (-r_0,r_0) \text{~and~} E_0(t,0) \in \Gamma_{(2k-3)}} \subset \bdH,
\end{array}
\right.
\end{equation}
and
\begin{equation}\label{E-L33a-4}
p := \left\{
        \begin{array}{l}
          (2k-2) \text{~in the Dirichlet case}\\[5pt]
          (2k-3) \text{~in the Robin case.}
        \end{array}
      \right.
\end{equation}

Then,
\begin{equation*}
\left\{
\begin{array}{ll}
\ord(v_t,(t,0)) = p &\text{if~} (t,0) \in \Gamma_{0,(2k-3)}\\[5pt]
\ord(v_t,(t,0)) = (p+1) &\text{if~} (t,0) \in \Gamma_{0,(2k-2)}.
\end{array}
\right.
\end{equation*}
\smallskip

For each $t$, write Taylor's formula at order $(p+1)$ for the function $\xi \mapsto v_t(\xi)$ in the coordinates $\xi=(\xi_1,\xi_2)$ of $\bH$,  at the point $\xi = (t,0)$:
\begin{equation}\label{E-L33a-6t}
\left\{
\begin{array}{ll}
v_t(\xi_1,\xi_2) & = \sum_{|\alpha| = p} \frac{1}{\alpha!} \, D^{\alpha}_{\xi}v_t(t,0) \, \big( \xi_1 - t, \xi_2\big)^{\alpha}\\[5pt]
&\quad  + \sum_{|\alpha| = p+1} \frac{1}{\alpha!} \, D^{\alpha}_{\xi}v_t(t,0) \, \big( \xi_1 - t, \xi_2\big)^{\alpha}\\[5pt]
&\quad  + \sum_{|\beta| = p+2} R_{\beta}(t;\xi_1,\xi_2)\, \big( \xi_1 - t, \xi_2\big)^{\beta},
\end{array}
\right.
\end{equation}
where
\begin{equation}\label{E-L33a-6r}
R_{\beta}(t;\xi_1,\xi_2) = \frac{|\beta|}{\beta!} \,\int_0^1 (1-s)^{|\beta|-1} D^{\beta}_{\xi}v_t\big( t + s(\xi_1-t),s\xi_2\big) \, ds.
\end{equation}

The first two terms in the Taylor formula \eqref{E-L33a-6t} are \emph{harmonic} homogeneous polynomials of degrees $p$ and $(p+1)$ respectively, see Subsection~\ref{SS-lsbs6}. When $(t,0)$ belongs to $\Gamma_{0,(2k-3)}$,  $\ord(v_t,(t,0)) = p$, and the first term is nonzero. When $(t,0)$ belongs to $\Gamma_{0,(2k-2)}$, $\ord(v_t,(t,0)) = (p+1)$, the first term is identically zero, and the second term is nonzero. In view of \eqref{E-L33a-2}, we can express the coefficients in Taylor's formula as
\begin{equation}
D^{\alpha}_{\xi}v_t(t,0) = \sum_{j=1}^{2k-2}a_j(t) \, D^{\alpha}_{\xi}\psi_j(t,0).
\end{equation}

Applying the proof of Lemma~\ref{L-lsbs6-2} to each function $\xi \mapsto v_t(\xi)$, for $t \in (-r_0,r_0)$, with a Taylor formula at the boundary point $(t,0)$ rather than at $(0,0)$, we obtain the following lemma.

\begin{lemma}\label{L-L33a}
The Taylor formula for the function $\xi \mapsto v_t(\xi)$, at the point $\xi=(t,0)$ and at order $(p+1)$, is given by the following identities, depending on the boundary condition (with the notation of Subsection~\ref{SS-lsbs6}).
\begin{equation}\label{E-L33a-6td}
\begin{array}{ll}
 \text{Dirichlet case}&\\[5pt]
v_t(\xi_1,\xi_2) & = s_{p}(t)\, S_p(\xi_1-t,\xi_2) + s_{p+1}(t) \, S_{p+1}(\xi_1-t,\xi_2)\\[5pt]
&\quad +\, R_{p+2}(t;\xi_1-t,\xi_2).
\end{array}
\end{equation}
\begin{equation}\label{E-L33a-6tn}
\begin{array}{ll}
\text{Neumann case}&\\[5pt]
  v_t(\xi_1,\xi_2) &= c_{p}(t)\, C_p(\xi_1-t,\xi_2) + c_{p+1}(t) \, C_{p+1}(\xi_1-t,\xi_2)\\[5pt]
&\quad +\, R_{p+2}(t;\xi_1-t,\xi_2).
\end{array}
  \end{equation}
\begin{equation}\label{E-L33a-6tr}
\begin{array}{ll}
\text{Robin case}&\\[5pt]
v_t(\xi_1,\xi_2) & = c_{p}(t)\, C_p(\xi_1-t,\xi_2) + c_{p+1}(t) \, C_{p+1}(\xi_1-t,\xi_2)\\[5pt]
&\quad +\, \frac{1}{p+1}\, c_{p}(t)\, h_E(t) \, S_{p+1}(\xi_1-t,\xi_2)\\[5pt]
& \quad +\, R_{p+2}(t;\xi_1-t,\xi_2).
\end{array}
\end{equation}
The remainder term, $R_{p+2}(t;\xi_1-t,\xi_2) = \sum_{|\beta|=p+2}R_{\beta}(t;\xi_1,\xi_2)\, (\xi_1-t,\xi_2)^{\beta}$, vanishes at order at least $(p+2)$ at $\xi = (t,0)$, with $R_{\beta}$ as in \eqref{E-L33a-6r}.
\end{lemma}%

We will write these Taylor identities as
\begin{equation}\label{E-L33a-8v}
\begin{array}{ll}
v_t(\xi_1,\xi_2) &= A_0(t) P_p(\xi_1-t,\xi_2) + A_1(t) P_{p+1}(\xi_1-t,\xi_2)\\[5pt]
&\qquad+\, \frac{1}{p+1}\, A_0(t)\, h_E(t) \, Q_{p+1}(\xi_1-t,\xi_2) + R_{p+2}(t;\xi_1-t,\xi_2),
\end{array}%
\end{equation}
where, using the notation \eqref{E-lsbs6-14},
\index{2-P@$P_p$} \index{2-Q@$Q_p$}
\begin{enumerate}[$\diamond$]
  \item $P_p = S_p$, $P_{p+1} = S_{p+1}$ and $Q_{p+1} = 0$ in the Dirichlet case,
  \item $P_p = C_p$, $P_{p+1} = C_{p+1}$ and $Q_{p+1} = S_{p+1}$ in the Robin case.
\end{enumerate}

Note that the third term in the right hand side of  \eqref{E-L33a-8v}  disappears in the Dirichlet and Neumann cases. \medskip

The family of functions $\xi \mapsto v_t(\xi)$ given by \eqref{E-L33a-2} is $\Cty$ with respect to the para\-meter $t \in (-r_0,r_0)$. Its $t$-derivative is given by
\index{2-w@$w_t$!$w_t$}
\begin{equation}\label{E-L33a-9}
w_t := \frac{d}{dt} v_t = \sum_{j=1}^{2k-2} a'_j(t) \psi_j\, .
\end{equation}

Let $w_t^{\Omega}$ denote the related family
\begin{equation}\label{E-L33a-9O}
w_t^{\Omega} :=  \sum_{j=1}^{2k-2} a'_j(t) \phi_j\,,
\end{equation}
which is a $\Cty$ family of eigenfunctions in the eigenspace $U$.

Taking the derivative of the identity \eqref{E-L33a-8v} with respect to $t$, at $t=0$, we obtain
\begin{equation}\label{E-L33a-12}
\left\{
\begin{array}{ll}
\partial_t v_t|_{t=0}(\xi_1,\xi_2)& = A'_{0}(0) P_p (\xi_1,\xi_2)  - A_{0}(0) \, \partial_{\xi_1}P_p(\xi_1,\xi_2)\\[5pt]
&\qquad +\, A'_{1}(0) \, P_{p+1}(\xi_1,\xi_2) -  A_{1}(0) \, \partial_{\xi_1}P_{p+1}(\xi_1,\xi_2)\\[5pt]
&\qquad +\, \frac{1}{p+1}\, \big( A_0(t) h_E(t)\big)'_{t=0} \, Q_{p+1}(\xi_1,\xi_2)\\[5pt]
&\qquad -\, \frac{1}{p+1}\, A_0(0) h_E(0) \partial_{\xi_1}Q_{p+1}(\xi_1,\xi_2)\\[5pt]
&\qquad +\, \sum_{|\beta|=p+2} \partial_t R_{\beta}(0;\xi_1,\xi_2)\, (\xi_1,\xi_2)^{\beta}\\[5pt]
&\qquad -\, \sum_{|\beta|=p+2} \beta_1 R_{\beta}(0;\xi_1,\xi_2)\,
(\xi_1,\xi_2)^{\beta - (1,0)}.
\end{array}
\right.
\end{equation}

In view of the relations \eqref{E-lsbs6-16} and the definitions of $P_n$ and $Q_n$ (depending on the boundary condition, Dirichlet or Robin), we have the relations \[\partial_{\xi_1}P_{n} = n\, P_{n-1} \mbox{ and } \partial_{\xi_1}Q_{n} = n\, Q_{n-1}.\]
It follows that \eqref{E-L33a-12} reduces to
\begin{equation}\label{E-L33a-12w}
\left\{
\begin{array}{ll}
w_0(\xi_1,\xi_2)& =  - p\, A_{0}(0) \, P_{p-1}(\xi_1,\xi_2)\\[5pt]
&\quad + \big[ A'_{0}(0) - (p+1) A_1(0) \big]\, P_p (\xi_1,\xi_2) \\[5pt]
&\quad - A_{0}(0) \, h_E(0) \, Q_{p}(\xi_1,\xi_2) + \cO\big( (\xi_1^2+\xi_2^2)^{\frac{p+1}{2}})\big).
\end{array}
\right.
\end{equation}
\smallskip

We also consider the function $\xi_1 \mapsto \breve{v}_t(\xi_1)$ as defined in \eqref{E-evp-dr}. In the Dirichlet case, this function is given by $\breve{v}_t(\xi_1) = \partial_{\xi_2}v_t(\xi_1,0)$. In the Robin case, it is given by $\breve{v}_t(\xi_1) = v_t(\xi_1,0)$. From the definition of $\breve{v}_t(\xi_1)$, the identity \eqref{E-L33a-8v}, the relations in \eqref{E-lsbs6-20} and \eqref{E-L33a-4}, we obtain the following relations. 
\begin{subequations}\label{E-L33a-14dr}
\begin{align}
\begin{split}\label{E-L33a-14d}
& \text{Dirichlet case:} \\
& \breve{v}_t(\xi_1) = (2k-2) \, A_0(t)\, (\xi_1-t)^{2k-3} + (2k-1) \, A_1(t)\, (\xi_1-t)^{2k-2}\\
& \hspace{16mm}  +\, \cO\big( (\xi_1-t)^{2k-1}\big).
\end{split}\\[5pt]
\begin{split}\label{E-L33a-14r}
&\text{Robin case:} \\
& \breve{v}_t(\xi_1) = A_0(t)\, (\xi_1-t)^{2k-3} + \, A_1(t)\, (\xi_1-t)^{2k-2}
+\, \cO\big( (\xi_1-t)^{2k-1}\big).
\end{split}
\end{align}
\end{subequations}

\subsubsection{Properties of $\Gamma_{(2k-3)}$ and $\Gamma_{(2k-2)}$}\label{SSS-hmn-31bc}

\begin{lemma}\label{L-L33}
Assume that $\Omega$ is simply connected. Let $U := U(\lambda_k)$ for some $k \ge 3$. Assume that $\dim U = (2k-2)$. [Assumptions~\ref{A-hmn3-0}]. Then, the following properties hold.
\begin{enumerate}[(i)]
  \item The sets $\Gamma_{(2k-3)}$ and $\Gamma_{(2k-2)}$ are disjoint and
      \[\Gamma = \Gamma_{(2k-3)} \bigsqcup \Gamma_{(2k-2)}.\]
  \item The set $\Gamma_{(2k-3)}$ is open in $\Gamma$ and the set $\Gamma_{(2k-2)}$ is finite.
\end{enumerate}
\end{lemma}%

\begin{proof} Assertion~(i) follows from Lemma~\ref{L-L32}, Assertion~(i),  and \eqref{E-hmn3-n10d}. \quad \qedc \medskip

\emph{Assertion~(ii).~} Let $y_0 \in \Gamma_{(2k -3)}$. The generator $u_{y_0}$ of $U_{y_0}$ given by \eqref{E-hmn3-n10f} satisfies $\cS_{\mathrm{b}}(u_{y_0}) = \set{y_0, z_0}$ for some $z_0 \in \Gamma$, $z_0 \neq y_0$, with $\rho(u_{y_0},y_0) = (2k-3)$, and $\rho(u_{y_0},z_0) = 1$. This means that the function $\breve{u}_{y_0}$ has precisely two zeros on $\Gamma$, $y_0$ and $z_0$, and changes sign at these points. Given two points $z_1$ and $z_2$ on either sides of $z_0$, close enough to $z_0$ and away from $y_0$, we have $\breve{u}_{y_0}(z_1) \, \breve{u}_{y_0}(z_2) < 0$. When $y \in \cA(y_0;\sigma_0)$ is close enough to $y_0$, the function $u_y$ is $C^1$-close to the function $u_{y_0}$ and hence $\breve{u}_y$ is uniformly close to $\breve{u}_{y_0}$, and satisfies $\breve{u}_{y}(z_1) \, \breve{u}_{y}(z_2) < 0$. In view of Lemma~\ref{L-L32}, this implies that for $y$ close enough to $y_0$ in $\Gamma$, $y \in \Gamma_{(2k-3)}$, so that $\Gamma_{(2k-3)}$ is open in $\Gamma$.\smallskip

To prove that the set $\Gamma_{(2k -2)}$ is finite, it suffices to prove that it is discrete.  We work in the setup of Paragraph~\ref{SSS-hmn-31bb} with the function $v_{t}$ given by \eqref{E-L33a-2}. Assume, by contradiction, that the point $y_0$ is not isolated in $\Gamma_{(2k-2)}$. Then the point $(0,0) = E_0^{-1}(y_0)$ is not isolated in $\Gamma_{0,(2k-2)}$, and  there exists a sequence $\set{t_n}$ tending to zero, such that  $v_{t_n}$ satisfies $\rho(v_{t_n},(t_n,0)) = (2k-2)$ for all $n$.\smallskip

Writing
\begin{equation*}
\begin{array}{ll}
v_t(\xi_1,\xi_2) &= A_0(t) P_p(\xi_1-t,\xi_2) + A_1(t) P_{p+1}(\xi_1-t,\xi_2)\\[5pt]
&\qquad+\, \frac{1}{p+1}\, A_0(t)\, h_E(t) \, Q_{p+1}(\xi_1-t,\xi_2) + R_{p+2}(t;\xi_1-t,\xi_2),
\end{array}%
\end{equation*}
as in \eqref{E-L33a-8v}, we have $A_0(0) = 0$, $A_1(0) \neq 0$, and since $A_0(t_n) = 0$ for all $n$, we also have $A'_0(0) = 0$. Equation \eqref{E-L33a-12w} then reduces to
\begin{equation*}
w_0(\xi_1,\xi_2) = - (p+1) \, A_1(0) P_p(\xi_1,\xi_2) +  \cO\big( (\xi_1^2+\xi_2^2)^{\frac{p+1}{2}}\big).
\end{equation*}
This means that $\ord(w_0^{\Omega},y_0) = \ord(w_0,(0,0)) = p$, and hence, using \eqref{E-L33a-4}, that \linebreak $\rho(w_0^{\Omega},y_0)=(2k-3)$, and $w_0^{\Omega} \in U_{y_0}$. On the other hand, $u_{y_0} \in U_{y_0}$, with $\rho(u_{y_0},y_0) = (2k-2)$. We would have two linearly independent functions in $U_{y_0}$, a contradiction with $\dim U_{y_0} = 1$. This proves that $y_0$ is isolated in $\Gamma$. It follows that $\Gamma_{(2k-2)}$ is discrete and hence finite. Assertion~(ii) is proved.  \quad \qedc \smallskip

The proof of Lemma~\ref{L-L33} is complete.
\end{proof}

\begin{remark}\label{R-L3237}
Lemmas~\ref{L-L37}, \ref{L-L32}, and \ref{L-L33} are actually valid when $\Omega$ is not simply connected: same arguments as in Section~\ref{S-hmn2N}.
\end{remark}%

\subsection{Global properties of the map $\Gamma \ni y \mapsto [U_y] \in \bP(U)$}\label{SSS-hmn-31bd}

\begin{lemma}\label{L-L33b}
Assume that $\Omega$ is simply connected. Let $U := U(\lambda_k)$ for some $k \ge 3$. Assume that $\dim U = (2k-2)$. [Assumptions~\ref{A-hmn3-0}]. Then, the following properties hold.
\begin{enumerate}[(i)]
  \item The map $\Gamma_{(2k -3)} \ni y \mapsto z(y) \in \Gamma$, where $z(y)$ is defined in \eqref{E-hmn3-n10d} is continuous in $\Gamma_{(2k -3)}$. Moreover, if $\eta \in \Gamma_{(2k -2)}$, then $\lim_{y \to \eta, y \in \Gamma_{(2k-3)}}z(y) = \eta$, i.e.,  the map $y \mapsto z(y)$ extends continuously to $\Gamma$, with $z(\eta) = \eta$ for all $\eta \in \Gamma_{(2k-2)}$.
  \item Let $C$ be any connected component of $\Gamma_{(2k-3)}$. The function $C \ni y \mapsto z(y) \in \Gamma$ is $\Cty$ and monotonic  in $C$ (more precisely, the derivative of $z$ does not vanish).
  \item  \emph{Assume that} $\Gamma_{(2k-2)}$ \emph{is not empty}, and let $\eta \in \Gamma_{(2k-2)}$. When $y$ is close to $\eta$, the points $y$ and $z(y)$ lie on either sides of $\eta$. More precisely, let $E:\bbH \to \Omegab$ is a conformal map such that $E(0,0)=\eta$. For $t\neq 0$ small enough, let $\cS_{\mathrm{b}}(u_{E(t,0)})= \set{E(t,0), z(E(t,0))}$ and define $z(t) := z(E(t,0))$. Then,
      \[z(t) = -(2k-3) t + o(t).\]
  \item \emph{Assume that} $\Gamma_{(2k-2)}$ \emph{is not empty}. When the point $y$ moves clockwise in a connected component $C$ of $\Gamma_{(2k-3)}$ the point $z(y)$ moves counter-clockwise in $\Gamma$.
  \item \emph{Assume that} $\Gamma_{(2k-2)}$ \emph{is not empty}. The set $\Gamma_{(2k-2)}$ cannot be reduced to one point. Each component $C$ of $\Gamma_{(2k-3)}$ has two distinct boundary points $\eta_1, \eta_2 \in \Gamma_{(2k-2)}$, and its image $z(C)$ is equal to $\Gamma \sm \wb{C}$. In particular, if $y \in C$, $z(y) \not \in C$.
\end{enumerate}
\end{lemma}%

\begin{proof}[Proof of Lemma~\ref{L-L33b}]

\emph{Assertion~(i).~} Consider a component $C$ of $\Gamma_{(2k -3)}$. Recall that
\begin{equation}\label{E-bha4-3e}
\breve{u}_y := \left\{
\begin{array}{ll}
\partial_{\nu}u_y &\quad \text{in the Dirichlet case}\\[5pt]
u_y|_{\Gamma} &\quad \text{in the Robin case.}\\[5pt]
\end{array}
\right.
\end{equation}

Let $y \in C$, and let $\set{y_n} \subset C$ be a sequence such that $y_n$ converges to $y$, so that $u_n := u_{y_n}$ converges to $u:=u_y$ (uniformly in the $C^m$ topology for any fixed $m$, see Lemma~\ref{L-L32}).
 Recall that $\breve{u}$ and $\breve{u}_n$ have precisely two distinct zeros $y, z(y)$ and $y_n, z(y_n)$ respectively. Since $\breve{u}_n$ converges uniformly to $\breve{u}$, and since $\breve{u}$ changes sign at $z(y)$, it follows that $z(y_n)$ belongs to some neighborhood of $z(y)$, and that $z(y_n)$ tends to $z(y)$.
 This proves that $y \to z(y)$ is continuous in $C$. We now investigate the behavior of $z(y)$ when $y$ tends to $\partial C$ (assuming that $C \neq \Gamma$). Assume that $\set{y_n} \subset C$, with $y_n$ tending to some $\eta \in \partial C \subset \Gamma_{(2k-2)}$.  Choose a subsequence of $\set{z(y_n)}$ which converges to some $z$. Since $u_n$ tends to $u_{\eta}$, we conclude that $\breve{u}_{\eta}(z) = 0$, and hence that $z=\eta$ since $\eta$ is the unique zero of $\breve{u}_{\eta}$ in $\Gamma$.\quad \qedc\medskip

\emph{Assertion~(ii).} The properties to be established are local. We work in a neighborhood of some $y_0 \in \Gamma_{(2k-3)}$. Taking a suitable conformal mapping as in Subsection~\ref{SSS-hmn-31bc}, we consider the family of functions $v_t$ defined  in \eqref{E-L33a-2}, near the point $0 \in \bdH$ corresponding to $y_0$. Since $\Gamma_{(2k-3)}$ is open in $\Gamma$, so does $\Gamma_{0,(2k-3)}$ in $\bdH$. Hence, there exists $r_0$ such that $(-r_0,r_0) \times \set{0} \subset \Gamma_{0,(2k-3)}$,  i.e., for all $t \in (-r_0,r_0)$, $\rho(v_t, (t,0)) = (2k-3)$. As a consequence, the following properties hold. \smallskip

\noid For all $t \in (-r_0,r_0)$, the first term $A_0(t)$ in the Taylor expansion \eqref{E-L33a-8v} is nonzero, so that
\begin{equation}
v_t(\xi_1,\xi_2) = A_0(t) \, P_p(\xi_1-t,\xi_2) + R_{p+1}(t;\xi_1-t,\xi_2),
\end{equation}
where $P_p = S_p$ in the Dirichlet case, $P_p = C_p$ in the Robin case, and the remainder term is given by
\begin{equation*}
R_{p+1}(t;\xi_1-t,\xi_2) =\sum_{|\beta| = p+1} R_{\beta}(t;\xi_1,\xi_2)\, \big( \xi_1 - t, \xi_2\big)^{\beta},
\end{equation*}
with $R_{\beta}$ as in \eqref{E-L33a-6r}.\\ Taking the derivative of $v_t$ with respect to $t$, and using \eqref{E-lsbs6-16}, we infer that
\begin{equation}\label{E-L33b-4}
w_t(\xi_1,\xi_2) = - p\, A_0(t) P_{p-1}(\xi_1-t,\xi_2) + R_{w,p}(t;\xi_1-t,\xi_2)\,
\end{equation}
where the remainder term $R_{w,p}(t;\xi_1,\xi_2)$ vanishes at order at least $p$ at $\xi = (t,0)$. This implies that
\begin{equation}\label{E-L33b-5z}
\rho(w_t^{\Omega},E_0(t,0)) = \rho(w_t, (t,0)) = (2k-4).
\end{equation}
\medskip

\noid We now look at the associated family of maps $\breve{v}_t$ on the boundary $\bdH$.
\begin{equation*}
\breve{v}_t(\xi_1) = \sum a_j(t) \breve{\psi}_j(\xi_1).
\end{equation*}

For convenience, we write the families $v_t$ and $\breve{v}_t$ as
\index{2-v@$v_t$!$v(t,\xi_1,\xi_2)$}
\index{2-v@$v_t$!$\breve{v}(t,\xi_1)$}
\[
v(t;\xi_1,\xi_2) := \sum a_j(t) \psi_j(\xi_1,\xi_2) \text{~for~} (\xi_1,\xi_2) \in  \bH, ~~ t \in (-r_0,r_0),
\]
and
\[
\breve{v}(t;\xi_1) := \sum a_j(t) \breve{\psi}_j(\xi_1) \text{~for~} (\xi_1,0) \in \partial \bH, ~~t \in  (-r_0,r_0).
\]
\smallskip

\index{2-w@$w_t$!$w(t,\xi_1,\xi_2)$}
\index{2-w@$w_t$!$\breve{w}(t,\xi_1)$}
Similarly, for the derivatives with respect to  the parameter $t$, we write
\[
w(t;\xi_1,\xi_2) := \partial_t v(t;\xi_1,\xi_2) = \sum a'_j(t) \psi_j(\xi_1,\xi_2) \text{~for~} (\xi_1,\xi_2) \in  \bH, ~~ t \in (-r_0,r_0),
\]
and
\begin{equation}\label{E-L33b-4z}
\breve{w}(t;\xi_1) := \sum a'_j(t) \breve{\psi}_j(\xi_1) \text{~for~} (\xi_1,0) \in \partial \bH, ~~t \in  (-r_0,r_0).
\end{equation}

\noid The map $(-r_0,r_0) \ni t \mapsto z(t)$ is such that $\cS_{\mathrm{b}}(v(t;\cdot)) = \set{t,z(t)}$,  where $z(t)\neq t$. According to Lemmas~\ref{L-L32} and \ref{L-breve}, for all $t$, the function $\breve{v}(t;\cdot)$ has a zero of order~$1$ at the point $z(t)$, i.e. $\breve{v}(t;z(t)) = 0$  and $\partial_{\xi_1}\breve{v}(t;z(t)) \neq 0$. The implicit function theorem implies that $t \mapsto z(t)$ is $\Cty$. This proves the first half of Assertion~(ii).\medskip

\noid Since $\breve{v}(t;z(t)) \equiv 0$, taking the derivative with
respect to $t$, we obtain
\[
\partial_t \breve{v}(t;z(t)) + z'(t) \, \partial_{\xi_1} \breve{v}(t;z(t)) \equiv 0.
\]
\noi \emph{Assuming by contradiction that} $z'(t_0)=0$ for some $t_0 \in (-r_0,r_0)$, we conclude that $\partial_t \breve{v}(t_0;z(t_0))=0$, i.e., $\breve{w}(t_0;z(t_0))=0$, and hence
\begin{equation}\label{E-L33b-10z}
\rho(w(t_0;z(t_0)) \ge 1.
\end{equation}
From Equations~\eqref{E-L33b-5z} and \eqref{E-L33b-10z} we obtain
\begin{equation*}
\rho(w^{\Omega}_{t_0}, E_0(t_0,0)) = (2k-4) \text{~and~} \rho(w^{\Omega}_{t_0},z(t_0))\ge 1,
\end{equation*}
which implies that  the function $w^{\Omega}_{t_0}$ belongs to the subspace $V_{E_0(t_0,0),z(t_0)}$. According to Lemma~\ref{L-L34a}, this subspace is $U_{E_0(t_0,0)}$ so that $\rho(w^{\Omega}_{t_0},E_0(t_0,0)) = (2k-3)$,
contradicting Equation~\eqref{E-L33b-5z}. We have proved that the assumption $z'(t_0) = 0$  yields a contradiction. Hence $z'(t) \neq 0$ for all $t \in (-r_0,r_0)$ and Assertion~(ii) follows. \quad \qedc \medskip

\emph{Assertion~(iii).}  As above, we work in the framework described in Paragraph~\ref{SSS-hmn-31bb}, with $t$ in an interval $(-r_0,r_0)$ such that $\rho(v_t,(t,0)) = (2k-3)$ for $t \neq 0$, and $\rho(v_0,0)= (2k-2)$.
According to \eqref{E-L33a-8v}, we have
\begin{equation*}
\begin{array}{ll}
v_t(\xi_1,\xi_2) &= A_0(t) P_p(\xi_1-t,\xi_2) + A_1(t) P_{p+1}(\xi_1-t,\xi_2)\\[5pt]
&\qquad+\, A_0(t) \frac{1}{p+1}\, h_E(t) \, Q_{p+1}(\xi_1-t,\xi_2) + R_{p+2}(t;\xi_1-t,\xi_2),
\end{array}%
\end{equation*}
with $A_0(0) = 0$, $A_1(0) \neq 0$, and $A_0(t) \neq 0$ for $t \neq 0$.
(Recall that  $P_p = S_p$, $P_{p+1} = S_{p+1}$ and $Q_{p+1} = 0$ in the Dirichlet case;  $P_p = C_p$, $P_{p+1} = C_{p+1}$ and $Q_{p+1} = S_{p+1}$ in the Robin case.) \smallskip

Using \eqref{E-L33a-12w}, we obtain
\begin{equation*}
w_0(\xi_1,\xi_2) =  \big[ A'_{0}(0) - (p+1) A_1(0) \big]\, P_p (\xi_1,\xi_2) + \cO\big( (\xi_1^2+\xi_2)^{\frac{p+1}{2}})\big).
\end{equation*}

\begin{claim}\label{C-L33b-2} Assume that $A_0(0) = 0$, $A_1(0) \neq 0$, and $A_0(t) \neq 0$ when $t \neq 0$. Then, $A_{0}'(0) = (p+1) \, A_1(0)$.
\end{claim}%

\noi \pf Otherwise, we would have $\ord(w_0,0) = p$,  and hence $\rho(w_0,0) = (2k-3)$ so that $w^{\Omega}_0$ belongs to $U_{y_0}$, contradicting the fact that $\dim U_{y_0} = 1$, since $u_{y_0} \in U_{y_0}$ with $\rho(u_{y_0},y_0)=(2k-2)$. The claim is proved. \quad \qedc\medskip

We now use the relations \eqref{E-L33a-14d} and \eqref{E-L33a-14r}, respectively in the Dirichlet and the Robin cases:\\[5pt]
\noi \emph{Dirichlet case,}%
\begin{equation*} 
\breve{v}_t(\xi_1) = (2k-2) A_0(t) (\xi_1-t)^{2k-3} + (2k-1) A_1(t) (\xi_1-t)^{2k-2} + \cO\big( (\xi_1-t)^{2k-1}\big).
\end{equation*}
\noi \emph{Robin case,}
\begin{equation*}
\breve{v}_t(\xi_1) = A_0(t) (\xi_1-t)^{2k-3} +  A_1(t) (\xi_1-t)^{2k-2}
 + \cO\big( (\xi_1-t)^{2k-1}\big).
\end{equation*}
Choosing $\xi_1 = z(t)$, and recalling that $z(t)$ tends to $0$ as $t$ tends to zero (Assertion~(i)), we obtain respectively:\\[5pt]
\noi \emph{Dirichlet case,}%
\begin{equation*} 
0 \equiv (2k-2)  A_0(t) (z(t)-t)^{2k-3} + (2k-1)  A_1(t)\, (z(t)-t)^{2k-2}
 + \cO\big( (z(t)-t)^{2k-1}\big).
\end{equation*}
\noi \emph{Robin case,}%
\begin{equation*} 
0 \equiv A_0(t) (z(t)-t)^{2k-3} + A_1(t) (z(t)-t)^{2k-2}
 + \cO\big( (z(t)-t)^{2k-1}\big).
\end{equation*}
Writing $A_0(t) = A_0'(0) \, t + o(t)$, $A_1(t) = A_1(0) + O(t)$ with $A_1(0) \neq 0$, and taking into account the fact that $A_0'(0) = (p+1) A_1(0)$,  we conclude that
\begin{equation}\label{E-L33b-0}
z(t) = - (2k-3) \, t + o(t) \text{~as~} t \text{~tends to~} 0,
\end{equation}
in both the Dirichlet and the Robin cases. \medskip

For $t \neq 0$ small enough, Equation~\eqref{E-L33b-0} implies that $t$ and $z(t)$ are on either sides of $0$. This means that for $y$ close enough to $\eta \in \Gamma_{(2k-2)}$, the points $y$ and $z(y)$ are located on either sides of $\eta$. Assertion~(iii) is proved. \quad \qedc \medskip

\emph{Assertion~(iv).}   Assume that $0 \in \Gamma_{0,(2k-2)}$ and $t \in \Gamma_{0,(2k-3)}$ for $t \neq 0$, small enough.  Then, $t^{-1}[z(2t)-z(t)] = z'(\theta_t) = - (2k-3) + o(1)$ for some $\theta_t$ between $t$ and $2t$. This implies that $z'(\theta_t) < 0$ for $t$ small enough. According to Assertion~(ii), $z'(t) < 0$ in the connected components of $\Gamma_{0,(2k-3)}$ which have $0$ as boundary point. \smallskip

  Note that Equation~\eqref{E-L33b-0} also implies that $z$ is differentiable everywhere on $\Gamma$, with derivative equal to $-(2k-3)$ at the points in $\Gamma_{(2k-2)}$. It is not clear though that $z'$ is continuous everywhere. \medskip

\emph{Assertion~(v).}  Assume that $\Gamma_{(2k-2)} = \set{\eta}$. For $y_0$ close to $\eta$ and on the right of $\eta$, the point $z(y_0)$ is close to $\eta$ and on the left of $\eta$. When $y$ moves  counter-clockwise from $y_0$, the point $z(y)$ moves  clockwise from $z(y_0)$ and  we would eventually find some $y_1$ with $z(y_1) = y_1$, a contradiction. If $\Gamma_{(2k-2)}$ is not empty, then $\# \big( \Gamma_{(2k-2)} \big) \ge 2$, and the boundary of a  connected component $C$ of $\Gamma_{(2k-3)}$ consists of two distinct points $\eta_1, \eta_2$ belonging to $\Gamma_{(2k-2)}$. Since $z(y)$ tends to $z(\eta_i)$ when $y$ tends to $\eta_i$, the last assertion follows.\quad \qedc \medskip

Lemma~\ref{L-L33b} is now proved.
\end{proof}

\begin{remark}\label{R-L33-2}
Under Assumptions~\ref{A-hmn3-0},  the Taylor identity \eqref{E-L33a-8v} for the function $v_t$ yields the Taylor identity \eqref{E-L33a-12w} for its derivative $w_0$ at $t=0$.  Since $0$ is an isolated point in $\Gamma_{0,(2k-2)}$,  we have $\rho(v_0,0)=(2k-2)$, i.e.,  $A_0(0) = 0$, and Claim~\ref{C-L33b-2} tells us that $A'_0(0) = (p+1) A_1(0) \neq 0$. From \eqref{E-L33a-12w}, we deduce that $\rho(w_0,0) \ge (2k-2)$. Since $w_0$ is orthogonal\footnote{Recall that orthogonality is meant with respect to the inner product induced by the $L^2$-inner product of eigenfunctions.} to $v_0$, this implies that $w_0 = 0$ because $\dim U_0 = 1$. The second derivative of $v_t$ at $t=0$ does not vanish, more precisely,
\begin{equation*}
\frac{d^2v_t}{dt^2}|_{t=0}(\xi_1,\xi_2) = - p(p+1) A_1(0) P_{p-1}(\xi_1,\xi_2) + \cO\big( (\xi_1^2+\xi_2^2)^{\frac p2}\big).
\end{equation*}
Since $v_t$ has norm $1$, $w_t$ is orthogonal to $v_t$, and since $w_0=0$, it follows that $\frac{d^2v_t}{dt^2}|_{t=0}$ is orthogonal to $v_0$.
\end{remark}%

\begin{remarks}\label{R-L33-4}\phantom{}~\\
1) In Lemma~\ref{L-L33c}, we shall prove that $\#\big(\Gamma_{(2k-2)}\big)$ is an even integer.\\
2) In Section~\ref{S-gam0}, using a global argument, we shall prove that $\Gamma_{(2k-2)} \neq \emptyset$.\\
3) For the time being, note that if  $\Gamma_{(2k-2)}$ were empty, we would have $z'(t) >0$. The function $z'$ has indeed a constant sign and if  $z'(t)$ were negative, we would reach a contradiction by finding a point $y_1$ such that $z(y_1)=y_1$ as in the proof of Assertion~(iv).
\end{remarks}

\subsection{Boundary behavior of the map $\Omega \ni x \mapsto [w_x] \in \bP(U)$}\label{SS-hmn-33}~\\
The assumption that $\Omega$ is simply connected in this subsection might be necessary. It is motivated by Remark~\ref{R-hmn-sc} and also makes the proofs of the following lemmas simpler. It would be worthwhile determining where the assumption that $\Omega$ is simply connected is actually necessary.\smallskip

The proof of the next lemma relies very much on Section~\ref{S-lsbs}, in particular Subsection~\ref{SS-lsbs5}.

\begin{lemma}\label{L-L38}
Assume that $\Omega$ is simply connected. Let $U := U(\lambda_k)$ for some $k \ge 3$. Assume that $\dim U = (2k-2)$. [Assumptions~\ref{A-hmn3-0}].
Let $\set{x_n} \subset \Omega$ be a sequence converging to some $y \in \Gamma$.  Let $\set{w_n}$ be a corresponding sequence of eigenfunctions, with $w_n  := w_{x_n} \in W_{x_n}\cap \bS(U)$.
\begin{enumerate}[(i)]
   \item If $w$ is a limit point of $\set{w_n}$, then $w \in U_y$. In particular, the continuous maps
\[
\Omega \ni x \mapsto [w_x] \text{~of Lemma~\ref{L-L37}},
\]
and
\[
\Gamma  \ni y \mapsto [u_y] \text{~of Lemma~\ref{L-L32},}
\]
give rise to a continuous map $x\mapsto [\bar{w}_x]$ from $\overline{\Omega}$ into $\bP(U)$.
  \item  The point $y$ belongs to $\Gamma_{(2k -3)}$ if and only if,
for $n$ large enough, \linebreak  $\cS_{\mathrm{b}}(w_{n}) = \set{y_{n},z_{n}}$ with $y_{n} \to y$, and $z_{n} \to z(y) \not = y$, and $\cS_{\mathrm{b}}(u_y) = \set{y,z(y)}$.
  \item  The point $y$ belongs to $\Gamma_{(2k -2)}$ if and only if there exists an infinite subsequence $\set{w_{s(n)}}$  such that $\cS_{\mathrm{b}}(w_{s(n)})= \emptyset$,  or  an infinite subsequence  $\set{w_{s(n)}}$  such that $\cS_{\mathrm{b}}(w_{s(n)}) \neq \emptyset$, and  the points in $\cS_{\mathrm{b}}(w_{s(n)})$ converge to $y$.
  \item  There exists a continuous map $x \mapsto \bar{w}_x$, from $\wb{\Omega}$ to $\bS(U)$, whose restrictions to $\Omega$ and to $\Gamma$ are $\Cty$, and such that $\bar{w}_x \in W_x$ whenever $x \in \Omega$ and $\bar{w}_x \in U_x$ whenever $x \in \Gamma$.
\end{enumerate}
\end{lemma}%

 For the proof of Lemma~\ref{L-L38},  it suffices to reason locally near a point $y \in \Gamma$. Using a conformal map $E : \bH \to \Omega$ as in Section~\ref{S-lsbs}, we work with the functions $v_n := w_n \circ E$ and $v := w\circ E$ in $D_{+}(0,r_0)$. Let $\xi_n \in \bH$ be the points such that $E(\xi_n)=x_n$. In the sequel, we use the notation of Paragraph~\ref{SSS-lsbs32}: $J_E$ is the Jacobian of the conformal map, and
\begin{equation*}
V_E := J_E\, (V\circ E), \quad h_E := \sqrt{J_E}\, (h\circ E).
\end{equation*}

At some point in the proof of Lemma~\ref{L-L38}, we will  need the following \emph{energy argument}.

\begin{lemma}[Energy argument]\label{L-hmn3-L38-e}
\index{Energy argument} Working with the eigenvalue problem \eqref{E-lsbs3-8}--\eqref{E-lsbs3-10},
\begin{equation*}
\left\{
\begin{array}{rll}
(-\Delta + J_E V_E)v &= \lambda \, J_E v & \text{~in~} \bH\\[5pt]
B_E(v) &= 0 & \text{~on~} \partial \bH,
\end{array}%
\right.
\end{equation*}
there exists $r_E > 0$  such that
\begin{equation}\label{E-hmn3-L38-mu}
\mu_1(D_{+}(0,r)) > \lambda_k
\end{equation}
for any $r \le r_E$. Here $\lambda_k$ is the eigenvalue associated with $U$, and $\mu_1(D_{+}(0,r))$ denotes the  lowest eigenvalue of
$(-\Delta + J_E V_E) v = \lambda \, J_E v$
in the domain $D_{+}(0,r)$  with the following mixed boundary conditions:  Dirichlet on
the subset  $C_{+}(0,r) = \partial D_{+}(0,r)\cap \bH$, and the current boundary condition  $B_E(u) = 0$  (Dirichlet or Robin) on the subset $(-r,r) \times \set{0} = \partial D_{+}(0,r) \cap \partial \bH$.
\end{lemma}%

\begin{proof}[Proof of Lemma~\ref{L-hmn3-L38-e}] We give the proof of the lemma when the boundary condition \eqref{E-lsbs3-10} on $\partial \bH$ is  the $h$-Robin condition (the Dirichlet or Neumann cases are simpler to deal with). We have to consider the Rayleigh quotient $R(u)$ for $u \in \cH_r$ where
\begin{equation*}
\cH_r := \set{u \in H^1(D_{+}(0,r)) \mid u=0 \text{~on~} C_{+}(0,r)},
\end{equation*}
and
\begin{equation*}
R(u) := \Big( \int_{D_+(0,r)}  (|du|^2 + V_E\, u^2)\, d\xi + \int_{-r}^{r} h_E\, u^2(t,0) \, dt \Big) \, \Big( \int_{D_+(0,r)} J_E\, u^2 \, d\xi\Big)^{-1}.
\end{equation*}

On $\wb{D}_{+}(0,r_0)$, the functions $V_E$ and $h_E$ are bounded from below, and $J_E$ is bounded from above and below by positive constants. Since $\int_{-r}^{r} u^2(t,0) \, dt \le r \, \int_{D_{+}(0,r)} |du|^2 \, d\xi$, it follows that
\begin{equation*}
R(u) \ge (1 - c_1 \,r) \, \big( \sup_{D_{+}(0,r_0)} J_E \big)^{-1}\, R_0(u) - c_2\,
\end{equation*}
where $c_1$ and $c_2$ are positive constants depending only on $(V,h,E,r_0)$, and where $R_0(u) := \big( \int_{D_+(0,r)}  |du|^2 \, d\xi \big) \big( \int_{D_+(0,r)}  u^2 \, d\xi\big)^{-1}$. The quotient $R_0(u)$ is bounded from below by the least eigenvalue of the Laplacian with mixed boundary conditions, Dirichlet on $C_{+}(0,r)$ and Neumann on $(-r,r)$. Hence, $R_0(u) \ge \frac{\pi j_{0,1}^2}{r^2}$, the least Dirichlet eigenvalue of $D(0,r)$, the disk of center $0$ and radius $r$. The lemma follows.
\end{proof}

\begin{remarks}\label{R-hmn3-L38-d} The proof of Lemma~\ref{L-hmn3-L38-e} shows that a similar result holds if we fix the current boundary condition \eqref{E-lsbs3-10} on a given interval $(a,b) \subset (-r,r)$, and the Dirichlet boundary condition on $\partial D_{+}(0,r) \sm \big( (a,b)\times \set{0}\big)$.
\end{remarks}%
\smallskip

\begin{proof}[Proof of Lemma~\ref{L-L38}] We divide the proof of Lemma~\ref{L-L38} into several steps labeled {\textbf (A), (B),} \ldots.\smallskip

\noi \textbf{(A)} To the sequence of interior points $\set{x_n} \subset \Omega$ we associate a sequence $\set{w_n := w_{x_n}}$ in the sphere $\bS(U)$ (Lemma~\ref{L-L37}). Taking a subsequence if necessary, we may assume that $\set{w_n}$ converges to some $w \in \bS(U)$. Then, the convergence is uniform in $C^m$ for any fixed $m \ge 0$. Since $\nu(w_n,x_n) = 2(k-1)$, or equivalently $\ord(w_n,x_n) = (k-1)$, with $k \ge 3$, and since the convergence is uniform, we have $\ord(w,y) \ge (k-1) \ge 2$,  so that $y$ is a boundary singular point of the $\lambda_k$-eigenfunction $w$. \smallskip

 Define $p := \ord(w,y)$, $q:=\rho(w,y)$. Recall that $p = (q+1)$ in the Dirichlet case, and $p=q$ in the Robin case.\smallskip

By Lemma~\ref{L-hdn}, the (sub)sequence $\set{\cZ(w_n)}$ converges to $\cZ(w)$ in the Hausdorff distance. This in particular implies that the set $\cZ(w)$ is connected.\medskip

\noi \textbf{(B)} The singular points of the nodal set  $\cZ(w)$ are isolated. We can choose some point $y_0 \in \Gamma$ with $y_0 \not \in \cS_{\mathrm{b}}(w)$. According to Section \ref{S-lsbs}, there is a conformal map $E : \bH \to \Omega$ which extends smoothly to $\bbH$, sends $0$ to $y$, and the point at infinity on $\partial \bH$  to $y_0$.  Fix the map $E$, and choose some
$r_0 > 0$ such that the nodal set $\cZ(w\circ E)$  is contained in $D_{+}(0,r_0) \cup (-r_0,r_0)\times \set{0}$. For $n$ large enough, the nodal sets $\cZ(w_n\circ E)$ will also be contained in $D_{+}(0,r_0) \cup (-r_0,r_0)\times \set{0}$. \smallskip

To prove Assertion~(i) in Lemma~\ref{L-L38},  we apply Subsection~\ref{SS-lsbs5}, to the function $v$. Let $(\rho,\omega)$ be the polar coordinates at $0 \in \bbH$, $\xi = (\rho \, \cos\omega, \rho \, \sin\omega)$. We use the following notation,

\begin{equation*}\label{E-lsbs-18n}
\left\{
\begin{array}{l}
[r,\omega] := (r\, \cos\omega, r\, \sin\omega),\\[5pt]
C_{+}(0,r) := \set{[r,\omega] \mid \omega \in (0,\pi)},\\[5pt]
\text{and we fix~} \alpha_1 \in (0, \frac{\pi}{8}), \alpha_p := \frac{\alpha_1}{p}.
\end{array}
\right.
\end{equation*}
We consider the Dirichlet and  Robin boundary conditions separately.\medskip

\noid In the Dirichlet case, $v([\rho,\omega]) = a_v \, \rho^p \, \sin(p\, \omega) + \cO(\rho^{p+1})$, for some $a_v \neq 0$. Define the rays
\[\set{\omega = \omega_{j}\mid 1 \le j \le p-1},\]
where $\omega_{j} := j \frac{\pi}{p}$.  As in \eqref{E-lsbs5-18}, consider the ``$\cG\cR\cB$-arcs''
\begin{equation}\label{E-hmn3-L38-vda}
\left\{
\begin{array}{l}
\cG_{\mathrm{d}}(r,j) := \set{[r,\omega] \mid \omega \in (\omega_{j} - \alpha_p,\omega_{j}+\alpha_p)}, \text{~for~} 1 \le j \le (p-1),\\[5pt]
\cR_{\mathrm{d}}(r,j) := \set{[r,\omega] \mid \omega \in [\omega_{j} + \alpha_p,\omega_{j+1}-\alpha_p]}, \text{~for~} 0 \le j \le (p-1),\\[5pt]
\cB_{\mathrm{d}}(r,0) := \set{[r,\omega] \mid \omega \in (0,\alpha_p]},\\[5pt]
\cB_{\mathrm{d}}(r,p) := \set{[r,\omega] \mid \omega \in [\pi-\alpha_p,\pi)}.
\end{array}
\right.
\end{equation}

These arcs are displayed in Figure~\ref{F-gam0-L38} (left picture, here $p = \ord(v,0) = 8$ and $q = \rho(v,0) = 7$).

\begin{figure}[!ht]
  \centering
  \includegraphics[width=0.9\textwidth]{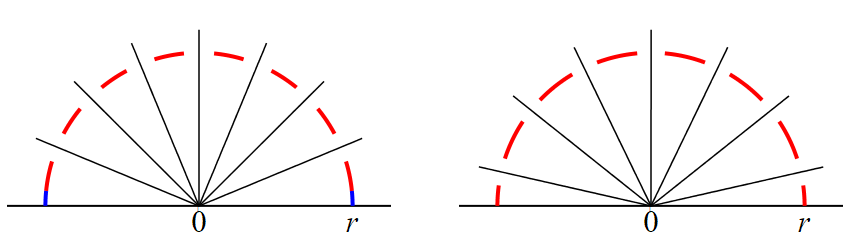}
  \caption{``$\cG\cR\cB$-arcs'' for $v$ with $\rho(v,0)=7$ (Dirichlet/Robin)}\label{F-gam0-L38}
\end{figure}

According to Proposition~\ref{P-lsbs5-2d},  there exists  some $r_1$, $0 < r_1 \le \frac{r_0}{2}$, such that the following properties hold for any $r \le r_1$, see Equation~\eqref{E-lsbs5-18a}.
\begin{equation}\label{E-hmn3-L38-vd}
\left\{
\begin{array}{l}
\pm \, (-1)^{j}\, \sign(a_v) \, v([r,\omega_j \pm \alpha_p]) \ge \frac 12 |a_v| \, \sin(\alpha_1)\, r^p.\\[5pt]
|\partial_{\omega}v([r,\omega])| \ge \frac p2 \, |a_v| \, \cos(\alpha_1) \, r^p \text{~~in each~} \cG_{\mathrm{d}}(r,j), 1 \le j \le (p-1),\\[5pt]
\hspace{0.8cm} \text{and~} v([r,\omega]) \text{~vanishes precisely once in each arc.} \\[5pt]
|\partial_{\omega}v([r,\omega])| \ge \frac p2 \, |a_v| \, \cos(\alpha_1) \, r^p \text{~~in~} \cB_r(r,0) \cup \cB_{\mathrm{d}}(r,p), \\[5pt]
\hspace{0.8cm} \text{and~} v(r,\omega) \text{~does not vanish in these arcs.}\\[5pt]
|v([r,\omega])| \ge \frac 12 \, |a_v|\, \sin(\alpha_1) \, r^p \text{~~in~} \cR_{\mathrm{d}}(r,j), 0 \le j \le (p-1), \\[5pt]
\hspace{0.8cm} \text{and~} v(r,\omega) \text{~does not vanish in these arcs.}
\end{array}
\right.
\end{equation} \smallskip

\noid In the Robin case, $v([\rho,\omega]) = a_v \, \rho^p \, \cos(p\, \omega) + \cO(\rho^{p+1})$. Define the rays
\[\set{\omega = \omega'_j\mid 1 \le j \le p},\]
where $\omega'_j := (j - \frac 12) \frac{\pi}{p}$. As in \eqref{E-lsbs5-18n}, consider the ``$\cG\cR\cB$-arcs''
\begin{equation}\label{E-hmn3-L38-vra}
\left\{
\begin{array}{l}
\cG_{\mathrm{n}}(r,j) := \set{[r,\omega] \mid \omega \in (\omega'_j - \alpha_p,\omega'_j+\alpha_p)}, \text{~for~} 1 \le j \le p,\\[5pt]
\cR_{\mathrm{n}}(r,j) := \set{[r,\omega] \mid \omega \in [\omega'_j + \alpha_p,\omega'_{j+1}-\alpha_p]}, \text{~for~} 1 \le j \le (p-1),\\[5pt]
\cR_{\mathrm{n}}(r,0) := \set{[r,\omega] \mid \omega \in (0,\omega'_1-\alpha_p]},\\[5pt]
\cR_{\mathrm{n}}(r,p) := \set{[r,\omega] \mid \omega \in [\omega'_p+\alpha_p,\pi)}.
\end{array}
\right.
\end{equation}
These arcs  are displayed in Figure~\ref{F-gam0-L38} (right picture, with $p = \ord(v,0) = 7$ and $q = \rho(v,0) = 7$).  According to Proposition~\ref{P-lsbs5-2n}, there exists some $r_1$, $0 < r_1 \le \frac{r_0}{2}$, such that the following properties hold for any $r \le r_1$, see Equation~\eqref{E-lsbs5-18na}.
\begin{equation}\label{E-hmn3-L38-vr}
\left\{
\begin{array}{l}
\mp \, (-1)^{j}\, \sign(a_v) \, v([r,\omega'_j \pm \alpha_p]) \ge \frac 12 |a_v| \, \sin(\alpha_1) \, r^p.\\[5pt]
|\partial_{\omega}v([r,\omega])| \ge \frac p2 \, |a_v| \, \cos(\alpha_1) \, r^p \text{~~in each~} \cG_{\mathrm{n}}(r,j), 1 \le j \le p, \\[5pt]
\hspace{0.8cm} \text{and~} v([r,\omega]) \text{~vanishes precisely once in each arc.} \\[5pt]
|v([r,\omega])| \ge \frac 12 \, |a_v| \, \sin(\alpha_1) \, r^p \text{~~in~} \cR_{\mathrm{n}}(r,0) \cup \cR_{\mathrm{n}}(r,p),\\[5pt]
\hspace{0.8cm} \text{and hence~} v([r,\omega]) \text{~does not vanish in these arcs.}\\[5pt]
|v([r,\omega])| \ge \frac 12 \, |a_v|\, \sin(\alpha_1) \, r^p \text{~~in~} \cR_{\mathrm{n}}(r,j), 1 \le j \le (p-1), \\[5pt]
\hspace{0.8cm} \text{and~} v(r,\omega) \text{~does not vanish in these arcs.}
\end{array}
\right.
\end{equation}
\smallskip

The arcs $\cG(r,j)$ appear in white on the semi-circles in  Figure~\ref{F-gam0-L38}. In both cases, $\cZ(v) \cap D_{+}(0,r_1)$ consists of $q$ nodal arcs  emanating from $0$, $r \mapsto \delta_j(r) := \big[ r, \omegat_j(r) \big]$, $1 \le j \le q$, where the functions $\omegat_j(r)$ are smooth for $0 < r < r_1$. These nodal arcs are transverse to the half circles $C_{+}(0,r)$.
\medskip

 Fix $r_2$ such that $0 < r_2 < r_1$. When $x_n$ tends to $y$ in $\wb{\Omega}$, the sequence $\set{w_n}$ tends to $w$ uniformly in the $C^m$ topology for any fixed $m$. Similarly, when the sequence $\set{\xi_n}$ tends to $0$ in $\bbH$, the sequence $\set{v_n}$ tends to $v$ uniformly in the $C^m$ topology in $D_{+}(0,r_0)$. This implies that the properties \eqref{E-hmn3-L38-vd} (Dirichlet case) and \eqref{E-hmn3-L38-vr} (Robin case) are satisfied by the functions $v_n$  provided that $r_2 < r < r_1$,  provided we relax the constant $|a_v|$ to $\frac{|a_v|}{2}$ in the right hand sides of the above inequalities, and provided we choose $n$ large enough implying that $v_n$ is $C^1$-close to $v$. This proves the following claim.

\begin{claim}\label{C-hmn3-L38-b}
Fix any $r_2$ such that $0 < r_2 < r_1$. Then, for $n$ large enough, depending on $r_2$, and for any $r$ such that $r_2 \le r \le r_1$,
\begin{equation}\label{E-hmn3-L38n}
\left\{
\begin{array}{l}
\cZ(v_n) \cap C_{+}(0,r) \subset \bigcup_{j=1}^{q} \cG(r,j),\\[5pt]
\text{and~}\\ \cZ(v_n) \text{~crosses each~} \cG(r,j) \text{~precisely once.}
\end{array}%
\right.
\end{equation}
\end{claim}%

From now on, we assume that:

\begin{enumerate}[$\diamond$]
  \item  $r_1 < r_E$ and  we fix $r_2$, $0 < r_2 < r_1$,
  \item   $n$ is large enough so that both \eqref{E-hmn3-L38-mu}  and  \eqref{E-hmn3-L38n}  are satisfied for any $r_2 < r < r_1$, with $\set{\xi_n} \subset D_{+}(0,r_2/2)$ close enough to $0$.
\end{enumerate}

\noi \textbf{(C)}~ Recall that $v_n = w_n\circ E$, $v = w \circ E$, and $x_n = E(\xi_n)$. We now study the sequence $\set{v_n}$, with $\set{\xi_n} \subset D_{+}(0,r_2/2)$. Taking Lemma~\ref{L-L37} into account,  there are two possible cases.\medskip

\noi \textbf{Case~C1}.~ \emph{There exists an infinite subsequence $\set{v_{s(n)}}$ of the sequence $\set{v_n}$ such that, for all $n$, $\cS_{\mathrm{b}} (v_{s(n)}) = \emptyset$.}\smallskip

In this case, according to the proof of Lemma~\ref{L-L37}, the nodal set $\cZ(v_{s(n)})$ consists of $(k -1)$ simple loops at $\xi_{s(n)}$. These loops do not intersect away from $\xi_{s(n)}$, and do not hit $\partial \bH$. Choose $\gamma$, any of these loops. Since $\xi_{s(n)} \in D_{+}( 0,r_2/2)$, either the loop $\gamma$ crosses $C_{+}(0,r_2)$ at (at least) two distinct points $z_{\gamma,1}$ and $z_{\gamma,2}$, or it is entirely contained in $D_{+}(0,r_2)$. In the latter case, the function $v_{s(n)}$ would have a nodal domain contained in $D_{+}(0,r_2)$. Taking into account our choice for $r_2$ and Lemma~\ref{L-hmn3-L38-e} (Energy argument), this would contradict \eqref{E-hmn3-L38-mu}, see Figure~\ref{F-hmn3-L38-C1} (right)\footnote{We draw the following pictures in a domain $\Omega$ whose boundary $\Gamma$ is a segment around $y$.}.  For each $n$, the set $\cZ(v_{s(n)})$ consists of $(k-1)$ loops which do not intersect away from $\xi_{s(n)}$. It follows that we have at least $(2k-2)$ distinct points $z_{s(n),j}$ in $C_{+}(0,r_2) \cap \cZ(v_{s(n)})$, for $1 \le j \le (2k-2)$.\smallskip

By Claim~\ref{C-hmn3-L38-b}, $\cZ(v_{s(n)})$ can only intersect $C_{+}(0,r_2)$ in the arcs $\cG(r_2,j)$, and at only one point for each $j$. This means that $q \ge (2k-2)$,  i.e., $\rho(w,y) = \rho(v,0) \ge (2k-2)$. From Lemma~\ref{L-L32} we infer that $w \in U_y$, and $\rho(w,y) = (2k-2)$, so that $y \in \Gamma_{(2k-2)}$.

\begin{figure}[t!h]
\centering
\includegraphics[width=0.6\linewidth,angle=0.3]{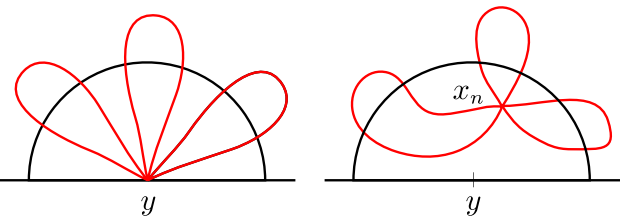}
\caption{Lemma~\ref{L-L38}, Case C1: nodal patterns for $w$ and $w_{s(n)}$} \label{F-hmn3-L38-C1}
\end{figure}

 Figure~\ref{F-hmn3-L38-FC1} displays forbidden configurations for the nodal sets $\cZ(w_{s(n)})$.
\begin{figure}[t!h]
\centering
\includegraphics[width=0.6\linewidth,angle=0.3]{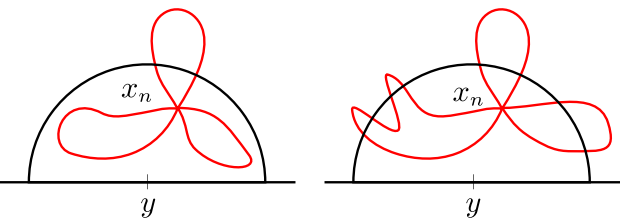}
\caption{Lemma~\ref{L-L38}, Case C1: forbidden configurations for $w_{s(n)}$} \label{F-hmn3-L38-FC1}
\end{figure}

\noi \textbf{Case~C2}.~ \emph{There exists an infinite subsequence $\set{v_{s(n)}}$ such that, for each $n$, we have $\cS_{\mathrm{b}} (v_{s(n)}) \neq \emptyset$.}\smallskip

In this case, according to the proof of Lemma~\ref{L-L32}, the nodal set $\cZ(v_{s(n)})$ consists of $(k -2)$ simple loops at $\xi_{s(n)}$, and two simple nodal intervals from $\xi_{s(n)}$ to the boundary $\partial \bH$. The loops do not intersect away from $\xi_{s(n)}$, and do not hit $\partial \bH$. The nodal intervals do not intersect each other, except at $\xi_{s(n)}$, and possibly on $\partial \bH$ if they hit $\partial \bH$  at the same point; they do not intersect the loops away from $\xi_{s(n)}$. The energy argument (Lemma~\ref{L-hmn3-L38-e}) used in Case~C1, shows that each loop in $\cZ(v_{s(n)})$ intersects $C_{+}(0,r_2)$ at (at least) two distinct points. A similar energy argument for mixed boundary conditions (Dirichlet on the nodal intervals, and the given boundary condition, Dirichlet or Robin, on $\partial \bH$) shows that the nodal intervals cannot both be contained in $D_{+}(0,r_2)$,   and at least one of them crosses $C_{+}(0,r_2)$, see Figure~\ref{F-hmn3-L38-C2}.  Indeed,  there would otherwise exist  a nodal  domain $\Omega_1$ of $v_{s(n)}$  with $\Omega_1 \subset D_{+}(0,r)$ as in Figure~\ref{F-hmn3-P} (such a domain appears in Figure~\ref{F-hmn3-L38-FC2a} (right), with $y_{s(n),2} = y_{s(n),1}$, and in Figure~\ref{F-hmn3-L38-FC2b} (left)). The function $v_{s(n)}|_{\Omega_1}$  would be a first eigenfunction of  $(-\Delta + J_E V\circ E) \, u = \lambda \, J_E \,u$ in $\Omega_1$, with the Dirichlet boundary condition on the nodal arcs from $\xi_{s(n)}$ to the boundary, and the given boundary condition \eqref{E-lsbs3-10} on the interval between $y_{s(n),1}$ and $y_{s(n),2}$, with associated first eigenvalue $\lambda_k$. Consider the function defined by
\begin{equation*}
f(x) := \left\{
\begin{array}{ll}
v_{s(n)}(x) & \text{~if\quad} x \in \Omega_1\\[5pt]
0 & \text{~if\quad} x \in D_{+}(0,r) \sm \Omega_1.
\end{array}%
\right.
\end{equation*}

The function $f$ satisfies the $B_E$ boundary condition (Dirichlet or Robin) on the segment $\big( y_{s(n),1}, y_{s(n),2} \big)$, vanishes on the nodal arcs from $x_{s(n)}$ to $y_{s(n),1}$ and $y_{s(n),2}$, and is $0$ outside $\Omega_1$.

\begin{figure}[t!h]
\centering
\includegraphics[width=0.5\linewidth]{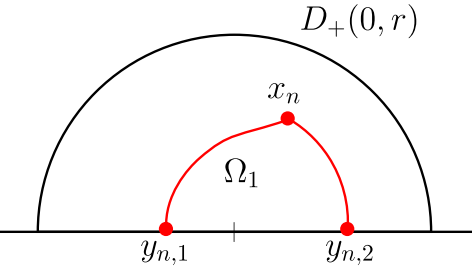}
\caption{Proof of Lemma~\ref{L-L38}: the domain $\Omega_1$} \label{F-hmn3-P}
\end{figure}

Then, $f \in H^1(D_{+}(0,r))$ and vanishes on $\partial D_{+}(0,r)$, except possibly on the segment $\big( y_{s(n),1} , y_{s(n),2} \big)$.
Looking at the quadratic forms,  and using Lemma~\ref{L-hmn3-L38-e} and Remark~\ref{R-hmn3-L38-d}, we conclude that
\[
\lambda_k = \mu_1(\Omega_1) > \mu_1(D_{+}(0,r)) > \lambda_k\,,
\]
a contradiction. \smallskip

Finally, for each $n$, we have at least $(2k-3)$   distinct points in $C_{+}(0,r_2) \cap \cZ(v_{s(n)})$, for $1 \le j \le (2k-3)$. As in Case~C1, we conclude that $q \ge (2k-3)$, i.e., that $\rho(w,y) = \rho(v,0) \ge (2k-3)$ so that $w \in U_y$, and we have two possible cases, either $\rho(w,y) = (2k-3)$ and $y \in \Gamma_{(2k-3)}$, or
$\rho(w,y) = (2k-2)$ and $y \in \Gamma_{(2k-2)}$. \smallskip

Claim~\ref{C-hmn3-L38-b} also implies that, for $n$ large enough, $Z(v_{s(n)})$ meets $C_{+}(0,r_2)$ at precisely $(2k-3)$ or $(2k-2)$ points. \smallskip

At this stage we have proved that the only possible limit points of a sequence $\set{w_n}$ are in $U_y$,  see Figure~\ref{F-hmn3-L38-C1} (left).  Since $\dim U_y = 1$, this proves Assertion~(i) of the lemma. \quad \qedc \medskip

According to  Lemma~\ref{L-L32}, we have $\cS_{\mathrm{b}}(v_{s(n)}) = \set{\eta_{s(n),1},\eta_{s(n),2}}$ possibly with $\eta_{s(n),1} = \eta_{s(n),2}$. Recalling that $y=E(0)$, the only possible limit points of these sequen\-ces are
\begin{equation*}
\left\{
  \begin{array}{ll}
    0 & \text{~if\quad} \rho(v,0) = (2k-2),\\[5pt]
    0\text{~and~} \zeta & \text{~if\quad} \rho(v,0) = (2k-3),
  \end{array}
\right.
\end{equation*}
 where $v = u_y\circ E$, $\cS_{\mathrm{b}}(v) = \set{0,\zeta}$, with $\zeta \neq 0$. When $\rho(v,0) = (2k-3)$, $\rho(v,\zeta)=1$, and the function $\breve{v}$ vanishes and changes sign at $0$ and $\zeta$. Since $\set{v_{s(n)}}$ converges to $v$ $C^{1}$-uniformly, this implies that, for $n$ large enough, the function $\breve{v}_{s(n)}$ changes sign near $0$ and near $\zeta$, and hence that one sequence, say $\set{\eta_{s(n),2}}$ tends to $\zeta$, and the other $\set{\eta_{s(n),1}}$ tends to $0$. Note that they cannot both tend to $\zeta$ since $\rho (v,\zeta)=1$.\smallskip

When $\rho(v,0) = (2k-2)$, the sequences $\set{\eta_{s(n),1}}$ and $\set{\eta_{s(n),2}}$ must both converge to $0$.\smallskip

Applying Claim~\ref{C-hmn3-L38-b}, we find that there are three sub-cases.
\begin{description}
  \item[C2\,(i)] There exists a subsequence $\set{v_{s(n)}}$ such that $\set{\eta_{s(n),1}}$ and  $\set{\eta_{s(n),2}}$ coincide and tend to $0$. For energy reasons  ($r_1 < r_E$), the arcs from $\xi_{s(n)}$ to $\eta_{s(n),1}$ and $\eta_{s(n),2}$ cannot both be contained in $D_{+}(0,r_2)$. One of these arcs, and actually only one for $n$ large enough, has to meet $C_{+}(0,r_2)$ at two distinct points,  see  Figure~\ref{F-hmn3-L38-C2} (left).
  \item[C2\,(ii)] There exists a subsequence $\set{v_{s(n)}}$ such that $\eta_{s(n),1} \not = \eta_{s(n),2}$, and both tend to $0$. For energy reasons ($r_1 < r_E$), the arcs from $\xi_{s(n)}$ to $\eta_{s(n),1}$ and $\eta_{s(n),2}$ cannot both be contained in $D_{+}(0,r_2)$. One of these arcs, and actually only one for $n$ large enough, has to meet $C_{+}(0,r_2)$ at two distinct points.  See Figure~\ref{F-hmn3-L38-C2} (center).
  \item[C2\,(iii)] There exists a subsequence $\set{v_{s(n)}}$ such that $\eta_{s(n),1} \not = \eta_{s(n),2}$, with  $\eta_{s(n),1}$ tending to $0$ and $\eta_{s(n),2}$ tending to some $\zeta \not = 0$. For $n$ large enough, the arc from $\xi_{s(n)}$ to $\eta_{s(n),2}$ intersects  $C_{+}(0,r_2)$ at one point, and the arc from $\xi_{s(n)}$ to $\eta_{s(n),1}$ stays inside $D_{+}(0,r_2)$.  See Figure~\ref{F-hmn3-L38-C2} (right).
\end{description}

In subcases C2\,(i) and C2\,(ii), we have $\rho(w,y) = \rho(v,0) = (2k-2)$, so that $y$ belongs to $\Gamma_{(2k-2)}$. In subcase C2\,(iii), we have $\rho(w,y) = \rho(v,0) = (2k-3)$, so that $y$ belongs to $\Gamma_{(2k-3)}$, with $z(y) = E(\zeta)$, the limit of $\set{E(\eta_{s(n),2})}$.

\begin{figure}[t!h]
\centering
\includegraphics[width=0.9\linewidth,angle=0.3]{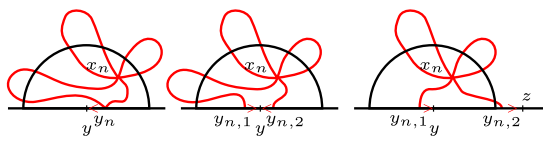}
\caption{Lemma~\ref{L-L38}, Case C2: nodal patterns for $w_{s(n)}$}\label{F-hmn3-L38-C2}
\end{figure}

 Figures~\ref{F-hmn3-L38-FC2a} and \ref{F-hmn3-L38-FC2b} display forbidden configurations for the nodal sets $\cZ(w_{s(n)})$ when $\cS_{\mathrm{b}}(w_{s(n)}) \neq \emptyset\,$.
\begin{figure}[t!h]
\centering
\includegraphics[width=0.6\linewidth,angle=0.3]{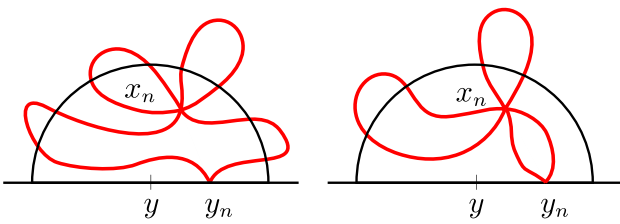}
\caption{Lemma~\ref{L-L38}, Case C2\,(i): forbidden configurations for $w_{s(n)}$}\label{F-hmn3-L38-FC2a}
\end{figure}

\begin{figure}[t!h]
\centering
\includegraphics[width=0.9\linewidth,angle=0.3]{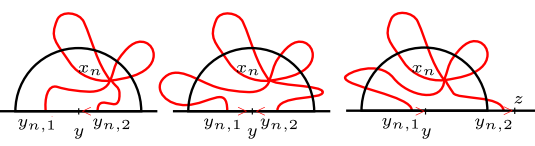}
\caption{Lemma~\ref{L-L38}, Case C2(ii)/(iii): forbidden configurations  for $w_{s(n)}$} \label{F-hmn3-L38-FC2b}
\end{figure}
\FloatBarrier

This proves Assertions~(ii) and (iii). \quad \qedc \medskip

 \emph{Assertion~(iv).}~ Since $\wb{\Omega}$ and $\bS(U)$ are simply connected, the map from $\wb{\Omega}$ to $\bP(U)$ given by Assertion~(i) can be lifted to the $\bS(U)$ with the desired properties. \quad \qedc \medskip

The proof of Lemma~\ref{L-L38} is complete.
\end{proof}

\begin{remark}\label{R-hmn3-L38-r1}  As a byproduct of the proof of Lemma~\ref{L-L38}, we obtain the configurations of the nodal sets $\cZ(w_x)$ when $x \in \Omega$ is close to  some $y \in \Gamma $. When $y \in \Gamma_{(2k-3)}$, $\cS_{\mathrm{b}}(u_y) = \set{y, z(y)}$ with $z(y) \neq y$. For $x$ close enough to $y$, $\cS_{\mathrm{b}}(w_x) = \set{y(x), z(x)}$ with $y(x) \neq z(x)$, $y(x)$ close to $y$ and $z(x)$ close to $z(y)$.  When $y \in \Gamma_{(2k-2)}$, $\cS_{\mathrm{b}}(u_y) = \set{y}$. For $x$ close enough to $y$, $\cS_{\mathrm{b}}(w_x)$ might be empty or consist of one or two points close to $y$.
\end{remark}%

Figures~\ref{F-hmn3-L38-C1} and \ref{F-hmn3-L38-C2} display typical configurations for $\cZ(w_{n})$ when $r_1 < r_E$ is small enough and $n$ large: the loops intersect $E(C_{+}(0,r_2))$ at two distinct points; one arc exits $E(D_{+}(0,r_0))$.\smallskip

Figures~\ref{F-hmn3-L38-FC1}, \ref{F-hmn3-L38-FC2a} and \ref{F-hmn3-L38-FC2b} display forbidden configurations for $\cZ(w_{n})$ when $r_2$ is small enough and $n$ large: a loop cannot be contained in  $E(D_{+}(0,r_2))$; the arcs cannot both be contained in $E(D_{+}(0,r_2))$;  the arcs cannot both meet $E(C_{+}(0,r_2))$.

\begin{remark}\label{R-Fgene3}
The \emph{nodal patterns} displayed in the above figures hold for both the Dirichlet and Robin boundary conditions. Unless otherwise stated this remark applies to all figures of this section.
\end{remark}

\begin{remark}\label{R-L38-2}
For $x \in \Omega$, let $h_{x,(k-1)}(\bar{w}_x)$ be the first nonzero term in the Taylor expansion of $\bar{w}_x$ at the point $x$ (this is a harmonic polynomial of degree $(k-1)$). Then, the map $x \to h_{x,(k-1)}(\bar{w}_x)$ is continuous, and  extends continuously to $\overline{\Omega}$. Unfortunately, this extension is not so interesting because $\lim_{x \to y \in \Gamma} h_{x,(k-1)}(\bar{w}_x) = 0$ since $\bar{w}_x$ tends to $\bar{w}_y \in U_y$, so that $h_{y,(k-1)}(\bar{u}_y) = 0$.  See also the final comment in \cite{BeNP2016}. We will mainly use Assertions~(ii) and (iii).
\end{remark}%

\begin{remark}\label{R-L38-4}
In this Subsection~\ref{SS-hmn-33}, we considered the behavior of $\cZ(w_x)$  when $x$ tends to some fixed $y \in \Gamma$. The radii $r_0, r_E, r_1$ which appear in the proofs depend on $y$ and $E$. In the next sections we will need to take care of these constants for varying values of $y$.
\end{remark}%

\subsection{Behavior of the combinatorial types of the functions $u_y$}\label{SS-hmn-34}

\begin{lemma}\label{L-L33c}
Assume that $\Omega$ is simply connected. Let $U := U(\lambda_k)$ for some $k \ge 3$. Assume that $\dim U = (2k-2)$. [Assumptions~\ref{A-hmn3-0}].  Then, the following properties hold.
\begin{enumerate}[(i)]
  \item  The  combinatorial type of a generator $u_y$ of $U_y$ is constant in any component of $\Gamma_{(2k-3)}$ in the sense that the maps $y \mapsto \tau_{y}^{U}(\downarrow)$ and $y \mapsto \tau_{y}^{U}$ are constant in each  component of $\Gamma_{(2k-3)}$.
  \item \emph{Assume that} $\Gamma_{(2k-2)}$ \emph{is not empty}, and let $\eta \in \Gamma_{(2k-2)}$. The eigenfunctions $u_y$ have different combinatorial types on either sides of $\eta$.  Then, $\#\big( \Gamma_{(2k-2)} \big) \ge 2$.
  \item \emph{Assume that} $\Gamma_{(2k-2)}$ \emph{is not empty}. Let $\eta_1 \neq \eta_2$ be two points of $\Gamma_{(2k-2)}$ such that the open arc $\cA(\eta_1,\eta_2)$ is contained in $\Gamma_{(2k-3)}$. The combinatorial types $\tau_{\eta_1}$ and $\tau_{\eta_2}$ are different.
  \item \emph{Assume that} $\Gamma_{(2k-2)}$ \emph{is not empty},   then $\#\big( \Gamma_{(2k-2)} \big)$ is an even positive integer.
\end{enumerate}
\end{lemma}%

\begin{proof}[Proof of Lemma~\ref{L-L33c}]

\emph{Assertion~(i).} Let $C$ be a component of $\Gamma_{(2k-3)}$.  For $y \in C$, define the number $\alpha(y) = \tau_{y}^{U}(\downarrow) \in L_{(2k-3)}$.  Assume that the map $y \mapsto \alpha(y)$ is not locally constant. Then, there exists $y \in C$ and a sequence $y_n$ tending to $y$ in $C$ such that $\alpha(y_n) \neq \alpha(y):=a$. Since the map $\alpha$ takes finitely many values, after taking a subsequence if necessary, we may assume that $\alpha(y_n) = b \neq a$. Let $u_n := u_{y_n}$. The nodal interval of $\cZ(u_n)$ which emanates from $y_n$ tangentially to the ray $\omega_b$ hits the boundary at the point $z_n := z(y_n)$. Since the sequence $\set{u_n}$ converges to $u_y$ in the $C^m$ topology for any fixed $m$, taking subsequences if necessary, we may assume that $\cZ(u_n)$ converges to $\cZ(u_y)$ in the Hausdorff distance, and that $\set{z_n}$ converges to $z(y)$. On the other hand, we can apply the local structure theorem to the functions $u_n$ in a neighborhood of $y$: the arcs emanating from $y_n$ intersect a circle of radius $\varepsilon$
 (with $\varepsilon$ independent of $n$), at points $x_{n,j}, 1 \le j \le 2k -3$ and these points converge to the corresponding points $x_j, 1 \le j \le 2k -3$, for the function $u_y$. To prove that $\varepsilon$ can be taken independent of $n$ we use the fact that, for any fixed $m$, the derivatives of $u_n$ of order less than or equal to $m$ converge uniformly to the corresponding derivatives of $u_y$ so that the remainder term in Taylor's formula can be controlled independently of $n$, see the proof of the local structure theorem in   Section~\ref{S-lsbs}. The arc in $\cZ(u_n)$ between $x_{n,b}$ and $z_n$ must tend in the Hausdorff distance to the arc in $\cZ(u_y)$ between $x_b$ and $y$, and we get a contradiction since $b\neq a$. It follows that the map $\alpha$ is locally constant, hence constant, on the component $C$.  Since the map $y \mapsto \tau_{y}^{U}(\downarrow)$ is constant on $C$, the set $L_{(2k-3)} \sm \set{\tau_{y}^{U}(\downarrow)}$ is constant, and it suffices to look at the restriction of $\tau_{y}^{U}$ to this set. To prove that the map $y \mapsto \tau_{y}^{U}$ is locally constant on $C$ we can reproduce the arguments in the proof of  Property~\ref{propertyD4} or
Lemma~\ref{L-hmn-202}. This proves Assertion~(i).\quad \qedc \smallskip

\emph{Assertion~(ii).} Since $\eta$ is isolated in $\Gamma_{(2k-2)}$, it suffices to work locally near $\eta$. \emph{Assume by contradiction}, that the combinatorial type of $u_y$ is the same on either sides of $\eta$ for $y$ close to $\eta$.\smallskip

\noid In the framework of Paragraph~\ref{SSS-hmn-31bb}, let $E_{\eta}$ be a conformal mapping from $\bH$ to $\Omega$ whose extension to the boundary sends $0$ to $\eta$. Let $r_0$ be small enough so that $E_{\eta}((-r_0,r_0)\times \set{0})$ is a neighborhood of $\eta$ in $\Gamma$ whose intersection with $\Gamma_{(2k-2)}$ is reduced to $\set{\eta}$. Let $\set{v_t}$ be the associated family of functions given by \eqref{L-L32}. \smallskip

According to our assumption, for $t \neq 0$ small enough, the combinatorial type of $v_t$ is constant. For $t \neq 0$, let $\cS_{\mathrm{b}}(v_t) = \set{t,z(t)}$. According to Lemma~\ref{L-L33b}, when $t$ tends to $0$, the point $z(t)$ tends to $0$, with $z(t) < 0$ when $t > 0$, and  $z(t) > 0$ when $t < 0$.\medskip

\noid We first consider the simple case $k=4$ and $a=3$ displayed in Figure~\ref{F-hmn3-L33c-2}.  The numbers between brackets are the labels of the rays. The numbers between braces are the labels of the nodal domains according to their order of appearance along a small half-circle centered at $t$, moving counter-clockwise. The labeling word for the nodal domains of $v_t$ is $\ww_t = |1|2|1|3|4|3|$. From our assumption, it is constant for $t$ small. The nodal interval from $t$ to $z(t)$ separates the domain $\Omega$ into two connected components and bounds the nodal domains labeled $\{1\}$ and $\{3\}$.

\begin{figure}[!ht]
  \centering
  \includegraphics[width=0.99\textwidth]{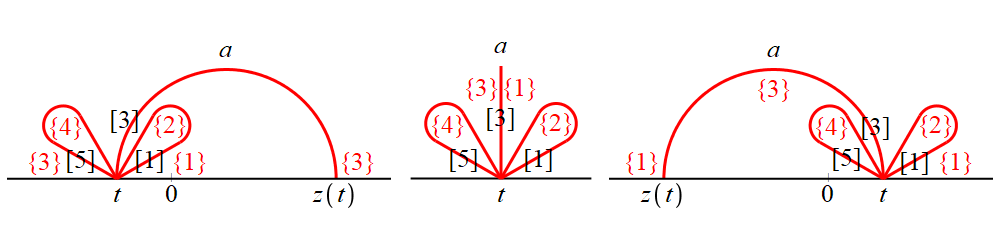}
  \caption{Example with $k=4$ and $a=3$}\label{F-hmn3-L33c-2}
\end{figure}

The combinatorial type of $v_t$ is given by
\begin{equation*}
\tau = \begin{pmatrix}
         \downarrow & 1 & 2 & 3 & 4 & 5 \\
         3 & 2 & 1 & \downarrow & 5 & 4 \\
       \end{pmatrix}.
\end{equation*}

The arguments in the proof of Lemma~\ref{L-L38} show that the combinatorial type of $v_{0,L}$, the limit of $v_t$ when $t$  tends to $0$ from the left,  is determined by the combinatorial type of $v_t$. When $t$ tends to $0$ from the left, $z(t)$ tends to $0$ from the right, and the nodal interval from $t$ to $z(t)$ closes up to form a loop $\gamma_{0,a}$ in the nodal set of $v_{0,L}$.  We write the combinatorial type of $v_{0,L}$ as
\begin{equation*}
\tau_{L} = \begin{pmatrix}
             0 & 1 & 2 & 3 & 4 & 5\\
             3 & 2 & 1 & 0 & 5 & 4 \\
           \end{pmatrix},
\end{equation*}
 so that the ray previously labeled $a$ (here $3$) keeps the same label.  Following the deformation of the nodal domains, when $t$ tends to zero from the left, we see that the nodal word $\ww_t$ yields the word $\ww_L = |3|1|2|1|3|4|3|$ for $v_{0,L}$.

\begin{figure}[!ht]
  \centering
  \includegraphics[width=0.99\textwidth]{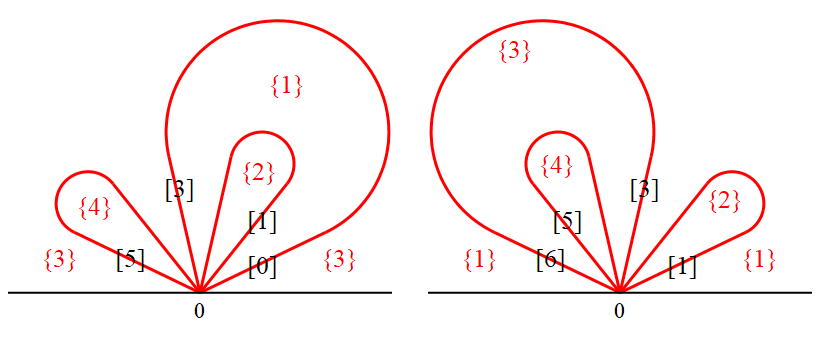}
  \caption{$k=4$ and $a=3$: $\cZ(v_{0,L})$ and $\cZ(v_{0,R})$}\label{F-hmn3-L33c-4}
\end{figure}

Similarly, when $t$ tends to $0$ from the right, we obtain a function $v_{0,R}$
whose combinatorial type is given by
\begin{equation*}
\tau_{R} = \begin{pmatrix}
             1 & 2 & 3 & 4 & 5 & 6 \\
             2 & 1 & 6 & 5 & 4 & 3 \\
           \end{pmatrix}
\end{equation*}
 and whose nodal set contains a loop $\gamma_{a,6}$ (the ray previously labeled $a$, here $a=3$, keeps the same label). Following the deformation of the nodal domains, when $t$ tends to zero from the right, we see that the nodal word $\ww_t$ yields the word $\ww_R = |1|2|1|3|4|3|1|$ for $v_{0,R}$. This is illustrated in Figure~\ref{F-hmn3-L33c-4}. \smallskip

The \emph{signature} $\sigma(\ww)$ of a word $\ww$ \index{Signature of a word} is the least rank, greater than or equal to $2$ at which the first letter of the word reappears (see Section~\ref{S-hmn2L} for more details). We obtain
\begin{equation*}
\ww_{L} = |3|1|2|1|3|4|3|, ~~\sigma(\ww_L)=5  \text{~~~~and~~~~} \ww_{R} = |1|2|1|3|4|3|1|, ~~\sigma(\ww_R)=3
\end{equation*}
respectively. This shows that the combinatorial types $\tau_{L}$ and $\tau_{R}$ are different, contradicting the fact that $v_{0,L} = v_{0,R}$ according to Lemma~\ref{L-L32}.\medskip

The general case follows similar lines.  It is detailed in Subsection~\ref{SS-hmn2L-P2ii}. In particular, this excludes the case $\#(\Gamma_{(2k-2)})=1$. We have proved Assertion~(ii).\quad \qedc \medskip

\emph{Assertion~(iii).} We give the proof on an example. \smallskip

Assuming that $\Gamma_{(2k-2)}$ is not empty, we have $\#(\Gamma_{(2k-2)}) \ge 2$. Consider  two conse\-cutive points $\eta_1$ and $\eta_2$ in $\Gamma_{(2k-2)}$, i.e.,  $\cA(\eta_1,\eta_2)$ is a connected component of $\Gamma_{(2k-3)}$. From Assertion~(i), we know that the combinatorial type of $u_y$ is constant in $\cA(\eta_1,\eta_2)$.  For $y \in \cA(\eta_1,\eta_2)$, Lemma~\ref{L-L33b} and Lemma~\ref{L-L38} imply that the combinatorial types of $u_{\eta_1}$, $u_y$ and  $u_{\eta_2}$ are determined once one of them is known. \smallskip

We work out the following example: $k=10$ and the combinatorial type $\tau_y$ of $u_y$ is given by
\begin{equation*}
\tau_y = \begin{pmatrix}
\downarrow & 1 & 2 & 3 & 4 & 5 & 6 & 7 & 8 & 9 & 10 & 11 & 12 & 13 & 14 & 15 & 16 & 17 \\
         11&  6 & 3 & 2 & 5 & 4 & 1 & 10 & 9 & 8 & 7 & \downarrow & 15 & 14 & 13 & 12 & 17& 16\\
         \end{pmatrix}.
\end{equation*}

The nodal set $\cZ(u_y)$ is displayed in Figure~\ref{F-hmn3-L33c-iv-1x3-rays}, middle sub-figure. The labels between brackets are the labels of the rays at $y$. Recall that $\cS_{\mathrm{b}}(u_y) = \set{y,z(y)}$ with $z(y) \in \Gamma$ and $z(y) \neq y$. \smallskip

We have chosen $k=10$, i.e., $\rho(u_y,y) = 17$, so that the example looks as general as possible, see Section~\ref{S-hmn2L}. To draw a readable figure and accommodate the seventeen rays tangent to $\cZ(u_y)$ at $y$ and the various labels,  we have opened the half-plane like a fan.\smallskip

\begin{figure}[!ht]
  \centering
  \includegraphics[width=0.99\textwidth]{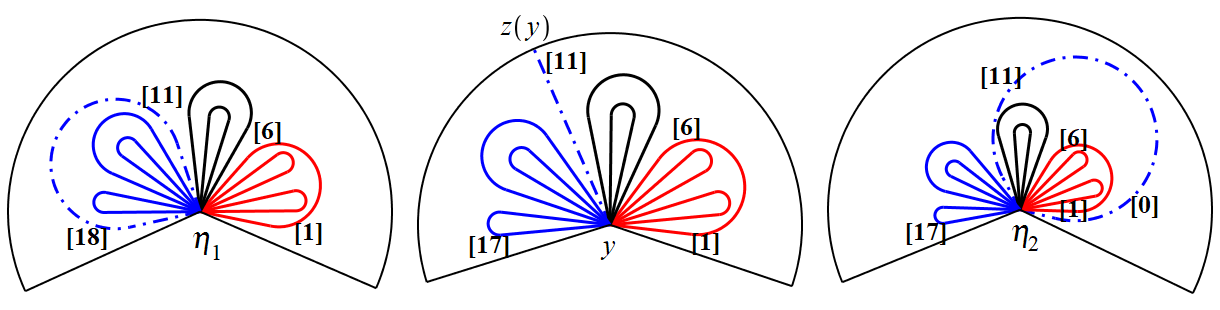}
  \caption{Transition from $u_{\eta_1}$ to $u_{\eta_2}$ (here $k=10$)}\label{F-hmn3-L33c-iv-1x3-rays}
\end{figure}

According to Lemma~\ref{L-L33b}, when $y \in \cA(\eta_1,\eta_2)$ moves clockwise to $\eta_1$, the point $z(y)$ moves counter-clockwise to $\eta_1$, and $u_y$ tends to some $u_{\eta_1} \in U_{\eta_1}$. The proof of Lemma~\ref{L-L38} shows that the loops in $\cZ(u_y)$ move continuously as $y$ moves, that the nodal interval $\delta_y^{z(y)} \subset \cZ(u_y)$ from $y$ to $z(y)$ tends to a loop $\gamma^{u_{\eta_1}}_{11,18}\subset \cZ(u_{\eta_1})$, and that the combinatorial type of  $u_{\eta_1}$ is
\begin{equation*}
\tau_{\eta_1} = \begin{pmatrix}
1 & 2 & 3 & 4 & 5 & 6 & 7 & 8 & 9 & 10 & 11 & 12 & 13 & 14 & 15 & 16 & 17 & 18 \\
6 & 3 & 2 & 5 & 4 & 1 & 10 & 9 & 8 & 7 & 18 & 15 & 14 & 13 & 12 & 17& 16 & 11 \\
         \end{pmatrix}.
\end{equation*}

This is illustrated in the left sub-figure of Figure~\ref{F-hmn3-L33c-iv-1x3-rays}. The loops in $\cZ(u_y)\sm \delta_y^{z(y)}$ and in $\cZ(u_{\eta_1})\sm \gamma^{u_{\eta_1}}_{11,18}$ have the same combinatorics and look very much alike.\smallskip

When $y$ moves counter-clockwise to $\eta_2$, $z(u)$ moves clockwise to $\eta_2$, and the proof of Lemma~\ref{L-L38} shows that the nodal interval $\delta_y^{z(y)}$ in $\cZ(u_y)$ closes up as a loop $\gamma^{u_{\eta_2}}_{0,11}$ in $\cZ(u_{\eta_2})$ tangent to a ray labeled $[0]$. This is illustrated in the right sub-figure  (upon arriving at $\eta_2$ a new ray pops up and we label it $\omega_{0}$ so that the labels of the other rays do no change, and we retain the counter-clockwise labeling of rays). This is a consequence of the local structure theorem applied to $u_{\eta_2}$, the limit of $u_y$ when $y$ tends to $\eta_2$ from the left. Then,
\begin{equation*}
\tau_{\eta_2} = \begin{pmatrix}
0 & 1 & 2 & 3 & 4 & 5 & 6 & 7 & 8 & 9 & 10 & 11 & 12 & 13 & 14 & 15 & 16 & 17\\
11 & 6 & 3 & 2 & 5 & 4 & 1 & 10 & 9 & 8 & 7 & 18 & 15 & 14 & 13 & 12 & 17& 16\\
         \end{pmatrix}.
\end{equation*}

The combinatorial types $\tau_{\eta_1}$ and $\tau_{\eta_2}$ look different. In order to prove that they are indeed different, we look at how the nodal domains of $u_y$ deform when $y$ moves in $\cA(\eta_1,\eta_2)$. We first choose $r$ small enough so that the local structure theorem holds (for the eigenfunctions we are interested in). Then, we label the nodal domains of $u_y$ according to their order of appearance while moving counter-clockwise along $C_{+}(y,r)$, taking into account that the intersection of a given nodal domain with $C_{+}(y,r)$ may consist of several disjoint intervals.  This labeling of the nodal domains of $u_y$ is displayed in the top sub-figure of Figure~\ref{F-hmn3-L33c-iv-2nd}. One can view the labeling as a map from the set of intervals determined by $\cZ(u_y)$ on $C_{+}(y,r)$, $\set{1, \ldots, 18}$, to the set of nodal domains of $u_y$ which has $10$ elements. Equivalently, one can view the labeling as a word of length $18$ in the $10$ letters $1, \ldots , 10$, the labels of the nodal domains, separated by a vertical bar $|$.  The labeling word $\ww_y$ corresponding to $u_y$ in the example appears at the bottom of the top sub-figure in Figure~\ref{F-hmn3-L33c-iv-2nd}
\begin{equation*}
\cW_y = |1|2|3|2|4|2|1|5|6|5|1|7|8|9|8|7|10|7|.
\end{equation*}

The nodal interval $\delta_y^{z(y)}$ divides $\Omega$ into two components, one component on the right of the nodal interval which contains $5$ loops and hence $6$ nodal domains, labeled from $1$ to $6$; and another component on the left of the nodal interval which contains $3$ loops and hence $4$ nodal domains labeled from $7$ to $10$. The nodal interval $\delta_{y}^{z(y)}$ is the common boundary of the nodal domains labeled $1$ and $7$. \smallskip

\begin{figure}[!ht]
  \centering
  \includegraphics[width=0.99\textwidth]{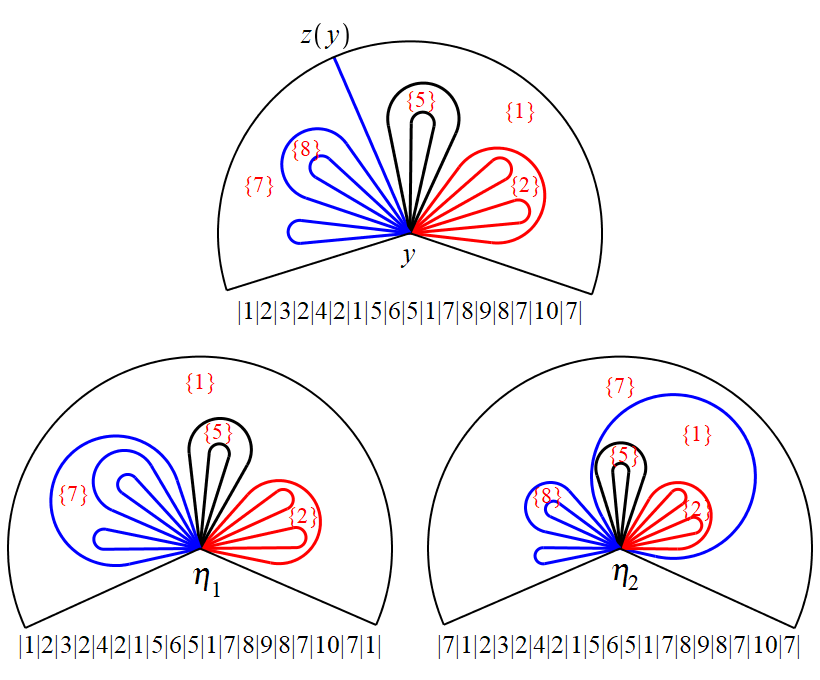}
  \caption{Words for $u_{\eta_1}$ and $u_{\eta_2}$ deduced
  from the word for $u_y$}\label{F-hmn3-L33c-iv-2nd}
\end{figure}

When $y$ moves, the nodal domains of $u_y$ deform but the corresponding labeling word does not change. When $y$ reaches $\eta_1$, the nodal interval $\delta_y^{z(y)}$ closes up to form the loop $\gamma^{u_{\eta_1}}_{11,18}$ in $\cZ(u_{\eta_1})$ and the nodal domain labeled $1$ contains $\Gamma$ in its boundary. When $y$ reaches $\eta_2$, the nodal interval $\delta_y^{z(y)}$ closes up to form the loop $\gamma^{u_{\eta_2}}_{0,11}$ in $\cZ(u_{\eta_2})$ and the nodal domain labeled $7$ contains $\Gamma$ in its boundary. The corresponding words appear at the bottom of the bottom sub-figures in Figure~\ref{F-hmn3-L33c-iv-2nd},
\begin{equation*}
\begin{array}{l}
\cW_{\eta_1} = |1|2|3|2|4|2|1|5|6|5|1|7|8|9|8|7|10|7|1| = \cW_y|1|,\\[5pt]
\cW_{\eta_2} = |7|1|2|3|2|4|2|1|5|6|5|1|7|8|9|8|7|10|7| = |7|\cW_y\,.
\end{array}%
\end{equation*}

The word $\cW_{\eta_1}$ is obtained from the word $\cW_y$ by adding the letter $1$ at the end; the word $\cW_{\eta_2}$ by adding the letter $7$ at the beginning. In $\cW_{\eta_1}$, the first letter of the word is $1$ and it first reappears as the  7th letter. In $\cW_{\eta_2}$, the first letter is $7$ and it first reappears is as the 13th letter, so that for the signatures, $\sigma(\cW_{\eta_1}) \neq \sigma(\cW_{\eta_2}) $. This shows that the functions $u_{\eta_1}$ and $u_{\eta_2}$ have different combinatorial types. This proves Assertion~(iv) for the above example. \quad \qedc \smallskip

The general case follows similar lines, see Subsection~\ref{SS-hmn2L-P2iv}.  \medskip

 \emph{Assertion~(iv).}~ Look at the example in Figure~\ref{F-hmn3-L33c-iv-2nd}. The function $u_y$ changes sign across the nodal interval $\delta_y^{z(y)}$, i.e., it has different signs in the nodal domains labeled $1$ and $7$. For $i=1,2$,  the function $\breve{u}_{\eta_i}$ only vanishes at $\eta_i$ and has a constant sign on $\Gamma\sm\set{\eta_i}$. Letting $y$ tend to $\eta_1$, resp. $\eta_2$, in the above argument, we infer that $\breve{u}_{\eta_1} \cdot \breve{u}_{\eta_2} < 0$  on $\Gamma \sm \set{\eta_1,\eta_2}$. This property is general and does not depend on the particular example.  According to Lemma~\ref{L-L38}\,(iv), we have a globally defined continuous function  $\Gamma \ni y \mapsto u_y \in \bS(U)$. In view of the previous property, this implies that the number of point in $\Gamma_{(2k-2)}$ is even. \quad \qedc\smallskip

The proof of Lemma~\ref{L-L33c} is complete.
\end{proof}

\FloatBarrier

\section[Nodal Sets of $\lambda_k$-Eigenfunctions]{Nodal Sets of $\lambda_k$-Eigenfunctions under  Assumptions~5.2}\label{S-gam0}

In this section, we continue to work under Assumptions~\ref{A-hmn3-0}.

\subsection{$\cZ(w_x)$ for $x$ close to $y \in \Gamma_{(2k-3)}$, local picture}
\label{SS-gam01}

In order to describe the combinatorial type of the eigenfunction $w_x$ when $x \in \Omega$ is close to some boundary point $y \in \Gamma_{(2k-3)}$, we first review the description of $\cZ(u_y)$. \smallskip

\fbox{1} Fix some $y \in \Gamma_{(2k-3)}$. The nodal set $\cZ(u_y)$ only hits $\Gamma$ at $y$ and some $z(y)\neq y$. Fix a conformal mapping $E$ from $\bH$ to $\Omega$ such that $E(0)=y$, $E(\zeta) = z(y)$.  According to the proof of Lemma~\ref{L-L38} and Section~\ref{S-lsbs}, $\cZ(u_y) \subset E\big( D_{+}(0,r_0) \big)$ for some  $r_0 > 0$.  We now work locally in $\bH$ rather than in $\Omega$, using the conformal mapping $E$. For the sake of simplicity, we identify the function $u_y\circ E$ with the function $u_y$ and, for $x \in \Omega$, the function $w_x\circ E$ with $w_x$.  We use the same notation $D_{+}(y,r)$, resp. $C_{+}(y,r)$, to denote $D_{+}(0,r)$, resp. $C_{+}(0,r)$, and their images under $E$. With these identifications, we write $\cZ(u_y) \subset D_{+}(y,r_0)$. Then, for $x$ close enough to $y$, we also have $\cZ(w_x) \subset D_{+}(y,r_0)$. \smallskip

\fbox{2} Recall from Subsection~\ref{SS-hmn-31b}, that the nodal set $\cZ(u_y)$ can be described in terms of the combinatorial type of the eigenfunction $u_y$,
\begin{equation}\label{E-gam0-2u}
\tau := \tau_{u_y} = \begin{pmatrix}
          1 & \ldots & (a-1) & a & (a+1) & \ldots & (2k-3) \\
          \tau(1) & \ldots & \tau(a-1) & \downarrow & \tau(a+1) & \ldots
          & \tau(2k-3) \\
       \end{pmatrix}.
\end{equation}
 We can add to $\tau$ a last, resp. first, column
$\begin{pmatrix} \downarrow \\  a \\ \end{pmatrix}$ to take into account the fact that
\begin{equation*}\label{E-gam0-2uL}
\tau_{\eta_1} = \begin{pmatrix}
          1 & \ldots & (a-1) & a & (a+1) & \ldots & (2k-3) & (2k-2) \\
          \tau(1) & \ldots & \tau(a-1) & (2k-2) & \tau(a+1) & \ldots
          & \tau(2k-3) & a \\
       \end{pmatrix}
\end{equation*}
and
\begin{equation*}\label{E-gam0-2uR}
\tau_{\eta_2}  = \begin{pmatrix}
          0 &1 & \ldots & (a-1) & a & (a+1) & \ldots & (2k-3) \\
          a & \tau(1) & \ldots & \tau(a-1) & 0 & \tau(a+1) & \ldots
          & \tau(2k-3) \\
       \end{pmatrix},
\end{equation*}
when $y \in \cA(\eta_1,\eta_2)$, with $\eta_1 \neq \eta_2$ two consecutive points in $\Gamma_{(2k-2)}$,  so that the initial rays keep their labels while
\begin{equation*}
\tau_{\eta_1}(a) = (2k-2) \text{~and~} \tau_{\eta_1}(2k-2) = a, \text{~resp.~} \tau_{\eta_2}(a)=0 \text{~and~} \tau_{\eta_2}(0)=a.
\end{equation*}
The nodal set $\cZ(u_y)$ contains a nodal interval $\delta_{y,a}$ which emanates from $y$ tangentially to the ray $\omega_{y,a}$ at $y$, for some $a \in J:= \set{1,\ldots,(2k-3)}$, and hits the boundary $\Gamma$ at the point $z(y) \neq y$. The nodal interval $\delta_{y,a}$ separates $\Omega$ into two components $\Omega_{y,-}$ on the right of $\delta_{y,a}$, and $\Omega_{y,+}$ on the left. For $1 \le j \le (a-1)$, the rays $\omega_{y,j}$  point inside $\Omega_{y,-}\,$. For $(a+1) \le j \le (2k-3)$, the rays $\omega_{y,j}$ point inside $\Omega_{y,+}$. The map $\tau$ leaves the subsets $J_{a,-} := \set{1,\ldots,(a-1)}$ and $J_{a,+} := \set{(a+1),\ldots,(2k-3)}$ globally invariant. Its restrictions $\tau_{\pm}$ to $J_{a,\pm}$ describe two bouquets of nodal loops $\cB_{y,\pm} := \set{\gamma^{y}_{j,\tau(j)}}_{j \in J_{a,\pm}}$ at the point $y$, contained respectively in $\Omega_{y,\pm}$. The nodal set $\cZ(u_y)$ is the wedge sum of the bouquets of loops $\cB_{y,\pm}$ with the nodal interval $\delta_{y,a}$.  There are $n_{y,-}:=(a+1)/2$ nodal domains of $u_y$ contained in $\Omega_{y,-}$ and $n_{y,+}:=[k-(a+1)/2]$ nodal domains contained in $\Omega_{y,+}$.  The nodal interval $\delta_{y,a}$ is a partial common boundary to the nodal domains $D_1$ and $D_{1+n_{y,-}}\,$. \smallskip

 We now apply the structure theorem of Section~\ref{S-lsbs} to the function $u_y$ at the point $y$. More precisely, we apply Propositions~\ref{P-lsbs5-2d} (Dirichlet case) and \ref{P-lsbs5-2n} (Robin case). We also retain the definitions and notation of this section. In particular, we use the notation $\cG_{y,j}(r)$ for the ``$\cG\cR\cB$-arcs'', defined by Equations~\eqref{E-lsbs5-18} (Dirichlet case) and \eqref{E-lsbs5-18n} (Robin case). Similar arguments can be found in the proof of Lemma~\ref{L-L38}.\smallskip

We are given some angle $\alpha_1 \in (0, \pih)$, and we have a radius  $r_{1,d}$ or $r_{1,n}$, given by \eqref{E-lsbs5-16} or \eqref{E-lsbs5-16n}. We now work in $D_{+}(y,r_1)$, with the radius $r_1$ satisfying
\begin{equation}\label{E-gam0-3}
\left\{
\begin{array}{l}
r_1 \le r_{1,d} \text{~or~} r_{1,n} \text{~(Dirichlet or Robin)}\\[5pt]
r_1 \le r_E \text{~(the energy radius given by Lemma~\ref{L-hmn3-L38-e}) }\\[5pt]
z(y) \not \in D_{+}(y,2r_1).
\end{array}%
\right.
\end{equation}

This choice of $r_1$ implies that the half disk $D_{+}(y,r)$ satisfies the energy inequality \eqref{E-hmn3-L38-mu} of Lemma~\ref{L-hmn3-L38-e}, for any $r \le r_1$. Fix some $r_2$, such that $0 < r_2 < r_1$. \smallskip

\begin{enumerate}[$\diamond$]
  \item In the Dirichlet case, we have the ``$\cG\cR\cB$-arcs'' \eqref{E-lsbs5-18} associated with the rays \eqref{E-lsbs5-12}, and the inequalities  \eqref{E-lsbs5-18a} satisfied by $u_y$ (alias $u_y\circ E = v$).
  \item In the Robin case, we have the ``$\cG\cR$-arcs'' \eqref{E-lsbs5-18n} associated with the rays \eqref{E-lsbs5-12n}, and the inequalities \eqref{E-lsbs5-18na} satisfied by $u_y$.
\end{enumerate}

The ``$\cG\cR\cB$-arcs'' appear in  Figure~\ref{F-gam0-0}, left image for the Dirichlet boundary condition, right image for the Robin boundary condition\footnote{The rays which appear in the other figures of this section are for the Dirichlet boundary condition. Except for the rays, the figures for the Neumann or Robin boundary condition are similar to the figures for the Dirichlet boundary condition.}.\smallskip

\begin{figure}[!ht]
  \centering
  \includegraphics[width=0.9\textwidth]{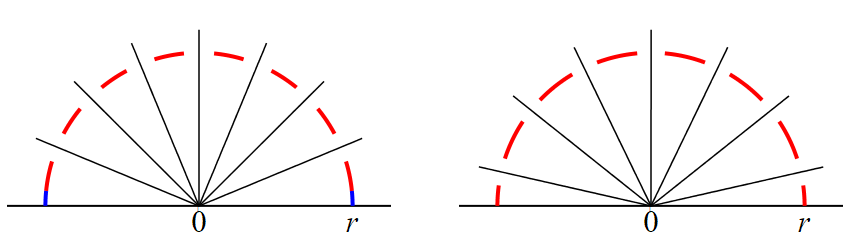}
  \caption{$\cG\cR\cB$-arcs for $u_y$, here $k=5$, $\rho(u_y,y) = 7$}\label{F-gam0-0}
\end{figure}

According to Propositions~\ref{P-lsbs5-2d} and \ref{P-lsbs5-2n}, for $0 < r < r_1$, the nodal arc $\delta_{y,j}$ emanating from $y$  {tangentially to the ray $\omega_{y,j}$} intersects the curve $C_{+}(y,r)$ at a unique point $A_{y,j}(r) = [r,\omegat_{y,j}(r)] \in \cG_{y,j}(r)$  in polar coordinates,  with $\omega_j - \alpha < \omegat_{y,j}(r) < \omega_j + \alpha$, and the nodal set $\cZ(u_y)$ does not intersect $C_{+}(y,r)$ elsewhere.  Here, $ \alpha = \alpha_1/ \ord(u_y,y)$ with $\alpha_1 \in (0, \frac{\pi}{8})$, see Subsection~\ref{SS-lsbs5}.\smallskip

\noi \emph{Sketch of the proof of the above properties}.~
Using the Taylor expansion of $u_y$, the fact that the arcs
$\delta_{y,j}$ meet the arcs $\cG_{y,j}(r)$ (white arcs on the
half-circles in the figure) follows from the intermediate value theorem and the
choice of $r_1$. The fact
that there is precisely one crossing point in each arc $\cG_{y,j}(r)$ follows
from the fact that the derivative of $u_y$ along the circle is  nonzero
in these arcs. The fact that the nodal set $\cZ(u_y)$ does not
meet the arcs $\cR_{y,j}(r)$ and $\cB_{y,k}(r)$ contained in $C_{+}(y,r) \sm
\big( \bigcup\, \cG_{y,j}(r) \big)$ follows
from the choice of $r_1$ and the fact that either  $u_y$ or its
derivative along the circle are controlled away from $0$ in these
arcs (the details are given in Section~\ref{S-lsbs} and in the
proof of Lemma~\ref{L-L38}). \quad \qedc \smallskip

The nodal set $\cZ(u_y)$ (viewed in $\bH$) is displayed in Figure~\ref{F-gam0-2}, with
\begin{equation*}
k = 5, a = 3, \text{~and~} \tau = \begin{pmatrix}
                                    1 & 2 & 3 & 4 & 5 & 6 & 7 \\
                                    2 & 1 & \downarrow & 5 & 4 & 7 & 6 \\
                                  \end{pmatrix}.
\end{equation*}
\begin{figure}[!ht]
  \centering
  \includegraphics[width=0.5\textwidth]{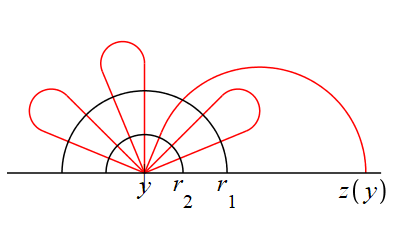}
  \caption{The nodal sets $\cZ(u_y)$ (here $k=5, a=3$)}\label{F-gam0-2}
\end{figure}
Look at $D_{+}^{c}(y,r_1)$, the complement of $D_{+}(y,r_1)$. The nodal arcs in $\cZ(u_y) \cap D_{+}^{c}(y,r_1)$ are pairwise disjoint and compact, so that they have disjoint neighborhoods of size $\varepsilon$,  for some $\varepsilon > 0$, $\cU^{\varepsilon}_{y,a}$, $\cU^{\varepsilon}(\tau_{-}) := \cup_{j=1}^{a-1} \cU^{\varepsilon}_{y,j}$, and $\cU^{\varepsilon}(\tau_{+}) := \cup_{j=a+1}^{2k-3} \cU^{\varepsilon}_{y,j}$. This is illustrated in Figure~\ref{F-gam0-4}.
\begin{figure}[!ht]
  \centering
  \includegraphics[width=0.5\textwidth]{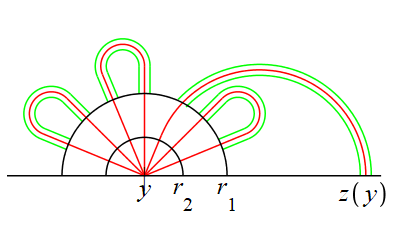}
  \caption{Neighborhoods of $\cZ(u_y)$ outside $D_{+}(y,r_1)$}\label{F-gam0-4}
\end{figure}
In the annulus $D_{+}(y,r_1) \sm D_{+}(y,r_2)$, we consider sectors containing the rays, as illustrated in Figure~\ref{F-gam0-6}. According to the local structure theorem for $u_y$, the intersection $\cZ(u_y) \cap \big( D_{+}(y,r_1) \sm D_{+}(y,r_2) \big)$ is contained in these sectors.
\begin{figure}[!ht]
  \centering
  \includegraphics[width=0.4\textwidth]{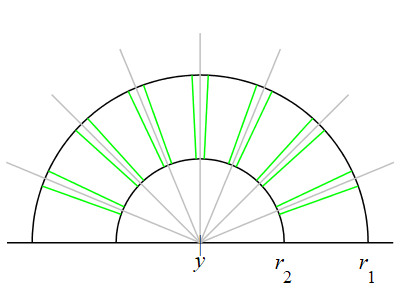}
  \caption{Sectors in $D_{+}(y,r_1) \sm D_{+}(y,r_2)$}\label{F-gam0-6}
\end{figure}
At the boundary, we consider the function $\breve{u}_y$ which vanishes precisely at the points $y$ and $z(y)$, and changes sign at both points. Fix some $\beta$, $0 < \beta < r_2$, such that $\cA(y;\beta) \subset \cA(y;r_2)$ and $\cA(z(y);\beta) \cap \cA(y;r_1) = \emptyset$, see Figure~\ref{F-gam0-7a}.
\begin{figure}[!ht]
  \centering
  \includegraphics[width=0.7\textwidth]{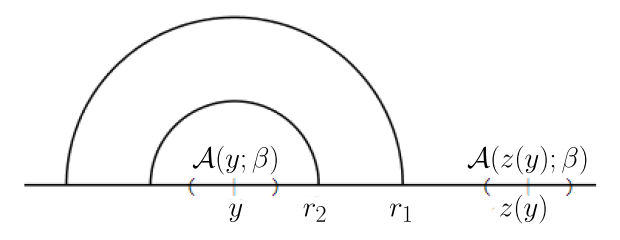}
  \caption{The setting at $y$}\label{F-gam0-7a}
\end{figure}

\fbox{3} We now look at the nodal sets $\cZ(w_x)$ when $x$ is close to $y$,  making use of the description of $\cZ(u_y)$ in  \fbox{2}.

Since $w_x$ tends to $u_y$ $C^1$-uniformly when $x$ tends to $y$, for $x$ close enough to $y$, the function $\breve{w}_x$ vanishes precisely once in both arcs $\cA(y;\beta)$ and $\cA(z(y);\beta)$, and only there, so that $\cS_{\mathrm{b}}(w_x)= \set{y(x),z(x)}$.  According to Lemma~\ref{L-L37}, the nodal set $\cZ(w_x)$ is the wedge sum of two nodal intervals, $\delta_x^{y(x)}$ from $x$ to $y(x)$ and $\delta_x^{z(x)}$ from $x$ to $z(x)$, and a $(k-2)$-bouquet of loops at $x$. \smallskip

Since $w_x$ tends to $u_y$ $C^1$-uniformly when $x$ tends to $y$, for $r_2 < r < r_1$, and for $x$ close enough to $y$, the function $w_x$ satisfies inequalities similar to the inequalities \eqref{E-hmn3-L38-vd} or \eqref{E-hmn3-L38-vr} satisfied by $u_y$ (see the proof of Lemma~\ref{L-L38}). It follows that $\cZ(w_x) \cap C_{+}(y,r) \subset \bigcup_{j=1}^q \, \cG_{y,j}(r)$ and $\cZ(w_x)$ crosses each $\cG_{y,j}(r)$ precisely once,  at a point denoted by $A_{x,j}(r)$. Since $z(x)$ lies outside $D_{+}(y,r_1)$, the nodal interval $\delta_x^{z(x)}$ must cross $C_{+}(y,r)$ precisely once. For energy reasons (choice of $r_1$), each nodal loop in $\cZ(w_x)$ intersects $C_{+}(y,r)$ precisely twice. Counting the points in $\cZ(w_x) \cap C_{+}(y,r)$, it  follows that the nodal interval $\delta_x^{y(x)}$, from $x$ to $y(x)$, does not cross $C_{+}(y,r)$ for $r_2 < r < r_1$, and hence is contained in $D_{+}(y,r_2)$. \smallskip

Given any $x \in \Omega$, there is a priori no natural labeling of the star\footnote{Recall that the star at $x$ is the collection of rays tangent to $\cZ(w_x)$ at the point $x$.} of $w_x$ at the point $x$. Assuming that $x$ is close enough to some $y \in \Gamma_{(2k-3)}$, the situation is different.  We have indeed  identified the nodal interval $\delta_x^{z(x)}$ which crosses $C_{+}(y,r)$ in $\cG_{y,a}(r)$, and we label $\omega_{x,a}$ the corresponding ray at $x$ accordingly. This gives us a natural labeling of the rays at $x$: we label the rays $\omega_{x,(a-1)}$ to $\omega_{x,1}$ counter-clockwise starting from $\omega_{x,a}$, and $\omega_{x,(a+1)}$ to $\omega_{x,(2k-2)}$ clockwise starting from $\omega_{x,a}$. In this labeling, we denote by $\omega_{x,b}$ the ray tangent to $\delta_x^{y(x)}$, for some $b \in \set{1,\ldots,(2k-2)} \sm \set{a}$. \smallskip

Since $w_x$ tends to $u_y$ in the $C^1$ topology, for $x$ close enough to $y$, $\cZ(w_x) \cap D_{+}^{c}(y,r_1)$ is contained in the neighborhood $\cU^{\varepsilon}(u_y) = \cup_{j=1}^{(2k-3)} \cU^{\varepsilon}_{y,j}$.\smallskip

Properties~\ref{P-gam0-2} summarize the above statements.

\begin{properties}\label{P-gam0-2}
Under Assumptions~\ref{A-hmn3-0},  let $y \in \Gamma_{(2k-3)}$ and let $\omega_{y,a}, 1 \le a \le (2k-3)$ be the ray at $y$ tangent to the nodal interval $\delta_y^{z(y)}$ in $\cZ(u_y)$ going from $y$ to $z(y)$. For $x$ close enough to $y$, the nodal set $\cZ(w_x)$ has the following properties.
\begin{enumerate}[(i)]
\item  There is one nodal interval, denoted by $\delta_x^{z(x)}$, going from $x$ to a point $z(x) \in \Gamma$ close to $z(y)$. We denote the ray at $x$ tangent to this nodal interval by  $\omega_{x,a}$, so that $\delta_x^{z(x)} = \delta_{x,a}$, the nodal interval in $\cZ(w_x)$ emanating from $x$  tangentially to $\omega_{x,a}$. For each $r_2 \le r \le r_1$, $\delta_{x,a}$ crosses $C_{+}(y,r)$ at exactly one point $A_{x,a}(r)$ in $\cG_{y,a}(r)$. In $D_{+}^{c}(y,r_2)$, $\delta_{x,a}$ is ``close'' to $\delta_{y,a}$, in the sense that it is contained in $\cU^{\varepsilon}_{y,a}$.
 \item  The ray $\omega_{x,a}$ induces a natural labeling of the rays at $x$.
  \item There is one nodal interval, denoted by $\delta_x^{y(x)}=\delta_{x,b}$, from $x$ to  a point $y(x) \in \Gamma$ close to $y$. This nodal interval emanates from $x$ tangentially to a ray, denoted by $\omega_{x,b}$, for some $b \neq a$. This nodal interval is entirely contained in $D_{+}(y,r_2)$.
  \item There are $(k-2)$ loops $\gamma^{x}_{j,\tau_x(j)}$ for some map $\tau_x$ on the set of rays at $x$, minus the pair $\set{\omega_{x,a},\omega_{x,b}}$.  Each loop crosses $C_{+}(y,r)$ at exactly two points, $A_{x,j}(r)$ in $\cG_{y,j}(r)$ and $A_{x,\tau(j)}(r)$ in $\cG_{y,\tau_x(j)}(r)$.
\end{enumerate}
\end{properties}

\begin{figure}[!ht]
  \centering
  \includegraphics[width=0.9\textwidth]{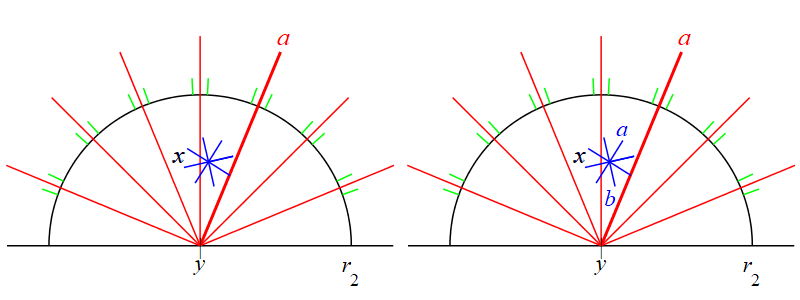}
  \caption{$k=5$: the star at $x$ and the rays $\omega_{x,a}$ and $\omega_{x,b}$}\label{F-gam0-8a}
\end{figure}

\begin{lemma}\label{L-gam0-2}
Let $y \in \Gamma_{(2k-3)}$. Under Assumptions~\ref{A-hmn3-0}, the combinatorial type of $u_y$ determines the combinatorial type of $w_x$ when $x$ is close enough to $y$.
\end{lemma}%

\begin{proof} We use Properties~\ref{P-gam0-2}.  For $x$ close enough to $y$, we follow  the nodal interval $\delta^{-1}_{x,b} \subset \cZ(w_x)$ from $y(x)$ to $x$, and then the nodal interval $\delta_{x,a} \subset \cZ(w_x)$ from $x$ to $z(x)$. The corresponding arc $\delta_{x,a}\circ \delta^{-1}_{x,b}$ from $y(x)$ to $z(x)$, through $x$, divides the simply connected domain $\wb{D}_{+}(y,r)$ into two connected components, say $\Omega_{x,\mp}$ respectively on the right/left of the arc $\delta_{x,a}\circ \delta^{-1}_{x,b}$.  Since the nodal interval $\delta_{x,b}$ is contained in $D_{+}(y,r_2)$, the domain $\Omega_{x,-}$ contains the points $A_{x,j}(r)$ for $1 \le j \le (a-1)$, and the domain $\Omega_{x,+}$ contains the points $A_{x,j}(r)$ for $(a+1) \le j \le (2k-3)$.

\begin{claim}\label{C-gam0-2C}
In the natural labeling of the rays at $x$, $b = (2k-2)$ and, for all $j \in \set{1,\ldots, (2k-3)} \sm \set{a}$, the nodal arc $\delta_{x,j}  \subset \cZ(w_x)$ intersects $C_{+}(y,r)$ at the point $A_{x,j}(r)$.
\end{claim}%

\pf If $b \neq (2k-2)$ there would exist some $j \in \set{1,\ldots, (2k-2)}\sm \set{a,b}$ such that the nodal arc $\delta_{x,j}$ intersects $ \delta_{x,a}\circ \delta^{-1}_{x,b}$ inside $D_{+}(y,r)$,  away from $x$, a contradiction with the fact that $\cS_{\mathrm{i}}(w_x) = \set{x}$. \smallskip

Assume that $\delta_{x,(a-1)}$  intersects $C_{+}(y,r)$ at the point $A_{x,j}(r)$, with $j \le (a-2)$. Then the point $A_{x,(a-1)}(r)$ would be on $\delta_{x,k}$ for some $k \le (a-2)$. The arcs $\delta_{x,(a-1)}$ and $\delta_{x,k}$ would therefore intersect which is not possible because $x$ is the only interior singular point of $w_x$, see Figure~\ref{F-gam0-7b}. We can then reason recursively with $(a-2)$, $(a-3) \ldots 1$. The proof is similar for $(a+1) \le j \le (2k-3)$. \quad \qedc \smallskip

\begin{figure}[!ht]
  \centering
  \includegraphics[width=0.7\textwidth]{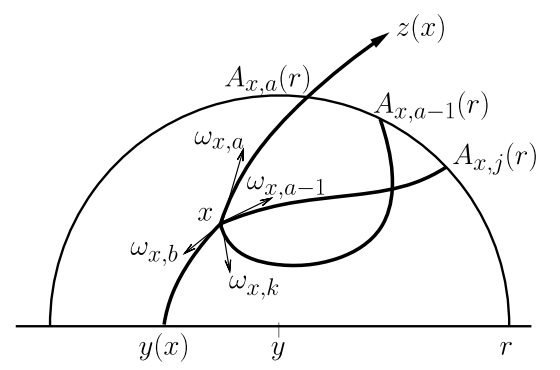}
  \caption{ Claim~\ref{C-gam0-2C}: prohibited situation}\label{F-gam0-7b}
\end{figure}

\begin{figure}[!ht]
  \centering
  \includegraphics[width=0.9\textwidth]{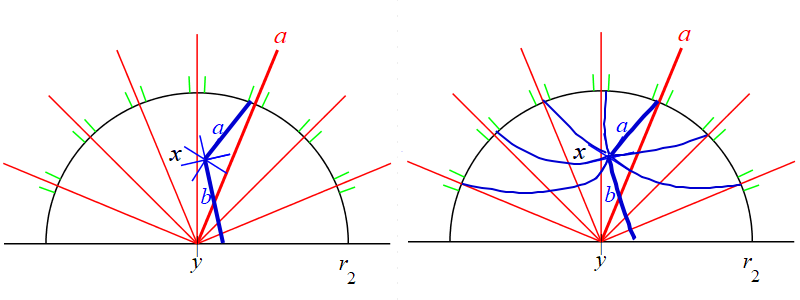}
  \caption{$k=5$: star at $x$ with $\delta_{x,b}^{-1} \circ \delta_{x,a}\,$, and $\cZ(w_x)$ near $x$}\label{F-gam0-8b}
\end{figure}

Outside $D_{+}(y,r_1)$, $\cZ(w_x) \cap D^c_{+}(y,r_1)\subset \cU^{\varepsilon}(u_y)$, and we conclude that the combinatorial type of $w_x$, in the above labeling of rays at $x$, is given by
\begin{equation}\label{E-gam0-2w}
\tau_{w_x} = \begin{pmatrix}
1 & \ldots & (a-1) & a & (a+1) & \ldots & (2k-3) & b  \\
\tau(1) & \ldots & \tau(a-1) & \downarrow_{z(x)} & \tau(a+1) & \ldots & \tau(2k-3)  & \downarrow_{y(x)} \\
             \end{pmatrix},
\end{equation}
where $b = (2k-2)$,
and
\begin{equation*}
\left\{
\begin{array}{l}
\tau_{w_x} = \tau_{u_y}=\tau \text{~in~} \set{1,\ldots,(a-1)} \cup \set{(a+1),\ldots,(2k-3)}\\[5pt]
\tau_{w_x}(a) = \, \downarrow_{z(x)} \text{~and~}
\tau_{w_x}(b) = \, \downarrow_{y(x)}.
\end{array}%
\right.
\end{equation*}

This is illustrated in Figure~\ref{F-gam0-14}.

\begin{figure}[!ht]
  \centering
  \includegraphics[width=0.9\textwidth]{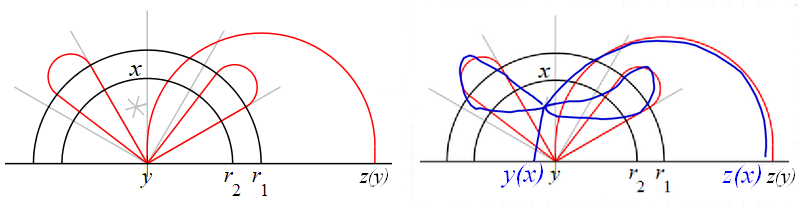}
  \caption{$k=4, a=3$: nodal sets $\cZ(u_y)$ and $\cZ(w_x)$}\label{F-gam0-14}
\end{figure}

The proof of Lemma~\ref{L-gam0-2} is complete.
\end{proof}

\begin{remark}\label{R-gam0-1}
Note that for $x$ close to $y \in \Gamma_{(2k-3)}$, the map $x \mapsto \omega_{x,a}$ is continuous. This is because there are finitely many rays.
\end{remark}%

\begin{remark}\label{R-gam0-2}  If $y \in \cA_0 \subset \Gamma_{(2k-3)}$ where $\cA_0$ is some compact arc, the numbers $r_1, r_2, \ldots$ in the proof of Lemma~\ref{L-gam0-2} can be chosen uniformly with respect to $y \in \cA_0$. This is in particular the case when we assume $ \Gamma_{(2k-3)} = \Gamma$  and consider  $\cA_0 =  \Gamma$.
\end{remark}%

\subsection{ $\cZ(w_x)$ for $x$ close to $y \in \Gamma_{(2k-3)}$, global picture when  $\Gamma_{(2k-2)}=\emptyset$}
\label{SS-gam02}

We now describe the global picture for $\cZ(u_y)$ and $\cZ(w_x)$ when $x$ is close to $y$, under the additional assumption that $\Gamma_{(2k-2)} = \emptyset$. Let $\gamma : [0,L] \to \Gamma$ be a parametrization by arc length. Let $\gamma(s,t) = \gamma(t) + s \nu(t)$ for $s$ small enough,  with $\nu$ the unit normal pointing inwards.

\begin{lemma}\label{L-gam0-4}
Under  Assumptions~\ref{A-hmn3-0} and  the further assumption that $\Gamma_{(2k-2)}$ is empty, the following properties hold.
\begin{enumerate}[(i)]
  \item The infimum $\delta := \inf\set{d(y,z(y)) \mid y \in \Gamma}$ is positive. ($d$ is the distance on $\Gamma$.)
  \item For all $\beta \le \frac{\delta}{4}$, there exists some positive $\varepsilon$ such that for all $s$, $0 < s \le \varepsilon$, for all $t \in [0,L]$,
\[
\cS_{\mathrm{b}}(w_{\gamma(s,t)}) \cap \cA(\gamma(t);\beta) \neq \emptyset \text{~and~} \cS_{\mathrm{b}}(w_{\gamma(s,t)}) \cap \cA(z(t);\beta) \neq \emptyset,
\]
where $z(t) := z(\gamma(t))$ is such that $\cS_{\mathrm{b}}(u_{\gamma(t)} )= \set{\gamma(t),z(t)}$.
\end{enumerate}
\end{lemma}%

\begin{proof} \smallskip

\emph{Assertion~(i).~} Assume that $\delta=0$. Then there exists a sequence $\set{y_n} \subset \Gamma$ such that $\delta(y_n,z(y_n)) \le \frac 1n$. We may assume that $y_n$ tends to some $y \in \Gamma$.  Then, $z(y_n)$ tends to $y$ as well. Choose $u_y, u_{y_n} \in \bS(U)$ given by Lemma~\ref{L-L32}. We may assume that $u_{y_n}$ tends to $u_y$ $C^1$-uniformly, implying that $\breve{u}_{y_n}$ tends to $\breve{u}$ uniformly. Since $\Gamma_{(2k-3)} = \Gamma$, $\cS_{\mathrm{b}}(u) = \set{y,z(y)}$ with $z(y) \neq y$. The properties of $\breve{u}_y$ imply that $\breve{u}_{y_n}$ vanishes near $y$ and $z(y)$, a contradiction with the fact that $z(y_n)$ tends to $y \neq z(y)$. \phantom{xxxx} \quad \qedc \medskip

\emph{Assertion~(ii).~} Assume that the assertion is false. Then, there exists some $\beta \le \frac{\delta}{4}$, a sequence $\set{s_n}$ tending to $0$, a sequence $\set{t_n}$ tending to some $\bar{t}$, such that the sequence $\set{w_n:=w_{\gamma(s_n,t_n)}}$ tends to $u_{\gamma(\bar{t})}$,   with the property that $\cS_{\mathrm{b}}(w_n) \cap \cA(\gamma(t_n);\beta) = \emptyset$ or
$\cS_{\mathrm{b}}(w_n) \cap \cA(z(t_n);\beta) = \emptyset$. Since $\bar{t} \in \Gamma_{(2k-3)}$, $ \cS_{\mathrm{b}}(u_{\gamma(\bar{t})}) = \set{\gamma(\bar{t}),z(\bar{t})}$.  We may also assume that $\mp \breve{u}_{\bar{t}}\big(\gamma(\bar{t})\pm \frac{\beta}{4}\big) > 0$ and $\pm \breve{u}_{\bar{t}}\big(z(\bar{t})\pm \frac{\beta}{4}\big) > 0$. For $n$ large enough, we will have that $|\gamma(t_n) - \gamma(\bar{t})| < \frac{\beta}{4}$, $|z(t_n) - z(\bar{t})| < \frac{\beta}{4}$, $\mp \breve{w}_{n}\big(\gamma(\bar{t})\pm \frac{\beta}{4}\big) > 0$, and $\pm \breve{w}_{n}\big(z(\bar{t})\pm \frac{\beta}{4}\big) > 0$. This implies that $\breve{w}_n$ vanishes in both $\big( \gamma(t_n) - \frac{\beta}{2},\gamma(t_n) + \frac{\beta}{2} \big)$ and $\big( z(t_n) - \frac{\beta}{2},z(t_n) + \frac{\beta}{2} \big)$, a contradiction with our assumption. \quad \qedc \smallskip

The lemma is proved.
\end{proof}

\begin{lemma}\label{L-gam0-5}
 Under  Assumptions~\ref{A-hmn3-0},  the set $\Gamma_{(2k-2)}$ is not empty.
\end{lemma}%

\begin{proof} Assuming that $\Gamma_{(2k-2)}=\emptyset$, the arguments in the proof of Lemma~\ref{L-gam0-2} can be globalized  because the radii $r_1$ and $r_2$ can be chosen uniformly with respect to $y \in \Gamma = \Gamma_{(2k-3)}$. There exists $\varepsilon > 0$ such that, for all $(s,t) \in (0,\varepsilon] \times [0,2\pi]$, the combinatorial type of $w_{\gamma(s,t)}$ is determined by the combinatorial type of $u_{\gamma(t)}$ which is constant on $\Gamma$ (see Lemma~\ref{L-L33c}).  Taking $\varepsilon$ small enough in Lemma~\ref{L-gam0-4}, we can apply Lemma~\ref{L-gam0-2} to the pair $y = \gamma(t)$ and $x(s,t) = \gamma(s,t)$ for all $(s,t) \in (0,\varepsilon] \times  [0,2\pi]$. Fix some $s_0, s_1, 0 < s_1 < s_0 \le \varepsilon$.  According to Lemma~\ref{L-gam0-2}, for each $t \in [0,2\pi]$  the star at $\gamma(s_0,t)$  inherits a natural labeling from the labeling of the star at $\gamma(t)$, with the same index $a$ corresponding to the nodal interval emanating from $\gamma(s_0,t)$ and hitting $\Gamma$ at $z(\gamma(s_0,t))$ close to $z(\gamma(t))$. Since the curve $t \mapsto \gamma(s_0,t)$ bounds a simply connected domain $\Omega_{s_0}$, using the continuity of $x \mapsto \omega_{x,a}$, we can extend this labeling from the curve $\gamma(s_0,\cdot)$ continuously into $\Omega_{s_0}$. \smallskip

Fix $s_1, 0 < s_1 < s_0$. Along $s \mapsto \gamma(s,t)$, we can deform the labeled  star at $\gamma(s_0,t)$ continuously into the labeled ``star'' $\set{\omega_{y,1}, \ldots, \omega_{y,(2k-3)},\omega_{y,\nu}}$ at  $\gamma(s_1,t)$, so that $\omega_{\gamma(s_0,t),(2k-2)}$ deforms to $\omega_{y,\nu}$. Here, $\omega_{y,\nu}$ is the direction of the normal to $\Gamma$ at $y$, pointing outwards (this can be visualized on Figure~\ref{F-gam0-8b}, right image).\smallskip

We have constructed a continuous nonzero vector field in $\Omega_{s_1}$ which is transverse to the boundary $\Gamma_{s_1}$, pointing outwards.  This is impossible by the Poincar\'{e}-Hopf theorem for manifold with boundary,  see  \cite{Miln1997}, Chap. 6, p. 35. The assumption that $\Gamma_{(2k-2)}$ is empty yields a contradiction, therefore, $\Gamma_{(2k-2)}$ cannot be empty.
\end{proof}

For later purposes (see Remark~\ref{R-hmn93-PH}), we introduce the following generalization of the Poincar\'{e}-Hopf theorem \index{Poincar\'e-Hopf theorem}, see \cite{Gott1990, GoSa1995} and the recent paper \cite{BaPP2024}.

\begin{theorem}\label{ThPHbd}
Let $X$ be a compact manifold with boundary. Let $V$ be a $\Cty$ vector-field on $X$ with isolated zeros  $x_i$  in $X$ and no zero on $\partial X$.  Then,
\begin{equation}\label{eq:ThPHbd}
\sum_i {\textrm ind}_{x_i} V = \chi (X) - \sum_{\xi \in \partial X, V^{tg} (\xi)=0,\ip{V}{\nu} < 0} {\textrm ind}_\xi V^{tg},
\end{equation}
where $V^{tg}= V - \ip{V}{\nu}\, \nu$ is the tangential component of $V$ at the boundary and $\nu$ is the outward pointing normal.
\end{theorem}

When $V$ points outwards,  the sum on the right hand side is empty.

\section[Behavior of $\lambda_k$-Eigenfunctions near $\Gamma$]{Behavior of $\lambda_k$-Eigenfunctions near $\Gamma$ under  Assumptions~5.2}\label{S-hmn9}

In this section, we work under Assumptions~\ref{A-hmn3-0}.\smallskip

Let $\gamma : [0,L] \to \Gamma$ be an arc-length parametrization of $\Gamma$, compatible with the orientation, and such that the unit normal vector $\nu(t)$ points inwards. Assume that $\gamma(0) = \gamma(L) \not \in \Gamma_{(2k-2)}$. Let $\gamma(s,t) = \gamma(t) + s\, \nu(t)$ where $0 < s < s_0$, with $s_0$ small enough so that the map $(0,s_0) \times [0,L] \to \Omega$ is a diffeomorphism onto a neighborhood of $\Gamma$ in $\Omega$. We also use the notation $\gamma_s(t)$ for $\gamma(s,t)$.

\subsection{Analysis near $\eta \in \Gamma_{(2k-2)}$}\label{SS-hmn91}~\\
 Under Assumptions~\ref{A-hmn3-0}, according to Lemmas~\ref{L-L33} and \ref{L-L33c}, the subset $\Gamma_{(2k-2)}$ is finite,  with an even number of points.  According to Section~\ref{S-gam0}, this set  has at least two points.
Fix a radius $r_1$ such that for all $\eta \in \Gamma_{(2k-2)}$ the local structure theorem (see Section~\ref{S-lsbs}) and the energy argument (Lemma~\ref{L-hmn3-L38-e}) apply to the function $u_{\eta}$ in the disk $D_{+}(\eta,2r_1)$. This is possible because the set $\Gamma_{(2k-2)}$ is finite. Fix $\beta = {r_1}/{10}$.

\begin{lemma}\label{L-hmn91-2}
There exists $r_2$, $ 0 < 2 r_2 < \beta$, such that for all $\eta \in \Gamma_{(2k-2)}$, and for all $x \in D_{+}(\eta,2r_2)$, $\cS_{\mathrm{b}}(w_x) \subset \cA(\eta;\beta)$, including the possibility that $\cS_{\mathrm{b}}(w_x) = \emptyset$.
\end{lemma}

\begin{proof} Assume that this is not the case. Then, there exists a sequence $\set{x_n}$ tending to some $\eta \in \Gamma_{(2k-2)}$ such that $\cS_{\mathrm{b}}(w_n) \not \subset \cA(\eta;\beta)$, with $w_n := w_{x_n} \in \bS(U) \cap W_x$, i.e., there exists $z_n \in \cS_{\mathrm{b}}(w_n)$, $z_n \not \in \cA(\eta;\beta)$. We may assume that the sequence $\{z_n\}$ converges to some $z \neq \eta$. Since $w_n$ tends to $u_{\eta}$ $C^1$-uniformly and since $\breve{w}_n(z_n) = 0$, we have $\breve{u}_{\eta}(z)=0$, a contradiction since $\breve{u}_{\eta}$ vanishes only at $\eta$.
\end{proof}

\noid A general combinatorial type $\tau_{\eta}$ for $u_{\eta}$, $\eta \in \Gamma_{(2k-2)}$ is given by
\begin{equation}\label{E-hmn9-}
\tau_{\eta} =
\begin{pmatrix}
  1 & R & a & C & b & L & f \\
  a & \tau_{\eta}(R) & 1 & \tau_{\eta}(C) & f & \tau_{\eta}(L) & b \\
\end{pmatrix}
\end{equation}
where $f := (2k-2)$, $g:=(2k-3)$, $R := \set{2,\ldots, (a-1)}$, $C:=\set{(a+1),\ldots,(b-1)}$, and $L = \set{(b+1),\ldots,g}$, with $R, C$ and $L$ globally invariant under $\tau_{\eta}$.  Here, we only consider the case $2 \le a < b \le (2k-3)$. The case in which $\tau_{\eta}(1) = (2k-2)$ can be treated similarly, see Section~\ref{S-hmn2L}.

\newcommand{\twa}{0.45}
\begin{figure}[!ht]
\centering
\begin{subfigure}[t]{\twa\textwidth}
\centering
\raisebox{-6pt}{\includegraphics[width=\linewidth]{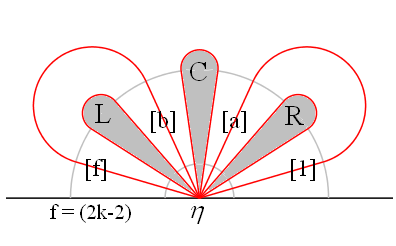}}
\end{subfigure}
\begin{subfigure}[t]{\twa\textwidth}
\centering
\includegraphics[width=\linewidth]{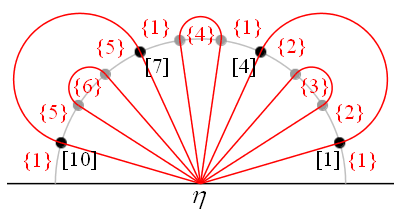}
\end{subfigure}
\caption{Rays and nodal domains for $u_{\eta}$}\label{F-hmn91-k6-ueta-labels}
\end{figure}

The nodal set $\cZ(u_{\eta})$ is a $(k-1)$-bouquet of loops $\gamma^{\eta}_{j,\tau_{\eta}(j)}$. Call $A_{\eta,j}(r_1)$ the intersection points of $\cZ(u_{\eta})$ with $C_{+}(\eta,r_1)$,  labeled counter-clockwise along $C_{+}(\eta,r_1)$. By the local structure theorem and the energy argument, there are precisely $(2k-2)$ such points, and there exists some $\alpha_0 > 0$ such that the $(2k-2)$ sub-arcs $\cA \big( A_{\eta,j}(r_1) ; \alpha_0 \big)$ on $C_{+}(\eta,r_1)$ are pairwise disjoint. Let $D_{+}^c(\eta,r_1) = \Omega \sm D_{+}(\eta,r_1)$ denote the complement of $D_{+}(\eta,r_1)$. The set $\cZ(u_{\eta}) \cap D_{+}^c(\eta,r_1)$ consists of $(k-1)$ pairwise disjoint nodal arcs $\gamma^{\eta}_{j,\tau_{\eta}(j)}\cap D_{+}^c(\eta,r_1)$. By compactness, these arcs have pairwise disjoint $\varepsilon_0$-tubular neighborhoods $\cU^{\varepsilon}_{j,r_1,\varepsilon_0}$, for some $\varepsilon_0 < \alpha_0$ small enough. Fix the values  $\alpha_0$ and $\varepsilon_0$ for the rest of  this subsection.\smallskip

We label the nodal domains of $u_{\eta}$ according to their order of appearance when moving along $C_{+}(\eta,r_1)$ counter-clockwise, encountering the point $A_{\eta,1}(r_1)$, $A_{\eta,2}(r_1)$, etc. In the right part of Figure~\ref{F-hmn91-k6-ueta-labels}, $k=6$, $a=4$ and $b=7$. The numbers between brackets are the labels of the rays at $\eta$. The numbers between braces are the labels of the nodal domains. The big dots stand for the intervals around the points $A_{\eta,j}(r_1)$  (in grey, the dots corresponding to the bouquets of loops associated with the subsets $R, C$ and $L$) which might occur in the general case.\medskip

\noid To study $\cZ(w_x)$, we now choose $r_2$ such that:
\begin{enumerate}[(i)]
  \item $r_2$ satisfies Lemma~\ref{L-hmn91-2}
  \item for all $\eta \in \Gamma_{(2k-2)}$, for all $x \in D_+(\eta, 2r_2)$, for all $j, 1 \le j \le (2k-2)$, the set $\cZ(w_x) \cap \cA\big( A_{\eta,j}(r_1),\alpha_0 \big)$ contains exactly one point $A_{x,j}(r_1)$
  \item for all $\eta \in \Gamma_{(2k-2)}$, for all $x \in D_+(\eta, 2r_2), \cZ(w_x) \cap D_{+}^{c}(\eta,r_1) \subset \bigcup \cU^{\varepsilon_0}_{\eta,j}$.
\end{enumerate}

For $x \in D_{+}(\eta,r_2)$, there are two possibilities:
\begin{enumerate}[a)]
\item either $\cS_{\mathrm{b}}(w_x) = \emptyset$ and $\cZ(w_x)$ is a $(k-1)$-bouquet of loops
\item or $\cS_{\mathrm{b}}(w_x) \neq \emptyset$ and $\cZ(w_x)$ is the wedge sum at $x$ of a $(k-2)$-bouquet of loops with two nodal intervals from $x$ to the boundary points in $\cS_{\mathrm{b}}(w_x) \subset \cA(\eta;\beta)$.
\end{enumerate}%

The choice of $r_1$ and the energy argument imply that any nodal loop in $\cZ(w_x)$ intersects $C_{+}(\eta,r_1)$ at precisely two points located in different intervals $\cA\big( A_{\eta,j}(r_1);\alpha_0 \big)$. Furthermore, when $\cS_{\mathrm{b}}(w_x) \neq \emptyset$, the nodal intervals from $x$ to the boundary cannot both be contained in $D_{+}(\eta,r_1)$. Since $\cS_{\mathrm{b}}(w_x) \subset \cA(\eta;\beta)$, one of the nodal intervals has to exit $D_{+}(\eta,r_1)$ and re-enter. Call $\delta_{x}^{z(x)}$ this nodal interval and $z(x) \in \cS_{\mathrm{b}}(w_x)$ its end point. The interval $\delta_x^{z(x)}$ actually intersects $C_{+}(\eta,r_1)$ at precisely two points. Counting the points in $\cZ(w_x) \cap C_{+}(\eta,r_1)$, we infer that the other nodal interval does not exit $D_{+}(\eta,r_1)$. Call $\delta_{x}^{y(x)}$ this nodal interval and $y(x) \in \cS_{\mathrm{b}}(w_x)$ its end point. Note that it may happen that $y(x) = z(x)$.\smallskip
The points $A_{\eta,j}(r_1), 1 \le j \le (2k-2)$, are in natural bijection with the rays $\omega_{\eta,j}$ at $\eta$. We will now show that, for $x \in D_{+}(\eta,r_2)$, the points $A_{x,j}(r_1) \in \cA\big( A_{\eta,j},\alpha_0 \big)$ define a labeling of the rays of the star at $x$.\smallskip \\

\emph{Case a)~~$\cS_{\mathrm{b}}(w_x) = \emptyset$.}~ Call $\omega_{x,j}$ the unique ray at $x$ such that the nodal arc $\delta_{x,\omega_{x,j}}$ emanating from $x$ tangentially to $\omega_{x,j}$ exits $C_{+}(\eta,r_1)$ at $A_{x,j}(r_1)$. Because these nodal arcs are pairwise disjoint away from $x$, Jordan's theorem implies that the $\omega_{x,j}$ are ordered counter-clockwise as the points $A_{x,j}(r_1)$, i.e. $\cR_{\frac{\pi}{k-1}}(\omega_{x,j}) = \omega_{x,j+1}$, where $\cR_{\frac{\pi}{k-1}}$ is the rotation with center $x$ and angle $\frac{\pi}{k-1}$. Looking at $\cZ(w_x) \cap D_{+}^{c}(\eta,r_1)$, we infer that the combinatorial types satisfy $\tau_{w_x} = \tau_{\eta}$ with the above labeling of the star at $x$.

\begin{figure}[!htb]
  \centering
  \includegraphics[width=0.80\textwidth]{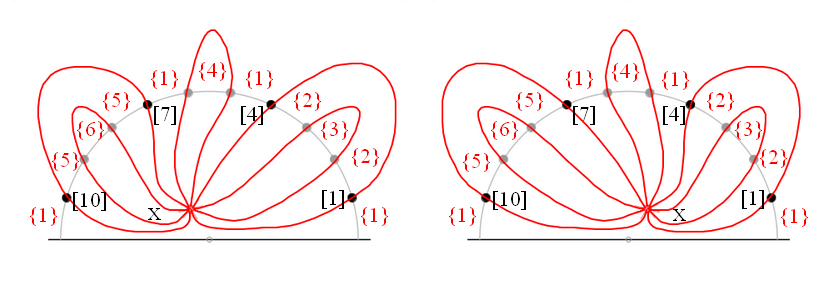}
  \caption{$k=6$, $\cZ(w_x)$ with no boundary singular point}\label{F-hmn91-k6-nobdry-LR}
\end{figure}

\emph{Case b)~~$\cS_{\mathrm{b}}(w_x) \neq \emptyset$.}~ The $(2k-2)$ points $A_{x,j}(r_1)$ can be partitioned into two subsets, the subset $\cL(x)$ which consists of the $(2k-4)$ points which belong to a nodal loop in $\cZ(w_x)$ and the subset $\cL'(x)$ which consists of the two points in the set $\delta_{x}^{z(x)} \cap C_{+}(\eta,r_1)$; call $A_{x,e}(r_1)$ the point at which $\delta_{x}^{z(x)}$ exits  $D_{+}(\eta,r_1)$ and $A_{x,e_z}(r_1)$ the point at which $\delta_{x}^{z(x)}$ re-enters $D_{+}(\eta,r_1)$.

\begin{claim}\label{C-ss-hmn91-2}
With the above notation, $e \neq e_z$ and $e_z \in \set{1,(2k-2)}$.
\end{claim}%

\begin{proof} The first assertion is obvious.\smallskip

Assume that $e = 1$ and that $e_z \neq (2k-2)$. Follow the nodal arc from $A_{x,(2k-2)}(r_1)$ to $x$ and then to $A_{x,1}(r_1)$. In $D_{+}(\eta,r_1)$, the point $A_{x,e_z}(r_1)$ lies above this arc and the point $z(x)$ below this arc, a contradiction  since nodal arcs cannot intersect away from $x$.  This is illustrated in Figure~\ref{F-hmn91-proofs-claims}, left picture. The proof in the other cases, $e = (2k-2)$ and $e, e_z \not \in \set{1,(2k-2)}$, is similar.
\end{proof}

\begin{figure}[!ht]
\centering
\begin{subfigure}[b]{.45\textwidth}
\centering
\includegraphics[width=\linewidth]{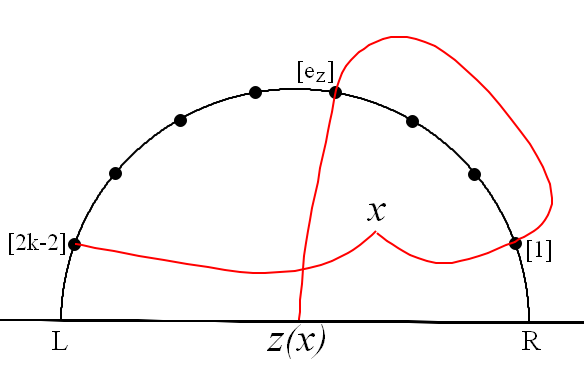}
\caption{Proof of Claim~\ref{C-ss-hmn91-2}}
\end{subfigure}
\begin{subfigure}[b]{.45\textwidth}
\centering
\includegraphics[width=\linewidth]{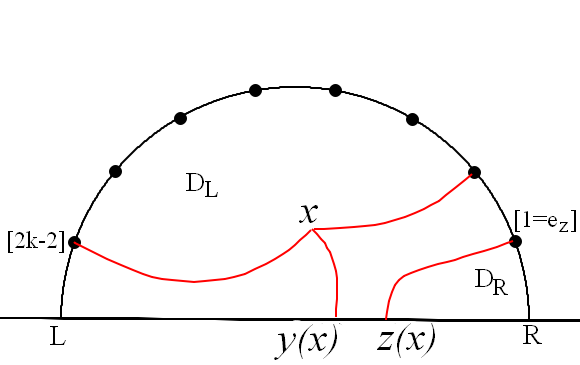}
\caption{Proof of Claim~\ref{C-ss-hmn91-4}}
\end{subfigure}
\caption{Proofs of Claims}\label{F-hmn91-proofs-claims}
\end{figure}

Look at the rays at $x$. Call $\omega_{x,j}$ the ray such that the nodal arc $\delta_{x,\omega_{x,j}}$, emanating from $x$ tangentially to the ray $\omega_{x,j}$,  first exits $C_{+}(\eta,r_1)$ at the point $A_{x,j}(r_1)$. Doing so, we label the $(2k-4)$ rays corresponding to the $(k-2)$ loops in $\cZ(w_x)$, as well as the ray corresponding to the nodal interval $\delta_{x}^{z(x)}$. One ray has not yet been labeled, namely the ray which corresponds to the nodal interval $\delta_{x}^{y(x)}$. This must be the  ray $\omega_{x,e_z}$, so that $\delta_{x}^{y(x)} = \delta_{x,\omega_{x,1}}$ or $\delta_{x}^{y(x)} = \delta_{x,\omega_{x,(2k-2)}}$ according to Claim~\ref{C-ss-hmn91-2}.

\begin{claim}\label{C-ss-hmn91-4}
 In Case b), with the above notation,
\begin{equation}\label{E-hmn91-a}
\left\{
\begin{array}{lll}
e_z = 1 \Rightarrow y(x) \le  z(x) & \text{and} & y(x) <  z(x) \Rightarrow e_z = 1 \\[5pt]
e_z = (2k-2) \Rightarrow y(x) \ge z(x)  & \text{and}  & y(x) >  z(x) \Rightarrow e_z = (2k-2).
\end{array}%
\right.
\end{equation}
\end{claim}%

\begin{proof}   Assume that $e_z = 1$. Consider the nodal arc inside $D_{+}(\eta,r_1)$, between $A_{x,1}(r_1)$ and $z(x)$. This arc divides $D_{+}(\eta,r_1)$ into two connected components, $D_R$ and $D_{L}$, with $D_R$ containing the arc of $C_{+}(\eta,r_1)$ from $A_{x,1}(r_1)$ to the boundary point $R$ on the right of $z(x)$ and $D_{L}$ containing the arc of $C_{+}(\eta,r_1)$ from $A_{x,(2k-2)}(r_1)$ to the boundary point $L$ on the left of $z(x)$. Because the nodal arcs cannot intersect away from $x$, the point $x$ must belong to $D_{L}$ and $y(x) \le z(x)$. Similarly, if $e_z = (2k-2)$, then $z(x) \le y(x)$. These statements imply the remaining statements.  The proof is illustrated in Figure~\ref{F-hmn91-proofs-claims}, right picture.
\end{proof}

\begin{remark}
Let $x \in \Omega$ be such that  $y(x)=z(x)$. Then, there exists a neighborhood $\cU_x$ of $x$ such that there do not exist $x_1, x_2 \in \cU_x$ with $y(x_1) < z(x_1)$ and $y(x_2) > z(x_2)$. Otherwise stated, for any $x_1 \in \cU_x$, with $\cS_{\mathrm{b}}(w_{x_1}) \neq \emptyset$, either $y(x_1) \le z(x_1)$ or $y(x_1) \ge z(x_1)$. This is a consequence of Claim~\ref{C-ss-hmn91-4}.
\end{remark}%

The possible combinatorial types of $w_x$, depending on whether $y(x) < z(x)$ or $z(x) < y(x)$  are as follows,
\begin{equation}\label{E-hmn91-2L}
\tau_{y(x) < z(x)} =
\begin{pmatrix}
  1 & R & a & C & b & L & f \\
  \downarrow_{y(x)} & \tau_{\eta}(R)& \downarrow_{z(x)} & \tau_{\eta}(C)
  & f & \tau_{\eta}(L) & b \\
\end{pmatrix}
\end{equation}
\begin{equation}\label{E-hmn91-2R}
\tau_{z(x) < y(x)} =
\begin{pmatrix}
  1 & R & a & C & b & L & f \\
  a & \tau_{\eta}(R)& 1 & \tau_{\eta}(C) & \downarrow_{z(x)} & \tau_{\eta}(L) & \downarrow_{y(x)} \\
\end{pmatrix}.
\end{equation}

\begin{figure}[!th]
  \centering
  \includegraphics[width=0.9\textwidth]{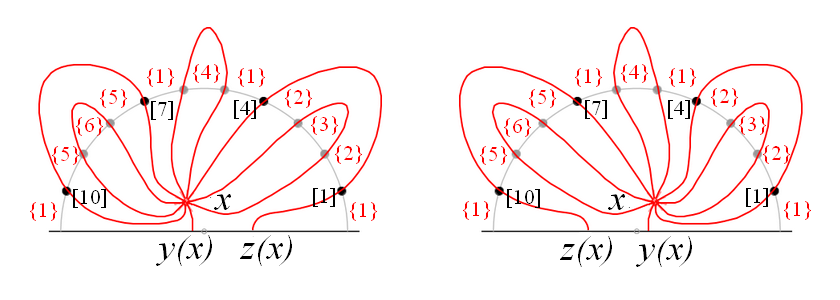}
  \caption{$k=6$, $\cZ(w_x)$ with two boundary singular points}\label{F-hmn91-k6-bdry-LR}
\end{figure}

Joining the extremities $y(x)$ and $z(x)$ of the nodal intervals $\delta_{x}^{y(x)}$ and $\delta_{x}^{z(x)}$ with the interval $[y(x),z(x)]$, we obtain a loop at $x$. The combinatorial types $\tau_{z(x) < y(x)}$ and $\tau_{y(x) < z(x)}$ then correspond to $\tau_{\eta}$.  Labeling the nodal domains of $u_{\eta}$ as usual, and following the nodal domains of $w_x$ by deformation, the words describing the nodal domains on $C_{+}(\eta,r_1)$ are the same.\medskip

Note that both cases $y(x) < z(x)$ and $y(x) > z(x)$ actually occur. Indeed, choose some $y \in \Gamma_{(2k-3)}$ close to $\eta$, on its left. Then $\cS_{\mathrm{b}}(u_y) = \set{y, z(y)}$ with $z(y)$ on the right of $\eta$. Choosing $x$ above $y$ and close enough, we will have $\cS_{\mathrm{b}}(w_x) = \set{y(x),z(x)}$ with $y(x) < z(x)$. If we choose $y$ to the right of $\eta$, we will similarly obtain $z(x) < y(x)$. \smallskip

Figure~\ref{F-hmn91-k6-bdry-LR} illustrates the two possible cases.  In this figure, the numbers in brackets represent the labels of the points $A_{x,j}(r_1)$. Label the rays at $x$ so that the nodal arc emanating from $x$ tangentially to $\omega_{x,a}$ exits $C_{+}(\eta,r_1)$ at $A_{x,a}(r_1)$ and hits the boundary at $z(x)$. Then, when $y(x) < z(x)$, the ray tangent to $\delta_{x}^{y(x)}$ at $x$ is $\omega_{x,1}$; when $z(x) < y(x)$, the ray tangent to $\delta_{x}^{y(x)}$ at $x$ is $\omega_{x,(2k-2)}$. \medskip

Figure~\ref{F-hmn91-k6-transition-labels} displays a simple transition. When $x$ moves on a line parallel to the boundary $\Gamma$, the rays corresponding to $\delta_x^{z(x)}$ are labeled either $[4]$, when $y(x) < z(x)$, or $[7]$, when $z(x) < y(x)$.

\begin{figure}[!ht]
  \centering
  \includegraphics[width=0.98\textwidth]{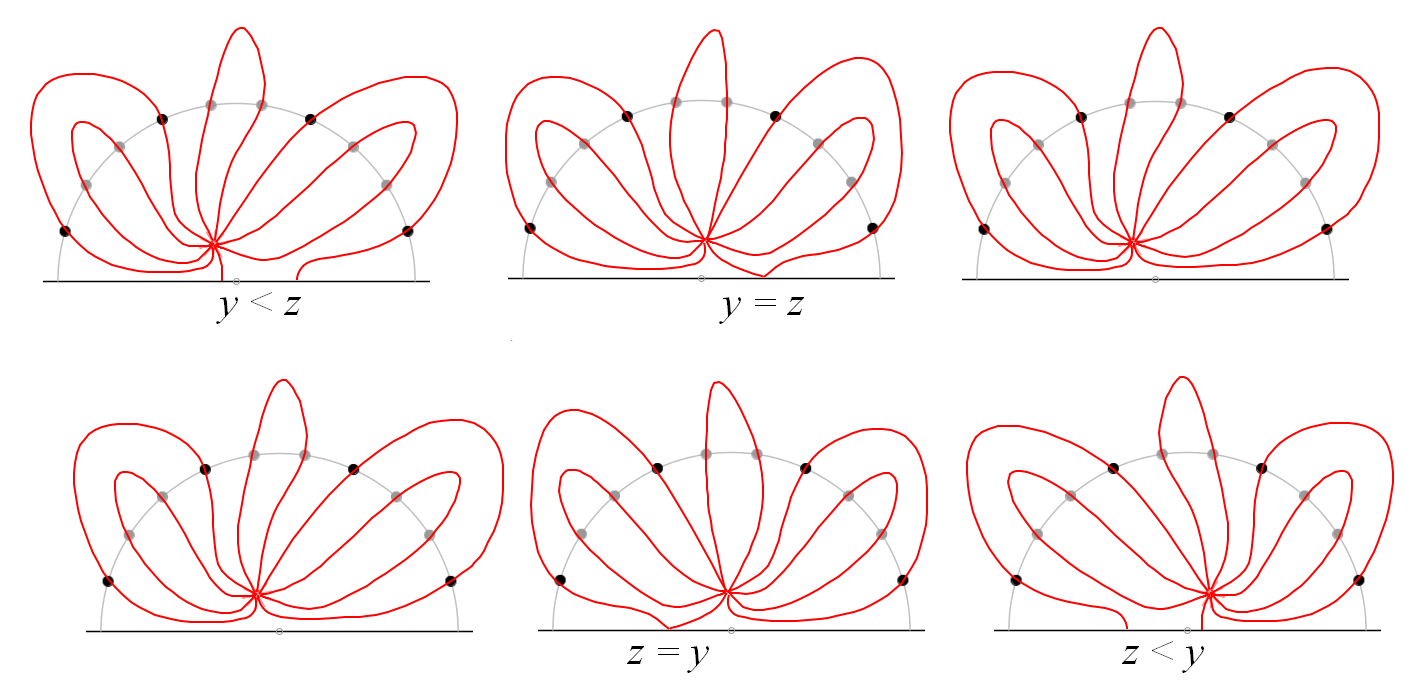}
  \caption{Simple transition in $D_{+}(\eta,r_2)$}\label{F-hmn91-k6-transition-labels}
\end{figure}

Move from left to right on a parallel to $\Gamma$. Start from  $x_1$ above $y_1$ on the left of $\eta$ with $y(x_1) < z(x_1)$. Arrive at $x_2$ such that $y(x_2)=z(x_2)$. When $x$ moves from $x_2$ to the right, the broken loop $\gamma^{w_{x_2}}_{1,4}$ lifts off, and $\cZ(w_x)$ does not hit $\Gamma$ (middle picture). When  $x$ reaches some $x_3$ such that $y(x_3)=z(x_3)$, a loop in $\cZ(w_x)$ touches down as the broken loop $\gamma^{w_{x_3}}_{7,10}$. When $x$ moves on from $x_3$ to $x_4$ on the right, we have $z(x) < y(x)$. During this transition, we can glue the interval between $y(x)$ and $z(x)$ on the boundary with the nodal intervals $\delta_{x}^{y(x)}$ and $\delta_{x}^{z(x)}$ in order to make a loop. Following the nodal domains continuously, their labeling does not change and the combinatorial type of $w_x$ does not change either. There has been some shift from the ray $\omega_{x_1,4}$ to the ray $\omega_{x_4,7}$.\smallskip

\begin{remark}\label{R-hmn91-4}
The simple transition displayed in Figure~\ref{F-hmn91-k6-transition-labels} might not be what happens in general. When $x$ moves on $\gamma_{s_0}$, from above $\eta_1$ to above $\eta_2$, there might be several points $x$ for which $y(x)=z(x)$. Although we do not need this information in the reasoning below, it would be interesting to investigate what actually occurs.
\end{remark}%

We can continuously deform the nodal arcs inside $D_{+}(\eta,r_1)$ into the rays from $x$ to the points $A_{\eta,j}(r_1)$. For $r_2$ small enough and for $x \in D_{+}(\eta,r_2)$, the star at $x$ can be continuously deformed to the constant set $\set{A_{\eta,1}(r_1), \ldots, A_{\eta,(2k-2)}(r_1)}$.\medskip

\FloatBarrier

\subsection{Analysis inside the arc $\cA(\eta_1,\eta_2)$}\label{SS-hmn92}

 Let $\eta_1, \eta_2$ be two points in $\Gamma_{(2k-2)}$, such that $\cA(\eta_1,\eta_2) \subset \Gamma_{(2k-3)}$. Call $t'_1, t'_2$ the parameters  such that $\gamma(t'_i) = \eta_i$. \smallskip

For the sake of simplicity, we assume that the combinatorial type of a generator $u_y$ of $U_y$ for $y \in \cA(\eta_1,\eta_2)$ is given by Figure~\ref{F-hmn92-2}. More precisely, we assume that $1 < a < (2k-3)$,
\begin{equation*}
R = \set{2,\ldots,(a-1)} \text{~~and~~} L = \set{(a+1),\ldots,(2k-3)},
\end{equation*}
and that, for $y \in \cA(\eta_1,\eta_2)$, the combinatorial type $\tau$ of $u_y$  is given by
\begin{equation}\label{E-hmn92-2}
\tau_{+} = \begin{pmatrix}
             \downarrow &  R & a & L\\
             a & \tau(R) & \downarrow & \tau(L) \\
           \end{pmatrix}
\text{~~or~~}
\tau_{-} = \begin{pmatrix}
             R & a & L & \downarrow\\
             \tau(R) & \downarrow & \tau(L) & a\\
           \end{pmatrix}.
\end{equation}
The cases in which $a \in \set{1,(2k-3)}$ can be dealt with similarly.\medskip

\begin{figure}[!ht]
  \centering
  \includegraphics[width=0.9\textwidth]{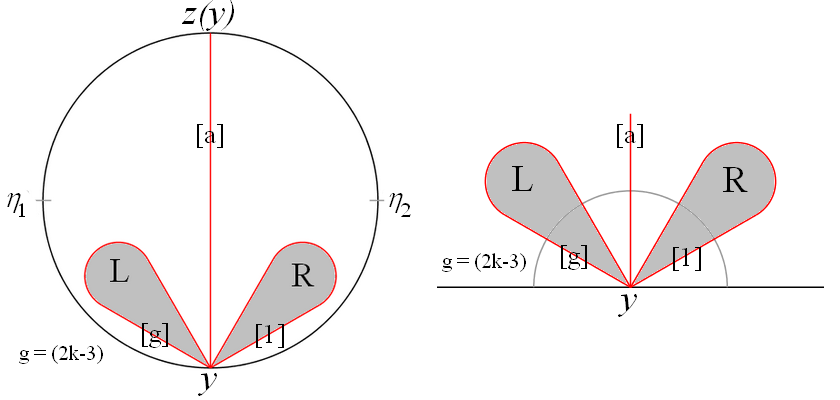}
  \caption{The global and local pictures for $\cZ(u_y)$}\label{F-hmn92-2}
\end{figure}

The nodal interval $\delta:=\delta_{y,a} = \delta_{y}^{z(y)}$ from $y$ to $z(y)$ separates the domain $\Omega$ into two connected components $\Omega_{a,R}$ and $\Omega_{a,L}$. To label nodal domains, we use Procedure~\ref{P-hmn2L-la}.\smallskip

The domain $\Omega_{a,R}$ contains the bouquet of loops $\cB_R$ and $(n_R+1)$ nodal domains of $u_y$, where $n_R = (a-1)/2$ is the number of loops in $\cB_R$. We call $D_1$ the nodal domain exterior to $\cB_R$ in $\Omega_{a,R}$, its boundary contains the nodal interval $\delta$. The interior nodal domains of $\cB_R$ are labeled $D_2$ to $D_{n_R+1}$. The word which describes the nodal domains of $u_y$ inside $\Omega_{a,R}$ is
\begin{equation}\label{E-hmn92-4R}
\ww_{a,R}= |1|\ww_R|1| \text{~~with~~} \|\ww_{a,R}\|=2 + \|\ww_R\| = (2n_R+1).
\end{equation}
The ``letters'' of the words are labels of the nodal domains, and separated by vertical bars, as in Paragraph~\ref{SSS-hmn-25D}.\smallskip

Here, the word $\ww_R$ describes the nodal domains related to $\cB_R$. This is a word in the letters $2, \ldots, (n_R+1)$, and possibly the letter $1$ (this occurs if $\cB_R$ contains consecutive loops as in Figure~\ref{F-hmn2L-L33c-sb}, left sub-figure).  \medskip

The domain $\Omega_{a,L}$ contains the bouquet of loops $\cB_L$ and $(n_L+1)$ nodal domains of $u_y$, where $n_L = (2k-a-3)/2$ is the number of loops in $\cB_L$. We call $D_{n_R+2}$ the nodal domain exterior to $\cB_L$ in $\Omega_{a,L}$, its boundary contains the nodal interval $\delta$. The interior nodal domains of $\cB_L$ are labeled $D_{n_R+3}$ to $D_{k}$, and we have $k = n_R+n_L+2$. Letting $m:=(n_R+2)$, the word which describes the nodal domains of $u_y$ inside $\Omega_{a,L}$ is
\begin{equation}\label{E-hmn92-4L}
\ww_{a,L}= |m|\ww_L|m| \text{~~with~~} \|\ww_{a,L}\|=2 + \|\ww_L\| = 2n_L+1.
\end{equation}
Here, $\ww_L$ describes the nodal domains related to $\cB_L$. This is a word in the letters $(n_R+3), \ldots, k$, and possibly $m=(n_R+2)$.\medskip

Finally, the word which describes how the nodal domains of $u_y$ hit $C_{+}(y,r)$ for $r$ small enough is given by
\begin{equation}\label{E-hmn92-4u}
\ww_y = |1|\ww_R|1|m|\ww_L|m|\text{~~with~~} \|\ww_y\|= (2n_R+2n_L+2) = (2k-2).
\end{equation}

The important fact is that the nodal domains $D_1$ and $D_{m}$ (with $m = (n_R+2)$) share a common boundary line, the nodal interval $\delta$.\medskip

In the following figures, the letters or numbers between brackets are the labels of the rays; the numbers between braces are the labels of the nodal domains.  The labeling of the nodal domains in the central sub-figure of Figure~\ref{F-hmn92-4} follows Procedure~\ref{P-hmn2L-la}.

\begin{figure}[!ht]
  \centering
  \includegraphics[width=0.95\textwidth]{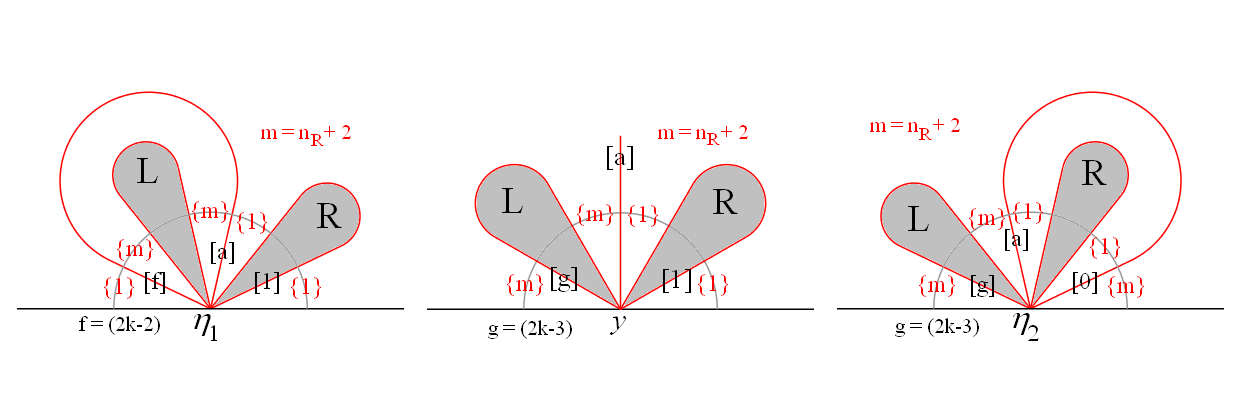}
  \caption{Limits when $y$ tends to $\eta_2$ from the left or to $\eta_1$ from the right}\label{F-hmn92-4}
\end{figure}

According to Lemma~\ref{L-L33b}, when $y \in \cA(\eta_1,\eta_2)$ moves monotonically counter-clock\-wise from $\eta_1$ to $\eta_2$, the point $z(y)$ moves monotonically clockwise from $\eta_1$ to $\eta_2$ in $\Gamma\sm \cA(\eta_1,\eta_2)$. Then, the nodal interval $\delta$ pushes the nodal domain $D_1$ which deforms into a nodal domain, still denoted by  $D_1$, contained in the interior of the loop $\gamma^{\eta_2}_{0,a}$ in $\cZ(u_{\eta_2})$. The nodal domains in  Figure~\ref{F-hmn92-4}, right sub-figure, are labeled by continuity from the labeling in the central sub-figure. For the function $u_{\eta_2}$ we obtain the following nodal type and word describing the nodal domain.

\begin{equation}\label{E-hmn92-6L}
\left\{
\begin{array}{ll}
\tau_{\eta_2} & = \begin{pmatrix}
                  0 & R & a & L \\
                  a & \tau(R) & 0 & \tau(L) \\
                \end{pmatrix}\\[12pt]
\ww_{\eta_2} &= |m|\ww_y.
\end{array}
\right.
\end{equation}

When $y \in \cA(\eta_1,\eta_2)$ moves monotonically clockwise from $\eta_2$ to $\eta_1$, the point $z(y)$ moves monotonically counter-clockwise from $\eta_2$ to $\eta_1$ in $\Gamma\sm \cA(\eta_1,\eta_2)$. Then, the nodal interval pushes the nodal domain $D_m$ (with $m=(n_R+2))$ which deforms into a nodal domain, still denoted $D_m$, contained in the interior of the loop $\gamma^{\eta_1}_{a,f}$ in $\cZ(u_{\eta_1})$, with $f=(2k-2)$. The nodal domains in  Figure~\ref{F-hmn92-4}, left  sub-figure, are labeled by continuity from the labeling in the central sub-figure. For the function $u_{\eta_1}$ we obtain the following nodal type and word describing the nodal domain.\smallskip

For $u_{\eta_1}$, we obtain
\begin{equation}\label{E-hmn92-6R}
\left\{
\begin{array}{ll}
\tau_{\eta_1} &= \begin{pmatrix}
                  R & a & L & f\\
                  \tau(R) & f & \tau(L) & a\\
                \end{pmatrix}\\[12pt]
\ww_{\eta_1} &= \ww_y|1|.
\end{array}
\right.
\end{equation}

We have $\cW_{\eta_1}=|1|\cW_R|1|m|\cW_L|m|1|$. Since $1$ may appear as a letter in the word $\cW_R$, the signature of the word $\cW_{\eta_1}$ given by \eqref{E-hmn92-4R} satisfies  \[\sigma(\cW_{\eta_1}) \le \|\cW_{a,R}\| = 2n_R+1=a.\] On the other hand,
$\cW_{\eta_2}=|m|1|\cW_R|1|m|\cW_L|m|$, and the letter $m$ does not appear in the word $\cW_R$, so that $\sigma(\cW_{\eta_2})=\|\cW_{a,R}\| + 2 = a+2$. We recover the fact that $u_{\eta_1}$ and $u_{\eta_2}$ have different combinatorial types. \medskip

Fix $y_1, y_2$ such that $\cA(y_1,y_2) \subset \cA(\eta_1,\eta_2)$, with $y_1$ close to $\eta_1$ (on its right), and $y_2$ close to $\eta_2$ on its left. The nodal pattern of $u_y$ for $y$ close to $\eta_1$, resp. to $\eta_2$, is displayed in Figure~\ref{F-hmn92-6-yeta12}.
For a point $x$ in $\Omega$, above $y$ and close enough to $y$, the nodal pattern of $w_x$ is displayed in Figure~\ref{F-hmn92-6-xeta12}. \smallskip

\begin{figure}[!ht]
  \centering
  \includegraphics[width=0.9\textwidth]{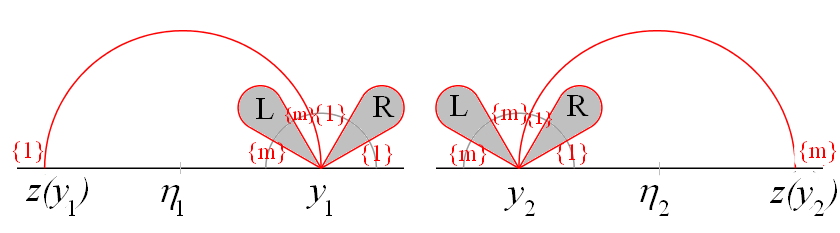}
  \caption{Nodal patterns for $y$ close to $\eta_1$ or $\eta_2$}\label{F-hmn92-6-yeta12}
\end{figure}

\begin{figure}[!ht]
  \centering
  \includegraphics[width=0.9\textwidth]{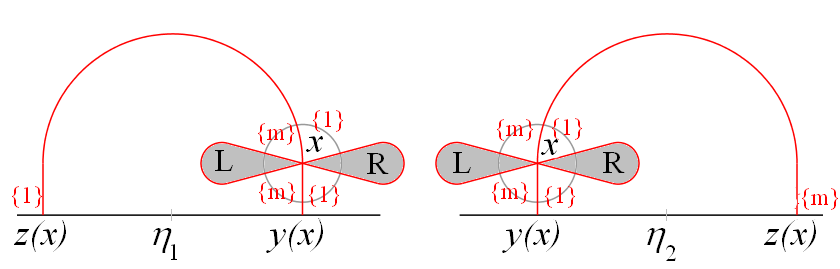}
  \caption{Nodal patterns for $x$ above $y$, and close to $y$}\label{F-hmn92-6-xeta12}
\end{figure}

\begin{figure}[!ht]
  \centering
  \includegraphics[width=0.95\textwidth]{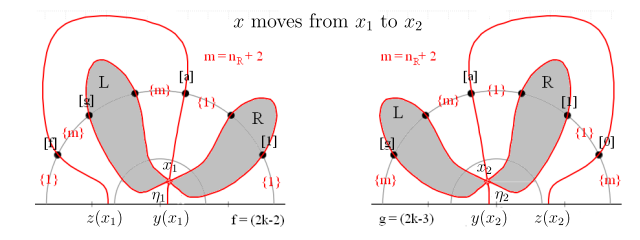}
  \caption{The transition from above $y_1$ to above $y_2$}\label{F-hmn92-k4-x-right-eta1-left-eta2}
\end{figure}

More precisely, let $t_1$ and $t_2$ be such that $\gamma(t_i) = y_i$. As in Lemma~\ref{L-gam0-4}, there exists some $s_1 > 0$ such that for all $t \in [t_1,t_2]$, and $0 < s \le s_1$, the function $w_{\gamma(s,t)}$ satisfies $\cS_{\mathrm{b}}(w_{\gamma(s,t)}) = \set{y(s,t),z(s,t)}$ with $y(s,t)$ close to $\gamma(t)$, and $z(s,t)$ close to $z(\gamma(t))$.\smallskip

\subsection{ Under Assumption~\ref{A-hmn3-0} and the further assumption $\Gamma_{(2k-2)} \neq \emptyset$}\label{SS-hmn93}~

Let $\gamma_0 : [0,L] \to \Gamma$ be an arc-length `counter-clockwise' parametrization of $\Gamma$, such that $\gamma_0(0) = \gamma_0(L) \not \in \Gamma_{(2k-2)}$. Let $m=\#(\Gamma_{(2k-2)}) \ge 2$, an even integer. Define $0 < t_1 < \cdots < t_m < L$ the points such that $\Gamma_{(2k-2)} = \set{\eta_1, \ldots, \eta_m}$, with $\eta_i = \gamma_0(t_i)$.  Let  $\gamma(s,t) := \gamma_0(t) + s \, \nu(t)$, where $\nu(t)$ is the unit normal to $\gamma_0$ at $\gamma_0(t)$, pointing inwards. For $s$ small enough, we have a diffeomorphism from $[0,s)\times [0,L]$ onto a neighborhood of $\Gamma$. We also write $\gamma_s(t)$ for $\gamma(s,t)$ and view this map as $L$-periodic in $t$. Denote by $\Omega_{s_0}$ the simply connected domain contained in $\Omega$, and bounded by $\Gamma_{s_0}\,$, with $\Gamma_{s_0} := \partial \Omega_{s_0} = \gamma_{s_0}([0,L])$. \smallskip

We now choose $r_1, r_2, r_3$, and $s_0$ small enough so that the following properties hold.
\begin{enumerate}[i)]
  \item The number $r_1, r_2$ are chosen according to Subsection~\ref{SS-hmn91}, which describes the behaviour of $\cZ(w_x)$ in  $D_{+}(\eta,r_1)$ for any $x$ in $D_{+}(\eta,r_2)$, and any $\eta$ in $\Gamma_{(2k-2)}$.
  \item For $j \in \set{1,\ldots,m}$, define the points $t_j^{\pm}:= t_j\pm \frac 12 r_2$, and first choose $s_0$ so that $\gamma(s_0,t_j^{\pm}) \in D_{+}(\eta_j,r_2)$. According to Remark~\ref{R-gam0-2}, there exists some $r_3 > 0$ such that, choosing $s_0$ small enough, Subsection~\ref{SS-hmn92} applies to the behavior of $\cZ(w_{\gamma(s_0,t)})$ in $D_{+}(\gamma(t),r_3)$, for $t \in [t_j^{+} ,t_{j+1}^{-}]$, $y(\gamma(s_0,t)) \neq z(\gamma(s_0,t))$, and we can follow these points by continuity.
\end{enumerate}
\smallskip

Figure~\ref{F-hmn97-eta1R-y-eta2LR-ex1} displays the nodal sets $\cZ(w_{\gamma(s_0,t)})$ for $t = t_1^{+}, t \in (t_1^{+},t_2^{-}), t= t_2^{-}, t=t_2^{+}$ (here $k=6$).

\begin{figure}[!ht]
  \centering
  \includegraphics[width=0.98\textwidth]{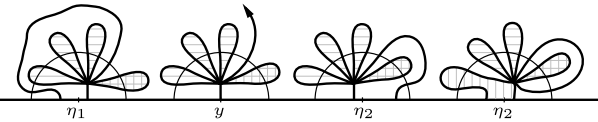}
  \caption{$\cZ(w_{\gamma(s_0,t)})$ for $t = t_1^{+}\,, ~t \in (t_1^{+}\,, t_2^{-}), ~t= t_2^{-}\,, ~t=t_2^{+}$}\label{F-hmn97-eta1R-y-eta2LR-ex1}
\end{figure}

The $\Cty$ family $\Omega \ni x \mapsto w_x \in U(\lambda_k)$ has the property that the function $w_x$ vanishes precisely at order $(k-1)$ at the point $x$, and that the leading term $h_x$ in the Taylor expansion of $w_x$ at the point $x$ is a nonzero harmonic homogeneous polynomial of degree $(k-1)$ in $T_x\Omega$.\smallskip

We fix a reference orthonormal direct frame in $\R^2$, with coordinates $(\xi_1,\xi_2) = [\rho,\theta]$. We use ${C,S}$ as the basis for the two dimensional space of harmonic homogenous polynomials of degree $(k-1)$ in $\R^2$, where
\[
C(\rho\cos\theta,\rho\sin\theta)=\rho^{k-1}\cos((k-1)\theta); \quad
S(\rho\cos\theta,\rho\sin\theta)=\rho^{k-1}\sin((k-1)\theta).
\]
We represent $h_x$ in this basis as
\[
h_x = a_x \, C + b_x \, S
\]
with $a_x^2+b_x^2 > 0$.  Since $x \mapsto h_x$ is $\Cty$, it follows that we have a $\Cty$ map
\begin{equation}\label{E-hmn93-2}
\tilde{h}: \Omega_{s_0} \ni x \mapsto \tilde{h}_x :=\big( a_x \, (a_x^2+b_x^2)^{-\frac 12}, b_x \, (a_x^2+b_x^2)^{-\frac 12}   \big) \in \bS^{1}.
\end{equation}
Since $\Omega_{s_0}$ is simply connected, the restriction $\tilde{h}|_{\Gamma_{s_0}}$ of this map to $\Gamma_{s_0}$ has degree $0$.

\begin{claim}\label{C-hmn93-2}
The map $\tilde{h}|_{\Gamma_{s_0}}$ has nonzero degree.
\end{claim}%

\noi \pf Define the  angle $\phi_{\gamma(s_0,t)}$ by continuity along $\gamma_{s_0}(\R)$ so that
\[
\tilde{h}_{\gamma(s_0,t)} = \big( \cos (\phi_{\gamma(s_0,t)}), \sin (\phi_{\gamma(s_0,t)})\big).
\]
Consider the map $x \mapsto h_x$. The zero set of the polynomial $h_{x}$ consists of the $(2k-2)$ equi-angular rays tangent to the nodal set of $w_{x}$ at the point $x$, the so-called \emph{star} $\Sigma_{x}$ at this point.
These rays $\omega_{x,j}$  are the zeros of the equation $\cos\big( (k-1)\theta - \phi_x\big) = 0$, so that
\[
\omega_{x,j} = \frac{1}{k-1}\big(\frac{\pi}{2} + \phi_x \big) + j \frac{\pi}{k-1}, \quad j\in \set{0,\ldots, (2k-3)}.
\]
When $t$ varies, we can follow one ray in $\Sigma_{\gamma(s_0,t)}$ by continuity. If this ray turns by an angle $\omega$, then  the previous equation shows that $\tilde{h}_{\gamma(s_0,t)}$ turns by the angle $(k-1)\omega$.  To compute the degree of $\tilde{h}|_{\Gamma_{s_0}}$ it suffices to follow one ray of $\Sigma_{\gamma(s_0,t)}$.\smallskip

Fix some $j, 1 \le j \le m$ (with the convention that $t_{m+1} = t_1$). Call $e^{(j)}(s_0)$ the unit vector at $\gamma(s_0,t_j^{+})$ which is tangent to the nodal arc emanating from $\gamma(s_0,t_j^{+})$ which intersects $C_{+}(\eta_j,r_1)$ at $A_{\gamma(s_0,t_j^{+}),2}(r_1)$ near the point $A_{\eta_j,2}(r_1)$. Viewing $\gamma$ as an $L$-periodic function in $t$, call $e^{(j)}(\gamma(s_0,t))$ the continuous unit vector field along $\gamma(s_0,[t_j, t_j+L))$ which takes the value $e^{(j)}(s_0)$ at $t_j^{+}$ and such that $e^{(j)}(\gamma(s_0,t))$ belongs to the star $\Sigma_{\gamma(s_0,t)}$.

According to Subsection~\ref{SS-hmn92}, for $t \in [t_j^{+},t_{j+1}^{-}]$, the vector $e^{(j)}(\gamma(s_0,t)$ is tangent at the point $\gamma(s_0,t)$ to the nodal arc from $\gamma(s_0,t)$ to $A_{\gamma(s_0,t),2}(r_1)$. When $t$ varies from $t_j^{+}$ to $t_{j+1}^{-}$, the nodal interval $\delta_{\gamma(s_0,t)}^{z(\gamma(s_0,t))}$ changes continuously and we have the following phenomenon:
\begin{enumerate}[$\diamond$]
  \item For $t:= t_j^{+}$, $\delta_{\gamma(s_0,t)}^{z(\gamma(s_0,t))}$ exits $D_{+}(\eta_j,r_1)$ near the point $A_{\eta_j,a_j}(r_1)$ for some integer $a_j$, and re-enters $D_{+}(\eta_j,r_1)$ near the point $A_{\eta_j,(2k-2)}(r_1)$.
  \item For $t:= t_{j+1}^{-}$, $\delta_{\gamma(s_0,t)}^{z(\gamma(s_0,t))}$ exits $D_{+}(\eta_{j+1},r_1)$ near the point $A_{\eta_{j+1},a_j+1}(r_1)$, and re-enters $D_{+}(\eta_{j+1},r_1)$ near the point $A_{\eta_{j+1},1}(r_1)$. This is illustrated in Figure~\ref{F-hmn92-k4-x-right-eta1-left-eta2}, right picture, in which the points $A_{\eta_{j+1},i}(r_1)$ are labeled $i=0,1, \ldots, (2k-3)$ where as we use the labeling $i=1,\ldots, (2k-2)$ in the previous statement. The important fact is the shift from $a_j$ to $a_j+1$.
  \item Furthermore, for $t \in (t_j^{+},t_{j+1}^{-})$ the nodal interval $\delta_{\gamma(s_0,t),e^{(j)}(\gamma(s_0,t))}$ exits the set $D_{+}(\eta_{j+1},r_1)$ near the point $A_{\eta_{j+1},a_j+1}(r_1)$, and re-enters $D_{+}(\eta_{j+1},r_1)$ near the point $A_{\eta_{j+1},1}(r_1)$.
\end{enumerate}

This behavior is illustrated in Figure~\ref{F-hmn97-nu6-eta-e}. Figure~\ref{F-hmn97-k4-eta-1-2-case2} gives a more global view.

\begin{figure}[!ht]
  \centering
  \includegraphics[width=0.98\textwidth]{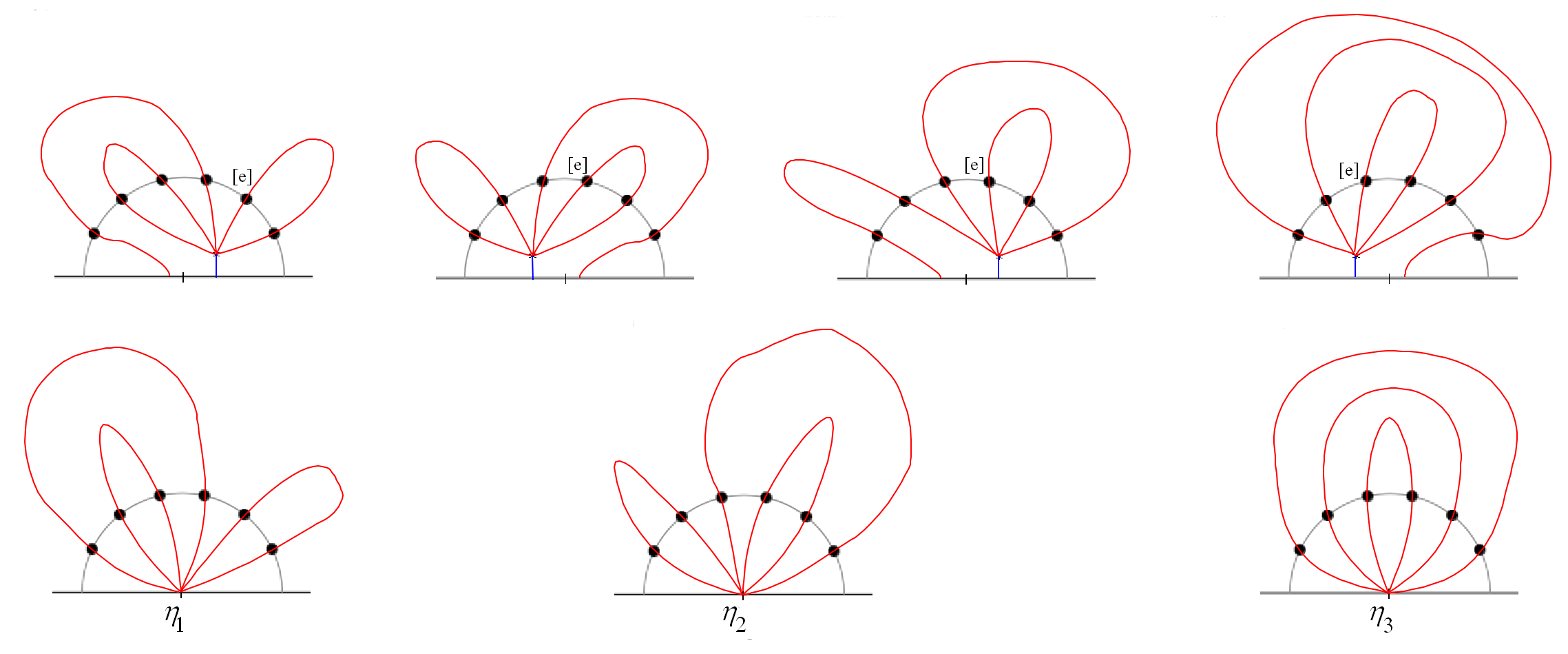}
  \caption{$e^{(1)}$ is in  positions 2-3-3-4 on $C_{+}$}\label{F-hmn97-nu6-eta-e}
\end{figure}\smallskip

\begin{figure}[!t]
  \centering
  \includegraphics[width=0.95\textwidth]{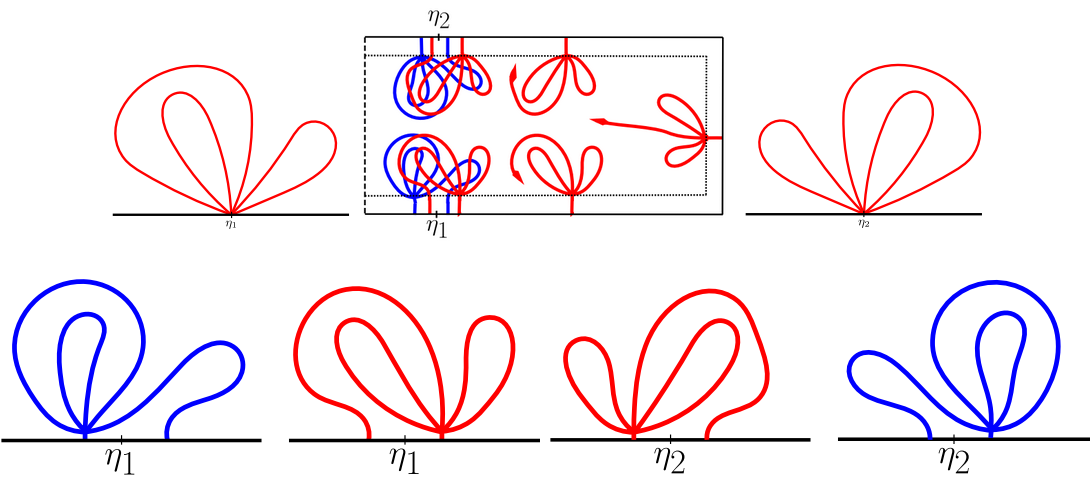}
  \caption{From $t_1^{+}$ to $t_2^{+}\,$, local and global views} \label{F-hmn97-k4-eta-1-2-case2}
\end{figure}

We interpret this phenomenon by saying that the angle of the vector $e^{(j)}(\gamma(s_0,t))$ with respect to the $t$-derivative $\gamma'(s_0,t)$ has increased by $\frac{\pi}{k-1}$ when passing from $t_j^{+}$ to $t_{j+1}^{+}$.
Otherwise stated, in the reference frame, the vector $\tilde{h}_{\gamma(s_0,t)}$ has turned by $\pi + (k-1)\measuredangle(\gamma'(s_0,t_j^{+}),\gamma'(s_0,t_{j+1}^{+}))$, where $\measuredangle$ denotes the angle between two vectors. Moving along $\Gamma_{s_0}$, the vector $\tilde{h}$ has turned by $m\, \pi + 2(k-1)\pi$ in the reference frame. It follows that the degree of  $\tilde{h}$ is $\frac m2 + (k-1)$, a positive integer since $m$ is even and positive. Claim~\ref{C-hmn93-2} is proved. \quad \qedc

\begin{remark}
One could also give a ``degree proof'' of Lemma~\ref{L-gam0-5}. In the framework of this lemma, the star along $\gamma_s$, turning counter-clockwise, follows the moving frame. When $\Gamma_{(2k-2)} \neq \emptyset$, the same occurs with extra counter-clockwise rotations due to the presence of the points $\eta_j \in \Gamma_{(2k-2)}$.
\end{remark}%

\begin{remark}\label{R-hmn93-PH}
An alternative approach to reach a contradiction is to construct a non-zero continuous vector-field, and to apply the Poincar\'{e}-Hopf theorem with boundary, Theorem~\ref{ThPHbd}.
\end{remark}%

 We can summarize the previous analysis in  the following lemma.
\begin{lemma}\label{L-gam0-7}
 Under  Assumptions~\ref{A-hmn3-0},  the set $\Gamma_{(2k-2)}$ cannot be non-empty.
\end{lemma}

\subsection{Overall conclusion}\label{SS-hmn93a}

Putting Subsection~\ref{SS-hmn92} (Lemma~\ref{L-gam0-5}) and Subsection~\ref{SS-hmn93} (Lemma~\ref{L-gam0-7}) together, we see that  Assumptions~\ref{A-hmn3-0},  lead to a contradiction. Therefore, $\mult(\lambda_k) \le (2k-3)$ for all $k\ge 3$.

\subsection{Examples}\label{SS-hmn94}

 In this subsection, we look at simple examples which shed some more light on the approach in Subsection~\ref{SS-hmn93}. Indeed moving along $\Gamma$ counter-clockwise starting from $\eta_1$,  the combinatorial type of $u_y$ changes on crossing a point $\eta_j$ (Lemma~\ref{L-L33c}). In some cases, arriving back at $\eta_1$, the  combinatorial type is different from the original one, a contradiction since $\dim U_y = 1$ for all $y \in \Gamma$.\smallskip

Let $m := \#\big( \Gamma_{(2k-2)}\big)$. As we already know,  $m$ is positive (see Lemma \ref{L-gam0-5})  and even (see Lemma \ref{L-L33c}). \smallskip

\noi Let us consider the simple case in which $m=2$, with  $\Gamma_{(2k-2)} = \set{\eta_1,\eta_2}$. Figure~\ref{F-hmn95-k4-rho6-m2-i}, left picture, exhibits an impossible configuration. Indeed, when the base point $y$ moves away from $\eta_1$ towards $\eta_2$, and continues moving to reach $\eta_1$ again, the combinatorial type of $u_y$ changes according to the figure. The corresponding functions give rise to the words $\cW_{\eta_1} = |1|2|1|3|4|3|1|$, $\cW_{\eta_2} = |1|2|3|2|1|4|1|$, and then $\cW_{\eta_1} = |1|2|3|4|3|2|1|$. The first and third words have different signatures, a contradiction.

\begin{figure}[!ht]
  \centering
  \includegraphics[scale=0.85]{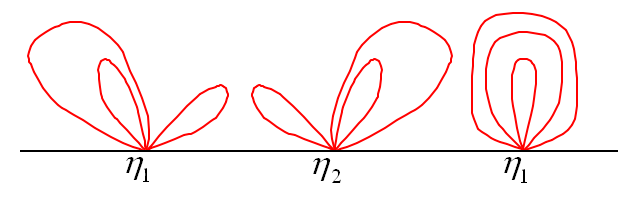}
  \caption{Case $m=2$, $k=4$, impossible configuration}\label{F-hmn95-k4-rho6-m2-i}
\end{figure}

Figure~\ref{F-hmn95-k4-rho6-m2-q} displays a case in which no such contradiction is reached  by this argument.  In this example, $y$ moves counter-clockwise (i.e. from left to right) from $\eta_1$ to $\eta_2$, and then to $\eta_1$ again.  To visualize the changes better, we change the color\footnote{\label{FN-color}To identify colors in black and white printing, blue arcs are labeled by a bullet, red arcs by a square. No label for black arcs.} of the loop which opens up,  first from black to red at $\eta_1$, then from black to blue at $\eta_2$.  The red loop $\gamma^{\eta_1}_{1,6}$ in the left figure opens up on the left of $\eta_1$,  the end point $z(y)$ moves from $\eta_1$ to $\eta_2$ clockwise, and the loop closes up again on the right of  $\eta_2$ to become the red loop $\gamma^{\eta_2}_{1,2}$ (middle figure). When $y$ continues moving counter-clockwise, from $\eta_2$ to $\eta_1$, the blue loop at $\gamma^{\eta_2}_{5,6}$ opens up on the left of $\eta_2$, its end point $z(y)$ moves clockwise from $\eta_2$ to $\eta_1$, and closes up again on the right of $\eta_1$ to become the blue loop $\gamma^{\eta_1}_{1,6}$  (right figure). The colors show that although the combinatorial types of $u_{\eta_1}$ and $u_{\eta_2}$ are identical, there is some change in the colored loop, hinting at some kind of rotation.
\begin{figure}[!ht]
  \centering
  \includegraphics[scale=0.9]{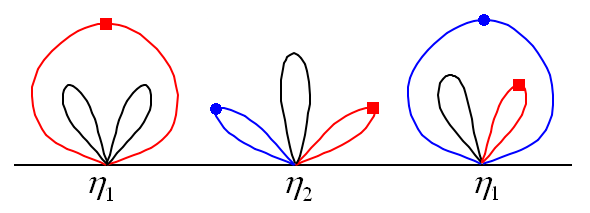}
  \caption{Case $m=2$, $k=4$}\label{F-hmn95-k4-rho6-m2-q}
\end{figure}
 This rotation can be visualized as follows. Label the nodal domains in the left figure. When $y$ moves counter-clockwise from $\eta_1$ to $\eta_2$ and then to $\eta_1$ again, follow the nodal domains under deformation. The labels are indicated by the numbers between braces in Figure~\ref{F-hmn95-k4-rho6-m2-qnd}. The corresponding words are respectively given by
\begin{equation*}
|1|2|3|2|4|2|1|, \qquad  |2|1|2|3|2|4|2|, \qquad |4|2|1|2|3|2|4| .
\end{equation*}
\begin{figure}[!ht]
  \centering
  \includegraphics[scale=0.9]{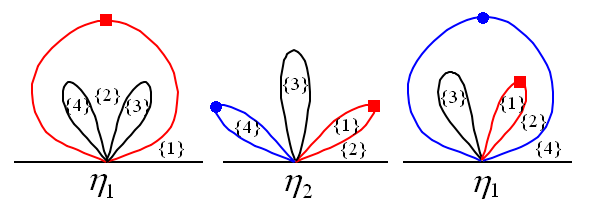}
  \caption{Case $m=2$, $k=4$, with nodal labeling}\label{F-hmn95-k4-rho6-m2-qnd}
\end{figure}
Since the first and last letter are always the same, it turns out to be more appropriate to look at these words as written on a circle (so that the last letter is suppressed). This is indicated by the symbol $\circlearrowleft$ at the end of the words. With this convention, the words in Figure~\ref{F-hmn95-k4-rho6-m2-qnd} are now given by
\begin{equation*}
|1|2|3|2|4|2|\!\circlearrowleft \,, \qquad  |2|1|2|3|2|4|\!\circlearrowleft \,, \qquad |4|2|1|2|3|2|\!\circlearrowleft \,,
\end{equation*}
as illustrated in Figure~\ref{F-hmn95-m2-k4-words}. This indicates a positive rotation by $\frac{2\pi}{3}$.
\begin{figure}[!ht]
  \centering
  \includegraphics[scale=0.2]{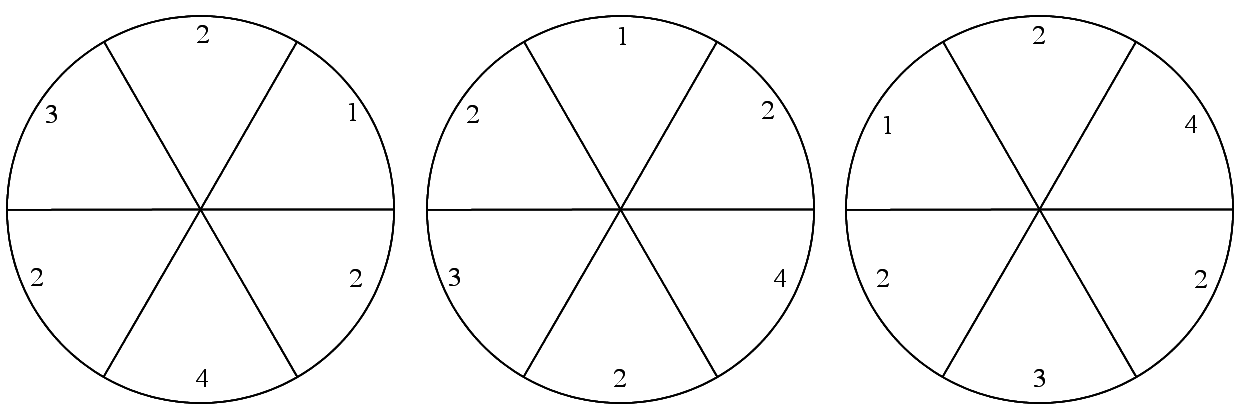}
  \caption{Nodal words seen on the circle}\label{F-hmn95-m2-k4-words}
\end{figure}

\begin{figure}[!ht]
  \centering
  \includegraphics[scale=0.9]{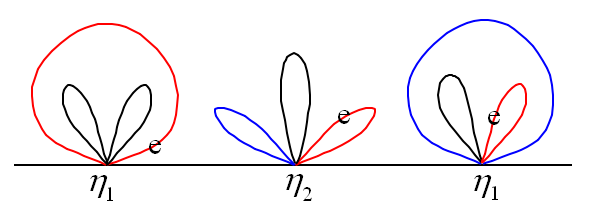}
  \caption{Case $m=2$, $k=4$, with the vector $e^{(1)}$}\label{F-hmn95-k4-rho6-m2-qe}
\end{figure}

The previous situation always occurs when $m=2$ and $k=3$,  as we now show. \medskip

\emph{Case $m := \#\big( \Gamma_{(2k-2)}\big) = 2$, $k=3$.} In this case, there are two possible combinatorial types at $\eta \in \Gamma_{(2k-2)}$,  namely
\begin{equation*}
\tau_{1} = \begin{pmatrix}
                  1 & 2 & 3 & 4 \\
                  4 & 3 & 2 & 1 \\
                \end{pmatrix}
\text{~~and~~}
\tau_{2} = \begin{pmatrix}
                  1 & 2 & 3 & 4 \\
                  2 & 1 & 4 & 3 \\
                \end{pmatrix}.
\end{equation*}%

\begin{figure}[!ht]
  \centering
  \includegraphics[scale=0.3]{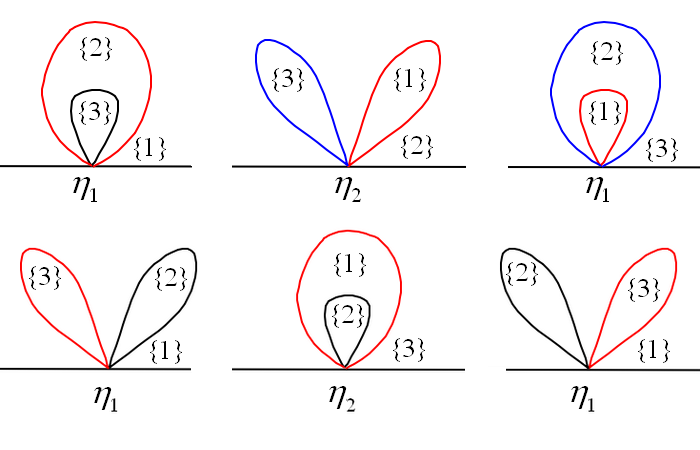}
  \caption{Evolution of the nodal domains ($m=2, k=3$)}\label{F-hmn95-m2-k3-linex2-nd}
\end{figure}

\begin{figure}[!ht]
  \centering
  \includegraphics[scale=0.35]{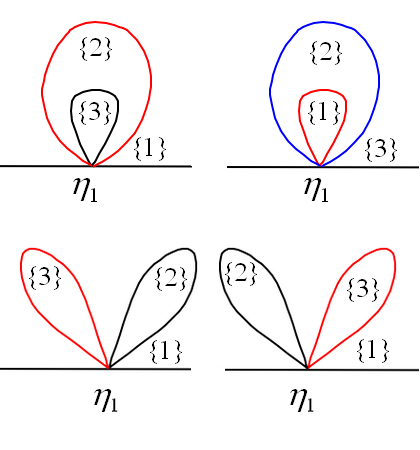}
  \caption{$\cZ(u_{\eta_1})$: initial (left) -- after returning to $\eta_1$ (right)}\label{F-hmn95-m2-k3-linex2-nde1}
\end{figure}

\newpage

Figure~\ref{F-hmn95-m2-k3-linex2-nd} describes the evolution of $\cZ(u_y)$ when $y$ moves counter-clockwise from $\eta_1$ to $\eta_2$, and then to $\eta_1$. The nodal domains of $u_{\eta_1}$ are indicated in the pictures on the left; the  other pictures then indicate how the nodal domains deform. The initial and final words (seen on the circle as above) are then given by
\begin{equation*}
\begin{array}{lll}
\text{Row 1:}& |1|2|3|2|\!\circlearrowleft  & \quad |3|2|1|2|\!\circlearrowleft \\[5pt]
\text{Row 2:}& |1|2|1|3|\!\circlearrowleft  & \quad |1|3|1|2|\!\circlearrowleft
\end{array}%
\end{equation*}
and they differ by a circular permutation which indicates a rotation by $\pi$. The initial and final patterns are best compared in Figure~\ref{F-hmn95-m2-k3-linex2-nde1}. \medskip

\begin{figure}[!ht]
  \centering
  \includegraphics[width=0.9\textwidth]{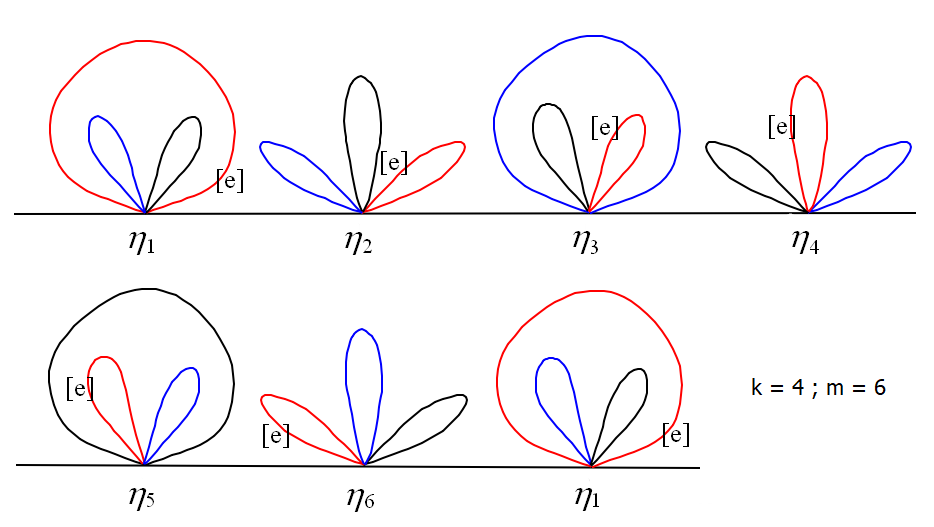}
  \caption{The case $k=4, m=6$, with the vector $e$} \label{F-hmn95-k4-rho6-m6}
\end{figure}

\begin{figure}[!ht]
  \centering
  \includegraphics[width=0.9\textwidth]{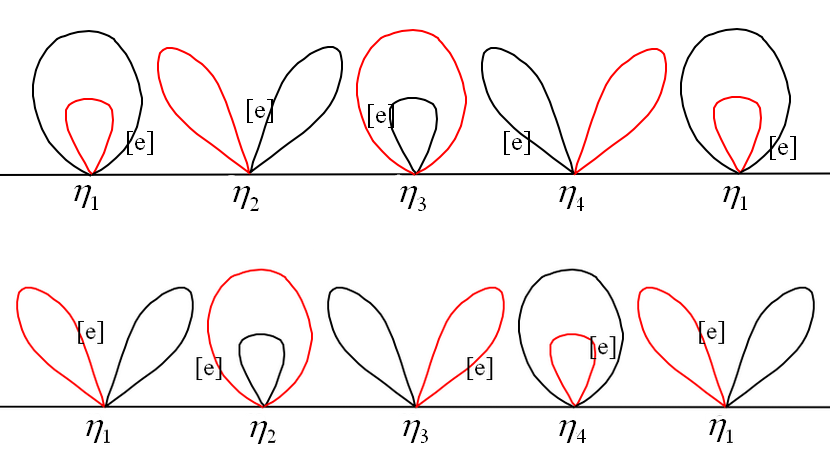}
  \caption{The case $k=3, m=4$, with the vector $e$} \label{F-hmn95-k3-rho4-m4}
\end{figure}
\medskip

Figures~\ref{F-hmn95-k3-nu4-eta1} and \ref{F-hmn95-k3-nu4-eta2} describe the behavior of $\cZ(w_x)$ when $x$ is near $\eta_1$ or $\eta_2$, and moves on a parallel curve close enough to $\Gamma$, on the left, resp. on the right. The end point $y(x)$ of the blue arc and the end point $z(x)$ of the red arc satisfy $y(x) < z(x)$, resp. $z(x) < y(x)$. When $x$ moves, the points may coincide or disappear (the nodal set $\cZ(w_x)$ does not touch the boundary). In any case, the intersection points of $\cZ(w_x)$ with the curve $C_{+}(\eta_i,r)$, $i=1,2$ remain in small pairwise disjoint intervals (the black dots) around the points in $\cZ(u_{\eta_i}) \cap C_{+}(\eta_i,r)$, $i=1,2$.

\begin{figure}[!ht]
  \centering
  \includegraphics[width=0.60\textwidth]{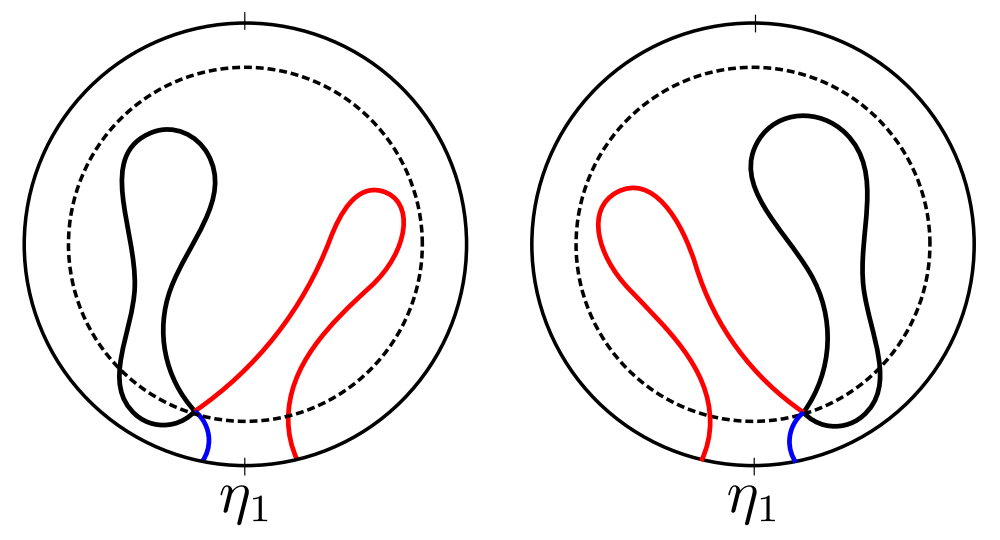}
  \caption{$m=2$, $k=3$, $x$ near $\eta_1$}\label{F-hmn95-k3-nu4-eta1}
\end{figure}

\begin{figure}[!ht]
  \centering
  \includegraphics[width=0.60\textwidth]{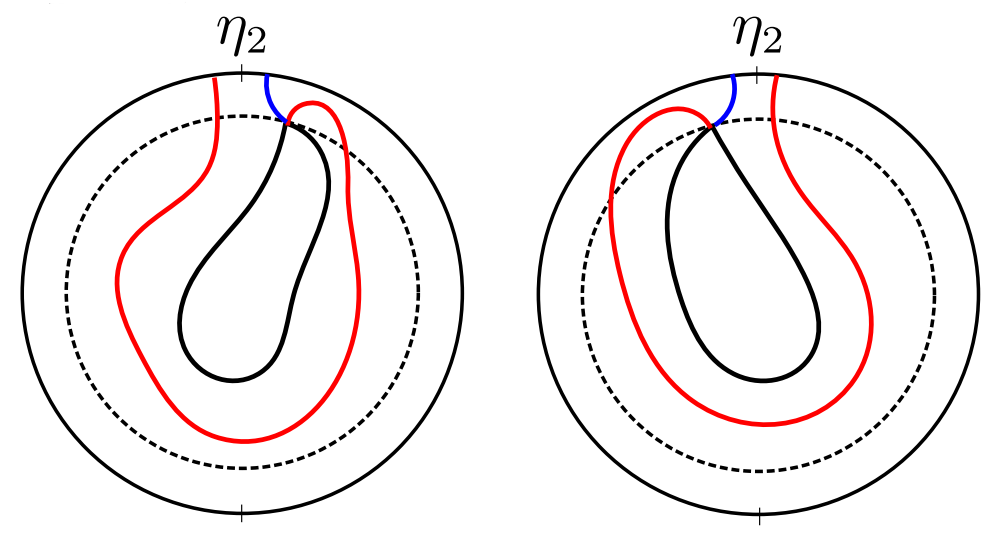}
  \caption{$m=2$, $k=3$, $x$ near $\eta_2$}\label{F-hmn95-k3-nu4-eta2}
\end{figure}

\begin{figure}[!ht]
  \centering
  \includegraphics[width=0.99\textwidth]{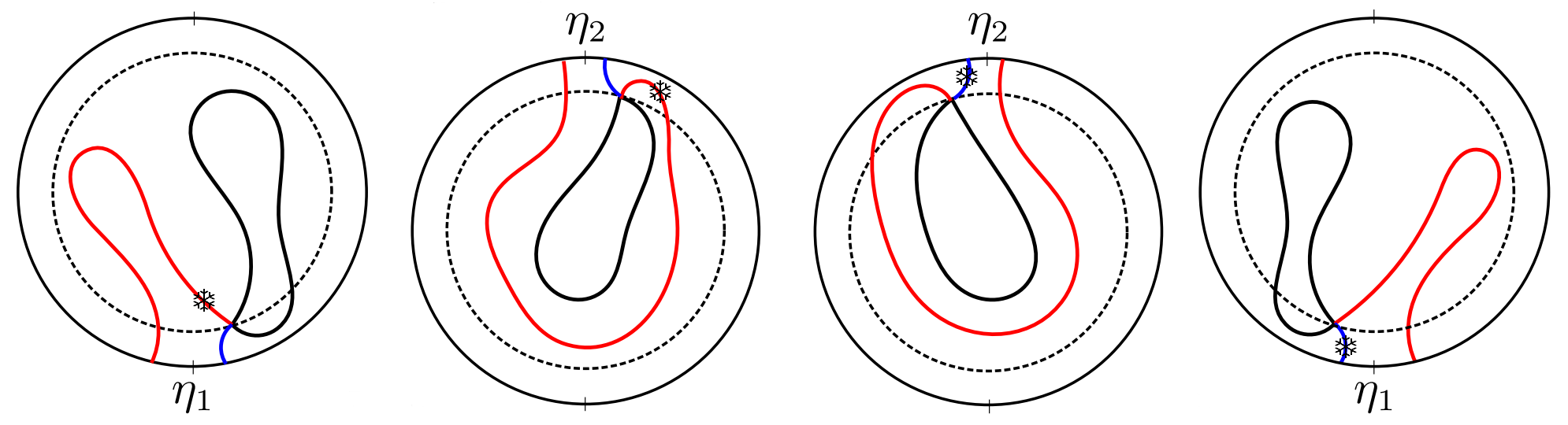}
  \caption{$m=2$, $e^{(1)}$}\label{F-hmn95-k3-nu4-m2}
\end{figure}

\begin{figure}[!ht]
  \centering       
  \includegraphics[width=0.9\textwidth]{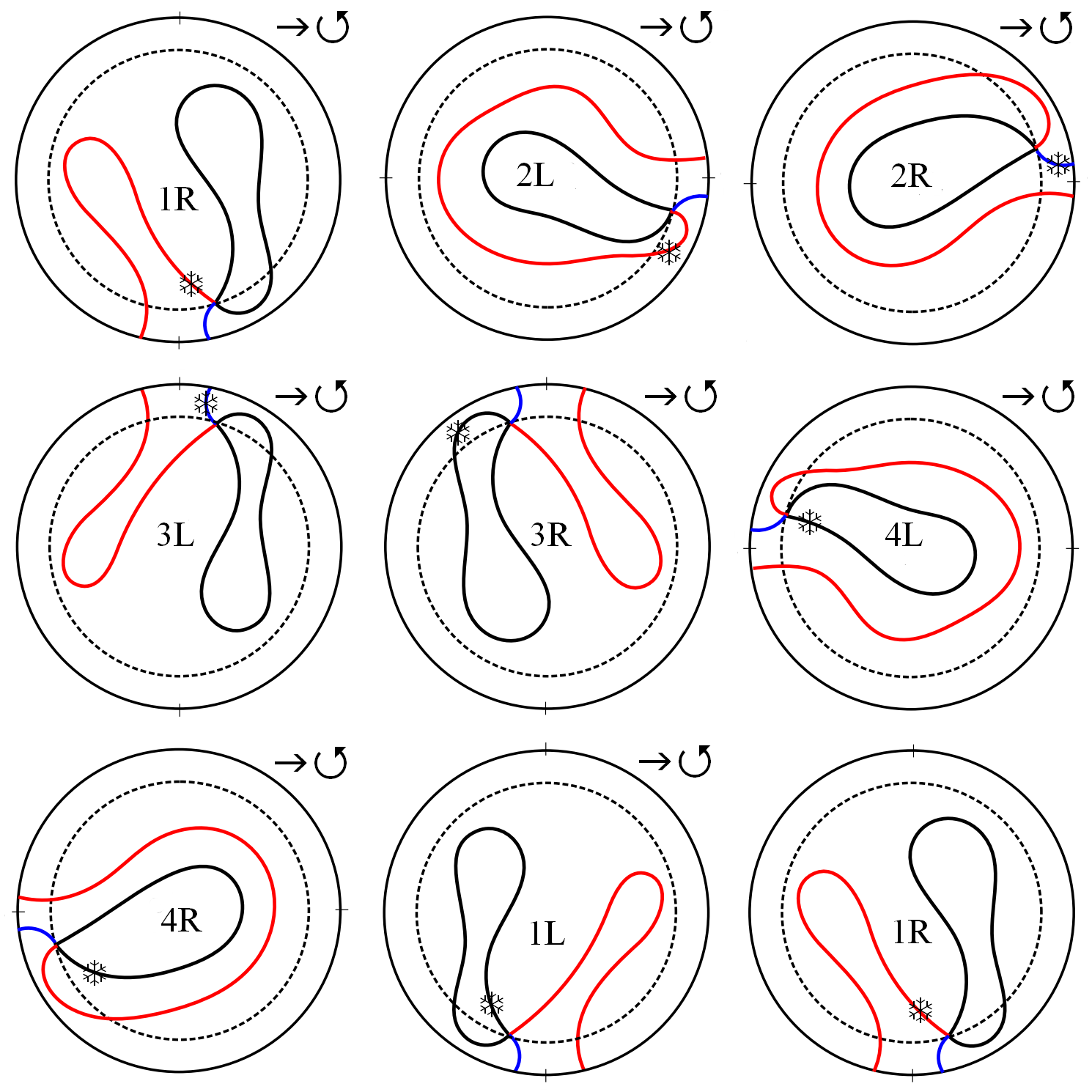}
  \caption{$m=4$, $e^{(1)}$}\label{F-hmn95-k3-nu4-m4}
\end{figure}

\FloatBarrier

\section{Labeling Nodal Domains}\label{S-hmn2L}

\subsection{Preliminaries}\label{SS-hmn2L-pre}

Let $y \in \Gamma$ and let $u$ be an eigenfunction such that $\rho(u,y) = q$.\smallskip

We will either work in $\Omega$ (global picture), or in a neighborhood of $y$ (local picture) of the form $E\big( D_{+}(0,r)\big)$ where $E$ is a conformal map as in Section~\ref{S-lsbs}. By abuse of notation, we shall denote such a neighborhood by $D_{+}(y,r)$ without mentioning $E$, and work there as if we were actually working in $\bH$.\smallskip

Let $\set{\omega_1,\ldots,\omega_q}$ be the rays tangent to the nodal set $\cZ(u)$ at the point $y$. In the Dirichlet case, the rays are given by  $\omega_{j} = j \frac{\pi}{q}$, for $1\le j\le q$. In the Robin case, they are given by $\omega_{j} = (j-\frac 12) \frac{\pi}{q}$.\smallskip

Let $r > 0$ be small enough so that the local structure theorem applies to $u$ in $D_{+}(y,2r)$. For $1 \le j \le q$, the nodal arc of $u$ emanating from $y$ tangentially to the ray $\omega_j$ intersects $C_{+}(y,r)$ at a unique point $A^{u}_j(r)$ close to the intersection point $\omega_j \cap C_{+}(y,r)$. Call $A_{+}(r)$, resp. $A_{-}(r)$, the intersection points of $C_{+}(y,r)$ with $\Gamma$ (these points do not belong to $\cZ(u)$). The points $A^{u}_j(r)$ determine $(q+1)$ intervals on $C_{+}(y,r)$, denoted $I^{u}_j(r)$ for $1 \le j \le (q+1)$: $I^{u}_1(r)$ is the arc of $C_{+}(y,r)$ from $A_{+}(r)$ to $A^{u}_1(r)$ (moving on $\Gamma$ counter-clockwise), \ldots, $I^{u}_{q+1}(r)$ is the arc from $A^{u}_q(r)$ to $A_{-}(r)$. We shall skip the superscripts when the context is clear.\medskip

Throughout this section, we fix some $k \ge 3$, and we consider eigenfunctions $u$ satisfying the following assumptions.

\begin{assumptions}\label{A-hmn2L-2}~
\begin{enumerate}[(i)]
  \item The function $u$ satisfies $\rho(u,y) = q$, for some $y \in \Gamma$, $q \in \set{(2k-3),(2k-2)}$.
  \item When $q=(2k-2)$, the point $y$ is the only singular point of $u$ and $\cZ(u)$ is a $(k-1)$-bouquet of loops at $y$.
  \item When $q=(2k-3)$, $u$  has two singular points $y$ and $z\neq y$,
  both in $\Gamma$, and there is a nodal interval $\delta = \delta^{u}_{b,y,z}$ which emanates from $y$ tangentially to the ray $\omega_b$ and hits $\Gamma$ at $z$. The nodal set $\cZ(u)$ is the wedge sum of the nodal interval $\delta$ with a $(k-2)$-bouquet of loops at $y$.
  \item The function $u$ has $k$ nodal domains, i.e., $\kappa(u)=k$.
  \item For $r$ small enough, all the nodal domains of $u$ intersect $C_{+}(y,r)$.
\end{enumerate}
\end{assumptions}%

 Eigenfunctions satisfying the above assumptions occur in Section~\ref{S-hmn3}, Lemma~\ref{L-L32} and Figure~\ref{F-hmn3-h0-L33}.  They also occurred in  Section~\ref{S-hmn2}, Lemmas~\ref{L-Uxy0} and \ref{L-Ux1}, Figures~\ref{F-hmn2-Uxy0} and \ref{F-hmn2-Ux100}). \smallskip

Examples are displayed in Figures~\ref{F-hmn2L-2} and \ref{F-hmn2L-4}. To differentiate the two cases, we will denote by $u_0$ or $v_0$ a function for which $q = (2k-2)$, and $u_1$ a function for which $q = (2k-3)$.\smallskip

\begin{figure}[!ht]
\centering
\begin{subfigure}[t]{.40\textwidth}
\centering
\includegraphics[width=\linewidth]{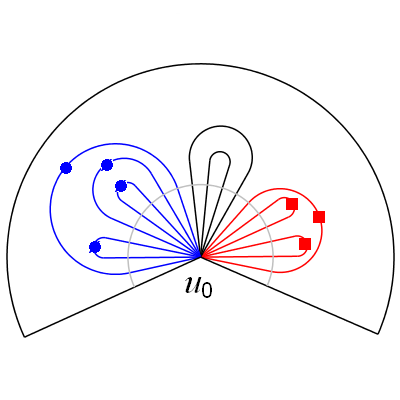}
\end{subfigure}
\begin{subfigure}[t]{.40\textwidth}
\centering
\includegraphics[width=\linewidth]{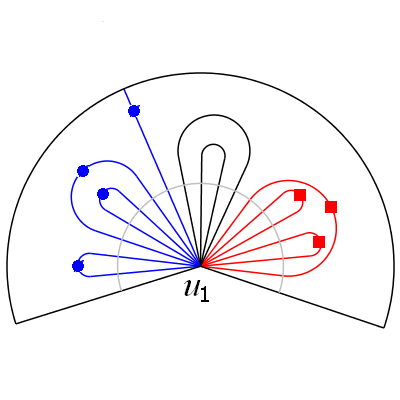}
\end{subfigure}
\caption{$\rho(u_0,y) = (2k-2)$, $\rho(u_1,y) =(2k-3)$ [here $k=10$]}\label{F-hmn2L-2}
\end{figure}

\begin{figure}[!ht]
\centering
\begin{subfigure}[t]{.40\textwidth}
\centering
\includegraphics[width=\linewidth]{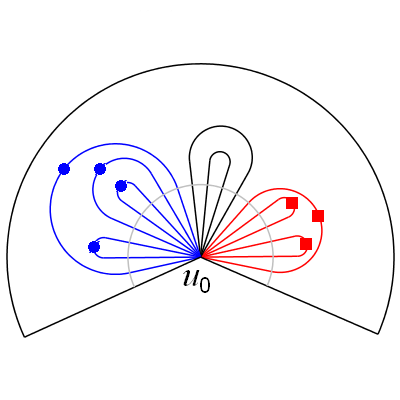}
\end{subfigure}
\begin{subfigure}[t]{.40\textwidth}
\centering
\includegraphics[width=\linewidth]{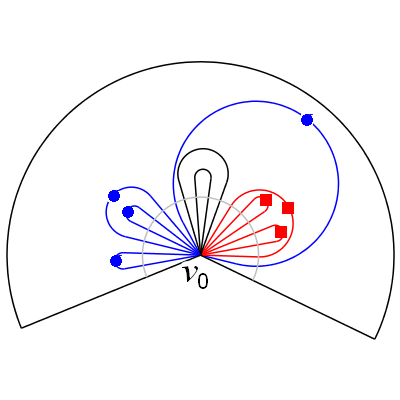}
\end{subfigure}
\caption{$\rho(u_{0},y) = \rho(v_{0},y) = (2k-2)$ [here $k=10$]}\label{F-hmn2L-4}
\end{figure}

The purpose of this section is to give a simple criterion to establish that the functions $u_{0}$ and $v_{0}$, whose nodal patterns are displayed in Figure~\ref{F-hmn2L-4}, are different in $\bP(U)$, i.e., $[u_0] \neq [v_0]$.  This criterion is used in Paragraph~\ref{SSS-hmn-25D},  in Section~\ref{S-hmn3}, and in Section~\ref{S-hmn32b}.

\FloatBarrier

\subsection{Labeling nodal domains and the signature of nodal patterns}\label{SS-hmn2L-lnds}
\index{Signature (nodal patterns)}
\begin{definition}\label{D-hmn2L-2}
A \emph{labeling of the nodal domains} \index{Labeling (nodal domains)} of an eigenfunction $u$ with $\kappa(u)=k$ is a set of  pairwise distinct labels $\cD := \set{d_1, \ldots , d_k}$  in bijection with the set of nodal domains of $u$, so that they can be listed as $D_{d_1},\ldots,D_{d_k}$. \end{definition}%

Let $\cD = \set{d_1, \ldots , d_k}$ be a labeling of the nodal domains of $u$. Given $r$ small enough, let $\set{I^{u}_{i}(r)}_{i=1}^{q+1}$ be the intervals determined by the points $\cZ(u) \cap C_{+}(y,r)$ on $C_{+}(y,r)$. We attach labels to these intervals as follows: for $1 \le j \le k$,  the label $d_j$ is attached to the intervals $I^{u}_i(r)$ which are contained in the nodal domain $D_{d_j}$.\\
 Note that the same label $d_j$ may be given to several intervals.\\ We now encode this information into a word $\ww_{u,\cD}$, of length $\|\ww_{u,\cD}\| = (q+1)$,
\begin{equation*}
\ww_{u,\cD} =\ell_{u,\cD,1} \cdots \ell_{u,\cD,(q+1)},
\end{equation*}
whose letters $\ell_{u,\cD,i}$, $1 \le i \le (q+1)$, belong to the labeling set $\cD$.\\
Note that the word does not depend on $r$ provided that $r$ is small enough (this is a consequence of the local structure theorem, Section~\ref{S-lsbs}). Since an eigenfunction changes sign across a nodal line, two consecutive letters in the word $\ww_{u,\cD}$ are different.  Different label sets $\cD_1$ and $\cD_2$ give rise to a priori different words.

\begin{definition}\label{D-hmn2L-4}
Let $\cI_{u,\cD} := \set{i,\, 1 \le i \le (q+1)  \mid \ell_{u,\cD,i} = \ell_{u,\cD,1}}$.
The \emph{signature} \linebreak $\sigma(\ww_{u,\cD})$ of the word $\ww_{u,\cD}$
is defined as
\begin{equation}\label{E-hmn2L-0}
\sigma(\ww_{u,\cD}) :=
          \left\{
          \begin{array}{ll}
          1 &\text{if~~} \cI_{u,\cD} = \set{1},\\[5pt]
          \min(\big( \cI_{u,\cD}\sm \set{1}\big) &\text{if~~} \cI_{u,\cD} \neq \set{1}.
           \end{array}
           \right.
\end{equation}
\end{definition}%

\begin{properties}\label{P-hmn2L-2}
The signature $\sigma$ \index{Signature} is well defined for eigenfunctions satisfying Assumptions~\ref{A-hmn2L-2}, and does not depend on the labeling set $\cD$.
\end{properties}%

 Indeed, let $D$ be the nodal domain of u which contains the interval $I_1^{u}(r)$ and let  $\cI_{u} := \set{j \mid I_j^{u}(r) \subset D}$. Then,
$\cI_{u,\cD} = \cI_{u}$, and
\begin{equation*}
\sigma(\ww_{u,\cD}) = \left\{
\begin{array}{ll}
1 & \text{if~~} \cI_u = \set{1}\\[5pt]
\min \big( \cI_u \sm \set{1}\big)& \text{if~~} \cI_u \neq \set{1},
\end{array}
\right.
\end{equation*}
and the right hand side is clearly independent of the choice of the  labeling set.
Figure~\ref{F-hmn2L-6a} illustrates this  fact (here on the coloring scheme). Figure~\ref{F-hmn2L-6b} illustrates the fact that the signature provides a criterion to distinguish different nodal patterns.\smallskip

 As we shall see below, a labeling of the nodal domains of an eigenfunction satisfying Assumptions~\ref{A-hmn2L-2} can be deduced from the combinatorial type $\tau_u$ of $u$. \medskip

\begin{figure}[!ht]
  \centering
  \includegraphics[width=0.8\textwidth]{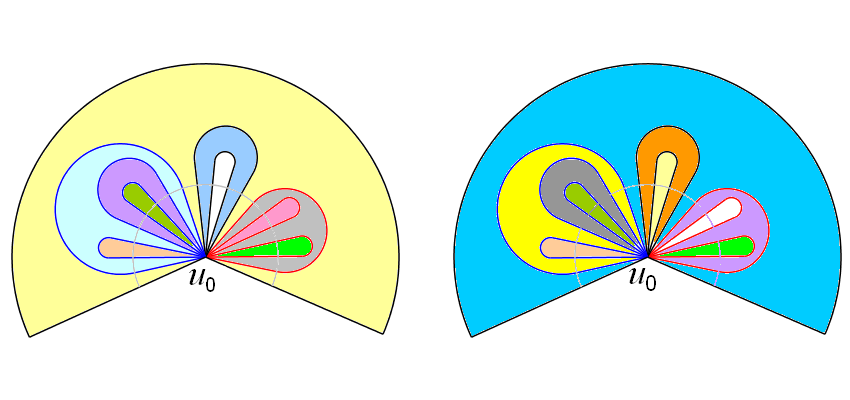}
  \caption{Same nodal pattern, different labeling sets, $\sigma = 7$}\label{F-hmn2L-6a}
\end{figure}

\begin{figure}[!ht]
  \centering
  \includegraphics[width=0.8\textwidth]{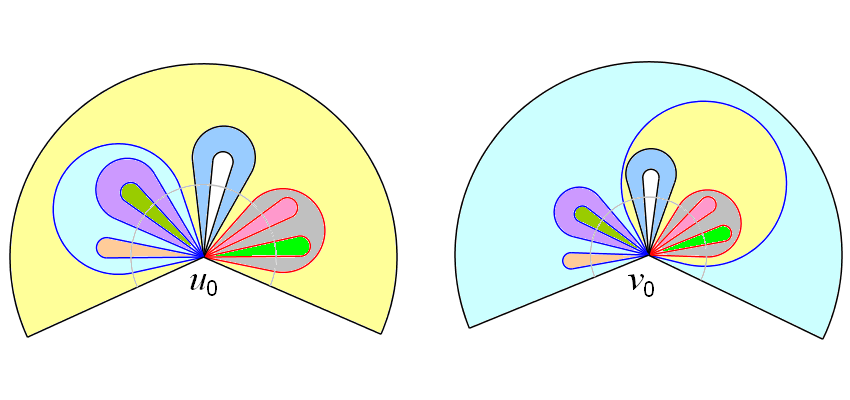}
  \caption{$\sigma(\ww_{u_0,\cD_1})=7$ (left), $\sigma(\ww_{v_0,\cD_2})=13$ (right)}\label{F-hmn2L-6b}
\end{figure}
\FloatBarrier

\subsection{A general description of nodal patterns}\label{SS-hmn2L-gdnp}~\\
 Let $u$ be an eigenfunction satisfying Assumptions~\ref{A-hmn2L-2}. Let $\tau$ be its combinatorial type.\smallskip

\fbox{A}~\emph{Sub-bouquets.}~ Given a subset $F \subset \set{1,\ldots,q}$, define $b_F := \min F$ and $e_F := \max F$. In the sequel, we only consider subsets $F$ with the following property:
\begin{equation}\label{E-hmn2L-2}
F = \set{j \mid b_F \le j \le e_F} \text{~and~} \tau(F)=F,
\end{equation}
i.e., $F$ is an interval in $\set{1,\ldots,q}$, and $\tau$ leaves $F$ globally invariant.  If $q = (2k-3)$ and $\tau(a) =\, \downarrow$ for some $a \in \set{1,\ldots, q}$, we also assume that $a \not \in F$.\smallskip

With such a subset $F$ we associate the bouquet of loops $\cB^{u}_F$,
\begin{equation*}
\cB^{u}_F = \bigcup_{j \in F} \gamma^{u}_{j,\tau(j)},
\end{equation*}
more precisely, the wedge sum at $y$ of the loops in $\cZ(u)$ associated with $F$.
\smallskip

\begin{definitions}\label{D-hmn2L-6}\phantom{}~
\begin{enumerate}[(i)]
  \item A loop $\gamma$ in $\cZ(u)$, at the point $y$, taken individually, divides $\Omega$ into two connected components. The \emph{interior} of $\gamma$ \index{Interior of a loop} is the component which only touches the boundary $\Gamma$ at $y$. The other component is called the \emph{exterior} of $\gamma$.\index{Exterior of a loop}
  \item Given $\cB^{u}_F$, the bouquet of loops associated with $u$ and $F$, we call \emph{interior} domain of $\cB^{u}_F$ a nodal domain of $u$ contained in the interior of some loop $\gamma_{j,\tau(j)}, j \in  F$. We call \emph{exterior} of $\cB^{u}_F$ the set of points of $\Omega$ which belong neither to $\cB^{u}_F$, nor to an interior domain of $\cB^{u}_F$.
  \item We denote by $n^{u}_F$ the number of loops in $\cB^{u}_F$.
\end{enumerate}
\end{definitions}%

\begin{figure}[!ht]
  \centering
  \includegraphics[width=0.9\textwidth]{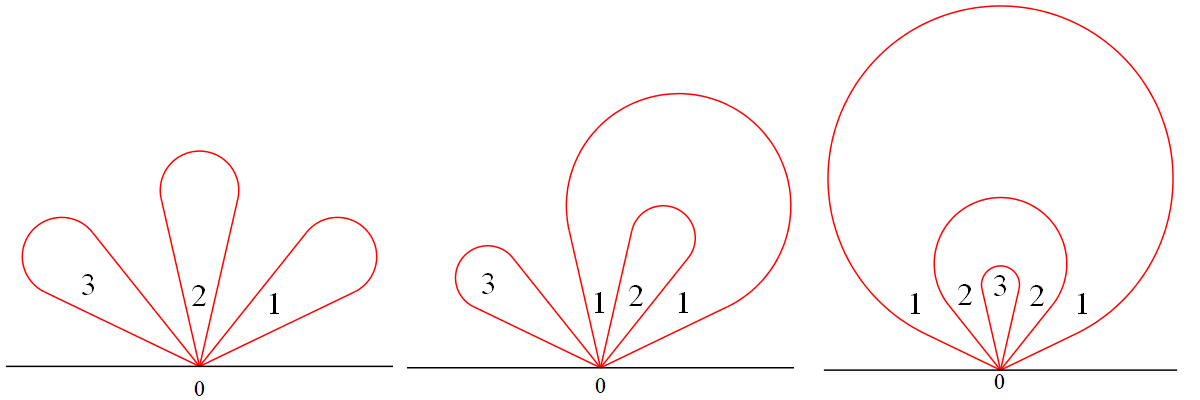}
  \caption{Examples of bouquets}\label{F-hmn2L-L33c-sb}
\end{figure}

Figure~\ref{F-hmn2L-L33c-sb} displays bouquets with $3$ loops:
\begin{itemize}
\item The numbers are labels for the interior nodal domains.
\item  The unlabeled domain is the exterior of the bouquet.
\end{itemize}

The following relations hold.
\begin{equation}\label{E-hmn2L-2a}
\left\{
\begin{array}{l}
\#(F) = e_F - b_F + 1.\\[5pt]
\|F\| := e_F - b_F = \#\set{i \mid I_i(r) \subset \cA\big( A^{u}_{b_F}(r),A^{u}_{e_F}(r)
\big)}.\\[5pt]
n_F = \# \set{\text{~interior nodal domains of~} \cB^{u}_F~}.\\[5pt]
2 \, n_F = \#(F) = \|F\|+1.
\end{array}%
\right.
\end{equation}
Here, $\cA\big( A^{u}_{b_F}(r),A^{u}_{e_F}(r)\big)$ denotes the arc from $A^{u}_{b_F}(r)$ to $A^{u}_{e_F}(r)$, moving counter-clockwise along  $C_{+}(y,r)$. The intervals contained in this arc are $I^{u}_{b_F+1}, \ldots, I^{u}_{e_F}$. Since $\tau(F)=F$, the number $\#(F)$ is even.  Note that $n_F$ depends on $\tau$, not on $u$ itself. \medskip

\fbox{B} \emph{The case $q = (2k-3)$.~} Let $u_1$ be an eigenfunction satisfying Assumptions~\ref{A-hmn2L-2}, with $\rho(u_1,y) = (2k-3)$. Let $\tau_1$  denote its combinatorial type. Let $b := \tau_1(\downarrow)$. Let $\delta = \delta^{u_1}_{b,y,z}$ be the nodal interval which emanates from $y$ tangentially to the ray $\omega_b$ and hits $\Gamma$ at a point $z \neq y$. It divides $\Omega$ into two connected (and simply-connected) components $\Omega_{b,R}$ and $\Omega_{b,L}$, resp. on the right and on the left of $\delta$. A nodal domain $D$ of $u_1$ must be contained in either $\Omega_{b,R}$ or $\Omega_{b,L}$. Define the subsets $R := \set{1,\ldots,(b-1)}$ and $L:=\set{(b+1),\ldots,(2k-3)}$. Assumptions~\ref{A-hmn2L-2} and Jordan's theorem imply that $\tau_1(R) = R$ and $\tau_1(L) = L$. It follows that $\#(R)$ and $\#(L)$ are even, and that $b$ is odd. The corresponding bouquets of loops $\cB^{u_1}_{R}$ and  $\cB^{u_1}_{L}$ are contained respectively in $\set{y}\cup \Omega_{b,R}$, resp. $\set{y}\cup \Omega_{b,L}$. It follows that $\Omega_{b,R}$ contains $k_R := n_R + 1$ nodal domains of $u_1$, and that  $\Omega_{b,L}$ contains $k_L := n_L + 1$ nodal domains of $u_1$. Here, $n_R := \frac{b-1}{2}$ and $n_L = \frac{2k-b-3}{2}$, so that $k_R+k_L = k$. Figure~\ref{F-hmn2L-2}\,(right) displays an example with $k=10, b=11$, $R = \set{1,\ldots,10}$ and $L=\set{12,\ldots,17}$.  The bouquet $\cB^{u_1}_R$ consists of the two black and three red loops; the bouquet $\cB^{u_1}_L$ of the three blue loops (the colors of the arcs are labeled according to Footnote~\ref{FN-color} in this chapter). \smallskip

To label the $k$ nodal domains of $u_1$, it suffices to first label the nodal domains contained in $\Omega_{b,R}$, from $d_1$ to $d_{k_R}$, and then the nodal domains contained in $\Omega_{b,L}$, from $d_{k_R+1}$ to $d_{k}$.\smallskip

Since both $\Omega_{b,R}$ and $\Omega_{b,L}$ are simply connected and only contain loops, we are reduced to labeling nodal domains for functions such that $\rho(u,y)$ is even. \medskip

\fbox{C} \emph{The case $q=(2k-2)$. An example with $k=10$, $q=18$.~} Decompose the nodal set $\cZ(u_0)$ displayed in Figure~\ref{F-hmn2L-2}\, (left),  into a large (red) loop $\gamma^{u_0}_{1,6}$ which contains the (red) bouquet $\cB^{u_0}_R$, a large (blue) loop $\gamma^{u_0}_{11,18}$ which contains the (blue) bouquet $\cB^{u_0}_L$, and a (black) bouquet $\cB^{u_0}_C$. More precisely, for this example,
\begin{equation}\label{E-hmn2L-4h}
\left\{
\begin{array}{lll}
 a=6, ~\tau_{u_0}(1)=6, & &
b=11, ~\tau_{u_0}(11)=18,\\[5pt]
R = \set{2,3,4,5}, & C = \set{7,8,9,10}, & L = \set{12,13,14,15,16,17}.
\end{array}%
\right.
\end{equation}
Similarly, in Figure~\ref{F-hmn2L-4}\, (right), $k=10$ and $q=18$, and we decompose the nodal set into a large (blue) loop  $\gamma^{v_0}_{1,12}$ whose interior contains the (red) bouquet $\cB^{v_0}_{R'}$ and the (black) bouquet $\cB^{v_0}_{N'}$, and the (blue) bouquet  $\cB^{v_0}_{L'}$ contained in the exterior of $\gamma^{v_0}_{1,12}$. More precisely, for this example,
\begin{equation}\label{E-hmn2L-4y}
\left\{
\begin{array}{lll}
& \tau_{v_0}(1)=12, &\\[5pt]
R' = \set{2,3,\ldots,6,7}, & N' = \set{8,9,10,11}, & L' = \set{13,14,\ldots,17,18}.
\end{array}
\right.
\end{equation}

\begin{remark}\label{R-hmn2L-2}
In the example $u_0$, we pay a special attention to the loops one of whose semi-tangents is $\omega_1$ or $\omega_{(2k-2)}$. The reason is  explained
in Section~\ref{S-hmn3}.
\end{remark}%

\fbox{D} \emph{The general case $k\ge 3$, $q=(2k-2)$.~} Let $u_0$ be an eigenfunction satisfying Assumptions~\ref{A-hmn2L-2}, with $\rho(u_0,y) = (2k-2)$ and combinatorial type $\tau_0$. Define $a=\tau_0(1)$ and $b=\tau_0(2k-2)$. We observe that $a$ is even and $b$ odd.\medskip

\noid When $a=(2k-2)$, $b=1$. The nodal set $\cZ(u_0)$ decomposes into the loop $\gamma_{1,(2k-2)}$ and the bouquet $\cB^{u_0}_R$ with $R = \set{2,\ldots,(2k-3)}$. The exterior of $\gamma_{1,(2k-2)}$ is a nodal domain of $u_0$. The bouquet $\cB^{u_0}_R$ is contained in the interior of $\gamma_{1,(2k-2)}$ which contains $(n_R + 1)$ nodal domains, with $n_R = (k-2)$.\medskip

\noid If $a \neq (2k-2)$, then $2 \le a \le (2k-4)$ and $(a+1) \le b \le (2k-3)$.
In this case, $\cZ(u_0)$ consists of two loops $\gamma^{u_0}_{1,a}$, $\gamma^{u_0}_{b,(2k-2)}$, and three bouquets of loops $\cB^{u_0}_R, \cB^{u_0}_C$ and $\cB^{u_0}_L$. The subsets $R, C$ and $L$ are given by
\begin{equation}\label{E-hmn2L-4z}
\left\{
\begin{array}{l}
R = \set{2,\ldots,(a-1)}, \quad C = \set{(a+1),\ldots,(b-1)},\\[5pt]
L = \set{(b+1),\ldots,(2k-3)}.
\end{array}%
\right.
\end{equation}
The combinatorial type $\tau_{u_0}$ is given by
\begin{equation}\label{E-hmn2L-6}
\tau_{u_0} = \begin{pmatrix}
                1 & R & a & C & b & L & q \\
                a & \tau_{u_0}(R) & 1 &\tau_{u_0}(C) &  q & \tau_{u_0}(L) & b \\
              \end{pmatrix}.
\end{equation}

\begin{remark}\label{R-hmn2L-4}
With the same subsets $R, C, L$, the combinatorial type of the function $u_1$, whose nodal pattern is displayed in Figure~\ref{F-hmn2L-2}, is
\begin{equation}\label{E-hmn2L-6u1}
\tau_{u_1} = \begin{pmatrix}
                1 & R & a & C & b & L & \downarrow\\
                a & \tau_{u_1}(R) & 1 &\tau_{u_1}(C) & \downarrow & \tau_{u_1}(L) & b \\
              \end{pmatrix},
\end{equation}
where the symbol $\downarrow$ indicates that the nodal interval $\delta^{u_1}_{b,y,z}$ emanating from $y$ tangentially to the ray $\omega_b$ hits the boundary at $z \neq y$.  In Figure~\ref{F-hmn2L-2}, the maps $\tau_{u_0}$ and $\tau_{u_1}$ coincide on the sets $R, C$ and $L$.
\end{remark}%

\subsection{A labeling procedure and the word associated with it}\label{SS-hmn2L-lpw}~\\
We now explain a procedure to label the nodal domains of a function $u$ satisfying Assumptions~\ref{A-hmn2L-2} in the case $q = (2k-2)$, see Figure~\ref{F-hmn2L-2}.\smallskip

Let $\cD = \set{d_1,\ldots,d_k}$ be a set of labels with $k=\kappa(u)$. Since every nodal domain of $u$, intersects $C_{+}(y,r)$, we label the nodal domains from $d_1$ to $d_k$ according to their order of appearance in the intervals $I^{u}_i(r)$, $1 \le i \le (2k-1)$, working counter-clockwise along $C_{+}(y,r)$.

\begin{procedure}\label{P-hmn2L-la}
The following procedure attributes a unique label $d_j, 1 \le j \le \kappa(u)=k$ to each nodal domain of the eigenfunction $u_0$, and a well defined label to each interval $I^{u}_i(r)$ determined by $\cZ(u) \cap C_{+}(y,r)$, $1 \le i \le (q+1)$ on $C_{+}(y,r)$. The labeling is independent of $r$ provided that $r$ is small enough for the local structure theorem to apply to the function $u$ at $y$.
\begin{enumerate}[{\textrm Step~1}]
  \item Let $D_{d_1}$ be the nodal domain which contains the interval $I^{u}_1(r)$. Attach the label $d_1$ to all the intervals $I^{u}_i(r)$ contained in $D_{d_1}$.
  \item Because an eigenfunction changes sign across a nodal arc, the interval $I^{u}_2(r)$ is not contained in $D_{d_1}$. Call $D_{d_2}$ the nodal domain which contains $I^{u}_2(r)$. Attach the label $d_2$ to all the intervals contained in $D_{d_2}$.
  \item For the same reason as in the previous item, the label $d_2$ is not attached to the interval $I^{u}_3(r)$. If $I^{u}_3(r) \subset D_{d_1}$, then the label $d_1$ is already attached to $I^{u}_3(r)$ by step 1. If $I^{u}_3(r) \not \subset D_{d_1}$, let $D_{d_3}$ be the nodal domain which contains $I^{u}_3(r)$, and attach the label $d_3$ to all the intervals $I^{u}_i(r)$ contained in $D_{d_3}$.
  \item Assume that the intervals $I^{u}_i(r)$, $1 \le i \le (j-1)$, have been labeled, using the labels $d_1, \ldots, d_p$ for some $p \ge 2$.
  \begin{enumerate}[$\diamond$]
    \item  If $p=k$, all the nodal domains have been labeled, and all the intervals have received a label as well.
    \item If $p < k$ then,
        \begin{enumerate}[$\centerdot$]
    \item either $I^{u}_j(r)$ has already been labeled because it is contained in an already labeled nodal domain and we proceed to $I^{u}_{j+1}(r)$,
    \item or $I^{u}_j(r)$ has no label attach to it yet. Then, call  $D_{d_{p+1}}$ the nodal domain which contains $I^{u}_j(r)$, and attach the label $d_{p+1}$ to all the intervals $I^{u}_i(r)$ which are contained in $D_{d_{p+1}}$.
        \end{enumerate}
  \end{enumerate}
\end{enumerate}
After at most $q$ steps all nodal domains and all intervals will be labeled.\smallskip

The labeling of the $(q+1)$ intervals $I^{u}_1(r), \ldots I^{u}_{(q+1)}(r)$ produces a word $\ww_{u}$ of length $\|\ww_u\|=(q+1)$ (the number of intervals), in the $\kappa(u)$ letters $d_1, \ldots, d_k$.  Equivalently, we can view the labeling as a map $\Lambda_{u}: \set{1,\ldots, (q+1)} \to \cD$.
\end{procedure}%

\begin{remark}\label{R-hmn2L-6}
The above procedure applies to both types of functions, $u_0$ with  $q = (2k-2)$, or $u_1$ with $q=(2k-3)$. There are two differences in the output words. The word $\ww_{u_0}$ has length $(2k-1)$, it begins and ends with the letter $d_1$.  The word $\ww_{u_1}$ has length $(2k-2)$, and consists of two words with no common letter. This is due to the fact that the nodal interval $\delta^{u_1}_{b,y,z}$ divides the domain $\Omega$ into two components, $\Omega_{b,R}$, $\Omega_{b,L}$, so that the nodal domains of $u$ are divided into two distinct families. The Procedure~\ref{P-hmn2L-la} will first label the $(n_R+1)$ nodal domains contained in $\Omega_{b,R}$, from $1$ to $(n_R+1)$, and then label the $(n_L+1)$ nodal domains contained in $\Omega_{b,L}$, from $(n_R+2)$ to $k$.
\end{remark}

\newpage
\subsection{Examples}\label{SS-hmn2L-ex}\phantom{xx}
\smallskip

\subsubsection{Examples with $k=4$, and $a=3$ or $5$, in Paragraph~\ref{SSS-hmn-25D}}\label{SSS-hmn2L-25D}

The labeling of nodal domains in Figures~\ref{F-hmn2-Uxyw-L1} and \ref{F-hmn2-Uxyw-L2}, left and middle sub-figures, follows Procedure~\ref{P-hmn2L-la}. The labeling in the right sub-figures follows the deformation of the nodal domains when $\theta$ tends to $0$. In the first figure, the signatures are $\sigma(\text{left}) = 3$, $\sigma(\text{right}) = 5$; in the second figure, $\sigma(\text{left}) = 3$, $\sigma(\text{right}) = 7$.
\smallskip

\subsubsection{An example with $k=4$ and $a=3$ in the proof of Lemma~\ref{L-L33c}\,(ii)}\label{SSS-hmn2L-exii}

The labeling of nodal domains in Figure~\ref{F-hmn3-L33c-4} is deduced by deformation from the labeling of the nodal domains in the middle subfigure of Figure~\ref{F-hmn3-L33c-2} which follows Procedure~\ref{P-hmn2L-la}. We have $\sigma(v_{0,R})=3$ and $\sigma(v_{0,L})=5$.
\smallskip

\subsubsection{Examples with $k=10$ in the proof of Lemma~\ref{L-L33c}\,(iv)}\label{SS-hmn2L-exk10}\phantom{}

\vspace{2pt}~\\
\fbox{A}~Example \eqref{E-hmn2L-4h}, $k=10, q = 18$, Figure~\ref{F-hmn2L-2}, function $u_0$.

\begin{enumerate}
  \item The interval $I^{u_0}_1(r)$ is contained in the exterior domain of $\cZ(u_0)$. We call $D_{d_1}$ the exterior domain of $\cZ(u_0)$, and we label $d_1$ the intervals $I^{u_0}_{j}(r)$ with $j \in \set{1,7,11,19}$.
  \item The interval $I^{u_0}_2(r)$ is contained in the interior of the loop $\gamma^{u_0}_{1,6}$. We call $D_{d_2}$, the nodal domain in the interior $\gamma^{u_0}_{1,6}$, which contains points close to the loop, and we label $d_2$ the intervals $I^{u_0}_{j}(r)$, $j \in \set{2,4,6}$.
  \item The interval $I^{u_0}_3(r)$ is contained in the interior of the loop $\gamma^{u_0}_{2,3}$. We label both $d_3$.
  \item The interval $I^{u_0}_4(r)$ is already labeled $d_2$. The interval $I^{u_0}_5(r)$ is contained in the interior of the loop $\gamma^{u_0}_{4,5}$. We label both $d_4$.
  \item The intervals $I^{u_0}_6(r)$ and $I^{u_0}_7(r)$ are already labeled, respectively $d_2$ and $d_1$. The interval $I^{u_0}_8(r)$ is contained in the interior of the loop $\gamma^{u_0}_{7,10}$. We label both $d_5$, as well as $I^{u_0}_{10}(r)$.
  \item The interval $I^{u_0}_9(r)$ is contained in the interior of the loop $\gamma^{u_0}_{8,9}$. We label both $d_6$.
  \item Continuing the procedure, we obtain
\begin{equation*}
\ww_{u_0} = d_1d_2d_3d_2d_4d_2d_1d_5d_6d_5
d_1d_7d_8d_9d_8d_7d_{10}d_7d_1.
\end{equation*}
\end{enumerate}
For simplicity, we use the alternative notation
\begin{equation*}
\ww_{u_0} = |1|2|3|2|4|2|1|5|6|5|1|7|8|9|8|7|10|7|1|
\end{equation*}
in which we have only written the indices of the labels, separated by a vertical bar.\medskip

For this example, we have
\begin{equation}\label{E-hmn2L-10A}
\ww_{u_0} = |1|2|3|2|4|2|1|5|6|5|1|7|8|9|8|7|10|7|1|
\text{~~with~~} \sigma(\ww_{u_0}) = 7.
\end{equation}

\begin{figure}[!ht]
  \centering
  \includegraphics[width=0.9\textwidth]{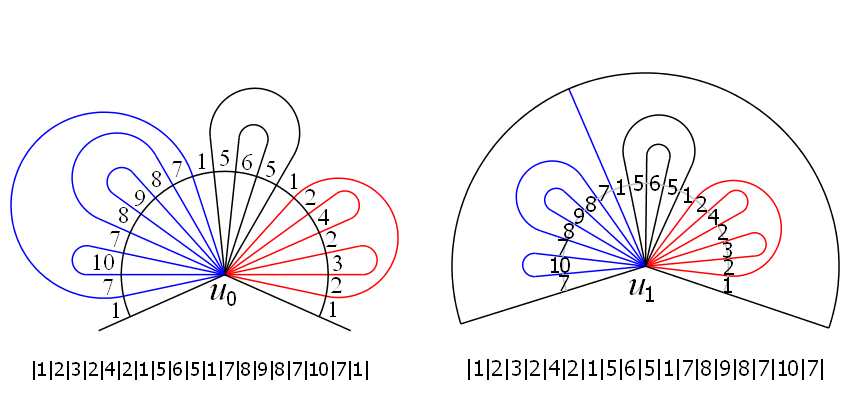}
  \caption{The words $\ww_{u_0}$ and $\ww_{u_1}$ [here $k=10$]}\label{F-hmn2L-10a}
\end{figure}

\fbox{B}~Example \eqref{E-hmn2L-4h}, $k=10, q = 17$, Figure~\ref{F-hmn2L-2}, function $u_1$. Applying Procedure~\ref{P-hmn2L-la}, we obtain
\begin{equation}\label{E-hmn2L-10B}
\ww_{u_1} = |1|2|3|2|4|2|1|5|6|5|1|7|8|9|8|7|10|7|.
\end{equation}
Note that the word $\ww_{u_1} = \ww_{1,R}\ww_{1,L}$ is the juxtaposition of two disjoint words, namely $\ww_{1,R}=|1|2|3|2|4|2|1|5|6|5|1|$ and $\ww_{1,L}=|7|8|9|8|7|10|7|$ which correspond respectively to the nodal sets contained in $\Omega_{11,R}$ and $\Omega_{11,L}$.\medskip

\fbox{C}~Example \eqref{E-hmn2L-4y}, $k=10, q = 18$, Figure~\ref{F-hmn2L-4}, function $v_0$. Applying Procedure~\ref{P-hmn2L-la}, we obtain
\begin{equation}\label{E-hmn2L-10C}
\ww_{v_0} = |1|2|3|4|3|5|3|2|6|7|6|2|1|8|9|8|1|10|1|  \text{~~with~~}
\sigma(\ww_{v_0}) = 13.
\end{equation}

\begin{figure}[!ht]
  \centering
  \includegraphics[width=0.9\textwidth]{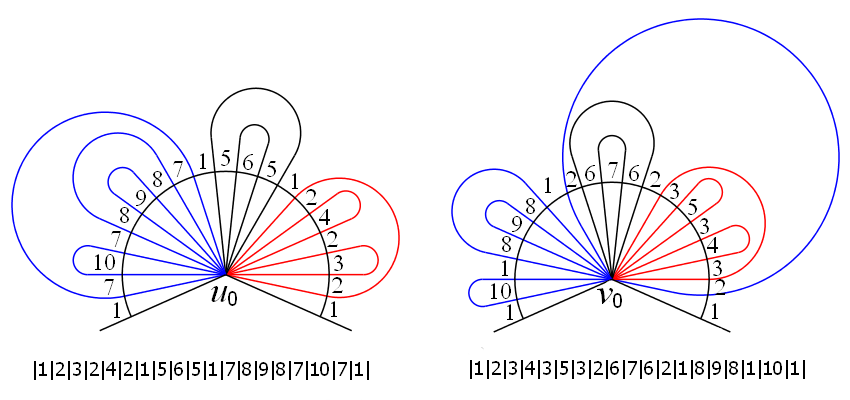}
  \caption{Words $\ww_{u_0}$ and $\ww_{v_0}$, with $7$ and $13$ [$k=10$]}\label{F-hmn2L-10b}
\end{figure}

\FloatBarrier

\subsection{Applying the labeling procedure in the general case}\label{SS-hmn2L-gen}

The nodal set $\cZ(u_0)$ of an eigenfunction $u_0$ such that $\rho(u_0,y) = (2k-2)$ is a $(k-1)$ bouquet of loops at $y$. Let $\tau_0$ be the combinatorial type of $u_0$. \smallskip

\noid If $\tau_0(1) = (2k-2)$, we decompose $\cZ(u_0)$ into the loop $\gamma^{u_0}_{1,(2k-2)}$ and the bouquet of loops $\cB_R$ which is contained in the interior of $\gamma^{u_0}_{1,(2k-2)}$, where $R:=\set{2,\ldots,(2k-3)}$, see paragraph \fbox{A} below.
\begin{figure}[!ht]
  \centering
  \includegraphics[width=0.5\textwidth]{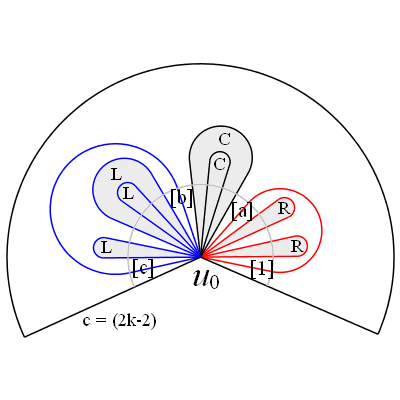}
  \caption{The decomposition of $\cZ(u_0)$ when $\tau_0(1) \neq (2k-2)$}\label{F-hmn2L-14}
\end{figure}

\noid If $a:=\tau_0(1) \neq (2k-2)$, then we define $b:=\tau_0(2k-2)$, and we decompose $\cZ(u_0)$ into the loops $\gamma^{u_0}_{1,a}$, $\gamma^{u_0}_{b,(2k-2)}$, and the bouquets of loops associated with the subsets $R,C,L$ given in \eqref{E-hmn2L-4z}. This decomposition is illustrated in Figure~\ref{F-hmn2L-14}, with the bouquets $R,C$ and $L$ represented in grey shade. Recall that this decomposition implies that $a$ is even and $b$ odd. Using Definitions~\ref{D-hmn2L-6} and Equation~\eqref{E-hmn2L-2a}, we have that $(n_R+n_C+n_L+3) = k$, the number of nodal domains of $u_{0}$. The labeling of nodal domains for this decomposition is given in paragraph \fbox{B} below. \smallskip

\fbox{A}~ \emph{The case $\tau_0(1) := a=(2k-2)$}.~ Call $w_0$ a corresponding eigenfunction. The exterior of the loop $\gamma_{1,(2k-2)}$ is a nodal domain of $w_0$, which we call $D_{d_1}$ and we label $d_1$ the intervals $I^{w_0}_1(r)$ and $I^{w_0}_{(2k-1)}(r)$. No other interval has the label $d_1$. The nodal domain $D_{d_2}$ contains the intervals $I^{w_0}_2(r)$ and $I^{w_0}_{(2k-2)}(r)$ and, possibly, other intervals. This nodal domain is the intersection of the interior of $\gamma_{1,(2k-2)}$ with the exterior of the bouquet $\cB^{w_0}_R$ associated with $R = \set{2,\ldots,(2k-3)}$. There are $(k-2)$ interior domains of $\cB^{w_0}_R$, labeled $D_{d_3}, \ldots, D_{d_k}$. The associated word is $\ww_{w_0} = d_1d_2\ww_Rd_2d_1$, where $\ww_R$ is the word associated with $R$, with length $\|\ww_R\| = (2k-5)$, in the letters $d_3,\ldots,d_k$, and possibly the letter $d_2$. The word $\ww_R$ does not contain the letter $d_1$. It follows that the signature of the nodal pattern of $w_0$ is $(2k-1)$.\medskip

\fbox{B}~ \emph{The case ~$2 \le \tau_0(1) := a \le (2k-4)$}.~ Call $u_0$ a corresponding function, with combinatorial type $\tau_0$. In this case, $(a+1) \le b \le (2k-3)$. \smallskip

\begin{enumerate}[1)]
\item The  nodal domains of $u_{0}$ are split into four disjoint families. The following description takes Procedure~\ref{P-hmn2L-la} and the relation $k = (n_R+n_C+n_L+3)$ into account.
\begin{enumerate}[$\diamond$]
  \item The exterior of $\cZ(u_0)$. It is called $D_{d_1}$.
  \item The $(n_R+1)$ nodal domains contained in the interior of $\gamma^{u_0}_{1,a}$. This family consists of the $n_R$ interior domains of $\cB^{u_0}_R$, $D_{d_3}$ to $D_{d_{(n_R+2)}}$, and the domain $D_{d_2}$ which is the complement of $D_{d_3}\cup \ldots \cup D_{d_{(n_R+2)}}$ in the interior of $\gamma^{u_0}_{1,a}$. The domain $D_{d_2}$ contains the interval $I^{u_0}_2(r)$, and its boundary contains the loop $\gamma^{u_0}_{1,a}$ itself.
  \item The $n_C$ interior domains of $\cB^{u_0}_C$. They are labeled from $d_{(n_R+3)}$ to $d_{(n_R+n_C+2)}$.
  \item The $(n_L+1)$ nodal domains contained in the interior of $\gamma^{u_0}_{b,(2k-2)}$. This family consists of the $n_L$ interior domains of $\cB^{u_0}_L$, labeled from $d_{(n_R+n_C+4)}$ to $d_k$, and the domain $D_{d_{(n_R+n_C+3)}}$ which is the complement of $D_{d_{n_R+n_C+4}}\cup \ldots \cup D_{d_k}$ in the interior of $\gamma^{u_0}_{b,(2k-2)}$. The domain $D_{d_{(n_R+n_C+3)}}$ contains the interval $I^{u_0}_{b+1}(r)$, and its boundary contains the loop $\gamma^{u_0}_{b,(2k-2)}$ itself.\medskip
\end{enumerate}
\item The $(2k-1)$ intervals $I^{u_0}_j(r)$ determined by $\cZ(u_{0}) \cap C_{+}(\eta,r)$ are as follows.
\begin{enumerate}[$\diamond$]
  \item The exterior domain $D_{d_1}$ contains four intervals $I^{u_0}_j(r)$, with $j$ in the set $\set{1,(a+1),b,(2k-1)}$.
  \item The interior of the loop $\gamma^{u_0}_{1,a}$ contains $(a-1)$ intervals $I^{u_0}_j(r)$, $j \in \set{2,\ldots,a}$. The bouquet $\cB^{u_0}_R$ determines $(a-3)$ intervals, and there are two intervals touching the loop $\gamma^{u_0}_{1,a}$.
  \item The bouquet $\cB^{u_0}_C$ determines $(b-a-2)$ intervals $I^{u_0}_j(r)$, with $j$ in the set $ \set{(a+2),\ldots,(b-1)}$.
  \item The interior of the loop $\gamma^{u_0}_{b,(2k-2)}$ contains $(2k-b-2)$ intervals $I^{u_0}_j(r)$, with the index $j$ in the set $\set{(b+1),\ldots,(2k-2)}$. The bouquet $\cB^{u_0}_L$ determines $(2k-b-4)$ intervals, and there are two intervals touching the loop $\gamma^{u_0}_{b,(2k-2)}$.\medskip
\end{enumerate}
\item  Applying Procedure~\ref{P-hmn2L-la}, the nodal domains of $u_{0}$ are described by the word
\begin{equation}\label{E-hmn2L-14}
\ww_{u_0} = |1|2|\ww^{u_0}_R|2|1|\ww^{u_0}_C|1|\hat{b}|\ww^{u_0}_L|\hat{b}|1|.
\end{equation}
Here $\hat{b} := (n_R+n_C+3)$, and $d_{\hat{b}}$ is the label of the nodal domain contained in the interior of the loop $\gamma^{u_0}_{b,(2k-2)}$ and whose boundary contains the loop itself; the words $\ww^{u_0}_R$, resp. $\ww^{u_0}_C$ and $\ww^{u_0}_L$, describe the nodal domains containing the intervals determined by $\cB^{u_0}_R$, resp. $\cB^{u_0}_C$ and $\cB^{u_0}_L$. More precisely,
\begin{enumerate}[$\diamond$]
  \item $\ww^{u_0}_R$ is a word of length $\|\ww^{u_0}_R\| = (a-3)$, in the letters $d_3, \ldots, d_{(n_R+2)}$ and, possibly the letter $d_2$
  \item $\ww^{u_0}_C$ is a word of length $\|\ww^{u_0}_C\| = (b-a-2)$, in the letters $d_{(n_R+3)}$ to $ d_{(n_R+n_C+2)}$ and, possibly the letter $d_1$
  \item $\ww^{u_0}_L$ is a word of length $\|\ww^{u_0}_L\| = (2k-b-4)$, in the letters $d_{(n_R+n_C+4)}$ to $ d_{k}$ and, possibly the letter $d_{\hat{b}}$.\smallskip
\end{enumerate}

\noi We recover the fact that $\|\ww^{u_0}\| = (2k-1) = 8 + \|\ww^{u_0}_R\| + \|\ww^{u_0}_C\| + \|\ww^{u_0}_L\|$.\smallskip

\noi In view of \eqref{E-hmn2L-14}, and the description of the words $\ww^{u_0}_R, \ww^{u_0}_C$ and $\ww^{u_0}_L$, we have
\begin{equation}\label{E-hmn2L-14sa}
\sigma(\ww_{u_0}) = 4 + \|\ww^{u_0}_R\| = (a+1).
\end{equation}
\end{enumerate}
\medskip

\fbox{C}~ ~ \emph{Another example with $q = (2k-2)$}.~ Call $v_0$ an eigenfunction such that such that $\rho(v_0,y) = (2k-2)$ with combinatorial type $\tau_{v_0}$ given by
\begin{equation*}
\tau_{v_0} = \begin{pmatrix}
               0& 1 & R & a & C & b & L \\
               b& a & \tau_{v_0}(R) & 1 &\tau_{v_0}(C) & 0 & \tau_{v_0}(L) \\
              \end{pmatrix},
\end{equation*}
where the sets $R, C, L$ are as in \eqref{E-hmn2L-4z} (the same subsets as in the previous example) and the corresponding bouquets are shaded in grey in the picture. The nodal set $\cZ(v_0)$ is partitioned as follows, see Figure~\ref{F-hmn2L-16}.
\begin{enumerate}[$\diamond$]
  \item The loop $\gamma_{0,b}$.
  \item The bouquet $\cB_{R'}:= \gamma_{1,a}\cup \cB_R \cup \cB_C$ contained in the interior of the loop $\gamma_{0,b}$.
  \item The bouquet $\cB_L$ contained in the exterior of $\gamma_{0,b}$.
\end{enumerate}

\begin{figure}[!ht]
  \centering
  \includegraphics[width=0.6\textwidth]{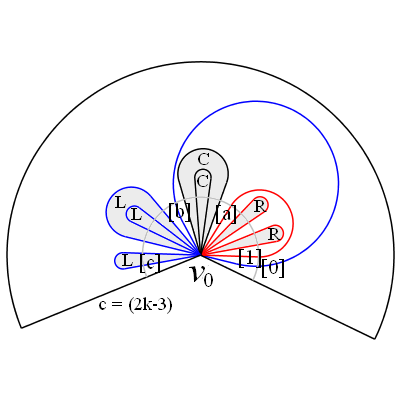}
  \caption{The decomposition of the nodal set $\cZ(v_0)$}\label{F-hmn2L-16}
\end{figure}

The intervals are ordered from $I_1(r)$ to $I_{(2k-2)}(r)$ as usual. The exterior of $\cZ(v_0)$ is called $D_{d_1}$. The nodal domain $D_{d_2}$ contains the interval $I_2(r)$. It also contains an inner neighborhood of $\gamma_{0,b}$ and the interval $I_{(b+1)}(r)$. The interior of $\gamma_{0,b}$ contains the $1+n_R+ n_C$ interior domains of the bouquet $\cB_{R'}$. The word $\ww_{v_0}$ is given by
\begin{equation*}
\ww_{v_0} = |1|2|\ww_{R'}|2|1|\ww_L,
\end{equation*}
where the word $\ww_{R'}$ is associated with $\cB_{R'}$. By \eqref{E-hmn2L-2a}, $\|\ww_{R'}=(b-2)\|$ and $\ww_{R'}$ is a word in the letters $d_3, \ldots, d_{n_R+n_C+2}$ and, possibly, $d_2$. It follows that
\begin{equation}\label{E-hmn2L-14sb}
\sigma(\ww_{v_0}) = (b+2).
\end{equation}
Note that $(b+2) \ge (a+3) > (a+1)$, so that $[v_0] \neq [u_0]$.\medskip

\FloatBarrier
\subsection{The rotating function argument of \S~\ref{SSS-hmn-25D} in the general case}\label{SS-hmn2L-P1}

In Section~\ref{S-hmn2}, under Assumption~\ref{A-hmn2-0}, we studied functions $u_{y,z}$ with $\rho(u_{y,z},y) = (2k-3)$ and $\rho(u_{y,z},z)=1$, and we looked at the limits when $z$ tends to $y$ clockwise or counter-clockwise.\smallskip

The general combinatorial type $\tau$ is
\begin{equation*}
\tau = \begin{pmatrix}
           \downarrow & R & a & L \\
           a & \tau(A) & \downarrow & \tau(B) \\
         \end{pmatrix},
\end{equation*}
where $R = \set{1,\ldots,(a-1)}$ and $L = \set{(a+1),\ldots,(2k-3)}$. Letting $z$ tend to $y$ clockwise or counter-clockwise, we obtain the combinatorial types $\tau_R$ and $\tau_L$ given by
\begin{equation*}
\tau_R = \begin{pmatrix}
           0 & R & a & L \\
           a & \tau(R) & 0 & \tau(L) \\
         \end{pmatrix}
\end{equation*}
and
\begin{equation*}
\tau_L = \begin{pmatrix}
           R & a & L & (2k-2)\\
           \tau(R) & (2k-2) & \tau(L) & a \\
         \end{pmatrix}.
\end{equation*}
Using Procedure~\ref{P-hmn2L-la}, it is easy to show that
\begin{equation*}
\sigma_L = 4 + \|\ww_R\| \text{~~and~~} \sigma_R \le 2 + \|\ww_R\|,
\end{equation*}
where $\ww_R$ is the word describing $\cB_R$. It follows that the combinatorial types $\tau_R$ and $\tau_L$ are different.\smallskip

This proves that the rotating function argument in Paragraph~\ref{SSS-hmn-25D} yields a contradiction in the general case. This finishes the proof of Proposition~\ref{P-hmn-s2}.

\subsection{Proof of Lemma~\ref{L-L33c}, Assertion~(ii), general case}\label{SS-hmn2L-P2ii}

Let $\eta \in \Gamma_{(2k-2)}$ (assuming this set in not empty). Assume, by contradiction, that the functions $u_y$ have the same combinatorial type $\tau$ for $y$ close enough to $\eta$, on either sides of $\eta$.\smallskip

Working in $\bH$, according to Lemma~\ref{L-L33b}, the general combinatorial type is given by
\begin{equation*}
\tau = \begin{pmatrix}
           \downarrow & R & a & L \\
           a & \tau(R) & \downarrow & \tau(L) \\
         \end{pmatrix},
\end{equation*}
where $R = \set{1,\ldots,(a-1)}$ and $L = \set{(a+1),\ldots,(2k-3)}$, and the nodal patterns are as follows depending on whether $t$ is on the right or on the left of $0$.
\begin{figure}[!ht]
  \centering
  \includegraphics[width=0.99\textwidth]{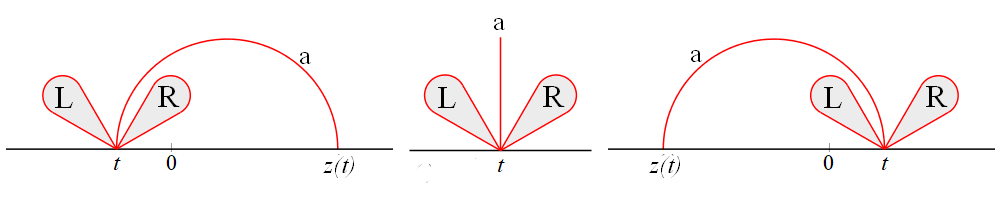}
  \caption{Nodal patterns with the same $\tau$}\label{F-hmn2L-L33c-6-LR}
\end{figure}

When $t$ tends to zero, the limit functions have the following combinatorial types
\begin{equation*}
\tau_L = \begin{pmatrix}
           0 & R & a & L \\
           a & \tau(R) & 0 & \tau(L) \\
         \end{pmatrix}
\end{equation*}
and
\begin{equation*}
\tau_R = \begin{pmatrix}
           R & a & L & (2k-2)\\
           \tau(R) & (2k-2) & \tau(L) & a \\
         \end{pmatrix}.
\end{equation*}
and nodal patterns as shown in Figure~\ref{F-hmn2L-L33c-8-LR}.
\begin{figure}[!ht]
  \centering
  \includegraphics[width=0.98\textwidth]{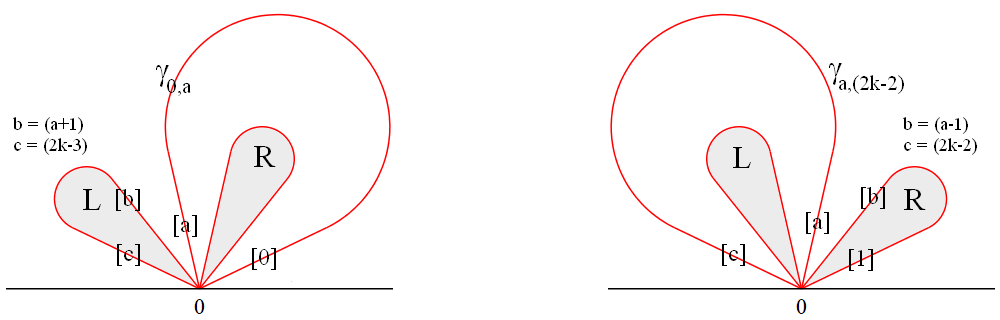}
  \caption{Nodal patterns for $u_L$ and $u_R$}\label{F-hmn2L-L33c-8-LR}
\end{figure}

Using Procedure~\ref{P-hmn2L-la}, it is easy to show that
\begin{equation*}
\sigma_L = 4 + \|\ww_R\| \text{~~and~~} \sigma_R \le 2 + \|\ww_R\|,
\end{equation*}
where $\ww_R$ is the word describing $\cB_R$. It follows that the combinatorial types $\tau_R$ and $\tau_L$ belong to different eigenfunctions.\hfill \qed

\FloatBarrier
\subsection{Proof of Lemma~\ref{L-L33c}, Assertion~(iv), general case}\label{SS-hmn2L-P2iv}

Assuming it is not empty, let $\eta_1, \eta_2$ be two successive points of $\Gamma_{(2k-2)}$, i.e., points such that the arc $\cA(\eta_1,\eta_2)$ is contained in $\Gamma_{(2k-3)}$. We want to prove that the combinatorial types of $u_{\eta_1}$ and $u_{\eta_2}$ are different. For this purpose, we choose the general pattern described in Subsection~\ref{SS-hmn2L-gen}, part \fbox{B}, see Figure~\ref{F-hmn2L-14}. In view of Lemma~\ref{L-L33b}, when the point $y$ moves off $\eta_1$ to the right (i.e., counter-clockwise on $\Gamma$), the loop $\gamma^{u_{\eta_1}}_{b,(2k-2)}$ opens up to become a nodal interval from $y$ to $z(y)$, emanating from $y$ tangentially to the ray $\omega_b$. When $y$ approaches $\eta_2$ from the left, this nodal interval closes in into the loop $\gamma^{u_{\eta_2}}_{0,b}$ and $u_{\eta_2}$ has the nodal pattern of $v_0$ (as in Figure~\ref{F-hmn2-w-theta0}), see Subsection~\ref{SS-hmn2L-gen}, part \fbox{C}. The transition from $\cZ(u_{\eta_1})$ to $\cZ(u_{\eta_2})$ is illustrated in Figure~\ref{F-hmn2L-L33c-iv}. According to \eqref{E-hmn2L-14sa}
and \eqref{E-hmn2L-14sb}, we have $\sigma(\ww_{\eta_1}) \neq \sigma(\ww_{\eta_2})$, showing that the nodal patterns are different. This proves Lemma~\ref{L-L33c}, Assertion~(iv), in the general case. \hfill \qed \smallskip

\begin{figure}[!ht]
  \centering
  \includegraphics[width=0.98\textwidth]{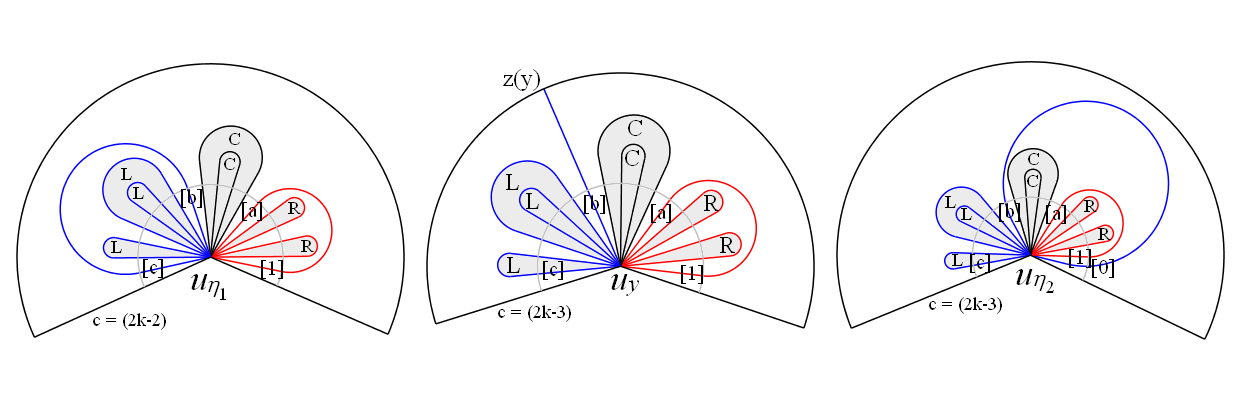}
  \caption{Nodal patterns for $u_{\eta_1}, u_y$ and $u_{\eta_2}$}\label{F-hmn2L-L33c-iv}
\end{figure}
\FloatBarrier

\section[Eigenfunctions with Two Prescribed Boundary Singular Points]{Eigenfunctions with Two Prescribed Boundary Singular Points: proof of Lemma~\ref{L-L34a} and further results}\label{S-hmn32b}

\subsection{Introduction}
The main purpose of this section is provide a detailed proof of Lemma~\ref{L-L34a}.  We also derive further properties of $V_{y,s}$ which appear in  \cite[pp.~1180-1183]{HoMN1999}, see Table~\ref{T-comp}.
We do not use these properties for our proof of Theorem~\ref{T-hmn-bh2}.\smallskip

We retain the notation  of  Section~\ref{S-hmn3},  in particular Notation~\ref{N-hmn3-arcs}.

We work under Assumptions~\ref{A-hmn3-0}, i.e. assuming that
\begin{equation}\label{E-hmn-bs}
\left\{
\begin{array}{l}
\Omega \text{~is simply connected,}~~ \Gamma := \partial \Omega, \\[5pt]
k \ge 3 \text{~and~} \dim U(\lambda_k) = (2k-2).
\end{array}%
\right.
\end{equation}
The sets $\Gamma_{(2k-3)}$ and $\Gamma_{(2k-2)}$ are defined in \eqref{E-hmn3-n12}.

\begin{remark}\label{R-simply}
The assumption $\Omega$ simply connected is motivated by Remark~\ref{R-hmn-sc}, and makes the proofs of the following lemmas simpler. It would be interesting to know whether it is actually necessary.
\end{remark}%

Recall that for  $(y,s) \in \Gamma_{(2k -3)} \times \Gamma $, with $ y \neq s$,
\begin{equation*}
V_{y,s} := \set{u \in U \mid \rho(u,y) \ge 2k-4 \text{~and~} \rho(u,s) \ge 1}.
\end{equation*}

 In view of the Equation~\eqref{E-hmn-bs}, Lemma~\ref{L-zero2} implies that $V_{y,s} \neq \set{0}$. \smallskip

\subsection{Proof of Lemma~\ref{L-L34a}}\label{SSS-hmn32ba}


For convenience, we reproduce the lemma.

\begin{lemma}[Restatement of Lemma~\ref{L-L34a}]\label{RL-L34a}%
Assume that $\Omega$ is simply connected. Let $U:=U(\lambda_k)$ with $k \ge 3$, and assume that $\dim U = (2k-2)$. Given $(y,s) \in \Gamma_{(2k -3)} \times \Gamma $, with $ y \neq s$,  the subspace
\[
V_{y,s} = \set{u \in U \mid \rho(u,y) \ge 2k-4 \text{~and~} \rho(u,s) \ge 1}
\]
has the following properties.
\begin{enumerate}[(i)]
  \item The subspace $V_{y,s}$ has dimension $1$.
  \item Any $0 \neq u \in V_{y,s}$ satisfies
  \begin{equation}\label{RE-L34-4}
  \left\{
  \begin{array}{l}
  \kappa(u) = k,\\[5pt]
  \cZ(u) \cup \Gamma  \text{~is connected,}\\[5pt]
  \cS_{\mathrm{i}}(u) = \emptyset,\\[5pt]
  \sum_{z \in \cS_{\mathrm{b}}(u)} \rho(u,z) = 2k-2, \text{~and}\\[5pt]
  \quad 2k-4 \le \rho(u,y) \le 2k-3,\\[5pt]
  \quad 1 \le \rho(u,s) \le 2.
  \end{array}
  \right.
  \end{equation}
  More precisely, there are three distinct possibilities.
  \begin{description}
    \item[Case~A] $\rho(u,y) = (2k-3)$ and $\rho(u,s) = 1$. In that case, $u \in U_y$, with $\cS_{\mathrm{b}}(u) = \set{y,s}$, and hence $s = z(y)$.
    \item[Case~B] $\rho(u,y) = (2k-4)$ and $\rho(u,s) = 2$. In that case, $\cS_{\mathrm{b}}(u) = \set{y,s}$.
    \item[Case~C] $\rho(u,y) = (2k-4)$, $\rho(u,s) = 1$, and there exists some $s' \in \Gamma  \sm \set{y,s}$ such that $\cS_{\mathrm{b}}(u) = \set{y,s,s'}$, with $\rho(u,s') = 1$.
  \end{description}
  \item If $s = z(y)$, then $V_{y,z(y)} = U_y$.
  \item The map $\set{(y,s) \mid (y,s) \in \Gamma_{(2k-3)}\times \Gamma , s\neq y} \ni (y,s) \mapsto [V_{y,s}] \in \bP(U)$ is $C^{\infty}$.
  \end{enumerate}
\end{lemma}%

\begin{proof}  We already know that $\dim V_{y,s} \ge 1$.\smallskip

\noi We retain the notation of Lemma~\ref{L-L32}. In particular, for $y \in \Gamma_{(2k -3)}$, we have $U_y = [u_y]$ with $0 \neq u_y \in U$ satisfying $\cS_{\mathrm{b}}(u_y) = \set{y,z(y)}$ with $z(y) \neq y$, and $\rho(u_y,y) = (2k-3)$, $\rho(u_y,z(y)) = 1$. \smallskip

\noid \emph{Assertion~(ii).~} Let $u \in V_{y,s}$. From Euler's formula \eqref{E-hmn-3a12} we obtain,
\begin{equation}\label{RE-L34-6}
\begin{split}
0 \geq  \kappa(u) - k & = \big( b_0(\cZ(u) \cup \Gamma ) -1 \big) + \frac 12\, \sum_{z \in \cS_{\mathrm{i}}(u)} (\nu(u,z)-2) \\
&\hspace{5mm} + \frac 12 \big( \sum_{z\in \cS_{\mathrm{b}}(u)}\rho (u,z) - 2k +2\big) .
\end{split}
\end{equation}
If $0 \neq u \in V_{y,s}$, we have $\sum_{z\in \cS_{\mathrm{b}}(u)}\rho (u,z) \ge 2k-3$, and hence $\sum_{z\in \cS_{\mathrm{b}}(u)}\rho (u,z) \ge 2k-2$ since the sum is an even integer,  by Corollary~\ref{cor:nodinfo}. All the terms in the right hand side of \eqref{RE-L34-6} must vanish; this proves \eqref{RE-L34-4}.  Assertion~(ii) then follows from \eqref{RE-L34-4} and the assumptions that $\rho(u,y) \ge (2k-4)$ and $\rho(u,s) \ge 1$.\medskip

\noid \emph{Assertion~(i).}~

\noi \fbox{1}~ We first assume that $s \neq z(y)$. Assume that there are at least two linearly independent functions $u_1,u_2 \in V_{y,s}$, then $\rho(u_i,y) = (2k-4)$ and  $\rho(u_i,s) \ge 1$. According to Lemma~\ref{L-zeroc}, there exists a nontrivial linear combination $u$ of $u_1$ and $u_2$ such that $\rho(u,y) \ge (2k-3)$ and $\rho(u,s) \ge 1$. Euler's formula implies that $u$ pertains to Assertion~(ii), Case~(1), contradicting the fact that $s \neq z(y)$.\smallskip

\noi \fbox{2}~ We now assume that $s = z(y)$. In this case, a generator $u_y$ of $U_y$ belongs to $V_{y,z(y)}$. Assume that $\dim V_{y,z(y)} \ge 2$. Define $V'_{y,z(y)} = V_{y,z(y)} \ominus U_y$, which has dimension at least $1$. If $\dim V_{y,z(y)} \ge 3$, we can find two linearly independent $u_1,u_2 \in V'_{y,z(y)}$, such that $\rho(u_i,y) = (2k-4)$, and $\rho(u_i,s) \ge 1$. By Lemma~\ref{L-zeroc}, there exists a nontrivial linear combination $u \in V'_{y,z(y)}$ such that $\rho(u,y) \ge (2k-3)$ and $\rho(u,s) \ge 1$.  Hence, $u \in U_y$, a contradiction. Assuming that $\dim V_{y,z(y)} = 2$, we can choose a basis $\set{u_y,v_y}$ such that $v_y \not \in U_y$. Then, $\rho(v_y,y) = (2k-4)$, and there are two cases,\smallskip
\begin{description}
  \item[Case~(a):] $\rho(v_y,z(y))=2$,
  \item[Case~(b):] $\rho(v_y,z(y)) = 1$, and there exists some $z_1(y) \in \Gamma $, $z_1(y) \neq z(y)$,  such that $\cS_{\mathrm{b}}(v_y) = \set{y,z(y),z_1(y)}$ and $\rho(v_y,z_1(y)) = 1$.
\end{description}\smallskip

Without loss of generality,  making use of Lemma~\ref{L-breve}, we may choose the functions $u_y$ and $v_y$ as follows  (we consider open arcs).  First we choose $u_y$ so that $\breve{u}_y > 0$ on the arc $\cA(y,z(y))$, and $\breve{u}_y < 0$ on the arc $\cA(z(y),y)$. \smallskip

\noic In Case (a), we choose $v_y$ such that $\breve{v}_y > 0$ on $\cA(y,z(y)) \cup \cA(z(y),y)$.\smallskip

\noic In Case (b), assuming that $z_1(y) \in \cA(y,z(y))$, we choose $v_y$ such that $\breve{v}_y > 0$ on $\cA(z(y),y) \cup \cA(y,z_1(y))$, and $\breve{v}_y < 0$ on $\cA(z_1(y),z(y))$.\smallskip

Figure~\ref{F-hmn3-L34-4} displays the signs of  $\breve{u}_y$ and of $\breve{v}_y$ in both cases. \smallskip

\begin{figure}[!ht]
  \centering
  \includegraphics[scale=0.80]{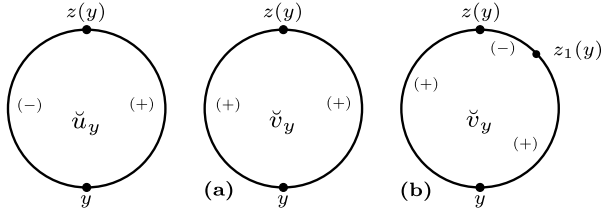}
  \caption{Signs of $\breve{u}_y$ and of $\breve{v}_y$, Cases a and b}\label{F-hmn3-L34-4}
\end{figure}

\emph{Claim~1.~ Under the assumption that $\dim V_{y,z(y)} = 2$, there exists some function $v \in V_{y,z(y)}$ such that $\rho(v,y)=(2k-4)$ and $\rho(v,z(y))=2$.} \smallskip

\emph{Proof of Claim~1.}~ If $v_y$ satisfies Claim~1, there is nothing to prove. If not, $v_y$ falls into Case~(b) above.\smallskip

Given $t \in \Gamma  \sm \set{y, z(y)}$, $\breve{u}_y(t) \neq 0$, and we can define the function
\begin{equation}\label{E-L34-10}
\xi_t := a(t)\, u_y - b(t)\, v_y \in V_{y,z(y)},
\end{equation}
where
\begin{equation}\label{E-L34-10a}
\left\{
\begin{array}{ll}
a(t) & =\breve{v}_y(t)\, \big( \breve{v}^2_y(t) + \breve{u}^2_y(t) \big)^{- \frac 12},\\[5pt]
b(t) & =\breve{u}_y(t) \, \big( \breve{v}^2_y(t) + \breve{u}^2_y(t) \big)^{- \frac 12}.
\end{array}
\right.
\end{equation}

For $t \not \in \set{y, z(y)}$, $b(t) \neq 0$, and hence $\rho(\xi_t,y) = 2k-4$, $\rho(\xi_t,z(y)) \ge 1$, and $\rho(\xi_t,t) \ge 1$. Euler's formula applied to $\xi_t$ implies that  $\rho(\xi_t,z(y)) = \rho(\xi_t,t) = 1$,  and $\cS_{\mathrm{b}}(\xi_t) = \set{y,z(y),t}$. According to Lemma~\ref{L-breve}, the function $\breve{\xi}_t$ has precisely three zeros at $y, z(y)$ and $t$, changes sign at $z(y)$ and $t$, and does not change sign at $y$.  For $t \in \cA(z_1(y),z(y))$, $\breve{\xi}_t(z_1(y)) < 0$, and we conclude that
\begin{equation}\label{E-L34-10b}
\text{for~} t \in \cA(z_1(y),z(y)), \qquad\left\{
\begin{array}{l}
\breve{\xi}_t > 0 \text{~~in~~} \cA(t,z(y)), \text{~and}\\[5pt]
\breve{\xi}_t < 0 \text{~~in~~} \cA(z(y),y) \bigcup \cA(y,t).
\end{array}%
\right.
\end{equation}
where $y, z(y)$ and $z_1(y)$ are as in Figure~\ref{F-hmn3-L34-4} (b).

Choose a sequence $\set{t_n} \subset \cA(z_1(y),z(y))$, with $t_n \to z(y)$. Taking a subsequence if necessary, we may assume that the sequence $\set{\big( a(t_n),b(t_n)\big)}$ converges to some $(a,b) \in \bS^1$, so that the sequence $\set{\xi_{t_n}}$ converges uniformly to the function $\xi$ given by $\xi = a\ u_y - b v_y$. From \eqref{E-L34-10b}, we conclude that $\breve{\xi} \le 0$ on $\Gamma $. Since $\xi \in V_{y,z(y)}$ we have three possibilities,
\begin{enumerate}[(i)]
  \item $\rho(\xi,y) = (2k-3)$ and $\rho(\xi,z(y)) =1$,
  \item $\rho(\xi,y) = (2k-4)$, $\rho(\xi,z(y)) =1$, and $\rho(\xi,z_2)$ for some $z_2 \neq y, z(y)$,
  \item $\rho(\xi,y) = (2k-4)$ and $\rho(\xi,z(y)) =2$.
\end{enumerate}
Since (i) and (ii) are incompatible with $\breve{\xi} \le 0$ on $\Gamma $, we conclude that $\rho(\xi,y) = (2k-4)$ and $\rho(\xi,z(y)) =2$. This proves Claim~1.\quad \qedc \medskip

We now continue  with part \fbox{2} in  the proof of Assertion~(i). In view of Claim~1,  assuming that $\dim V_{y,z(y)}= 2$, we may choose a basis $\set{u_y,v_y}$ of $V_{y,z(y)}$ such that
\begin{equation}\label{E-L34-10d}
\begin{array}{ll}
\left\{
\begin{array}{l}
\rho(u_y,y) = 2k-3,\\[5pt]
\rho(u_y,z(y)) = 1,\\[5pt]
\breve{u}_y|_{\cA(y,z(y))} > 0 \text{~and~} \breve{u}_y|_{\cA(z(y),y)} < 0,\\[5pt]
\end{array}
\right.
\text{~and~} & \left\{
\begin{array}{l}
\rho(v_y,y) = 2k-4,\\[5pt]
\rho(v_y,z(y)) = 2,\\[5pt]
\breve{v}_y|_{\Gamma  \sm \set{y,z(y)}} > 0.
\end{array}
\right.
\end{array}
\end{equation}
Examples of nodal sets of these functions are displayed in Figure~\ref{F-hmn3-L34-8}: on the left $\cZ(u_y)$, on the right $\cZ(v_y)$, with two possible cases.\smallskip

\begin{figure}[!ht]
  \centering
  \includegraphics[scale=0.8]{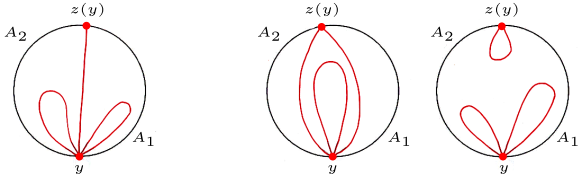}\\
  \caption{Nodal sets of $u_y$ (left) and $v_y$ (right), with $k=4$.}\label{F-hmn3-L34-8}
\end{figure}

From now on, to simplify the notation in the proof,  we denote the arc $\mathcal A(y,z(y))$  by $A_1$, and the arc $\mathcal A(z(y),y)$ by $A_2$.  We now use another ``rotating function argument''. \smallskip

For $s \not \in \set{y,z(y)}$, we consider the function
\begin{equation}\label{E-L34-12}
\xi_s = a(s) \, u_y - b(s) \, v_y,
\end{equation}
where $u_y$ and $v_y$ satisfy \eqref{E-L34-10d}, and $a(s)$, $b(s)$ are given by
\begin{equation}\label{E-L34-14}
\left\{
\begin{array}{l}
a(s) =\breve{v}_y(s)\, \big( \breve{v}^2_y(s) + \breve{u}^2_y(s) \big)^{- \frac 12},\\[5pt]
b(s) = \breve{u}_y(s) \, \big( \breve{v}^2_y(s) + \breve{u}^2_y(s) \big)^{- \frac 12}.
\end{array}
\right.
\end{equation}
In particular, $a(s) > 0$ in $A_1 \cup A_2$, $b(s) > 0$ in $A_1$ and $b(s) < 0$ on $A_2$. \smallskip
Since $a(s)$ and $b(s)$ are different from $0$, $\rho(\xi_s,y) = (2k-4)$ and $\rho(\xi_s,z(y))=1$. Since $\breve{\xi}_s(s) =0$, $\rho(\xi_s,s) \ge 1$. Since $\xi_s$ belongs to $V_{y,z(y)}$,  Equation~\eqref{E-L34-4} implies that $\rho(\xi_s,s) = 1$, $\cS_{\mathrm{b}}(\xi_s) = \set{y,z(y),s}$, and $\breve{\xi_s}$ changes sign at $z(y)$ and $s$  (use Lemma~\ref{L-breve} again).\smallskip

Taking $s_2 \in A_2$ and $s \in A_1$, we find that $\breve{\xi}_s(s_2) < 0$ and hence,
\begin{equation}\label{E-L34-14a}
\text{for~} s \in A_1, \quad \left\{
\begin{array}{ll}
\breve{\xi_s} > 0 & \text{in~~} \cA(s, z(y)),\\[5pt]
\breve{\xi_s} < 0 & \text{in~~} A_2 \bigcup \cA(y,s).
\end{array}
\right.
\end{equation}
Similarly, taking $s_1 \in A_1$ and $s \in A_2$, we find that $\breve{\xi}_s(s_1) > 0$ and hence,
\begin{equation}\label{E-L34-14b}
\text{for~} s \in A_2, \quad \left\{
\begin{array}{ll}
\breve{\xi_s} < 0 & \text{in~~} \cA(z(y), s),\\[5pt]
\breve{\xi_s} > 0 & \text{in~~} \cA(s,y) \bigcup A_1 .
\end{array}
\right.
\end{equation}

\begin{figure}[!ht]
\centering
\includegraphics[width=0.8\linewidth]{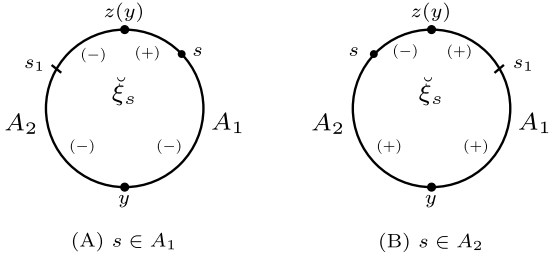}
\caption{Signs of the function $\breve{\xi}_s$}\label{F-hmn3-L34-20}
\end{figure}
There exists a unique $\theta(s) \in (-\pib , \pib) $ such that $a(s) = \cos(\theta(s))$ and $b(s) = \sin(\theta(s))$, so that $\xi_s = \cos(\theta(s)) \, u_y - \sin(\theta(s))\, v_y$. Then, $\theta(s) \in (0,\pib)$ when $b(s) > 0$ or equivalently when $s \in A_1$;  $\theta(s) \in (-\pib,0)$ when $b(s) < 0$ or equivalently when $s \in A_2$. Furthermore, the map $s \mapsto \theta(s)$ is injective because $\cS_{\mathrm{b}}(\xi_s) = \set{y,z(y),s}$. \smallskip

For $s_1, s_2 \in A_1$ or $A_2$, we have
\begin{equation}\label{E-L34-14c}
\left\{
\begin{array}{ll}
\xi_{s_1} - \xi_{s_2} & = \big( \cos(\theta(s_1) - \cos(\theta(s_2)\big) u_y - \big( \sin(\theta(s_1) - \sin(\theta(s_2)\big) v_y \\[5pt]
& = - 2 \sin\frac{\theta(s_1) - \theta(s_2)}{2}\big( \sin\frac{\theta(s_1) + \theta(s_2)}{2}\, u_y + \cos\frac{\theta(s_1) + \theta(s_2)}{2}\, v_y\big).
\end{array}
\right.
\end{equation}
If $s_1, s_2 \in A_1$, $\theta(s_1), \theta(s_2) \in (0,\pib)$, and the second factor in the second line of Equation~\eqref{E-L34-14c} is positive in $A_1$. If $s_1, s_2 \in A_2$, $\theta(s_1), \theta(s_2) \in (-\pib,0)$, and the second factor in the second line of Equation~\eqref{E-L34-14c} is positive in $A_2$. \\  Since $\breve{\xi}_{s_1}(s_2) - \breve{\xi}_{s_2}(s_2) = \breve{\xi}_{s_1}(s_2) $, using Equations~\eqref{E-L34-14a} and \eqref{E-L34-14b} we conclude that,
\begin{equation}\label{E-L34-14e}
\left\{
\begin{array}{l}
s_1 \in A_1 \text{~and~} s_2 \in \cA(s_1,z(y)) \Rightarrow \breve{\xi}_{s_1}(s_2) > 0\\[5pt]
 \quad \Rightarrow  \sin\frac{\theta(s_1)-\theta(s_2)}{2}
< 0  \quad \Rightarrow \theta(s_2) > \theta(s_1) \text{~in~} (0,\pib),\\[5pt]
s_1 \in A_2  \text{~and~} s_2 \in \cA(s_1,y) \Rightarrow \breve{\xi}_{s_1}(s_2) > 0\\[5pt]
 \quad \Rightarrow  \sin\frac{\theta(s_1)-\theta(s_2)}{2}
< 0  \quad \Rightarrow \theta(s_2) > \theta(s_1)  \text{~in~} (-\pib,0),\\[5pt]
\end{array}%
\right.
\end{equation}
otherwise stated, when $s$ moves counter-clockwise on $A_1$, resp. $A_2$,  the function $\theta(s)$ increases from $0$ to $\pib$, resp. from $-\pib$ to $0$, see Figure~\ref{F-hmn3-L34-22}.\medskip

\begin{figure}[!ht]
\centering
\includegraphics[width=0.8\linewidth]{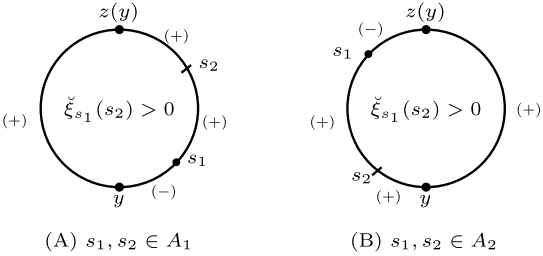}
\caption{Sign of $\breve{\xi}_{s_1}(s_2)$}\label{F-hmn3-L34-22}
\end{figure}

 Under the assumption that $\dim V_{y,z(y)}=2$, we are in a framework similar to that of Subsection~\ref{SS-hmn-25}, with $V_{y,z(y)}$ replacing $U^2_x$. We use a rotating function argument similar to the one used in Paragraph~\ref{SSS-hmn-25D}. For this purpose, we first investigate the limits of $\xi_s$ and $\theta(s)$ when $s$ tends to $z(y)$ or to $y$.\smallskip

Let $\gamma_z$ (resp. $\gamma_y$) denote a local parametrization of $\Gamma $ in a neighborhood of $z(y)$ (resp. $y$), such that $\gamma_z(0) = z(y)$, $\gamma_z(-\varepsilon) \in A_2$, and $\gamma_z(\varepsilon) \in A_1$ (resp. $\gamma_y(0) = y$, $\gamma_y(-\varepsilon) \in A_2$, and $\gamma_y(\varepsilon) \in A_1$). Using our choice of sign for  $\breve{u}_y$ and $\breve{v}_y$, the vanishing properties of these functions, and  Lemma~\ref{L-breve}, we find that, in a  pointed neighborhood of $z(y)$,
\begin{equation*}
\left\{
\begin{array}{l}
\breve{u}_y(\gamma_z(t)) = \alpha_u\, t + o(t), {\text{~with~}}\alpha_u > 0,\\[5pt]
\breve{v}_y(\gamma_z(t)) = \alpha_v\, t^2 + o(t^2), {\text{~with~}}\alpha_v > 0,\\[5pt]
a(\gamma_z(t)) = \frac{\alpha_v}{\alpha_u} |t| + o(t),\\[5pt]
b(\gamma_z(t)) = \sign(t) + o(1).\\[5pt]
\end{array}%
\right.
\end{equation*}
Similarly, in a neighborhood of $y$,

\begin{equation*}
\left\{
\begin{array}{l}
\breve{u}_y(\gamma_y(t)) = \beta_u\, t^{2k-3} + o(t^{2k-3}), {\text{~with~}}\beta_u > 0,\\[5pt]
\breve{v}_y(\gamma_y(t)) = \beta_v\, t^{2k-4} + o(t^{2k-4}), {\text{~with~}}\beta_v > 0,\\[5pt]
a(\gamma_y(t)) =  1 + o(1),\\[5pt]
b(\gamma_y(t)) = \frac{\beta_u}{\beta_v}\, t + o(t).\\[5pt]
\end{array}%
\right.
\end{equation*}

This gives us the limits of $\xi_s$ and $\theta(s)$ when $s$ tends to $z(y)$ in $A_1$ or $A_2$ (resp. when $s$ tends to $y$ in $A_1$ and $A_2$),

\begin{equation}\label{E-L34-14g}
\left\{
\begin{array}{llll}
\lim_{\substack{s \to z(y)\\s \in A_1}}\xi_s &= - v_y,
& \qquad  \lim_{\substack{s \to z(y)\\s \in A_1}}\theta(s) &= \pib,\\[10pt]
\lim_{\substack{s \to y\\s \in A_1}}\xi_s &= u_y,
& \qquad  \lim_{\substack{s \to y\\s \in A_1}}\theta(s) &= 0,\\[10pt]
\lim_{\substack{s \to z(y)\\s \in A_2}}\xi_s &=  v_y,
& \qquad  \lim_{\substack{s \to z(y)\\s \in A_2}}\theta(s) &= 0,\\[10pt]
\lim_{\substack{s \to y\\s \in A_2}}\xi_s &= u_y,
& \qquad  \lim_{\substack{s \to y\\s \in A_2}}\theta(s) &=  -\pib .
\end{array}
\right.
\end{equation}
\medskip

\begin{figure}[!ht]
  \centering
  \includegraphics[scale=0.7]{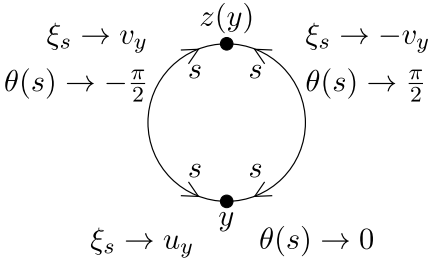}
  \caption{Lemma~\ref{RL-L34a}: limits of $\xi_s$ when $s$ tends to $y$ or $z(y)$}\label{F-hmn3-L34-22nu}
\end{figure}

When $s$ moves counter-clockwise from $y$ to $z(y)$ on $A_1$, $\theta(s)$ increases from $0$ to $\pib$; when $s$ moves counter-clockwise from $z(y)$ to $y$ on $A_2$, $\theta(s)$ increases from $-\pib$ to $0$.\medskip

\noi Let $ [-\pib,\pib] \ni \sigma \mapsto \Gamma (\sigma)$ be a parametrization of $\Gamma $ such that  $\gamma_1(-\pib) = \gamma_1(\pib) = z(y)$, $\gamma_1(0)=y$,  $\gamma_1\big( (-\pib,0) \big) = A_2$, and $\gamma_1\big( (0,\pib) \big) = A_1 \,$.\smallskip

\noi Consider the map
\begin{equation*}
\theta_1 : (-\pib,0) \cup (0,\pib) \ni \sigma \mapsto \theta(\gamma_1(\sigma)) \in (-\pib,\pib).
\end{equation*}
Then, $\theta_1$ extends to a continuous, monotone increasing map from $(-\pib,\pib)$ to $(-\pib,\pib)$ such that  $\lim_{\sigma \to \pm \pib}\theta_1(\sigma) = \pm \pib$ and $\lim_{\sigma \to 0}\theta_1(\sigma) = 0$. \medskip

\noi For $t \in [-\pib,\pib]$, introduce the functions
\begin{equation}\label{E-L34-15a}
\zeta_{t} := (\cos t)  u_y - (\sin t)  v_y,
\end{equation}
so that $\zeta_{-\pib} = v_y$, $\zeta_0 = u_y$, and $\zeta_{\pib} = - v_y$. If $t \not \in \set{-\pib, 0, \pib}$ there exists a unique $s(t) \in \Gamma \sm\set{y,z(y)}$ such that
\begin{equation}\label{E-L34-15c}
\left\{
\begin{array}{l}
\rho(\zeta_{t},y) = (2k-4), ~~\rho(\zeta_{t},z(y))=1,~~\rho(\zeta_{t},s(t))=1,\\[5pt]
\cS_{\mathrm{b}}(\zeta_{t}) = \set{y,z(y),s(t)},\\[5pt]
s(t) \in A_1 \text{~if\quad} t \in (0,\pib), \text{\quad and\quad} s(t) \in A_2 \text{~if\quad} t \in (-\pib,0).
\end{array}
\right.
\end{equation}
Indeed, near $z(y)$, $\cos t > 0$ implies that $\zeta_t$ has the sign of $u_y$. In a small pointed arc $J_y$ around $y$, $\zeta_t \sim - \sin t \, v_y$ which has the sign of $(-t)$. \medskip

We now apply the ``rotating function  argument'', see Paragraph~\ref{SSS-hmn-25D}, to the family of nodal sets $\cZ(\zeta_t)$. We have $\cZ(\zeta_{-\pib}) = \cZ(\zeta_{\pib}) = \cZ(v_y)$; when $t \in (0,\pib)$, $\cS_{\mathrm{b}}(\zeta_t) = \set{y, z(y),s(t)}$ with $s(t) \in A_1$; when $t \in (-\pib,0)$, $s(t) \in A_2$. \smallskip

\noi Figure~\ref{F-hmn3-L34-24n} (resp. Figure~\ref{F-hmn3-L34-26n}) illustrates the deformation of the nodal set $\cZ(v_y)$ given in Subfigure (A) in a particular case with $k=4$.\medskip

\noi In Figure~\ref{F-hmn3-L34-24n} the nodal set $\cZ(v_y)$ is connected. When $t$ decreases from $\pib$ to $0$ (top line),
 the nodal set $\cZ(\zeta_t)$ deforms from  $\cZ(v_y)$ in Subfigure~(A) to $\cZ(u_y)$ in Subfigure~(L).  When $t$ increases from $-\pib$ to $0$ (bottom line), the nodal set $\cZ(\zeta_t)$ deforms from $\cZ(v_y)$ in Subfigure~(A) to $\cZ(u_y)$ in Subfigure~(R). In Figure~\ref{F-hmn3-L34-26n} the nodal set $\cZ(\zeta_t)$ has two components and deforms to (R) or (L).\medskip

\begin{figure}[!ht]
  \centering
  \includegraphics[width=0.99\textwidth]{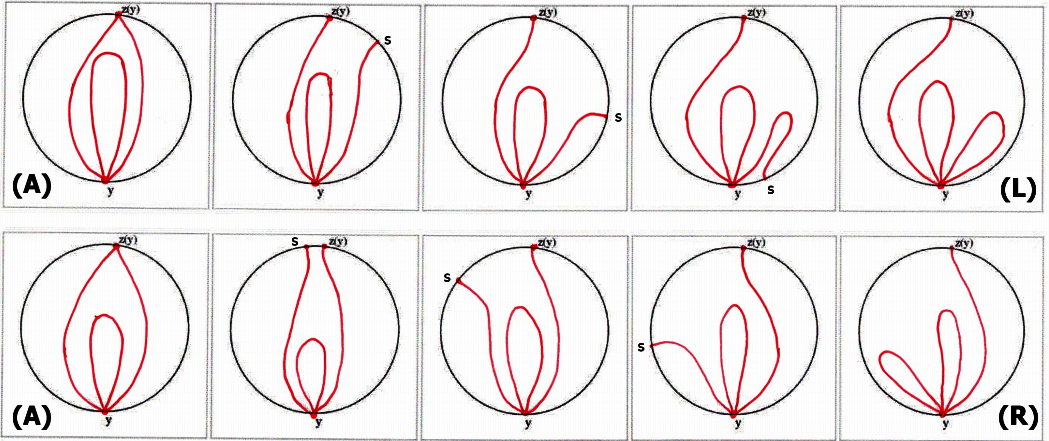}
  \caption{The point $s$  tends to $y$ clockwise (top) or counter-clockwise (bottom), here $k=4$}\label{F-hmn3-L34-24n}
\end{figure}

\begin{figure}[!ht]
  \centering
  \includegraphics[width=0.99\textwidth]{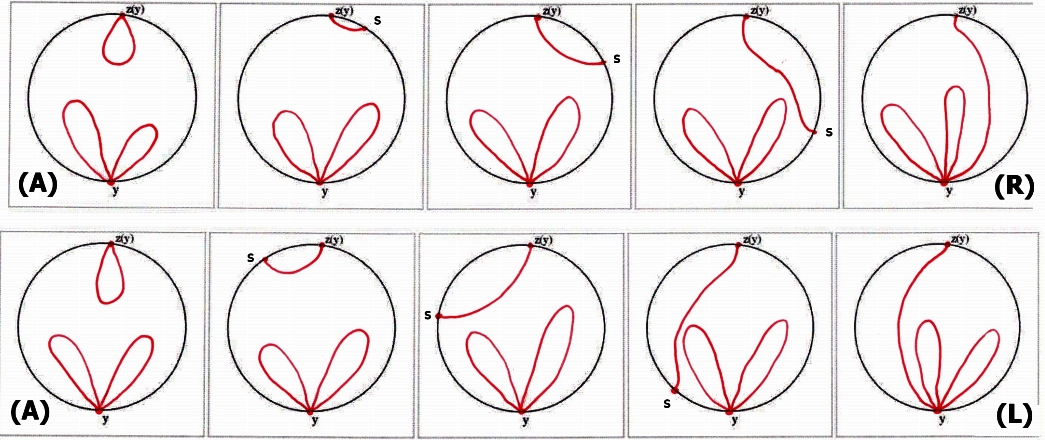}
  \caption{The point $s$  tends to $y$ clockwise (top) or counter-clockwise (bottom), here $k=4$}\label{F-hmn3-L34-26n}
\end{figure}

 The nodal patterns $(L)$ and $(R)$ belong to a function $u_y$. We claim that they are different. For this purpose, we label the loops as in Paragraph~\ref{SSS-hmn-25B}, and we use the combinatorial type of the function $u_y$, see Paragraph~\ref{SSS-hmn-31bas}.

\begin{figure}[!ht]
  \centering
  \includegraphics[scale=0.25]{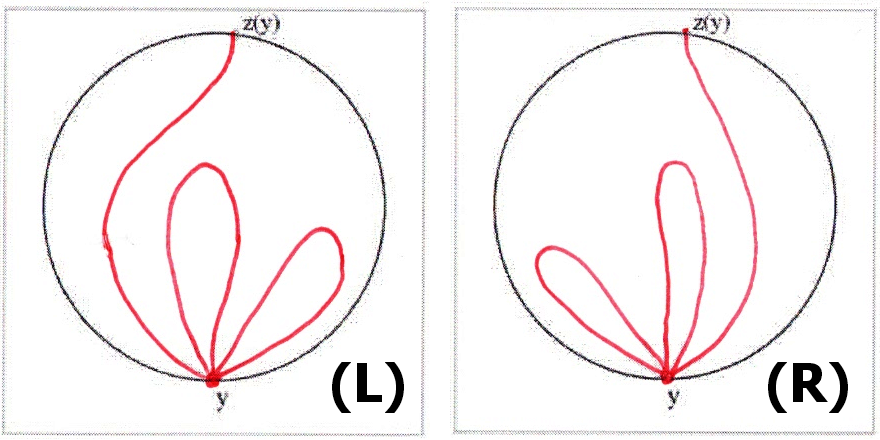}
  \caption{The nodal patterns $(L)$ and $(R)$ are different}\label{F-hmn3-L34-26LR}
\end{figure}

\FloatBarrier

The maps $\tau$ describing the combinatorial types of the nodal patterns $(L)$ and $(R)$ of Figure~\ref{F-hmn3-L34-26LR} are given by
\begin{equation*}
\tau_L = \begin{pmatrix}
           \downarrow & 1 & 2 & 3 & 4 & 5 \\
          5 & 2 & 1 & 4 & 3 & \downarrow \\
         \end{pmatrix}
\text{\quad and\quad}
\tau_R = \begin{pmatrix}
           \downarrow & 1 & 2 & 3 & 4 & 5 \\
           1 & \downarrow & 3 & 2 & 5 & 4 \\
              \end{pmatrix},
\end{equation*}
where $\downarrow$ corresponds to the arc hitting the boundary. Correspondingly, we label the nodal domains as in Section~\ref{S-hmn2L}, and we find the words,
\begin{equation*}
\ww_L = |1|2|1|3|1|4| \text{\quad and\quad} \ww_R = |1|2|3|2|4|2|.
\end{equation*}

 The nodal patterns  $(L)$ and $(R)$ having different signatures (see Definition~\ref{D-hmn2L-4}), they are different, although they should both be the nodal pattern of $u_y$, a contradiction. Recall that we already proved that $\dim V_{y,s} \le 2$. Since the assumption $\dim V_{y,s} = 2$ leads to a contradiction, at least in the example at hand, we conclude that  $\dim V_{y,s} = 1$. The proof in the general case follows the same lines, as in  Subsection~\ref{SS-hmn-25}. This proves Assertion~(i). \quad \qedc \medskip

\begin{remark}\label{R-hmn3-L34-4}
When $t$ varies in $(-\pib,\pib)$, the nodal sets $\cZ(\zeta_{t})$
vary continuously with respect to the Hausdorff distance, see Lemma \ref{L-hdn}.
It follows that they are either all connected, or that they all have two components, so that their type in Figure~\ref{F-hmn3-L34-2} is either (b) \& (c)
or (d) \& (e).
\end{remark}%

\noid \emph{Assertion~(iii).~} This is a consequence of  Assertion~(i) and its proof. \quad \qedc \medskip

\noid \emph{Assertion~(iv).~}   Let $v_{y,s}$ denote a generator of $V_{y,s}$. Then, the linear system which defines $v_{y,s}$ up to scaling has constant rank, so that it has a solution which depends smoothly on the parameters $y, s$ locally. \quad \qedc \medskip

Lemma~\ref{RL-L34a} (aka Lemma~\ref{L-L34a}) is proved.
\end{proof}

\subsection{Structure and combinatorial type of nodal sets in $V_{y,s}$}\label{SSS-hmn32bas}

The last two lines in \eqref{E-L34-4} give rise to three cases for $0 \neq u \in V_{y,s}$.\smallskip

\noi \textbf{Case~1}.~ $\rho(u,y) = (2k-3)$, $\rho(u,s) = 1$,  and $\cS_{\mathrm{b}}(u) = \set{y,s}$.  This means that $u \in U_y$, and this case only occurs when $s = z(y)$, see Figure~\ref{F-hmn3-L34-2}\,(a).\smallskip

\noi \textbf{Case~2}.~ $\rho(u,y) = (2k-4)$, $\rho(u,s) = 2$, and $\cS_{\mathrm{b}}(u) = \set{y,s}$, with two possibilities for $\cZ(u)$,
\begin{itemize}
  \item[$\diamond$] \emph{either} $\cZ(u)$ consists of $(k-2)$ loops at $y$ which do not intersect nor meet $\Gamma $ away from $y$, and one loop at $s$ which does not hit $\Gamma $ away from $s$, and does not meet the loops at $y$,
  \item[$\diamond$] \emph{or} $\cZ(u)$ consists of $(k-3)$ loops at $y$  which do not intersect nor meet $\Gamma $ away from $y$, and two simple arcs from $y$ to $s$ which do not meet except at $y$ and $s$, and do not meet the loops except at $y$; in this case we have a ``generalized loop'' which hits $\Gamma $ at $s$,
\end{itemize}
see Figures~\ref{F-hmn3-L34-2}\,(b) and \ref{F-hmn3-L34-2}\,(d).\smallskip

 \noi \textbf{Case~3}.~ $\rho(u,y) = (2k-4)$, $\rho(u,s) = 1$, and there exists another point $s_1 \in \Gamma $, $s_1 \neq s,y$ such that  $\cS_{\mathrm{b}}(u) = \set{y,s,s_1}$ and $\rho(u,s_1)=1$. In this case there are two possibilities for $\cZ(u)$,
\begin{itemize}
  \item[$\diamond$] \emph{either} $\cZ(u)$ consists of $(k-2)$ loops at $y$ which do not intersect nor meet $\Gamma $ away from $y$, and one arc from $s$ to $s_1$ which does not hit $\Gamma $ away from $s,s_1$, and does not meet the loops at $y$,
  \item[$\diamond$] \emph{or} $\cZ(u)$ consists of $(k-3)$ loops at $y$  which do not intersect nor meet $\Gamma $ away from $y$, and two simple arcs, one from $y$ to $s$ and one from $y$ to $s_1$ which do not meet except at $y$, and do not meet the loops except at $y$; in this case we have a ``generalized loop'' which contains a sub-arc of $\Gamma $ from $s$ to $s_1$,
\end{itemize}
see Figures~\ref{F-hmn3-L34-2}\,(c) and  \ref{F-hmn3-L34-2}\,(e).

\begin{figure}[!ht]
  \centering
  \includegraphics[width=0.5\textwidth]{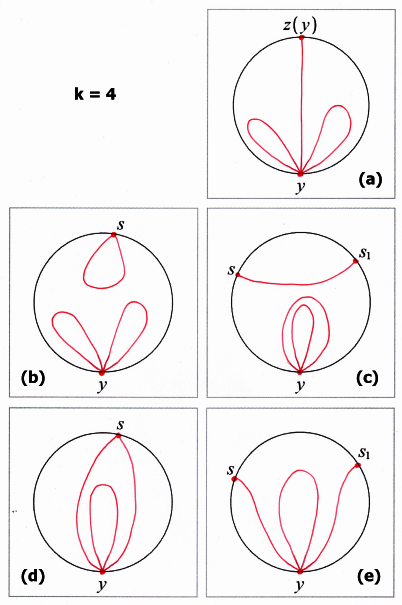}
\caption{Nodal patterns for $u \in V_{y,s}$ (k=4)}\label{F-hmn3-L34-2}
\end{figure}

\begin{remarks}\label{R-hmn3-L34-2}\phantom{}
\begin{enumerate}[(i)]
  \item The subcases in Cases~2 and 3 are distinguished by the fact that $b_0(\cZ(u)) = 1$ (as in Figures~\ref{F-hmn3-L34-2}\,(d) \&\,(e)) or $b_0(\cZ(u)) = 2$, as in Figures~\ref{F-hmn3-L34-2} (b) \& (c), if $s \neq z(y)$. Since $\dim V_{y,s} = 1$, the subcases cannot occur simultaneously. Indeed, by Lemma~\ref{L-zeroc} we would otherwise find a function $u$ such that $\rho(u,y) \ge (2k-3)$ and $\rho(y,s) \ge 1$, with $s \neq z(y)$, contradicting Case~1.
  \item  At this stage of the discussion, the location of $z(y)$ with respect to $y$, $s$, and $s_1$ in Sub-figures~\ref{F-hmn3-L34-2}\,(b)--(e) is not clear. This will be explained in Lemma~\ref{L-L35a}.
\end{enumerate}
\end{remarks}%

For a pair $(y,s) \in \Gamma_{(2k -3)} \times \Gamma $ with $y \neq s$, since $\dim V_{y,s} = 1$, we can define the \emph{combinatorial type} of a generator $v_{y,s}$ at the point $y$, as we did for the generator $u_y$ of $U_y$ in Paragraph~\ref{SSS-hmn-31bas}, taking the above cases into account.

When $(y,s) = (y,z(y))$, the combinatorial type is that of $u_y$, and we denote it by $\tau_{u_y}$. For example, the combinatorial type $\tau_{u_y}$ of the function $u_y$ whose nodal pattern appears in Figure~\ref{F-hmn3-L34-2}\,(a) is given by
\begin{equation*}
\tau_{a}^{\ref{F-hmn3-L34-2}} = \begin{pmatrix}
             \downarrow &  1 & 2 & 3 & 4 & 5 \\
               3 & 2 & 1 & \downarrow & 5 & 4 \\
             \end{pmatrix},
\end{equation*}

When $s \neq y$, the combinatorial type $\tau := \tau_{v_{y,s}}$ of a function $v_{y,s}$ is described as follows. When the nodal set is connected,  as in Figure~\ref{F-hmn3-L34-2} (d) and (e), we write $\tau(j) = s$ (resp. $s_1$) to indicate that the nodal semi-arc emanating from $y$ tangentially to the ray labeled $j$ ends up at $s$ (resp. $s_1$). When the nodal set has two  components, as in Figure~\ref{F-hmn3-L34-2}\,(b) and\,(c), we write $\tau(s)=s$ to indicate that there is a loop at $s$, and $\tau(s) = s_1$ to indicate that there is a nodal interval from $s$ to $s_1$. We describe the maps $\tau$ by $2\times (2k-2)$ matrices. The first row enumerates the rays at $y$ and the rays at $s$ and $s_1$ (counter-clockwise). With this convention, the combinatorial type $\tau_{v_{y,s}}$ of a function $v_{y,s}$ whose nodal pattern appears in Figure~\ref{F-hmn3-L34-2}\,(b)--(e), is given by one of the following formulas.
\begin{equation*}
\tau_{2,b}^{\ref{F-hmn3-L34-2}} = \begin{pmatrix}
               1 & 2 & 3 & 4 & s & s\\
               2 & 1 & 4 & 3 & s & s\\
             \end{pmatrix},\quad
\tau_{2,c}^{\ref{F-hmn3-L34-2}} = \begin{pmatrix}
               1 & 2 & 3 & 4 & s_1 & s\\
               2 & 1 & 4 & 3 & s & s_1\\
             \end{pmatrix},
\end{equation*}
and
\begin{equation*}
\tau_{1,d}^{\ref{F-hmn3-L34-2}} = \begin{pmatrix}
               1 & 2 & 3 & 4 & s & s\\
               s & 3 & 2 & s  & 1 & 4\\
             \end{pmatrix},\quad
\tau_{1,e}^{\ref{F-hmn3-L34-2}} = \begin{pmatrix}
               1 & 2 & 3 & 4 & s_1 & s\\
               s_1 & 3  & 2 & s & 1 & 4\\
             \end{pmatrix}.
\end{equation*}

\subsection{Precise description of $V_{y,s}$}\label{SSS-hmn32bf}

 In this subsection  we analyze the behavior of a generator $v_{y,s}$ of $V_{y,s}$ when $y$ and $s$ vary. More precisely, Lemma~\ref{L-L34b} describes the behavior of $v_{y,s}$ when $y \in \Gamma_{(2k -3)}$ is fixed and $s$ tends to $y$ or to $z(y)$, and the behavior when $s \in \Gamma_{(2k -3)}$ is fixed and $y$ tends to $s$. Lemma~\ref{L-L35a} describes the global behavior of $v_{y,s}$ for a given $y \in \Gamma_{(2k -3)}$.

\begin{lemma}\label{L-L34b}
Assume that  $\Omega$ is simply connected. Let $U:=U(\lambda_k)$ with $k \ge 3$.  Assume that $\dim U = (2k-2)$. Given $(y,s) \in \Gamma_{(2k -3)} \times \Gamma $, with $ y \neq s$, recall that
\begin{equation*}
V_{y,s} := \set{u \in U \mid \rho(u,y) \ge 2k-4 \text{~and~} \rho(u,s) \ge 1}.
\end{equation*}
 Let $v_{y,s}$ be a generator of $V_{y,s}$. The function $v_{y,s}$ has the following properties.
  \begin{enumerate}[(i)]
  \item  When $s=z(y)$, the function $ \breve{v}_{y,z(y)}=\breve{u}_y$ vanishes on $\Gamma$ precisely at the points $y$ and $z(y)$, and changes sign at these points. When $s \neq z(y)$ and $\cS_{\mathrm{b}}(v_{y,s}) = \set{y,s,s'}$ with $s \neq s'$, $\rho(v_{y,s},s)=\rho(v_{y,s},s')=1$, the function $ \breve{v}_{y,s}$ vanishes on $\Gamma$ precisely at the points $y, s$ and $s'$, does not change  sign at $y$, and changes sign at $s$ and $s'$. When $s \neq z(y)$ and $\cS_{\mathrm{b}}(v_{y,s}) = \set{y,s}$ with  $\rho(v_{y,s},s)=2$, the function $ \breve{v}_{y,s}$ vanishes on $\Gamma$ precisely at the points $y$ and $s$, and does not change sign.
   \item For fixed $y \in \Gamma_{(2k -3)}$, and $s$ close enough to $z(y)$, $\cS_{\mathrm{b}}(v_{y,s}) = \set{y,s,s'}$, with $s' \neq s, y$,  and $\rho(v_{y,s},s) = 1$, $\rho(v_{y,s},s') = 1$. Furthermore, when $s$ tends to $z(y)$, $[v_{y,s}]$ tends to $[u_{y}]$, and $s'$ tends to $y$.
  \item For fixed $y \in \Gamma_{(2k -3)}$, and $s$ close enough to $y$, $\cS_{\mathrm{b}}(v_{y,s}) = \set{y,s,s'}$, with $s' \neq s, y$,  and $\rho(v_{y,s},s) = 1$, $\rho(v_{y,s},s') = 1$. Furthermore, when $s$ tends to $y$, $[v_{y,s}]$ tends to $[u_{y}]$, and $s'$ tends to $z(y)$.
  \item  For fixed $s \in \Gamma_{(2k -3)}$, and $y\in \Gamma_{(2k -3)}$ close enough to $s$, $\cS_{\mathrm{b}}(v_{y,s}) = \set{y,s,s'}$, with $s' \neq y,s$,  and $\rho(v_{y,s},s) = 1$, $\rho(v_{y,s},s') = 1$. Furthermore, when $y$ tends to $s$, $[v_{y,s}]$ tends to $[u_s]$, and $s'$ tends to $z(s)$.
\end{enumerate}
\end{lemma}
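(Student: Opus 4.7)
The plan is to prove Assertion~(i) directly from Lemma~\ref{L-breve} by a parity argument on the orders of vanishing, and then to deduce (ii)--(iv) from a single scheme: extract a convergent (projective) subsequence of $v_{y,s}$, identify its limit as a generator of $W_y$ or $W_s$, and combine this with a sign-change argument on $\breve{v}_{y,s}$ to exclude Case~(2) of Lemma~\ref{L-L34a} and pin down the third singular point $s'$.

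For Assertion~(i), by Lemma~\ref{L-breve} the function $\breve{v}_{y,s}$ vanishes at each boundary singular point $z \in \cS_b(v_{y,s})$ precisely at order $\rho(v_{y,s},z)$. In Case~(1) of Lemma~\ref{L-L34a}, the orders at $y$ and $s=z(y)$ are $(2k-3)$ and $1$, both odd, so $\breve{v}_{y,s}$ changes sign at both points. In Case~(2), the orders are $(2k-4)$ and $2$, both even, so no sign change occurs. In Case~(3), the orders are $(2k-4)$ at $y$ (even) and $1$ at each of $s,s'$ (odd), yielding the stated sign-change pattern. This gives (i).

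For (ii), take a sequence $s_n\to z(y)$ with $s_n\neq z(y)$, and normalize $v_n := v_{y,s_n}$ in $L^2$. Standard elliptic regularity gives a $C^\infty$-convergent subsequence, still denoted $v_n$, with limit $v \in U$, $\|v\|_{L^2}=1$. Semi-continuity of the order of vanishing (Remark~\ref{R-evp2-2}) gives $\rho(v,y) \geq 2k-4$, and $\breve{v}_n(s_n)=0$ together with uniform $C^0$-convergence forces $\breve v(z(y))=0$, i.e.\ $\rho(v,z(y))\geq 1$. Hence $v \in V_{y,z(y)} = W_y$ (Lemma~\ref{L-L34a}(iii)), so $[v]=[w_y]$. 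Flipping the sign of $v_n$ if needed, we may assume $\breve{v}_n \to \breve{w}_y$ uniformly in $C^0$. Since $\breve{w}_y$ changes sign at $y$ and $z(y)$, for $n$ large $\breve{v}_n$ must also change sign at least twice along $\Gamma$, which by (i) rules out Cases~(1) and (2) of Lemma~\ref{L-L34a} and places $v_{y,s_n}$ in Case~(3) with sign changes at $s_n$ and $s'_n$. Any accumulation point of $\set{s'_n}$ is a zero of $\breve{w}_y$, hence equals $y$ or $z(y)$; it cannot equal $z(y)$ since $s_n \neq s'_n$ and $\rho(w_y,z(y))=1$ forbids two distinct zeros of $\breve v_n$ collapsing there. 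So $s'_n \to y$, proving (ii).

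For (iii) and (iv) the same scheme applies, with one extra ingredient needed to identify the limit $[v]$ as $[w_y]$ (resp.\ $[w_s]$): we must upgrade the semi-continuity bound from $\rho \geq 2k-4$ to $\rho \geq 2k-3$. The key step, which I expect to be the main obstacle, is a Rolle-type argument along $\Gamma$. Working in local boundary coordinates in which $\Gamma$ becomes an interval, $\breve{v}_{y,s}$ has a zero of order $\geq 2k-4$ at $y$ and a simple zero at $s$, giving $(2k-3)$ zeros counted with multiplicity in an arbitrarily small neighborhood of $y$ (resp.\ $s$) as $s\to y$ (resp.\ $y\to s$). Iterating Rolle $(2k-4)$ times produces a zero of $\breve{v}_{y,s}^{(2k-4)}$ strictly between $y$ and $s$, and this zero tends to the coalescing point. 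By uniform $C^{2k-2}$-convergence of the subsequence, the limit $\breve v$ therefore satisfies $\breve v^{(j)}=0$ at the coalescing point for $j=0,\dots,2k-4$, i.e.\ $\rho(v,y)\geq 2k-3$ (resp.\ $\rho(v,s)\geq 2k-3$). Hence $[v]=[w_y]$ (resp.\ $[w_s]$), and the sign-change argument of the previous paragraph applied to $\breve w_y$ (resp.\ $\breve w_s$) identifies the remaining boundary singular point $s'$ as tending to $z(y)$ (resp.\ $z(s)$), since this is the unique other simple zero of $\breve w_y$ (resp.\ $\breve w_s$) along $\Gamma$. This completes (iii) and (iv).
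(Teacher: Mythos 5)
Your proof is correct, and for Assertions~(iii) and~(iv) it follows a genuinely different route from the paper's. Where the paper upgrades the semi-continuity bound $\rho(v,y)\ge 2k-4$ to $\rho(v,y)\ge 2k-2$ (hence a contradiction with $y\in\Gamma_{(2k-3)}$) by an energy argument --- shrinking a half-disk $D^{+}(r_0)$ around the coalescing point, invoking Dirichlet-eigenvalue monotonicity to force every loop and arc of $\cZ(v_n)$ to exit $D^{+}(r_0)$, and then counting crossings with $C^{+}(r_0)$ as in the proof of Lemma~\ref{L-L38} --- you instead run a one-dimensional Rolle count directly on the boundary trace $\breve{v}_n$: the order-$(2k-4)$ zero at $y$ and the extra zero(s) coalescing at $y$ (resp.\ $s$) give $\ge 2k-3$ (and in Case~2, $\ge 2k-2$) zeros with multiplicity in a shrinking arc, so $\breve{v}_n^{(2k-4)}$ has a zero tending to the coalescing point; with the uniform $C^{m}$ convergence supplied by Lemma~\ref{L-breve} and the finite-dimensionality of $U$ this passes to the limit and yields $\rho(v,y)\ge 2k-3$ (resp.\ $\rho(v,s)\ge 2k-3$), forcing $[v]=[w_y]$ (resp.\ $[w_s]$) and, in Case~2, the contradictory $\rho\ge 2k-2$. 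This is more elementary: it stays entirely on the boundary and needs only Lemma~\ref{L-breve} and Rolle, avoiding the interior energy estimate and the topological bookkeeping of nodal crossings. The trade-off is that the paper's energy method also delivers, as a byproduct, information about how the loops and arcs of $\cZ(v_n)$ are positioned relative to $D^{+}(r_0)$ (which the paper reuses elsewhere), whereas the Rolle method gives only the vanishing order. For Assertion~(ii) the paper's first step is actually slightly shorter than yours --- it derives $\rho(v,z(y))\ge 2$ directly from semi-continuity and contradicts $\rho(w_y,z(y))=1$, rather than passing through the sign-change count --- but both are sound, and the exclusion of $s'_n\to z(y)$ (resp.\ $s'_n\to y$ in (iii)) is in both proofs the same Rolle-on-two-collapsing-zeros observation. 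One presentational remark: in (iii)--(iv) you should make explicit that when Case~2 occurs, or when $s'_n$ also tends to the coalescing point, the Rolle count gives $\rho(v,\cdot)\ge 2k-2$, which contradicts membership in $\Gamma_{(2k-3)}$; this is the step that simultaneously establishes the first statement of (iii)/(iv) and pins down $s'_n$, and at present you leave it slightly implicit behind the phrase ``the sign-change argument of the previous paragraph.''
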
%

\begin{proof}\phantom{}

\noid \emph{Assertion~(i).~} Use Lemma~\ref{L-breve}, and the description of the possible nodal patterns for $V_{y,s}$ which follows from Lemma~\ref{RL-L34a}.\medskip

\noid \emph{Assertion~(ii).~} Assume that the first statement is not true. Then, we can find a sequence $\set{s_n}$ tending to $z(y)$, and a corresponding sequence $\set{u_n := v_{y,s_n}} \subset \bS(U)$ such that $\rho(u_n,y) = (2k-4)$,  $\rho(u_n,s_n) = 2$, and $u_n$ tends to some $u \in \bS(U)$. Since the convergence is uniform in $C^m$ for fixed $m \ge 0$, it follows that $\rho(u,y) \ge (2k-4)$ and $\rho(u,z(y)) \ge 2$, see Remark~\ref{R-evp2-2}  (lower semi-continuity of $\rho$),  and hence $u \in V_{y,z(y)}$. By Lemma~\ref{RL-L34a}~(iii),  we must have $[u]=[u_y]$, and we reach a contradiction since $\rho(u_y,z(y))=1$. This proves the first statement. \smallskip

We now prove the second statement. Considering $\set{u_n} \subset \bS(U)$ such that $\cS_{\mathrm{b}}(u_n) = \set{y,s_n,s'_n}$ with $s_n \neq s'_n$ and $s_n$ tends to $z(y)$, we may also assume that $s'_n$ tends to some $s'$, and $u_n$ tends to $u$. Then, $\rho(u,y) \ge (2k-4)$, and $\rho(u,z(y)) \ge 1$,  $\rho(u,s') \ge 1$. Lemma~\ref{RL-L34a}~(iii) implies that $[u] =[u_y]$, and $\breve{u}(s') = 0$ so that $s' \in \set{y,z(y)}$ (where $\breve{u}$ is defined by \eqref{E-evp-dr}). Assuming that $s' = z(y)$, we would have $\rho(u,z(y)) = 2$, leading to a contradiction. Indeed, in that case,  $s_n$ and $s'_n$ would both tend to $z(y)$, and the derivative $\partial_b \breve{u}_n$ of the function $\breve{u}_n$ along the boundary would vanish at some $t_n$ on the smallest arc between $s_n$ and $s'_n$, with $t_n$ tending to $z(y)$. Passing to the limit, we would have $\partial_b \breve{u}_y(z(y)) = 0$, implying that $\rho(u_y,z(y)) \ge 2$. Assertion~(ii) is proved.\medskip

\noid \emph{Assertion~(iii).~} Assume that the first statement is false. Then, we can find a sequence $\set{s_n}$ tending to $y$,  and a corresponding sequence $\set{u_n := v_{y,s_n}} \subset \bS(U)$ such that $\rho(u_n,y) = (2k-4)$,  $\rho(u_n,s_n) = 2$, and $u_n$ tends to some $u \in \bS(U)$. Then, $\rho(u,y) \ge (2k-4)$. Applying the local structure theorem to $u$, and following the arguments in the proof of Lemma~\ref{L-L38},  Part~(C), we may choose $r_1 > 0$ such that the neighborhood $D_{+}(r_1)$ of $y$ satisfies Properties~(B-a)--(B-d) in the proof of Lemma~\ref{L-L38}. We may also assume that the sequence $\set{s_n}$ is contained in $D_{+}(r_1)$. The nodal set of $u_n$ consists of either $(k-2)$ loops at $y$ and a loop at $s_n$, or $(k-3)$ loops at $y$ and two arcs from $y$ to $s_n$, see Figure~\ref{F-hmn3-L34-2}\,(b-d).
 As in the proof of Lemma~\ref{L-L38},  Part~(C), for $r_0 < r_1$ small enough, each loop at $y$ must cross $C_{+}(r_0)$ at (at least) two distinct points; so does each loop at $s_n$, and each arc from $y$ to $s_n$. We then conclude that $\rho(u,y) \ge (2k-2)$. Euler's formula then implies that actually $\rho(u,y) = (2k-2)$. This is not possible since $y \in \Gamma_{(2k -3)}$.   This proves the first statement in Assertion~(iii).\smallskip

We now prove the second statement. If $s_n$ tends to $y$, we have $\cS_{\mathrm{b}}(u_n) = \set{y,s_n,s'_n}$, with $s_n \neq s'_n$. The preceding argument also shows that no subsequence of $s'_n$ can tend to $y$. We may then assume that $s_n$ tends to $y$ and $s'_n$ tends to some $s' \neq y$, with $u_n$ tending to some $u$. Then, $\rho(u,y) \ge (2k-4)$, and the previous argument shows that $\rho(u,y) \ge (2k-3)$, and $\rho(u,s') \ge 1$. Lemma~\ref{L-L32} then shows that $s' = z(y)$, and that $[u]=[u_y]$. Assertion~(iii) is proved. \medskip

\noid \emph{Assertion~(iv).~} See Figure~\ref{F-hmn3-L34-8i}.\smallskip

\noi Assume that the first statement in false. Then, there exists a sequence $\set{y_n} \subset \Gamma_{(2k -3)}$ which tends to $s$, with a corresponding sequence $\ \set{u_n:=v_{y_n,s}} \subset \bS(U)$ tending to some $u \in \bS(U)$, and such that $\rho(v_{y_n,s},s) = 2$. The convergence of $u_n$ to $u$ being uniform in $C^{2k}$, we have $\rho(u,s) \ge (2k-4)$. Lemma~\ref{L-hdn} and Lemma~\ref{RL-L34a}(ii) applied to $u_n$  imply that $ \cZ(u) \cup \Gamma $ is connected.  Applying Euler's formula to $u$, we conclude that $(2k-4) \le \sum_{z \in \cS_{\mathrm{b}}(u)} \rho(u,z) \le (2k-2)$. Since the functions $\breve{u}_n$ do not change sign on $\Gamma $, $\breve{u}$ does not change sign either, so that $\rho(u,s) \neq (2k-3)$. Applying the local structure theorem to $u$ at $s$, and using the same proof as in Lemma~\ref{L-L38},  Part~(C),  (with the disc $D_{+}(s,r_0)$ and circle $C_{+}(s,r_0)$ centered at $s$), we infer that $\rho(u,s) = (2k-2)$.   Indeed, the nodal sets $\cZ(u_n)$ consist either is $(k-2)$ loops at $y_n$ (including a special loop touching $s$), or $(k-3)$ loops at $y_n$ and a loop at $s$. Since $y_n$ tends to $s$, for $r_0$ small enough, these loops must intersect $C_{+}(s;r_0)$ at $(2k-2)$ distinct points, and we can conclude as in the proof of Lemma~\ref{L-L38}.\smallskip

\noi To prove the second statement, we choose a sequence $\set{y_n}$ such that $\rho(u_n,s) = 1$ for $n$ large enough, so that $\cS_{\mathrm{b}}(u_n) = \set{y_n,s,s'_n}$, with $s'_n \neq s$. An argument similar to the previous one, shows that no subsequence of $\set{s'_n}$ can tend to $s$. Since $s \in \Gamma_{(2k -3)}$, the only remaining possibility is that $\rho(u,s) = (2k-3)$, and hence that $u \in U_s$. Assertion~(iv) is proved. The proof of Lemma~\ref{L-L34b} is complete.
\end{proof}

\begin{figure}[!ht]
  \centering
  \includegraphics[width=0.8\textwidth]{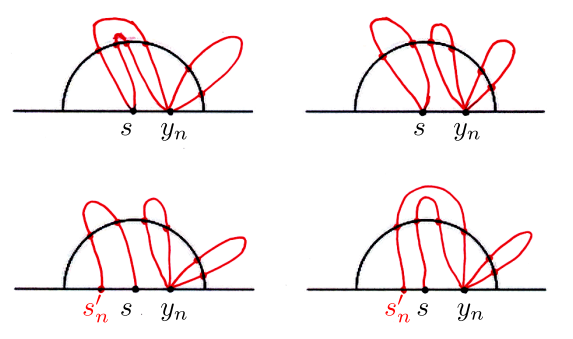}
  \caption{Proof of Lemma~\ref{L-L34b}(iv)}\label{F-hmn3-L34-8i}
\end{figure}

We can enhanced the previous lemma by the following properties.

\begin{properties}\label{P-hmn3-P38}
Assume that $\Omega$ is simply connected, and that $\dim U = (2k-2)$. Given $(y,s) \in \Gamma_{(2k-3)} \times \Gamma $ with $y\neq s$, there exists $\varepsilon_0$ such that
\begin{enumerate}[(i)]
  \item for all $s \in \cA(z(y);\varepsilon_0)\cup \cA(y;\varepsilon_0) \sm \set{y,z(y)}$, $\rho(v_{y,s},s)=1$;
  \item for all $\varepsilon < \varepsilon_0$, there exists $\eta > 0$ such that for all $s \in \cA(z(y);\eta) \sm \set{z(y)}$, $\cS_{\mathrm{b}}(v_{y,s}) = \set{y,s,s'}$ with $s' \in \cA(y;\varepsilon)\sm \set{y}$;
  \item for all $\varepsilon < \varepsilon_0$, there exists $\eta > 0$ such that for all $s \in \cA(y;\eta) \sm \set{y}$, $\cS_{\mathrm{b}}(v_{y,s}) = \set{y,s,s'}$ with $s' \in \cA(z(y);\varepsilon)\sm \set{z(y)}$.
\end{enumerate}%

Let $s_1 \not \in \set{y,z(y)}$. Assume that the function $v_{y,s_1}$ satisfies $\rho(v_{y,s_1},s_1)=1$, i.e., $\cS_{\mathrm{b}}(v_{y,s_1}) = \set{y,s_1,s'_1}$, with $s'_1 \neq s_1$. Then,
\begin{enumerate}[(a)]
  \item there exists $\varepsilon_1 > 0$, such that for all $s \in \cA(s_1;\varepsilon)$, $\rho(v_{y,s},s) = 1$;
  \item for all $\varepsilon > 0$, there exists $\eta < \varepsilon_1$ such that for all $s \in \cA(s_1;\eta)$, $s' \in \cA(s'_1;\varepsilon)$, i.e., the map $s \mapsto s'$ is continuous.
\end{enumerate}
\end{properties}%

\begin{lemma}\label{L-L35a}
 Assume that $\Omega$ is simply connected, and that $\dim U = (2k-2)$.  Let $y \in \Gamma_{(2k-3)}$. Then, the following properties hold.
\begin{enumerate}[(i)]
  \item For any $s \in \cA(y,z(y))$, the function $v_{y,s}$ satisfies $\cS_{\mathrm{b}}(v_{y,s}) = \set{y,s,s'}$ with $s' \in \cA(y,z(y))$, possibly with $s=s'$.
  \item There exists  a unique $s_1 \in \cA(y,z(y))$ such that $v_{y,s_1}$ satisfies $\rho(v_{y,s_1},s_1) = 2$.
  \item For all $s \in \cA(s_1,z(y))$, $\cS_{\mathrm{b}}(v_{y,s}) = \set{y,s,s'}$ with $s' \in \cA(y,s_1)$. Furthermore when $s$ moves counter-clockwise in $\cA(s_1,z(y))$, $s'$ moves clockwise in $\cA(y,s_1)$.
  \item For all $s \in \cA(y,s_1)$, $\cS_{\mathrm{b}}(v_{y,s}) = \set{y,s,s'}$ with $s' \in \cA(s_1,z(y))$. Furthermore when $s$ moves counter-clockwise in $\cA(y,s_1)$, $s'$ moves clockwise in $\cA(s_1,z(y))$.
\end{enumerate}
Similar statements hold for the arc $\cA(z(y),y)$.
\end{lemma}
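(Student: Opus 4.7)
\emph{Proof plan for Lemma~\ref{L-L35a}.}~ The plan is to reduce (i)--(iv) to the study of a single continuous involution $\phi$ of the arc $\cA(y,z(y))$ that swaps its endpoints. For every $s \in \cA(y,z(y))$, I define $s'(s) \in \Gamma$ by setting $s'(s)$ equal to the unique point of $\cS_b(v_{y,s}) \sm \set{y,s}$ when Case~(3) of Lemma~\ref{L-L34a}(ii) occurs, and $s'(s) := s$ when Case~(2) occurs (so that $\rho(v_{y,s},s) = 2$). Continuity of $s \mapsto s'(s)$ away from the fixed points of this map is exactly Properties~\ref{P-hmn3-P38}(b); at a putative fixed point $s_0$, if $s_n \to s_0$ with $s'(s_n) \neq s_n$, any accumulation point of $\set{s'(s_n)}$ is a zero of $\breve{v}_{y,s_0}$ distinct from $y$, and the combination of the semi-continuity of the order of vanishing (Remark~\ref{R-evp2-2}) with the budget $\sum_z \rho(v_{y,s_0},z) = 2k-2$ from Lemma~\ref{L-L34a} forces this accumulation point to equal $s_0$ itself. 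The map $s \mapsto s'(s)$ is therefore continuous on the entire arc $\cA(y,z(y))$.

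To prove (i), I would argue that $s'(s) \not\in \set{y,z(y)}$ for every $s \in \cA(y,z(y))$: the equality $s'(s)=y$ would give $\rho(v_{y,s},y) \ge 2k-3$, placing $v_{y,s}$ in $W_y$ and forcing $s=z(y)$, which is excluded; and $s'(s)=z(y)$ would give $v_{y,s} \in V_{y,z(y)}=W_y$ by Lemma~\ref{L-L34a}(iii), with the same contradiction. Thus $s \mapsto s'(s)$ takes values in $\Gamma \sm \set{y,z(y)}$, whose two connected components are $\cA(y,z(y))$ and $\cA(z(y),y)$. By Lemma~\ref{L-L34b}(ii)--(iii), $s'(s)$ lies in $\cA(y,z(y))$ whenever $s$ is sufficiently close to either $y$ or $z(y)$ inside $\cA(y,z(y))$, and continuity then forces $s'\bigl(\cA(y,z(y))\bigr) \subset \cA(y,z(y))$.

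Next I parametrize $\cA(y,z(y))$ by an orientation-preserving homeomorphism $\gamma:(0,L_0) \to \cA(y,z(y))$ and set $\phi(t) := \gamma^{-1}(s'(\gamma(t)))$. Then $\phi$ is continuous on $(0,L_0)$ with $\lim_{t\to 0^+}\phi(t) = L_0$ and $\lim_{t\to L_0^-}\phi(t) = 0$ by Lemma~\ref{L-L34b}(ii)--(iii). The crucial observation is that $\phi$ is an involution: if $s'(s) \neq s$, then $v_{y,s} \in V_{y,s'(s)}$, and since $\dim V_{y,s'(s)} = 1$ by Lemma~\ref{L-L34a}(i), we have $[v_{y,s}] = [v_{y,s'(s)}]$, whence $\cS_b(v_{y,s'(s)}) = \cS_b(v_{y,s})$ and $s'(s'(s)) = s$; the case $s'(s) = s$ is trivial. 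A continuous involution of $(0,L_0)$ that swaps the endpoints is injective, hence strictly monotone, and by its boundary behavior strictly decreasing; it therefore has a unique fixed point $t_1$. Setting $s_1 := \gamma(t_1)$ yields the unique point of $\cA(y,z(y))$ at which $\rho(v_{y,s_1},s_1)=2$, proving (ii). Assertions (iii) and (iv) then follow immediately from the strict decrease of $\phi$: for $t \in (t_1,L_0)$ one has $\phi(t) \in (0,t_1)$, which is precisely the statement that $s \in \cA(s_1,z(y))$ moving counter-clockwise has $s'(s) \in \cA(y,s_1)$ moving clockwise, and symmetrically on the other half. The analogous statement for the arc $\cA(z(y),y)$ is obtained by applying the identical argument to that arc.

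The main obstacle will be the continuity of $s \mapsto s'(s)$ at fixed points of $\phi$, where two distinct boundary zeros of $\breve{v}_{y,s}$ merge; one must rigorously rule out that this merger raises $\rho(v_{y,s_0},s_0)$ above $2$ or produces a spurious extra boundary zero of $\breve{v}_{y,s_0}$ away from $s_0$. The sketch above combines the rigidity $\dim V_{y,s}=1$ with the total-mass constraint from Euler's formula, but a careful analysis in the spirit of Lemma~\ref{L-L34b}(ii) and step~(C) of the proof of Lemma~\ref{L-L38}, applying the local structure theorem to $\breve{v}_{y,s_0}$ along $\Gamma$, will be needed to complete this step rigorously.
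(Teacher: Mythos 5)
Your reformulation of (ii)--(iv) through the continuous involution $s \mapsto s'(s)$ of the arc $\cA(y,z(y))$ is a genuinely different and cleaner route than the paper's: once the map is known to be a continuous involution of that arc which exchanges its endpoints in the limit, strict monotonicity, existence and uniqueness of the fixed point $s_1$, and the two order reversals in (iii) and (iv) all follow at once, bypassing both the $\sup J$ construction and the sign identity $\breve{\xi}_s(t)\,\breve{\xi}_t(s) \le 0$ that the paper uses.

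However, there is a real gap in your proof of (i). You claim that $s'(s) \in \cA(y,z(y))$ for $s$ near either endpoint "by Lemma~\ref{L-L34b}(ii)--(iii)," but those assertions only give the limits: $s'(s) \to y$ as $s \to z(y)$ and $s'(s) \to z(y)$ as $s \to y$. They say nothing about which component of $\Gamma\sm\set{y,z(y)}$ contains $s'(s)$; the point $s'(s)$ could perfectly well approach $y$ (resp.\ $z(y)$) from inside the other arc $\cA(z(y),y)$. Continuity of $s'$ together with $s'(s)\notin\set{y,z(y)}$ --- both of which you do establish --- places the image $s'\bigl(\cA(y,z(y))\bigr)$ entirely in one of the two arcs, but does not tell you which. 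The alternative scenario $s'\bigl(\cA(y,z(y))\bigr) \subset \cA(z(y),y)$ is exactly what the paper's proof of (i) rules out, and it does so by a dedicated argument: it introduces the family $\xi_s = a_0(s)\,w_y - b_0(s)\,v_0$, shows by tracking the signs of $\breve{\xi}_s$ that if one $s_0$ has $s'_0$ on the wrong arc then every $s$ does, and then derives a contradiction from the two limiting nodal patterns of $\xi_s$ as $s\to y$ and $s\to z(y)$, both of which must coincide with the fixed nodal pattern of $w_y$ (Figure~\ref{F-hmn3-L35a-4}). Without this rotating-function step, or an equivalent, your involution restricted to $\cA(y,z(y))$ might have no fixed point and (ii)--(iv) would not follow. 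The secondary issue you flagged --- continuity of $s \mapsto s'(s)$ across the fixed point where Case~(2) of Lemma~\ref{L-L34a}(ii) occurs --- is also genuine, but is closable along the lines you sketch, supplemented by the Rolle-type argument on $\breve{v}_{y,s_n}$ used in the proof of Lemma~\ref{L-L34b}(ii) to exclude an accumulation of $s'(s_n)$ at $y$.
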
%

The statements in Lemma~\ref{L-L35a} are illustrated in Figure~\ref{F-hmn3-L35a-1} for the arc $\cA(y,z(y))$.  The corresponding nodal patterns appear in Figure~\ref{F-hmn3-L35a-9a}.

\begin{figure}[!ht]
  \centering
  \includegraphics[scale=0.5]{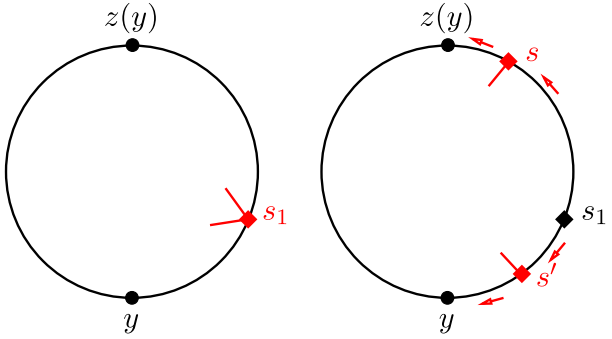}
  \caption{Lemma~\ref{L-L35a}: Assertions~(ii) and (iii)}\label{F-hmn3-L35a-1}
\end{figure}

\begin{proof}[Proof of Lemma~\ref{L-L35a}] Choose a generator $u_y$ of $U_y$ such that $\breve{u}_y$ is positive in the arc $\cA(y,z(y))$, and negative in $\cA(z(y),y)$.\smallskip

\emph{Assertion~(i).}~ Assume that the assertion is false: there exists some $s_0 \in \cA(y,z(y))$ such that $v_0 := v_{y,s_0}$ satisfies $\cS_{\mathrm{b}}(v_0) = \set{y,s_0,s'_0}$, with $s'_0 \in \cA(z(y),y)$. Since $s_0 \neq s'_0$, Lemma~\ref{L-L34b}\,(i) implies that $\breve{v}_0$ vanishes on $\Gamma$ only at the points $y$, $s_0$ and $s'_0$, does not change sign at $y$, and changes sign at $s_0$ and $s'_0$. Choose $v_0$ such that $\breve{v}_0 < 0$ in $\cA(s_0,s'_0)$. For $s \neq y,z(y)$, introduce the function
\begin{equation*}
\xi_s = a_0(s) u_y - b_0(s) v_0,
\end{equation*}
where
\begin{equation*}
\left\{
\begin{array}{l}
a_0(s) = \breve{v}_0(s) \big( \breve{v}_0^2(s) + \breve{u}_y^2(s)
\big)^{-\frac 12},\\[5pt]
b_0(s) = \breve{u}_y(s) \big( \breve{v}_0^2(s) + \breve{u}_y^2(s)
\big)^{-\frac 12}.
\end{array}
\right.
\end{equation*}
Since $b_0(s) \neq 0$, $\rho(\xi_s,y) = 2k-4$ and $\rho(\xi_s,s) \ge 1$, so that $\xi_s \in V_{y,s}$.\smallskip

\noi Choose $s \in \cA(s_0,z(y))$. Since $\breve{v}_0$ only changes sign at $s_0$ and $s'_0$, $\breve{\xi}_s(s'_0) > 0$. By Lemma~\ref{L-breve}, at $y$ along $\Gamma$, $\breve{u}_y$ vanishes at order $(2k-3)$, while $\breve{v}_0$ vanishes at order $(2k-4)$. In a pointed neighborhood $J\sm \set{y}$ of $y$ in $\Gamma$, we have that $\breve{\xi}_s \sim - b_0(s) \breve{v}_0 < 0$, and hence $\breve{\xi}_s$ vanishes at some $s' \in \cA(s'_0,y)$. It follows that $\xi_s \in V_{y,s}$ with $\cS_{\mathrm{b}}(\xi_s) = \set{y,s,s'}$, $\rho(\xi_s,s) = 1$, $\rho(\xi_s,s') = 1$.\smallskip

\noi Similarly, choosing $t \in \cA(y,s_0)$, we have $\breve{\xi}_t(s'_0) < 0$ and $\breve{\xi}_t(z(y)) > 0$, and $\breve{\xi}_t$ vanishes at some $t' \in \cA(z(y),s'_0)$. \\ Finally, we conclude as above that $\xi_t \in V_{y,t}$ with
\[
\cS_{\mathrm{b}}(\xi_t) = \set{y,t,t'}, \, \rho(\xi_t,t) = 1, \, \rho(\xi_t,t') = 1.
\]
These arguments can be visualized on Figure~\ref{F-hmn3-L35a-2}. \smallskip

\begin{figure}[!ht]
  \centering
  \includegraphics[scale=0.7]{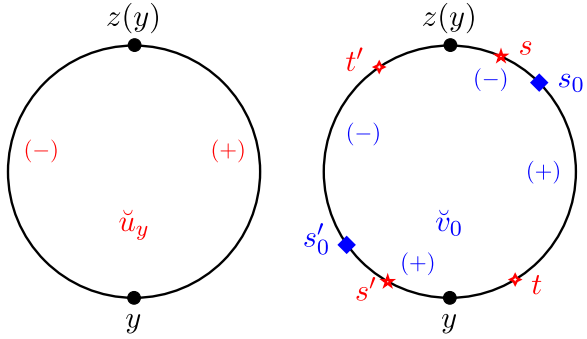}
  \caption{Proof of Lemma~\ref{L-L35a}(i)\\}\label{F-hmn3-L35a-2}
\end{figure}

From the assumed existence of $s_0$, we conclude that, for all $s \in \cA(y,z(y))$, $\xi_s \in V_{y,s}$ and $\cS_{\mathrm{b}}(\xi_s) = \set{y,s,s'}$, with $s' \in \cA(z(y),y)$, $\rho(\xi_s,s) = 1$, $\rho(\xi_s,s') = 1$. Because $s \in \cA(s_0,z(y))$ implies that $s' \in \cA(s'_0,y)$, with a parallel statement for $t$, the previous argument also shows that when the point $s$ moves counter-clockwise in $\cA(y,z(y))$, the point $s'$ moves counter-clockwise in $\cA(z(y),y)$.  According to Lemma~\ref{L-L34b}, Assertions~(ii) and (iii), $\xi_s$ tends to $u_y$ when $s$ tends to $y$ or to $z(y)$ in $\cA(y,z(y))$. Looking at the behavior of the nodal sets, we reach a contradiction as Figure~\ref{F-hmn3-L35a-4} shows. Assertion~(i) is proved. \quad \qedc \medskip

\begin{figure}[!ht]
  \centering
  \includegraphics[width=0.9\textwidth]{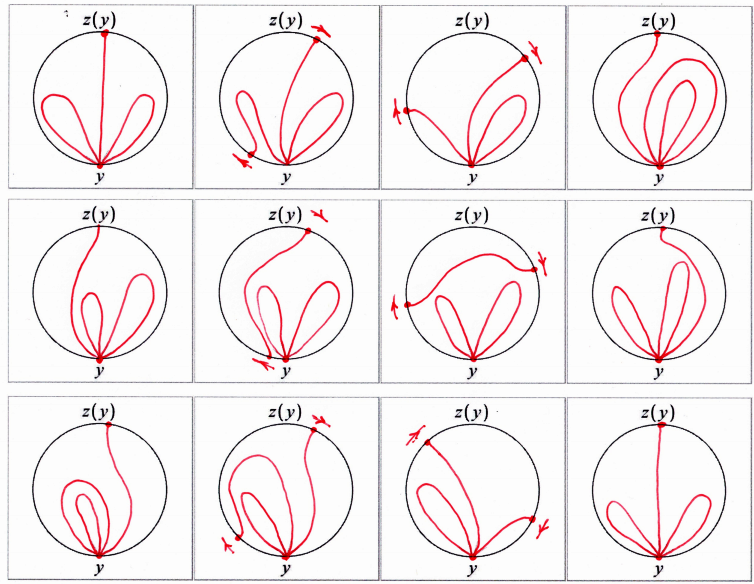}
  \caption{Proof of Lemma~\ref{L-L35a}(i)}\label{F-hmn3-L35a-4}
\end{figure}

\begin{remark} The previous arguments implicitly use the fact that the combinatorial type of $v_{y,s}$ does not change when $y$ is fixed and $s$ varies in $\Gamma  \sm \set{y}$ (the proof is similar to the proof of Lemma~\ref{L-L33c}\,(i)),
see Subsection~\ref{SSS-hmn32bas}. Note also that the reasoning in Figure~\ref{F-hmn3-L35a-4} is actually quite general, and only depends on the position of the arc from $y$ to $z(y)$ with respect to the loops.
\end{remark}

\begin{figure}[!ht]
  \centering
  \includegraphics[scale=0.7]{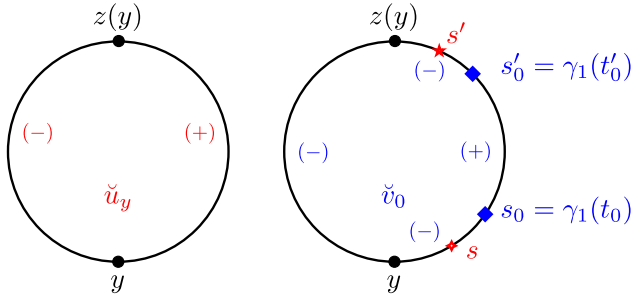}
  \caption{Proof of Lemma~\ref{L-L35a}(ii)\\}\label{F-hmn3-L35a-6}
\end{figure}

\emph{Assertion~(ii).} Let $\gamma_1 : [0,\ell_1] \to \Gamma $ be an arc-length parametrization of $\Gamma $, such that $\gamma_1(t)$ moves counter-clockwise, and $\gamma_1(0) = \gamma_1(\ell_1) = y$, $\gamma_1(\ell) = z(y)$. \smallskip

According to Lemma~\ref{L-L34b}\,(i), and to the above Assertion~(i), taking $s \in \cA(y,z(y))$ close to $y$, we have $\cS_{\mathrm{b}}(v_{y,s}) = \set{y,s,s'}$ with $s' \in \cA(y,z(y))$ close to $z(y)$. If $s=\gamma_1(t)$ and $s'=\gamma_1(t')$, for $t$ positive small enough, we have $t < t'$. Choose any such point $s_0 = \gamma_1(t_0)$ such that $v_0 = v_{y,s_0}$ satisfies $\cS_{\mathrm{b}}(v_0) = \set{y,s_0,s'_0}$, with $s'_0 = \gamma_1(t'_0)$ and $t_0 < t'_0$. By Lemma~\ref{L-L34b}\,(i), the function $\breve{v}_0$ vanishes precisely at the points $s_0$ and $s'_0$ and changes sign at these points. We choose it so that it is positive on $\cA(s_0,s'_0)$. Define $\xi_s$ as in the proof of Assertion~(i) (now with $s_0' \in \cA(y,z(y))$). Take $s \in \cA(y,s_0)$, $s=\gamma_1(t)$ with $0 < t < t_0$. With our previous choice of signs for $\breve{w}_y$, we find that $\breve{\xi}_s(z(y)) > 0$ and $\breve{\xi}_s(s'_0) < 0$, so that $\breve{\xi}_s$ vanishes at some $s'$ in $\cA(s'_0,z(y))$. Since $\xi_s \in V_{y,s}$, we have $\cS_{\mathrm{b}}(\xi_s) = \set{y,s,s'}$, with $s \in \cA(y,s_0)$, $s' \in \cA(s'_0,z(y))$, $\rho(\xi_s,s)=\rho(\xi_s,s')=1$. Then, $s' = \gamma_1(t')$ with $0 < t < t_0 < t'_0 < t' < \ell$, so that the map $(0,t_0) \ni t \mapsto t'$ is decreasing.\smallskip

Introduce the set
\begin{equation*}
J := \set{t \in (0,\ell) \mid \cS_{\mathrm{b}}(v_{y,\gamma_1(t)}) = \set{y,\gamma_1(t),\gamma_1(t')} \text{~with\quad} t < t'}.
\end{equation*}

If $t \in J$, $\rho(v_{y,\gamma_1(t)},\gamma_1(t)) = \rho(v_{y,\gamma_1(t)},\gamma_1(t')) = 1$. We have $(0,t_0) \subset J$ so that $J \neq \emptyset$, and hence $s_1 := \sup J$ exists. Since $\rho(v_{y,s},s)=1$ is an open condition, $s_1 \not \in J$. Take a subsequence $\set{t_n} \subset J$ tending to $s_1$, and choose corresponding functions $v_{y,t_n}$ in $\bS(U) \cap V_{y,t_n}$. A subsequence converges to some function $v_1$ in $\bS(U)$ which satisfies $\rho(v_1,y) \ge (2k-4)$ and $\rho(v_1,s_1) \ge 1$. By Lemma~\ref{RL-L34a},  this implies that  $v_1 \in V_{y,s_1}$ (use Remark~\ref{R-evp2-2}). Since $s_1 \not \in J$, we must have $\rho(v_1,s_1)= 2$, so that $v_1$ is a generator $v_{y,s_1}$ of $V_{y,s_1}$. \medskip

\emph{Assertions~(iii) and (iv).} Take $v_1 = v_{y,s_1}$ given by Assertion (ii). By Lemma~\ref{L-L34b}\,(i), we may choose the function $\breve{v}_1$ to be non-negative  and vanishing only at $y$ and $s_1$.  For $s\neq y, z(y)$, introduce the functions,
\begin{equation*}
\xi_s = a_1(s) u_y - b_1(s) v_1,
\end{equation*}
where
\begin{equation*}
\left\{
\begin{array}{l}
a_1(s) = \breve{v}_1(s) \big( \breve{v}_1^2(s) + \breve{u}_y^2(s)
\big)^{-\frac 12},\\[5pt]
b_1(s) = \breve{u}_y(s) \big( \breve{v}_1^2(s) + \breve{u}_y^2(s)
\big)^{-\frac 12}.
\end{array}
\right.
\end{equation*}

\begin{figure}[!ht]
  \centering
  \includegraphics[scale=0.7]{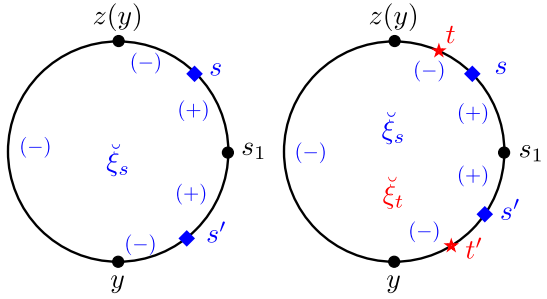}
  \caption{Proof of Lemma~\ref{L-L35a}, Assertion~(iii)}\label{F-hmn3-L35a-8}
\end{figure}

\noi When $s = s_1$, we have $\breve{\xi}_{s_1} = - b_1(s_1) \breve{v}_1 \le 0$, $\breve{\xi}_{s_1}$ only vanishes at $y$ and $s_1$, and $\xi_{s_1}$ belongs to $V_{y,s_1}$. When $s \in \cA(y,z(y))\sm \set{s_1}$, taking into account the properties of the functions involved, we find that
\begin{equation*}
\left\{
\begin{array}{l}
\breve{\xi}_s(z(y)) < 0,\\[5pt]
\breve{\xi}_s(s_1) > 0,\\[5pt]
\breve{\xi}_s < 0 \text{~in\quad} J_y\sm\set{y},\\[5pt]
\end{array}%
\right.
\end{equation*}
where $J_y$ is a small arc on $\Gamma_1$ centered at $y$.\smallskip

It follows that, for $s \in \cA(y,z(y))\sm \set{s_1}$, $\xi_s \in V_{y,s}$, $\cS_{\mathrm{b}}(\xi_s) = \set{y,s,s'}$, $\rho(\xi_s,s) = \rho(\xi_s,s') = 1$, with the point $s$ on one side of $s_1$ in $\cA(y,z(y))$, and the point $s'$ on the other side. For these points $s, s'$, we have $V_{y,s} = V_{y,s'}$, so that $\xi_s$ and $\xi_{s'}$, must be proportional (Lemma~\ref{RL-L34a}\,(i)),  and since the functions $\breve{\xi}_s$ and $\breve{\xi}_{s'}$ both take a positive value at $s_1$, $\xi_{s'} = a \, \xi_{s}$, with $a > 0$. We also have $\breve{\xi}_s(t) \breve{\xi}_t(s) \le 0$, since
\begin{equation*}
\breve{\xi}_s(t) \breve{\xi}_t(s) = - \big( \breve{v}_1^2(s) +
\breve{w}_y^2(s) \big)^{-\frac 12}\big( \breve{v}_1^2(t) +
\breve{w}_y^2(t) \big)^{-\frac 12}\big( \breve{v}_1(s)
\breve{w}_y(t)  - \breve{w}_y(s) \breve{v}_1(t) \big)^2.
\end{equation*}

Choose $s \in \cA(s_1,z(y))$, and $t \in \cA(s,z(y))$. Then $s',t' \in \cA(y,s_1)$.  The functions $\breve{\xi}_s$ and $\breve{\xi}_t$ are positive at $s_1$, and hence positive respectively on the arcs $\cA(s',s)$ and $\cA(t',t)$. We have $\breve{\xi}_s(t) < 0$, and hence, using the above properties, $\breve{\xi}_{s'}(t) > 0$, and $\breve{\xi}_t(s') < 0$. This implies that $t' \in \cA(y,s')$. We have proved that when $s$ moves counter-clockwise in $\cA(s_1,z(y))$, $s'$ move clockwise in $\cA(y,s_1)$. This is coherent with Lemma~\ref{L-L34b}(ii). The proof of Assertion~(iv) is similar.
\end{proof}

\begin{remarks} ~(i) Lemma~\ref{L-L35a} corresponds to the first part of the proof of Lemma~3.5 in \cite{HoMN1999} (from p.\,1181, line (-7), ``We consider the function'' to p.\,1182, line (+5), ``the following nodal domain'').\smallskip

(ii)  Figure~\ref{F-hmn3-L35a-9a} displays the possible nodal patterns of $u_y$, and the corresponding nodal patterns for the function $v_{y,s_1}$ with $s_1 \in \cA(y,z(y))$ (see  Lemma~\ref{L-L35a}\,(ii)), and for the function $v_{y,s}$  with $s \in \cA(s_1,z(y))$ (see Lemma~\ref{L-L35a}\,(iii)).
\end{remarks}%

\begin{figure}[!ht]
  \centering
  \includegraphics[width=0.8\linewidth]{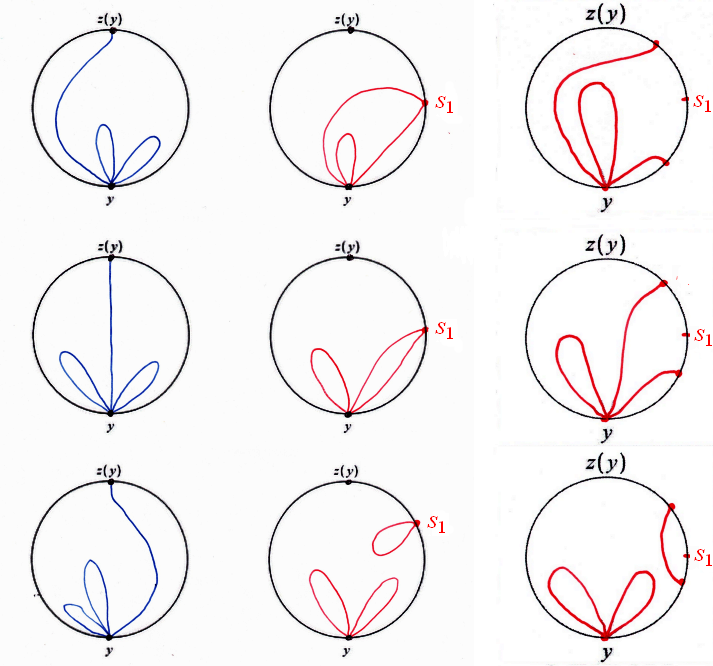}
  \caption{Lemma~\ref{L-L35a}\,(ii) and (iii): examples of nodal patterns for $u_y$ and $v_{y,s_1}$}\label{F-hmn3-L35a-9a}
\end{figure}
\FloatBarrier

\section{Conclusion}\label{S-comp}

The lemmas in Sections~\ref{S-hmn3}, \ref{S-gam0}, and \ref{S-hmn9}  explain, expand or provide proofs for the lemmas and certain statements from \cite{HoMN1999}, Section~3. Although we have mainly followed the ideas sketched in \cite{HoMN1999}, the way we organize the lemmas and some of our proofs are different. Correspondences are given in the table below.\smallskip

In \cite[Lemma~3.5]{HoMN1999}, the authors have implicitly assumed, without proof, that the set $\Gamma_{(2k-2)}$ contains at least two elements. We take care of this issue in Lemmas~\ref{L-L33c} and \ref{L-gam0-5}. In their Lemma~3.6, they mention, without proof, that if $y$ moves clockwise then $z(y)$ moves counter-clockwise. We prove this assertion in Lemma~\ref{L-L33b}.\smallskip

In the final steps of their proof, \cite[p.~1185, line (-5)]{HoMN1999}, the authors claim that the star at $x$ rotates by some positive quantity when passing on $\gamma_s$ over the point $\eta \in \Gamma_{(2k-2)}$. Our interpretation is rather that the star rotates when $x$ moves on $\gamma_s$, from above a point $\eta_1 \in \Gamma_{(2k-2)}$ to above the next point $\eta_2 \in \Gamma_{(2k-2)}$ (see Subsection~\ref{SS-hmn92}).

\begin{table}[!htb]
\centering
\begin{tabular}[c]{|c|c|}%
\hline
\cite{HoMN1999}, Section~3.9 & This Chapter~\ref{Ch-scpdsb} \\
\hline
Lemma p. 1178 & Lemmas~\ref{L-L32} and \ref{L-L33} \\
\hline
Lemma 3.3 & Lemmas~\ref{L-L32}, \ref{L-L33} and \ref{L-L33c} \\
\hline
Lemma 3.4 & Lemma~\ref{L-L34a} and Section~\ref{S-hmn32b} \\
\hline
Lemma 3.5 & Lemmas~\ref{L-L33b} and  \ref{L-L33c} \\
\hline
$\Gamma_{(2k-2)} \neq \emptyset$ & Lemma~\ref{L-gam0-5}\\
(overlooked in Lemma~3.5) &  \\
\hline
$\#(\Gamma_{(2k-2)}) \neq 1 $ & Lemma~\ref{L-L33c} \\
actually positive and even & Lemma~\ref{L-L33c} \\
(overlooked in Lemma~3.5) &\\
\hline
Lemma 3.6 & Lemma~\ref{L-L33b} and \ref{L-L33c}\\
\hline
Lemma 3.7 & Lemma~\ref{L-L37} \\
\hline
Lemma 3.8 & Lemma~\ref{L-L38} \\
\hline
Section 3.9 p.~1184 ff & Subsections~\ref{S-gam0} and \ref{S-hmn9} \\
\hline
\end{tabular}
\vspace{8pt}
\caption{ Correspondences between statements}\label{T-comp}
\end{table}

\section{Simpler proof of Lemma~\ref{L-L38}}\label{S-spL38}

In this section, we provide simpler statements and a simpler proof of Lemma~\ref{L-L38}.\medskip

Simpler statements:\\
(i) If $x \in \Omega$ tends to $y \in \Gamma$, then $[w_x]$ tends to $[u_y]$  in $\mathbb P(U)$.\\
(ii) $y \in \Gamma_{(2k-3)}$ if and only if there exists $z\neq y$ in $\Gamma$ such that for all $\beta$ small enough, and all $x$ close enough to $y$, $\cS_{\mathrm{b}}(w_x) \cap \cA(y;\beta) = \set{y(x)}$ and $\cS_{\mathrm{b}}(w_x) \cap \cA(z;\beta) = \set{z(x)}$.\\
(iii) $y \in \Gamma_{(2k-2)}$ if and only if  for all $\beta$ small enough, and all $x$ close enough to $y$, $\cS_{\mathrm{b}}(w_x) \subset \cA(y;\beta)$.\medskip

\begin{proof} Let $\set{x_n} \subset \Omega$ be a sequence tending to $y \in \Gamma$. We work locally in a neighborhood of $y$, and actually in $\bH$ via a conformal map.\smallskip

Let $w_n := w_{x_n} = \sum_{j=1}^{2k-2} a_{n,j} \phi_j \in \bS(U)\cap W_{x_n}$. Without loss of generality, we may assume that this sequence converges to some $w \in \bS(U)$, $C^m$-uniformly for any given $m \ge 1$. Since $\ord(w_n,x_n) = (k-1)$ for all $n$, $\ord(w,y) \ge (k-1) \ge 2$, so that $y$ is a singular point of $w$. Define $q := \rho(w,y) \ge 1$. Apply the local structure theorem to $w$ at the point $y$: for some $r$ smaller than the energy radius (Lemma~\ref{L-hmn3-L38-e}), the nodal set $\cZ(w)$ intersects the semi-circle $C_{+}(y,r)$ at $q$ points $A^{w}_{y,j}(r), 1 \le j \le q$. The proof of the local structure theorem implies that for $n$ large enough the function $w_n$ vanishes precisely once in the $\cG$-arcs $\cG^{w}_{y,j}(r) = \cA \big( A^{w}_{y,j}(r),\alpha \big) \subset C_{+}(y,r)$, and does not vanish elsewhere on $C_{+}(y,r)$. This implies that $\# \big( \cZ(w_n) \cap C_{+}(y,r)\big) = q$.\smallskip

According to Lemma~\ref{L-L37}, there are three possibilities
\begin{enumerate}[(A)]
  \item $\cS_{\mathrm{b}}(w_n) = \emptyset$.
  \item $\cS_{\mathrm{b}}(w_n) = \set{z_{n,1},z_{n,2}}$ with $z_{n,1} \neq z_{n,2}$ and $\rho(w_n,z_{n,i})=1$.
  \item $\cS_{\mathrm{b}}(w_n) = \set{z_n}$ with $\rho(w_n,z_n)=2$.
\end{enumerate}

\noi In the three cases, $\cZ(w_n)$ contains at least $(k-2)$ pairwise distinct loops at $x$ which do not intersect away from $x$. When $n$ is large enough, for energy reasons, each loop in $\cZ(w_n)$ intersects $C_{+}(y,r)$ at at least two distinct points.  It follows that this number satisfies $q = \# \big( \cZ(w_n) \cap C_{+}(y,r)\big) \ge (2k-4)$.\smallskip

\noi If there exists an infinite sub-sequence $\set{w_{s(n)}}$ satisfying (A), each $\cZ(w_{s(n)})$ contains $(k-1)$ loops and hence $q \ge (2k-2)$ and we must have $w \in U_y$ and $q = (2k-2)$.\smallskip

\noi Otherwise, for $n$ large enough, $\cS_{\mathrm{b}}(w_n) \neq \emptyset$ and there are two nodal intervals in $\cZ(w_n)$ from $x_n$ to the boundary. For energy reasons these intervals cannot both be contained in $D_{+}(y,r)$ and at least one of them must exit $D_{+}(y,r)$ so that $q \ge (2k-4) + 1 = (2k-3)$. This inequality implies that $w \in U_y$ and that $q \in \set{2k-3,2k-4}$.\smallskip

In summary, at this point, we have proved that when $x_n$ tends to $y$, any limit point $w$ of the sequence $\set{w_n}$ belong to $U_y$ and since $\dim U_y =1$, we conclude that $[W_n]$ tends to $[U_y]$.\medskip

We can now make a more precise analysis, looking at whether $y \in \Gamma_{(2k-3)}$ or $y \in \Gamma_{(2k-2)}$.\smallskip

\noi Assume that $y \in \Gamma_{(2k-3)}$. In that case, a limit point $w$ of $\set{w_n}$ belong to $U_y$ and satisfies $\cS(w) = \set{y,z}$ for some $z \neq y$, with $\rho(w,y) = (2k-3)$ and $\rho(w,z)=1$. Since $w_n$ tends to $w$ $C^1$-uniformly, $\breve{w}_n$ tends to $\breve{w}$. Since $\breve{w}$ changes sign at $y$ and $z$, $\breve{w_n}$ must change sign near $y$ and near $z$, so that $\cS_{\mathrm{b}}(w_n)$ contains precisely two points and belongs to Case (B). Fixing  some $\beta > 0$ small enough  we may choose $z_{n,1} \in \cA(y;\varepsilon)$ and  $z_{n,2} \in \cA(z,\beta)$.\medskip

\noi Assume that $y \in \Gamma_{(2k-2)}$. In that case, $\cS_{\mathrm{b}}(w) = \set{y}$. Assume that there exists an infinite sequence such that $\cS_{\mathrm{b}}(w_n) = \set{z_{n,1},z_{n,2}}$, possibly with $z_{n,1}=z_{n,2}$. We may assume that
$z_{n,1}$ tends to $z_1$ and $z_{n,2}$ tends to $z_2$. Since $\breve{w}_n$ tends uniformly to $\breve{w}$, we have $\breve{w}(z_1) = \breve{w}(z_2) = 0$ and since $\breve{w}$ only vanishes at $y$, we conclude that $z_1=z_2$.\smallskip

In summary, if $x_n$ tends to $y \in \Gamma_{(2k-2)}$, and if $w$ is a limit point of $\set{w_n}$ then for all $\beta$, there exists $N_{\beta}$ such that $\cS_n(w_n) \subset \cA(y;\beta)$, including the case $\cS_{\mathrm{b}}(w_n) = \emptyset$.
\end{proof}

\chapter{Further Results}\label{Ch-fr}

\section{Upper Bounds on the Multiplicities and the Nodal Line Conjecture}\label{S-nlc}

When $k=2$, Proposition~\ref{P-hmn-s1} gives $\mult(\lambda_2) \le 3$.  In view of Table~\ref{E-scr-2T}, Section~\ref{S-sketch}, a natural question is whether this bound is sharp (depending on the boundary condition). As we shall see, this question is related to the so-called  ``nodal line conjecture''.

\subsection{Nodal sets of second eigenfunctions}\label{SSS-hmn-0CA}

 We use the notation of Subsection~\ref{SS-hmn-0N}, and write the boundary of the domain $\Omega$ as $\Gamma = \bigcup_{j=1}^q \Gamma_j$, with $q \ge 1$. Let $u \in U(\lambda_2)$ be any second eigenfunction. By Courant's Theorem~\ref{T-RC},  $u$ has exactly two nodal domains. Euler's formula \eqref{E-euler-or2}  yields
\begin{equation}\label{E-hmn0-22}
\left\{
\begin{array}{ll}
0 = \kappa(u) - 2 = & \big[ b_0(\cZ(u) \cup \Gamma(u)) - 1 \big]  + \frac 12
\sum_{z \in \cS_{\mathrm{i}}(u)} (\nu(u,z)-2)\\[5pt]
& + \sum_{j \in J(u)}\frac 12 \, \big( \sum_{z\in \cS_{\mathrm{b}}(u) \cap \Gamma_j\,} \rho(u,z) - 2\big).
\end{array}%
\right.
\end{equation}

 Since all the terms is the right hand side are nonnegative (use Corollary~\ref{cor:nodinfo} and the definition of $J(u)$), we immediately deduce that
\begin{equation}\label{E-hmn0-24}
\left\{
\begin{array}{l}
\cZ(u) \cup \Gamma(u) \text{~is connected},\\[5pt]
\sum_{z \in \cS_{\mathrm{i}}(u)} (\nu(u,z)-2) = 0 \text{~~i.e,~} \cS_{\mathrm{i}}(u) = \emptyset,\\[5pt]
\sum_{z\in \cS_{\mathrm{b}}(u) \cap \Gamma_j\,} \rho(u,z) = 2 ~~\forall j \in J(u).
\end{array}%
\right.
\end{equation}

 The structure of $\cZ(u)$ depends on whether $J(u) = \emptyset$ or $J(u) \neq \emptyset$. We now consider the two simplest situations. The proofs of the following properties are clear.

\begin{property}\label{P-hmn-nl0}
Assume that $\Omega$ is \emph{simply connected}. Let $u$ be a second eigenfunction. Then, either $\cZ(u)$ does not hit $\Gamma$ and $\cS(u) = \emptyset$, or $\cZ(u)$ hits $\Gamma$, $\cS_{\mathrm{i}}(u) = \emptyset$, and $\sum_{z\in \cS_{\mathrm{b}}(u)} \rho(u,z) = 2$. More precisely, there are three distinct possibilities.
\begin{enumerate}[(a)]
  \item If $J(u) = \emptyset$, then $\cZ(u)$ is a nodal circle, i.e., a simple closed regular  connected curve contained in $\Omega$, not touching $\Gamma$. This case is characterized by the fact that the function $\breve{u}$  defined in \eqref{E-evp-dr} does not vanish on $\Gamma$.
  \item If $J(u) = \set{1}$ and $\cS_{\mathrm{b}}(u) = \set{y}$ for some $y \in  \Gamma$ with $\rho(u,y)=2$, then $\cZ(u)$ is a nodal loop at $y$, i.e., $\cZ(u) \sm \set{y}$ a simple regular connected curve contained in $\Omega$.  This case is characterized by the fact that the function $\breve{u}$ vanishes only at $y_1$ on $\Gamma$, and does not change sign.
  \item If $J(u) = \set{1}$ and $\cS_{\mathrm{b}}(u) = \set{y_1,y_2}$ for some $y_1 \neq y_2 \in  \Gamma$ with $\rho(u,y_1)=1$, $\rho(u,y_2)=1$, then $\cZ(u)$ is a nodal arc from $y_1$ to $y_2$, i.e., $\cZ(u) \sm \set{y_1,y_2}$ is a simple regular connected arc contained in $\Omega$.  This case is characterized by the fact that the function $\breve{u}$ vanishes precisely at $y_1$ and $y_2$ on $\Gamma$, and changes sign at these points.
\end{enumerate}
\end{property}%

\begin{property}\label{P-hmn-nl1}
Assume that $\Omega$ has \emph{one hole}, and write $\Gamma = \Gamma_1 \cup \Gamma_2$. Let $u$ be a second eigenfunction. Then, either $\cZ(u)$ does not hit $\Gamma$ and $\cS(u) = \emptyset$, or $\cZ(u)$ hits $\Gamma$, $\cS_{\mathrm{i}}(u) = \emptyset$, and $\sum_{z\in \cS_{\mathrm{b}}(u)\cap \Gamma_j} \rho(u,z) = 2$ for all $j \in J(u)$. More precisely, there are three distinct cases (up to relabeling the two components of $\Gamma$).
\begin{enumerate}[(1)]
  \item If $J(u) = \emptyset$, then $\cZ(u)$ is a nodal circle contained in $\Omega$, not touching $\Gamma$. This case is characterized by the fact that the function $\breve{u}$  defined in \eqref{E-evp-dr} does not vanish on $\Gamma$.
  \item If $J(u) = \set{1}$, then either $\cS_{\mathrm{b}}(u) = \set{y}$ for some $y \in  \Gamma_1$ with $\rho(u,y_1)=2$, or $\cS_{\mathrm{b}}(u) = \set{y_1,y_2}$ for some $y_1 \neq y_2 \in  \Gamma_1$ with $\rho(u,y_1)=\rho(u,y_2)=1$. We have either a nodal loop at $y$, or a nodal arc from $y_1$ to $y_2$.  This case is characterized by the fact that the function $\breve{u}$ vanishes only at $y$ (without changing sign along $\Gamma_1$), or vanishes at $y_1$ and $y_2$ (and changes sign along $\Gamma_1$). In both subcases, $\breve{u}$ does not vanish on $\Gamma_2$.
  \item If $J(u) = \set{1,2}$,  then $\cZ(u)$ hits both component $\Gamma_1$ and $\Gamma_2$ at one point with index $2$, or at two distinct points of index $1$. Furthermore, the components $\Gamma_1$ and $\Gamma_2$ are linked by two nodal arcs (possibly with one or two common boundary points).
\end{enumerate}
\end{property}%

\begin{remark}\label{R-hmn0-24}\phantom{}
It is not clear  a priori whether the possible nodal patterns described in Property~\ref{P-hmn-nl0} or \ref{P-hmn-nl1} are actually realized for some choice of domain $\Omega$ and potential $V$.  Applying Lemmas~\ref{L-zero1} or \ref{L-zero2}, one can at least prescribe one or two boundary singular points.
\begin{enumerate}[(i)]
\item Assume that $\dim U(\lambda_2(-\Delta+V)) \ge 3$. If $\Omega$ is simply connected, then there exists an eigenfunction whose nodal set satisfies (b), resp. (c), in Properties~\ref{P-hmn-nl0}. If $\Omega$ has one hole, then there exists an eigenfunction whose nodal set hits both $\Gamma_1$ and $\Gamma_2$.
  \item Assume that $\dim U(\lambda_2(-\Delta+V)) = 2$. Then, there exists an eigenfunction whose nodal domain hits $\Gamma$.
\end{enumerate}
\end{remark}%

Nodal sets and $\mult(\lambda_2)$ are known precisely in few circumstances only. For example, when $\Omega$ is convex and $V\equiv 0$, see \cite{Ales1994} and its list of references. The following figures display some particular cases. The second (Dirichlet or Neumann) eigenvalue of an equilateral triangle with rounded corners has multiplicity two, with one symmetric and one antisymmetric eigenfunction. The nodal domains of the symmetric eigenfunction appear in Figure~\ref{F-hmn1-nl-teeh} (left), see \cite{BeHe2021t}.

The second (Dirichlet or Neumann) eigenvalue of the interior of an ellipse is simple with nodal domains as in Figure~\ref{F-hmn1-nl-teeh} (center); this is a particular case of the domains described in \cite{Shen1988}, \cite{Putt1990} and \cite{Putt1991}. The nodal set of a second Dirichlet eigenfunction of $D\sm B$, where $D, B$ are convex symmetric domains, has been studied in \cite{Kiwa2018}, see Figure~\ref{F-hmn1-nl-teeh} (right).

\begin{figure}[!ht]
\centering
\includegraphics[width=0.9\linewidth]{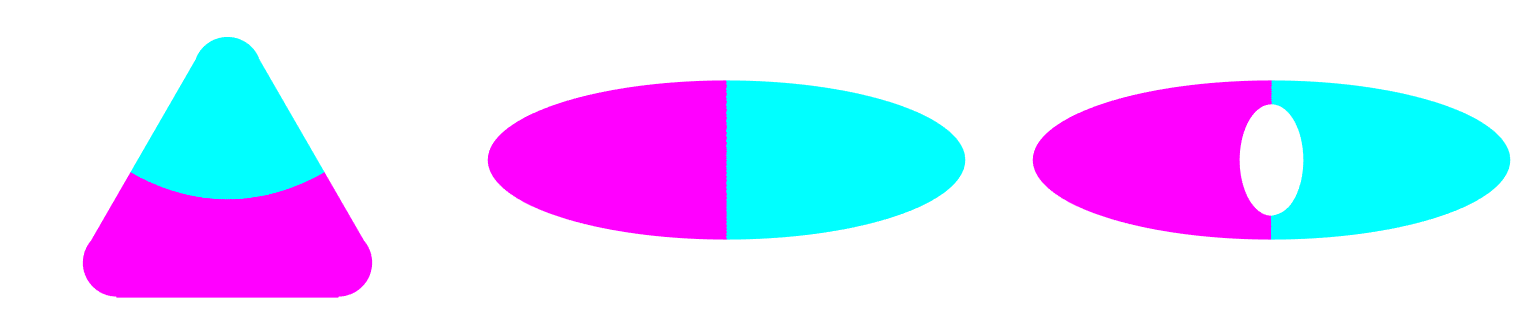}
\caption{Nodal patterns of second eigenfunctions}
\label{F-hmn1-nl-teeh}
\end{figure}

Numerical computations, playing with the position of the holes, give rise to
some other patterns, see Figure~\ref{F-hmn1-nl-eh}.

\begin{figure}[!ht]
\centering
\includegraphics[width=0.99\linewidth]{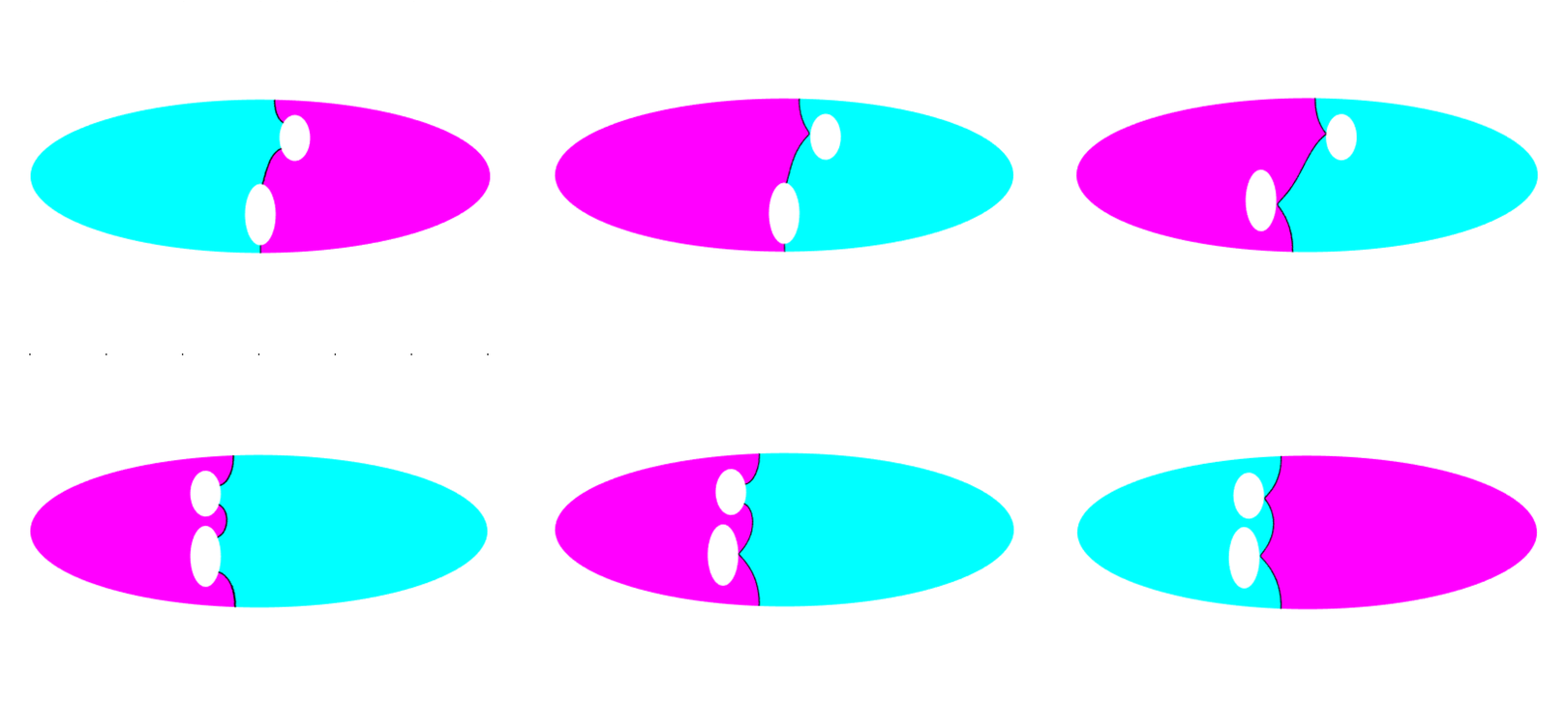}
\caption{Nodal patterns of second eigenfunctions}
\label{F-hmn1-nl-eh}
\end{figure}

In view of the above remarks, the following questions are natural.

\begin{description}
  \item[Question~1:] Are there examples in which the nodal set of some second eigenfunction of $-\Delta+V$ is a nodal circle ?
  \item[Question~2:] Are there examples in which the nodal set of some second eigenfunction of $-\Delta+V$ is a nodal loop at some $y \in \Gamma$ ?
\end{description}

\subsection{The nodal line conjecture}\label{SSS-hmn-0CB}

For a simply connected domain $\Omega$, Pleijel \cite[p.~546]{Plej1956} observed that a second Neumann eigenfunction of $-\Delta$  cannot have a closed nodal line. \index{Nodal line conjecture} The idea is to use the inequality \[\lambda_2(D,-\Delta,\mf{n}) \le \lambda_1(D,-\Delta,\mf{d})\] between the second Neumann eigenvalue and the first Dirichlet eigenvalue of $-\Delta$ in a domain $D \subset \R^2$, and the monotonicity property of the first Dirichlet eigenvalue. Indeed, if there exists a second Neumann eigenfunction $u_2$ one of whose two nodal domains (say $D_1$) has the property that $\partial D_1 \cap \Gamma$ consists of isolated points  (which occurs in particular when $\partial D_1 \subset \Omega$), then the restriction of $u_2$ to $D_1$ is the ground state of the Dirichlet problem in $D_1$, $\lambda_1(D_1,\mf{d}) = \lambda_2(\Omega,\mf{n})$. By the strict domain monotonicity of the Dirichlet eigenvalues, we get
\[
\lambda_1(\Omega,\mf{d}) <  \lambda_1(D_1,\mf{d})=\lambda_2(\Omega,\mf{n}),
\]
hence a contradiction. This argument may fail when $\Omega$ as a hole because both nodal domains of $u$ may touch the boundary.\smallskip

The inequality $\lambda_2(D,-\Delta,\mf{n}) \le \lambda_1(D,-\Delta,\mf{d})$ is due to Szeg\"{o} (1954) when $\Omega \subset \R^2$ is simply connected and smooth.  The strict inequality is proved in an earlier paper of P\'{o}lya (1952) which does apparently not use the assumption that the domain is simply connected. It was later generalized to higher dimensions, and smooth enough domains (not necessarily simply connected) by Weinberger (1956), see \cite{Payn1967}, Theorem~3, p.~463. It has been extended to domains with $C^1$ boundary, see \cite{Maz1991} in which Mazzeo revisits the earlier paper of L.~Friedlander \cite{Frie1991}.  When $\partial D_1$ meets $\Gamma$ we actually need the inequality to hold for domains with piecewise $C^1$ boundary   (see \cite{ArMa2007,ArMa2012} for the Lipschitz case). \smallskip

Concerning the inequalities between Dirichlet and Neumann eigenvalues, we point out the recent papers \cite{FrGoMi2025} and \cite{HuMuZh2024}, their introductions and lists of references. \smallskip

In view of Pleijel's observation, Payne conjectured that a second Dirichlet eigenfunction of $-\Delta$ cannot have a closed nodal line, see \cite{Payn1967}, \index{Payne conjecture} Conjecture~5, p.~467. One can make a similar conjecture for second Robin eigenfunctions,  and also consider domains in $\R^n$, $n\ge 3$,  see for example \cite{Dama2000}, \cite{Four2001}, \cite{FrKr2008}, \cite{Jeri1991}, \cite{Ken2013}, \cite{MuSa2022}, \cite{MuSa2025}, \cite{MaMuSa2025}, and their bibliographies.

\begin{remark}\label{R-hmn0-26}
As observed in \cite{Liq1995} (end of Section~2, p.~277), if a simply connected domain satisfies the \emph{nodal line conjecture}, then $\dim U(\lambda_2) \le 2$. This is an immediate consequence  of Remark~\ref{R-hmn0-24}(ii).
\end{remark}

In the next subsections, we consider the three boundary conditions, Dirichlet, Neumann and Robin separately.

\subsection{Dirichlet boundary condition}\label{SSS-hmn-0CD}

The following results are due to Lin and Ni. \index{Multiplicity}
\begin{enumerate}[$\diamond$]
\item \cite[Theorem~3.6]{LiNi1988}: For all $n \ge 2$, there exists a radius $R_n$ and a nonzero smooth radial potential $V_n$, such that
\begin{equation*}
  \mult(\lambda_2; B(R_n), -\Delta + V_n,\mf{d}) = 1,
\end{equation*}
and the corresponding eigenfunction is radial, with nodal set a sphere in $B(R_n)$. Here, $B(R_n)$ is the ball of radius $R_n$ in $\R^n$.
\item \cite[Theorem~3.8]{LiNi1988}: For all $n \ge 2$, there exists a radius $R_n$ and a nonzero smooth radial potential $V_n$, such that
\begin{equation*}
  \mult(\lambda_2; B(R_n), -\Delta + V_n,\mf{d}) = (n+1),
\end{equation*}
and there exists a radial second eigenfunction.
\end{enumerate}

In dimension $2$, the second assertion implies that the bound of the multiplicity $\mult(\lambda_2;\Omega,-\Delta+V,\mf{d}) \le 3$ is sharp.\medskip

The following results are due to M. and T. Hoffmann-Ostenhof and Nadirashvili \cite{HoHN1997, HoHN1998} (for corrections and complements on the second statement, see  \cite{HeHJ2020}).
\begin{enumerate}[$\diamond$]
\item \cite[Theorem~2.1]{HoHN1998}: There exists $N_0$ and domains $D_{N,\varepsilon} \subset \R^2$ such that for all $N \ge N_0$, and $\varepsilon$ small enough, $\lambda_2(D_{N,\varepsilon};-\Delta,\mf{d})$ is simple, with a closed nodal set contained in $D_{N,\varepsilon}$. The domain $D_{N,\varepsilon}$ is homeomorphic to a disk minus $N$ points.
  \item \cite[Theorem~2.2]{HoHN1998}: For all $N \ge 3$, and $\varepsilon$ small enough, the domains $D_{N,\varepsilon} \subset \R^2$  satisfy
\begin{equation*}
  \mult(\lambda_2; D_{N,\varepsilon}, -\Delta,\mf{d}) = 3.
\end{equation*}
\end{enumerate}

The second assertion implies that the upper bound $3$ for the second Dirichlet eigenvalue of $-\Delta$ is sharp for non simply connected domains.\medskip

In \cite{DaGH2021}, the authors give a counter-example to the nodal line conjecture for the Laplacian in a $2$-dimensional domain with six holes. In \cite{FrLe2025}, the authors construct  a counter-example in a \emph{doubly-connected} domain. The introduction of their paper provides a detailed up-to-date overview of the subject. \smallskip

The nodal line conjecture remains open for the Laplacian in simply-connected planar domains. This is in contrast with the results of \cite{LiNi1988} mentioned above and with \cite{Fre2002}. This is also in contrast with the higher dimensional case. Indeed, \cite{Ken2013} provides a simply-connected counter-example in dimension larger than or equal to $3$, based on \cite{Four2001} which generalizes \cite{HoHN1997} in dimension greater than or equal to $2$. \smallskip

The nodal line conjecture for $(\Omega,-\Delta)$ is known to be true when $\Omega$ is a bounded \emph{convex domain} in $\R^2$: \cite{Payn1973} and \cite{Lin1987} (under additional symmetry assumptions), \cite{Mela1992} (smooth convex domains) and \cite{Ales1994} (general convex domains).  This is also the case for domains which are convex in one direction only\footnote{See \cite[Corollary~2.7]{Liq1995}. Note however that the other results on the multiplicity presented in this paper are true under strong additional conditions only.  We thank the author for clarifying this point.}.\medskip

As a by-product of these results, we have the upper bound
\begin{equation*}
\mult(\lambda_2;\Omega,-\Delta,\mf{d}) \le 2 \text{~for any convex bounded domain~} \Omega.
\end{equation*}

This bound holds for domains which are convex in one direction and for domains which satisfy the nodal line conjecture (see Remark~\ref{R-hmn0-26}). This result supports the following conjecture.

\begin{conjecture}\label{C-aaa}
\begin{equation*}
\mult(\lambda_2;\Omega,-\Delta,\mf{d}) \le 2 \text{~for any simply connected bounded domain~} \Omega.
\end{equation*}
\end{conjecture}%

The properties that the nodal set of a second Dirichlet eigenfunction does or does not intersect the boundary is in some sense stable under certain small perturbations. We refer to \cite{MuSa2022}, \cite{MuSa2025} and \cite{MaMuSa2025} for more details and new examples.

\subsection{Neumann boundary condition}\label{SSS-hmn-0CN}
The discussion in Paragraph~\ref{SSS-hmn-0CB} shows that the nodal line conjecture holds for the Neumann Laplacian in any simply connected  bounded regular domain in $\R^2$. Nadirashvili proved that the multiplicity of the second eigenvalue of a simply connected domain with nonpositive curvature is at most $2$ and that this estimate is sharp, see \cite{Nadi1987}, Theorem~2 and Corollary~1. As a matter of fact, his proof also shows that the nodal line conjecture is true in such domains (for the Neumann condition).

\subsection{Robin boundary condition}\label{SSS-hmn-0CR}
As observed by James B.~Kennedy\footnote{We thank J.B. Kennedy for
useful discussions around this problem.} in \cite{Ken2011}, the proof of the nodal line conjecture for the $h$-Robin boundary condition works
in the same way as in the case of Neumann condition provided that the following
inequality holds,
\[
\lambda_2 (h,\Omega) \leq  \lambda_1^D(\Omega).
\]
Observing the monotonicity of the Robin problem with respect to $h$,
we obtain the existence of some $h_\Omega >0$ such that this inequality
holds for $h \leq  h_\Omega$.\smallskip

J.B. Kennedy also shows that, as in the Dirichlet case (which corresponds to $h=+\infty$), one can find examples of multiply connected domains for which counter examples to the nodal line conjecture can be constructed. One can also expect to construct examples for which the multiplicity is $3$
(as in  \cite{HoHN1998} and \cite{HeHJ2020})  but this is still open
at the moment.\smallskip

On the positive side, it is natural to ask if convexity is enough to ensure multiplicity at most $2$ for every Robin parameter $h>0$, following Lin's approach in \cite{Lin1987}. This is still open at the moment. What we do know is that a sufficient  condition on $\Omega$  is that nodal line conjecture holds (Remark~\ref{R-hmn0-26}).

\section{Upper Bounds for Multiplicities vs Courant-sharp Eigenvalues}\label{S-mcs}
\index{Courant nodal Theorem}
For simplicity, let us only consider Dirichlet eigenvalues in a $\Cty$ bounded domain $\Omega \subset \R^2$. The upper bounds on the eigenvalue multiplicities strongly rely on Courant's nodal domain theorem. As observed in \cite{HoMN1999}, they are actually a consequence of the following 3-step result, $n \in \set{1,2,3}$,   provided that the third step ($n=3$) is correct.

\begin{proposition}\label{P-propC1} Let $U(\lambda)$ be a Dirichlet eigenspace of $-\Delta + V$. Assume that \[\sup\set{\kappa(u) \mid u \in U(\lambda)} = \ell\]
for some $\ell \ge 3$. Then $\dim(U(\lambda)) \le (2\ell -n)$.
\end{proposition}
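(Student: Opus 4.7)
My plan is to transcribe the arguments of Sections~\ref{S-hmn}--\ref{S-hmn6} with the eigenvalue label $k$ replaced everywhere by the combinatorial quantity $\ell := \sup\set{\kappa(u) ~|~ 0 \neq u \in U(\lambda)}$. The key observation is that nothing in those sections uses Courant's theorem beyond the a priori bound $\kappa(u) \le k$; the whole machinery is already phrased in the abstract setting of Subsection~\ref{SS-hmn-0N} (a subspace $U$ with $\sup\kappa(u) \le \ell$), and the existence lemmas for eigenfunctions with prescribed singular points (Lemmas~\ref{L-zeroi}, \ref{L-zero1}, \ref{L-zero2}, \ref{L-zeroc}) do not involve the eigenvalue label at all. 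I would handle the three steps in parallel with Sections~\ref{S-hmn}, \ref{S-hmn2}, and \ref{S-hmn3}--\ref{S-hmn6} respectively.

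For $n = 1$, I would argue by contradiction: assuming $m := \dim U(\lambda) \ge 2\ell$, pick $x \in \partial \Omega$, apply Lemma~\ref{L-zero1} to obtain $0 \neq u \in U(\lambda)$ with $\rho(u,x) \ge m - 1 \ge 2\ell - 1$, and insert $u$ into Euler's inequality \eqref{E-euler-or2a}. Dropping the three nonnegative summands on the right-hand side yields
\begin{equation*}
2\ell - 2 \ge \sum_{z \in \cS_b(u) \cap \Gamma_1} \rho(u,z) \ge \rho(u,x) \ge 2\ell - 1\,,
\end{equation*}
a contradiction. This is literally the proof of Proposition~\ref{P-hmn-s1} with $\ell$ in place of $k$, and requires no topological restriction on $\Omega$.

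For $n = 2$, I would again reason by contradiction and assume $\dim U(\lambda) = 2\ell - 1$. This puts us in the exact setting of Section~\ref{S-hmn2}: Lemmas~\ref{L-Uxy0}--\ref{L-Ux1} control the dimensions and nodal structures of $U_{x,y}$, $U^1_x$, $U^2_x$; the combinatorial types are defined as in Paragraphs~\ref{SSS-hmn-21as} and \ref{SSS-hmn-22as}; and the rotating function argument of Paragraphs~\ref{SSS-hmn-25B}--\ref{SSS-hmn-25D} applied to $w_\theta = \cos\theta\, v_1 + \sin\theta\, v_2$ produces two limit words $m^{(0)}$ and $m^{(\pi)}$ of length $(2\ell - 1)$ whose first-repetition indices (in the sense of \eqref{E-invlab}) are $4 + |p_+|$ and $2 + |p_+|$ respectively. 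Since both words should describe the same nodal pattern of $v_1$ (because $w_0 = v_1 = -w_\pi$), this is a contradiction; the non-simply-connected case is reduced to the simply connected one by projecting nodal sets to $\check{\Omega}$ as in Paragraph~\ref{SS-hmn-26}. The hypothesis $\ell \ge 3$ is essential --- it is what ensures that the two indices are distinct --- and in fact for $\ell = 2$ the corresponding bound fails (the second Dirichlet eigenvalue of some planar domains has multiplicity~$3$, cf.~Paragraph~\ref{SSS-hmn-0CD}).

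For $n = 3$ --- the main obstacle --- I would follow the plan of Sections~\ref{S-hmn3}--\ref{S-hmn6}. Assuming $\dim U(\lambda) = 2\ell - 2$, one constructs the $C^{\infty}$ maps $x \mapsto [u_x]$ and $y \mapsto [w_y]$ of Lemmas~\ref{L-L37} and \ref{L-L32} (with $\nu(u_x,x) = 2(\ell - 1)$ and $\rho(w_y,y) \in \set{2\ell-3, 2\ell - 2}$), partitions $\partial \Omega$ into the disjoint union $\Gamma_{(2\ell-3)} \cup \Gamma_{(2\ell-2)}$, and studies the two-parameter family $v_{y,s}$ of Section~\ref{S-hmn4}. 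Assuming for simplicity that $\Omega$ is simply connected, the final step is to lift $x \mapsto u_x$ to a smooth map into $\bS(U)$, extract the unit harmonic homogeneous polynomial $\bar h_x \in \bS_{\cH}$ of degree $(\ell - 1)$ from the leading Taylor term of $u_x$ at $x$, and compute the degree of the restriction of $\bar h$ to a curve $c(\cdot,\varepsilon)$ parallel to $\partial \Omega$ at small distance. Contractibility of $c(\cdot,\varepsilon)$ in $\Omega$ forces this degree to vanish, whereas one would like to show it is non-zero for small $\varepsilon$ by analyzing how the ``star of tangents'' of $\cZ(u_x)$ rotates as $x$ travels once along this curve. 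This last degree computation is precisely where I expect the main difficulty: the supporting statements (Lemmas~\ref{L-L35b0}, \ref{L-L35b1}, \ref{L-L35b2} and \ref{L-L50}) are currently only heuristically justified, and Berdnikov~\cite{Berd2018} has observed that the original argument of~\cite{HoMN1999} breaks down outside the simply connected case. Step $n = 3$ therefore remains conditional on completing the program outlined in Section~\ref{S-hmn6}.
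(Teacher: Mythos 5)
Your proposal coincides with the paper's own (implicit) proof: Proposition~\ref{P-propC1} is presented there precisely as a restatement of Proposition~\ref{P-hmn-s1} ($n=1$), Proposition~\ref{P-hmn-s2} ($n=2$) and the unfinished program of Sections~\ref{S-hmn3}--\ref{S-hmn6} ($n=3$), all of which are indeed already phrased for an abstract subspace $U$ with $\sup\set{\kappa(u)} \le \ell$, so substituting $\ell$ for $k$ is exactly the intended reading. Your flagging of the $n=3$ step as conditional matches the paper's own caveat that the statement holds ``provided that the third step ($n=3$) is correct.''
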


Indeed, since $u \in U(\lambda_k)$ implies that $\kappa(u) \le k$ (Courant's theorem):
\begin{enumerate}
  \item The first step, $n=1$, yields the upper bound $\mult(\lambda_k) \le (2k-1)$, see  Theorem~1 in \cite{Nadi1987}.
  \item The second step, $n=2$, yields the upper bound $\mult(\lambda_k) \le (2k-2)$, see Lemma~2.13 in \cite{HoMN1999} or Theorem~\ref{T-hmn-bh1}.
  \item The third step, $n=3$, yields the upper bound $\mult(\lambda_k) \le (2k-3)$, see Theorem~B in \cite{HoMN1999} or Theorem~\ref{T-hmn-bh2}.
\end{enumerate}\smallskip

According to Pleijel \cite{Plej1956}, \index{Pleijel Theorem} the equality in Courant's Theorem~\ref{T-RC} can only occur for finitely many eigenvalues, the so-called \emph{Courant-sharp} eigenvalues.  The eigenvalue $\lambda_k$ is called  \emph{Courant-sharp} whenever the associated eigenspace $U(\lambda_k)$ contains an eigenfunction with $k$ nodal domains, the maximum number allowed by Courant's nodal domain theorem. If $\lambda_k$ is not a Courant-sharp eigenvalue, in particular if $k$ is large enough (depending on the geometry of the domain,  see Remark~\ref{R-mcs-3} below), $u \in U(\lambda_k)$ implies that $\kappa(u) \le (k-1)$, and the above proposition implies that $\mult(\lambda_k) \le (2k-2-n)$. When $\lambda_k$ is not Courant-sharp, the upper bound for the multiplicity can be improved by $2$.\medskip

Proposition~\ref{P-propC1} can be restated as
\begin{equation}\label{E-mcs-2}
\mult(\lambda_k) \leq 2 \sup_{u \in U(\lambda_k)} \kappa(u)\, -1.
\end{equation}
 Since $\sup_{u \in U(\lambda_k)} \kappa(u) \le k$, we obtain
\begin{equation*}\label{E-mcs-2a}
\limsup_{k\rightarrow +\infty}\frac{\mult(\lambda_k)}{k} \le 2.
\end{equation*}

We can continue the discussion a little further by recalling Pleijel's proof.  \smallskip

 \emph{Sketch of Pleijel's proof.~} Let $\Omega\subset\mathbb R^n$ be an open set of finite measure. For $k\in\N$, let $\bar{\kappa}(\lambda_k)$ be the maximal number of nodal domains of an eigenfunction corresponding to the Dirichlet eigenvalue $\lambda_k(\Omega)$. Choose some Dirichlet eigenfunction $u$ associated with $\lambda_k$ and  such that $\kappa(u) =\bar{\kappa}(\lambda_k)$. Let $\{\omega_\alpha\}_\alpha$ be the nodal domains of $u$.  The first Dirichlet eigenvalue $\lambda_1(\omega_{\alpha})$ is equal to $\lambda_k(\Omega)$ and satisfies the Faber-Krahn inequality \index{Faber-Krahn inequality}  (see \cite{BeMe1982}),
\begin{equation*}
\lambda_1(\omega_{\alpha})|\omega_{\alpha}|^{\frac 2n} \ge \lambda_1(\B_1)|\B_1|^{\frac 2n} =: F_n,
\end{equation*}
where $\B_1$ denotes the unit ball in $\R^n$, $|\Omega|$ the volume of $\Omega$, and where $F_n$ is some universal constant (see \cite{BeMe1982}. Then,
\begin{equation*}
\frac{\bar{\kappa}(\lambda_k)}{k} = \frac{\lambda_k(\Omega)^\frac n2}{k} \sum_\alpha \lambda_1(\omega_\alpha)^{-\frac n2} \leq  F_n^{-\frac n2} \frac{\lambda_k(\Omega)^\frac n2}{k} \sum_\alpha |\omega_\alpha| = F_n^{-\frac n2} \frac{\lambda_k(\Omega)^\frac n2}{k} |\Omega| .
\end{equation*}

On the other hand,  Weyl's asymptotic formula \cite[Corollary~17.5.8]{Horm2007c}
\index{Weyl asymptotic formula}
\begin{equation*}
N(\lambda) := \# \set{j\mid \lambda_j < \lambda} \sim C_{W,n}|\Omega|\lambda^{\frac n2},
\end{equation*}
where $C_{W,n}$ is a universal constant (Weyl's constant), implies that
\begin{equation*}
\lambda_k(\Omega)^{\frac 2n} \sim C_{W,n}^{-1} \, \frac{k}{|\Omega|}.
\end{equation*}
It follows that
\begin{equation}
\limsup_{k\to\infty} \frac{\bar{\kappa}(\lambda_k)}{k} \leq F_n^{-\frac n2} C_{W,n}^{-1} =: \gamma_n,
\end{equation}
and it turns out that the constant $\gamma_n$ is (strictly) less than $1$.
\smallskip

In dimension $2$, $\gamma_2 = \frac{4}{j_{0,1}^2}$ where $j_{0,1}$ is the first positive zero of the Bessel function $J_0$, and hence we obtain Pleijel's estimate \cite{Plej1956}
\begin{equation}\label{E-mcs-W2}
\limsup_{k\to\infty} \frac{\bar{\kappa}(\lambda_k)}{k} \leq \frac{4}{j_{0,1}^2} < 1.
\end{equation}%

As a consequence of Weyl's asymptotic formula, Pleijel's method for Dirichlet eigenvalues, and Equation~\eqref{E-mcs-2}, we obtain the improved estimate
\begin{equation}\label{E-mcs-10}
\limsup_{k\rightarrow +\infty} \frac{\mult(\lambda_k)}{k} \le 2 \, \gamma < 2.
\end{equation}

For a \emph{regular} bounded domain $\Omega$ in $\R^2$, Weyl's asymptotic formula reads
\begin{equation}\label{E-mcs-W}
N(\lambda) = \frac{|\Omega|}{4\pi} \, \lambda + O(\sqrt{\lambda}).
\end{equation}

For Weyl's formula with a remainder term, we refer to Theorem~29.3.3 in H\"{o}rmander's book \cite{Horm2009d} and to Ivrii's papers \cite{Ivri1980r, Ivri1980}. These references actually give a much more precise formula which yields a two-term asymptotic formula under an assumption on the set of periodic billiard trajectories \cite[Corollary~29.3.4]{Horm2009d}.
\smallskip

For any $\delta$ small enough,
\begin{equation}\label{eq:E-6}  \mult(\lambda_k) = N(\lambda_k+\delta) - N(\lambda_k - \delta).
\end{equation}
According to \eqref{E-mcs-W}, we should expect that
\[
\mult(\lambda_k) = O(\sqrt{\lambda_k}),
\]
and hence
\[
\limsup_{k\rightarrow +\infty} \frac{\mult(\lambda_k)}{k} = 0,
\]
which shows that the inequality \eqref{E-mcs-10} is not really pertinent when the domain is regular.

 For extensions and improvements of Pleijel's theorem, we refer to  \cite{Peet1957}, \cite{BeMe1982}, \cite{Bour2015} and \cite{Stei2014}.  Pleijel's method does not readily apply to Neumann eigenvalues. This is because there exist nodal domains whose boundary contain a portion of the boundary of $\Omega$. For the extension of Pleijel's theorem to the Neumann or Robin boundary condition, we refer to \cite{Polt2009}, \cite{Lena2019},  \cite{HaSh2023} and \cite{BeCM2023}.

\begin{remark}\label{R-mcs-3}
For a bounded domain $\Omega \subset \R^n$ with $C^2$ boundary, and Dirichlet boundary condition, one can show that there exists a constant $C(\Omega)$, depending on $\Omega$ and invariant under dilations, such that for $k > C(\Omega)$, the $k$th Dirichlet eigenvalue $\lambda_k$ is not Courant-sharp. In dimension $2$, the constant $C(\Omega)$ can be estimated in terms of the area $|\Omega|$, the length $|\Gamma|$ of the boundary, the curvature and the cut-distance of $\Gamma$. We refer to \cite[Theorem~1.3]{BeHe2016} for more details, and to \cite[Theorem~1]{BeGi2016} for an extension to less regular domains. The proof of this result makes use of a lower bound on the remainder term $R(\lambda) := N(\lambda) - \frac{|\Omega|}{4\pi} \, \lambda$ in Weyl's asymptotic estimate, as given for example in \cite{BeLi2001}.\\
For the case of domains with Neumann or Robin boundary condition, we refer to \cite{GiLe2020} and \cite{GiHaLeSh2024}.
\end{remark}%

\begin{remark}\label{R-mcs-6}~  In order to estimate $\mult(\lambda_k)$ asymptotically, we could use the relation \eqref{eq:E-6}  together with a geometrical control of $N(\lambda)$ as in  Safarov  \cite{Safa2001} or  Van den
Berg--Lianantonakis \cite{BeLi2001} who give estimates of the form
\[
|N(\lambda) - C_n |\Omega| \lambda^{n/2}| \leq C_{geom} (\Omega) \lambda^{(n-1)/2} \ln \lambda,
\]
where $C_n$ is Weyl's constant.
\end{remark}

\begin{remark}\label{R-subL}
Some of the above topics are being investigated in the framework of sub-Laplacians, see for example \cite{EsLe2023}, \cite{FrHeLa2024}, \cite{FrHe2025}.
\end{remark}%

\backmatter



\printindex

\end{document}